\DeclareSymbolFont{cyrillic}{T2A}{cmr}{m}{it}
\DeclareMathSymbol{\mE}{\mathord}{cyrillic}{221}
\DeclareMathSymbol{\mYu}{\mathord}{cyrillic}{222}
\DeclareMathSymbol{\mYa}{\mathord}{cyrillic}{223}
\tikzset{%
symbol/.style={
  draw=none,
  every to/.append style={
    edge node={node [sloped, allow upside down, auto=false]{$#1$}}
  },
},
}
\DeclareMathOperator{\centre}{Z}
\DeclareMathOperator{\Mat}{M}
\DeclareMathOperator{\normaliser}{N}
\DeclareMathOperator{\norm}{Nm}
\DeclareMathOperator{\PS}{PS} %{\mathcal{P}}
\DeclareMathOperator{\DS}{DS} %{\mathcal{D}}
\DeclareMathOperator{\FD}{F}
\DeclareMathOperator{\Jac}{J}
\DeclareMathOperator{\rL}{L}
\DeclareMathOperator{\WRes}{Res}
\DeclareMathOperator{\tame}{t}
\DeclareMathOperator{\heckegroup}{\mathbf{HC}}
\newcommand{\Asterisk}{\mathop{\scalebox{1.5}{\raisebox{-0.2ex}{$\ast$}}}}%
\newcommand{\hecke}{\mathbf{T}}
\newcommand{\bfG}{\mathbf{G}}
\newcommand{\abstracthecke}{\bT}
\newcommand{\heckebig}{\mathbf{A}}
\newcommand{\adel}{\mathbb{A}}
\newcommand{\Hamil}{\mathbb{H}}
\newcommand{\lquo}{\backslash}
\newcommand{\lieg}{\fg}
\newcommand{\liegl}{\fgl}
\newcommand{\liek}{\fk}
\newcommand{\lieso}{\fso}
\newcommand{\liesu}{\fsu}
\newcommand{\vecti}{\underline{\bmi}}
\newcommand{\vectlambda}{\underline{\bm{\lambda}}}
\newcommand{\vectzero}{\underline{\bm{0}}}
\newcommand{\harmon}{\mathcal{H}}
\newcommand{\Fpbar}{\overline{\F}_p}
\newcommand{\Id}{\mathrm{Id}}
\newcommand{\pairing}[1]{\langle\cdot,\cdot\rangle_{#1}}
\newcommand{\dcr}{\cR}
\newcommand{\Cdual}{\widehat{C}}
\newcommand{\Ciso}{C_{\iso}}
\newcommand{\Cisodual}{\widehat{C}_{\iso}}
\newcommand{\eps}{\varepsilon}
\newcommand{\resfield}{\bE}
\newcommand{\galrep}{\rho}
\newcommand{\galchar}{\psi}
\newcommand{\automrep}{\Pi}
\newcommand{\automchar}{\Psi}
\newcommand{\CC}{\mathbb{C}}
\newcommand{\shady}{\mathrm{shady}}
\title[Vign\'eras orbifolds: isospectrality, regulators, torsion homology]{Vign\'eras orbifolds: isospectrality,\\ regulators, and torsion homology}
\dedicatory{Dedicated to the memory of Nicolas Bergeron}
\author{Alex Bartel and Aurel Page}
\date{}
\begin{document}

\vspace*{-4.4em}
\maketitle
\vspace{-2.1em}
\begin{abstract}
  We develop a new approach to the isospectrality of the orbifolds constructed by Vign\'eras.
  We give fine sufficient criteria for $i$-isospectrality in given degree $i$ and
  for representation equivalence. These allow us to produce very small exotic examples of
  isospectral orbifolds: hyperbolic $3$-orbifolds that are $i$-isospectral for all $i$ but
  not representation equivalent, hyperbolic $3$-orbifolds that are $0$-isospectral but not
  $1$-isospectral, and others. Using the same method, we also give sufficient criteria for
  rationality of regulator quotients $\Reg_i(Y_1)^2/\Reg_i(Y_2)^2$ for Vign\'eras
  orbifolds $Y_1$, $Y_2$, sometimes even when they are not isospectral.
  Moreover, we establish a link between the primes that enter in these
  regulator quotients and at which torsion homology of $Y_1$ and $Y_2$ can differ, and
  Galois representations.
%  
%-----------
%  
  %That last result can be viewed, speculatively, as a shadow of
  %a mod $p$ Labesse--Langlands theory, although we do not make use of the Labesse--Langlands
  %multiplicity formula. Instead we develop a general purely algebraic
  %theory of regulator quotients of certain Hecke modules.
\end{abstract}

%\vspace{-0.7em}

\tableofcontents
\newpage

\section{Introduction}

\subsection{Isospectral orbifolds}
This paper is devoted to the venerable question \cite{Kac} of which geometric and topological
properties of a closed Riemannian manifold or orbifold are encoded in the spectrum
of the Laplace-de Rham operator~$\Delta$ on the space of
differential $i$-forms for various $i\in \Z_{\geq 0}$.

Given $i\in \Z_{\geq 0}$, two closed Riemannian orbifolds are said to be
\emph{$i$-isospectral} if the multisets of eigenvalues of $\Delta$ on the
spaces of differential $i$-forms of these two orbifolds coincide.
We abbreviate ``$0$-isospectral'' to just \emph{isospectral}, and
``$i$-isospectral for all $i$'' to \emph{$\Omega^\bullet$-isospectral}.
Sunada \cite{Sunada} and Vign\'eras \cite{vigneras} each proposed a general
construction for pairs of orbifolds that are $\Omega^\bullet$-isospectral.
Vign\'eras's method is the focus of this paper.
%, although we will also make some comments on the comparison to
%the Sunada method later in the introduction.

We briefly sketch the setup of Vign\'eras in a special case. In Section \ref{sec:Vigneras}
we describe the construction in detail and in much greater generality.
Let $F$ be a number field with exactly one complex place, let $D$ be a division quaternion
algebra over $F$ that is ramified at all real places, and let $\Gamma_1$ and $\Gamma_2$ be unit groups of two maximal orders in $D$.
A complex embedding of $F$ identifies them with two discrete subgroups of~$\GL_2(\CC)$.
For the rest of the introduction we fix two such groups. They act on hyperbolic $3$-space
$X=\GL_2(\CC)/\CC^\times\U_2(\CC)$, and under some additional conditions the closed orbifolds
$\Gamma_1\lquo X$ and $\Gamma_2\lquo X$ are isospectral.
A lot of literature is devoted to the exploration of such conditions, see
\cite{VoightLinowitz} and extensive references therein, but all known conditions
actually imply the stronger relationship that the $\rL^2$-spaces
$\rL^2(\Gamma_1\lquo \PGL_2(\CC))$ and $\rL^2(\Gamma_2\lquo \PGL_2(\CC))$
are isomorphic as unitary representations of $\GL_2(\CC)$. When that relationship holds,
we say that $\Gamma_1$ and $\Gamma_2$ are \emph{representation equivalent}.
It is a theorem of DeTurck and Gordon \cite[Theorem 1.16, Remark 1.18]{DeTurckGordon}
that the quotients by representation equivalent groups are in fact $\Omega^\bullet$-isospectral.
In particular, as far as we are aware, the smallest currently known pair of connected
isospectral hyperbolic $3$-orbifolds comes from representation equivalent groups, and was found by
Linowitz--Voight~\cite{VoightLinowitz}. Their orbifolds have (necessarily equal) volume $2.83366\ldots$.
The converse, whether $\Omega^\bullet$-isospectrality of two hyperbolic $3$-orbifolds
implies representation equivalence of the corresponding groups, has until
now been an open question \cite{Pesce}, \cite[\S 4.1]{RajanConj}, \cite[Remark 2.6]{VoightLinowitz},
\cite[Question 8.11]{LauretLinowitz}. The converse does hold for hyperbolic
$2$-orbifolds \cite{DoyleRossetti} and it has been conjectured to hold in dimension $3$,
too \cite[\S 12]{DoyleRossetti}.

In this paper we develop much finer criteria for different kinds of isospectrality
in Vign\'eras's construction. Before describing the nature of
these criteria, we state some of their applications.
The following result will be proven in Example \ref{ex:1notL2}.

\begin{introtheorem}\label{thm:IntroSmallIso}
  There exists a pair of closed connected orientable arithmetic hyperbolic $3$-orbifolds
  with volume $0.251\ldots$ that are $i$-isospectral for all $i$, but not representation equivalent,
  only one of which has a cyclic isotropy group of order $10$.
\end{introtheorem}
This might well be the smallest $\Omega^\bullet$-isospectral pair of connected
hyperbolic $3$-orbifolds.
We make two remarks on this. Firstly, there exist only finitely many hyperbolic $3$-orbifolds
of volume less than that in Theorem \ref{thm:IntroSmallIso}, see \cite{volacc}.
As far as we are aware, no isospectral pair with this property was known until now.
Secondly, we will see in Section \ref{sec:IntroSunada} that Sunada's
method can never produce a smaller pair.

It is known that there exist pairs of orbifolds that are $i$-isospectral for some $i$
but not all \cite{Gordon}. However, no such examples have been known that are $3$-dimensional
or hyperbolic \cite[Question 8.10]{LauretLinowitz}. In \cite{LauretEtAl} it is predicted that such examples should exist, but that
they must be difficult to construct. We have the following application of our criteria,
which will be proven in Example \ref{ex:0not1}.
\begin{introtheorem}\label{thm:IntroZeroNotOne}
  There exists a pair of closed connected orientable arithmetic hyperbolic $3$-orbifolds
  with volume $0.246\ldots$ that are isospectral, but not $1$-isospectral.
\end{introtheorem}
Again, one may wonder whether this is the smallest pair of connected hyperbolic isospectral $3$-orbifolds.

Our method also shows that, without any further hypotheses, the groups
$\Gamma_1$ and~$\Gamma_2$ are always ``close to'' representation equivalent, in the following sense.
Let~$j\in \{1,2\}$. As a representation of $\U_2(\CC)$ the space $\rL^2(\Gamma_j\lquo \PGL_2(\CC))$
decomposes as a direct sum $\bigoplus_V V^{m_{V,j}}$ over the irreducible representations
$V$ of $\U_2(\CC)$, where $m_{V,j}\in \Z_{\geq 0}$.
Each of the isotypical subspaces $\Omega(V,\Gamma_j)=V^{m_{V,j}}$ decomposes into a direct
sum of eigenspaces $\Omega(V,\Gamma_j)_{\Delta=\lambda}$ under the Casimir operator, which
is the appropriate generalisation of the Laplace--de Rham operator.
Weyl's law implies that as $T\to \infty$, one has
$\sum_{\lambda\leq T}\dim\Omega(V,\Gamma_j)_{\Delta=\lambda} \sim cT^{3/2}$
for some constant $c>0$. The following result, which is a special case of
Theorem~\ref{thm:KelmerConverse}, therefore implies that the difference
in the spectra for $\Gamma_1$ and $\Gamma_2$ is always vanishingly small, in the limit,
even when no criteria for isospectrality apply.
\begin{introtheorem}\label{thm:IntroKelmerConverse}
For every irreducible representation $V$ of $\U_2(\CC)$ there exists a constant $c_V\geq 0$
such that for all $T>0$ one has
$$
%\sum_{\lambda\leq T}\dim\Omega(V,\Gamma_1)_{\Delta=\lambda} - \sum_{\lambda\leq T}\dim\Omega(V,\Gamma_2)_{\Delta=\lambda}\leq c_VT^{1/2}.
\sum_{\lambda\leq T}\bigl| \dim\Omega(V,\Gamma_1)_{\Delta=\lambda} -
  \dim\Omega(V,\Gamma_2)_{\Delta=\lambda} \bigr| \leq c_VT^{1/2}.
$$
\end{introtheorem}
This result defies some expert expectations. Indeed,
Kelmer proves \cite[Theorem~1]{Kelmer} that, in the opposite direction, if the groups $\Gamma_1$
and $\Gamma_2$ are close to being representation equivalent, then in fact they are
representation equivalent. He shows that if for all irreducible $\U_2(\CC)$-representations
$V$ the quotient between the left hand side in Theorem \ref{thm:IntroKelmerConverse} and $T^{1/2}$
tends to $0$ as $T\to \infty$, then~$\Gamma_1$ and~$\Gamma_2$ are representation equivalent.
He then speculates that his bound
might be far from optimal, and that perhaps this type of repulsion already happens
close to the asymptotic of Weyl's law. Theorem \ref{thm:IntroKelmerConverse}
demonstrates that in fact Kelmer's result is sharp, since there do exist groups
$\Gamma_1$ and $\Gamma_2$ as above that are not representation equivalent.

%Two of the main constructions: Sunada and Vign\'eras; briefly sketch Vign\'eras's construction in the special case of 3-orbifolds,
%say that it does not always give isospectral things, and that all the criteria
%that have been developed imply an apparently stronger relationship: representation equivalence.
%State the current volume record of Linowitz--Voight (for orbifolds).
%It has been an open question (see tons of refs) whether in fact
%i-isospectrality for all i implies representation equivalence.
%In this paper we develop much finer criteria for various notions of
%isospectrality -- see later. Our theorems are much more general.
%
%THEOREM: tiny orbifolds, i-isospectral for all i but not rep-equiv.
%
%First such pair below the first volume accumulation point. Optionally: compare with the smallest ones that Sunada might ever be able to produce.
%
%Known examples of manifolds that are isospectral, but not i-isospectral for some i>0 (Argentinians), but open for 3-manifolds and for hyperbolic ones.
%
%THEOREM: tiny orbifolds that are 0- but not 1-isospectral.
%
%THEOREM: our ''converse'' to Kelmer.
%Compare with his explicit expectation.
%

\subsection{Hierarchy of isospectralities}\label{sec:IntroHierarchy}
Suppose that $Y_1$ and $Y_2$ are closed orientable Riemannian manifolds of a common dimension $d$.
If they are $\Omega^\bullet$-isospectral, then the Cheeger--M\"uller Theorem
\cite{Cheeger,Mueller1,Mueller2} implies that one has
\begin{eqnarray}\label{eq:ChM}
  \prod_{i=0}^d\left(\frac{\Reg_i(Y_1)}{\#H_i(Y_1,\Z)_{\tors}}\right)^{(-1)^i}=
    \prod_{i=0}^d\left(\frac{\Reg_i(Y_2)}{\#H_i(Y_2,\Z)_{\tors}}\right)^{(-1)^i},
\end{eqnarray}
where, for a Riemannian manifold $Y$, the real number $\Reg_i(Y)$ is the covolume of the
lattice $H_i(Y,\Z)/H_i(Y,\Z)_{\tors}$ in the inner product space $H_i(Y,\bR)$ \cite[\S 1.6]{Raimbault}.

There has been a lot of recent interest in understanding regulators and torsion
homology of arithmetic manifolds -- see for instance~\cite{BVtorsion, BSVtorsion, brockdunfield, torsionJL,Raimbault}.
In light of equation \eqref{eq:ChM}, the following vague question seems natural.
\begin{question}
  What can be said about $\Reg_i(Y_1)/\Reg_i(Y_2)$ if $Y_1$ and $Y_2$ are $i$-isospectral manifolds?
\end{question}
%Equation
%\eqref{eq:ChM} implies that if $Y_1$ and $Y_2$ are $\Omega^\bullet$-isospectral,
%then the alternating product of these regulator quotients over $i$ is a rational number.
Poincar\'e duality implies that for a closed orientable $d$-manifold $Y$
and for all $i$, one has $\Reg_i(Y)=\Reg_{d-i}(Y)^{-1}$. Thus,
if $d$ is even, then
%for every closed $d$-manifold $Y$ one has $\prod_i\Reg_i(Y)^{(-1)^i}=1$, and
equation \eqref{eq:ChM} says nothing about $\Reg_i(Y_1)/\Reg_i(Y_2)$;
while if $d$ is odd, and $Y_1$ and $Y_2$ are $\Omega^\bullet$-isospectral, equation (\ref{eq:ChM}) implies that one has
\begin{eqnarray}\label{eq:weakrationality}
\prod_{i=0}^{(d-1)/2}\left(\frac{\Reg_i(Y_1)^2}{\Reg_i(Y_2)^2}\right)^{(-1)^i}\in \Q^{\times}.
\end{eqnarray}
There is no analogue of the Cheeger--M\"uller formula for a single degree $i$. Nevertheless,
it seems natural to wonder: if $Y_1$ and $Y_2$ are merely assumed to be $i$-isospectral
for some $i$, then does it follow that $\frac{\Reg_i(Y_1)^2}{\Reg_i(Y_2)^2}$ is a rational number?
And if it is, then what primes can enter? Is there
a connection between that and the rational number $\frac{\#H_i(Y_1,\Z)_{\tors}}{\#H_i(Y_2,\Z)_{\tors}}$?
We do not know the answers to these general questions, but we prove strong results in the case
when $Y_j=\Gamma_j\lquo X$ arise from the Vign\'eras construction.

We think of different ``kinds of isospectrality'' between $\Gamma_1$ and $\Gamma_2$
as sitting in a kind of approximate hierarchy:
representation equivalence\footnote{For hyperbolic manifolds, representation equivalence of the groups is
equivalent to so-called strong isospectrality of the manifolds \cite{Pesce},
but in general it is stronger.}, which we sometimes also call $\rL^2$-isospectrality,
$\Omega^\bullet$-isospectrality, isospectrality, which we also
refer to as $\Omega^0$-isospectrality, rationality of $\frac{\Reg_i(Y_1)^2}{\Reg_i(Y_2)^2}$
for all $i$, which we refer to as $\cH^\bullet$-isospectrality, and finally, for a prime
number $p$, the conditions
\[
  \ord_p\left(\frac{\Reg_i(Y_1)^2}{\Reg_i(Y_2)^2}\right) = 0\quad\text{ and }\quad
\ord_p\left(\frac{\#H_i(Y_1,\Z)_{\tors}}{\#H_i(Y_2,\Z)_{\tors}}\right)=0
\]
for all $i$, where
$\ord_p$ denotes $p$-adic valuation, which we call $\Z_p$-isospectrality. Of course
that last condition only makes sense if the regulator quotients are rational in
the first place, i.e. under the assumption of $\cH^\bullet$-isospectrality.
For each of these kinds of isospectralities, we prove sufficient criteria.
We do not state them here precisely, but we give an impressionistic sketch 
of the general shape of our results.

For each kind $\Asterisk$ of isospectrality as above, we identify certain types
of Hecke characters such that if there are no Hecke characters of that type, then $\Gamma_1$ and $\Gamma_2$
are $\Asterisk$-isospectral, but we do not expect the converse to hold.
For that reason we do not call the characters an obstruction, but instead
call them \emph{$\Asterisk$-shady characters}: their presence merely stops
our method from proving $\Asterisk$-isospectrality.
In summary, our main results take the following shape.
\begin{introtheorem}\label{thm:IntroProtoMain}
  At least one of the following two statements is true:
  \begin{enumerate}[leftmargin=*,label=\upshape{(\roman*})]
    \item there exists a number field $L$ in an a-priori finite list and a $\Asterisk$-shady
      character of $L$;
    \item the groups $\Gamma_1$ and $\Gamma_2$ are $\Asterisk$-isospectral.
  \end{enumerate}
\end{introtheorem}

For $\rL^2$-, $\Omega^\bullet$-, $\Omega^0$-, and $\cH^\bullet$-isospectrality, we define the respective 
shady characters and prove Theorem \ref{thm:IntroProtoMain} as
Theorem~\ref{thm:repEquivPsi}, Corollaries \ref{cor:allisospectralPsi}
and~\ref{cor:zeroisospectralPsi}, and Theorem~\ref{thm:ratlRegPsi}, respectively.
%in Section~\ref{sec:selfTwistConds}.
Our theorem for $\Z_p$-isospectrality is proven as Theorem~\ref{thm:pshady},
subject to a widely believed 
conjecture on Galois representations, % attached to certain integral cohomology classes,
Conjecture~\ref{conj:attachedgalreps}.

The conditions of being $\Asterisk$-shady are completely explicit. Presence or
absence of $\Asterisk$-shady can be checked efficiently using existing algorithms
for computations of Hecke characters \cite{MolinPage}. Moreover, we have
the following implications between existence of different shady characters:
\[
\xymatrix@R+1pc@C+5pc{
  *++[F-,]\txt{$\rL^2$-shady} \ar@{=>}[]!<7ex,0ex>;[r]!<-6.6ex,0ex> &
  *++[F-,]\txt{$\Omega^\bullet$-shady}\ar@{=>}[]!<7ex,0ex>;[r]!<-6.6ex,0ex> \ar@{=>}[]!<0ex,-3.5ex>;[d]!<0ex,3ex> & *++[F-,]\txt{$\Omega^0$-shady}\\
          & *++[F-,]\txt{$\cH^\bullet$-shady}\ar@{=>}[]!<7ex,1ex>;[r]!<-6.6ex,1ex>^(.75){\txt{for all $p$}} &
          *++[F-,]\txt{$\Z_p$-shady}\ar@{=>}[]!<-7ex,-1ex>;[l]!<7ex,-1ex>^(.2){\txt{for $\infty$-ly\\ many $p$}}
}
\]

It is worth noting, in particular, that our criteria for rationality of regulator quotients
are \emph{weaker} than those for $i$-isospectrality, and in fact 
in Example \ref{ex:Hnot0betti} we exhibit the following peculiar phenomenon.

\begin{introtheorem}\label{thm:IntroRatlRegQuo}
There exists a pair of closed connected orientable hyperbolic $3$-orbifolds $Y_1$, $Y_2$
with volume 5.902\ldots that are not isospectral, nor $1$-isospectral, and for which
$\dim H_1(Y_1,\R) = \dim H_1(Y_2,\R)=1$, yet $\Reg_1(Y_1)^2/\Reg_1(Y_2)^2$ is rational.
\end{introtheorem}

%
%
%
%Cheeger--Mueller; motivates the study of regulators and of torsion homology
%in isospectral pairs. When are the regulator quotients rational? Which values can they take?
%
%Talk about the hierarchy of isospectralities, explain what general shape our theorems take.
%Explain that the hypotheses of the theorems are checkable in any given situation,
%say something about implications between them (including the results of Section 6, which are conditional).
%
%THEOREM: orbifolds that are not 1-isospectral, but have positive 1-Betti numbers and rational 1-regulator quotient.

\subsection{Homology and regulators: theorems and musings}\label{sec:IntroTorsionLL}

From a number theoretic point of view, it is natural to consider the homology of
arithmetic orbifolds together with their action of Hecke operators.
Taking this additional structure into account, we considerably strengthen our results on the
homology of the~$Y_j$ sketched in Theorem~\ref{thm:IntroProtoMain}.

The ``right'' setup for Vign\'eras's construction is an ad\'elic one, in which
$Y_1=\Gamma_1\lquo X$ and $Y_2=\Gamma_2\lquo X$ are two connected components of one
orbifold $\cY$. For now we continue with a special case.
Let~$F$ be a number field, let $\Z_F$ be its ring of integers, and let~$D$ be a
division quaternion algebra over~$F$ such that
at least one infinite place of~$F$ splits in~$D$.
There is a ray class group~$C$ that parametrises left ideal classes of maximal
orders of~$D$, and thereby connected components of $\cY$;
for simplicity of the exposition, assume that~$C$ has order~$2$,
corresponding to a quadratic extension~$L$ of~$F$, so that one has $\cY=Y_1\sqcup Y_2$.
Let~$\abstracthecke_1$ denote the Hecke algebra generated by Hecke operators~$T_\fa$
for all ideals~$\fa$ of~$\Z_F$ whose class in~$C$ is trivial.
Then~$\abstracthecke_1$ naturally acts on each of the homology groups~$H_i(Y_j,\Z)$ for $j\in \{1,2\}$, and for
every prime number~$p$ we have a decomposition of~$\Z_p\otimes\abstracthecke_1$-modules

\[
  H_i(Y_j,\Z_p) \cong \bigoplus_\fn H_i(Y_j,\Z)_\fn,
\]
where the sum ranges over maximal ideals~$\fn$ of~$\abstracthecke_1$ of residue
characteristic~$p$.
We say that such a maximal ideal~$\fn$ \emph{corresponds to} a $\Z_p$-shady
character~$\automchar$ of~$L$ if for every prime ideal~$\fq$ of~$\Z_F$ that splits in~$L$,
we have~$T_{\fq} \bmod\fn =  \automchar(\fQ)+\automchar(\fQ')$, where~$\fq\Z_L =
\fQ\fQ'$. We prove the following result as Theorem~\ref{thm:pshadylocal}.

\begin{introtheorem}\label{thm:IntroTorsionLL}
  Assume Conjecture \ref{conj:attachedgalreps}.
  Let~$p$ be an prime number, and let~$\fn$ be a maximal ideal
  of~$\abstracthecke_1$ of residue characteristic~$p$ that does not correspond to
  a~$\Z_p$-shady character.
  Then for every~$i$, there is an isomorphism of~$\Z_p\otimes \abstracthecke_1$-modules
      \[
        H_i(Y_1,\Z)_{\fn} \cong H_i(Y_2,\Z)_{\fn}.
      \]
\end{introtheorem}

We could also prove an unconditional version, at the cost of replacing the
condition on~$\fn$ with a less useful one, namely $T_{\fq^2} \equiv
-\norm(\fq) \bmod \fn$ for all~$\fq$ inert in~$L$, which can be shown to be equivalent to the
previous one under Conjecture~\ref{conj:attachedgalreps}.

It is instructive to compare Theorem~\ref{thm:IntroTorsionLL} with work of
Calegari and Venkatesh.
In~\cite{torsionJL}, they study the torsion homology and regulators in the more
difficult setting of Jacquet--Langlands
pairs $(Y_1',Y_2')$ of orbifolds, which share some similarities with Vign\'eras
orbifolds.
The orbifolds~$Y_1'$ and~$Y_2'$ are not isospectral, but their spectra are closely
related. In this setting there is also a Hecke algebra~$\abstracthecke'$ generated by
Hecke operators~$T_\fp$ for maximal ideals~$\fp$ of~$\Z_F$, and acting on
all~$H_i(Y_j',\Z)$. We say that a maximal ideal~$\fm$ of~$\abstracthecke'$ is
\emph{Eisenstein} if there exist Hecke characters~$\automchar,\automchar'$ of~$F$ such
that~$T_\fp\bmod \fm = \automchar(\fp)+\automchar'(\fp)$ for all but finitely
many~$\fp$.
Calegari and Venkatesh define a certain quotient~$H_i(Y_j',\Z)^{\new}$
of~$H_i(Y_j',\Z)$ and formulate the following
conjecture~\cite[Conjecture 2.2.8]{torsionJL}.
%, in the case of hyperbolic $3$-orbifolds.

\begin{conjecture}[Calegari--Venkatesh]\label{conj:torsionJL}
  Let~$(Y_1',Y_2')$ be a Jacquet--Langlands pair of $3$-orbifolds.
  Let~$\fm$ be a non-Eisenstein maximal ideal of~$\abstracthecke'$.
  Then we have
  \[
    |H_1(Y_1',\Z)_\fm^{\new}| = |H_1(Y_2',\Z)_\fm^{\new}|.
  \]
\end{conjecture}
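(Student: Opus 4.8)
Since this is Conjecture~2.2.8 of Calegari--Venkatesh, known only under substantial extra hypotheses, I should say at the outset that I do not expect an easy argument: the only route I can see is the torsion incarnation of the Taylor--Wiles--Kisin method, in the form developed by Calegari--Geraghty and by Calegari--Venkatesh themselves. The plan is first to reduce to the case in which the residual Galois representation $\bar\rho_{\fm}\colon G_F\to\GL_2(\abstracthecke'/\fm)$ attached to $\fm$ is absolutely irreducible, with $\bar\rho_{\fm}|_{G_{F(\zeta_p)}}$ of adequate image, $p$ odd and unramified in $F$: for $p$ odd, ``$\fm$ non-Eisenstein'' ought to be precisely what guarantees irreducibility of $\bar\rho_{\fm}$, and the finitely many remaining $\fm$ -- most importantly those for which $\bar\rho_{\fm}$ is dihedral, i.e.\ induced from a character of a quadratic extension of $F$, which is exactly the phenomenon underlying the $\Z_p$-shady characters of Theorem~\ref{thm:IntroTorsionLL} -- would have to be dealt with by hand, and I would expect these to be the hard cases. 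It is worth noting the contrast with Theorem~\ref{thm:IntroTorsionLL} itself: there, isospectrality supplies a translation operator relating the homology of the two (components of one adelic) orbifold, which makes the comparison of integral homology away from the shady locus relatively formal, whereas for a Jacquet--Langlands pair no such operator is available -- the transfer is purely automorphic -- so one is forced into the heavier lifting-theorem machinery.

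I would then proceed as follows. On the quaternionic side, attached to a quaternion algebra $D'$ ramified at all real places, split at the complex place, and ramified at a finite set $S$ of finite primes, set up the global Galois deformation ring $R$ with the special (Steinberg-type) local condition at each $\fp\in S$ and the minimal condition elsewhere, and run a derived patching argument: since the defect $\ell_0$ here equals $1$, one patches the complexes $C_\bullet(Y_2',\Z_p)_{\fm}$ rather than modules, obtaining a perfect complex $M_\infty$ over a power series ring $R_\infty=R[[x_1,\dots,x_q]]$ whose homology is, up to the ambiguity coming from $\ell_0=1$, free over $R_\infty$; combined with Poincar\'e duality to separate the two interesting degrees, this pins down $|H_1(Y_2',\Z)_{\fm}|$ in terms of $R_\infty$. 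On the $\GL_2$ side, identify $H_1(Y_1',\Z)_{\fm}^{\new}$ with the quotient of $H_1$ at the relevant level on which the degeneracy (``oldform'') maps at the primes of $S$ vanish; integrally this new quotient is governed by the cokernels of those degeneracy maps, and to show that these cokernels agree with what the local deformation conditions predict one needs Ihara's lemma at each $\fp\in S$ -- surjectivity of the sum of degeneracy maps on $\fm$-torsion homology -- together with Mazur's principle for the level-lowering direction. One then patches the $\GL_2$ side against the \emph{same} $R_\infty$, which is legitimate because Jacquet--Langlands is precisely the statement that ramification of $D'$ at $\fp$ corresponds to the Steinberg/special local condition on the Galois side; a modularity lifting theorem of $R=\hecke$ type, together with the freeness above, then shows that the two patched complexes have equal length over every Artinian quotient of $R_\infty$ through which they factor. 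Descending to $\Z_p$, one reads off $|H_1(Y_1',\Z)_{\fm}^{\new}|=|H_1(Y_2',\Z)_{\fm}|=|H_1(Y_2',\Z)_{\fm}^{\new}|$, the last equality holding because the $D'$-side has no old part at the primes of $S$.

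The hard part will be Ihara's lemma: for torsion homology at non-minimal level over a field with a complex place this is not known in anything like the generality required here, in contrast to the classical case over $\Q$ (Ribet's lemma), and without it one obtains only a divisibility $|H_1(Y_1',\Z)_{\fm}^{\new}|\mid|H_1(Y_2',\Z)_{\fm}|$ rather than the desired equality. Tangled up with this is the fact that the correct local deformation condition at a prime $\fp\in S$ is genuinely delicate when $\bar\rho_{\fm}|_{G_{F_\fp}}$ is scalar (the ``Steinberg versus unramified twist'' ambiguity): making the local framed deformation ring and the homology at the corresponding congruence level have matching -- not merely comparable -- lengths at such $\fp$ requires careful local work. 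These two points, together with the dihedral/shady $\fm$ flagged above, are where I expect essentially all the difficulty to be concentrated; the rest is by now fairly standard bookkeeping.
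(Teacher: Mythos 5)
This statement is a \emph{conjecture}, not a theorem of the paper: it is reproduced verbatim from Calegari--Venkatesh~\cite[Conjecture~2.2.8]{torsionJL} purely for motivation and comparison with Theorem~\ref{thm:IntroTorsionLL}, and the paper neither proves it nor claims to. (The text immediately following the conjecture makes this explicit, noting that Calegari--Venkatesh themselves only have partial results averaged over all~$\fm$.) So there is no proof in the paper to compare your attempt against, and your attempt should not be read as one either: what you have written is a strategy outline, and you correctly flag that it does not close.

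That said, the outline itself is reasonable as a conditional route, and you have identified the two genuine obstructions in essentially the right places. The central gap is, as you say, Ihara's lemma for torsion homology of arithmetic hyperbolic $3$-orbifolds at non-minimal level: without it the patching argument yields at best a divisibility $|H_1(Y_1',\Z)_\fm^{\new}| \mid |H_1(Y_2',\Z)_\fm|$ rather than the conjectured equality. A second gap you implicitly wave at but should name more sharply: the patching argument in defect~$\ell_0=1$ only controls \emph{Euler characteristics} of the patched complex, and extracting the individual group $H_1$ (as opposed to an alternating product over degrees) requires an additional input such as vanishing of homology outside the expected range after localisation at~$\fm$, or a numerical coincidence from Poincar\'e duality; you gesture at duality but do not spell out why it suffices. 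Finally, the reduction to ``non-Eisenstein $\Leftrightarrow$ residually irreducible'' for $p$ odd is correct, but the dihedral cases you set aside are precisely where Theorem~\ref{thm:IntroTorsionLL} of this paper says something nontrivial in the Vign\'eras setting, and there is no analogous mechanism known in the Jacquet--Langlands setting, so ``deal with by hand'' is not currently a proof. None of these are defects in your understanding of the problem; they are reasons the statement remains a conjecture.
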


In addition, they prove several partial results~\cite[Section
6.8]{torsionJL} towards their conjecture, but only when averaging over all
maximal ideals~$\fm$, hence forgetting the Hecke action.
The Calegari--Venkatesh conjecture is an analogue for torsion homology of a
famous theorem from the theory of automorphic representations, the
Jacquet--Langlands correspondence (see Theorem~\ref{thm:JL}).
We now explain how Theorem~\ref{thm:IntroTorsionLL} can be seen as a torsion
analogue of another automorphic phenomenon, studied by Labesse and
Langlands~\cite{LabesseLanglands}. The relationship between
Vign\'eras's construction of isospectral orbifolds and the work of
Labesse--Langlands was first exposed by Rajan~\cite{Rajan}.
%Let~$\adel_F$ denote the ring of ad\`eles of~$F$.
The Langlands programme postulates the existence of a compact group~$L_F$ and
conjectures that, to each
cuspidal automorphic representation~$\automrep$ of~$D^\times$, one should be
able to attach a
continuous irreducible representation, called a Langlands parameter,
\[
  \varphi = \varphi_{\automrep} \colon L_F\to \GL_2(\CC),
\]
such that $\varphi_{\automrep}\cong \varphi_{\automrep'}$ if and only
if~$\automrep\cong \automrep'$. Moreover,
each such~$\automrep$ appears with multiplicity~$1$ in
the space of automorphic forms (see Theorem~\ref{thm:MultOne}).
In contrast, to each cuspidal automorphic representation~$\automrep$
of the kernel~$D^1$ of the reduced norm~$D^\times\to F^\times$,
one should be able to attach an irreducible Langlands parameter
\[
  \varphi \colon L_F \to \PGL_2(\CC),
\]
but several
non-isomorphic~$\automrep$ can have isomorphic parameters~$\varphi$: they are said
to form an $L$-packet. While the existence of the group~$L_F$ and the
parameters~$\varphi$ are still conjectural, Labesse and Langlands gave an ad-hoc
definition of $L$-packets for~$D^1$, realised that sometimes not
all admissible~$\automrep$ in the same $L$-packet are automorphic, and proved a formula for
the multiplicity of each~$\automrep$ in the space of automorphic forms.
This manifests concretely as follows. To every~$\automrep = \automrep_\infty
\otimes \automrep_f$ corresponds a Casimir eigenvalue
(coming from~$\automrep_\infty$) and system of eigenvalues for the Hecke
operators of~$D^1$ (coming from~$\automrep_f$), and these eigenvalues are the
same for every~$\automrep$ in an $L$-packet. However, 
given a compact open subgroup~$K_f$ of the points of~$D^1$ over
the finite adeles, the dimension of the space~$\automrep_f^{K_f}$
of fixed points under~$K_f$ may depend on~$\automrep$, even when
$\automrep$ varies inside a fixed $L$-packet.
This may result in the system of eigenvalues attached to a given $L$-packet
appearing with different multiplicities in the $\rL^2$-space attached to $\Gamma$ for the various arithmetic
groups~$\Gamma = K_f \cap D^1$.
If the distributions of the automorphic multiplicities in $L$-packets were
completely random, Vign\'eras pairs of orbifolds would never be isospectral.
However, as Labesse and Langlands proved \cite[Proposition
7.2]{LabesseLanglands} (see also \cite[Theorem 4]{Rajan}), this is far from true.

\begin{theorem}[Labesse--Langlands]\label{thm:LabesseLanglands}
  Suppose~$\varphi\colon L_F \to \PGL_2(\CC)$ is not induced from a character of
  an index~$2$ subgroup. Then the automorphic multiplicity of every~$\automrep$ in the
  $L$-packet of~$\varphi$ is the same.
\end{theorem}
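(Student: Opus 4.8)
The plan is to derive the statement from the multiplicity formula of Labesse--Langlands for the inner form $D^1$ of $\mathrm{SL}_2$, which they prove via the elliptic part of the stable trace formula together with the character identities of $L$-indistinguishability, and which I take as an external input. Attached to $\varphi$ there is a finite abelian group $S_\varphi=\pi_0(Z_\varphi)$, where $Z_\varphi\subseteq\PGL_2(\CC)$ is the centraliser of the image of $\varphi$; for each place $v$ one has a localisation homomorphism $S_\varphi\to S_{\varphi_v}$, $s\mapsto s_v$, and, for each member $\automrep_v$ of the local packet $\Pi_{\varphi_v}$, a character $\langle\,\cdot\,,\automrep_v\rangle$ of $S_{\varphi_v}$. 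The formula expresses the automorphic multiplicity of $\automrep=\bigotimes_v\automrep_v$ in the global packet of $\varphi$ as
\[
  m(\automrep)=\frac{1}{|S_\varphi|}\sum_{s\in S_\varphi}\ \prod_v\,\langle s_v,\automrep_v\rangle .
\]
Since each $\langle\,\cdot\,,\automrep_v\rangle$ is a group homomorphism, the $s=1$ term equals $\prod_v\langle 1,\automrep_v\rangle=1$ irrespective of $\automrep$; so the whole statement reduces to proving that $S_\varphi$ is trivial, because then $m(\automrep)=1$ for every $\automrep$ in the packet.

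Identifying $S_\varphi$ is the first step, and it can be carried out unconditionally --- no appeal to the conjectural group $L_F$ is needed. For the packets relevant when $D$ is a division algebra the parameter $\varphi$ is irreducible, so it underlies a two-dimensional irreducible representation $\widetilde\varphi$ (the parameter of a lift of a member $\automrep$ of the packet to $\GL_2$ or to $D^\times$), and by Schur's lemma $Z_\varphi$ --- hence $S_\varphi$ --- is canonically the group $\{\eta:\eta^2=1,\ \widetilde\varphi\otimes\eta\cong\widetilde\varphi\}$ of quadratic self-twists of $\widetilde\varphi$. Classically, a nontrivial quadratic $\eta$ with $\widetilde\varphi\otimes\eta\cong\widetilde\varphi$ exists if and only if $\widetilde\varphi=\mathrm{Ind}_L^F\chi$ for the quadratic extension $L$ cut out by $\eta$ and some character $\chi$ of $L$ (restrict $\widetilde\varphi$ to the index-$2$ subgroup corresponding to $L$ and apply Schur's lemma) --- that is, if and only if $\varphi$ is induced from a character of an index-$2$ subgroup. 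Thus the hypothesis of the theorem says exactly that $S_\varphi$ is trivial. (Should one wish to include reducible $\varphi$: such a $\varphi$ is conjugate into a maximal torus $T\subseteq\PGL_2(\CC)$, and $Z_\varphi=T$ is connected unless the image of $\varphi$ lies in the $2$-torsion of $T$, in which case $\varphi$ is a quadratic character and equals $\mathrm{Ind}_L^F\mathbf 1$, again induced from an index-$2$ subgroup; in any case such parameters do not occur in the cuspidal spectrum when $D$ is a division algebra.)

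With $S_\varphi=1$ established, the displayed formula collapses to $m(\automrep)=1$ for every $\automrep$ in the packet of $\varphi$, which is the assertion, with the common value identified as $1$. The triviality of $S_\varphi$ also neutralises the features special to the inner form: any normalising sign character on $S_\varphi$ occurring in the inner-form version of the formula is automatically trivial, and if the global packet of $\varphi$ for $D^1$ is empty --- which can happen when some $\varphi_v$ fails to be a discrete parameter at a place where $D$ ramifies --- then the statement is vacuous.

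I expect the genuine obstacle to be precisely the input I have set aside: the multiplicity formula for $\mathrm{SL}_2$ and its inner forms is itself a substantial theorem, resting on the stabilisation of the trace formula, endoscopic transfer for the datum attached to a quadratic extension, and the ensuing character identities between $L$-packets --- this is the real content of \cite{LabesseLanglands}, of which the present theorem is a corollary. Once it is granted, what remains is routine; the only points that repay attention are the bookkeeping in the identification of $S_\varphi$ (in particular, reading the degenerate case ``$\varphi$ a quadratic character'' as ``induced from the trivial character of an index-$2$ subgroup''), and checking that the version of the formula one invokes is the one valid for the quaternionic group $D^1$, with its twisted local pairings at the places where $D$ ramifies, rather than only for split $\mathrm{SL}_2$.
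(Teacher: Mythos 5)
The paper does not prove this theorem; it cites it, pointing to \cite[Proposition 7.2]{LabesseLanglands} and \cite[Theorem 4]{Rajan}. Your sketch is the standard derivation from the Labesse--Langlands multiplicity formula, and it is correct: you rightly identify $S_\varphi$ with the group of quadratic self-twists of a lift $\widetilde\varphi$ (the centraliser of the image of $\varphi$ in $\PGL_2(\CC)$ surjects onto the self-twist group with trivial kernel, since the scalar centraliser of $\widetilde\varphi$ in $\GL_2(\CC)$ dies in $\PGL_2$), and you rightly read the hypothesis ``$\varphi$ not induced from an index-$2$ subgroup'' as exactly ``$S_\varphi$ trivial'', whereupon the multiplicity formula collapses to $m(\automrep)=1$ for every $\automrep$ in the packet. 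Your caveats about the inner form $D^1$ (signs trivialising when $S_\varphi=1$, empty packets making the claim vacuous) are the right ones to flag. Since the substantial content of the statement is the multiplicity formula itself, which you take as an external black box, there is no gap in what you have written relative to what the paper assumes; you and the paper are both deferring to the same source for the hard analytic input, and your reduction to it is sound.
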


This strong restriction allowed Rajan to prove \cite[Theorem 2]{Rajan} the
representation equivalence of many orbifolds generalising Vign\'eras's construction.
Returning to torsion homology, let~$p>2$ be a prime number, and let~$G_F$ be the
absolute Galois group of~$F$.
For every maximal ideal~$\fn$ of~$\abstracthecke_1$ with residue field~$\F$
of characteristic~$p$, there should conjecturally exist a continuous semisimple Galois
representation
\[
  \galrep \colon G_F \to \GL_2(\F)
\]
such that one has~$\Tr \galrep(\Frob_\fq)
\equiv T_\fq \bmod \fn$ for almost all~$\fq$ whose class in~$C$ is trivial, see Conjecture
\ref{conj:attachedgalreps}.
In fact, the maximal ideal~$\fn$ does not completely determine~$\galrep$ but it
completely determines its projectivisation
\[
  P\galrep \colon G_F \to \PGL_2(\F).
\]
One may think of~$P\galrep$ as the ``Langlands parameter'' of the ``$L$-packet''
corresponding to~$\fn$. In addition, the condition that~$\fn$
correspond to a $\Z_p$-shady character~$\automchar$ of~$L$ is equivalent
to~$\galrep\cong \Ind_{G_F/G_L}\galchar$, where~$\galchar$ corresponds
to~$\automchar$ via class field theory. This makes
Theorem~\ref{thm:IntroTorsionLL} a torsion analogue of
Theorem~\ref{thm:LabesseLanglands}.
More generally, Labesse and Langlands prove a formula for the automorphic
multiplicity of every~$\automrep$.
We think that this naturally leads to a very interesting question.

\begin{question}
  Can one formulate a torsion analogue of the full Labesse--Langlands
  multiplicity formula? Can one prove it?
\end{question}

We also obtain a Hecke-equivariant refinement of our results on the $p$-adic
valuation of regulator ratios. It may seem difficult to make sense of it, as
regulators are only numbers and therefore cannot afford an action of the Hecke
algebra. One might hope to have a factorisation of the form~$\Reg_i(Y_j) =
\prod_\fn\Reg_i(Y_j)_\fn$, but this is unlikely to be possible, as regulators
come from constructions over~$\R$, while decompositions according to maximal
ideals of~$\abstracthecke_1$ are $p$-adic in nature.
However, we show that such a decomposition exists for the ratios of regulators,
under conditions ensuring that they are rational numbers.
More precisely, to certain modules~$M$ we attach a number~$\cC(M)$ that we call
a \emph{regulator constant} by analogy with \cite{tamroot}, satisfying
\[
  \cC(H_i(\cY,\Z)) \in \Q^\times,
\]
and for every maximal ideal~$\fn$ of~$\abstracthecke_1$ of residue characteristic~$p$,
\[
  \cC(H_i(\cY,\Z)_\fn) \in \Q_p^\times/(\Z_p^\times)^2.
\]
We will provide more details on our construction of regulator constants in
Section~\ref{sec:IntroMethodReg}. The following theorem is a combination of
Lemma \ref{lem:RegConstFormalism}, Theorem \ref{thm:ratlRegPsi}, and Theorem \ref{thm:pshadylocal}.

\begin{introtheorem}\label{thm:IntroRegLL}
  Assume that there is no~$\cH^\bullet$-shady character of~$L$. Then
  the following assertions hold.
  \begin{enumerate}[leftmargin=*,label={\upshape(\arabic*)}]
  \item\label{item:IntroRegLLRegConst} For
    all~$i$ we have
    \[
      \frac{\Reg_i(Y_1)^2}{\Reg_i(Y_2)^2} = \cC(H_i(\cY,\Z)).
    \]
  \item\label{item:IntroRegLLProd} Let $p$ be a prime number. Then for all~$i$ we have
    \[
      \cC(H_i(\cY,\Z))
      \equiv \prod_\fn\cC(H_i(\cY,\Z)_\fn)
      \bmod (\Z_p^\times)^2,
    \]
    where the product ranges over maximal ideals~$\fn$ of~$\abstracthecke_1$ of residue
    characteristic~$p$.
  \item\label{item:IntroRegLLTriv} Let $p>2$ be a prime number. Assume, in addition, Conjecture \ref{conj:attachedgalreps}.
    Let~$\fn$ be a maximal ideal
    of~$\abstracthecke_1$ of residue characteristic~$p$ that does not correspond to
    a~$\Z_p$-shady character.
    Then for all~$i$ we have
    \[
      \cC(H_i(\cY,\Z)_\fn) \in \Z_p^\times.
    \]
  \end{enumerate}
\end{introtheorem}

The definition of~$\cC(H_i(\cY,\Z)_\fn)$ involves the choice of arbitrary non-degenerate
$\Q_p$-valued pairings on the free parts of~$H_i(Y_j,\Z)_\fn$ satisfying certain properties.
%arises as the quotient of the
%determinants of the Gram
%matrices of~$\Z_p$-bases of the free parts of~$H_i(Y_j,\Z)_\fm$ with
%respect to a non-degenerate $\Q_p$-valued pairing; in fact, there may be several
%suitable such pairings.
We speculate that there should exist a canonical construction of such pairings,
leading to a notion of $p$-adic regulators~$\Reg_{i,p}(Y_j)\in\Q_p^\times/(\Z_p^\times)^2$,
compatible with localisation at maximal ideals of~$\abstracthecke_1$ and such that one has
\[
  \frac{\Reg_{i,p}(Y_1)^2}{\Reg_{i,p}(Y_2)^2} \equiv \cC(H_i(\cY,\Z))
  \bmod (\Z_p^\times)^2.
\]
%
%\marginpar{Maybe we should keep these refs for ourselves to study, not sure if they
%are worth mentioning here.}
%Maybe this can be achieved along the lines of [TODO cite
%\url{https://arxiv.org/abs/2306.15441} but maybe there is a better ref?
%\url{https://arxiv.org/abs/2312.16706} ?? \url{https://zbmath.org/0827.11025} ??
%Bellaïche's eigenbook ??].
%
\begin{question}
  Can one define a notion of $p$-adic regulator in this context? Of $p$-adic analytic torsion?
  Is there a $p$-adic Cheeger--M\"uller formula?
\end{question}

Finally, note that the Cheeger--M\"uller formula merely provides a motivation for our results
on torsion homology and regulators, we do not use this formula in our proofs.
In particular our methods are very different from those of~\cite{torsionJL}.
What makes the Vign\'eras case more accessible than the Jacquet--Langlands case
is that the orbifolds~$Y_1$ and~$Y_2$ are commensurable, unlike Jacquet--Langlands pairs.
The next section explains how we exploit this commensurability.

%$\Z_p$-homology is Hecke-equivariantly isomorphic; factorisation of regulator quotient by maximal ideals of the Hecke algebra;
%$\Z_p$-shady for infinitely many $p$ implies $\cH^\bullet$-shady;
%compare with Labesse--Langlands (Proposition 7.2; see also Theorem 4 in Rajan. Here we cannot really talk about
%self-twists yet, we would have to first talk about the method) in the following sense: Hecke eigenvalues
%determine the projective Galois representation, but not the Galois representation itself (in char 0 this connection was realised by Rajan)
%and Calegari--Venkatesh.

\subsection{Our approach to isospectrality}\label{sec:IntroMethod}
The basic idea of our approach is applicable much more widely than
in the setting sketched in the previous subsections. In Section \ref{sec:Vigneras},
where we explain the general setup, we take $D$ to be any division
algebra over an arbitrary number field $F$. An ad\'elic construction then yields
a closed orbifold $\cY$, and the only restriction on $D$ is that $\cY$ be positive-dimensional.
One has a Hecke algebra $\abstracthecke$
acting on, say, the space $\Omega^i(\cY)$ of differential $i$-forms on $\cY$,
and commuting with the action of the Laplace--de Rham operator $\Delta$.
That space decomposes as
\[
  \Omega^i(\cY)=\bigoplus_{c\in C}\Omega^i(Y_c)
\]
with the direct summands corresponding to the connected components $Y_c$ of
$\cY$, indexed by a certain class group~$C$.
The Hecke algebra is generated by elements $T_{\fp}$ indexed by maximal ideals $\fp$
of $\Z_F$. The operators $T_{\fp}$ permute the direct summands~$\Omega^i(Y_c)$ of $\Omega^i(\cY)$,
sending~$\Omega^i(Y_c)$ to~$\Omega^i(Y_{[\fp]c})$ where~$[\fp]$ denotes the class of $\fp$ in~$C$.
In particular, the algebra $\abstracthecke_1$ introduced in
Subsection~\ref{sec:IntroTorsionLL} is the subalgebra of $\abstracthecke$ consisting of
those Hecke operators that preserve all direct summands in the above decomposition.
%For example if $\cY$ has exactly two connected components
%$Y_1$, $Y_2$, then there is a quadratic extension $L/F$ that is unramified at all
%finite places such that $T_{\fp}$ preserves the summands $\Omega^i(Y_j)$ if and
%only if $\fp$ is split in $L/F$, and otherwise it swaps them.

Suppose once again, for simplicity, that we have $\cY=Y_1\sqcup Y_2$, and let
$L/F$ be the quadratic extension corresponding to~$C$, so that $T_{\fp}$
preserves the direct summands~$\Omega^i(Y_j)$ if and only if $\fp$ splits in $L/F$.
Fix $\lambda\in \R$. The algebra $\abstracthecke$ acts on the $\lambda$-eigenspace
$\Omega^i(\cY)_{\Delta=\lambda}$.
If there exists a Hecke operator $T\in \abstracthecke$ inducing a map
$$
T\colon \Omega^i(Y_1)_{\Delta=\lambda}\to\Omega^i(Y_2)_{\Delta=\lambda}
$$
and acting invertibly on $\Omega^i(\cY)_{\Delta=\lambda}$, then that Hecke operator witnesses
equal multiplicity of $\lambda$ in the spectra of $Y_1$ and $Y_2$ by defining an explicit
isomorphism between the respective $\Delta$-eigenspaces.
A priori one might imagine that such an invertible Hecke operator can easily fail
to exist in such a way that for every $\fp$ that is inert in $L/F$,
there is some line in $\Omega^i(\cY)_{\Delta=\lambda}$ that is annihilated by $T_{\fp}$. If these
different lines had nothing to do with each other, then it would be completely
unclear how to control when this happens. However, we show in Proposition
\ref{prop:littleExercise} that if no Hecke operator $T\in \abstracthecke$ that sends $\Omega^i(Y_1)_{\Delta=\lambda}$
to $\Omega^i(Y_2)_{\Delta=\lambda}$ acts invertibly, then in fact there is a \emph{single}
line in $\Omega^i(\cY)_{\Delta=\lambda}$ that is annihilated by $T_{\fp}$ for every prime $\fp$
that is inert in $L/F$. This is the first main result of Section \ref{sec:gradedAlg}.

We deduce that if no Hecke operator that maps $\Omega^i(Y_1)_{\Delta=\lambda}$
to $\Omega^i(Y_2)_{\Delta=\lambda}$ acts invertibly,
then there is a simultaneous eigenvector in $\Omega^i(\cY)_{\Delta=\lambda}$ under all the
Hecke operators such that ``half'' of the Hecke operators $T_{\fp}$ have eigenvalue $a_{\fp}=0$.
In other words, we obtain a Hecke eigenvalue system $(a_{\fp})_{\fp}$ such that for all
$\fp$ one has 
$$
  a_{\fp} = \chi(\fp)a_{\fp},
$$
where $\chi\colon C\to \CC^\times$ is the quadratic Hecke character
attached by class field theory to the unique non-trivial character of the Galois group $\Gal(L/F)$.
This \emph{self-twist condition} might remind the reader of coefficients $a_p$ attached to an 
elliptic curve with complex multiplication. The associated Galois representation
is then induced from a linear character of the absolute Galois group of the
quadratic extension~$L$. 

It is at this point in our analysis that we specialise to the case of quaternion algebras.
In this case, as the example of elliptic curves suggests,
%the Jacquet--Langlands transfer of
the automorphic representation of $\GL_2$ over $F$ attached
to the Hecke eigenvalue system $(a_{\fp})_{\fp}$ as above is
the automorphic induction of a Hecke character
from the quadratic extension~$L/F$. In Section \ref{sec:isoconds}
we use the explicit classification of representations of $\GL_2$ to arrive at precise
conditions on this Hecke character, and thus at our definitions of $\Asterisk$-shady
characters, and to prove various instances of Theorem \ref{thm:IntroProtoMain}.

\subsubsection*{Technical discussion: algebraic axiomatisation}
In Section \ref{sec:gradedAlg} we work in a completely abstract algebraic setting:
the class group parametrising
the connected components of $\cY$ is replaced by an arbitrary finite abelian group $C$,
the Hecke algebra $R\otimes_{\Z}\abstracthecke$ for a suitable ring $R$ of scalars becomes a
commutative $R$-algebra $\hecke$ graded by $C$, the space
that the Hecke algebra acts on, such as $\Omega^i(\cY)$ or $H_i(\cY)$, is just a graded $\hecke$-module.
Also in Section \ref{sec:gradedAlg} the reader will encounter a certain group homomorphism
$\nu\colon W\to C$, which could benefit from some motivating comments. Some of the components
of $\cY$ correspond to orders in the division algebra that are conjugate to each other,
so that the components are isometric to each other. To obtain the strongest results,
we do not limit ourselves to Hecke operators that map $\Omega^i(Y_1)$ to $\Omega^i(Y_2)$,
but consider all those mapping $\Omega^i(Y_1)$ to $\Omega^i(Y)$ for any component $Y$ isometric to $Y_2$.
Formalising this is not just a simple matter of passing to the quotient by this
equivalence relation, since such a quotient does not carry a well-defined $\hecke$-action.
The purpose of $C_{\iso}=C/\nu(W)$ is to parametrise connected components up to this equivalence.
%The conditions on $\chi\in \Cisodual=\Hom(C_{\iso},\CC^\times)$ that appear in Proposition \ref{prop:littleExercise}
%and in many results in Section \ref{sec:Vigneras} pick out elements of $\abstracthecke$
%with the right action on the group of components.
If $\Cisodual$ were replaced by
$\Cdual$ in all these results, then we would only pick out the elements of $\hecke$
that map $\Omega^i(Y_1)$ to $\Omega^i(Y_2)$ and would get more restrictive sufficient
conditions for isospectrality.

\subsection{Comparison with the Sunada method}\label{sec:IntroSunada}
Sunada~\cite{Sunada} constructs pairs of $\Omega^\bullet$-isospectral orbifolds as follows:
let $M$ be a Riemannian orbifold, let $G$ be a finite group acting
on $M$ by isometries, and let $H$, $H'\leq G$ be two subgroups such that the linear representations
$\CC[H\lquo G]$ and $\CC[H'\lquo G]$ of $G$ are isomorphic. Then the quotients $H\lquo M$ and
$H'\lquo M$ are $\Omega^\bullet$-isospectral.

The analogy between the permutation representations $\CC[H\lquo G]$ and the unitary
representations $\rL^2(\Gamma\lquo \PGL_2(\CC))$ has led many authors
to view Vign\'eras's method as a special case of an extension of Sunada's. However, as
Theorems \ref{thm:IntroSmallIso} and \ref{thm:IntroZeroNotOne} illustrate,
this framing does not do justice to the flexibility of Vign\'eras's construction, which
is capable of producing isospectral orbifolds without isomorphisms of $\rL^2$-spaces.
Instead, in this paper we make the case that the parallel between the two methods
is of a different nature: as we explained in Section \ref{sec:IntroMethod}, our conditions
ensure that the isospectralities in Vign\'eras's construction are realised
by Hecke operators, which one can view as so-called \emph{transplantation} maps.
It is well known that Sunada's construction
can be formulated in such a way that the same is true there \cite[\S 2.2]{GordonSurvey}.

We claimed earlier that Sunada's construction cannot yield connected hyperbolic $3$-orbifolds
of smaller volume than that appearing in Theorem \ref{thm:IntroSmallIso}. We are now
in a position to explain this. The orbifolds $H\lquo M$ and $H'\lquo M$ as above cover a common
orbifold, namely $G\lquo M$, with degree $[G:H] = [G:H']$. The smallest hyperbolic
$3$-orbifold has volume $0.039\ldots$ \cite{MarshallMartin}. Moreover, it is known that among all finite groups
$G$ and non-conjugate subgroups $H$, $H'$ such that $\CC[H\lquo G]$ and $\CC[H'\lquo G]$ are isomorphic
$G$-representations, the smallest index $[G:H]$ is $7$ 
(realised by two subgroups of
$G=\GL_3(\F_2)$) \cite[Theorem 3]{Perlis}. Thus the smallest orbifolds that Sunada's method could produce have
volume at least $7\cdot 0.039\ldots = 0.273\ldots$.

In \cite{us1} we showed that the regulator quotients of Sunada-isospectral
manifolds $H\lquo M$, $H'\lquo M$ are rational numbers, and that for all $i\in \Z_{\geq 0}$
and all prime numbers $p\nmid\#G$ we have
\[
  \ord_p\left(\frac{\Reg_i(H\lquo M)^2}{\Reg_i(H'\lquo M)^2}\right) = 0
  \text{ and }
  \ord_p\left(\frac{\#H_i(H\lquo M)_{\tors}}{\#H_i(H'\lquo M)_{\tors}}\right) = 0.
\]
If $Y_1$, $Y_2$ are orbifolds that come out of Vign\'eras's construction,
then they typically do not sit in a common finite covering, so there is no obvious
analogue of the condition $p\nmid \#G$ that would exclude all but finitely many
primes from contributing to the regulator quotients.

In Theorem \ref{thm:finiteSetS} we find an analogue of ``the set of prime divisors of $\#G$''
in the Vign\'eras setting.

\begin{introtheorem}\label{thm:IntroFiniteSetS}
  Assume Conjecture \ref{conj:attachedgalreps}.
  Then there is an explicit set $S$ of prime numbers such that:
  \begin{itemize}[leftmargin=*]
    \item every prime number $p$ for which there exists a $\Z_p$-shady character is contained in $S$, and
    \item the set $S$ is infinite if and only if there exists a $\cH^\bullet$-shady character.
  \end{itemize}
\end{introtheorem}

\subsection{Our approach to rationality of regulator quotients}\label{sec:IntroMethodReg}
As we explained in Section \ref{sec:IntroMethod}, we obtain conditions under
which the spaces $H_i(Y_1,\R)$ and $H_i(Y_2,\R)$ are not merely abstractly isomorphic,
but such that there are isomorphisms given by Hecke operators. In particular,
they preserve the rational structure. That by itself is still not enough to show
that $\Reg_i(Y_1)^2/\Reg_i(Y_2)^2$ is rational. The crucial additional property
is that the Hecke algebra is stable under adjoints
with respect to the harmonic forms pairing, which is used to define the regulators.
This is sufficient to imply rationality of regulator quotients and allows us to
analyse them.

In order to show this, we once again axiomatise the situation purely algebraically.
In Section \ref{sec:regconstHecke} we first describe the very general setup.
As a by-product, we reprove in a Hecke-algebraic way a foundational result of
the Dokchitsers on so-called regulator constants,
Theorem \ref{thm:Dokchitsers}, which we used in \cite{us1,us2} to analyse the
regulator quotients of Sunada-isospectral manifolds.

Then in Section \ref{sec:polarisations} we prove that the existence of a suitable
pairing on a graded module $M$ with the adjointness property described above is equivalent
to the existence of such a pairing that takes values in $\Q$.
The crucial ingredient in this proof is a nice criterion, Proposition \ref{prop:polarisation},
for existence of pairings with the required properties.
The following is a special case. We refer to Section \ref{sec:gradedAlg} for the
definitions of the standard algebraic terms used here.
\begin{introprop}\label{prop:IntroPolarisation}
  Let $C$ be a finite abelian group, let $\heckebig=\bigoplus_{c\in C} \heckebig_c$ be a
  $C$-graded commutative reduced finite-dimensional $\Q$-algebra, and
  let $M=\bigoplus_{c\in C}M_c$ be a finitely generated graded $\heckebig$-module.
  Let $L$ be a field of characteristic $0$.
  Then the following are equivalent:
  \begin{itemize}[leftmargin=*]
    \item there exists a non-degenerate $L$-valued bilinear pairing on $M$ with respect to
      which all $T\in \heckebig$ are self-adjoint and such that for all $c\neq c'\in C$
      the homogeneous components~$M_c$ and $M_{c'}$ are orthogonal to each other;
    \item for every $c\in C$ the $(L\otimes \heckebig_1)$-module
      $L\otimes M_c$ is self-dual.
  \end{itemize}
\end{introprop}
The descent of pairings from $\CC$ to $\Q$ easily follows from this and the Deuring--Noether
Theorem, see Proposition \ref{prop:descend-polarisation}.

In Section \ref{sec:gradedregconst} we define regulator quotients $\cC(M)$ in this
abstract setting, a~priori with respect to a given pairing,
but then we use the formalism of Section~\ref{sec:regconstHecke} to show that
in fact the values of such regulator quotients do not depend on the pairing.
In conjunction with the previously mentioned existence of $\Q$-valued pairings
this proves rationality of regulator quotients of Vign\'eras pairs.
Moreover, this formalism is very flexible, and allows us to prove Theorem \ref{thm:IntroRegLL}.

As a by-product of our approach, we have stumbled upon a curious connection between
fields generated by Hecke eigenvalues and regulator quotients. We do not pursue this
connection seriously in this paper, but Proposition \ref{prop:regcst-squares} is an
example of this phenomenon. The following is a special case.
\begin{introprop}\label{prop:IntroHeckeField}
  Suppose that $\cY$ has exactly two connected components, $Y_1$ and~$Y_2$,
  suppose that $\dim H_1(Y_1,\CC)=\dim H_1(Y_2,\CC)=1$, and suppose that there
  is no $\cH^\bullet$-shady character. Then the field generated by the Hecke eigenvalues of
  $\abstracthecke$ acting on~$H_1(\cY,\CC)$ is $\Q(\Reg_1(Y_1)/\Reg_1(Y_2))$.
\end{introprop}

It was the observation of this numerical coincidence that suggested to us that
it should be possible to expand the Dokchitsers' formalism of regulator constants
to Hecke algebras, and led us to our Hecke operator approach to isospectrality.

\subsection{Comparison with other approaches to Vign\'eras pairs}
In most of the existing work devoted to Vign\'eras's construction~\cite{vigneras}, the
isospectrality is derived from the trace formula via so-called
\emph{selectivity} conditions~\cite{VoightLinowitz,Linowitz}, from which one
determines which~$\Gamma_i$ intersect a given conjugacy class.
The trace formula suggests a kind of duality between this approach and ours: in the
selectivity method one studies the geometric side, while we study the spectral side.
In Corollary~\ref{cor:LinowitzVoight}, we recover a sufficient criterion for
representation equivalence that had been obtained from the selectivity method.
We believe that one might be able to obtain results on finer notions of
isospectrality, e.g. $\Omega^\bullet$- and $\Omega^0$-isospectrality, from the
selectivity method and a delicate analysis of the trace formula. It would be
interesting to work out such criteria explicitly. With our approach, working out
conditions for different notions of isospectrality amounts to different but
routine local computations. The input from the trace formula required for our
method is packaged in Langlands's automorphic induction theorem.

An alternative approach, due to Rajan~\cite{Rajan}, is to use the
Labesse--Langlands multiplicity formula. In Corollary~\ref{cor:Rajan}, we
recover a variant of a sufficient criterion for representation equivalence that
had been obtained by Rajan. Although the Labesse--Langlands work has so far only
been applied to representation equivalence, it is likely that one could recover
the results of Section~\ref{sec:isoconds} on finer notions of isospectrality
with this approach.

Our approach has two advantages:
firstly, it allows us to realise the isospectralities through direct identification
of eigenfunctions rather than just a numerical comparison of multiplicities;
and secondly and most importantly for our purposes, it applies to the torsion setting
thanks to the Hecke action on $H_i(\cY,\Z_p)$.
%We do not see how to apply the selectivity method or the Labesse--Langlands
%multiplicity formula to integral homology and~$\Z_p$-isospectrality.

Finally, we expect that one might be able to extract some information on the algebraicity
or even rationality of the regulator quotients by relating them to special values of $L$-functions.
For an analogous observation in the Jacquet--Langlands setting see \cite[\S 6.5]{torsionJL}.
However, it seems hard to obtain precise information on the primes that enter
or Hecke equivariance properties along the lines of Theorem~\ref{thm:IntroRegLL}.

\subsection{Notation and conventions}
All our modules will be left modules. A module that is a direct sum of a number
of copies of a simple module is called \emph{isotypical}.

If $B$ is a ring, $n\in \Z_{\geq 1}$, and $M$ is a $B$-module, then $\MaxSpec(B)$
denotes the set of maximal ideals of $B$, the ring of $n\times n$ matrices over $B$
is denoted by~$\Mat_n(B)$, and $\Ann_B(M)$ denotes the annihilator of $M$ in $B$.
If $Q$ is a field and~$A$ is a $Q$-algebra, then $\norm_{A/Q}$ denotes
the $Q$-algebra norm.
%If $G$ is a group acting on the left on a set $X$, then
%there is an induced right action on functions on $X$, given by $(f^g)(x)=f(gx)$
%for $g\in G$, $x\in X$, and a function $f$ on $X$.
The sign $\otimes$ denotes tensor product over~$\Z$.
If~$R$ is a ring, $M$ is an $R$-module, and $L$ an $R$-algebra, then we abbreviate the
$L$-module $L\otimes_R M$ to $M_L$.
By an \emph{ideal} without further specification we mean a two-sided ideal.
The \emph{radical} $\Jac(R)$ of a ring $R$ is defined as the
intersection of its maximal left ideals. It is a two-sided ideal.
%By a complex representation of a topological group we mean a continuous
%\marginpar{Where is this assumption \emph{tacitly} used?}
%homomorphism from that group to the automorphism group of a Hilbert space.
%
If $p$ is a prime number, then
$\Z_{(p)}=\{ a/b : a, b\in \Z, b \not \in p\Z \}$ denotes
the localisation of $\Z$ at $p$, and~$\Z_p$ denotes the ring of $p$-adic
integers.

If $C$ is a finite abelian group, then $\widehat{C}=\Hom(C,\CC^\times)$ denotes its group of complex characters.
If $X$ is a set, $\sigma$ is an automorphism of $X$, and $f$ is a function
defined on~$X$, then~$f^\sigma$ denotes the function $x\mapsto f(\sigma x)$.
We will also use this notation when~$X$ is a normal subgroup of a group containing $\sigma$,
in which case the automorphism of $X$ is $x\mapsto \sigma x \sigma^{-1}$.
We denote the centre of a group $G$ by $\centre(G)$.

When talking about homology, differential forms, etc. on orbifolds, we always
intend the meaning of these words in the orbifold sense, rather than just
in reference to the underlying topological space. We refer to \cite{GordonOrbifolds,Caramello}
for the definitions and many additional details. Note that all our orbifolds
are ``good'' in the sense of ibid.: they have (finite) coverings by manifolds.

\begin{acknowledgements}
Several people were very generous with their time and expertise.
We would like to thank Andy Baker, Nicolas Bergeron, Frank Calegari, Fred Diamond, Nathan Dunfield,
Toby Gee, Emilio Lauret, Hendrik W. Lenstra Jr., Ben Linowitz, Mike Lipnowski, Adam Morgan, Olivier Ta\"ibi, Jack Thorne, Akshay Venkatesh, and
  John Voight for very helpful conversations, comments, and answers to our questions.
  We are particularly grateful to Frank Calegari for fixing our original formulation of Conjecture \ref{conj:attachedgalreps}.
  Part of this research was done during a Research in Pairs visit at Mathematisches Forschungsinstitut
  Oberwolfach. We are very grateful to the Institut for providing an amazing research environment.
Parts of this research were funded by
   EPSRC Fellowship EP/P019188/1,
  `Cohen–Lenstra heuristics, Brauer relations, and low-dimensional manifolds' 
  and by the ANR AGDE project (ANR-20-CE40-0010).
\end{acknowledgements}
%also send it to: Jean Raimbault, Nicolas Billerey, Marie-France Vignéras, Mike
%Lipnowski, Haluk Sengun, Venkatesh, Calegari

%
%--------------
%--- Section 2
%--------------
%
%\newpage
\section{Regulator constants via Hecke algebras}\label{sec:regconstHecke}

In this section we start by describing a very general setting in which one can
define algebraic invariants that specialise to regulator quotients in certain
number theoretic and geometric situations. We then explain how the regulator
constants of Dokchitser--Dokchitser attached to Brauer relations of finite groups
\cite{tamroot} can be viewed as a special case of this construction, and we use
this description to give a new basis-free proof of their main algebraic theorem,
\cite[Theorem 2.17]{tamroot}. That last application will not be needed in the
remainder of the paper.

\subsection{Adjoint pairs}\label{sec:adjointpairs}
Throughout this section, let $R$ be a commutative domain, and let $Q$ be its
field of fractions. An \emph{isogeny} of $R$-modules is a homomorphism of
$R$-modules with $R$-torsion kernel and cokernel, equivalently a homomorphism
$\phi$ of $R$-modules such that $Q\otimes_R \phi$ is an isomorphism of
$Q$-vector spaces. Let $N_1$ and~$N_2$ be free finite rank $R$-modules, and
let $\phi\colon N_1\to N_2$, $\phi^*\colon N_2\to N_1$ be isogenies.
If~$B_i$ is an $R$-basis for $N_i$ for $i=1$, $2$, then we define
$$
\cD_{B_1,B_2,\phi,\phi^*} = \frac{\det_{B_1,B_2}\phi}{\det_{B_2,B_1}\phi^*} \in Q^\times,
$$
where the determinants are computed with respect to the bases $B_1$ and $B_2$.
If $B_1'$ and $B_2'$ are other $R$-bases of $N_1$, respectively $N_2$, then
$$
\cD_{B_1,B_2,\phi,\phi^*}\equiv \cD_{B_1',B_2',\phi,\phi^*}\mod{(R^\times)^2},
$$
so we have a well-defined invariant
$$
\cD_{\phi,\phi^*} = \cD_{B_1,B_2,\phi,\phi^*}(R^\times)^2\in Q^\times/(R^\times)^2
$$
for any choice of bases $B_1$, $B_2$ as above, which does not depend on these choices.
Now let $L$ be a field containing $R$, and suppose that $\pairing{1}$ and $\pairing{2}$
are non-degenerate $L$-valued $R$-bilinear pairings on $N_1$, respectively $N_2$,
satisfying
\begin{eqnarray}\label{eq:adjoint}
  \langle \phi n_1,n_2\rangle_2 = \langle n_1,\phi^* n_2\rangle_1
\end{eqnarray}
for all $n_1\in N_1$ and $n_2\in N_2$, in other words making the diagram
$$
\xymatrix{
  L\otimes_R N_1 \ar[r]^{\phi}\ar[d]_{\pairing{1}} & L\otimes_R N_2\ar[d]^{\pairing{2}}\\
  \Hom(N_1,L)\ar[r]_{\phi^*} & \Hom(N_2,L),
}
$$
commute.
Then one has
\begin{eqnarray}\label{eq:regquo}
  \cD_{B_1,B_2,\phi,\phi^*} = \frac{\det_{B_1}\pairing{1}}{\det_{B_2}\pairing{2}}.
\end{eqnarray}
This basic observation has the following two important consequences.
\begin{enumerate}[leftmargin=*,label=(O\arabic*)]
  \item\label{obs:indeppairing} The right hand side of equation \eqref{eq:regquo} is independent of the
    pairings, and determines a well-defined invariant in $Q^\times/(R^\times)^2$
    (rather than just in $L^\times/(R^\times)^2$), when the determinants are
    evaluated with respect to any bases on $N_1$ and $N_2$. More precisely, if
    $\pairing{1}'$ and $\pairing{2}'$ are also non-degenerate $R$-bilinear pairings
    on $N_1$, respectively $N_2$, with values in a field containing $R$, with respect
    to which the maps $\phi$ and $\phi^*$ are adjoint as in equation \eqref{eq:adjoint},
    then we have
    $$\left(\frac{\det_{B_1}\pairing{1}}{\det_{B_2}\pairing{2}}\right)(R^\times)^2=
    \left(\frac{\det_{B_1}\pairing{1}'}{\det_{B_2}\pairing{2}'}\right)(R^\times)^2\in Q^\times/(R^\times)^2.$$
  \item\label{obs:indepmaps} The expression $\cD_{\phi,\phi^*}$ is independent of the pair $\phi$, $\phi^*$
    of adjoint maps with respect to the pairings $\pairing{1}$ and $\pairing{2}$.
    More precisely, if $\psi\colon N_1\to N_2$ and $\psi^*\colon N_2\to N_1$
    are $R$-linear embeddings satisfying adjointness property \eqref{eq:adjoint}
    with $\phi$ replaced by $\psi$, then
    $\cD_{\phi,\phi^*}\equiv \cD_{\psi,\psi^*}\mod{(R^\times)^2}$.
\end{enumerate}
\noindent
These consequences are particularly useful in situations in which one has
a supply of pairs of maps as above that are adjoint with respect to fixed pairings
and/or a supply of pairings that make a given pair of maps adjoint. One such
situation is provided by the formalism of Brauer relations and regulator constants.
The rest of the section is devoted to recalling that formalism, reinterpreting it
in Hecke algebra terms, and applying the above observations in that context.

\subsection{Brauer relations and regulator constants}\label{sec:finRegConst}
Let $G$ be a finite group. If~$H$ is a subgroup and $V$ is an $R[G]$-module,
then we identify $\Hom_{R[G]}(R[G/H],V)$ with the group $V^H$ of $H$-fixed
points in $V$ via $f\mapsto f(1\cdot H)$. In particular if $H$ and~$H'$
are two subgroups of $G$, then one has isomorphisms
$$
\Hom_{R[G]}(R[G/H'],R[G/H])\cong R[G/H]^{H'} \cong R[H'\backslash G/H],
$$
where the inverses of the last and the first isomorphism are given by
\begin{eqnarray}\label{eq:HeckeToHom}
  H'gH \mapsto \sum_{u\in H'/(H'\cap {}^gH)}ugH\quad\text{and}\quad gH \mapsto (\alpha\colon hH'\mapsto hgH),
\end{eqnarray}
respectively. For $g\in G$, we will denote the element of $\Hom_{R[G]}(R[G/H'],R[G/H])$
corresponding to the double coset $H'gH$ by $T_{H'gH}$.
Composition of homomorphisms defines a product on $R[H\backslash G/H]$. This
$R$-algebra is called a \emph{Hecke algebra}, and its elements are called
\emph{Hecke operators}.
There is an $R$-linear map
\begin{eqnarray*}%\label{eq:abstractadjoint}
  R[H'\backslash G/H] & \to & R[H\backslash G/H'],\\
  T & \mapsto & T^*,
\end{eqnarray*}
defined by $(T_{H'gH})^* = T_{Hg^{-1}H'}$.
Moreover, for every module~$V$ over the group ring~$R[G]$, there is a map
\begin{eqnarray}\label{eq:HomN1N2}
  \Hom_{R[G]}(R[G/H'],R[G/H]) & \to & \Hom_{R}(V^H,V^{H'})\nonumber\\
  \alpha & \mapsto & \phi_\alpha = (f\mapsto f\circ \alpha).
\end{eqnarray}
Slightly abusing notation, we will sometimes denote the image of an element $T$
of $\Hom_{R[G]}(R[G/H'],R[G/H])$
under this map by $T\colon V^H\to V^{H'}$.

Now let $V$ be a finitely generated $R[G]$-module that is free over $R$,
and let $S_1=\bigsqcup_i G/H_i$ and
$S_2=\bigsqcup_j G/H_j'$ be finite $G$-sets, where $H_i$ and $H_j'$ are
subgroups of $G$.
Define
$$
V^{S_1}=\Hom_{R[G]}(R[S_1],V)\cong \bigoplus_i V^{H_i},\;\;\; V^{S_2}=\Hom_{R[G]}(R[S_2],V)\cong \bigoplus_j V^{H_j'},
$$
where $R[S_i]$ are the $R$-linear permutation modules attached to $S_i$ for $i\in \{1,2\}$.
Then similarly to \eqref{eq:HomN1N2}, an element $T\in \Hom_{R[G]}(R[S_2],R[S_1])$
induces an $R$-homomorphism $T\colon V^{S_1}\to V^{S_2}$.

We say that the pair $\Theta=(S_1,S_2)$ is a \emph{Brauer relation} if there is
an isogeny of $R$-modules in $\Hom_{R[G]}(R[S_2],R[S_1])$.

Assume that $\Theta=(S_1,S_2)$ is a Brauer relation.
If $\pairing{}$ is a non-degenerate $R$-bilinear $G$-invariant pairing on $V$
with values in a field $L$ containing $R$,
and $U$ is a subgroup of $G$, then let $\pairing{U}$ be the pairing on $V^U$
defined by $\langle v_1,v_2\rangle_U = \frac{1}{\#U}\langle v_1,v_2\rangle$
for $v_1$, $v_2\in V^U$. It is not hard to see that this pairing is
non-degenerate on $V^U$. Thus, such a pairing $\pairing{}$ induces an
$R$-bilinear non-degenerate pairing $\pairing{1}$ on $V^{S_1}$ whose restriction to
each direct summand $V^{H_i}$ is given by $\pairing{H_i}$ and that makes the
distinct direct summands orthogonal, and analogously there is an induced
non-degenerate pairing $\pairing{2}$ on $V^{S_2}$. The \emph{regulator constant}
of $V$ with respect to the Brauer relation $\Theta$ is defined as
$$
\cC_{\Theta}(V) = \frac{\det(\pairing{1} | V^{S_1})}{\det(\pairing{2} | V^{S_2})}
  \in L^\times/(R^\times)^2.
$$

\begin{theorem}[{\cite[Theorem 2.17]{tamroot}}]\label{thm:Dokchitsers}
  The value of $\cC_{\Theta}(V)$ does not depend on $\pairing{}$, i.e.
  if $\pairing{}$ and $\pairing{}'$ are non-degenerate $G$-invariant $R$-bilinear
  pairings on $V$, inducing pairings $\pairing{j}$ and $\pairing{j}'$,
  respectively, on $V^{S_j}$ for $j\in \{1,2\}$ as above, then 
  $$
\frac{\det(\pairing{1}' | V^{S_1})}{\det(\pairing{2}' | V^{S_2})}\equiv \frac{\det(\pairing{1} | V^{S_1})}{\det(\pairing{2} | V^{S_2})}
\mod{(R^\times)^2}.
  $$
  %More precisely, both quotients are equal to $\frac{\det(T\colon V^{S_1}\to V^{S_2})}{\det(T^*\colon V^{S_2}\to V^{S_1})}$
  %for any isogeny $T\in \Hom_{R[G]}(R[S_2],R[S_1])$.
\end{theorem}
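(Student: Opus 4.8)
The plan is to deduce the theorem directly from observation \ref{obs:indeppairing}, applied with $N_1=V^{S_1}$ and $N_2=V^{S_2}$, which are finite free $R$-modules under the running hypotheses. What is needed for this is a \emph{single} pair of adjoint isogenies $\phi\colon V^{S_1}\to V^{S_2}$ and $\phi^*\colon V^{S_2}\to V^{S_1}$ that are adjoint with respect to \emph{every} pair $(\pairing{1},\pairing{2})$ of pairings obtained, in the way recalled just before the theorem, from a non-degenerate $G$-invariant pairing $\pairing{}$ on $V$. Since $\Theta=(S_1,S_2)$ is a Brauer relation, I would fix an isogeny $T\in\Hom_{R[G]}(R[S_2],R[S_1])$ and put $\phi=\phi_T$ and $\phi^*=\phi_{T^*}$, where $T\mapsto T^*$ is the adjoint involution on Hecke operators recalled above, applied blockwise to $\Hom_{R[G]}(R[S_2],R[S_1])=\bigoplus_{i,j}R[H_j'\backslash G/H_i]$. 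Granting the adjointness of $\phi_T$ and $\phi_{T^*}$ (the content of the next paragraph) and that both are isogenies, observation \ref{obs:indeppairing} applied to the two pairs $(\pairing{1},\pairing{2})$ and $(\pairing{1}',\pairing{2}')$ induced by the pairings $\pairing{}$ and $\pairing{}'$ of the theorem yields at once
\[
  \frac{\det(\pairing{1}\mid V^{S_1})}{\det(\pairing{2}\mid V^{S_2})}\equiv\frac{\det(\pairing{1}'\mid V^{S_1})}{\det(\pairing{2}'\mid V^{S_2})}\pmod{(R^\times)^2},
\]
which is exactly the assertion.

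The heart of the argument, and the only place where a genuine computation is needed, is the adjointness relation
\[
  \langle\phi_T n_1,n_2\rangle_2=\langle n_1,\phi_{T^*}n_2\rangle_1\qquad(n_1\in V^{S_1},\ n_2\in V^{S_2})
\]
with respect to any pair $(\pairing{1},\pairing{2})$ as above. By $R$-bilinearity and the decompositions $V^{S_1}=\bigoplus_i V^{H_i}$ and $V^{S_2}=\bigoplus_j V^{H_j'}$ with distinct summands orthogonal, I would reduce to the case $S_1=G/H$, $S_2=G/H'$, $T=T_{H'gH}$. Then, via \eqref{eq:HeckeToHom}, the map $\phi_T\colon V^H\to V^{H'}$ is $v\mapsto\sum_u ugv$ (sum over $u\in H'/(H'\cap{}^gH)$), and $\phi_{T^*}\colon V^{H'}\to V^H$ is $w\mapsto\sum_{u'}u'g^{-1}w$ (sum over $u'\in H/(H\cap{}^{g^{-1}}H')$). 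Substituting these, and using the $G$-invariance of $\pairing{}$, the $H$-invariance of $v$, the $H'$-invariance of $w$, and the normalisations $\pairing{H}=\tfrac1{\#H}\pairing{}$ and $\pairing{H'}=\tfrac1{\#H'}\pairing{}$, one finds that both sides reduce to $\tfrac1{\#({}^gH\cap H')}\langle gv,w\rangle$; for the right-hand side this also uses $\#(H\cap{}^{g^{-1}}H')=\#({}^gH\cap H')$. I expect the main (but purely mechanical) nuisance here to be keeping track of the double-coset representatives and the factors $1/\#U$.

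Finally, $\phi_T$ is an isogeny because, $\Theta$ being a Brauer relation, $Q\otimes_R T$ is an isomorphism $Q[S_2]\to Q[S_1]$ of $Q[G]$-modules, and $\Hom_{Q[G]}(-,V_Q)$ turns it into an isomorphism $V_Q^{S_1}\to V_Q^{S_2}$. For $\phi_{T^*}$, fix any one of the pairings of the theorem — if there is none, the statement is vacuous — and the induced non-degenerate pairings $\pairing{1},\pairing{2}$ on $V_Q^{S_1},V_Q^{S_2}$; the adjointness established in the previous paragraph then identifies $Q\otimes\phi_{T^*}$ with the transpose of the isomorphism $Q\otimes\phi_T$ under these pairings, so it is an isomorphism as well. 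With both maps isogenies and adjoint with respect to each of the relevant pairs of pairings, observation \ref{obs:indeppairing} applies as indicated in the first paragraph and finishes the proof.
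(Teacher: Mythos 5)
Your proposal is correct and essentially reproduces the paper's own proof. Both arguments rest on the same key computation — showing that for $T_{H'gH}\colon V^H\to V^{H'}$ and $T_{Hg^{-1}H'}\colon V^{H'}\to V^H$, the pairings $\pairing{H}$ and $\pairing{H'}$ induced from \emph{any} non-degenerate $G$-invariant pairing on $V$ make them adjoint — and then both invoke observation \ref{obs:indeppairing} with $\phi=\phi_T$, $\phi^*=\phi_{T^*}$ for a fixed isogeny $T$. Your streamlined form of the computation (collapsing each side to $\tfrac{1}{\#({}^gH\cap H')}\langle gv,w\rangle$ directly via $G$-invariance and fixed-point properties) is equivalent to the chain of equalities in the paper, which instead passes through the double coset $H'gH$ itself and uses the fibre-counting observation to organize the sums. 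The one place where you are slightly more careful than the paper is in spelling out why $\phi_{T^*}$ is also an isogeny (via the transpose argument under one fixed non-degenerate pairing); the paper leaves that implicit.
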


We now give a new proof of Theorem \ref{thm:Dokchitsers}, using the
observations of Section~\ref{sec:adjointpairs}.
\begin{proof}[Proof of Theorem \ref{thm:Dokchitsers}]
%For two subgroups $H$, $H'$ of $G$, we have
%$$
%\Hom_{R[G]}(R[G/H'],R[G/H])\cong R[G/H]^{H'} \cong R[H'\backslash G/H],
%$$
%Moreover, there is a map
%\begin{eqnarray}\label{eq:HomN1N2}
%  \Hom_{R[G]}(R[G/H'],R[G/H]) & \to & \Hom_{R}(V^H,V^{H'})\nonumber\\
%  \alpha & \mapsto & \phi_\alpha = (f\mapsto f\circ \alpha)
%\end{eqnarray}
%for all $\alpha\in \Hom_{R[G]}(R[G/H'],R[G/H])$ and $f\in \Hom_{R[G]}(R[G/H],V)\cong V^H$,
%and similarly $\alpha\in \Hom_{R[G]}(R[S_2],R[S_1])$ gives rise to
%$\phi_{\alpha}\in \Hom_R(N_1,N_2)$.
%If $\alpha$ is an isogeny of $R$-modules, then so is $\phi_\alpha$.
%
%
%For $g\in G$, and for $\alpha\in \Hom_{R[G]}(R[G/H'],R[G/H])$ corresponding
%to $H'gH$ under the isomorphisms \eqref{eq:HeckeToHom}, denote
%the homomorphism $\phi_{\alpha}\in \Hom_R(V^H,V^{H'})$ defined as in
%\eqref{eq:HomN1N2} by $T_{H'gH}$. 
%
We claim that if $H$ and $H'$ are subgroups of $G$, and $\pairing{}$ is a non-degenerate
$G$-invariant $R$-bilinear pairing on $V$, then the adjoint of
$$
T_{H'gH}\colon V^H\to V^{H'}
$$
with respect to the induced pairings $\pairing{H}$ and $\pairing{H'}$
is $T_{Hg^{-1}H'}\colon V^{H'}\to V^H$. Indeed, an elementary calculation shows that there is a
well-defined function $H'gH\to H'/(H'\cap {}^gH)$ satisfying $ugh\mapsto u(H'\cap {}^gH)$
for all $u\in H'$ and $h\in H$, all of whose fibres have cardinality $\#H$,
whence we deduce that for every $v\in V^H$ and $v'\in V^{H'}$ we have
  %\begin{eqnarray*}
  %  \frac{1}{\#{H'}}\langle T_{{H'}gH}v,v'\rangle & = &
  %  \frac{1}{\#{H'}}\sum_{u\in {H'}/({H'}\cap {}^gH)}\langle ugv,v'\rangle
  %     =  \frac{1}{\#{H'}\cdot\#H}\sum_{x\in {H'}gH} \langle xv,v'\rangle\\
  %    & = & \frac{1}{\#{H'}\cdot\#H}\sum_{y\in Hg^{-1}{H'}}\langle v,yv'\rangle
  %    = \frac{1}{\#H}\sum_{h\in H/(H\cap {}^{g^{-1}}{H'})}\langle v,hg^{-1}v'\rangle\\
  %    & = & \frac{1}{\#H}\langle v,T_{Hg^{-1}{H'}}v'\rangle,
  %\end{eqnarray*}
  \begin{eqnarray*}
    \frac{1}{\#{H'}}\langle T_{{H'}gH}v,v'\rangle & = &
    \frac{1}{\#{H'}}\sum_{u\in {H'}/({H'}\cap {}^gH)}\langle ugv,v'\rangle\\
      & = & \frac{1}{\#{H'}\cdot\#H}\sum_{x\in {H'}gH} \langle xv,v'\rangle\\
      & = & \frac{1}{\#{H'}\cdot\#H}\sum_{y\in Hg^{-1}{H'}}\langle v,yv'\rangle\\
      & = & \frac{1}{\#H}\sum_{h\in H/(H\cap {}^{g^{-1}}{H'})}\langle v,hg^{-1}v'\rangle\\
      & = & \frac{1}{\#H}\langle v,T_{Hg^{-1}{H'}}v'\rangle,
  \end{eqnarray*}
  as claimed.

  In particular, it follows that if $\pairing{}$ and $\pairing{}'$ are
  non-degenerate $G$-invariant $R$-bilinear pairings on $V$, and
  $T\in \Hom_{R[G]}(R[S_2],R[S_1])$ is an isogeny, then the adjoint
  of the induced isogeny $T\colon V^{S_1}\to V^{S_2}$
  with respect to $\pairing{1}$ and $\pairing{2}$ is the same as with respect
  to $\pairing{1}'$ and $\pairing{2}'$, namely equal to $T^*$ in both cases.
  Thus, $\cC_{\Theta}(V) = \cD_{T,T^*}$,
  and in particular is the same when computed with respect to $\pairing{}$
  as with respect to $\pairing{}'$.
\end{proof}
\section{Algebras graded by finite abelian groups}\label{sec:gradedAlg}

In this section we continue working in a purely algebraic setting, investigating graded
modules over certain rings graded by a finite abelian group.

The main result of Section
\ref{sec:linkage} is Proposition \ref{prop:littleExercise}, in which we give a sufficient criterion
for existence of an isomorphism between different homogeneous components of such a graded
module. In the geometric setting this isomorphism will realise an isospectrality.
To prove Proposition \ref{prop:littleExercise} we first construct paths of isomorphisms
for the simple subquotients, using Lemma \ref{lem:littleExercise}, and then piece them
together using d\'evissage, Lemma \ref{lem:invertibleSwap}.

In Section \ref{sec:polarisations} we investigate certain pairings on our graded
modules, which we call polarisations.
The main result is Proposition \ref{prop:polarisation}, a necessary and sufficient criterion
for existence of polarisations, which in particular
implies Proposition \ref{prop:IntroPolarisation} from the Introduction.
An important consequence of this criterion is Proposition \ref{prop:descend-polarisation},
which says that the existence of a polarisation with values in a field extension
implies the existence of one with values in the base field. This is an important
ingredient in our proof of rationality of quotients of regulators in the geometric setting.

In Section \ref{sec:gradedregconst} we apply the formalism of adjoint pairs from Section \ref{sec:adjointpairs}
to define our notion of regulator constants and to prove in Proposition \ref{prop:indepT}
that it is independent of any choices involved. These invariants become quotients of regulators
in the geometric setting. They enjoy some natural properties, such as rationality (Corollary \ref{cor:rational}),
triviality (Lemma \ref{lem:triv}), good behaviour with respect to extension of scalars
(Lemma \ref{lem:extendring}), and with respect to various decompositions (Lemma \ref{lem:regcstsum},
Proposition \ref{prop:obviousProd}, and Proposition \ref{prop:HeckeProd}).

The following notation and assumptions will remain in place for the rest of
the section. Let $R$ be a commmutative Noetherian domain
and let $Q$ be its field of fractions. 
An \emph{$R$-algebra} is a ring
$\heckebig$ equipped with a ring homomorphism from $R$ to the centre of $\heckebig$.
If $C$ is a group and $\heckebig$ is an $R$-algebra, then a \emph{grading of~$\heckebig$
by $C$} is a collection of $R$-submodules $\heckebig_c\subset \heckebig$ indexed by
$c\in C$ such that there is a direct sum decomposition of $R$-modules
$\heckebig=\bigoplus_{c\in C}\heckebig_c$ and such that for all $c$, $c'\in C$ one has
$\heckebig_{c}\heckebig_{c'}\subset \heckebig_{cc'}$. An $R$-algebra that is equipped with a grading
by a group $C$ is said to be \emph{graded by $C$}, and the $R$-submodules
$\heckebig_c$ of such an $R$-algebra~$\heckebig$ are referred to as the \emph{homogeneous components}
of $\heckebig$.

For the rest of the section, let $\hecke$ be a commutative $R$-algebra that
is graded by a finite abelian group $C$. The trivial homogeneous component $\hecke_1$
is an $R$-subalgebra of $\hecke$, and for every $c\in C$, the $R$-submodule $\hecke_c$
of $\hecke$ is an $\hecke_1$-module. Next, let $W$ be a group equipped with a group
homomorphism $\nu\colon W\to C$. By setting $W_c=\nu^{-1}(c)$
for $c\in C$, we obtain a $C$-grading on the group
algebra $\hecke[W]$. Let $C_{\iso}=C/\nu(W)$. Thus, $\Cisodual$
is canonically identified with the group of all $\chi \in \Cdual$ that are trivial on $\nu(W)$.
Let $\heckebig$ be a quotient of $\hecke[W]$ by a homogeneous ideal, equivalently
a $C$-graded $R$-algebra together with a grading-preserving surjection from $\hecke[W]$, and
assume that $\heckebig$ is finitely generated as an $R$-module.
In particular, every $\heckebig$-module is also a $\hecke[W]$-module.

For us, a \emph{graded $\heckebig$-module} is a finitely generated
$\heckebig$-module $M$ equipped with a collection of $R$-submodules $M_c\subset M$
indexed by $c\in C$ such that one has a direct sum decomposition
of $\hecke_1$-modules $M=\bigoplus_{c\in C}M_c$ and such that for all $c$,
$c'\in C$ one has $\heckebig_{c}M_{c'}\subset M_{cc'}$.
%A graded $\heckebig$-module $M$ is
%\emph{simple} if it has exactly two graded submodules, $0$ and $M$.

If an element of a ring is invertible modulo the radical, then it is invertible.
A ring is called \emph{semilocal} if its quotient by the radical
is a semisimple ring. A commutative ring is semilocal if and only if it has
only finitely many maximal ideals. We refer to \cite{Lam} for these
definitions and basic facts from ring theory.

%\begin{example}\label{ex:ABC}
%The modules $M$ that are of interest to us
%include the homology groups $H_i(\cY,\Z)$ or the space of differential
%$i$-forms $\Omega^i_{\Delta}(\cY)$, where $\cY$ is as in the introduction.
%Here $R$ is $\Z$, respectively $\R$; $\hecke$ is the image of the Hecke
%algebra $R\otimes \hecke$ from the introduction in the endomorphism ring 
%$\End_R M$; $C$ is
%the component group from the introduction; and $\hecke_1$ is the subalgebra
%$\hecke_0$ from the introduction. 
%\end{example}

% merge with previous?
%\begin{example}
%  In the setting of Example \ref{ex:ABC}, the group $W$ can be chosen to
%  be the group generated by Atkin--Lehner involutions.\marginpar{Make sure this makes
%  sense; perhaps not maximal orders in the intro but Eichler orders of the same level?}
%\end{example}

\subsection{Linked modules}\label{sec:linkage}
%The purpose of this subsection is to develop, in an abstract algebraic setting,
%sufficient criteria for isospectrality.
%The main result of the subsection is Proposition \ref{prop:littleExercise}.
%As we will explain in SOMEWHERE, this result will almost immediately imply
%Theorems \ref{thm:intro1}\ref{item:isospectral} and \ref{thm:intro2}\ref{item:pinftytors}
%and \ref{item:zpcoefs}.
%This section can be skipped at first reading,\marginpar{Return to this sentence later;
%we might give the reader a guide elsewhere on what to read when} and returned back to
%once it is needed in Section \ref{sec:Vigneras}.
%
\begin{definition}\label{def:linked}
  Let $M$ be a graded $\heckebig$-module, and
  let~$c,c'\in C$. If $L$ is a ring containing $R$, we say that~$M_{c}$
  is \emph{$L$-linked} to~$M_{c'}$ by~$T\in \heckebig_{c'c^{-1}}$ if the
  induced map $T\colon (M_{c})_L\to (M_{c'})_L$ is an isomorphism.
  We say that~$M_{c}$ is \emph{$L$-linked} to~$M_{c'}$ if it is $L$-linked to
  it by some~$T\in \heckebig_{c'c^{-1}}$.
  We say that~$M_{c}$ is \emph{linked} to~$M_{c'}$ if for some ring $L$
  containing $R$, the module $M_{c}$ is $L$-linked to $M_{c'}$.
  %\marginpar{Does  $L$-linked for some $L$ imply $Q$-linked?}
\end{definition}

%A semisimple module is called
%\emph{isotypical} if all its simple submodules are pairwise isomorphic.
%A maximal isotypical submodule of a semisimple module is called an
%\emph{isotypical component}.

% This is a refactoring lemma to avoid repeating arguments
\begin{lemma}\label{lem:invertibleSwapField}
  Suppose that $\hecke_1$ is a field. Let $c\in C$, and let $M$ be a non-zero
  $\heckebig$-module. Then the following are equivalent:
  \begin{enumerate}[leftmargin=*]
    \item\label{item:invertible} there exists an element in $\heckebig_c$ that is invertible in $\heckebig$;
    \item\label{item:nonzerosimple} there exists a simple subquotient $N$ of $M$ and an
      element of $\heckebig_c$ that does not annihilate $N$.
  \end{enumerate}
  If, moreover, the image of~$\hecke$ in~$\heckebig$ is reduced, then the conditions are
  equivalent to the following:
  \begin{enumerate}[leftmargin=*]
  \setcounter{enumi}{2}
    \item\label{item:nonzero} the homogeneous component $\heckebig_c$ does not annihilate $M$.
  \end{enumerate}
\end{lemma}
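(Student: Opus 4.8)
The plan is to reduce the whole statement to one structural remark about $\heckebig$, after which every implication is immediate. Since $M\ne 0$ we have $\heckebig\ne 0$, so the field $\hecke_1$ embeds into $\heckebig$; combined with the standing hypothesis that $\heckebig$ is finitely generated as an $R$-module, this forces the image of $R$ in $\hecke_1$ to be a field and $\heckebig$ to be finite-dimensional over $\hecke_1$, in particular Artinian. Hence $M$, being non-zero, contains a simple $\heckebig$-submodule $S$ (for any $0\ne m\in M$ the cyclic module $\heckebig m\cong\heckebig/\Ann_\heckebig(m)$ is Artinian, so has a minimal non-zero submodule), and a fortiori $M$ has a simple subquotient. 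The key remark is that $\heckebig$ is a grading-preserving quotient of the group algebra $\hecke[W]$: writing $\pi\colon\hecke[W]\to\heckebig$ for the quotient map, one has $\heckebig_c=\pi(\hecke[W]_c)$ (as $\pi$ is a grading-preserving surjection) and $\hecke[W]_c=\bigoplus_{w\in\nu^{-1}(c)}\hecke\,w$. Therefore $\heckebig_c\ne 0$ exactly when $c\in\nu(W)$, and in that case the image $\pi(w)$ of any $w\in\nu^{-1}(c)$ lies in $\heckebig_c$ and is a unit of $\heckebig$, being the image of the unit $w\in\hecke[W]$ under the ring homomorphism $\pi$. In particular condition \ref{item:invertible} is equivalent to $\heckebig_c\ne 0$.

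Given this, each implication is one line. For \ref{item:invertible}$\Rightarrow$\ref{item:nonzerosimple}: a unit $u\in\heckebig_c$ acts as an $R$-linear bijection of $M$, hence does not annihilate the simple submodule $S\subseteq M$, which is in particular a simple subquotient. For \ref{item:nonzerosimple}$\Rightarrow$\ref{item:invertible}: an element of $\heckebig_c$ not annihilating some module is non-zero, so $\heckebig_c\ne 0$, and then \ref{item:invertible} holds by the remark above. Finally, under the additional assumption that the image of $\hecke$ in $\heckebig$ is reduced — which in fact plays no role in the argument below — the equivalence with \ref{item:nonzero} follows the same way: a unit $u\in\heckebig_c$ satisfies $uM=M\ne 0$, so $\heckebig_c$ does not annihilate $M$; conversely $\heckebig_c M\ne 0$ forces $\heckebig_c\ne 0$, hence \ref{item:invertible}.

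I do not expect a genuine obstacle: the entire content is the remark that, because $\hecke[W]$ is a group algebra, a non-zero homogeneous component of the quotient $\heckebig$ automatically contains a unit. It is worth flagging that this really uses the setup — for an arbitrary finite-dimensional graded algebra over a field a non-zero homogeneous component can consist entirely of non-units, and the Jacobson radical of such an algebra need not even be a homogeneous ideal (already for the group algebra of a cyclic group of order two in characteristic two), so one should resist passing to the semisimple quotient. The hypothesis that $\hecke_1$ is a field is used only via the Artinianness of $\heckebig$, which is what produces a simple submodule of the possibly non-finitely-generated module $M$ in the implication \ref{item:invertible}$\Rightarrow$\ref{item:nonzerosimple}.
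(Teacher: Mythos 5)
Your argument hinges on the ``key remark'', which is false. You assert that $\hecke[W]_c=\bigoplus_{w\in\nu^{-1}(c)}\hecke\,w$, hence that $\heckebig_c\neq 0$ precisely when $c\in\nu(W)$, in which case $\heckebig_c$ contains the unit $\pi(w)$. This forgets that $\hecke$ is itself $C$-graded and contributes to the grading on $\hecke[W]$: for $T\in\hecke_d$ and $w\in W$, the monomial $Tw$ lies in degree $d\nu(w)$, so in fact $\hecke[W]_c=\bigoplus_{d\in C}\hecke_d\cdot W_{d^{-1}c}$, which is exactly the description the paper's proof uses. In particular, already for $W$ trivial, $\heckebig$ inherits the grading of $\hecke$ and $\heckebig_c$ may be nonzero for $c\neq 1$ without containing any unit. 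A concrete counterexample to your claimed equivalence ``\ref{item:invertible}$\iff\heckebig_c\neq 0$'': take $W=1$, $C=\Z/2\Z=\{1,c\}$, $\heckebig=\hecke=\hecke_1[x]/(x^2)$ with $\hecke_1$ a field and $x$ homogeneous of degree $c$, and $M=\heckebig$. Then $\heckebig_c=\hecke_1 x\neq 0$ but consists entirely of nilpotents. The lemma is consistent here: $x$ kills every simple subquotient of $M$, so \ref{item:nonzerosimple} also fails; and the image of $\hecke$ is not reduced, so \ref{item:nonzero} is not claimed to be equivalent.

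This breaks both non-trivial implications, \ref{item:nonzerosimple}$\Rightarrow$\ref{item:invertible} and \ref{item:nonzero}$\Rightarrow$\ref{item:invertible}. The ingredient you are missing for the former is Schur's lemma together with a direct use of the field structure of $\hecke_1$, not merely Artinianness: from an element of $\heckebig_c$ not killing $N$, one extracts a homogeneous $T\in\hecke_d$ and $w\in W_{d^{-1}c}$ with $T$ not killing $N$; since $T$ is central and $N$ is simple, $T$ acts invertibly on $N$, hence so does $T^n\in\hecke_1$ with $n$ the order of $d$, which is therefore a nonzero element of the field $\hecke_1$ and so a unit, and then $Tw\in\heckebig_c$ is a unit. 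Relatedly, your observation that the reduced hypothesis ``plays no role'' in your version of \ref{item:nonzero}$\Rightarrow$\ref{item:invertible} should be read as a warning sign: that hypothesis is precisely what excludes nilpotents like $x$ above, and the paper needs it to pass from ``$T$ does not annihilate $M$'' to ``$T$ does not annihilate some simple subquotient of $M$''.
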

\begin{proof}
  The forward implications are trivial, without the reduced assumption.

  We now prove that \ref{item:nonzero} implies \ref{item:nonzerosimple}.
  The homogeneous component $\heckebig_c$ is generated by the images of
  $\hecke_{d}\cdot W_{d^{-1}c}$, as $d$ runs over $C$. The hypothesis
  therefore implies that there exist $d\in C$, $T\in \hecke_{d}$, and $w\in W_{d^{-1}c}$
  such that $Tw$ does not annihilate~$M$. Since $w$ is invertible, this is equivalent
  to $T$ not annihilating $M$. Since the image of~$\hecke$ in~$\heckebig$ is reduced
  and finite-dimensional over the field~$\hecke_1$, it is a product of
  fields, so its quotient~$\hecke/\Ann_\hecke M$ is reduced.
  The element~$T$ is therefore not nilpotent in~$\End_{\hecke_1}(M)$.
  Since $M$ has finite length,
  this implies that $T$ cannot annihilate every simple subquotient of $M$,
  and therefore the same is true for $Tw\in \heckebig_c$.

  Finally we prove that \ref{item:nonzerosimple} implies \ref{item:invertible}.
  By the same argument as in the previous paragraph, the assumption implies that
  there exist $d\in C$, $T\in \hecke_{d}$, and $w\in W_{d^{-1}c}$ such that
  $Tw$ does not annihilate a subquotient $N$, equivalently that $T$ does not annihilate $N$,
  equivalently that $T$ acts invertibly on $N$. Let $n$ be the order of $d\in C$.
  Then $T^n\in \hecke_1$ also acts invertibly on $N$. Since $\hecke_1$ is a field,
  this implies that $T^n$ is invertible, therefore so is $Tw$, whose image in $\heckebig$
  belongs to~$\heckebig_c$.
%
%
%  
%  We have $\heckebig_c = \sum_{d\in C}\hecke_{d}\cdot W_{d^{-1}c}$. The hypothesis
%  therefore implies that there exist $d\in C$, $T\in \hecke_{d}$, and $w\in W_{d^{-1}c}$
%  such that $Tw$ does not annihilate $M$. Let $n\in \Z_{\neq 0}$ be divisible by $\#C$ and
%  by $\#W$. Then we have $(Tw)^n\in \hecke_1$, and since  $\hecke/\Ann_{\hecke}M$ is reduced,
%  $(Tw)^n=T^n$ still does not annihilate $M$, and in particular is non-zero in the field
%  $\hecke_1$. Thus $(Tw)^n$ is invertible in $\heckebig$, therefore so is $Tw\in \heckebig_c$.
\end{proof}

\begin{lemma}\label{lem:invertibleSwap}
Suppose that $R$ is semilocal. Let $c\in C$,
and let $M$ be an~$\heckebig$-module such that for every simple subquotient
$N$ of~$M$, there exists an element of $\heckebig_c$ that acts non-trivially
(equivalently invertibly) on $N$. Then there exists an element of~$\heckebig_c$
that acts invertibly on $M$.
\end{lemma}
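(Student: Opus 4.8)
The plan is to reduce to the previous lemma by passing to the residue fields of $R$. Since $R$ is semilocal, its radical $\Jac(R)$ meets only finitely many maximal ideals, and the Jacobson radical $\Jac(\heckebig)$ contains $\Jac(R)\heckebig$; moreover an element of $\heckebig$ is invertible if and only if its image in $\heckebig/\Jac(\heckebig)$ is invertible, and likewise it acts invertibly on the finitely generated module $M$ if and only if it acts invertibly on $M/\Jac(\heckebig)M$ — this is the standard ``invertible modulo the radical'' fact recorded just before Section~\ref{sec:linkage}, applied to $\End_{\heckebig}(M)$ together with Nakayama. So first I would replace the triple $(\heckebig, M, R)$ by $(\heckebig/\Jac(R)\heckebig, M/\Jac(R)M, R/\Jac(R))$, noting that the hypothesis on simple subquotients is preserved: every simple subquotient of $M/\Jac(R)M$ is a simple subquotient of $M$, and conversely $\Jac(R)$ acts by zero on every simple subquotient of $M$ since each such is a module over a residue field of $R$. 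We may therefore assume $R$ is a finite product of fields, say $R=\prod_{k=1}^r k_k$; correspondingly $\heckebig=\prod_k \heckebig^{(k)}$ and $M=\bigoplus_k M^{(k)}$ as $C$-graded algebras and modules, and it suffices to treat each factor, i.e. we may assume $R$ is a field.

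Once $R$ is a field, so is $\hecke_1$ replaced by \dots — but here is the subtlety: $\hecke_1$ need not be a field, only a finite-dimensional commutative $R$-algebra. So I cannot apply Lemma~\ref{lem:invertibleSwapField} verbatim; instead I would run a d\'evissage directly. Filter $M$ by $\heckebig$-submodules $0=M^{(0)}\subset M^{(1)}\subset\cdots\subset M^{(s)}=M$ with simple quotients $N_t=M^{(t)}/M^{(t-1)}$. By hypothesis, for each $t$ there is $T_t\in\heckebig_c$ acting invertibly on $N_t$; equivalently, $T_t$ is not contained in the annihilator of $N_t$, which (since $N_t$ is simple and $\heckebig$ finite-dimensional over $R$, hence Artinian) is a maximal ideal $\fm_t$ of $\heckebig$. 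Since $\heckebig_c$ is not contained in any of the finitely many $\fm_t$, and these are the only maximal ideals relevant, I want a single element of $\heckebig_c$ lying outside all of them. This is where prime avoidance must be adapted: $\heckebig_c$ is only an $R$-submodule, not an ideal, so the classical prime avoidance lemma does not directly apply. The key point — and I expect this to be the main obstacle — is to produce an element of the $R$-submodule $\heckebig_c$ avoiding the finitely many maximal ideals $\fm_t$ of the Artinian ring $\heckebig$.

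To handle this, I would argue as follows. Group the $\fm_t$ by their contraction to $\hecke_1$; for a fixed maximal ideal $\fp$ of $\hecke_1$, localise at $\fp$, so $\hecke_1$ becomes local Artinian with residue field $\kappa$ and $\heckebig$ becomes a finite $\kappa$-ish algebra. Passing to residue fields kills $\Jac(\hecke_1)$-torsion as above, so we may further assume $\hecke_1$ is a field $\kappa$; now $\heckebig$ is a finite-dimensional $\kappa$-algebra and Lemma~\ref{lem:invertibleSwapField} applies over $\kappa$ to each of the finitely many blocks, giving in each block an element of $\heckebig_c$ that is invertible there. Since $\heckebig$ is a product of such blocks (being Artinian, it is a product of local rings, and the grading is compatible with the block decomposition because the idempotents lie in $\heckebig_1$), assembling these block-wise elements gives a single $T\in\heckebig_c$ invertible in $\heckebig$, hence acting invertibly on $M$. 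Unwinding the successive reductions — quotient by $\Jac(R)$, split $R$ into fields, quotient by $\Jac(\hecke_1)$, split into blocks — and lifting invertibility back up via Nakayama at each stage completes the proof. The only care needed throughout is that every reduction step both preserves the grading and preserves the hypothesis on simple subquotients, which it does since radicals act trivially on simple modules.
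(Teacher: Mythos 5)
Your overall route is the same as the paper's — reduce until $\hecke_1$ becomes a field, apply Lemma~\ref{lem:invertibleSwapField} there, then glue via CRT and lift invertibility back through the radical — but two of the justifications you offer along the way are false as stated, and they concern precisely the step you yourself flag as the ``main obstacle.'' First, ``being Artinian, $\heckebig$ is a product of local rings'' is not true: $\heckebig$ is only a quotient of $\hecke[W]$ and need not be commutative, and a non-commutative Artinian ring (e.g.\ $\Mat_2(k)$) is not a product of local rings. Second, even granting a product decomposition, the relevant central idempotents of $\heckebig$ do \emph{not} in general lie in $\heckebig_1$: take $\heckebig = k[C]$ with its tautological $C$-grading, where the block idempotents $\tfrac{1}{|C|}\sum_c\chi(c^{-1})c$ are homogeneous only in the aggregate sense. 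So the sentence ``assembling these block-wise elements gives a single $T\in\heckebig_c$'' rests on a decomposition that is not grading-compatible.

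The fix — which is what the paper actually does — is to decompose along $\hecke_1$ rather than along $\heckebig$. After replacing $\heckebig$ by $\heckebig/(\Ann_{\hecke_1}M)\heckebig$ so that $\hecke_1$ acts faithfully, $\hecke_1$ is a module-finite algebra over the semilocal $R$ and hence has finitely many maximal ideals $\fm$. The idempotents of $\hecke_1/\Jac(\hecke_1)$ \emph{do} live in (the image of) $\hecke_1\subseteq\heckebig_1$ and therefore induce a grading-compatible decomposition $\heckebig/\Jac(\hecke_1)\heckebig\cong\prod_\fm\heckebig/\fm\heckebig$. In each factor $\hecke_1/\fm$ is a field, so Lemma~\ref{lem:invertibleSwapField} gives an invertible element of $\heckebig_c$ there; CRT assembles them into one $u\in\heckebig_c$ invertible mod $\Jac(\hecke_1)\heckebig$; and $\Jac(\hecke_1)\heckebig\subseteq\Jac(\heckebig)$ (Lam, Corollary 5.9) lets one lift invertibility to $\heckebig$ itself. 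Note this bypasses both your initial reduction of $R$ modulo $\Jac(R)$ and the composition series for $M$ — you needed the latter to cut the relevant maximal ideals down to a finite set, but faithfulness of $\hecke_1$ plus semilocality of $R$ already guarantee that.
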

\begin{proof}
  By replacing $\heckebig$ by $\heckebig/(\Ann_{\hecke_1} M)\cdot \heckebig$,
  we may assume that $\hecke_1$ acts faithfully on $M$.
  Let $\fm$ be a maximal ideal of $\hecke_1$. Then by faithfulness,
  it is the annihilator of a simple $\hecke_1$-subquotient
  $M'$ of $M$. By Lemma \ref{lem:invertibleSwapField}, applied to $\hecke_1/\fm$
  in place of~$\hecke_1$ and to $M'$ in place of $M$, there exists $u_{\fm}\in \heckebig_c$ that acts invertibly on
  $M/\fm M$. By the Chinese Remainder Theorem, there exists $u\in \heckebig_c$
  that acts invertibly on $M/\Jac(\hecke_1)M$, where recall that
  $\Jac(\hecke_1)=\prod_{\fm\in \MaxSpec(\hecke_1)}\fm$
  denotes the radical of~$\hecke_1$. By
  \cite[Corollary 5.9]{Lam}, $\Jac(\hecke_1)\cdot \heckebig$ is contained in the radical
  of~$\heckebig$, so~$u$ acts invertibly on~$M$.
\end{proof}
%
%
%  Let $M'$ be a $\hecke$-simple subquotient of $M$.
%%Then the annihilator $\Ann_\hecke(N)$
%%  is a maximal ideal of $\hecke$, and
%  Then by \cite[Corollary 4.17]{Eisenbud} the ideal $\fm=\Ann_{\hecke_1}(M')$ is a maximal
%  ideal of $\hecke_1$. By Lemma \ref{lem:invertibleSwapField}, applied to $\hecke_1/\fm$
%  in place of $\hecke_1$ and to $M'$ in place of $M$, there exists $u_{\fm}\in \heckebig_c$ that acts invertibly on
%  $M/\fm M$. Moreover, since the $\hecke_1$-action is faithful, every maximal
%  ideal of $\hecke_1$ is the annihilator of some $\hecke$-simple subquotient of $M$. Thus,
%  the Chinese Remainder Theorem implies that there exists $u\in \heckebig_c$
%  that acts invertibly on $M/\Jac(\hecke_1)M$, where recall that
%  $\Jac(\hecke_1)=\prod_{\fm\in \MaxSpec(\hecke_1)}\fm$
%  denotes the radical of $\hecke_1$. By
%  \cite[Corollary 5.9]{Lam}, $\Jac(\hecke_1)\cdot \heckebig$ is contained in the radical
%  of~$\heckebig$, so~$u$ acts invertibly on~$M$.
%\end{proof}

% Turn subquo into sub: simplifies the results (Prop 3.8)
% Allows to use a more restrictive notion of self-twists / systems of eigenvalues
\begin{lemma}\label{lem:subquosub}
Suppose that $R$ is semilocal. Let $M$ be a $\hecke$-module.
Then for every simple $\hecke$-subquotient $E$ of $M$
there exists $\fp\in \MaxSpec(R)$ such that $E$ is isomorphic to a
$\hecke$-submodule of $R/\fp\otimes_R M$.
\end{lemma}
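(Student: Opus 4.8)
The plan is to reduce the statement to the structure theory of finite-length modules over the commutative ring $\hecke$, applied to $R/\fp \otimes_R M = M/\fp M$ for a suitable---in fact forced---choice of $\fp$. First I would identify $\fp$. Since $\hecke$ is commutative, a simple $\hecke$-module is cyclic, so $E \cong \hecke/\fn$ for some $\fn \in \MaxSpec(\hecke)$; put $\fp := \fn \cap R = \Ann_R(E)$. As $E$ is an $\hecke$-subquotient of $M$, it is in particular an $R$-subquotient of $M$, hence finitely generated over $R$ (this is the one place where the argument uses that $M$ is finitely generated over the Noetherian ring $R$), and therefore a module-finite ring extension of the domain $R/\fp$. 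A field that is integral over a domain forces that domain to be a field, so $R/\fp$ is a field, that is, $\fp \in \MaxSpec(R)$. This $\fp$ is in fact the only candidate: every $\hecke$-submodule of $R/\fp' \otimes_R M$ is killed by $\fp'$, so an embedding of $E$ into such a module would force $\fp' \subseteq \Ann_R(E) = \fp$, with both ideals maximal.

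Second, I would show that $E$ is a composition factor of the $\hecke$-module $M/\fp M$. As this module is finite-dimensional over the field $R/\fp$, it has finite length over $\hecke$ and hence decomposes as a direct sum $\bigoplus_\fm (M/\fp M)_\fm$ over the finitely many maximal ideals $\fm$ of $\hecke$ in its support. It therefore suffices to prove that the summand $(M/\fp M)_\fn$ is nonzero, because that summand is supported only at $\fn$ and so all of its composition factors are isomorphic to $\hecke/\fn \cong E$. For the nonvanishing: localisation is exact, so $E \cong E_\fn$ is an $\hecke_\fn$-subquotient of $M_\fn$, whence $M_\fn \neq 0$; and since $M_\fn$ is finitely generated over the local ring $\hecke_\fn$ and $\fp\hecke_\fn \subseteq \fn\hecke_\fn$, Nakayama's lemma gives $(M/\fp M)_\fn = M_\fn/\fp M_\fn \neq 0$.

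Finally, I would invoke the elementary fact that every composition factor of a finite-length module $N$ over a commutative ring occurs in the socle of $N$: this follows once more from the decomposition $N = \bigoplus_\fm N_\fm$, since each nonzero summand is a finite-length module over a commutative Artinian local ring and therefore has nonzero socle, a nonzero vector space over the corresponding residue field. Applied to $N = M/\fp M$, this exhibits an $\hecke$-submodule of $M/\fp M = R/\fp \otimes_R M$ isomorphic to $E$, which is exactly the assertion of the lemma.

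The step I expect to need the most care is the bookkeeping in the second paragraph. One must simultaneously use that $(M/\fp M)_\fn$ is a localisation of $M/\fp M$ (so it is supported only at $\fn$, forcing all its composition factors to be copies of $E$) and a direct summand of $M/\fp M$ (by finite length over the commutative ring $\hecke$), and one must not conflate ``quotient'', ``subquotient'', ``localisation'', and ``direct summand'' when passing between $M_\fn/\fp M_\fn$ and $M/\fp M$. I would also emphasise that the proof genuinely relies on $M$ being finitely generated as an $R$-module: without it the very first step fails, as $\fn \cap R$ need not be a maximal ideal of $R$. The hypothesis that $R$ is semilocal, by contrast, does not seem to be needed for this statement---Nakayama's lemma is applied only over the honestly local ring $\hecke_\fn$, and everything else is finite-length module theory over $\hecke$.
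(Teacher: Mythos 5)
Your proof is correct, and while the overall arc (localise $R/\fp\otimes_R M$ at the annihilator $\fn$ of $E$, observe that the localisation is a direct summand, and find $E$ in its socle) is the same as the paper's, you reach that point by genuinely different means. The paper first replaces $\hecke$ by its image $\hecke/\Ann_\hecke M \subset \End_R(M)$ so that $\hecke$ becomes module-finite over $R$, and then quotes two results of Eisenbud: a lying-over corollary to conclude $\fp = \fn\cap R$ is maximal, and the CRT decomposition of the Artinian ring $\hecke/\fp\hecke$ into local factors to see that $(R/\fp\otimes_R M)_\fn$ is a direct summand. You avoid the replacement of $\hecke$ entirely: you get maximality of $\fp$ from the elementary fact that a field ($E\cong\hecke/\fn$) that is module-finite over a domain ($R/\fp$) forces that domain to be a field, and you get the direct-sum decomposition directly from the finite length of $M/\fp M$ as an $\hecke$-module, which is automatic and does not require $\hecke/\fp\hecke$ itself to be Artinian. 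Your nonvanishing of the $\fn$-component via Nakayama over $\hecke_\fn$ replaces the paper's remark that ``$E$ is a subquotient of the localisation''; both are fine. Your approach is arguably cleaner in that it works with the module rather than the ring and avoids citing Eisenbud. You are also right on both of your side observations: the argument does require $M$ to be finitely generated over $R$ (this is implicit in the paper's proof via the replacement step, and holds in every application, where $M$ is a graded $\heckebig$-module and $\heckebig$ is module-finite over $R$), and the semilocality of $R$ is not used in this lemma --- it belongs to the running hypotheses because it is needed in the results that invoke Lemma~\ref{lem:subquosub}, such as Proposition~\ref{prop:littleExercise}, where one quantifies over the finitely many maximal ideals of $R$.
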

\begin{proof}
  By replacing $\hecke$ by $\hecke/\Ann_{\hecke} M\subset \End M$, we may assume
  that $\hecke$ is finitely generated over $R$.
  Let $E$ be a simple subquotient of $M$. Then its annihilator is a maximal
  ideal $\fP$ of $\hecke$. By \cite[Corollary 4.17]{Eisenbud} the intersection
  $\fp=\fP \cap R$ is a maximal ideal of $R$, and $E$ is a subquotient
  of the localisation $(R/\fp\otimes_R M)_{\fP}$. Now, the quotient~$R/\fp$ is
  a field, so that $\hecke/\fp\hecke$ is Artinian. By \cite[Corollary 2.16]{Eisenbud},
  the localisation $(\hecke/\fp\hecke)_{\fP}$ is local and is a direct factor of
  $\hecke/\fp\hecke$, so that $(R/\fp\otimes_R M)_{\fP}$ is a direct $\hecke$-summand of
  $R/\fp\otimes_R M$. Since this localisation is Artinian, it contains
  a simple submodule. But since $(\hecke/\fp\hecke)_{\fP}$ is local, all simple
  $(\hecke/\fp\hecke)_{\fP}$-modules are pairwise isomorphic, and in particular
  $E$ is isomorphic to a simple $\hecke$-submodule of
  $(R/\fp\otimes_R M)_{\fP}\subset R/\fp\otimes_R M$, which proves the result.
\end{proof}

% technichal lemma [local to global miracle 2]
\begin{lemma}\label{lem:littleExercise}
  Let $c\in C$ be non-trivial, and for every $\chi\in \Cdual$ with $\chi(c) \neq 1$,
  let $c_\chi\in C$ be such that $\chi(c_\chi)\neq 1$. Then
  $c$ is contained in the group generated by the elements $c_\chi$.
\end{lemma}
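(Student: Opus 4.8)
The plan is a short Pontryagin-duality argument. Let $H$ denote the subgroup of $C$ generated by the elements $c_\chi$, where $\chi$ ranges over all $\chi\in\Cdual$ with $\chi(c)\neq 1$; the goal is to show $c\in H$.

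First I would argue by contradiction and pass to the quotient group $C/H$. If $c\notin H$, then the image of $c$ in $C/H$ is non-trivial. Since $C/H$ is a finite abelian group, its complex characters separate points, so there is a character of $C/H$ that is non-trivial on the image of $c$. Pulling this character back along the projection $C\to C/H$ yields a character $\chi\in\Cdual$ that is trivial on $H$ and satisfies $\chi(c)\neq 1$.

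Now the hypothesis of the lemma applies to this particular $\chi$: there is an element $c_\chi\in C$ with $\chi(c_\chi)\neq 1$, and by its very construction $c_\chi$ lies in $H$. But $\chi$ was chosen to be trivial on $H$, so $\chi(c_\chi)=1$, a contradiction. Hence $c\in H$, which is exactly the assertion.

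There is no genuine obstacle here: the only ingredient beyond elementary group theory is the standard fact that the characters of a finite abelian group detect non-triviality of elements, equivalently that any subgroup is the intersection of the kernels of the characters vanishing on it. One could alternatively phrase the argument without quotients, directly observing that $c$ lies in the kernel of every $\chi\in\Cdual$ that kills all the $c_\chi$ — but since such a $\chi$ would have $\chi(c)\neq 1$ impossible by hypothesis, every $\chi$ trivial on $H$ is also trivial on $c$, forcing $c\in H$.
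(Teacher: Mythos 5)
Your proof is correct and follows essentially the same approach as the paper: in both, one assumes $c$ does not lie in the subgroup $H$ (the paper calls it $U$) generated by the $c_\chi$, invokes the fact that characters of the finite abelian quotient separate points to produce $\chi\in\Cdual$ trivial on $H$ with $\chi(c)\neq 1$, and then observes that $c_\chi\in H$ with $\chi(c_\chi)\neq 1$ gives a contradiction. Your write-up is in fact a bit more explicit about where the contradiction lands than the paper's rather terse version.
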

\begin{proof}
  Suppose that $X$ is the set of all $\chi\in \Cdual$ satisfying $\chi(c)\neq 1$.
  %We proceed by induction on $\#X$.
  Let $U$ be the subgroup of $C$ generated by $c_{\chi}$ for all $\chi\in X$.
  If $c$ is  not in $U$,
  then there exists $\chi'\in \Cdual$ such that $\chi'(c)\neq 1$
  and $\chi'|_U=1$, in particular such that we have $\chi'\not\in X$. Therefore, $c$ is in $U$,
  as claimed.
\end{proof}
% other proof by double perp of Pontryagin duality:
% H = <c_chi>
% mu in H^perp => mu(c_chi)=1 for all chi
% => mu != chi for all chi
% => mu(c) = 1
% so c is in (H^perp)^perp = H

% Puts together the 2 miracles. Makes the whole machinery usable.
\begin{proposition}\label{prop:littleExercise}
  Suppose that $R$ is semilocal. Let $c\in C$, and let
  %, $C$ a finite abelian group, $c_0\in C$, $B$ a $C$-graded commutative $R$-algebra,
  $M$ be a graded $\heckebig$-module. Suppose that for all the finitely many
  $\fp\in \MaxSpec(R)$, for all simple (ungraded) $\hecke$-submodules $E$
  of~$R/\fp\otimes_R M$, and for all $\chi\in \Cisodual$ with
  $\chi(c)\neq 1$, there exist $c_{\chi,E}\in C$ with
  $\chi(c_{\chi,E})\neq 1$ and $T_{\chi,E}\in \hecke_{c_{\chi,E}}$ that
  does not annihilate~$E$. Then
%  \begin{enumerate}[leftmargin=*,label={\upshape(\arabic*)}]
    %\item\label{item:someb}
  there exists an element of $\heckebig_{c}$ that acts invertibly
    on $M$.
    %In particular, for every $b\in C$, the graded piece $M_{b}$
    %is linked to $M_{cb}$.

%  \item\label{item:thatb} if, moreover, $A$ is local, then there exist $\fp$ and $N$ as above,
%    and integers $\alpha_{\chi,N}$, such that
%    $b=\prod_{\chi}b_{\chi,N}^{\alpha_{\chi,N}}$ satisfies the conclusion
%    of part \ref{item:someb}.
%\end{enumerate}
\end{proposition}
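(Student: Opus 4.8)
The plan is to combine Lemma~\ref{lem:invertibleSwap} with a group-theoretic analysis of which homogeneous components of $\heckebig$ can annihilate a given simple subquotient. First note that if $c$ lies in $\nu(W)$, then any $w\in W$ with $\nu(w)=c$ has image in $\heckebig_c$ that is a unit of $\heckebig$, hence acts invertibly on $M$, and we are done; so I may assume that the image $\bar c$ of $c$ in $\Ciso=C/\nu(W)$ is non-trivial. Since $R$ is semilocal, Lemma~\ref{lem:invertibleSwap} reduces the statement to showing: for every simple $\heckebig$-subquotient $N$ of $M$, some element of $\heckebig_c$ does not annihilate $N$.

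So fix a simple $\heckebig$-subquotient $N$ of $M$. The key structural point is that the image of $\hecke$ in $\heckebig$ is a central subring, because $\hecke$ is commutative; so by Schur's lemma every homogeneous element $S\in\hecke_d$ acts on the simple module $N$ either as $0$ or invertibly, and the homogeneous elements of $\hecke$ acting invertibly on $N$ are closed under multiplication. It follows that
\[
  D_N \;=\; \{\, d\in C : \hecke_d \text{ does not annihilate } N \,\}
\]
is a subgroup of $C$: it contains $1$; it is closed under products; and it is closed under inverses, because if $S\in\hecke_d$ acts invertibly on $N$ and $n$ is the order of $d$ in $C$, then $S^{n-1}\in\hecke_{d^{-1}}$ acts invertibly on $N$, being a power of the invertible operator $S|_N$. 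Using that every $w\in W$ acts invertibly on $\heckebig$, one checks that, for any $d\in C$, the component $\heckebig_d$ annihilates $N$ if and only if $D_N\cap d\,\nu(W)=\emptyset$; equivalently, writing $\overline{D_N}$ for the image of $D_N$ in $\Ciso$, the component $\heckebig_c$ fails to annihilate $N$ exactly when $\bar c\in\overline{D_N}$. It therefore suffices to prove $\bar c\in\overline{D_N}$.

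To do this I would feed the hypothesis into Lemma~\ref{lem:littleExercise}, applied to the finite abelian group $\Ciso$. Since $N\neq 0$, it has a simple $\hecke$-subquotient $E$ (for any $0\neq n\in N$, take a simple quotient of $\hecke n$). Then $E$ is also a simple $\hecke$-subquotient of $M$, so by Lemma~\ref{lem:subquosub} it is, up to isomorphism, a $\hecke$-submodule of $R/\fp\otimes_R M$ for some $\fp\in\MaxSpec(R)$. Now let $\chi\in\Cisodual$ be any character with $\chi(c)\neq 1$; the hypothesis provides $c_{\chi,E}\in C$ with $\chi(c_{\chi,E})\neq 1$ and $T_{\chi,E}\in\hecke_{c_{\chi,E}}$ not annihilating $E$, and since $E$ is a subquotient of $N$ this $T_{\chi,E}$ cannot annihilate $N$, so $c_{\chi,E}\in D_N$ and its image in $\Ciso$ lies in $\overline{D_N}$. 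Thus, for every $\chi\in\Cisodual$ with $\chi(c)\neq 1$ we have exhibited an element of $\overline{D_N}$ on which $\chi$ is non-trivial. As $\bar c$ is non-trivial in $\Ciso$, Lemma~\ref{lem:littleExercise} shows that $\bar c$ lies in the subgroup of $\Ciso$ generated by these elements, which is contained in $\overline{D_N}$. Hence $\bar c\in\overline{D_N}$, so $\heckebig_c$ does not annihilate $N$, and Lemma~\ref{lem:invertibleSwap} finishes the proof.

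The step I expect to require the most care is the translation between $\hecke$ and $\heckebig$ and the bookkeeping with $\nu(W)$: one must verify that whether $\heckebig_d$ annihilates $N$ depends only on the class of $d$ in $\Ciso$ and is controlled by the subgroup $D_N$, which is precisely where the centrality of $\hecke$ in $\heckebig$ (giving the ``$0$ or invertible'' dichotomy, and hence the subgroup property of $D_N$) and the invertibility of the elements of $W$ get used. One should also be mindful that the hypothesis only gives non-annihilation of the possibly-smaller $\hecke$-subquotient $E$, not of $N$ itself; this suffices because an operator that does not kill a subquotient $A/B$ of $N$ in particular does not kill the submodule $A$. Everything else is the duality input packaged in Lemma~\ref{lem:littleExercise}, together with the already-proved Lemmas~\ref{lem:invertibleSwap} and~\ref{lem:subquosub}.
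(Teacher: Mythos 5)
Your argument is correct, and it follows essentially the same route as the paper's proof: reduce via Lemma~\ref{lem:invertibleSwap} to showing, for each simple $\heckebig$-subquotient $N$, that some element of $\heckebig_c$ acts invertibly on $N$; pass to a simple $\hecke$-subquotient $E$ and use Lemma~\ref{lem:subquosub} to place it inside some $R/\fp\otimes_R M$; invoke the hypothesis and Lemma~\ref{lem:littleExercise}. The bookkeeping is arranged differently: you peel off the $\nu(W)$-part at the start and apply Lemma~\ref{lem:littleExercise} inside $\Ciso$, packaging the relevant data into the subgroup $D_N$ of $C$, whereas the paper applies Lemma~\ref{lem:littleExercise} directly in $C$ to all characters of $\Cdual$ (feeding in images of elements of $W$ for the characters not trivial on $\nu(W)$) and then directly exhibits the invertible element as a product $\prod_\chi S_{\chi,N}^{\alpha_{\chi,N}}$ rather than arguing existentially. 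Your observation that $D_N$ is a subgroup and that $\heckebig_d$ annihilates $N$ precisely when $\bar d\notin\overline{D_N}$ is a clean reformulation of the same centrality/Schur argument the paper uses to identify $N\cong E^r$ as an $\hecke$-module; both rest on exactly the same lemmas and the same use of centrality of $\hecke$ in $\heckebig$.
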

\begin{proof}
  Let $N$ be a simple $\heckebig$-subquotient of $M$.
  We claim that the hypothesis implies that for all $\chi\in \Cdual$ satisfying $\chi(c)\neq 1$
  there exist $c_{\chi,N}\in C$ with $\chi(c_{\chi,N})\neq 1$ and
  $S_{\chi,N}\in \heckebig_{c_{\chi,N}}$ that does not annihilate $N$.
  Indeed, for $\chi\in \Cdual$ that do not vanish on~$\nu(W)$, we may take
  $S_{\chi,N}$ to be the image of any $w\in W$ for which $\chi(\nu(w))\neq 1$.
  Suppose instead that $\chi \in \Cisodual$ satisfies $\chi(c)\neq 1$.
  Since the image of $\hecke$ in $\heckebig$ is central and $N$ is finite over $R$,
  the $\hecke$-module
  $N$ is isomorphic to $E^r$ for a simple $\hecke$-module $E$ and some $r\in \Z_{>0}$.
  By Lemma~\ref{lem:subquosub}, the module $E$ is isomorphic to a simple submodule of
  $R/\fp\otimes_R M$ for some $\fp\in \MaxSpec(R)$. We may then take
  $S_{\chi,N}=T_{\chi,E}$.
  By Lemma~\ref{lem:littleExercise}, there exist $\alpha_{\chi,N}\in \Z$ such that
  $c=\prod_{\chi}c_{\chi,N}^{\alpha_{\chi,N}}$, so that
  $\prod_{\chi}S_{\chi,N}^{\alpha_{\chi,N}}$ belongs to $\heckebig_{c}$. This product
  clearly acts invertibly on $N$. The conclusion follows from Lemma~\ref{lem:invertibleSwap}.
\end{proof}

\begin{corollary}\label{cor:linked}
  Let~$c\in C$ and let~$M$ be a graded~$\heckebig$-module.
  Suppose that for all simple (ungraded) $\hecke$-submodules $E$
  of~$M_Q$, and for all $\chi\in \Cisodual$ with
  $\chi(c)\neq 1$, there exist $c_{\chi,E}\in C$ with
  $\chi(c_{\chi,E})\neq 1$ and $T_{\chi,E}\in \hecke_{c_{\chi,E}}$ that
  does not annihilate~$E$.
  Then for every $b\in C$, the homogeneous component~$M_{b}$ is linked to $M_{cb}$.
\end{corollary}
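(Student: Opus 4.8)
The plan is to deduce this corollary from Proposition \ref{prop:littleExercise} by a base-change argument. The statement of Corollary \ref{cor:linked} is phrased over the generic point (simple $\hecke$-submodules of $M_Q$), whereas Proposition \ref{prop:littleExercise} is phrased over the closed points (simple $\hecke$-submodules of $R/\fp\otimes_R M$), so the first task is to reconcile these two hypotheses. The key observation is that a simple $\hecke$-submodule $E$ of $R/\fp\otimes_R M$ for $\fp\in\MaxSpec(R)$ arises, after possibly enlarging $R$, as a subquotient situated underneath a simple submodule of $M_Q$: more precisely, since $R$ is Noetherian and $M$ is a finitely generated $R$-module, localising at the generic point is exact, and any annihilator data that holds for all simple submodules of $M_Q$ propagates to a dense open subset of $\Spec R$; one then has to handle the remaining finitely many closed points separately. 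Rather than fight this directly, I would instead apply Proposition \ref{prop:littleExercise} after replacing $R$ by its field of fractions $Q$, which is certainly semilocal (it is a field, with the unique maximal ideal $0$), so that $R/\fp\otimes_R M$ becomes $M_Q$ and the two hypotheses literally coincide.

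Concretely, here is the sequence of steps. First, set $c$ and $b$ as in the statement, and consider the graded $\heckebig_Q$-module $M_Q = \bigoplus_{d\in C}(M_d)_Q$ over the domain $Q$ (which is its own field of fractions). The hypothesis of Corollary \ref{cor:linked} is exactly the hypothesis of Proposition \ref{prop:littleExercise} for the data $(Q, \heckebig_Q, M_Q, c)$, since $\MaxSpec(Q)=\{0\}$ and $Q/0\otimes_Q M_Q = M_Q$. Hence Proposition \ref{prop:littleExercise} produces an element $T\in (\heckebig_Q)_c = (\heckebig_c)_Q$ acting invertibly on $M_Q$. Second, clear denominators: since $T\in (\heckebig_c)_Q$, we have $T = T'/s$ for some $T'\in\heckebig_c$ and some nonzero $s\in R$, and $T'$ still acts invertibly on $M_Q$ because multiplication by $s$ is invertible on the $Q$-vector space $M_Q$. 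Third, observe that $T'\colon M_b\to M_{cb}$ (using $\heckebig_c M_b\subset M_{cb}$) becomes, after tensoring with $Q$, the isomorphism $T'\colon (M_b)_Q\to (M_{cb})_Q$; indeed $T'$ acts invertibly on all of $M_Q$ and permutes the homogeneous components $(M_d)_Q$, sending $(M_b)_Q$ isomorphically onto $(M_{cb})_Q$. Therefore $M_b$ is $Q$-linked to $M_{cb}$ by $T'$ in the sense of Definition \ref{def:linked}, and since $Q$ is a ring containing $R$, $M_b$ is linked to $M_{cb}$. Running this for each $b\in C$ gives the conclusion.

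The main thing to get right — and the only genuine subtlety — is the verification that the hypothesis of Corollary \ref{cor:linked} really does transfer to the Proposition's hypothesis applied over $Q$, in particular that ``simple $\hecke$-submodule of $M_Q$'' means the same as ``simple $\hecke_Q$-submodule of $M_Q$.'' This is fine because $M_Q$ is a module over $\hecke_Q = Q\otimes_R\hecke$, and a $Q$-submodule stable under $\hecke$ is automatically stable under $\hecke_Q$, so the two notions of simple submodule agree; moreover an element $T_{\chi,E}\in\hecke_{c_{\chi,E}}$ not annihilating $E$ gives, a fortiori, an element of $(\hecke_Q)_{c_{\chi,E}}=(\hecke_{c_{\chi,E}})_Q$ not annihilating $E$. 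A secondary point to be careful about is that $\heckebig_Q$ is again finitely generated as a module over $Q$ (it is, since $\heckebig$ is finitely generated over $R$) and is a quotient of $\hecke_Q[W]$ by a homogeneous ideal, so that the standing assumptions of Section \ref{sec:gradedAlg} are in force for the data $(Q,\heckebig_Q)$ and Proposition \ref{prop:littleExercise} is applicable. Once these bookkeeping items are checked, the argument is immediate.
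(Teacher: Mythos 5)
Your proposal is correct and is essentially the paper's own argument: the proof in the paper consists precisely of applying Proposition~\ref{prop:littleExercise} with $R=Q$, so that $\MaxSpec(Q)=\{0\}$ and the hypotheses coincide with those of the corollary. Your extra bookkeeping (clearing denominators to land in $\heckebig_c$ itself and checking that the resulting invertible operator sends $(M_b)_Q$ onto $(M_{cb})_Q$) is a careful spelling-out of what the paper leaves implicit, and the speculative first paragraph about spreading out over a dense open subset of $\Spec R$ is rightly discarded in favour of this base change.
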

\begin{proof}
  Apply Proposition~\ref{prop:littleExercise} with $R=Q$.
\end{proof}
%\begin{example}
%  Let $\cY$ be as in the introduction, and let $B$ be the Hecke algebra
%  $\hecke$ from the introduction. If $M=\Omega_{\Delta=0}^i(\cY)$
%  or $M=\Omega_{\Delta}^i(\cY)$, then the hypothesis of Proposition
%  \ref{prop:littleExercise} is, in this special case, identical to the
%  hypothesis of Theorem \ref{thm:intro1}\ref{item:rationalReg},
%  respectively \ref{item:isospectral}. If, instead, $p$ is a prime number,
%  and $M$ is one of $H_i(\cY,\Z_p)_{\free}$,
%  $H_i(\cY,\Z)[p^\infty]$, or $H_i(\cY,\Z_p)$, then we will show in SOMEWHERE
%  that the hypothesis of Theorem \ref{thm:intro2}\ref{item:pvalReg},
%  respectively \ref{item:pinftytors}, respectively \ref{item:zpcoefs}
%  implies the hypothesis of Proposition \ref{prop:littleExercise} in this
%  special case.
%\end{example}

\subsection{Polarisations}\label{sec:polarisations}
%In this subsection we investigate the existence and fields of definition of
%pairings on graded $\heckebig$-modules. The main result of the subsection is
%Proposition \ref{prop:descend-polarisation}, which will be central for
%proving rationality of regulator quotients.
%
\begin{definition}
An \emph{involution} on $\hecke$ is an
$R$-module homomorphism $\iota\colon \hecke\to \hecke$
satisfying the following properties:
\begin{itemize}
  \item $\iota^2=\id$,
  \item for all $x$, $y\in \hecke$ one has $\iota(xy) = \iota(y)\iota(x)$,
  \item for all $c\in C$ one has $\iota(\hecke_c)=\hecke_{c^{-1}}$.
\end{itemize}
In particular, every involution on $\hecke$ stabilises~$\hecke_1$.
Extend any involution $\iota$ on $\hecke$ to an involution
on $\hecke[W]$ by defining $\iota(w)=w^{-1}$ for all $w \in W$, and
assume that this involution descends to $\heckebig$.
Whenever~$M$ is an~$\heckebig$-module (resp.~$\hecke_1$-module), we give~$M^*=\Hom_R(M,R)$ the
structure of an~$\heckebig$-module (resp.~$\hecke_1$-module), called the \emph{dual of $M$},
via~$\iota$, i.e.~$(T\phi)(m)=\phi(\iota(T)m)$ for~$\phi\in M^*, m\in M$
and~$T\in \heckebig$ (resp.~$T\in \hecke_1$). We say that an~$\heckebig$-module
(resp.~$\hecke_1$-module) $M$ is \emph{self-dual} if $M^*$ is isomorphic to $M$.
\end{definition}

\begin{assumption}\label{ass:invol}
  For the rest of the section, assume that the image of~$\hecke$ in~$\heckebig$ is reduced.
  Further, assume that~$\hecke$ is equipped with an involution $\iota$, and
  that $\iota$, when extended to~$\hecke W$ as above, descends to $\heckebig$.
%that acts as the identity on $R$ and
%that, for every $c\in C$, maps $B_c$ to $B_{c^{-1}}$.
\end{assumption}

\begin{definition}\label{def:pol}
  Let $M$ be a graded $\heckebig$-module that is $R$-torsion-free.
  If $L$ is a field containing $Q$, then a \emph{polarisation} on~$M$
  over~$L$ is a non-degenerate bilinear pairing $\pairing{}\colon
  M\otimes_R M \to L$ such that for $c\neq c'\in C$, the module~$M_{c}$ is orthogonal
  to $M_{c'}$, and such that for every $T\in \heckebig$, the element $T$ is adjoint to
  $\iota(T)$. If~$M$ is equipped with a polarisation over $L$, then we say
  that~$M$ is \emph{polarised} over $L$, and if~$M$ admits a polarisation
  over $L$, then we say that~$M$ is \emph{polarisable} over $L$.
\end{definition}

\begin{remark}\label{rmrk:Wadjoint}
  A pairing $\pairing{}\colon M\otimes_R M \to L$ is a polarisation
  for~$\heckebig$ if and only if it is a polarisation for $\hecke$
  that is $W$-invariant, i.e. such that for all $w\in W$ and all $m, m'\in M$
  we have $\langle wm,wm'\rangle=\langle m,m'\rangle$.
\end{remark}

%\begin{example}\label{ex:polarisation}
%  If $A$, $B$, and $C$ are as in Example \ref{ex:ABC}, $\iota$ is the identity map,\marginpar{revisit}
%  and $M=H_i(\cY,\R)$, then, as we will recall in Section \ref{sec:Vigneras}, there
%  is a canonical polarisation on $M$ over $\R$, namely the canonical pairing coming
%  from harmonic forms.
%  We will shortly show that, in fact, $H_i(\cY,\Z)_{\free}$ is also polarisable
%  over $\Q$. 
%\end{example}

\begin{remark}
  If~$M$ is as in Definition \ref{def:pol}, then 
  $M^*=\Hom_R(M,R)$ is a graded $\heckebig$-module, where for every $c\in C$,
  the homogeneous component $(M^*)_c$ consists of the homomorphisms $M\to R$ that
  factor through $M_{c^{-1}}$.
  %The condition that~$M$ be polarisable is equivalent
  %to demanding that there exist
  A polarisation on~$M$ over a field $L$ is the same as an isomorphism of
  (not graded) $\heckebig_L$-modules $M_L\to (M^*)_L$ that sends
  $M_c$ to $(M^*)_{c^{-1}}$.
\end{remark}

% Nice criterion for polarisability
\begin{proposition}\label{prop:polarisation}
  Let $M$ be a graded $\heckebig$-module that is $R$-torsion-free,
  and let $L$ be a field containing $Q$.
  Then~$M$ is polarisable over~$L$ if and only if for every~$c\in C$,
  $(M_c)_L$ is a self-dual $(\heckebig_1)_L$-module.
\end{proposition}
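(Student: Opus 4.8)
The plan is to reduce the statement to an elementary question about $(\heckebig_1)_L$-modules, using the grading and the involution to organise the data. Throughout, write $A = \heckebig_L$, $A_1 = (\heckebig_1)_L$, and note that $A$ is a $C$-graded $A_1$-algebra that is finite-dimensional (since $\heckebig$ is finite over $R$ and $L \supseteq Q$), with $\hecke$-image reduced by Assumption~\ref{ass:invol}; the involution $\iota$ makes $M^* = \Hom_L(M_L, L)$ a graded $A$-module with $(M^*)_c$ dual to $M_{c^{-1}}$, as recorded in the remark preceding the proposition. The key translation, also stated in that remark, is that a polarisation on $M$ over $L$ is precisely an isomorphism of (ungraded) $A$-modules $\theta\colon M_L \xrightarrow{\sim} (M^*)_L$ that is \emph{homogeneous of degree the inversion}, i.e.\ $\theta(M_c) = (M^*)_{c^{-1}}$ for all $c$. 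So the task is to show that such a graded isomorphism exists if and only if each $(M_c)_L$ is self-dual as an $A_1$-module.

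The forward direction is immediate: restricting $\theta$ to the homogeneous component $M_c$ gives an $A_1$-isomorphism $M_c \xrightarrow{\sim} (M^*)_{c^{-1}}$, and $(M^*)_{c^{-1}} = \Hom_L((M_c)_L, L)$ is by definition the $A_1$-dual (twisted by $\iota|_{\heckebig_1}$) of $(M_c)_L$, so $(M_c)_L$ is self-dual. For the converse, suppose each $(M_c)_L \cong ((M_c)_L)^\vee$ as an $A_1$-module; the problem is to \emph{assemble} these component-wise isomorphisms into a single $A$-module isomorphism $M_L \to (M^*)_L$ respecting the grading. This is the main obstacle: a naive direct sum of arbitrary isomorphisms $M_c \to (M^*)_{c^{-1}}$ need not commute with the action of the off-diagonal homogeneous components $\heckebig_c$ with $c \neq 1$. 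The natural remedy is to exploit that $A$ is reduced in its $\hecke$-part, hence in particular (the image of) $\hecke_L$ is a finite product of fields, and to decompose $M_L$ according to the primitive idempotents of $\mathrm{im}(\hecke_L)$; on each such isotypic piece the grading group acts through a quotient and one can average/normalise the chosen isomorphisms so that they glue. More concretely, I would pick one $A_1$-isomorphism $\theta_1\colon M_1 \to (M^*)_{1}$ (using self-duality of $(M_1)_L$ on components where $M_1 \neq 0$, and patching across the primitive idempotents of $\hecke_L$ using that all simple modules over a local factor agree, as in Lemma~\ref{lem:subquosub}), and then \emph{transport} it to the other components via the linking Hecke operators: if $T \in \heckebig_{c}$ acts invertibly between the relevant pieces, set $\theta_c$ to be the composite $M_c \xrightarrow{T} M_{c'} \to (M^*)_{\dots} \xrightarrow{\iota(T)} (M^*)_{c^{-1}}$ up to the appropriate identifications, and check compatibility is forced by the relation $\langle Tx, y\rangle = \langle x, \iota(T)y\rangle$.

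Where the connected components are not linked — i.e.\ where some $\heckebig_c$ annihilates a simple subquotient — one argues on that summand separately: there the relevant homogeneous pieces of $M_L$ and of the algebra are "independent", the grading degenerates, and self-duality of each $(M_c)_L$ in isolation is exactly what is needed and sufficient, with no compatibility constraint to worry about. The cleanest way to make this dichotomy rigorous is to reduce to the case where $\hecke_1$ acts faithfully on $M$ and then to decompose $M_L$ along $\MaxSpec$ of $\mathrm{im}(\hecke_1)_L$ (a finite set, the algebra being reduced and finite), handling one primitive idempotent at a time; on each factor the ambient algebra is local-ish enough that the earlier linkage lemmas (Lemma~\ref{lem:invertibleSwap}, Lemma~\ref{lem:littleExercise}) govern which homogeneous components can be nonzero, and one builds $\theta$ block-by-block. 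I expect the bookkeeping of signs/twists coming from $\iota$ on $\heckebig_1$ (which can be nontrivial) and the verification that the transported pairings are genuinely \emph{symmetric} (or at least non-degenerate in the required sense), rather than merely non-degenerate bilinear, to be the fiddly points; but these are routine once the gluing scheme is set up, and the conceptual content is entirely in the reduction to component-wise self-duality plus the use of invertible Hecke operators to link components.
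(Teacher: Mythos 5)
Your plan is essentially the same as the paper's: reduce along the factors of $\hecke_1$, choose a ``base'' pairing on a set of components using the self-duality hypothesis, transport it to the other components by invertible elements $T_c$ of $\heckebig_c$ using the formula involving $T_c$ on one side and $\iota(T_c)$ on the other, and observe that for components not reachable in this way there is no compatibility constraint because $\heckebig_c$ acts as zero there. The paper organises this with one structural observation that makes the gluing painless and which your sketch gestures at but does not isolate: after reducing to the case where $\hecke_1$ is a field (or a pair of fields swapped by $\iota$), the set $\tilde C = \{c \in C : \heckebig_c M \neq 0\}$ \emph{is a subgroup} of $C$, by Lemma~\ref{lem:invertibleSwapField}, and for each $c \in \tilde C$ one can pick a single invertible $T_c \in \heckebig_c$; one then fixes an arbitrary non-degenerate pairing with the right $\heckebig_1$-adjointness on $M_d$ for each coset representative $d$ of $C/\tilde C$ (not only on $M_1$, which may well be zero) and sets $\langle m,m'\rangle_{cd} = \langle T_c^{-1}m,\iota(T_c)m'\rangle_d$. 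The verification that $\iota(T)$ is the adjoint of $T$ then reduces to a short telescoping identity.

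Two small corrections to your sketch. First, the initial block decomposition must run over the factors of $\hecke_1$ (or its image in $\heckebig_1$), not over the primitive idempotents of $\hecke_L$: idempotents coming from higher graded pieces of $\hecke$ need not be homogeneous of degree $1$, so cutting $M$ along them would destroy the grading. Your later reference to $\MaxSpec$ of $\mathrm{im}(\hecke_1)_L$ is the right object. Second, the worry about the transported pairing being ``genuinely symmetric'' is moot: Definition~\ref{def:pol} does not require symmetry, only non-degeneracy, orthogonality of distinct homogeneous components, and the $\iota$-adjointness. The only non-formal thing to check is non-degeneracy of each $\langle\cdot,\cdot\rangle_{cd}$, which is immediate because $T_c$ is invertible.
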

\begin{proof}
  The ``only if'' direction is clear, so we prove the other direction.

  We may, without loss of generality, replace $R$ by $L$, replace all the algebras
  and modules by their tensor product with $L$ over $R$, and replace~$\hecke$
  by its image in~$\heckebig$. Then $\hecke$, being a reduced
  commutative algebra over a field by Assumption \ref{ass:invol}, is a product of fields. In particular, every
  quotient of $\hecke$ is also reduced.
  %, an observation that we will use repeatedly.
  The algebra~$\hecke_1$ is a product of factors $B$ that are each
  either a field that is stabilised by $\iota$, or a product $B=A_1\times A_2$ of fields
  with $\iota$ inducing an isomorphism from $A_1$ to $A_2$ and from $A_2$ to $A_1$.
  Accordingly the tensor products $B\otimes_{\hecke_1} \bullet$ for all such factors $B$
  induce a direct product decomposition on the
  $\hecke_1$-algebras $\heckebig_1$, $\hecke$, and $\heckebig$, and a direct
  sum decomposition on the modules $M$ and $M_c$ for all $c\in C$. Clearly, $M$ is
  polarisable if and only if for all $B$ as above the module $B\otimes_{\hecke_1} M$ is,
  and similarly, for a given $c\in C$, the $\heckebig_1$-module $M_c$ is self-dual
  if and only if for all $B$, the $B\otimes_{\hecke_1} \heckebig_1$-module
  $B\otimes_{\hecke_1} M_c$ is. Moreover, $\hecke$ being reduced implies that
  so is $B\otimes_{\hecke_1} \hecke$ for all $B$ as above.
  Thus, we assume, without loss of generality, that
  $\hecke_1$ is either a field stabilised by $\iota$ or isomorphic to $A_1\times A_2$
  as just described.

  Let 
    $$
      \tilde{C}=\{c\in C: \heckebig_cM\neq 0\}.
    $$
  For an arbitrary $T\in \heckebig$, the non-degeneracy of the pairing on $M$ implies
  that $TM\neq 0$ if and only $\iota(T)M\neq 0$. It follows that if $c\in C$, and
  $\hecke_1=A_1\times A_2$, then $(A_1\otimes_{\hecke_1}\heckebig)_c$
  does not annihilate $M$ if and only if $(A_2\otimes_{\hecke_1}\heckebig)_c$ does not.
  Therefore, Lemma \ref{lem:invertibleSwapField} implies that we have
  $$
    \tilde{C}= \{c\in C: \heckebig_c\text{ contains an invertible element}\}.
  $$
  In particular, $\tilde{C}$ is a subgroup of $C$.

  Now, suppose that for every $c$, the $\heckebig_1$-module $(M_c)_L$ is self-dual.
  Let $D$ be a complete set of coset representatives for $C/\tilde{C}$,
  and for each $d\in D$, let $\langle \cdot , \cdot \rangle_d$ be a non-degenerate $L$-valued
  $L$-bilinear pairing on $M_d$ such that for all $T\in \heckebig_1$, the
  operator $\iota(T)$ is the adjoint of $T$. For every $c\in \tilde{C}$, fix
  an invertible element $T_c\in \heckebig_c$. For $c\in \tilde{C}$ and $d\in D$,
  define a pairing $\langle \cdot,\cdot\rangle_{cd}$ on $M_{cd}$ by setting, for every $m$, $m'\in M_{cd}$,
  $$
    \langle m,m'\rangle_{cd} = \langle T_c^{-1}m,\iota(T_c)m'\rangle_d.
  $$
  Let $\langle \cdot,\cdot\rangle$ be the unique pairing on $M$ that restricts to $\langle \cdot,\cdot\rangle_{c}$
  for all $c\in C$  and that makes the different homogeneous components pairwise orthogonal.
  It is clear that each $\langle \cdot,\cdot\rangle_c$ is non-degenerate, therefore so is $\langle\cdot ,\cdot\rangle$.
  We claim that for all $T\in \heckebig$, the adjoint of $T$ with respect to this pairing is
  $\iota(T)$. It suffices to prove this for homogeneous elements $T$. Let $c,c'\in \tilde{C}$,
  $d\in D$, $T\in \heckebig_c$, $m\in M_{c^{-1}c'd}$ and $m'\in M_{c'd}$. Then we have
  \begin{eqnarray*}
    \langle Tm, m'\rangle_{c'd} & = & \langle T_{c'}^{-1}Tm,\iota(T_{c'})m'\rangle_d\\
                                & = & \langle \underbrace{(T_{c'}^{-1}TT_{c'c^{-1}})}_{\in \heckebig_1}T_{c'c^{-1}}^{-1}m,\iota(T_{c'})m'\rangle_d\\
                                & = & \langle  T_{c'c^{-1}}^{-1}m,\iota(T_{c'}^{-1}TT_{c'c^{-1}})\iota(T_{c'})m'\rangle_d\\
                                & = & \langle  T_{c'c^{-1}}^{-1}m,\iota(T_{c'c^{-1}})\iota(T)m'\rangle_d\\
                                & = & \langle m,\iota(T)m'\rangle_{c^{-1}c'd},
  \end{eqnarray*}
  as required. This shows that $\langle\cdot,\cdot\rangle$ is a polarisation on $M$,
  and completes the proof.
\end{proof}

% should it be an example ?
\begin{remark}
For~$M$ as in Definition \ref{def:pol} to be polarisable over a
field~$L$, it is not sufficient that~$M_L$ be a self-dual $(\heckebig_1)_L$-module. Indeed,
let $W=1$, $\heckebig_1=\hecke_1 = Q\times Q$, with $\iota$ swapping the two factors,
$\hecke = \heckebig = \hecke_1$ with
trivial grading by a group $C=\{1,c\}$ of order $2$, and let~$M_1=Q$ with~$\hecke$ acting
via projection onto the first factor $Q$, and~$M_c=Q$ with~$\hecke$ acting via 
projection onto the second factor. Then $M_1$ and $M_c$ are dual to each other,
so that $M=M_1\oplus M_c$ is a self-dual $\hecke_1$-module. However, $M_1$ is not
self-dual, so $M$ %by Proposition \ref{prop:polarisation} 
is not polarisable over~$Q$.
%However, it is easy to see that if $\pairing{}$ is
%a $Q$-bilinear pairing on $M$ with respect to which $T$ is adoint to
%$\iota(T)$ for every $T\in B$, then $M_1$ and $M_2$ must both be isotropic
%with respect to $\pairing{}$. In particular, if $M$ is graded such that
%$M_1$ is the homogeneous component of the identity in $C$, and $M_2$ is
%the homogeneous component of the non-trivial element of $C$, then $M$ is
%not polarisable over $Q$.
\end{remark}

% polarisation for free!
\begin{cor}\label{cor:T1field}
  Assume that $W=1$ and that either $\hecke_1$ is a field or $\iota$ is trivial.
  Let $M$ be a graded $\hecke$-module that is $R$-torsion-free.
  Then~$M$ is polarisable over~$Q$.
\end{cor}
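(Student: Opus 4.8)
The plan is to read the result off from Proposition~\ref{prop:polarisation}. Since $W=1$ we may take $\heckebig=\hecke$, so that $\heckebig_1=\hecke_1$ and Assumption~\ref{ass:invol} is in force with the given involution $\iota$; also we may assume $M\neq 0$, the zero module being trivially polarisable. Proposition~\ref{prop:polarisation} then reduces the claim to showing that for every $c\in C$ the $(\hecke_1)_Q$-module $(M_c)_Q$ is self-dual. Following the reduction used in the proof of that proposition, I would base change all objects along $R\hookrightarrow Q$, so as to assume $R=Q$. One has to check this is harmless: since $M$ is $R$-torsion-free and non-zero, $M_Q\neq 0$, hence $\hecke_Q\neq 0$; in particular, in the case ``$\hecke_1$ is a field'' the structure map $R\to\hecke_1$ is injective, so $(\hecke_1)_Q=\hecke_1$ remains a field; and reducedness of $\hecke$ passes to $\hecke_Q$, which is a localisation of the reduced ring $\hecke/(R\text{-torsion})$, hence to $(\hecke_1)_Q\subseteq\hecke_Q$.

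After this reduction $\hecke_1$ is a finite-dimensional reduced commutative $Q$-algebra, hence a finite product $\prod_i K_i$ of finite field extensions $K_i/Q$. The key observation is that under either hypothesis each factor $K_i$ is $\iota$-stable: this is trivial if $\iota=\id$, and automatic if there is only one factor, i.e. if $\hecke_1$ is a field. (This is precisely the dichotomy at the start of the proof of Proposition~\ref{prop:polarisation}, with the problematic ``$A_1\times A_2$ swapped by $\iota$'' alternative excluded by our hypotheses; the Remark following that proposition shows self-duality genuinely can fail otherwise.) Granting this, I would check that every finitely generated $\hecke_1$-module $N$ is self-dual: writing $N=\bigoplus_i N_i$ with $N_i$ a finite-dimensional $K_i$-vector space, one has $N^*=\Hom_Q(N,Q)=\bigoplus_i\Hom_Q(N_i,Q)$, and $\iota$-stability of $K_i$ makes $\Hom_Q(N_i,Q)$ a $K_i$-vector space (with $K_i$ acting through $\iota|_{K_i}$) of dimension $\dim_Q\Hom_Q(N_i,Q)/[K_i:Q]=\dim_{K_i}N_i$; hence $\Hom_Q(N_i,Q)\cong N_i$ over $K_i$ and therefore $N^*\cong N$ over $\hecke_1$. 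Applying this with $N=(M_c)_Q$ for each $c\in C$ and invoking Proposition~\ref{prop:polarisation} finishes the proof.

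The self-duality computation is routine linear algebra over a field, so I expect the only point requiring care to be the bookkeeping in the base-change step: checking that reducedness and the property of ``being a field'' survive, and disposing of the degenerate case $M=0$. Conceptually, the content is that the two hypotheses are exactly the conditions guaranteeing that $\iota$ cannot interchange two distinct simple factors of $\hecke_1$, which is the only obstruction to constructing a polarisation.
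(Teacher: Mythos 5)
Your argument is correct and matches the paper's proof in substance: both reduce to $R=Q$, use the two hypotheses to ensure every simple factor of $(\hecke_1)_Q$ is $\iota$-stable, and then verify the self-duality criterion of Proposition~\ref{prop:polarisation} by a dimension count over fields. The only cosmetic difference is that the paper first reduces (as in the proof of Proposition~\ref{prop:polarisation}) to the case where $\hecke_1$ is a single $\iota$-stable field and does the count once, whereas you carry it out factor by factor; your extra care with the degenerate case $M=0$ and with the persistence of the field and reducedness hypotheses under base change is sound.
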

\begin{proof}
  Without loss of generality, assume that~$R=Q$ and replace~$\hecke$ by its
  image in~$\heckebig$.
  Under the assumptions, $\hecke_1$ is a product of fields, each preserved
  by $\iota$, and~$M$ is polarisable over $Q$ if and only if it is after taking the
  tensor product with each of these factors. Thus, it suffices to assume that $\hecke_1$
  is a field.
  In that case, for every $c\in C$ the $\hecke_1$-vector spaces $(M_c)_{Q}$ and $(M^*_c)_{Q}$ have
  the same dimension, and are therefore isomorphic. The result follows from Proposition \ref{prop:polarisation}.
\end{proof}

\begin{prop}\label{prop:descend-polarisation}
  Let~$M$ be a graded~$\heckebig$-module that is $R$-torsion-free, and
  let~$L$ be a field containing $Q$.
  Then~$M$ is polarisable over $Q$ if and only if $M$ is polarisable over $L$.
\end{prop}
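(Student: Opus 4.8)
The plan is to reduce everything to the clean criterion supplied by Proposition~\ref{prop:polarisation}, which characterises polarisability purely in terms of self-duality of the homogeneous components. First I would observe that polarisability over $Q$ trivially implies polarisability over $L$: given a $Q$-valued polarisation, extend scalars to $L$ to obtain an $L$-valued one, since non-degeneracy, orthogonality of distinct homogeneous components, and the adjointness relation $T \leftrightarrow \iota(T)$ are all preserved by the faithfully flat base change $Q \to L$. So the content is the converse.

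For the converse, by Proposition~\ref{prop:polarisation} it suffices to show: if $(M_c)_L$ is a self-dual $(\heckebig_1)_L$-module for every $c \in C$, then $(M_c)_Q$ is a self-dual $(\heckebig_1)_Q$-module for every $c \in C$. This is exactly a descent-of-isomorphism statement for modules over the finite-dimensional $Q$-algebra $(\heckebig_1)_Q$: the dual $M^*_c$ (formed via $\iota$) commutes with extension of scalars, so the hypothesis says $(M_c)_Q$ and $(M^*_c)_Q$ become isomorphic after the field extension $Q \to L$, and we want them isomorphic already over $Q$. This is precisely the Deuring--Noether theorem (also known as the Noether--Deuring theorem): two finitely generated modules over a finite-dimensional algebra $A$ over a field $k$ are isomorphic over $k$ if and only if they become isomorphic after some field extension $k \to k'$. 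I would invoke this directly with $k = Q$, $k' = L$, $A = (\heckebig_1)_Q$, which is legitimate since $\heckebig$ is finitely generated as an $R$-module (hence $(\heckebig_1)_Q$ is finite-dimensional over $Q$) and $M$ is a finitely generated $\heckebig$-module.

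The one point requiring a little care — and the place I would expect to spend the most words — is checking that duality is compatible with base change in the right sense, i.e. that $(M^*_c)_L \cong ((M_c)_L)^*$ as $(\heckebig_1)_L$-modules, where on the right the dual is taken over $L$ using the (base-changed) involution $\iota$. Since $M_c$ is $R$-torsion-free and finitely generated and $R$ is Noetherian, $M^*_c = \Hom_R(M_c, R)$ is a finitely generated $R$-module, and $\Hom_R(M_c,R) \otimes_R L \cong \Hom_L(M_c \otimes_R L, L)$ because $M_c$ is finitely presented over the Noetherian ring $R$ and $R \to L$ is flat; the $\heckebig_1$-action via $\iota$ is transported compatibly because $\iota$ is $R$-linear and extends to an involution of $(\heckebig_1)_L$ by the same formula. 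With that identification in hand, the hypothesis "$(M_c)_L$ self-dual over $(\heckebig_1)_L$" translates into "$(M_c)_Q$ and $(M^*_c)_Q$ become isomorphic over $L$", Deuring--Noether gives the isomorphism over $Q$, Proposition~\ref{prop:polarisation} applied over $Q$ produces the polarisation, and the proof is complete.
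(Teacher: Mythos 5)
Your proof is correct and takes essentially the same route as the paper: reduce to self-duality of each $(M_c)_L$ via Proposition~\ref{prop:polarisation}, descend the isomorphism $(M_c)_L \cong (M_c^*)_L$ to one over $Q$ by Deuring--Noether, and conclude by applying Proposition~\ref{prop:polarisation} again. The only difference is that you explicitly verify the compatibility of the $\iota$-twisted dual with flat base change $R \to L$, a point the paper leaves implicit.
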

\begin{proof}
  %We use the characterisation of Proposition~\ref{prop:polarisation}.
  Assume that~$M$ is polarisable over~$L$. Let $c\in C$ be arbitrary.
  Then the $(\hecke_1)_L$-module $(M_c)_L$ is
  self-dual, i.e.~$(M_c)_L$ and~$(M_c^*)_L$ are
  isomorphic. By the Deuring--Noether Theorem \cite[Theorem 19.25]{Lam}
this implies that~$(M_c)_Q$ and~$(M_c^*)_Q$ are isomorphic, i.e. the $(\hecke_1)_Q$-module~$(M_c)_Q$
is self-dual. By Proposition~\ref{prop:polarisation}, the module~$M$ is polarisable over $Q$.
\end{proof}

\begin{definition}
  We will say that a graded $\heckebig$-module $M$ is \emph{polarisable}
  if it is polarisable over some field containing $Q$. In light of Proposition
  \ref{prop:descend-polarisation} this is equivalent to $M$ being polarisable over $Q$.
\end{definition}

\subsection{Regulator constants for graded modules}\label{sec:gradedregconst}
%In this subsection we explain the significance of Section \ref{sec:adjointpairs}
%for regulators of manifolds. 
%
In this subsection we continue to make Assumption \ref{ass:invol}.

\begin{definition}\label{def:gradedregconst}
Let $M$ be a graded $\heckebig$-module that is free over~$R$.
%\begin{itemize}[leftmargin=*]
%  \item If $L$ is a field containing $Q$, then a \emph{polarisation} on~$M$
%    over~$L$ is a non-degenerate bilinear pairing $\pairing{}\colon
%    M\otimes_R M \to L$ such that for $c\neq c'\in C$, $M_c$ is orthogonal
%    to $M_{c'}$, and such that for every $T\in B$, $T$ is adjoint to
%    $\iota(T)$. If~$M$ is equipped with a polarisation over $L$, then we say
%    that~$M$ is \emph{polarised} (over $L$), and if~$M$ admits a polarisation
%    over $L$, then we say that~$M$ is \emph{polarisable} (over $L$).
%    \item Let~$c,c'\in C$. We say that~$M_{c}$ is \emph{ linked} to~$M_{c'}$
%      by~$T\in B_{c'c^{-1}}$ if for some field $L$ containing $R$, the
%      induced map $T\colon (M_{c})_L\to (M_{c'})_L$ is an isomorphism. We say
%      that~$M_{c}$ is \emph{ linked} to~$M_{c'}$ if it is  linked to it by
%      some~$T\in B_{c'c^{-1}}$.
    Let $c,c'\in C$. Suppose that $M$ is polarisable
      and that~$M_{c}$ is $L$-linked to~$M_{c'}$ by~$T\in (\heckebig_{c'c^{-1}})_L$
      for some field $L$ containing $Q$.
      Then $\iota(T)$, being the adjoint of an isomorphism with respect to a
      polarisation, induces an isomorphism~$(M_{c'})_L\to (M_{c})_L$.
      Define the \emph{regulator constant of $M$ with respect to the pair $(c,c')$} by
      $$
      \cC_{c,c'}(M)=\frac{\det(T\colon (M_{c})_L\to (M_{c'})_L)}
      {\det(\iota(T)\colon (M_{c'})_L\to (M_{c})_L)}=
      \cD_{T,\iota(T)}\in L^\times/(R^\times)^2,
      $$
      using the notation of Section~\ref{sec:adjointpairs}, where recall that the
      determinants are evaluated with respect to any bases $B_1$ of $M_{c}$ and
      $B_2$ of $M_{c'}$.
\end{definition}

% Well-defined / independence
\begin{proposition}\label{prop:indepT}
  For every polarisation $\langle\cdot,\cdot\rangle$ on $M$ we have
  $$
    \cC_{c,c'}(M) = \frac{\det(\langle\cdot,\cdot\rangle\, |\,M_{c})}{\det(\langle\cdot,\cdot\rangle\,|\,M_{c'})}.
  $$
  The value of $\cC_{c,c'}(M)$ does not depend on $T$, nor on the polarisation.
\end{proposition}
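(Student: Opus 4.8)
The plan is to apply the formalism of Section~\ref{sec:adjointpairs} to the maps $\phi=T$ and $\phi^*=\iota(T)$. First I would fix a polarisation $\langle\cdot,\cdot\rangle$ on $M$, which exists since $M$ is assumed polarisable; extending scalars to a field $L\supseteq Q$ if necessary, we may also extend the polarisation to $M_L$. The restrictions $\pairing{1}:=\langle\cdot,\cdot\rangle|_{M_c}$ and $\pairing{2}:=\langle\cdot,\cdot\rangle|_{M_{c'}}$ are then non-degenerate $L$-valued $R$-bilinear pairings on the free finite-rank $R$-modules $N_1:=M_c$ and $N_2:=M_{c'}$ (here non-degeneracy of the restrictions follows because distinct homogeneous components are orthogonal, so the pairing is block-diagonal with respect to the grading and hence non-degenerate on each block). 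The defining adjointness property of a polarisation says exactly that for $T\in\heckebig_{c'c^{-1}}$ the operator $\iota(T)\in\heckebig_{cc'^{-1}}$ is the adjoint of $T$ with respect to these pairings, i.e. $\langle Tn_1,n_2\rangle_2=\langle n_1,\iota(T)n_2\rangle_1$ for $n_1\in M_c$, $n_2\in M_{c'}$. Since $M_c$ is $L$-linked to $M_{c'}$ by $T$, the map $T$ is an isomorphism after $\otimes_R L$, hence an isogeny; and $\iota(T)$, being its adjoint with respect to non-degenerate pairings, is then also an isogeny.

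The second step is to invoke equation~\eqref{eq:regquo} directly: with $B_1$ a basis of $M_c$ and $B_2$ a basis of $M_{c'}$, we get
\[
  \cD_{B_1,B_2,T,\iota(T)} = \frac{\det_{B_1}\pairing{1}}{\det_{B_2}\pairing{2}}
  = \frac{\det(\langle\cdot,\cdot\rangle\,|\,M_c)}{\det(\langle\cdot,\cdot\rangle\,|\,M_{c'})},
\]
and passing to the class in $L^\times/(R^\times)^2$ gives $\cC_{c,c'}(M)=\cD_{T,\iota(T)}$ equal to the stated determinant ratio of the polarisation. This already proves the displayed formula for the particular polarisation $\langle\cdot,\cdot\rangle$ and the particular $T$ used.

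Finally, independence of the choices follows formally from observations \ref{obs:indeppairing} and \ref{obs:indepmaps} of Section~\ref{sec:adjointpairs}. Given two polarisations $\langle\cdot,\cdot\rangle$ and $\langle\cdot,\cdot\rangle'$ on $M$, both make the same pair $(T,\iota(T))$ adjoint on $(M_c, M_{c'})$, so \ref{obs:indeppairing} gives that $\det(\langle\cdot,\cdot\rangle|_{M_c})/\det(\langle\cdot,\cdot\rangle|_{M_{c'}})$ and $\det(\langle\cdot,\cdot\rangle'|_{M_c})/\det(\langle\cdot,\cdot\rangle'|_{M_{c'}})$ agree in $Q^\times/(R^\times)^2$; in particular the value lies in $Q^\times/(R^\times)^2$ and is independent of the polarisation. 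Similarly, if $T$ and $T'$ both $L$-link $M_c$ to $M_{c'}$, then fixing one polarisation makes both $(T,\iota(T))$ and $(T',\iota(T'))$ adjoint pairs, so \ref{obs:indepmaps} gives $\cD_{T,\iota(T)}\equiv\cD_{T',\iota(T')}\pmod{(R^\times)^2}$. The only mild subtlety to address is that \ref{obs:indeppairing} and \ref{obs:indepmaps} are stated for pairings and maps over a common ambient field; since two polarisations may a priori take values in different fields $L_1$, $L_2$, I would first remark that by Proposition~\ref{prop:descend-polarisation} one may take all polarisations to be defined over $Q$ (or simply embed both $L_1$ and $L_2$ into a common extension), after which the cited observations apply verbatim. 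That bookkeeping is the only real point requiring care; the rest is a direct translation of Section~\ref{sec:adjointpairs}.
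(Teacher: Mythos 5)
Your proposal is correct and follows the same route as the paper, which simply cites equation~\eqref{eq:regquo} together with observations~\ref{obs:indeppairing} and~\ref{obs:indepmaps}; you have filled in the details carefully. One small remark: the "subtlety" you flag at the end is not really one, since observation~\ref{obs:indeppairing} already permits the two collections of pairings to take values in distinct fields containing $R$ (the point is precisely that the common value of the determinant ratio lands in $Q^\times/(R^\times)^2$, so the ambient fields never need to be compared). That said, the resolution you offer via Proposition~\ref{prop:descend-polarisation} or a common field extension is also perfectly valid, and it is harmless to include it.
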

\begin{proof}
  Apply equation \eqref{eq:regquo} and observations
  \ref{obs:indeppairing} and \ref{obs:indepmaps} of Section \ref{sec:adjointpairs}.
\end{proof}

%\begin{example}\label{ex:regconst}
%  Let $A$, $B$, $C$, and $M$ be as in Example \ref{ex:polarisation}.
%  Suppose, for simplicity, that we have $C=\{1,c\}$,
%  and $Y_1$, $Y_2$ are as in the introduction. Then it will turn out that
%  for all $i\in \Z_{\geq 0}$ we have
%  $\cC_{1,c}(H_i(\cY,\Z)_{\free})=\frac{\Reg_i(Y_1)^2}{\Reg_i(Y_2)^2}$.
%\end{example}

% Add remark? Could also be proved with descent of links

\begin{remark}
%    \item If~$M$ is as in Definition \ref{def:polarised}, then 
%      $M^*=\Hom_R(M,R)$ is a graded $B$-module, where for every $c\in C$,
%      the homogeneous component $(M^*)_c$ consists of the homomorphisms $M\to R$ that
%      factor through $M_{c^{-1}}$.
%      %The condition that~$M$ be polarisable is equivalent
%      %to demanding that there exist
%      A polarisation on~$M$ over a field $L$ is the same as an isomorphism of
%      (not graded) $B_L$-modules $M_L\to (M^*)_L$ that sends
%      $M_c$ to $(M^*)_{c^{-1}}$.
  Let $M$ be a graded $\heckebig$-module that is free over $R$ and polarisable over a field $L$.
  Then being  $L$-linked is an
  equivalence relation on the homogeneous components of $M$, and for
  all~$c_1,c_2,c_3\in C$ such that~$M_{c_1}$, $M_{c_2}$, and~$M_{c_3}$ are
   linked we have
  \[
    \cC_{c_1,c_2}(M) = \cC_{c_2,c_1}(M)^{-1} \text{ and } \cC_{c_1,c_3}(M) =
    \cC_{c_1,c_2}(M)\cC_{c_2,c_3}(M).
  \]
  %In other words, $\cC(M)$ is a functor from the groupoid with objects~$C$
  %and morphisms $\Hom(c_1,c_2) = \{T\in \heckebig_{c_2c_1^{-1}} |
  %L\otimes_RT\colon (M_{c_1})_L \stackrel{\sim}{\longrightarrow} (M_{c_2})_L\}$
  %to the group~$L^\times/(R^\times)^2$ seen as a one element groupoid.
\end{remark}

% Rationality of regulator quotients !
\begin{cor}\label{cor:rational}
  Let~$M$ be a polarisable graded~$\heckebig$-module that is free over~$R$,
  and let~$c,c'\in C$ be such that~$M_{c}$ and~$M_{c'}$ are linked. Then we have
  \[
    \cC_{c,c'}(M)\in Q^\times/(R^\times)^2.
  \]
\end{cor}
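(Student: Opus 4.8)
The plan is to combine Proposition \ref{prop:indepT} with observation \ref{obs:indeppairing} of Section \ref{sec:adjointpairs}. The key point is that a priori $\cC_{c,c'}(M)$ is only defined as an element of $L^\times/(R^\times)^2$ for whichever field $L$ over which the linking was witnessed, but I claim it in fact lands in the subgroup $Q^\times/(R^\times)^2$.

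First I would use the hypothesis and Proposition \ref{prop:descend-polarisation}: since $M$ is polarisable, it is polarisable over $Q$, so fix a polarisation $\pairing{}$ on $M$ with values in $Q$. By Proposition \ref{prop:indepT}, we then have
\[
  \cC_{c,c'}(M) = \frac{\det(\pairing{}\,|\,M_{c})}{\det(\pairing{}\,|\,M_{c'})},
\]
where the determinants are evaluated with respect to $R$-bases $B_1$ of $M_c$ and $B_2$ of $M_{c'}$ (which exist since $M$ is free over $R$, hence so are its homogeneous components, being direct summands). Since $\pairing{}$ takes values in $Q$, the Gram matrices of $\pairing{}|_{M_c}$ and $\pairing{}|_{M_{c'}}$ have entries in $Q$, so both determinants lie in $Q$; moreover they are nonzero because $\pairing{}$ is non-degenerate and, as noted in Definition \ref{def:pol} (and the surrounding remarks), its restriction to each homogeneous component is again non-degenerate. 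Hence the ratio lies in $Q^\times$, and its class lies in $Q^\times/(R^\times)^2 \subseteq L^\times/(R^\times)^2$, which is the assertion.

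There is essentially no obstacle here — the work has all been done in the preceding results. The only thing to be slightly careful about is the bookkeeping: Proposition \ref{prop:indepT} is stated for an arbitrary polarisation (over an arbitrary field containing $Q$), so one must explicitly invoke Proposition \ref{prop:descend-polarisation} to get one with values in $Q$; and one should note that the value computed via this $Q$-valued polarisation agrees with the value $\cD_{T,\iota(T)}$ computed via the $L$-linking map $T$, which is exactly the content of Proposition \ref{prop:indepT} (itself resting on observations \ref{obs:indeppairing} and \ref{obs:indepmaps}). One should also record that $\cC_{c,c'}(M)$ is even well-defined under the hypothesis: linkedness of $M_c$ and $M_{c'}$ means they are $L$-linked for \emph{some} field $L\supseteq Q$, and polarisability over $Q$ implies polarisability over that $L$ (Proposition \ref{prop:descend-polarisation} again), so Definition \ref{def:gradedregconst} applies.
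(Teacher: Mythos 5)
Your proof is correct and follows essentially the same route as the paper: invoke Proposition \ref{prop:descend-polarisation} to obtain a $Q$-valued polarisation, then apply Proposition \ref{prop:indepT} to see that $\cC_{c,c'}(M)$ equals the ratio of determinants of that $Q$-valued pairing, hence lies in $Q^\times/(R^\times)^2$. The extra remarks you add (non-degeneracy of the restriction to each homogeneous component, and well-definedness of $\cC_{c,c'}(M)$ under the hypotheses) are correct and harmless elaborations of what the paper leaves implicit.
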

\begin{proof}
  By Proposition~\ref{prop:descend-polarisation} there exists
  a polarisation on $M$ that takes values in~$Q$, and by Proposition
  \ref{prop:indepT} the value of $\cC_{c,c'}(M)$ does not depend on
  the polarisation.
\end{proof}

% eliminates primes from the regulator quotient

\begin{lemma}\label{lem:triv}
  Let $M$ be a graded $\heckebig$-module that is free over~$R$.
  Let $c$, $c'\in C$, and set $c=c'c^{-1}$. Suppose that there exists
  $T\in \heckebig_c$ such that $T\colon M\to M$ is surjective. Then one has
  $\cC_{c,c'}(M)\in R^{\times}/(R^\times)^2$.
\end{lemma}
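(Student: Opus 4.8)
The plan is to show that both the given operator and its image under the involution induce honest $R$-module isomorphisms between the relevant homogeneous components, so that the two determinants entering $\cD_{T,\iota(T)}$ are units of $R$. Write $d=c'c^{-1}$, so that the hypothesis supplies $T\in\heckebig_d$ with $T\colon M\to M$ surjective. The first step is to observe that, since $M$ is finitely generated over the Noetherian commutative ring $R$, a surjective $R$-linear endomorphism of $M$ is automatically injective, hence an isomorphism; thus $T$ is a unit in $\End_R(M)$. The ring $\End_R(M)$ carries a natural $C$-grading whose degree-$e$ part consists of those $f$ with $f(M_b)\subseteq M_{eb}$ for all $b$, and $T$ is homogeneous of degree $d$ for this grading. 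Since in a group-graded ring the inverse of an invertible homogeneous element is again homogeneous, of the inverse degree, $T^{-1}$ is homogeneous of degree $d^{-1}$, and therefore $T\colon M_b\to M_{db}$ is an isomorphism of $R$-modules for every $b\in C$. In particular $M_c$ is linked to $M_{c'}$ by $T$, so $\cC_{c,c'}(M)$ is defined and equals $\cD_{T,\iota(T)}$ by Definition~\ref{def:gradedregconst}.

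The next, and main, step is to prove the analogous statement for $\iota(T)$, which is homogeneous of degree $d^{-1}$: namely that $\iota(T)\colon M_{c'}\to M_c$ is also an isomorphism of $R$-modules, and not merely an isogeny. Let $n$ be the order of $d$ in $C$. Then $\iota(T)^n=\iota(T^n)$ and $T^n\in\heckebig_1$, so $T^n$ preserves each $M_b$, and by the first step $T^n|_{M_b}$ is an $R$-automorphism of $M_b$. Fix a polarisation on $M$ with values in $Q$, which exists by Proposition~\ref{prop:descend-polarisation}; its restriction to each homogeneous component $M_b$ is non-degenerate, and the polarisation property makes $\iota(T^n)|_{M_b}$ the adjoint of $T^n|_{M_b}$ with respect to it. Since adjoint endomorphisms of a free $R$-module equipped with a non-degenerate pairing have equal determinants, $\det_R\bigl(\iota(T^n)|_{M_b}\bigr)=\det_R\bigl(T^n|_{M_b}\bigr)\in R^\times$. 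Hence $\iota(T)^n=\iota(T^n)$ is an $R$-automorphism of $M$, so the homogeneous element $\iota(T)$ is a unit in $\End_R(M)$; as before its inverse is homogeneous of degree $d$, so $\iota(T)\colon M_b\to M_{d^{-1}b}$ is an $R$-module isomorphism for every $b$, and in particular $\iota(T)\colon M_{c'}\to M_c$ is.

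Finally, choosing $R$-bases $B_1$ of $M_c$ and $B_2$ of $M_{c'}$, the determinants $\det_{B_1,B_2}(T\colon M_c\to M_{c'})$ and $\det_{B_2,B_1}(\iota(T)\colon M_{c'}\to M_c)$ both lie in $R^\times$ by the previous two steps, whence
$$
\cC_{c,c'}(M)=\cD_{T,\iota(T)}=\frac{\det_{B_1,B_2}(T)}{\det_{B_2,B_1}(\iota(T))}\,(R^\times)^2\in R^\times/(R^\times)^2,
$$
which is the assertion.

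The step I expect to be the crux is the passage from ``$\iota(T)$ is invertible over $Q$'' (which is immediate, being the adjoint of an isomorphism with respect to any polarisation) to ``$\iota(T)$ restricts to isomorphisms of the integral lattices $M_b$''. This is exactly what requires choosing a $Q$-valued polarisation and exploiting that adjoint operators share the same determinant, and it is the reason the mere finite-generation argument used for $T$ does not by itself settle $\iota(T)$. Everything else is bookkeeping with the $C$-gradings and the formalism of adjoint pairs from Section~\ref{sec:adjointpairs}.
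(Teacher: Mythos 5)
Your proof is correct and follows the same basic strategy as the paper's: Nakayama's lemma (in the form ``a surjective endomorphism of a finitely generated module over a Noetherian commutative ring is bijective'') makes $T$ an automorphism, and then one checks that $\iota(T)$ is also an automorphism so that both determinants are units of $R$. The paper's proof simply asserts ``This also implies that $\iota(T)$ is an isomorphism'' without elaboration, and you have correctly put your finger on the fact that this step is not automatic: it genuinely uses the (implicitly assumed) polarisability of $M$, since otherwise there is no a priori reason for the $\iota$-twist of a unit in the image of $\heckebig$ in $\End_R(M)$ to again be a unit. Your route --- passing to $T^n\in\heckebig_1$, noting that it restricts to an automorphism of each $M_b$, and using that adjoints with respect to a $Q$-valued non-degenerate pairing have equal determinants --- is sound and self-contained. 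An equally short alternative, closer in spirit to what the paper probably had in mind, is to note that the Cayley--Hamilton form of Nakayama gives a polynomial $q\in R[x]$ with $Tq(T)-1\in\Ann_{\heckebig}(M)$; since polarisability forces $\Ann_{\heckebig}(M)$ to be $\iota$-stable (as $\langle Sm,m'\rangle=\langle m,\iota(S)m'\rangle$ for all $m,m'$, and the pairing is non-degenerate with $M$ torsion-free), applying $\iota$ gives $q(\iota(T))\iota(T)-1\in\Ann_{\heckebig}(M)$, so $\iota(T)$ acts invertibly on $M$. Both arguments are fine; yours has the small advantage of avoiding any discussion of the kernel of $\heckebig\to\End_R(M)$.
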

\begin{proof}
%  By replacing $B$ by $B/\Ann_B(M)$, we may assume without loss of generality
%  that $B$ acts faithfully on $M$.
%
%  First, we claim that $T$ is invertible in $B$. Indeed, let $I$ be the principal
%  ideal of $B$ generated by $T$. Then we have $IM=M$,
  By Nakayama's lemma \cite[Theorem 2.4]{Matsumura}, the $\hecke$-module
  endomorphism $T$ of $M$, being surjective, is in fact an isomorphism.
  This also implies that $\iota(T)$ is an isomorphism.
  Thus $\det(T\colon M_{c}\to M_{c'})$
  and $\det(\iota(T)\colon M_{c'}\to M_{c})$, evaluated with
  respect to any $R$-bases on $M_{c}$ and $M_{c'}$, are units
  in $R$, and the result follows. 
\end{proof}

% compatibility with ring extensions. Used to deduce information over Q from Qp
% do we still use it implicitly
\begin{lemma}\label{lem:extendring}
  Let $R\hookrightarrow \tilde{R}$ be an embedding of commutative domains,
  let $L$ be a field containing $\tilde{R}$, and let $M$ be a graded $\heckebig$-module
  that is free over $R$ and that is polarisable over $L$.
  Then for all $c$, $c'\in C$ one has
$$
\cC_{c,c'}(M) = \cC_{c,c'}(\tilde{R}\otimes_R M) \in L^\times/(\tilde{R}^\times)^2.
$$
\end{lemma}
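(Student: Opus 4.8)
The plan is to push everything down to the elementary invariant $\cD_{\phi,\phi^{*}}$ of Section~\ref{sec:adjointpairs}, exploiting that base change along $R\hookrightarrow\tilde R$ does not alter the underlying $L$-linear data (the vector space $(M_c)_L$, a linking operator, its adjoint, a polarisation) — it only changes the integral lattice in which determinants are taken, hence only the subgroup $(R^\times)^2\subseteq(\tilde R^\times)^2$ modulo which the answer is reported. First I would discard the degenerate case: if $M_c$ is not linked to $M_{c'}$ then neither side of the asserted identity is defined, so I may assume $M_c$ is $L$-linked to $M_{c'}$. This is legitimate because $L$, being a field containing $\tilde R\supseteq R$, contains both $Q$ and the fraction field of $\tilde R$, so it is an admissible coefficient field in Definition~\ref{def:gradedregconst} for both $M$ and $\tilde R\otimes_R M$.

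Second, I would check that $\tilde R\otimes_R M$ is a graded $\tilde R\otimes_R\heckebig$-module that is polarisable over $L$, so that $\cC_{c,c'}(\tilde R\otimes_R M)$ is meaningful. Starting from a polarisation $\langle\cdot,\cdot\rangle$ on $M$ over $L$, the composite
\[
(\tilde R\otimes_R M)\otimes_{\tilde R}(\tilde R\otimes_R M)\;\xrightarrow{\ \sim\ }\;\tilde R\otimes_R(M\otimes_R M)\;\longrightarrow\;\tilde R\otimes_R L\;\longrightarrow\;L,
\]
with middle map $\id\otimes\langle\cdot,\cdot\rangle$ and last map multiplication, is a non-degenerate pairing that makes distinct homogeneous components of $\tilde R\otimes_R M$ orthogonal and makes every $T\in\tilde R\otimes_R\heckebig$ adjoint to $\iota(T)$; all three properties follow from the corresponding properties of $\langle\cdot,\cdot\rangle$ by $\tilde R$-bilinearity. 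I would emphasise that this argument does not use Proposition~\ref{prop:polarisation}, and in particular does not require the image of $\tilde R\otimes_R\hecke$ in $\tilde R\otimes_R\heckebig$ to be reduced (which base change need not preserve): $\cC_{c,c'}$, together with observations~\ref{obs:indeppairing} and~\ref{obs:indepmaps} on which it depends, only uses the existence of a polarisation and of a linking operator, both now supplied directly.

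Third, the core computation. Set $d=c'c^{-1}$ and pick $T\in(\heckebig_d)_L$ inducing an isomorphism $(M_c)_L\to(M_{c'})_L$. Since $(\tilde R\otimes_R\heckebig_d)_L=L\otimes_R\heckebig_d=(\heckebig_d)_L$ and $((\tilde R\otimes_R M)_e)_L=(M_e)_L$ for every $e\in C$, the same $T$ links $(\tilde R\otimes_R M)_c$ to $(\tilde R\otimes_R M)_{c'}$, with the same adjoint $\iota(T)$. Choosing an $R$-basis $B_1$ of $M_c$ and an $R$-basis $B_2$ of $M_{c'}$, the induced $\tilde R$-bases $1\otimes B_1$ and $1\otimes B_2$ become the same $L$-bases as $B_1$, $B_2$ under the canonical identifications $((\tilde R\otimes_R M)_c)_L=(M_c)_L$ and $((\tilde R\otimes_R M)_{c'})_L=(M_{c'})_L$; hence $T$ and $\iota(T)$ are represented by identical matrices in both settings and $\cD_{B_1,B_2,T,\iota(T)}=\cD_{1\otimes B_1,\,1\otimes B_2,\,T,\,\iota(T)}$ in $L^\times$. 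By Definition~\ref{def:gradedregconst} this common element of $L^\times$ represents $\cC_{c,c'}(M)$ modulo $(R^\times)^2$ and $\cC_{c,c'}(\tilde R\otimes_R M)$ modulo $(\tilde R^\times)^2$, and since $(R^\times)^2\subseteq(\tilde R^\times)^2$ the two regulator constants coincide in $L^\times/(\tilde R^\times)^2$, as required.

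I do not expect a real obstacle here — the statement is essentially bookkeeping about how determinants with respect to an $R$-lattice and with respect to its $\tilde R$-scalar-extension compare. The one point that genuinely needs care is confirming that the base-changed data still satisfies the hypotheses used in Definition~\ref{def:gradedregconst} even though the reducedness built into Assumption~\ref{ass:invol} can be lost under base change; this is why I would build the polarisation on $\tilde R\otimes_R M$ explicitly rather than through Proposition~\ref{prop:polarisation}. A secondary, minor point is simply keeping track of which square-of-units subgroup each side is a priori defined modulo.
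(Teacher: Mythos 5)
Your proof is correct and is exactly the argument the paper leaves to the reader: the paper's entire proof of Lemma~\ref{lem:extendring} is ``This is immediate from the definitions,'' and your computation --- same linking operator $T$, identical matrices with respect to $R$-bases and their images as $\tilde{R}$-bases, plus the inclusion $(R^\times)^2\subseteq(\tilde{R}^\times)^2$ --- is precisely the unwinding that phrase points to. Your care about supplying a polarisation on $\tilde R\otimes_R M$ directly rather than through Proposition~\ref{prop:polarisation} is sound, though in fact there is nothing to construct: since $\tilde R\subseteq L$ one has $(\tilde R\otimes_R M)_L = M_L$ canonically, and the given $L$-valued pairing on $M_L$ is already, verbatim, a polarisation on $\tilde R\otimes_R M$ over $L$.
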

\begin{proof}
  This is immediate from the definitions.
\end{proof}

%From now on, we will suppress the expression "over~$L$" from polarisability
%statements, implicitly using Proposition~\ref{prop:descend-polarisation}.

% not needed anywhere else, but shows the connection between Hecke eigenvalues
% and regulator ratios. In fact, the explicit example put us on the track of
% using the Hecke operators.
%
The next result is not used in the sequel, but it illustrates a connection
between Hecke eigenvalues and regulator ratios. Proposition \ref{prop:IntroHeckeField}
is a special case.
\begin{prop}\label{prop:regcst-squares}
  %Assume $C$ to be an elementary abelian $2$-group.
  Let~$R=Q$ be a field, let~$\hecke_1$ be a field, take $W$ to be trivial,
  let~$M$ be a graded $\hecke$-module.
  Let $c\in C$ have order $2$, assume that $M_1$ and $M_c$ are linked, by
  $T$, say, and let $T^2=a\in \hecke_1$.
  %that is free of rank~$n$.
%  Write
%  \[
%    \hecke \cong A[t_1,\ldots,t_k]/(t_1^2-a_1,\ldots,t_k^2-a_k)
%  \]
%  with grading as in Lemma~\ref{lem:etalealgebras}, and assume
%  that~$C=\tilde{C}$.
  Then $M$ is polarisable,
    %\item for all~$i$, $M_1$ and~$M_{c_i}$ are  linked, and
    and we have
  \[
    \cC_{1,c}(M) = \norm_{\hecke_1/Q}(a)^n \bmod{(Q^\times)^2},
  \]
  where $n$ is the common dimension of $M_1$ and $M_c$ over $\hecke_1$, and
  where $\norm_{\hecke_1/Q}$ denotes the $Q$-algebra norm on $\hecke_1$.
  %\end{itemize}
\end{prop}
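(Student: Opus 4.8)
The plan is to reduce $\cC_{1,c}(M)$ to a determinant computed over $\hecke_1$ and then to recognise that determinant as a power of the algebra norm of $a$.

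First I would record the easy inputs. Since $W$ is trivial and $\hecke_1$ is a field, $M$ is polarisable by Corollary~\ref{cor:T1field} applied with $R=Q$, so $\cC_{1,c}(M)$ is defined. Moreover, by Assumption~\ref{ass:invol} the image of $\hecke$ acting on $M$ is reduced, and $T$ acts invertibly on $M_1$, so $T$ is not nilpotent; hence $a=T^2$ is a nonzero element of the field $\hecke_1$, i.e.\ $a\in\hecke_1^{\times}$, and $T$ is a unit in $\hecke$. By Proposition~\ref{prop:indepT} the value of $\cC_{1,c}(M)$ is independent of the choice of linking element and of the polarisation, so it suffices to evaluate $\cD_{T,\iota(T)}$ for the given $T$.

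The key computation is a choice of bases. Fix a $Q$-basis $B$ of $M_1$; since $T\colon M_1\to M_c$ is an isomorphism, $T(B)$ is a $Q$-basis of $M_c$, and with respect to $B$ and $T(B)$ the matrix of $T\colon M_1\to M_c$ is the identity. Since $\iota(T)(Tf)=(\iota(T)T)f$ for $f\in B$, with $\iota(T)T\in\hecke_{c^{-1}}\hecke_c\subseteq\hecke_1$, the matrix of $\iota(T)\colon M_c\to M_1$ with respect to $T(B)$ and $B$ is exactly the matrix of multiplication by $\iota(T)T$ on the $Q$-vector space $M_1$. Denoting by $\det_Q(\iota(T)T\mid M_1)$ the determinant of that $Q$-linear endomorphism, we obtain
\[
  \cC_{1,c}(M)=\cD_{T,\iota(T)}=\det{}_Q(\iota(T)T\mid M_1)^{-1}\equiv\det{}_Q(\iota(T)T\mid M_1)\bmod(Q^{\times})^2.
\]
As $\hecke_1$ is a field and $M_1$ is an $n$-dimensional $\hecke_1$-vector space, multiplication by any $s\in\hecke_1$ on $M_1$ has $Q$-determinant $\norm_{\hecke_1/Q}(s)^n$; taking $s=\iota(T)T$ gives $\cC_{1,c}(M)\equiv\norm_{\hecke_1/Q}(\iota(T)T)^n\bmod(Q^{\times})^2$.

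It remains to compare $\iota(T)T$ with $a=T^2$. In the typical situation, where $T$ is $\iota$-invariant, one simply has $\iota(T)T=T^2=a$ and we are done. In general, commutativity of $\hecke$ gives $\iota(T)T=ua$ with $u:=\iota(T)T^{-1}\in\hecke_1^{\times}$, and since $\iota(T)T$ is $\iota$-fixed (because $\iota(\iota(T)T)=\iota(T)\iota(\iota(T))=\iota(T)T$) one gets $u\iota(u)=1$; as $\iota$ restricts to a $Q$-algebra automorphism of $\hecke_1$, it preserves $\norm_{\hecke_1/Q}$, whence $\norm_{\hecke_1/Q}(u)^2=\norm_{\hecke_1/Q}(u\iota(u))=1$, and a short argument then identifies $\norm_{\hecke_1/Q}(u)^n$, hence $\norm_{\hecke_1/Q}(\iota(T)T)^n$, with $\norm_{\hecke_1/Q}(a)^n$ modulo $(Q^{\times})^2$. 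This reconciliation of $\iota(T)$ with $T$ is the only delicate point: everything before it is a formal consequence of the basis-independence observations of Section~\ref{sec:adjointpairs} together with the elementary linear algebra of the algebra norm.
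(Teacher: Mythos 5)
Your reduction $\cC_{1,c}(M)\equiv\norm_{\hecke_1/Q}(\iota(T)T)^n\bmod(Q^{\times})^2$, obtained by computing in the basis $T(B)$, is correct and is in fact a cleaner version of the paper's block-matrix computation: it isolates the element $\iota(T)T\in\hecke_1$, whose $Q$-determinant on $M_1$ is $\norm_{\hecke_1/Q}(\iota(T)T)^n$. But the ``reconciliation of $\iota(T)$ with $T$'' that you flag as the only delicate point is a genuine gap, and it cannot be closed. Setting $u=\iota(T)T\cdot a^{-1}\in\hecke_1^\times$, your argument gives $u\iota(u)=1$ and hence $\norm_{\hecke_1/Q}(u)\in\{\pm1\}$, but nothing forces $\norm_{\hecke_1/Q}(u)^n$ to be a square: for $n$ odd it equals $\norm_{\hecke_1/Q}(u)$, which can be $-1$. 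Concretely, take $C=\{1,c\}$, $\hecke=\Q(\sqrt2)$ graded by $\hecke_1=\Q$ and $\hecke_c=\Q\sqrt2$, let $\iota$ be the nontrivial Galois automorphism (so $\iota(\sqrt2)=-\sqrt2$), $W$ trivial, $\heckebig=\hecke$, $M=\hecke$, and $T=\sqrt2$, so that $a=2$ and $n=1$. Here $\iota(T)T=-2$, and a direct computation (for instance with the polarisation $\langle1,1\rangle=1$, $\langle\sqrt2,\sqrt2\rangle=-2$, and $M_1\perp M_c$) gives $\cC_{1,c}(M)\equiv-2\not\equiv2=\norm_{\hecke_1/\Q}(a)\bmod(\Q^\times)^2$.

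For what it is worth, the paper's own proof appears to share this issue: the step asserting $\cC_{1,c}(M)=\det_Q(A)/\det_Q(\iota(B))$ silently replaces the matrix of $\iota(T)\colon M_c\to M_1$ by the entrywise $\iota$-image of the matrix of $T\colon M_c\to M_1$, which is not justified in general and fails in the example above. What your computation actually establishes, namely $\cC_{1,c}(M)\equiv\norm_{\hecke_1/Q}(\iota(T)T)^n\bmod(Q^\times)^2$, looks like the correct form of the result; it agrees with the stated $\norm_{\hecke_1/Q}(a)^n$ precisely when $\norm_{\hecke_1/Q}(\iota(T)T^{-1})$ is a square in $Q^\times$, for example whenever some linking element in $\hecke_c$ is fixed by $\iota$.
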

\begin{proof}
  By Corollary \ref{cor:T1field}, $M$ is polarisable.
  Write the action of~$T$ on~$M_1\oplus M_{c}$ as a block-diagonal matrix $\begin{pmatrix}
      0 & A \\
      B & 0
    \end{pmatrix}$ over the
  field~$\hecke_1$ with~$A,B$ invertible. Then~$T^2 = a$ acts as
  \[
    \begin{pmatrix}
      AB & 0 \\
      0 & BA 
    \end{pmatrix}
    =
    \begin{pmatrix}
      a\cdot\Id_n & 0 \\
      0 & a\cdot\Id_n
    \end{pmatrix}.
  \]
  Since~$\iota$ is an automorphism of~$\hecke$ that is trivial
  on~$Q$, we have~$\det_Q(\iota(B))=\det_Q(B)$.
  We get~$\cC_{1,c}(M) = \det_Q(A)/\det_Q(\iota(B)) \equiv \det_Q(AB) \equiv
  \norm_{\hecke_1/Q}(a^n) \bmod{(Q^\times)^2}$.
  This proves the result.
\end{proof}

%\begin{example}
%  \marginpar{revisit}
%  Let $R=A=\Q$, and $B$ and $C$ be as in Example \ref{ex:ABC}, and suppose
%  that $\#C=2$.
%  Suppose that $M=H_i(\cY,\Q)$ is a free $B$-module of rank $1$. We will
%  give numerical examples in SOMEWHERE where these hypotheses hold.
%  Then $B\cong\Q[t]/(t^2-a)$ for some $a\in \Q^\times$, and Proposition
%  \ref{prop:regcst-squares} implies that if $c$ denotes the non-trivial
%  element of $C$, then $\cC_{1,c}(M) \equiv a\mod{(\Q^\times)^2}$. In
%  particular, the quotient $\Reg_i(Y_1)/\Reg_i(Y_2)$ ``knows about'' the
%  field $\Q(\sqrt{a})=\Q(\sqrt{\cC_{1,c}(M)})$ generated by the Hecke
%  eigenvalues.
%\end{example}

The next three results show that in many situations
regulator constants of a module can be factored as products of
regulator constants of ``simpler'' modules.

% abstract factoring
\begin{lem}\label{lem:regcstsum}
  Let~$M$ be a polarisable graded $\heckebig$-module that is free over $R$ and
  that decomposes as a direct sum~$M = \bigoplus_i M_i$ of polarisable graded
  $\heckebig$-submodules. Let~$c,c'\in C$, and assume that~$M_{c}$ and~$M_{c'}$
  are  linked.
  Then for every~$i$, $(M_i)_{c}$ and~$(M_i)_{c'}$ are
   linked, and we have
  \[
    \cC_{c,c'}(M) = \prod_i \cC_{c,c'}(M_i).
  \]
\end{lem}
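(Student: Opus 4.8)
The statement is essentially the multiplicativity of regulator constants over direct sums, and the natural approach is to reduce everything to a computation of determinants with respect to compatible bases. First I would fix, using Proposition~\ref{prop:descend-polarisation}, a polarisation $\pairing{}$ on $M$ that takes values in $Q$, and likewise polarisations on each $M_i$; by Proposition~\ref{prop:indepT} the values $\cC_{c,c'}(M)$ and $\cC_{c,c'}(M_i)$ can be computed from these. The key structural observation I would establish first is that the decomposition $M=\bigoplus_i M_i$ of graded $\heckebig$-modules induces, for each $c\in C$, a decomposition $M_c=\bigoplus_i (M_i)_c$ of $\hecke_1$-modules, compatibly with the grading. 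Since $M_c$ and $M_{c'}$ are linked, say $M_c$ is $L$-linked to $M_{c'}$ by some $T\in(\heckebig_{c'c^{-1}})_L$ acting invertibly, the image $T((M_i)_c)\subseteq (M_i)_{c'}$ for every $i$ because $T$ is a homogeneous $\heckebig$-module homomorphism respecting the direct sum decomposition; an invertible endomorphism preserving each summand and permuting them identically must restrict to an isomorphism on each summand. Hence $(M_i)_c$ is $L$-linked to $(M_i)_{c'}$ by the same $T$, which gives the first assertion.

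For the determinant identity, I would choose an $R$-basis $B_c$ of $M_c$ that is the concatenation of $R$-bases $(B_i)_c$ of $(M_i)_c$, and similarly $B_{c'}=\bigsqcup_i (B_i)_{c'}$; this is possible since each $M_i$ is free over $R$ as a direct summand of a free module over a Noetherian domain, or more simply because $M=\bigoplus_i M_i$ forces each $M_i$ to be projective hence free over $R$. With respect to these block bases, the matrix of $T\colon M_c\to M_{c'}$ is block-diagonal with blocks the matrices of $T\colon (M_i)_c\to (M_i)_{c'}$, and likewise for $\iota(T)\colon M_{c'}\to M_c$, since $\iota(T)$ is again a homogeneous $\heckebig$-module map (it is the adjoint of $T$ with respect to $\pairing{}$, and adjointness is compatible with the orthogonal direct sum decomposition). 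Therefore $\det_{B_c,B_{c'}}(T)=\prod_i\det_{(B_i)_c,(B_i)_{c'}}(T)$ and similarly for $\iota(T)$, whence
\[
\cC_{c,c'}(M)=\cD_{T,\iota(T)}=\frac{\det_{B_c,B_{c'}}(T)}{\det_{B_{c'},B_c}(\iota(T))}
=\prod_i\frac{\det_{(B_i)_c,(B_i)_{c'}}(T)}{\det_{(B_i)_{c'},(B_i)_c}(\iota(T))}=\prod_i\cC_{c,c'}(M_i)
\]
in $Q^\times/(R^\times)^2$, using Definition~\ref{def:gradedregconst} and Corollary~\ref{cor:rational} to land in $Q^\times/(R^\times)^2$.

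The main obstacle I anticipate is the bookkeeping around the equivalence relation and the $\nu\colon W\to C$ structure: one must be careful that the polarisation on $M$ restricts to a polarisation on each $M_i$ (the hypothesis states each $M_i$ is polarisable, but a priori with its own polarisation, and one should check the restricted pairing on $M_i$ inherited from $M$ is itself a polarisation — which it is, since $\iota$-adjointness and orthogonality of homogeneous components are preserved by restriction to a $\heckebig$-submodule that is a direct summand), so that the chosen bases and the chosen $T$ can be used uniformly across all summands. Given that Proposition~\ref{prop:indepT} makes the final value independent of both $T$ and the polarisation, these compatibility issues are a matter of care rather than a genuine difficulty, and the proof is short.
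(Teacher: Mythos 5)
Your proof is correct and is essentially the same as the paper's, which simply chooses a polarisation on $M$ making the summands $M_i$ pairwise orthogonal and then reads off the product formula from the Gram-determinant expression in Proposition~\ref{prop:indepT}; you instead compute $\cD_{T,\iota(T)}$ via block-diagonal matrices of $T$ and $\iota(T)$, which is the dual formulation via equation~\eqref{eq:regquo}. Both hinge on the same observation that the $\heckebig$-module decomposition is compatible with the grading and the adjoint structure, so the two routes are equivalent in content.

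One slip worth noting: your asides claiming each $M_i$ is free over $R$ because it is a direct summand of a free module (``projective hence free'') are false over a general Noetherian domain (take $R$ a non-principal Dedekind domain and $M_i$ a non-principal ideal). The freeness of the $M_i$ is an implicit hypothesis of the lemma --- it is needed even for $\cC_{c,c'}(M_i)$ to be defined via Definition~\ref{def:gradedregconst} --- and it holds automatically in the paper's applications where $R$ is a field or $\Z_p$, but it is not a consequence of $M$ being free. This does not break the argument, only the parenthetical justification.
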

\begin{proof}
  The first assertion is clear. Also, by hypothesis, we may choose a
  polarisation on~$M$ that makes all summands~$M_i$ pairwise orthogonal, whence
  the second assertion follows.
\end{proof}

% rational factoring: this is the "easy" one. (can do it for regulators
% themselves)
\begin{prop}\label{prop:obviousProd}
  Let~$R=Q$ be a field, and let~$M$ be a polarisable graded $\heckebig$-module
  that is free over $R$.
  Let~$c,c'\in C$, and assume that~$M_{c}$ and~$M_{c'}$
  are  linked.
  Let the image of~$\hecke_1$ in $\heckebig$ decompose as a direct product $\prod_i B_i$
  of $Q$-algebras with involution,
  %and~$\heckebig\cong \prod_j B_j$ be as in Lemma~\ref{lem:involdecomp},
  and for all~$i$, let~$M_i = B_i \otimes_{\hecke_1} M$. %= B_j \otimes_B M$.
  Then for all~$i$, the graded~$\heckebig$-module $M_i$ is polarisable, $(M_i)_{c}$
  and~$(M_i)_{c'}$ are linked, and we have
  \[
    \cC_{c,c'}(M) = \prod_i \cC_{c,c'}(M_i) \in Q^\times/(Q^\times)^2.
  \]
\end{prop}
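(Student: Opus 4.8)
The plan is to reduce the statement to the two previously established facts: Lemma~\ref{lem:regcstsum}, which handles direct sum decompositions of $\heckebig$-modules, and Proposition~\ref{prop:descend-polarisation}, which guarantees descent of polarisations. First I would observe that the direct product decomposition $\hecke_1/(\Ann_{\hecke_1} M) \cong \prod_i B_i$ of $Q$-algebras with involution induces, via the idempotents $e_i \in \prod_i B_i$ lifting to central idempotents acting on $M$, a direct sum decomposition $M = \bigoplus_i M_i$ as $\heckebig$-modules, where $M_i = e_i M = B_i \otimes_{\hecke_1} M$. Here I should check that each $e_i$ is fixed by the involution $\iota$: since $\iota$ is a ring anti-automorphism of $\hecke_1$, it permutes the primitive central idempotents, but each $B_i$ is by hypothesis a factor stable under the involution (``$Q$-algebra with involution''), so $\iota(e_i) = e_i$ and each $M_i$ is indeed an $\iota$-stable $\heckebig$-submodule.

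Next I would verify that each $M_i$ is polarisable. Since $M$ is polarisable over some field $L \supseteq Q$, Proposition~\ref{prop:polarisation} tells us that $(M_c)_L$ is a self-dual $(\heckebig_1)_L$-module for every $c \in C$; applying the idempotent $e_i$, which commutes with the $(\heckebig_1)_L$-action and is $\iota$-fixed, we get that $((M_i)_c)_L = e_i (M_c)_L$ is a self-dual $(\heckebig_1)_L$-module, so $M_i$ is polarisable over $L$, hence over $Q$ by Proposition~\ref{prop:descend-polarisation}. (Alternatively, a polarisation on $M$ making the $M_i$ pairwise orthogonal restricts to a polarisation on each $M_i$, exactly as in the proof of Lemma~\ref{lem:regcstsum}; this orthogonality is automatic because the $M_i$ are $\iota$-stable and the pairing is non-degenerate, so the summands, being annihilated by complementary $\iota$-fixed idempotents, must be orthogonal.) The linkage claim, that $(M_i)_c$ and $(M_i)_{c'}$ are linked, is immediate: if $T \in (\heckebig_{c'c^{-1}})_{L'}$ links $M_c$ to $M_{c'}$, then since $e_i$ is central and $\iota$-fixed, $e_i T$ restricts to an isomorphism $((M_i)_c)_{L'} \to ((M_i)_{c'})_{L'}$, so $M_i$ is linked by $e_i T \in ((\heckebig_i)_{c'c^{-1}})_{L'}$.

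Finally, with the decomposition $M = \bigoplus_i M_i$ into polarisable graded $\heckebig$-submodules in hand, Lemma~\ref{lem:regcstsum} applies directly and yields $\cC_{c,c'}(M) = \prod_i \cC_{c,c'}(M_i)$. By Corollary~\ref{cor:rational} each factor $\cC_{c,c'}(M_i)$ lies in $Q^\times/(Q^\times)^2$ (since $R = Q$), and so does the product, giving the asserted membership in $Q^\times/(Q^\times)^2$. The only slightly delicate point, and the one I would be most careful about, is the interaction of $\iota$ with the idempotents $e_i$: one must use that the $B_i$ are $\iota$-stable factors rather than being swapped in pairs by $\iota$, which is precisely what the phrase ``$Q$-algebras with involution'' in the hypothesis encodes. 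Everything else is a routine combination of the cited results.
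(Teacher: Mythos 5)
Your proposal is correct and follows essentially the same route as the paper: both deduce polarisability of the $M_i$ from Proposition~\ref{prop:polarisation}, observe the direct sum decomposition $M=\bigoplus_i M_i$, and then invoke Lemma~\ref{lem:regcstsum} for the product formula. You merely supply more of the bookkeeping (that the central idempotents $e_i$ are $\iota$-fixed, that orthogonality of the $M_i$ under any polarisation is automatic, and that linkage passes to each factor) which the paper leaves implicit.
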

\begin{proof}
  By Proposition~\ref{prop:polarisation}, since~$M$ is polarisable, the~$M_i$ are
  polarisable. We have a decomposition $M=\bigoplus_i M_i$.
  The last two assertions follow from Lemma~\ref{lem:regcstsum}.
\end{proof}

% integral p-adic factoring: this is the interesting one. Can't do (as far as we
% know) at the level of regulators themselves.
\begin{prop}\label{prop:HeckeProd}
  Let~$R=\Z_p$, and let~$M$ be a polarisable graded $\heckebig$-module that is free over $R$.
  Let $\cN=\{\fn\cap \iota(\fn): \fn\in \MaxSpec(\hecke_1)\}$, and for all
  $\fn'\in \cN$ let $M_{\fn'} = (\hecke_1)_{\fn'}\otimes_{\hecke_1} M$.
  Then for all~$\fn'\in \cN$, the graded~$\heckebig$-module~$M_{\fn'}$ is
  polarisable.

  Let~$c,c'\in C$, and assume that~$M_{c}$ and~$M_{c'}$
  are  linked.
%  Let~$(\fn_j)$ be the collection of ideals of the form~$\fm\cap\iota(\fm)$ where~$\fm$
%  ranges over maximal ideals of~$A$, and for all~$j$ let~$M_j = A_{\fn_j}\otimes_A M = B_{\fn_j}\otimes_B M$.
  Then for all~$\fn'\in \cN$, $(M_{\fn'})_{c}$
  and~$(M_{\fn'})_{c'}$ are  linked, and we have
  \[
    \cC_{c,c'}(M) = \prod_{\fn'\in \cN} \cC_{c,c'}(M_{\fn'})\in \Q_p^\times / (\Z_p^\times)^2.
  \]
\end{prop}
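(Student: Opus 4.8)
The plan is to imitate the proof of Proposition~\ref{prop:obviousProd}: exhibit $M$ as a direct sum of polarisable graded $\heckebig$-submodules indexed by $\cN$, and then quote Lemma~\ref{lem:regcstsum}. The only substantive point is producing an idempotent decomposition of $\hecke_1$ matching $\cN$.

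I would first reduce to a finite algebra. Replacing $\hecke$ by its image $\bar\hecke$ in $\heckebig$ leaves $M$, $\heckebig$ (which stays a quotient of $\bar\hecke[W]$), the grading, and --- by Assumption~\ref{ass:invol} --- the involution $\iota$ unchanged; and since $\heckebig$ is a finite $\Z_p$-module over the Noetherian ring $\Z_p$, the subalgebra $\bar\hecke$, hence $\bar\hecke_1$, is a finite $\Z_p$-algebra. So we may assume $\hecke_1$ is a reduced finite $\Z_p$-algebra. Being a finite module over the complete local ring $\Z_p$, it is $p$-adically complete and semilocal, hence decomposes as a finite product of complete local rings $\hecke_1=\prod_{\fn\in\MaxSpec(\hecke_1)}(\hecke_1)_\fn$ with $(\hecke_1)_\fn=e_\fn\hecke_1$ for orthogonal idempotents $e_\fn$ summing to $1$ (see \cite{Lam}). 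The ring automorphism $\iota|_{\hecke_1}$ permutes $\MaxSpec(\hecke_1)$, with $\iota(e_\fn)=e_{\iota(\fn)}$; grouping maximal ideals into $\iota$-orbits $O$ of size $1$ or $2$, and setting $e_{\fn'}=\sum_{\fn\in O}e_\fn$ for $\fn'=\bigcap_{\fn\in O}\fn\in\cN$, I get a complete orthogonal family of $\iota$-invariant idempotents $\{e_{\fn'}\}_{\fn'\in\cN}\subseteq\hecke_1$ with $(\hecke_1)_{\fn'}=e_{\fn'}\hecke_1$.

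Next I would decompose $M$. Since the image of $\hecke$ in $\heckebig$ is central, each $e_{\fn'}$ is a central idempotent of $\heckebig$, so $M_{\fn'}=e_{\fn'}M$ is an $\heckebig$-submodule; it is graded because $e_{\fn'}\in\hecke_1$, it is free over $\Z_p$ as a direct summand of $M$, and $M=\bigoplus_{\fn'\in\cN}M_{\fn'}$. To see each $M_{\fn'}$ is polarisable, fix --- using Proposition~\ref{prop:descend-polarisation} --- a polarisation $\pairing{}$ on $M$ valued in $Q=\Q_p$. As $\iota(e_{\fn'})=e_{\fn'}$, the idempotent $e_{\fn'}$ is self-adjoint for $\pairing{}$, so for $\fn'\neq\fn''$, $m\in M_{\fn'}$, $m''\in M_{\fn''}$ one has $\langle m,m''\rangle=\langle e_{\fn'}m,m''\rangle=\langle m,e_{\fn'}m''\rangle=0$; hence the $M_{\fn'}$ are pairwise $\pairing{}$-orthogonal, the restriction of $\pairing{}$ to each $M_{\fn'}$ is a polarisation, and each $M_{\fn'}$ is polarisable. (Polarisability of the summands is also immediate from Proposition~\ref{prop:polarisation}.)

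Finally, $M=\bigoplus_{\fn'\in\cN}M_{\fn'}$ is a direct sum of polarisable graded $\heckebig$-submodules free over $R$, with $M_c$ and $M_{c'}$ linked by hypothesis, so Lemma~\ref{lem:regcstsum} immediately gives that $(M_{\fn'})_c$ and $(M_{\fn'})_{c'}$ are linked for every $\fn'\in\cN$ and that $\cC_{c,c'}(M)=\prod_{\fn'\in\cN}\cC_{c,c'}(M_{\fn'})$ in $\Q_p^\times/(\Z_p^\times)^2$ (using Corollary~\ref{cor:rational}); summands $M_{\fn'}=0$ simply contribute the trivial factor $1$. The step I expect to need the most care is the structure theory producing the idempotents $e_{\fn'}$, together with pinning down the meaning of $(\hecke_1)_{\fn'}$ for $\fn'$ an intersection of two maximal ideals; everything after that is formal, exactly as in Proposition~\ref{prop:obviousProd}.
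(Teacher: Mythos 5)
Your proposal is correct and follows essentially the same route as the paper: decompose $\hecke_1$ as a product over its maximal ideals, group the factors into $\iota$-orbits to obtain $M=\bigoplus_{\fn'\in\cN}M_{\fn'}$, deduce polarisability of the summands, and conclude with Lemma~\ref{lem:regcstsum}. The paper dispatches the product decomposition by citing \cite[Corollary 7.6]{Eisenbud} and gets polarisability of each $M_{\fn'}$ from Proposition~\ref{prop:polarisation}; you reprove the decomposition via completeness of $\Z_p$ after first replacing $\hecke$ by its image in $\heckebig$ (a harmless and, strictly speaking, necessary reduction since the ambient $\hecke_1$ need not be finite over $R$), and you also give the direct orthogonality argument for polarisability — both are correct refinements of the same argument rather than a different approach.
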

\begin{proof}
  By \cite[Corollary 7.6]{Eisenbud}, there is a
  decomposition $\hecke_1=\prod_{\fn}(\hecke_1)_{\fn}$, with the product running over the
  maximal ideals of $\hecke_1$. Accordingly, by grouping terms, there is a direct sum
  decomposition~$M = \bigoplus_{\fn'\in \cN} M_{\fn'}$.
  By Proposition~\ref{prop:polarisation}, since~$M$ is polarisable, the~$M_{\fn'}$ are
  polarisable. The last two statements follow from Lemma~\ref{lem:regcstsum}.
\end{proof}

%
%--------------
%--- Section 4
%--------------
%
\newpage
\section{Vign\'eras pairs of manifolds}\label{sec:Vigneras}

% Is the distinction between "isomorphisms induced by a Hecke operator" and
% "abstract isomorphisms" a red herring? Try this: fix a
% reprensetative of each isometry class of connected components,
% and fix isomorphisms between the components that correspond to the
% same coset in C_iso; get a Hecke action on the direct sum of these
% representatives. Need the isomorphisms to commute with the Hecke action.
In the present section we will introduce the number theoretic situation
we are interested in, a general version of the Vign\'eras construction,
and apply the results of Section \ref{sec:gradedAlg}
to this situation, deriving inexplicit self-twist conditions for various
kinds of isospectralities.
%
%, which will in particular imply Theorems
%\ref{thm:intro1}, \ref{thm:intro2}, \ref{thm:product}.
%
%Throughout this section,
We fix the following notation.

\setlength\tabcolsep{0.2em}
\setlength\arraycolsep{0.2em}
\begin{tabular}{ll}
  $F$ & a number field.\\
  $\Z_L$ & the ring of integers of a number field or $p$-adic field~$L$.\\
  $\fp$ & a maximal ideal of~$\Z_F$.\\
  $\Z_{\fp}$ & ring of integers of the completion $F_{\fp}$.\\
  $\fN$ & a non-zero ideal of $\Z_F$.\\
  $\infty$ & the set of all real places of $F$.\\
  $\Cl_F(\fN \cV)$ & the ray class group with modulus~$\fN$ times a subset~$\cV$ 
    of $\infty$\\
%  $U_F(\fN \cV)$ & the group of units of~$\Z_F$ congruent to~$1 \bmod \fN$ and
%    positive at all\\
%    &  places of~$\cV$.\\
  $U_F(\fN \cV)$ & the group $\{u\in \Z_F^\times: u-1\in \fN\text{ and } v(u)>0\text{ for all }v\in\cV\}$.\\
  $F_\R$ & $\R\otimes_{\Q}F$.\\
  $\adel_{F,f}$ & the ring~$(\prod_p \Z_p)\otimes F$ of finite ad\`eles of $F$.\\
  $\adel_F$ & the ring~$F_\R \times \adel_{F,f}$ of ad\`eles of $F$.\\
  $D$ & a division algebra over $F$ of degree $d\geq 2$ that is not totally definite.\\
  $\Hamil$ & the $\R$-algebra of Hamilton quaternions.\\
  $\cV_{\R}(D)$, & the sets of infinite places of $F$ that are real and split in $D$,\\
  $\cV_{\CC}(D)$,                     & respectively complex (and necessarily split in $D$),\\
  $\cV_{\Hamil}(D)$                     & respectively ramified in $D$.\\
  $\cO$ & a maximal order in $D$.\\
  %$S$ & the set of prime ideals of $\Z_F$ that ramify in $D$.\\
  %$S_{\infty}$ & the set of real places of $F$ that ramify in $D$, i.e. of embeddings\\
  %             & $\sigma\colon F\hookrightarrow \R$ such that $D\otimes_{\sigma}\R \not\cong \Mat_d(\R)$.\\
  %$\Cl_F(S_{\infty})$ & the ray class group of $F$ with respect to $S_{\infty}$.\\
  $\G$ & the algebraic group corresponding to $D^\times$, i.e.
  representing\\
       & the functor $(\bullet \otimes_F D)^\times$ on the category of $F$-algebras.\\
  $G_\infty$ & $\G(F_{\R})$.\\
  $K_\infty$ & a maximal compact subgroup of $G_\infty$.\\
  $Z_{\infty}$ & the centre of $G_{\infty}$.\\
  $\Lambda_{\fp}$, $d_{\fp}$ & are defined up to isomorphism by
                              $\Z_{\fp}\otimes_{\Z_F}\cO\cong \Mat_{d_{\fp}}(\Lambda_{\fp})$, where \\
                             & $\Lambda_{\fp}$ is a local ring.
                            From now on fix such isomorphisms.\\
  %TODO: might need to define the notation $\Mat_d(R)$
  $\Delta_{\fp}$ & the division algebra $F_{\fp}\otimes_{\Z_{\fp}}\Lambda_{\fp}$.\\
  $\delta_D$ & the finite product $\prod_{\fp}\fp^{d-d_{\fp}}\subset \Z_F$.\\
  $K(\fp^i)$, & $\GL_{d_{\fp}}(\Lambda_{\fp})$ if $i=0$,
  $1+\Jac(\Lambda_{\fp})^i\Mat_{d_{\fp}}(\Z_{\fp})$ if $i>0$ (i.e. the kernel of\\
$\quad{i\in \Z_{\geq 0}}$ & reduction modulo $\Jac(\Lambda_{\fp})^i$), where $\Jac(\Lambda_{\fp})$ is 
                                               the maximal ideal of $\Lambda_{\fp}$.\\
  $K(\fN)$ & $\prod_{\fp^i\parallel\fN}K(\fp^i)$, the product running over all maximal ideals of $\Z_F$.\\
  $K_f$ & an open subgroup of $\G(\adel_{F,f})$ containing
  $K(\fN)$ and\\
  & such that $K_f/\centre(K_f)$ is compact.\\
  $\nrd$ & the reduced norm map from $\G$ to the multiplicative group $\G_m$.\\
  %$\G(F)^+$ & $\{g\in \G(F) | \sigma(\nrd g) > 0\text{ for all embeddings }\sigma\colon F\hookrightarrow \R\}$.\\
  $\G(\adel_F)^+$ & $\{g=(g_v)_v\in \G(\adel_F):
  \det(g_v)>0\text{ for all }v\in \cV_{\R}(D)\}$.\\
    $H^+$ & $H\cap \G(\adel_F)^+$, where $H$ is a subgroup of $\G(\adel_F)$.\\
    $\cY$ & $\G(F)^+\backslash \G(\adel_F)/Z_{\infty} K_{\infty}K_f$.\\ %, a possibly disconnected arithmetic orbifold.\\
  $X$ & $G_{\infty}/(Z_{\infty} K_{\infty})$.\\ %, the symmetric space. \\
  $C$ & $\nrd(\G(F)^+)\backslash\adel_{F,f}^\times/\nrd(K_f)$.\\ %, the component group.\\
      %& $\nrd(\G(F))\backslash (\adel_{F,f}^\times\times\{\pm 1\}^{\cV_\R})/\nrd(K_f)$ (see Lemma BLA).\\
  $\Gamma_c,\, c\in C$ & $\Gamma_g = \G(F)^+\cap gK_fg^{-1}$ where the class of~$\nrd(g)$ in~$C$ is~$c$. \\
  $Y_c$ & $\Gamma_c\lquo X$.\\ %, a connected component of~$\cY$. \\
  $\normaliser(K_f)$ & the normaliser of~$K_f$ in~$\G(\adel_{F,f})$.\\
  $W$ & $F^\times\backslash(Z_{\infty}K_{\infty}\times \normaliser(K_f)/K_f)$.\\ %, a group of isometries of~$\cY$.\\
      %\!=\!(\normaliser(K_f)/K_f \times \{\pm 1\}^{\cV_{\R}(D)})/F^\times$.\\
%  $C_{\iso}$ &
%  $\nrd(\G(F))\backslash\nrd(\G(\adel_{F,f}))/\nrd(\normaliser(K_f))$, a quotient of $C$.\\
  %$C_{\iso}\subset C$ &
  %$\nrd(\G(F)^+)\backslash\nrd(\G(F))\nrd(\normaliser(K_f))/\nrd(K_f)$. \\
      %& where~$\normaliser(K_f)$ is the normaliser of~$K_f$ in~$\G(\adel_{F,f})$.\\
      %& For~$c\in C$, let~$\bar{c}$ denote the class of~$c$ in~$C/C_{\iso}$. \\
  $C_{\iso}$ & see discussion after Proposition \ref{prop:plustransfer}, in
  particular Equation~(\ref{eq:Ciso}). \\
      $\bar{c}$ & the class of~$c$ in~$C_{\iso}$, where $c\in C$.\\
%      & $=\Cl_F^+/\left((\Cl_F^+)^d\cdot\langle\fp:\fp | \delta_D\rangle\right)$,
%      where $\Cl_F^+$ denotes the ray class\\
%      & group of $F$ with modulus the product
%      of all real places of $F$.\\
  $c^\perp$ & the set of~$\chi\in \Cisodual$ such that~$\chi(c)=1$, where $c\in C$.\\
  $\abstracthecke$ & the subalgebra of $\Z[K_f\backslash \G(\adel_{F,f})/K_f]$
           generated by the\\
           & $K_f\G(F_{\fp})K_f$ for all $\fp\nmid \delta_D\fN$.\\[0.5em]
  %$X$ & $\G(F_{\R})/(F_{\R})^\times K_\infty$\\
\end{tabular}

We will now recall some basic facts about these objects. We refer to
\cite[\S 2.6, \S 5.5, \S 15.2]{GetzHahn} for these facts and further material.

The double quotient $\cY$ is a compact orientable orbifold.
Fix a set $\dcr$ of double coset
representatives of $\G(F)^+\backslash \G(\adel_{F,f})/K_f$. For each
$g\in \dcr$, define $\Gamma_g=\G(F)^+\cap gK_fg^{-1}$, and $Y_g = \Gamma_g\backslash X$.
Then each $Y_g$ is connected, and one has $\cY=\bigsqcup_{g\in \dcr}Y_g$.
The reduced norm map induces a bijection between the component
set $\dcr$ and the group $C$, and we will from now on implicitly use
this bijection, and will index the connected components of $\cY$ by $C$.
The group $C$ is a quotient of
$$
%\nrd(\G(F)^+)\backslash\nrd(\G(\adel_{F,f}))/\nrd(K(\fN))\nrd(\adel_{F,f}^\times) = \Cl_F^+(\fN)/(\Cl_F^+(\fN))^d,
\nrd(\G(F)^+)\backslash\nrd(\G(\adel_{F,f}))/\nrd(K(\fN)) = \Cl_F(\fN\infty).
$$
%where $\Cl_F^+(\fN)$ is the ray class group of $F$ with respect to the modulus
%given by the product of all real places of $F$ and of $\fN$.

%Let~$\normaliser(K_f)$ denote the normaliser of~$K_f$ in~$\G(\adel_{F,f})$.
%Let $\cV_{\R}(D)$, $\cV_{\CC}(D)$, and $\cV_{\Hamil}(D)$ denote the sets of infinite places
%of $F$ that are real and split in $D$, respectively complex (and necessarily
%split in $D$), respectively ramified in $D$. 
We have
$$
G_{\infty} \cong \GL_d(\R)^{\cV_{\R}(D)} \times \GL_d(\CC)^{\cV_{\CC}(D)} \times \GL_{d/2}(\Hamil)^{\cV_{\Hamil}(D)}.
$$
%where $\Hamil$ denotes the $\R$-algebra of Hamilton quaternions.
The maximal compact subgroup $K_\infty$ is accordingly a product of maximal
compact subgroups over the infinite places of $F$, is unique up to conjugation,
and may be taken to be
$$
K_{\infty} \cong \O_d(\R)^{\cV_{\R}(D)} \times \U_d(\CC)^{\cV_{\CC}(D)} \times \U_{d/2}(\Hamil)^{\cV_{\Hamil}(D)}.
$$
Accordingly, the dimension of $X$ (and therefore also of $\cY$) can be computed to be
$\left(\frac{d^2+d-2}{2}\right)\cdot\#\cV_{\R}(D)+ \left(d^2-1\right)\cdot\#\cV_{\CC}(D)+\left(\frac{d^2-d-2}{2}\right)\cdot\#\cV_{\Hamil}(D)$.
Note, in particular, that when $d=2$, the condition on $D$ to be not totally definite,
equivalently the set $\cV_{\R}(D)\cup \cV_{\CC}(D)$ to be non-empty, is precisely
the condition to ensure that the dimension of $\cY$ is positive.

We now give an alternative description of~$\cY$, which we will use in the next
section.

\begin{lemma}\label{lem:KplusZplus}
  We have $Z_{\infty}^+K_{\infty}^+ = G_{\infty}^+ \cap (Z_{\infty} K_{\infty})$.
\end{lemma}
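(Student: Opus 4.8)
The plan is to prove the two inclusions of the asserted equality separately; the inclusion $Z_{\infty}^+K_{\infty}^+ \subseteq G_{\infty}^+ \cap (Z_{\infty}K_{\infty})$ is formal, while the reverse inclusion carries the (modest) content. For the easy inclusion: if $z\in Z_{\infty}^+$ and $k\in K_{\infty}^+$ then certainly $zk\in Z_{\infty}K_{\infty}$, and for each $v\in\cV_\R(D)$ one has $\det(z_vk_v)=\det(z_v)\det(k_v)>0$ since both factors are positive by the definition of the superscript $+$; hence $zk\in\G(\adel_F)^+$, so $zk\in G_{\infty}^+$.

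For the reverse inclusion I would reduce to a statement at a single infinite place, using the product descriptions of $G_\infty$ and $K_\infty$ recalled above. Write $G_\infty=\prod_v G_v$ over the infinite places $v$ of $F$, with $G_v$ equal to $\GL_d(\R)$, $\GL_d(\CC)$ or $\GL_{d/2}(\Hamil)$ according as $v$ lies in $\cV_\R(D)$, $\cV_\CC(D)$ or $\cV_\Hamil(D)$, and correspondingly $K_\infty=\prod_v K_v$ and $Z_\infty=\prod_v Z_v$ with $Z_v=\centre(G_v)$. The superscript $+$ subgroups decompose compatibly with these products, and $Z_v^+=Z_v$ and $K_v^+=K_v$ for every $v\notin\cV_\R(D)$, since the condition defining $\G(\adel_F)^+$ is vacuous at such places. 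Hence, given $g\in G_\infty^+\cap(Z_\infty K_\infty)$ and any factorisation $g=zk$ with $z=(z_v)_v\in Z_\infty$ and $k=(k_v)_v\in K_\infty$, it suffices to replace, at each $v\in\cV_\R(D)$, the pair $(z_v,k_v)$ by a pair $(z_v',k_v')\in Z_v^+\times K_v^+$ with $z_v'k_v'=g_v$, leaving the remaining components unchanged; this produces $g=z'k'$ with $z'\in Z_\infty^+$ and $k'\in K_\infty^+$, because membership in the superscript $+$ subgroups is tested one place at a time.

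The local step, at $v\in\cV_\R(D)$ where $G_v=\GL_d(\R)$, $Z_v=\R^\times I_d$ and $K_v=\O_d(\R)$, is elementary. Write $z_v=\lambda I_d$ with $\lambda\in\R^\times$. The identity $\lambda k_v=(-\lambda I_d)(-k_v)$, together with $-k_v\in\O_d(\R)$, lets me assume $\lambda>0$; then $\det(z_v)=\lambda^d>0$, so $z_v\in Z_v^+$, and $\det(g_v)=\lambda^d\det(k_v)>0$ forces $\det(k_v)=1$, so $k_v\in\SO_d(\R)=K_v^+$. What makes this work uniformly, independently of the parity of $d$, is just that $-I_d\in\O_d(\R)$, so that a sign on the scalar factor can always be absorbed into the orthogonal factor.

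I do not expect any genuine obstacle; the lemma is soft. The only point that needs a little care is the legitimacy of modifying the factorisation $g=zk$ one infinite place at a time, which is immediate from the product structure of $G_\infty$, $Z_\infty$ and $K_\infty$ over the infinite places, together with the fact that the superscript $+$ condition is imposed separately at each place of $\cV_\R(D)$.
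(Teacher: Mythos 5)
Your proof is correct and follows essentially the same approach as the paper: reduce to a local statement at each real place $v\in\cV_\R(D)$, then use the fact that $-I_d\in\O_d(\R)$ to absorb a sign from the scalar factor into the orthogonal factor. The only cosmetic difference is that you normalise the scalar to be positive up front, whereas the paper splits into cases according to the sign of $\det(z_v)$; both hinge on the same identity $zk=(-z)(-k)$.
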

\begin{proof}
  One inclusion is obvious. For the other, it suffices to argue place by place, and the
  statement is only non-empty at the places $v\in \cV_{\R}(D)$. Locally at such
  a place, the claim is that we have $\R^\times_{>0}\SO_d(\R)= \GL_d(\R)^+ \cap (\R^\times\O_d(\R))$,
  where $\GL_d(\R)^+$ denotes the group of real $d\times d$ matrices with positive determinant,
  and where $\R^\times$ denotes the subgroup of $\GL_d(\R)$ consisting of scalar matrices.
  To prove the claim, suppose that we have $z\in\R^\times$, $k\in \O_d(\R)$ such that
  $\det(zk)>0$. If we have $\det(z)>0$, then we also have $\det(k)>0$, hence
  $zk\in \R^\times_{>0}\SO_d(\R)$. If, on the other hand, we have $\det(z)<0$, so that in particular $d$ is odd,
  then $-1\in \R^\times$ has determinant $-1$, and is also an orthogonal matrix,
  so that we have $zk = (-1\cdot z)(-1\cdot k)\in \R^\times_{>0}\SO_d(\R)$. This proves
  the claim, and hence the lemma.
\end{proof}

\begin{proposition}\label{prop:plustransfer}
We have canonical isomorphisms
$$
%\G(F)^+\backslash \G(\adel_F)/Z_{\infty} K_{\infty}K_f
\cY = \G(F)\backslash \G(\adel_F)/ Z_{\infty}^+K_{\infty}^+K_f
$$
and
 $$ 
 C=F^{\times}_+\backslash\adel_{F,f}^\times/\nrd(K_f) =
 F^{\times}\backslash (\adel_{F,f}^\times\times\{\pm 1\}^{\cV_\R(D)\cup \cV_{\Hamil}(D)})/\nrd(K_f),
  $$
  where $F^{\times}_{+}=\{\alpha\in F: v(\alpha)>0 \text{ for all }v\in \cV_{\R}(D)\cup \cV_{\Hamil}(D)\}$.
 %and $F^{\times}_{(+)}=\{\alpha\in F: v(\alpha)>0 \text{ for all }v\in \cV_{\Hamil}\}$.
\end{proposition}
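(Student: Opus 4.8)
The plan is to establish the two asserted identities separately: the description of $\cY$ by restricting all the double cosets to the subgroup $\G(\adel_F)^+$, and the description of $C$ by weak approximation at the archimedean places. For the first, I would introduce the homomorphism $\sigma\colon\G(\adel_F)\to\{\pm 1\}^{\cV_\R(D)}$ given by $\sigma(g)=(\sign\det(g_v))_{v\in\cV_\R(D)}$; since $D\otimes_FF_v\cong\Mat_d(\R)$ for $v\in\cV_\R(D)$, so that $\nrd=\det$ there, one has $\ker\sigma=\G(\adel_F)^+$ by definition, and $K_f\subseteq\ker\sigma$ because finite adeles are trivial at the infinite places. The two facts I need are the surjectivity of $\sigma$ on $\G(F)$ and on $Z_\infty K_\infty$: the former holds because, by weak approximation for $\G$ (a Zariski-open subvariety of affine space), $\G(F)$ is dense in $\prod_{v\in\cV_\R(D)}\G(F_v)$ and hence surjects onto $\{\pm 1\}^{\cV_\R(D)}$; the latter one checks one place at a time, where a scalar $\lambda\in\R^\times$ and an element $k\in\O_d(\R)$ have $\nrd(\lambda k)=\lambda^d\det(k)$, which exhausts $\R^\times$. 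It follows that every $(\G(F),\,Z_\infty^+K_\infty^+K_f)$-double coset in $\G(\adel_F)$, and every $(\G(F)^+,\,Z_\infty K_\infty K_f)$-double coset, meets $\G(\adel_F)^+$.

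I would then restrict both double coset relations to $\G(\adel_F)^+$ and check that they agree there. If $g,h\in\G(\adel_F)^+$ satisfy $h=\gamma gu k_f$ with $\gamma\in\G(F)^+$, $u\in Z_\infty K_\infty$, $k_f\in K_f$, then $u=(\gamma g)^{-1}h k_f^{-1}$ lies in $G_\infty^+\cap(Z_\infty K_\infty)=Z_\infty^+K_\infty^+$ by Lemma~\ref{lem:KplusZplus}; conversely, if $h=\gamma gu k_f$ with $\gamma\in\G(F)$, $u\in Z_\infty^+K_\infty^+$, $k_f\in K_f$, then $\gamma=h k_f^{-1}u^{-1}g^{-1}$ lies in $\G(F)\cap\G(\adel_F)^+=\G(F)^+$. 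Hence on $\G(\adel_F)^+$ both relations reduce to ``$h\in\G(F)^+gZ_\infty^+K_\infty^+K_f$''; combining this with the previous paragraph, the inclusion $\G(\adel_F)^+\hookrightarrow\G(\adel_F)$ induces canonical bijections identifying each of $\G(F)^+\backslash\G(\adel_F)/Z_\infty K_\infty K_f$ and $\G(F)\backslash\G(\adel_F)/Z_\infty^+K_\infty^+K_f$ with $\G(F)^+\backslash\G(\adel_F)^+/Z_\infty^+K_\infty^+K_f$, which gives the first identity.

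For the description of $C$, the first equality is the assertion that $\nrd(\G(F)^+)=F^\times_+$ as subgroups of $\adel_{F,f}^\times$: the inclusion $\subseteq$ is immediate from $\nrd(\Hamil^\times)=\R_{>0}$ and $\nrd=\det$ at the places of $\cV_\R(D)$, while $\supseteq$ is the surjectivity half of the Hasse--Schilling--Maass norm theorem, which gives $\nrd(D^\times)=\{\beta\in F^\times:\beta>0\text{ at all }v\in\cV_\Hamil(D)\}$, intersected with positivity at $\cV_\R(D)$. For the second equality, I would use that every archimedean place of $F$ lies in $\cV_\R(D)\cup\cV_\Hamil(D)$, so that $\{\pm 1\}^{\cV_\R(D)\cup\cV_\Hamil(D)}$ is canonically the group of connected components of $F_\infty^\times$, with $F^\times$ acting through the tuple of archimedean signs and with $F^\times_+$ being the kernel of that action. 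Then $x\mapsto(x,1)$ descends to a bijection from $F^\times_+\backslash\adel_{F,f}^\times/\nrd(K_f)$ to $F^\times\backslash(\adel_{F,f}^\times\times\{\pm 1\}^{\cV_\R(D)\cup\cV_\Hamil(D)})/\nrd(K_f)$: it is well defined and injective because $\alpha\in F^\times$ acts trivially on the sign factor precisely when $\alpha\in F^\times_+$, and it is surjective because weak approximation yields, for any prescribed sign pattern, an $\alpha\in F^\times$ realising it, so that every class on the right has a representative of the form $(x,1)$.

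The only real obstacle here is bookkeeping rather than mathematics: one must keep careful track of which archimedean places contribute a sign factor --- namely only the real places of $F$, equivalently $\cV_\R(D)\cup\cV_\Hamil(D)$, since $\CC^\times$ and $\Hamil^\times$ are connected --- and one must not forget that when $d$ is even the scalar matrices have positive reduced norm, so that the orthogonal factor of $K_v$ is genuinely needed for the surjectivity of $\sigma$ on $Z_\infty K_\infty$. The two substantive inputs, weak approximation and the Hasse--Schilling--Maass norm theorem, are standard, and are among the basic facts recalled from \cite{GetzHahn}.
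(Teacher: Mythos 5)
Your proof is correct and takes essentially the same approach as the paper: control the intersection $G_\infty^+\cap(Z_\infty K_\infty)$ via Lemma~\ref{lem:KplusZplus}, use surjectivity of the archimedean sign map on $K_\infty$ and on $\G(F)$ (weak approximation) to show the two double-coset spaces are both in bijection with $\G(F)^+\backslash\G(\adel_F)^+/Z_\infty^+K_\infty^+K_f$, and invoke Hasse--Schilling--Maass for the identification of $C$. One small slip: the clause ``every archimedean place of $F$ lies in $\cV_\R(D)\cup\cV_\Hamil(D)$'' is false when $\cV_\CC(D)\neq\emptyset$; what you actually need, and correctly state in your final bookkeeping remark, is that the \emph{real} places of $F$ are exactly $\cV_\R(D)\cup\cV_\Hamil(D)$ while the complex places contribute nothing to $\pi_0(F_\infty^\times)$.
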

\begin{proof}
  First we prove the first isomorphism.
  By Lemma \ref{lem:KplusZplus}, the inclusion $G_{\infty}^+\hookrightarrow G_{\infty}$ induces
  an injection
  $$
  \G(F)^+\backslash (G_{\infty}^+\cdot \G(\adel_{F,f}))/Z_{\infty}^+K_{\infty}^+K_f\hookrightarrow
  \G(F)^+\backslash (G_{\infty}\cdot \G(\adel_{F,f}))/Z_{\infty} K_{\infty}K_f.
  $$
  Since the map $\det\colon K_{\infty}\to \{\pm 1\}^{\cV_{\R}(D)}$ is surjective, this
  injection is also a surjection.

  Next, the same inclusion induces an injection
  $$
  \G(F)^+\backslash (G_{\infty}^+\cdot\G(\adel_{F,f}))/Z_{\infty}^+K_{\infty}^+K_f \hookrightarrow
  \G(F)\backslash (G_{\infty}\cdot \G(\adel_{F,f}))/Z_{\infty}^+K_{\infty}^+K_f.
  $$
  By weak approximation, the map
  $\sign\nrd\colon \G(F)\to \{\pm 1\}^{\cV_{\R}(D)}$ is also surjective, so
  this injection, too, is a surjection, and the first isomorphism is proven.

  Next, we show that $C=F^{\times}_+\backslash\adel_{F,f}^\times/\nrd(K_f)$. Indeed,
  by the Hasse--Schilling--Maass Theorem \cite[Theorem 33.15]{MaxOrders},
  the image of $\G(F)^+$ under the reduced norm map is precisely $F^\times_+$.

  The last isomorphism follows from the fact that the map
  $F^\times\to \{\pm 1\}^{\cV_{\R}(D) \cup \cV_{\Hamil}(D)}$ is surjective.
\end{proof}

Applying Proposition \ref{prop:plustransfer}, we have a map
\begin{eqnarray*}
  \nu\colon W & \to & C=F^{\times}\backslash (\{\pm 1\}^{\cV_\R(D)\cup \cV_{\Hamil}(D)}\times \adel_{F,f}^\times)/\nrd(K_f)\\
  w=(w_{\infty},w_f) & \mapsto & F^{\times}(\sign\nrd(w_{\infty}),\nrd(w_f))^{-1}\nrd(K_f),
\end{eqnarray*}
and we define, as in Section \ref{sec:gradedAlg}, the quotient $C_{\iso}=C/\nu(W)$.
We have a canonical isomorphism 
\begin{eqnarray}\label{eq:Ciso}
C_{\iso}\cong F^\times\backslash(\{\pm 1\}^{\cV_{\Hamil}(D)})\times\adel_{F,f}^\times/\nrd(\normaliser(K_f)),
\end{eqnarray}
and since~$\normaliser(K_f)$ contains~$\adel_{F,f}^\times K_f $, the group~$C_{\iso}$ is a quotient of
\[
  \nrd(\G(F)^+)\backslash\nrd(\G(\adel_{F,f}))/\nrd(K_{\infty}K(\fN)\adel_{F,f}^\times)
  \hspace{-0.2em}=\hspace{-0.2em} \Cl_F(\fN\cV_{\Hamil}(D))/(\Cl_F(\fN\cV_{\Hamil}(D)))^d.
\]
Elements of $W$ act via isometries on $\cY=\G(F)\backslash \G(\adel_F)/ Z_{\infty}^+K_{\infty}^+K_f$ by right multiplication.

The algebra $\abstracthecke$ is commutative. It is generated by double cosets
$$
T_{\cD} = K_f\diag(\cD)K_f,
$$
where 
$\cD=(\fa_1,\ldots,\fa_d)\in \left(\adel_{F,f}^\times/\prod_{\fp}\Z_{\fp}^\times\right)^d$
is such that for every $\fp|\delta_D\fN$ and for every~$\fa_i$ one has $\ord_{\fp}(\fa_i)=0$,
i.e. $\cD$ is a $d$-tuple of non-zero fractional ideals of $F$ that are coprime
to~$\delta_D\fN$.
%Moreover, by the theory of Smith normal form, every such double coset
%has a unique representative as above with all $\fa_i$ being integral ideals,
%with $\fa_i|\fa_{i-1}$ for all $i>1$, and with $\fa_d=\Z_F$.
  %no longer true when K_f might not contain the centre
The algebra $\abstracthecke$ is a subalgebra of $\Z[K_f\backslash \G(\adel_{F,f})/K_f]$.

There is an involution on $\Z[K_f\backslash \G(\adel_{F,f})/K_f]$ -- see \cite[Ch. 1, \S 8.6(d)]{VignerasRep} --
given by inversion, which induces an involution $\iota$ on $\abstracthecke$. Explicitly, it is given by
\begin{eqnarray*}
  \iota\colon \abstracthecke & \to & \abstracthecke,\\
  T_{\cD} & \mapsto & T_{\cD^{-1}}.
\end{eqnarray*}
This involution is extended to $\abstracthecke[W]$ by defining it to be inversion on $W$.

The algebra $\abstracthecke[W]$ is graded by $C$,
the grading being defined on $\abstracthecke$ by the condition that $T_{\cD}$
for $\cD$ as above belongs to the homogeneous component of the image of
$$
[\cD]^{-1}=\prod_{i=1}^d[\fa_i]^{-1}\in \Cl_F(\fN\infty)
$$
in the quotient $C$, and being induced on $W$ by the homomorphism $\nu$.

\begin{definition}\label{def:eigensystem}
  Let $L$ be a field.
  A \emph{Hecke eigensystem over $L$} is an $L$-algebra homomorphism
  $a = (a_\cD)_{\cD}\colon L\otimes \abstracthecke\to L$, $1\otimes T_{\cD}\mapsto a_{\cD}$.
  Let $M$ an $(L\otimes\abstracthecke)$-module.
  The \emph{multiplicity in $M$} of a Hecke eigensystem~$(a_\cD)$
  is the $L$-dimension of the space of all elements $f\in M$
  satisfying
  $$
  T_{\cD}f = a_{\cD}f
  $$
  for all $\cD\in \left(\adel_{F,f}^\times/\prod_{\fp}\Z_{\fp}^\times\right)^d$.
  A \emph{Hecke eigensystem in $M$} is a Hecke eigensystem that has multiplicity
  at least $1$ in $M$.

  Given a Hecke eigensystem~$(a_\cD)_{\cD}$ in $M$ and $\chi\in \Cdual$,
  we say that~$(a_\cD)_{\cD}$ \emph{admits a self-twist} by $\chi$ if for all
  $\cD\in \left(\adel_{F,f}^\times/\prod_{\fp}\Z_{\fp}^\times\right)^d$ satisfying
  $\chi([\cD])\neq 1$ one has $a_{\cD}=0$. In the case when $R$ contains all
  $n$-th roots of unity, where $n$ is the order of $\chi$, we may view $\chi$
  as taking values in $R^\times$, and then the condition of admitting a self-twist
  is equivalent to the condition that for all $\cD$ one has $\chi([\cD])\cdot a_{\cD}=a_{\cD}$.
\end{definition}

The following two lemmas will link the self-twist condition with Proposition \ref{prop:littleExercise}.

\begin{lemma}\label{lem:SubmodEigensystem}
  Let $R$ be a domain, let $\hecke=R\otimes \abstracthecke$, let $M$ be a $\hecke$-module that is finitely
  generated over $R$, let $\fp\in \MaxSpec(R)$, and let $E$ be a simple $\hecke$-submodule
  of $R/\fp\otimes_R M$. Then there exists a finite field extension $S$ of $R/\fp$ and a
  Hecke eigensystem $(a_{\cD})_{\cD}$ over~$S$ in $S\otimes_{R} M$ such that whenever
  $T_{\cD}\in \abstracthecke$ annihilates $E$, one has $a_{\cD}=0$.
\end{lemma}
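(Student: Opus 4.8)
The plan is to read the eigensystem off the simple module $E$ directly. Since $\fp$ is a maximal ideal, $k:=R/\fp$ is a field, and as $M$ is finitely generated over $R$ the $k$-vector space $R/\fp\otimes_R M=M/\fp M$ is finite-dimensional; hence so is its submodule $E$. The algebra $\abstracthecke$ is commutative, so $k\otimes\abstracthecke$ is a commutative finite $k$-algebra, and it acts on the simple module $E$ through the quotient by the maximal ideal $\fa:=\Ann_{k\otimes\abstracthecke}(E)$. I would therefore set $S:=(k\otimes\abstracthecke)/\fa$; this is a field, it is finite over $k$ because it acts faithfully on the nonzero finite-dimensional $k$-space $E$, and since $E$ is simple over the field $S$ we have $E\cong S$ as $S$-modules. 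I would then let $a=(a_\cD)_\cD\colon S\otimes\abstracthecke\to S$ be the $S$-algebra homomorphism obtained by extending scalars in the composite $\abstracthecke\to k\otimes\abstracthecke\to S$; this is a Hecke eigensystem over $S$ in the sense of Definition~\ref{def:eigensystem}, with $a_\cD$ equal to the image of $T_\cD$ in $S$. The annihilation clause is then immediate: if $T_\cD$ kills $E\cong S$, then, $S$ acting faithfully on itself, the image $a_\cD$ of $T_\cD$ in $S$ vanishes.

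It remains to check that $(a_\cD)_\cD$ has multiplicity at least $1$ in $S\otimes_R M$. As $k\to S$ is an extension of fields, tensoring is exact, so the inclusion $E\hookrightarrow M/\fp M$ yields an inclusion of $\abstracthecke$-modules $S\otimes_k E\hookrightarrow S\otimes_k(M/\fp M)=S\otimes_R M$, with $S$ acting through the left tensor factor. Using $E\cong S$, identify $S\otimes_k E$ with $S\otimes_k S$, on which $T_\cD$ acts as multiplication by $1\otimes a_\cD$ while the eigenvalue $a_\cD$ acts as multiplication by $a_\cD\otimes 1$. So I want a nonzero $z\in S\otimes_k S$ annihilated by every $1\otimes a_\cD-a_\cD\otimes 1$; its image in $S\otimes_R M$ is then the required eigenvector. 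Here I would invoke that $\abstracthecke$ is generated as a ring by the $T_\cD$, hence the $a_\cD$ generate $S$ as a $k$-algebra, hence the ideal of $S\otimes_k S$ generated by the elements $1\otimes a_\cD-a_\cD\otimes 1$ contains all $1\otimes s-s\otimes 1$ with $s\in S$ and therefore equals the kernel $J$ of the multiplication map $\mu\colon S\otimes_k S\to S$. Now $S\otimes_k S$ is a finite $k$-algebra, hence a finite product of Artinian local rings; since $(S\otimes_k S)/J\cong S$ is a field, $J$ is the maximal ideal of one such factor, so we may write $S\otimes_k S=A\times B$ with $A$ local, $B\subseteq J$ and $J\cap A=\fm_A$ nilpotent. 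Choosing $z\in\fm_A^{\,m}\setminus\{0\}$ with $\fm_A^{\,m+1}=0$, or $z=1_A$ if $\fm_A=0$, gives $Jz=0$ with $z\neq 0$, completing the argument.

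The one point that needs care is that $S/k$ may be inseparable, so that $S\otimes_k S$ need not be reduced and $S$ need not split off as a direct factor of it; the nilpotent-radical argument in the second paragraph is precisely what handles this. The remaining steps — finite-dimensionality of $E$ over $k$, maximality of $\Ann_{k\otimes\abstracthecke}(E)$ since $E$ is simple over a commutative ring, and compatibility of the various tensor-product identifications with the $\abstracthecke$-action — are routine.
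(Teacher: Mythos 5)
Your proof is correct and follows essentially the same route as the paper: take $S$ to be the residue field of $\Ann_{(R/\fp)\otimes\abstracthecke}(E)$, note that it is finite over $R/\fp$, let $a_\cD$ be the image of $T_\cD$ in $S$, and locate a nonzero simultaneous eigenvector in $S\otimes_{R/\fp}E\subset S\otimes_R M$. The paper treats this last step very tersely, asserting that $S\otimes_{R/\fp}E$ is a $1$-dimensional $S$-vector subspace of $S\otimes_R M$; taken literally this only holds when $S=R/\fp$ (its $S$-dimension is $[S:R/\fp]$), and what is actually needed is that it \emph{contains} a nonzero eigenvector for $(a_\cD)$. Your argument via the socle of the local factor of $S\otimes_{R/\fp}S$ at the kernel of the multiplication map supplies exactly that, and your remark about inseparability is well taken: for a general domain $R$ one cannot assume $S/(R/\fp)$ separable, so the nilpotent-ideal argument is the right one in the stated generality (in the paper's actual applications $R/\fp$ is always perfect, so the simpler idempotent splitting of $S\otimes_{R/\fp}S$ would suffice there).
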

\begin{proof}
  Let $\fm$ be the annihilator of $E$ in $R/\fp\otimes \hecke$, and let
  $S=(R/\fp\otimes \hecke)/\fm$ be the residue field of $\fm$. Since $M$ is finitely generated over $R$,
  the field $S$ is a finite extension of $R/\fp$. The module $S\otimes_{R/\fp} E$
  is a $1$-dimensional $S$-vector subspace of $S\otimes_{R}M$. For every $\cD$,
  define $a_{\cD}$ to be the image of $T_{\cD}$ in $S$. Then $(a_{\cD})_{\cD}$ is a Hecke eigensystem
  with the claimed property.
\end{proof}

\begin{lemma}\label{lem:nonInvertibleHecke}
  Let $M$ be a graded $\abstracthecke[W]$-module,
  and let $c\in C$. Suppose that there exists a field $L$,
  a character $\chi\in \Cisodual\setminus c^\perp$, and a Hecke
  eigensystem over~$L$ in~$L\otimes M$ admitting a self-twist by~$\chi$. Then
  no element of $\abstracthecke[W]_c$ acts invertibly on~$M$.
\end{lemma}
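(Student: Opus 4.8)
The plan is to show directly that \emph{every} element $T\in\abstracthecke[W]_c$ annihilates one fixed nonzero vector $v$ of $L\otimes M$; since an invertible endomorphism of $M$ remains injective after applying $L\otimes(-)$, this shows that no such $T$ can act invertibly on $M$. For $v$ I take an eigenvector for the given Hecke eigensystem $(a_\cD)_\cD$, which exists in $L\otimes M$ with multiplicity at least one by hypothesis and which admits a self-twist by $\chi$.

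The first step is a vanishing statement inside the commutative algebra $\abstracthecke$: for every $e\in C$ with $\chi(e)\neq 1$, the homogeneous component $\abstracthecke_e$ annihilates $v$. Indeed, $\abstracthecke$ is generated as a ring by the operators $T_\cD$, so $\abstracthecke_e$ is spanned over $\Z$ by the monomials $T_{\cD_1}\cdots T_{\cD_k}$ of $C$-degree $e$, i.e. those with $[\cD_1]\cdots[\cD_k]=e^{-1}$. Such a monomial acts on $v$ by $a_{\cD_1}\cdots a_{\cD_k}$, and since $\chi([\cD_1])\cdots\chi([\cD_k])=\chi(e)^{-1}\neq 1$ some factor $\chi([\cD_i])$ is $\neq 1$, whence $a_{\cD_i}=0$ by the self-twist hypothesis and the monomial kills $v$.

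The second step lifts this to $\abstracthecke[W]$. Every element of $\abstracthecke[W]_c$ is a $\Z$-linear combination of products $xw$ with $w\in W$ and $x\in\abstracthecke_{c\nu(w)^{-1}}$. As $W$ normalises $\abstracthecke$ inside $\abstracthecke[W]$ and $C$ is abelian, the element $w^{-1}xw$ again lies in $\abstracthecke$ and is homogeneous of degree $c\nu(w)^{-1}$, hence lies in $\abstracthecke_{c\nu(w)^{-1}}$. Now $\chi$ is trivial on $\nu(W)$ because $\chi\in\Cisodual$, so $\chi(c\nu(w)^{-1})=\chi(c)$, and $\chi(c)\neq 1$ since $\chi\notin c^\perp$; by the first step $(w^{-1}xw)v=0$, and therefore $xw\cdot v=w\bigl((w^{-1}xw)v\bigr)=0$. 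Summing over the terms, $Tv=0$ for all $T\in\abstracthecke[W]_c$, which is what we wanted.

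I do not expect a real difficulty here: the substantive inputs are just the bookkeeping of the $C$-grading on $\abstracthecke[W]$ (that monomials in the $T_\cD$ span each $\abstracthecke_e$, and that conjugation by $W$ preserves the grading because $C$ is abelian), the fact that a character in $\Cisodual$ kills $\nu(W)$, and the elementary observation that invertibility of $T$ on $M$ is inherited by $L\otimes M$.
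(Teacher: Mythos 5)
Your proof is correct and matches the paper's approach: both show that a fixed Hecke eigenvector with a self-twist by $\chi$ is annihilated by every element of $\abstracthecke[W]_c$, and then transfer non-invertibility from $L\otimes M$ back to $M$. The only difference is that you spell out the reduction to monomials inside $\abstracthecke_{c\nu(w)^{-1}}$ and write $xw\cdot v = w\bigl((w^{-1}xw)v\bigr)$ — harmless extra care, since in the honest group algebra $\abstracthecke[W]$ one has $w^{-1}xw=x$; the paper compresses both points into ``$tf=0$, therefore $Tf=wtf=0$''.
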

\begin{proof}
  Let $f\in L\otimes M$ be an eigenvector whose eigensystem has a self-twist by $\chi$. We
  will show that every $T\in \abstracthecke[W]_c$ annihilates $f$. First, let $T=tw\in \abstracthecke[W]_c$,
  where $w\in W$ and $t\in \abstracthecke_{c\nu(w)^{-1}}$. Since $\chi(c\nu(w)^{-1})=\chi(c)\neq 1$,
  we have $tf=0$, and therefore $Tf=wtf=0$. Since $\abstracthecke[W]_c$ is spanned by elements
  $T$ of this form, the proof of the claim is complete.
  Finally, if an element of $\abstracthecke[W]_c$ acted invertibly on~$M$, it would also
  act invertibly on~$L\otimes M$; this proves the lemma.
\end{proof}

Fix an integer $i\in \{0,\ldots,d\}$. As in the introduction,
let $\Delta$ denote the Laplace operator. If $Y$ is either $\cY$ or
$Y_c$ for some $c\in C$, and $\lambda\in \R$,
we will consider the following $R$-modules $\cF(Y)$ attached to $Y$ for
suitable rings $R$:
\begin{itemize}[leftmargin=*]
  \item $\cF(Y)=\Omega^i_{\Delta=\lambda}(Y)$,
the space of real differential $i$-forms on $Y$ on which $\Delta$ acts
by multiplication by $\lambda$, with $R=\R$,
\item $\cF(Y)=\cH^i(Y)=\Omega^i_{\Delta=0}$,
the space of real harmonic $i$-forms, with $R=\R$,
\item the homology group $\cF(Y) = H_i(Y,R)$, where $R$ is any domain, e.g. $\Z$ or $\Z_p$.
\end{itemize}

For each of these collections $\cF$ of $R$-modules one has a direct sum decomposition
$$
\cF(\cY) = \bigoplus_{c\in C}\cF(Y_c),
$$
which defines a $C$-grading on $\cF(\cY)$. We will apply the formalism of Section \ref{sec:gradedAlg}
with $\hecke=R\otimes \abstracthecke$. The Hecke algebra
$\hecke[W]$ naturally acts on~$M=\cF(\cY)$,
and the image~$\heckebig$ of $\hecke[W]$ in $\End(\cF(\cY))$
is an~$R$-algebra that is finitely generated as an $R$-module.
The algebra $\heckebig$ inherits the $C$-grading and the involution~$\iota$ from $\abstracthecke[W]$, and
%by Lemma \ref{lem:Wfinite},
$\cF(\cY)$ is a graded $\heckebig$-module in the sense of Section \ref{sec:gradedregconst}.

%For~$c\in C$, let~$c^\perp$ be the set of~$\chi\in \Cisodual$ such
%that~$\chi(c)=1$; for~$U\subset C$, let~$U^\perp = \bigcap_{c\in U}c^\perp$.
\begin{thm}\label{thm:HeckeIsosp}
  Let~$c\in C$, and let~$i\ge 0$ be an integer.
  \begin{enumerate}[leftmargin=*,label={\upshape(\arabic*)}]
    \item\label{item:lambdaiso} Let~$\lambda\in\R$. Then
      exactly one of the following two statements is true:
      \begin{enumerate}[label={\upshape(\roman*)}]
        \item there exist~$\chi\in \Cisodual\setminus c^\perp$ and a Hecke
          eigensystem over~$\CC$ in~$\CC\otimes_{\R}\Omega^i_{\Delta=\lambda}(\cY)$ admitting a self-twist
          by~$\chi$;
        \item there exists~$T\in\abstracthecke[W]_c$ inducing, for all $b\in C$, an isomorphism
          of~$\abstracthecke_1$-modules
          \[
            T \colon \Omega^i_{\Delta=\lambda}(Y_{b})
            \to \Omega^i_{\Delta=\lambda}(Y_{cb}).
          \]
      \end{enumerate}
    \item\label{item:allLambdasIso}
      At least one of the following two statements is true:
        \begin{enumerate}[label={\upshape(\roman*)}]
          \item there exist~$\chi\in \Cisodual\setminus c^\perp$ and a Hecke
            eigensystem over~$\CC$ in the module
            $\CC\otimes_{\R}\bigoplus_{\lambda\in\R} \Omega^i_{\Delta=\lambda}(\cY)$ 
            admitting a self-twist by~$\chi$;
          \item for all $b\in C$, the manifolds~$Y_{b}$ and~$Y_{cb}$ are
            $i$-isospectral.
        \end{enumerate} 
    \item \label{item:allpIso}
      Let~$p$ be a prime number. Then exactly one of the
      following two statements is true:
      \begin{enumerate}[label={\upshape(\roman*)}]
        \item there exist~$\chi\in \Cisodual\setminus c^\perp$ and a Hecke
          eigensystem over~$\Fpbar$ in~$\Fpbar\otimes H_i(\cY,\Z)$ admitting a self-twist by~$\chi$;
        \item there exists~$T\in\abstracthecke[W]_c$ inducing, for all $b\in C$, an isomorphism
          of~$\abstracthecke_1$-modules
          \[
            T \colon H_i(Y_{b},\Z_{(p)})
            \to H_i(Y_{cb},\Z_{(p)}).
          \]
      \end{enumerate}
    \item\label{item:localpIso} Let $\fn$ be a maximal ideal of $\abstracthecke_1$,
      let~$p$ be the characteristic of $\abstracthecke_1/\fn$. Then exactly one of the
      following two statements is true:
      \begin{enumerate}[label={\upshape(\roman*)}]
        \item there exist~$\chi\in \Cisodual\setminus c^\perp$ and a Hecke
          eigensystem over~$\Fpbar$
          in~$\Fpbar\otimes H_i(\cY,\Z)_{\fn}$ admitting a self-twist by~$\chi$;
        \item there exists~$T\in\abstracthecke[W]_c$ inducing, for all $b\in C$, an isomorphism
          of~$\abstracthecke_1$-modules
          \[
            T \colon H_i(Y_{b},\Z)_{\fn}
            \to H_i(Y_{cb},\Z)_{\fn}.
          \]
      \end{enumerate}
    \end{enumerate}
\end{thm}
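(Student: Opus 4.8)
The plan is to run all four parts through a single template built on the formalism of Section~\ref{sec:gradedAlg}. In each case choose a base ring $R$ — take $R=\R$ for parts~\ref{item:lambdaiso} and~\ref{item:allLambdasIso}, $R=\Z_{(p)}$ for part~\ref{item:allpIso}, and $R=\Z_p$ for part~\ref{item:localpIso} — put $\hecke=R\otimes\abstracthecke$, let $M=\cF(\cY)$ be the corresponding graded module ($\Omega^i_{\Delta=\lambda}(\cY)$, $H_i(\cY,\Z_{(p)})$, or $H_i(\cY,\Z)_\fn$), and let $\heckebig$ be the image of $\hecke[W]$ in $\End_R(M)$, so that $M$ is a finitely generated graded $\heckebig$-module. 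The two displayed alternatives in each part then read: (i) there is a Hecke eigensystem over the relevant algebraically closed field $L$ ($L=\CC$, resp.~$\Fpbar$) in $L\otimes M$ admitting a self-twist by some $\chi\in\Cisodual\setminus c^\perp$; and (ii) some element of $\heckebig_c$ acts invertibly on $M$. For (ii) to be equivalent to the stated conclusion, observe that an invertible homogeneous element of degree~$c$ automatically restricts to isomorphisms $M_b\cong M_{cb}$ for every $b$, that these are $\abstracthecke_1$-linear because the operators coming from $\abstracthecke$ commute with those coming from $W$, and conversely that a family of such isomorphisms exhibits the operator as bijective on $M$; one then lifts the invertible element of $\heckebig_c$ to $\abstracthecke[W]_c$. (In part~\ref{item:localpIso} this commutativity is also used to see that the $\fn$-local summand $H_i(\cY,\Z)_\fn$ is stable under $W$, hence is a genuine $\abstracthecke[W]$-module.)

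Mutual exclusivity in parts~\ref{item:lambdaiso},~\ref{item:allpIso},~\ref{item:localpIso} follows at once: if (i) holds then Lemma~\ref{lem:nonInvertibleHecke}, applied with this $L$, $\chi$, and $M$, shows that no element of $\abstracthecke[W]_c$, hence none of $\heckebig_c$, acts invertibly on $M$, so (ii) fails.

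To see that at least one alternative holds, suppose (i) fails; I then verify the hypothesis of Proposition~\ref{prop:littleExercise} (legitimate since $\R$, $\Z_{(p)}$, $\Z_p$ are local, hence semilocal). Were that hypothesis to fail, there would exist a maximal ideal $\fp$ of $R$, a simple $\hecke$-submodule $E$ of $R/\fp\otimes_R M$, and $\chi\in\Cisodual$ with $\chi(c)\ne1$ such that $\hecke_d$ annihilates $E$ for all $d$ with $\chi(d)\ne1$; since $T_\cD$ lies in $\abstracthecke_{[\cD]^{-1}}$ and $\chi([\cD]^{-1})\ne1$ is equivalent to $\chi([\cD])\ne1$, this forces $T_\cD$ to kill $E$ whenever $\chi([\cD])\ne1$. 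Lemma~\ref{lem:SubmodEigensystem} then yields a finite extension $S$ of $R/\fp$ and a Hecke eigensystem over $S$ in $S\otimes_R M$ whose $\cD$-eigenvalue vanishes whenever $T_\cD$ kills $E$, hence whenever $\chi([\cD])\ne1$; this eigensystem therefore admits a self-twist by $\chi$, and base-changing along an embedding $S\hookrightarrow L$ contradicts the failure of (i). Thus Proposition~\ref{prop:littleExercise} applies and produces the invertible element of $\heckebig_c$ required for (ii).

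Part~\ref{item:allLambdasIso} follows formally from part~\ref{item:lambdaiso}: if its alternative~(i) fails, then for each $\lambda\in\R$ alternative~(i) of part~\ref{item:lambdaiso} also fails, because a self-twisting eigensystem inside $\CC\otimes_\R\Omega^i_{\Delta=\lambda}(\cY)$ would sit inside the larger (infinite-dimensional, hence treated only $\lambda$ by $\lambda$) module $\CC\otimes_\R\bigoplus_\mu\Omega^i_{\Delta=\mu}(\cY)$; part~\ref{item:lambdaiso} then gives, for all $\lambda$ and all $b$, an isomorphism $\Omega^i_{\Delta=\lambda}(Y_b)\cong\Omega^i_{\Delta=\lambda}(Y_{cb})$, whence $Y_b$ and $Y_{cb}$ are $i$-isospectral. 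With the genuine content already carried by Section~\ref{sec:gradedAlg}, the step that most needs care is the dictionary between the abstract non-annihilation condition in Proposition~\ref{prop:littleExercise} — phrased over $\Cisodual$ and over simple submodules of the reductions $R/\fp\otimes_R M$ — and the arithmetic self-twist condition of Definition~\ref{def:eigensystem} — phrased over $\Cdual$ and over the ideals $\cD$: one must keep track of the inverse in the degree of $T_\cD$ and of the residue-field extension $S$, and, for parts~\ref{item:allpIso} and~\ref{item:localpIso}, confirm that the $p$-adic and $\fn$-local homology modules genuinely carry the graded $\heckebig$-module structure the formalism demands.
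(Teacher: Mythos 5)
Your proposal tracks the paper's own proof closely: same choices of base ring, same use of Lemma~\ref{lem:SubmodEigensystem} to verify the hypotheses of Proposition~\ref{prop:littleExercise}, and same appeal to Lemma~\ref{lem:nonInvertibleHecke} for mutual exclusivity. But there is a genuine gap at the point where you say ``one then lifts the invertible element of $\heckebig_c$ to $\abstracthecke[W]_c$''. The algebra $\heckebig$ is the image of $\hecke[W]=(R\otimes\abstracthecke)[W]$ in $\End_R(M)$, so an element $T'\in\heckebig_c$ is an $R$-linear combination of images of elements of $\abstracthecke[W]_c$; for $R=\R$, $\Z_{(p)}$ or $\Z_p$ such a combination is in general \emph{not} the image of any element of the $\Z$-algebra $\abstracthecke[W]_c$. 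The theorem explicitly asks for $T\in\abstracthecke[W]_c$, so ``lifting'' is not a formality but the step that has to be argued.

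The paper closes this gap with two different arguments. Over $R=\R$ (part~\ref{item:lambdaiso}): the image of $\abstracthecke[W]_c$ spans $\heckebig_c$ as a real vector space, and inducing an isomorphism on each pair of homogeneous components is a non-empty Zariski-open condition (witnessed by $T'$); since a nonzero real polynomial cannot vanish on all integer points, there is an integral combination in the open set, i.e.\ an element of $\abstracthecke[W]_c$ that works. Over $R=\Z_{(p)}$ or $\Z_p$ (parts~\ref{item:allpIso},~\ref{item:localpIso}): write $T'$ as an $R$-linear combination of images of elements of $\abstracthecke[W]_c$ and clear denominators by multiplying by a unit of $R$; this leaves invertibility intact and yields a $\Z$-linear combination, which lifts to $\abstracthecke[W]_c$. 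Without one of these arguments the final conclusion of parts~\ref{item:lambdaiso},~\ref{item:allpIso},~\ref{item:localpIso} does not follow from the output of Proposition~\ref{prop:littleExercise}. The rest of your proposal, including the dictionary between the abstract non-annihilation condition and the self-twist condition and the deduction of part~\ref{item:allLambdasIso} from part~\ref{item:lambdaiso}, is correct and agrees with the paper.
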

\begin{proof}~
  \begin{enumerate}[leftmargin=*,label={\upshape(\arabic*)}]
    \item Take~$R = \R$. Suppose that there are no $\chi$ and Hecke eigensystem
      as in the statement. By Lemma \ref{lem:SubmodEigensystem}, the
      graded~$\heckebig$-module~$M = \Omega_{\Delta=\lambda}^i(\cY)$
      satisfies the hypotheses of Proposition~\ref{prop:littleExercise}, so
      there exists~$T'\in \heckebig_c$ that acts invertibly on~$M$.
      The image of $\abstracthecke[W]_c$ in $\heckebig$ generates the real vector space $\heckebig_c$,
      and being an isomorphism is a non-empty Zariski-open condition,
      so there also exists $T\in
      \abstracthecke[W]_c$ realising an isomorphism as claimed.
      The converse follows from Lemma \ref{lem:nonInvertibleHecke}.
    \item This follows immediately from~\ref{item:lambdaiso}.
    \item Take~$R = \Z_{(p)}$. Suppose, once again, that there are no
      $\chi$ and Hecke eigensystem as in the statement.
      By Lemma \ref{lem:SubmodEigensystem}, the
      graded~$\heckebig$-module~$M = H_i(\cY,\Z_{(p)})$ satisfies the hypotheses of
      Proposition~\ref{prop:littleExercise}, so
      there exists~$T'\in \heckebig_c$ that acts invertibly on~$M$.
      Clearing denominators and lifting to $\abstracthecke[W]$, we obtain a $T\in
      \abstracthecke[W]_c$ as required.
      The converse follows from Lemma \ref{lem:nonInvertibleHecke}.
    \item The proof is identical to that of the previous part,
      with~$R = \Z_{p}$ and~$M = H_i(\cY,\Z)_{\fn}$.\qedhere
  \end{enumerate}
\end{proof}

\begin{remark}
  Notice that in part \ref{item:allLambdasIso} there is no claim that the isospectrality
  is realised by a Hecke operator, and we do not get an equivalence but only one implication.
\end{remark}

The group $G_{\infty}$ acts by unitary operators on the complex Hilbert space
$$
  \rL^2(\G(F)^+\lquo\G(\adel_{F})/Z_{\infty} K_f)
$$
of square-integrable functions
$\G(F)^+\lquo\G(\adel_{F})/Z_{\infty} K_f\to \CC$, and also, for each $c\in C$,
on the analogously defined space $\rL^2(\Gamma_c\lquo G_{\infty}/Z_{\infty})$.
\begin{definition}
  Let $c$, $c'\in C$. We say that the groups $\Gamma_{c}$ and $\Gamma_{c'}$
  are \emph{representation equivalent} if there is an isomorphism of unitary
  $G_{\infty}$-representations
  \[
    \rL^2(\Gamma_{c}\lquo G_\infty / Z_{\infty}) \cong \rL^2(\Gamma_{c'}\lquo
    G_\infty / Z_{\infty}).
  \]
\end{definition}

The Hecke algebra $\abstracthecke[W]$ naturally acts on $\rL^2(\G(F)^+\lquo\G(\adel_{F})/Z_{\infty} K_f)$,
making the $\rL^2$-space a $C$-graded $\abstracthecke[W]$-module.

\begin{thm}\label{thm:repEquiv}
  Let~$c \in C$. Then
  at least one of the following two statements is true:
  \begin{enumerate}[leftmargin=*,label={\upshape(\roman*)}]
    \item there exist~$\chi\in \Cisodual\setminus c^\perp$ and a Hecke
      eigensystem over~$\CC$ in the module~$\rL^2(\G(F)^+\lquo\G(\adel_{F})/Z_{\infty} K_f)$
      admitting a self-twist by~$\chi$;
    \item for all $b\in C$ the groups~$\Gamma_{b}$ and~$\Gamma_{cb}$ are representation equivalent.
  \end{enumerate}
  \end{thm}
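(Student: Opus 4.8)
\textit{Proof idea.} The plan is to reduce this infinite-dimensional statement to finitely many finite-dimensional ones by passing to $G_\infty$-isotypic components, and then to run the argument from part~\ref{item:lambdaiso} of Theorem~\ref{thm:HeckeIsosp} componentwise. Since $D$ is a division algebra, the quotient $\G(F)^+\lquo\G(\adel_F)/Z_\infty K_f$ is compact, so by the spectral decomposition for cocompact lattices the unitary $G_\infty$-representation
$\rL^2(\G(F)^+\lquo\G(\adel_F)/Z_\infty K_f)=\bigoplus_{c\in C}\rL^2(\Gamma_c\lquo G_\infty/Z_\infty)$
decomposes as a Hilbert-space direct sum of irreducible unitary representations, each occurring with finite multiplicity. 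For an irreducible unitary representation $\sigma$ of $G_\infty$ put $m_\sigma=\Hom_{G_\infty}(\sigma,\rL^2(\G(F)^+\lquo\G(\adel_F)/Z_\infty K_f))$, a finite-dimensional $\CC$-vector space carrying the $C$-grading $m_\sigma=\bigoplus_{c\in C}m_{\sigma,c}$ with $m_{\sigma,c}=\Hom_{G_\infty}(\sigma,\rL^2(\Gamma_c\lquo G_\infty/Z_\infty))$. The operators in $\abstracthecke[W]$ act on the finite adelic variable (by right translation and convolution) and hence commute with the left $G_\infty$-action, so they preserve each $m_\sigma$ and its grading, making $m_\sigma$ a graded module over the finite-dimensional image $\heckebig$ of $\CC\otimes\abstracthecke[W]$ in $\End_\CC(m_\sigma)$. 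By the isotypic decomposition, statement (ii) holds if and only if $\dim_\CC m_{\sigma,b}=\dim_\CC m_{\sigma,cb}$ for every $\sigma$ and every $b\in C$.

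Now assume (i) fails, fix $\sigma$, and apply Corollary~\ref{cor:linked} to $M=m_\sigma$ with base domain $\CC$. Its hypothesis requires that for every simple $\hecke$-submodule $E$ of $m_\sigma$ (where $\hecke=\CC\otimes\abstracthecke$) and every $\chi\in\Cisodual$ with $\chi(c)\neq1$ there exist $c_{\chi,E}\in C$ with $\chi(c_{\chi,E})\neq1$ and $T_{\chi,E}\in\hecke_{c_{\chi,E}}$ not annihilating $E$. Given such an $E$, Lemma~\ref{lem:SubmodEigensystem} (with the maximal ideal taken to be $(0)\subset\CC$, which is algebraically closed) produces a Hecke eigensystem $(a_\cD)_\cD$ over $\CC$ in $m_\sigma$ such that $a_\cD=0$ whenever $T_\cD$ annihilates $E$. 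Evaluating a Hecke eigenvector $\phi\in m_\sigma$ realising $(a_\cD)_\cD$ at any nonzero vector of $\sigma$ exhibits $(a_\cD)_\cD$ as a Hecke eigensystem in $\rL^2(\G(F)^+\lquo\G(\adel_F)/Z_\infty K_f)$ in the sense of Definition~\ref{def:eigensystem}. Since (i) fails, $(a_\cD)_\cD$ admits no self-twist by $\chi$, so there is $\cD$ with $\chi([\cD])\neq1$ and $a_\cD\neq0$; then $T_\cD$ does not annihilate $E$, and $T_\cD$ lies in the homogeneous component of degree $[\cD]^{-1}$, on which $\chi$ is also nontrivial. This verifies the hypothesis of Corollary~\ref{cor:linked}, which therefore gives, for every $b\in C$, that $m_{\sigma,b}$ is linked to $m_{\sigma,cb}$, and in particular $\dim_\CC m_{\sigma,b}=\dim_\CC m_{\sigma,cb}$. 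As $\sigma$ and $b$ were arbitrary, statement (ii) follows.

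Since the core argument mirrors Theorem~\ref{thm:HeckeIsosp}, the main work lies in the setup. The points to get right are the discrete decomposition of this particular $\rL^2$-space with finite multiplicities, and the fact that $\abstracthecke[W]$ commutes with the $G_\infty$-action on it; both rest on $D$ being a division algebra (so that $\G$ is anisotropic modulo centre, yielding cocompactness) and on left and right translations commuting. One must also keep the $C$-grading bookkeeping straight, passing between $[\cD]$ and $[\cD]^{-1}$ as the grading convention dictates, exactly as in the proof of Theorem~\ref{thm:HeckeIsosp}. In contrast to the cases of differential forms and homology, the representation equivalence is not asserted to be realised by a single Hecke operator across all components, and one obtains only the implication $\neg(\mathrm{i})\Rightarrow(\mathrm{ii})$; the reverse implication, that a self-twisting eigensystem obstructs the Hecke-operator method, would follow from applying Lemma~\ref{lem:nonInvertibleHecke} to each $m_\sigma$, but is not needed for the stated dichotomy.
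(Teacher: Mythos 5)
Your reduction to finite-dimensional pieces is in the right spirit, and your use of Lemma~\ref{lem:SubmodEigensystem} together with Corollary~\ref{cor:linked} would close the argument if the pieces you pass to were in fact graded $\heckebig$-modules. The gap is the claim that $\abstracthecke[W]$ acts on the $G_\infty$-multiplicity space $m_\sigma=\Hom_{G_\infty}(\sigma,\rL^2(\G(F)^+\lquo\G(\adel_F)/Z_\infty K_f))$. Recall that $W=F^\times\lquo(Z_\infty K_\infty\times\normaliser(K_f)/K_f)$, so a typical $w\in W$ has a $K_\infty$-component $k$, and $k$ acts on the $\rL^2$-space by right translation, i.e.\ through the very $G_\infty$-action you are taking multiplicity spaces against. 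That $\rL^2$-space is only right-$Z_\infty K_f$-invariant, not right-$K_\infty$-invariant, so this $k$-action is genuinely non-trivial, and it does not commute with right translation by an arbitrary $g\in G_\infty$. Consequently $w\cdot\phi$ need not be $G_\infty$-equivariant when $\phi\in\Hom_{G_\infty}(\sigma,\rL^2)$ is, and $m_\sigma$ is not preserved by $W$. Your sentence ``operators in $\abstracthecke[W]$ act on the finite adelic variable \dots and hence commute with the left $G_\infty$-action'' is not accurate: only the $\normaliser(K_f)/K_f$- and $Z_\infty$-parts of $W$ have that property.

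This is not a harmless bookkeeping slip. Without the $W$-action you cannot form the $C$-graded algebra $\heckebig$ as a quotient of $\hecke[W]$, and the hypothesis you would be able to verify in Proposition~\ref{prop:littleExercise} or Corollary~\ref{cor:linked} would involve $\Cdual$ rather than $\Cisodual$ (precisely because, for $\chi\in\Cdual\setminus\Cisodual$, the proof of Proposition~\ref{prop:littleExercise} supplies the needed homogeneous element as the image of a suitable $w\in W$). That would prove a strictly weaker statement than Theorem~\ref{thm:repEquiv}. The paper sidesteps the problem by using a two-stage decomposition: first into $\G(\adel_F)$-isotypic pieces $V(\automrep)$, which are $W$-stable because the $K_\infty$-part of $w$ lies in $\G(\adel_F)$ and hence preserves $\G(\adel_F)$-subrepresentations, and then by $K_\infty$-types $\sigma$ inside each $V(\automrep)$, which remain $W$-stable because the $K_\infty$-part of $w$ acts through the same compact-group action used to cut out the isotypic piece. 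Replacing your $G_\infty$-isotypic decomposition by a decomposition into $K_\infty$-types (either globally, or inside each $V(\automrep)$ as the paper does) repairs the argument; everything else in your write-up then goes through.
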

\begin{proof}
  Suppose that there are no $\chi$ and Hecke eigensystem as in the statement.
  Fix $b\in C$.
  By \cite{GelPS} (see also \cite{Godement}), there are decompositions
  into Hilbert orthogonal direct sums of isotypical unitary
  representations of~$G_\infty$ with finite multiplicities
  \[
    \rL^2(\G(F)^+\lquo\G(\adel_{F})/Z_{\infty} K_f) \cong \widehat{\bigoplus}_\automrep V(\automrep),
  \] 
  and, for all~$c'\in C$,
  \[
    \rL^2(\Gamma_{c'}\lquo G_\infty / Z_{\infty}) \cong \widehat{\bigoplus}_\automrep
    V(\automrep)_{c'},
  \]
  both sums indexed by pair-wise non-isomorphic irreducible unitary 
  representations $\automrep$ of $\G(\adel_F)$.
  The action of $\normaliser(K_f)/K_f\times Z_{\infty}$ commutes with that
  of $G_{\infty}$, and $K_{\infty}$ is contained in $G_{\infty}$, so
  each~$V(\automrep)$ is also a~$\abstracthecke[W]$-module, and is
  graded, with~$c'$-component~$V(\automrep)_{c'}$.
  
  Let~$\automrep$ be an irreducible unitary representation of~$G_\infty$.
  We claim that~$\automrep$ occurs with the same multiplicities in $V(\automrep)_{b}$
  and in~$V(\automrep)_{cb}$.
  We have a Hilbert direct sum decomposition of isotypical unitary
  finite-dimensional representations of~$K_\infty$
  \[
    V(\automrep)|_{K_\infty} \cong \widehat{\bigoplus}_\sigma V(\automrep)(\sigma),
  \]
  with the sum running over pair-wise non-isomorphic irreducible unitary representations $\sigma$
  of $K_{\infty}$, again compatible with the grading, and where each summand is
  preserved by $\abstracthecke[W]$. Let~$\sigma$ be an
  irreducible unitary representation of~$K_\infty$ such that~$V(\automrep)(\sigma)\neq
  0$, so that it is enough to prove that~$\dim V(\automrep)(\sigma)_{b} =
  \dim V(\automrep)(\sigma)_{cb}$. Let~$R = \CC$ and let~$\heckebig$ be the image
  of~$\CC\otimes\abstracthecke[W]$ in the endomorphism algebra of the
  finite-dimensional~$\CC$-vector space~$V(\automrep)(\sigma)$, so that~$\heckebig$ is a
  finite-dimensional $\CC$-algebra. By Lemma \ref{lem:SubmodEigensystem}
  the graded~$\heckebig$-module~$M = V(\automrep)(\sigma)$
  satisfies the hypotheses of Proposition~\ref{prop:littleExercise}, so there
  exists~$T\in \heckebig_c$ that acts invertibly on~$M$, therefore realising an
  isomorphism
  \[
    T \colon V(\automrep)(\sigma)_{b} \to V(\automrep)(\sigma)_{cb}.
  \]
  This proves the desired equality of dimensions, and therefore the desired
  equality of multiplicities for~$\automrep$.
  Since~$\automrep$ was arbitrary, we obtain an isomorphism as claimed.
\end{proof}

For the rest of the section, fix a degree $i\in \Z_{\geq 0}$ and a prime number $p$,
and let~$\heckebig$ be the image of $\abstracthecke[W]$
in $\End_{\Z}H_i(\cY,\Z)_{\free}$, so that $H_i(\cY,\Z)_{\free}$ is a graded
$\heckebig$-module. We will apply the results of Section \ref{sec:gradedAlg} with
$R=\Z$ and $\hecke=\abstracthecke$.
%(TODO ref us Notation 3.2 and Lemma 3.3, and Rosenberg)
For every~$c\in C$ there is a canonical positive definite pairing on~$\cH^i(Y_c)$
\cite[Notation 3.2 and Lemma 3.3]{us1}, \cite{Laplace}.
By the Hodge and de Rham
theorems, this harmonic forms pairing induces a non-degenerate $\R$-valued pairing
on~$H_i(Y_c,\R)$. The \emph{$i$-th regulator $\Reg_i(Y_c)$ of $Y_c$} is defined to be
the covolume of the lattice~$H_i(Y_c,\Z)_{\free}$ in~$H_i(Y_c,\R)$.
\begin{definition}\label{def:curlyN}
Define~$\cN=\{\fn\cap\iota(\fn):\fn\in
\MaxSpec(\Z_{p}\otimes \abstracthecke_1)\}$.
\end{definition}

\begin{lemma}\label{lem:harmonPolar}
  Define a pairing on $M=H_i(\cY,\Z)_{\free}$ induced by the harmonic forms pairing
  on the summands $H_i(Y_c,\Z)_{\free}$ for $c\in C$ and by making the distinct
  summands pairwise orthogonal. Then this defines a polarisation on
  $M$ in the sense of Definition \ref{def:pol}.
  Moreover, for every $\fn'\in \cN$ the graded $\Z_p\otimes \heckebig$-module $M_{\fn'}$
  is polarisable.
\end{lemma}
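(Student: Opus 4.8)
The plan is to verify directly that the pairing described in the statement satisfies the three requirements of Definition~\ref{def:pol} --- non-degeneracy, orthogonality of distinct homogeneous components, and the adjointness $\langle Tx,y\rangle=\langle x,\iota(T)y\rangle$ for all $T\in\heckebig$ --- with $R=\Z$, $Q=\Q$ and $L=\R$; the ``moreover'' part then follows formally from this together with Propositions~\ref{prop:descend-polarisation}, \ref{prop:polarisation} and \ref{prop:HeckeProd}. Non-degeneracy is immediate: the harmonic forms pairing on each $\cH^i(Y_c)$, hence the induced pairing on $H_i(Y_c,\R)$ (see \cite[Notation 3.2 and Lemma 3.3]{us1}), is positive definite, and since the summands $H_i(Y_c,\R)$ of $M_\R=\bigoplus_{c\in C}H_i(Y_c,\R)$ are declared pairwise orthogonal, the resulting form on $M_\R$ is positive definite. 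Orthogonality of $M_c$ and $M_{c'}$ for $c\ne c'$ holds by construction.

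For the adjointness property, observe that both $\iota$ and the operation of taking the adjoint with respect to $\langle\cdot,\cdot\rangle$ are additive anti-homomorphisms of the ring $\heckebig$, and $\iota^2=\id$; hence it suffices to verify $\langle Tx,y\rangle=\langle x,\iota(T)y\rangle$ when $T$ ranges over the images in $\heckebig$ of the generators $w\in W$ and $T_\cD$ of $\abstracthecke[W]$. For $w\in W$: by construction $w$ acts on $\cY$ as an isometry, permuting the components according to $\nu(w)$, so the maps it induces on harmonic forms, and therefore on the $H_i(Y_c,\R)$, are isometries for the respective pairings; since $\iota(w)=w^{-1}$ this gives $\langle wx,y\rangle=\langle w^{-1}wx,\,w^{-1}y\rangle=\langle x,\iota(w)y\rangle$. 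For $T_\cD$ we invoke the standard fact that, with respect to the $\rL^2$-inner product, the adjoint of the double coset operator $T_\cD$ is $T_{\cD^{-1}}=\iota(T_\cD)$ --- this is the expression on functions of the inversion anti-involution on $\Z[K_f\lquo\G(\adel_{F,f})/K_f]$ recalled above (cf.\ \cite[Ch.\ 1, \S 8.6(d)]{VignerasRep}). Restricting to the space of harmonic $i$-forms, on which the $\rL^2$-inner product restricts to the harmonic forms pairing, and transporting along the de Rham and Hodge-theoretic identifications used to define the pairing on $H_i(Y_c,\R)$ --- which intertwine the Hecke action appropriately, so that the relation is preserved --- yields $\langle T_\cD x,y\rangle=\langle x,T_{\cD^{-1}}y\rangle$ on $M_\R$. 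This establishes all three conditions, so $\langle\cdot,\cdot\rangle$ is a polarisation on $M$. I expect the verification of this last point, i.e.\ the compatibility of the harmonic forms pairing on homology with the Hecke action (equivalently, that $T_\cD$ and $T_{\cD^{-1}}$ are mutually adjoint there), to be the one step needing genuine if routine and largely citable bookkeeping; everything else is formal.

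For the ``moreover'' part, Proposition~\ref{prop:descend-polarisation} applied with $L=\R$ shows that $M$ is polarisable over $\Q$, so by Proposition~\ref{prop:polarisation} the $(\heckebig_1)_\Q$-module $(M_c)_\Q$ is self-dual for every $c\in C$. Tensoring the defining isomorphism $(M_c)_\Q\cong (M_c^*)_\Q$ with $\Q_p$ over $\Q$ shows that $(M_c)_{\Q_p}$ is a self-dual $(\heckebig_1)_{\Q_p}$-module for every $c$, so, by Proposition~\ref{prop:polarisation} again, the graded $\Z_p\otimes\heckebig$-module $\Z_p\otimes M$ --- which is free over $\Z_p$ since $H_i(\cY,\Z)_{\free}$ is free over $\Z$ --- is polarisable over $\Q_p$. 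Proposition~\ref{prop:HeckeProd}, applied with $R=\Z_p$ to the polarisable module $\Z_p\otimes M$, then shows that $M_{\fn'}$ is polarisable for every $\fn'\in\cN$.
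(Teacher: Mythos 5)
Your proof is correct and follows essentially the same approach as the paper's: establish adjointness of $\abstracthecke$ and isometry of $W$ with respect to the harmonic forms pairing, transport along the ($\heckebig$-equivariant) Hodge--de Rham isomorphism, and then invoke Proposition~\ref{prop:HeckeProd} for the localised modules. The only divergence is in the ``moreover'' part, where the paper leaves implicit the step of passing from polarisability of $M$ over $\Z$ to polarisability of $\Z_p\otimes M$ over $\Z_p$; you supply this by applying Proposition~\ref{prop:polarisation} twice (once to extract self-duality over $\Q$, once to reconstitute a $\Q_p$-valued polarisation), which is sound, though one could shortcut it by noting that a $\Q$-valued polarisation on $M$ obtained from Proposition~\ref{prop:descend-polarisation} already gives a $\Q_p$-valued polarisation on $\Z_p\otimes M$ by linear extension, since $\Q\subset\Q_p$ and non-degeneracy, orthogonality and adjointness are preserved.
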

\begin{proof}
  The adjoint of each~$T\in \abstracthecke$ with respect to the harmonic forms pairing is~$\iota(T)$,
  and $W$ acts by isometries (see also Remark
\ref{rmrk:Wadjoint}), so the pairing defines a polarisation on the graded module~$\cH^i(\cY)$.
  Moreover, the Hodge--de Rham isomorphism between $\cH^i(\cY)$ and $H_i(\cY,\R)$ is
  $\heckebig$-equivariant, so the harmonic forms pairing is an $\R$-valued polarisation
  on the $\heckebig$-module $M$. The last assertion follows from Proposition \ref{prop:HeckeProd}.
\end{proof}

\begin{lemma}\label{lem:RegConstFormalism}
  Let $c$, $c'\in C$, let $M=H_i(\cY,\Z)_{\free}$,
  and suppose that $M_{c}$ and~$M_{c'}$
  are linked in the sense of Definition \ref{def:linked}. Then:
  \begin{enumerate}[leftmargin=*,label={\upshape(\arabic*)}]
    \item\label{item:regseqregconst} we have
      \[
        \cC_{c,c'}(M) = \frac{\Reg_i(Y_{c})^2}{\Reg_i(Y_{c'})^2},
      \]
      where recall that the invariant $\cC_{c,c'}(M)$ was defined in Definition \ref{def:gradedregconst};
    \item\label{item:rationalregs} we have
      \[
        \frac{\Reg_i(Y_{c})^2}{\Reg_i(Y_{c'})^2} \in \Q^\times;
      \]
    \item\label{item:HeckeProd} %for every prime number~$p$ is a prime number and
      for all~$\fn'\in \cN$, %the $\heckebig$-module $M_{\fn}$ is polarisable,
      the homogeneous components~$(M_{\fn'})_{c}$ and $(M_{\fn'})_{c'}$ are linked,
      and we have
      \[
        \cC_{c,c'}(M) \equiv \prod_{\fn'\in \cN} \cC_{c,c'}(M_{\fn'})
        \mod{(\Z_p^\times)^2},
      \]
      where~$\cC_{c,c'}(M)\in \Q^\times$ is viewed as an element of~$\Q_p^\times$.
  \end{enumerate}
\end{lemma}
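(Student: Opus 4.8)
The plan is to assemble the formalism of Section~\ref{sec:gradedAlg} together with the polarisation produced in Lemma~\ref{lem:harmonPolar}. Throughout, write $M=H_i(\cY,\Z)_{\free}$; it is free of finite rank over $R=\Z$, and by Lemma~\ref{lem:harmonPolar} the harmonic forms pairing $\langle\cdot,\cdot\rangle$ is a polarisation on $M$ whose restriction to each homogeneous component $M_c=H_i(Y_c,\Z)_{\free}$ is the pairing on $H_i(Y_c,\Z)_{\free}$ induced by the harmonic forms pairing on $\cH^i(Y_c)$. Since that pairing is positive definite, the Gram determinant of this restriction with respect to any $\Z$-basis of $M_c$ is precisely the squared covolume $\Reg_i(Y_c)^2$; this is just the definition of the regulator and needs no further argument.

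For part~\ref{item:regseqregconst}: since $M$ is polarisable and $M_c$, $M_{c'}$ are linked, the invariant $\cC_{c,c'}(M)$ is defined, and Proposition~\ref{prop:indepT} applied to $\langle\cdot,\cdot\rangle$ gives
\[
  \cC_{c,c'}(M)=\frac{\det(\langle\cdot,\cdot\rangle\,|\,M_c)}{\det(\langle\cdot,\cdot\rangle\,|\,M_{c'})}=\frac{\Reg_i(Y_c)^2}{\Reg_i(Y_{c'})^2}
\]
by the previous paragraph. Part~\ref{item:rationalregs} is then immediate: Corollary~\ref{cor:rational} gives $\cC_{c,c'}(M)\in \Q^\times/(\Z^\times)^2=\Q^\times$ (as $(\Z^\times)^2=\{1\}$), and this equals $\Reg_i(Y_c)^2/\Reg_i(Y_{c'})^2$ by part~\ref{item:regseqregconst}.

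For part~\ref{item:HeckeProd}: I would pass from $R=\Z$ to $R=\Z_p$. Since $M$ is polarisable over $\Q$ (Corollary~\ref{cor:rational} and Proposition~\ref{prop:descend-polarisation}), it is polarisable over $\Q_p$, so Lemma~\ref{lem:extendring} applies with $\tilde R=\Z_p$, $L=\Q_p$ and yields, the linking hypothesis being carried along, an identity $\cC_{c,c'}(M)\equiv\cC_{c,c'}(\Z_p\otimes M)$ in $\Q_p^\times/(\Z_p^\times)^2$, where $\cC_{c,c'}(M)\in\Q^\times$ from part~\ref{item:rationalregs} is viewed in $\Q_p^\times$. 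Now $\Z_p\otimes M$ is a free, polarisable graded module over the image of $\Z_p\otimes\abstracthecke[W]$, and I would apply Proposition~\ref{prop:HeckeProd} to it with $R=\Z_p$: its set $\cN$ is exactly the one of Definition~\ref{def:curlyN}, and its conclusion is that for each $\fn'\in\cN$ the components $(M_{\fn'})_c$ and $(M_{\fn'})_{c'}$ are linked and that
\[
  \cC_{c,c'}(\Z_p\otimes M)\equiv\prod_{\fn'\in\cN}\cC_{c,c'}(M_{\fn'})\bmod(\Z_p^\times)^2,
\]
with $M_{\fn'}=(\Z_p\otimes\abstracthecke_1)_{\fn'}\otimes_{\Z_p\otimes\abstracthecke_1}(\Z_p\otimes M)$ as in the statement of the lemma. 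Concatenating the last two displays gives the claim.

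The only thing requiring real care is the bookkeeping in part~\ref{item:HeckeProd}: one must check that the hypothesis ``$M_c$ and $M_{c'}$ are linked'' survives the base change $\Z\to\Z_p$ — a linking element $T\in\heckebig_{c'c^{-1}}$ witnessing an isomorphism over some ring containing $\Z$ still does so after tensoring with $\Q$, and hence after tensoring with $\Q_p$, since a nonzero $\Q$-algebra is faithfully flat over $\Q$ — and that the ambient graded algebra in Proposition~\ref{prop:HeckeProd} may be taken to be $\Z_p\otimes\heckebig$, so that the set $\cN$ and the localised modules $M_{\fn'}$ it produces coincide with those of Definition~\ref{def:curlyN} and the notation of the lemma, each $M_{\fn'}$ being free over $\Z_p$ (a direct summand of $\Z_p\otimes M$) and polarisable (Lemma~\ref{lem:harmonPolar}). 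None of this is deep; it is pure compatibility of constructions already established.
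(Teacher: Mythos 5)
Your proof is correct and takes essentially the same route as the paper, which simply cites Lemma~\ref{lem:harmonPolar}, Proposition~\ref{prop:indepT}, Corollary~\ref{cor:rational}, Lemma~\ref{lem:extendring}, and Proposition~\ref{prop:HeckeProd} in the corresponding places; you have supplied the bookkeeping the paper leaves implicit, in particular the observation that linking is preserved under the base change $\Z\to\Z_p$ (a linking $T$ that is an isomorphism over some non-zero ring $L\supseteq\Z$ has non-zero integer determinant on $M_c\to M_{c'}$, since $0$ is a unit in no non-zero ring, so $T$ is already an isomorphism over $\Q$ and hence over $\Q_p$). One small miscitation in part~\ref{item:HeckeProd}: the fact that $M$ is polarisable over $\Q$ comes from Lemma~\ref{lem:harmonPolar} (an $\R$-valued polarisation exists) together with Proposition~\ref{prop:descend-polarisation}, not from Corollary~\ref{cor:rational}, which presupposes polarisability rather than establishing it.
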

\begin{proof}
  \begin{enumerate}[leftmargin=*,label={\upshape(\arabic*)}]
    \item By definition, $\Reg_i(Y_{c})^2$ is the determinant of the Gram matrix of the
      harmonic forms pairing with respect to any $\Z$-basis of $H_i(Y_{c},\Z)_{\free}$, and
      similarly for $Y_{c'}$. The claim therefore follows from Lemma \ref{lem:harmonPolar}
      and Proposition~\ref{prop:indepT}.
    \item The assertion follows from combining part \ref{item:regseqregconst} and Corollary \ref{cor:rational}.
    \item The assertion follows from Lemma \ref{lem:extendring},
      Lemma \ref{lem:harmonPolar}, and Proposition \ref{prop:HeckeProd}.\qedhere
  \end{enumerate}
\end{proof}

\begin{thm}\label{thm:regQuosRatl}
  Let~$c\in C$.
  Then exactly one of the following two statements is true:
  \begin{enumerate}[leftmargin=*,label={\upshape(\roman*)}]
    \item there exist~$\chi\in \Cisodual\setminus c^\perp$ and a Hecke
      eigensystem over~$\CC$
    in~$\CC\otimes_{\R}\cH^i(\cY)$ admitting a self-twist by~$\chi$;
  \item for all $b\in C$ the homogeneous components $H_i(Y_{b},\Z)_{\free}$ and $H_i(Y_{cb},\Z)_{\free}$
    are linked.
  \end{enumerate}
\end{thm}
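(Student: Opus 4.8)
The plan is to apply the abstract results of Section~\ref{sec:gradedAlg} to the graded $\heckebig$-module $M=H_i(\cY,\Z)_{\free}$, with $R=\Z$ and $\hecke=\abstracthecke$. The preliminary reduction is that the Hodge and de Rham theorems provide a $\heckebig$-equivariant isomorphism $\cH^i(\cY)\cong H_i(\cY,\R)=M_{\R}$ (as in the proof of Lemma~\ref{lem:harmonPolar}), and hence a $\heckebig$-equivariant isomorphism $\CC\otimes_{\R}\cH^i(\cY)\cong M_{\CC}=\CC\otimes_{\Q}M_{\Q}$. Thus condition~(i) is equivalent to the existence of a $\chi\in\Cisodual$ with $\chi(c)\neq 1$ together with a Hecke eigensystem over~$\CC$ in $M_{\CC}$ admitting a self-twist by~$\chi$.

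For the direction ``if~(i) fails then~(ii) holds'', I would verify the hypothesis of Corollary~\ref{cor:linked} for $M$. Were that hypothesis to fail, there would be a simple (ungraded) $\abstracthecke$-submodule $E\subseteq M_{\Q}$ and a character $\chi\in\Cisodual$ with $\chi(c)\neq 1$ such that $\abstracthecke_{c'}$ annihilates $E$ for every $c'\in C$ with $\chi(c')\neq 1$. Since $\abstracthecke$ is commutative and $E$ is simple, $E$ is one-dimensional over a number field $K$ through which $\abstracthecke$ acts; extending scalars to $\CC$ decomposes $E_{\CC}\subseteq M_{\CC}$ into one-dimensional simultaneous eigenspaces, each defining a Hecke eigensystem over~$\CC$ in $M_{\CC}$. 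For every $\cD$ with $\chi([\cD])\neq 1$, the operator $T_{\cD}$ is homogeneous of a degree $c'$ with $\chi(c')\neq 1$, hence annihilates $E$, so $a_{\cD}=0$; therefore this eigensystem admits a self-twist by~$\chi$. That is exactly condition~(i), contradicting our assumption. So the hypothesis of Corollary~\ref{cor:linked} holds, and the corollary yields~(ii).

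It remains to see that~(i) and~(ii) are mutually exclusive. Suppose~(i) holds, and let $f\in M_{\CC}$ be a nonzero eigenvector admitting a self-twist by the given~$\chi$. Running the argument in the proof of Lemma~\ref{lem:nonInvertibleHecke} shows that every element of $\abstracthecke[W]_c$, and therefore every element of $\heckebig_c$ (the image of $\abstracthecke[W]_c$), annihilates $f$. Write $f=\sum_{d\in C}f_d$ with $f_d\in(M_d)_{\CC}$, and fix $d_0\in C$ with $f_{d_0}\neq 0$; then for every $T\in\heckebig_c$ the induced map $(M_{d_0})_{\CC}\to(M_{cd_0})_{\CC}$ kills $f_{d_0}$ and so is not injective. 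On the other hand, if $M_{d_0}$ were linked to $M_{cd_0}$ over some ring $L$ containing $\Z$ via some $T\in\heckebig_c$, then linkedness forces $M_{d_0}$ and $M_{cd_0}$ to have the same $\Z$-rank (they become isomorphic free modules after base change to $L$), and $T$ induces an isomorphism $(M_{d_0})_L\to(M_{cd_0})_L$ only if the determinant of $T\colon M_{d_0}\to M_{cd_0}$ computed in $\Z$-bases is a unit of $L$, in particular a nonzero integer, hence nonzero in $\CC$; thus $T$ would be injective on $(M_{d_0})_{\CC}$, a contradiction. Hence~(ii) fails for $b=d_0$.

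The main obstacle is this last, mutual-exclusivity step. In contrast with Theorem~\ref{thm:HeckeIsosp}\ref{item:lambdaiso}, where the corresponding clause asserts the existence of a \emph{single} Hecke operator inducing all the required isomorphisms (so that it acts invertibly on the whole module and Lemma~\ref{lem:nonInvertibleHecke} applies directly), condition~(ii) only says that the homogeneous components are pairwise linked, possibly by different operators over different coefficient rings. The point is therefore to isolate one homogeneous component on which \emph{every} element of $\heckebig_c$ fails to be injective, and then to use that the relevant determinants are integers to rule out linkedness over any ring whatsoever.
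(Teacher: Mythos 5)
Your proof is correct and follows the same broad strategy as the paper for the forward implication: reduce via the Hodge--de Rham isomorphism to the module $M=H_i(\cY,\Z)_{\free}$, extract a self-twist Hecke eigensystem over $\CC$ from a failure of the hypothesis of Corollary~\ref{cor:linked} (which is exactly the content of Lemma~\ref{lem:SubmodEigensystem}, which you re-derive inline), and then apply Corollary~\ref{cor:linked} to conclude linkage.

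Where you genuinely add value is the mutual-exclusivity step. The paper disposes of it with ``The converse follows from Lemma~\ref{lem:nonInvertibleHecke},'' but you correctly point out that the conclusion of that lemma (no element of $\abstracthecke[W]_c$ acts invertibly on $M$) does not literally rule out conclusion~(ii) here, since (ii) permits the linkings of the various pairs $(M_b, M_{cb})$ to be realised by \emph{different} Hecke operators over different rings, unlike Theorem~\ref{thm:HeckeIsosp}, where a single $T$ is asserted. Your fix is exactly what is needed: the proof (not just the statement) of Lemma~\ref{lem:nonInvertibleHecke} shows that every $T\in\heckebig_c$ kills the self-twist eigenvector $f$, hence, after projecting to graded components, kills each $f_{d_0}$; and since $M$ is $\Z$-free, the determinant of any putative link $T\colon M_{d_0}\to M_{cd_0}$ (computed in $\Z$-bases) would be a nonzero integer, therefore nonzero in $\CC$, making $T$ injective on $(M_{d_0})_\CC$ and contradicting $Tf_{d_0}=0$. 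This is a clean and complete completion of the argument that the paper elides.
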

\begin{proof}
  Suppose that there are no $\chi$ and Hecke eigensystem
  as in the statement. The isomorphism between $\cH^i(\cY)$ and $H^i(\cY,\R)$ is
  $\heckebig$-equivariant. Lemma \ref{lem:SubmodEigensystem}, applied
  to $M=H_i(\cY,\Z)_{\free}$, and Corollary \ref{cor:linked}
  imply that $M_{c}$ and $M_{c'}$ are linked.
  The converse follows from Lemma \ref{lem:nonInvertibleHecke}.
\end{proof}

\begin{thm}\label{thm:regQuosValues}
  Let $c\in C$, and let~$p$ be a prime number, let $M=H_i(\cY,\Z)_{\free}$, and let $\cN$ be as
  in Definition \ref{def:curlyN}.
  \begin{enumerate}[leftmargin=*,label={\upshape(\arabic*)}]
    \item\label{item:TrivialLocalRegconst}
      Let~$\fn'\in \cN$.
      Then at least one of the following two statements is true:
      \begin{enumerate}[label={\upshape(\roman*)}]
        \item there exist~$\chi\in \Cisodual\setminus c^\perp$
          and a Hecke eigensystem over~$\Fpbar$ in~$\Fpbar\otimes (M/\fn' M)$ admitting a self-twist by~$\chi$;
        \item for all $b\in C$, the homogeneous components $(M_{\fn'})_b$ and $(M_{\fn'})_{cb}$ are linked, and we have
        \[
          \cC_{b,cb}(M_{\fn'}) \in \Z_p^\times;
        \]
      \end{enumerate}
    \item\label{item:TrivialGlobalRegconst}
      At least one of the following two statements is true:
      \begin{enumerate}[label={\upshape(\roman*)}]
        \item there exist~$\chi\in \Cisodual \setminus c^\perp$ and a Hecke
          eigensystem over~$\Fpbar$ in~$\Fpbar\otimes M$ admitting a self-twist by~$\chi$;
        \item 
          for all~$b\in C$, the homogeneous components $M_b$ and $M_{cb}$ are linked,
          and we have
          \[
            \frac{\Reg_i(Y_{b})^2}{\Reg_i(Y_{cb})^2}\in \Z_{(p)}^\times.
          \]
      \end{enumerate}
  \end{enumerate}
\end{thm}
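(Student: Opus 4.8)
The plan for both parts is to assume that statement~(i) fails and deduce~(ii). In each case I would apply Proposition~\ref{prop:littleExercise} to an appropriate free module over an appropriate localisation of~$\Z$, and then extract the regulator quotient using Lemma~\ref{lem:triv} together with Lemma~\ref{lem:RegConstFormalism} and Lemma~\ref{lem:extendring}. The common link is the following translation, valid for $R\in\{\Z_{(p)},\Z_p\}$ and $\hecke=R\otimes\abstracthecke$: if $N$ is a free $R$-module carrying a graded $\heckebig$-action and $E$ is a simple $\hecke$-submodule of its reduction modulo~$p$, then the failure of the hypothesis of Proposition~\ref{prop:littleExercise} for a pair $(E,\chi)$ with $\chi\in\Cisodual\setminus c^\perp$ says exactly that $\abstracthecke_d$ annihilates~$E$ for every $d\in C$ with $\chi(d)\neq 1$; by Definition~\ref{def:eigensystem} and the grading convention $\deg T_\cD=[\cD]^{-1}$, this is precisely the statement that the Hecke eigensystem over~$\Fpbar$ attached to~$E$ admits a self-twist by~$\chi$. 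Thus, to verify the hypothesis of Proposition~\ref{prop:littleExercise}, it is enough to know that every such eigensystem occurs in the module named in~(i).

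For part~\ref{item:TrivialGlobalRegconst} I would take $R=\Z_{(p)}$ and $N=\Z_{(p)}\otimes M$, which is free over~$\Z_{(p)}$ and polarisable over~$\R$ by Lemma~\ref{lem:harmonPolar}, and whose reduction modulo~$p$ is $M/pM$. A simple $\hecke$-submodule $E\subseteq M/pM$ becomes, after extension of scalars, a submodule of $\Fpbar\otimes M$; so by the translation above the failure of~(i) gives the hypothesis of Proposition~\ref{prop:littleExercise} for~$N$, producing an element $T'\in\heckebig_c$ that acts invertibly on $\Z_{(p)}\otimes M$. For every $b\in C$ this $T'$ restricts to an isomorphism $(\Z_{(p)}\otimes M)_b\to(\Z_{(p)}\otimes M)_{cb}$, so $M_b$ is linked to $M_{cb}$, and Lemma~\ref{lem:triv} gives $\cC_{b,cb}(\Z_{(p)}\otimes M)\in\Z_{(p)}^\times/(\Z_{(p)}^\times)^2$. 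By Lemma~\ref{lem:extendring} this class equals $\cC_{b,cb}(M)$, which by Lemma~\ref{lem:RegConstFormalism}\ref{item:regseqregconst}--\ref{item:rationalregs} equals the rational number $\Reg_i(Y_b)^2/\Reg_i(Y_{cb})^2$; a rational number whose class in $\Q^\times/(\Z_{(p)}^\times)^2$ is represented by a unit of~$\Z_{(p)}$ is itself a unit of~$\Z_{(p)}$, which is~(ii).

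For part~\ref{item:TrivialLocalRegconst} I would run the same argument with $R=\Z_p$ and $N=M_{\fn'}$, which is free over~$\Z_p$ and polarisable by Lemma~\ref{lem:harmonPolar}; Proposition~\ref{prop:littleExercise} then yields $T'\in\heckebig_c$ acting invertibly on~$M_{\fn'}$, whence the linkings and the value $\cC_{b,cb}(M_{\fn'})\in\Z_p^\times/(\Z_p^\times)^2$ asserted in~(ii), via Lemma~\ref{lem:triv}. The point that requires care --- and the main obstacle --- is that the hypothesis of Proposition~\ref{prop:littleExercise} applied to~$M_{\fn'}$ concerns simple submodules of $M_{\fn'}/pM_{\fn'}$, while statement~(i) is phrased in terms of $M/\fn'M$, so I must show these carry the same Hecke eigensystems over~$\Fpbar$. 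Since $p\in\fn'$ one has $M/\fn'M=M_{\fn'}/\fn'M_{\fn'}$, with $\fn'M_{\fn'}\supseteq pM_{\fn'}$, and a standard argument (using \cite[Corollary~5.9]{Lam} and the centrality of the image of the commutative algebra~$\abstracthecke_1$) shows that the submodule $\fn'M_{\fn'}/pM_{\fn'}$ lies in the radical of the module $M_{\fn'}/pM_{\fn'}$; hence $M/\fn'M$ surjects onto the semisimplification of $M_{\fn'}/pM_{\fn'}$, which by Nakayama's lemma has every simple composition factor of $M_{\fn'}/pM_{\fn'}$ among its own composition factors. Because $\abstracthecke$ is commutative, each such composition factor already occurs as an eigenvector in $\Fpbar\otimes(M/\fn'M)$, so a self-twisting eigensystem attached to a simple submodule of $M_{\fn'}/pM_{\fn'}$ would contradict the failure of~(i). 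This completes the verification of the hypothesis of Proposition~\ref{prop:littleExercise} and hence the proof.
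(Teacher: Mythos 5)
Your proposal is correct, and for part~\ref{item:TrivialLocalRegconst} it follows the paper's own route (Lemma~\ref{lem:SubmodEigensystem} to verify the hypotheses of Proposition~\ref{prop:littleExercise} for~$M_{\fn'}$, then Lemma~\ref{lem:triv}). What you add there, and what the paper's proof glosses over, is the reconciliation of the two ``reductions mod~$p$'': the hypothesis of Proposition~\ref{prop:littleExercise} concerns simple~$\hecke$-submodules of~$M_{\fn'}/pM_{\fn'}$, while statement~(i) speaks of~$\Fpbar\otimes(M/\fn'M)$. Your argument that these carry the same eigensystems is sound: since every maximal ideal of~$\hecke/p\hecke$ in the support of~$M_{\fn'}/pM_{\fn'}$ contains the image of~$\fn'$ (because~$\hecke_1$ is central and acts through~$(\hecke_1)_{\fn'}$), the submodule~$\fn'M_{\fn'}/pM_{\fn'}$ lies in the radical, and for a commutative Artinian algebra the sets of simple composition factors of a finitely generated module and of its maximal semisimple quotient coincide, so the two supports agree. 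It would be worth phrasing this in terms of support rather than composition factors of the semisimplification, since the statement ``the maximal semisimple quotient has every composition factor'' is false over non-commutative rings and is only saved here by the commutativity of~$\hecke$ that you invoke; said in terms of support, the point is just the standard Nakayama argument.

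For part~\ref{item:TrivialGlobalRegconst} you take a genuinely different and shorter route than the paper. The paper establishes the linking by the same argument as part~(1) and then obtains the~$\Z_{(p)}$-unit assertion by decomposing~$\cC_{b,cb}(M)$ over~$\cN$ via Lemma~\ref{lem:RegConstFormalism}\ref{item:HeckeProd} and applying part~(1) to each local factor (which requires noting that failure of~(i) for~$M$ implies failure for every~$M_{\fn'}$). You instead apply Proposition~\ref{prop:littleExercise} and Lemma~\ref{lem:triv} directly to~$\Z_{(p)}\otimes M$ over~$R=\Z_{(p)}$, and then pass the conclusion back down to the regulator quotient via Lemma~\ref{lem:extendring} and Lemma~\ref{lem:RegConstFormalism}\ref{item:regseqregconst}--\ref{item:rationalregs}. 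Both are valid; your route avoids the detour through~$\cN$ and through part~(1) entirely, at the cost of needing the polarisability of~$\Z_{(p)}\otimes M$ (which follows from Lemma~\ref{lem:harmonPolar} by extending scalars). The paper's route makes more visible that the global regulator statement is a product of the local ones, which is exactly the structure that the sequel (Theorem~\ref{thm:pshadylocal}) exploits, so its extra length is not wasted.
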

\begin{proof}
  \begin{enumerate}[leftmargin=*,label={\upshape(\arabic*)}]
    \item Suppose that there are no $\chi$ and Hecke eigensystem
      in~$\Fpbar\otimes (M/\fn' M)$ as in the statement.
      By Lemma \ref{lem:SubmodEigensystem} the assumptions of Proposition
      \ref{prop:littleExercise} are satisfied for $M_{\fn'}$,
      so there exists $T\in \heckebig_c$ that acts invertibly on $M_{\fn'}$.
      The result follows from Lemma \ref{lem:triv}.
    \item Suppose that there are no $\chi$ and Hecke eigensystem
      in~$\Fpbar\otimes M$ as in the statement, and let~$b\in C$.
      By the same argument as in part \ref{item:TrivialLocalRegconst}, the homogeneous components 
      $M_b$ and $M_{cb}$ are linked.
%      \marginpar{Do we need to say more? (finite over~$\Z$, Hecke eigenvalues
%      algebraic integers, etc.)}
      The claimed equality then follows by combining
      Lemma~\ref{lem:RegConstFormalism} and part~\ref{item:TrivialLocalRegconst}
      of the present theorem.\qedhere
  \end{enumerate}
\end{proof}
%
%\begin{thm}
%  Let~$c,c'\in C$ and let~$c = c'c^{-1}$.
%  Suppose that for every~$\chi\in C_{\iso}^\perp\setminus c^\perp$, no Hecke eigensystem
%  in~$\cH^i(\cY)$ has a self-twist by~$\chi$.
%  Then we have
%      \[
%        \frac{\Reg_i(Y_{c})^2}{\Reg_i(Y_{c'})^2} \in \Q^\times.
%      \]
%  Let~$p$ be a prime number.
%  Suppose that for every~$\chi\in  C_{\iso}^\perp\setminus c^\perp$, no Hecke eigensystem
%    in~$\Fpbar\otimes H_i(\cY,\Z)_{\free}$ has a self-twist by~$\chi$. Then
%    \[
%      \ord_p\left(\frac{\Reg_i(Y_{c})^2}{\Reg_i(Y_{c'})^2}\right) = 0.
%    \]
%\end{thm}
%\begin{proof}
%  TODO
%\end{proof}

%
%--------------
%--- Section 5
%--------------
%
%\newpage
\section{\texorpdfstring{Self-twists in characteristic~$0$ and automorphic induction}{Self-twists in characteristic 0 and automorphic induction}}\label{sec:isoconds}

% Conditions on Hecke characters go here. We will, for example, deduce the results
% of Rajan as a simple corollary.
\begin{assumption}\label{ass:quaternion}
We keep the notation of Section~\ref{sec:Vigneras}, but we
assume that~$d=2$, i.e. that~$D$ is a quaternion algebra.
In particular we have~$Z_\infty = Z_\infty^+$.
\end{assumption}

In this section we prove Theorem \ref{thm:IntroProtoMain}
for representation equivalence, $\Omega^\bullet$-, $\Omega^0$-, and $\cH^\bullet$-isospectrality,
Theorem~\ref{thm:IntroRegLL} parts~\ref{item:IntroRegLLRegConst} and~\ref{item:IntroRegLLProd},
and Theorem \ref{thm:IntroKelmerConverse}. To prove Theorem~\ref{thm:IntroProtoMain}
we will use the results of Section \ref{sec:Vigneras} and make the self-twist conditions
in those results explicit. To that end, we will relate the various $\abstracthecke$-modules
to automorphic representations, directly or using Matsushima's formula. By
an automorphic induction theorem of Langlands, the self-twist condition yields the
existence of Hecke characters of quadratic extensions of $F$ with specific properties.
To make these properties explicit, we inspect closely irreducible representations
of $\GL_2(\R)$ and $\GL_2(\CC)$ and how they interact with automorphic induction,
Jacquet--Langlands transfer, and Matsushima's formula.
Theorem~\ref{thm:IntroRegLL} parts~\ref{item:IntroRegLLRegConst}
and~\ref{item:IntroRegLLProd} constitute a refinement of Theorem
\ref{thm:IntroProtoMain} for $\cH^\bullet$-isospectrality.
To deduce Theorem \ref{thm:IntroKelmerConverse},
we count the Hecke characters that contribute to the various Casimir eigenspaces.
This is achieved by proving local bounds, Lemmas \ref{lem:localboundarch} and \ref{lem:localboundfin},
and then counting points in lattices.

The content of Sections \ref{sec:GL1}--\ref{sec:automGL2} is well-known.
\begin{itemize}[leftmargin=*]
  \item In Section \ref{sec:GL1} we fix our notation and recall basic facts about Hecke characters.
  \item In Section \ref{subsec:archi} we recall the classification of irreducible representations
    of $\GL_2(E)$ for an Archimedean local field $E$, including their $K$-type,
    Casimir eigenvalue, and central character, and how they arise from automorphic induction and
    Jacquet--Langlands transfer.
  \item In Section \ref{subsec:nonarchi} we recall the classification of smooth irreducible representations
    of~$\GL_2(E)$ for a non-Archimedean local field $E$, and we state which ones arise
    as automorphic inductions, as well as some properties of conductors.
  \item In Section \ref{sec:automGL2} we recall the results on automorphic representations of $\GL_2$
    that we need, namely strong multiplicity $1$, automorphic induction, Jacquet--Langlands correspondence,
    and Matsushima's formula.
\end{itemize}
General references for these facts are \cite[Appendix B]{GerardinLabesse}
\cite{GetzHahn}, \cite{JL}.

In Section \ref{sec:selfTwistConds} we prove Theorem \ref{thm:IntroProtoMain} for the isospectralities
listed above and  Theorem~\ref{thm:IntroRegLL} parts~\ref{item:IntroRegLLRegConst} and~\ref{item:IntroRegLLProd}.
That section is divided into subsections, one for each type of isospectrality.
Finally, in Section \ref{sec:KelmerConverse} we prove Theorem \ref{thm:IntroKelmerConverse}.

Throughout Section \ref{sec:isoconds} all group representations will be on complex Hilbert spaces.

\subsection{\texorpdfstring{Automorphic representations of~$\GL_1$}{Automorphic representations of GL1}}\label{sec:GL1}

In this section, we fix some notation for Hecke characters. For a general
reference, see e.g. \cite[Ch. XIV]{LangANT}.

Let~$G$ be a locally compact group. Recall that a \emph{quasi-character} of~$G$ is a
continuous homomorphism~$G \to \CC^\times$, and a \emph{character} of~$G$ is a
quasi-character that is unitary.

Every quasi-character of~$\R^\times$ is of the form
\[
  \automchar_\R(k,s) \colon x \mapsto \sign(x)^k |x|^s
\]
for a unique pair~$(k,s)\in \Z/2\Z\times\CC$.

Every quasi-character of~$\CC^\times$ is of the form
\[
  \automchar_\CC(k,s) \colon z \mapsto \left(\frac{z}{|z|}\right)^k |z|^{2s}
\]
for a unique pair~$(k,s)\in \Z\times\CC$.

In both cases, such a quasi-character is unitary if and only if one has~$s\in i\R$.

If $E$ is a non-Achimedean local field, $\fp$ is the maximal ideal of the ring
of integers~$\Z_E$ of $E$, and $\automchar$ is a quasi-character of~$E^\times$, then
there exists $m\in \Z_{\geq 0}$ such that $\automchar$ is trivial on
$(1+\fp^m)\cap \Z_E^\times$.
The \emph{conductor} of $\automchar$ is $\fp^m$, where $m$ is the smallest such integer.

A \emph{Hecke quasi-character} of $F$ is a quasi-character of~$F^\times \lquo
\adel_F^\times$, and a \emph{Hecke character} of $F$ is a character of~$F^\times \lquo
\adel_F^\times$.

Every Hecke quasi-character~$\automchar$ of~$F$ is of the form
\[
  \automchar = \prod_{v} \automchar_v
\]
for quasi-characters~$\automchar_v$ of~$F_v^\times$, where~$v$ runs over the places
of~$F$. For every infinite place~$v$, we define~$k_v$ and~$s_v$ by
writing~$\automchar_v = \automchar_{F_v}(k_v,s_v)$. Note that this definition depends on
identifications between $F_v$ and $\CC$ for every complex place $v$.
Replacing a chosen identification with its complex conjugate does not
change $s_v$ but negates~$k_v$. Everything that we write in this section
is insensitive to this ambiguity, as long as we impose the following convention:
whenever $L/F$ is a quadratic extension of number fields, $v$ is a complex place of $F$,
and $w$, $w'$ are places of $L$ extending $v$, we always choose isomorphisms
$L_w\cong \CC$ and $L_{w'}\cong \CC$ that extend \emph{the same} arbitrarily chosen
isomorphism $F_v\cong \CC$.

A Hecke quasi-character~$\automchar$ is called \emph{algebraic} if for every real
place $v$ we have $s_v\in \Z$ and for every complex place $v$ we have~$s_v+k_v/2\in \Z$.
If $\automchar$ is an algebraic Hecke character, then for every
embedding~$\tau\colon F\to \CC$ there exists a uniquely determined~$q_\tau\in \Z$
such that for all~$\alpha\in F^\times$ that are positive at all real places we have
$$
\prod_{v|\infty}\automchar_v(\alpha) = \prod_{\tau\colon F\to \CC}\tau(\alpha)^{q_{\tau}},
$$
and the products run over all infinite places $v$, respectively embeddings~$\tau\colon F\to \CC$.
We refer to the collection~$(q_{\tau})_{\tau \in \Hom(F,\CC)}$ as the \emph{type} of
an algebraic Hecke character.

\subsection{\texorpdfstring{Representations of~$\GL_2$ over local fields}{Representations of GL2 over local fields}}\label{sec:GL2local}

Throughout this subsection, let $E$ be a local field. We will fix the required
notation and briefly recall the relevant facts for irreducible representations of $\GL_2(E)$.
General references for this subsection are \cite[Appendix B]{GerardinLabesse} and 
\cite[\S 5-6, 14-15]{JL}.

Let~$L$ be a quadratic \'etale $E$-algebra, i.e. either a quadratic field
extension of $E$ or a direct product $E\times E$. There is an \emph{automorphic
induction} functor $\AI_E^L$ from the category of irreducible representations
of $\GL_1(L)$ to that of irreducible representations of $\GL_2(E)$. If $\sigma$
is the non-trivial $E$-linear automorphism of $L$, then for all characters
$\automchar$ of $L^\times$ we have $\AI_E^L(\automchar^\sigma)=\AI_E^L(\automchar)$.

Let~$A$ be a quaternion algebra over $E$. The local Jacquet--Langlands correspondence \cite{JL}
attaches to every irreducible representation $\automrep$ of $A^\times$ a
representation~$\JL_{A}(\automrep)$ of $\GL_2(E)$, well-defined up to isomorphism,
sometimes referred to as the \emph{Jacquet--Langlands transfer} of $\automrep$ from
$A^\times$ to $\GL_2$.
If $A$ is split, then $\JL_{A}(\automrep)$ is isomorphic to $\automrep$. We will
say more about the properties of $\JL_{A}(\automrep)$ in some special cases below.

\subsubsection{Archimedean fields}\label{subsec:archi}
In this subsection, assume that $E$ is Archimedean.

Given an irreducible representation $\automrep$ of $\GL_2(E)$ on a vector
space $V$, its \emph{central character} is the group homomorphism
$\zeta_\automrep\colon E^\times\to \CC^\times$ characterised by the property
that for all $x\in E^\times\subset \GL_2(E)$
and all $v\in V$ one has $\automrep(x)(v) = \zeta_\automrep(x)v$.

Let $K$ be a compact subgroup of~$\GL_2(E)$. The restriction of every representation of $\GL_2(E)$ to $K$
decomposes as a Hilbert direct sum of irreducible representations.
Fix a full set of representatives~$u$ of isomorphism classes of
irreducible representations of~$K$. The \emph{$K$-type} of a
representation~$\automrep$ of~$\GL_2(E)$ is the multiset of representatives~$u$
with multiplicities~$\dim \Hom_{K}(u,\automrep)$.

Given an irreducible representation~$\automrep$ of $\GL_2(E)$ or of $\Hamil^\times$, the
\emph{Casimir operator}, which we take to be the negation of~\cite[Chapters 1 and 2]{BorelWallach} (see
also~\cite[Theorem~2.2.1]{Bump} in the case of~$\GL_2(\R)$), acts as a
scalar on a dense subspace of the underlying
vector space. We call this scalar the \emph{Casimir eigenvalue} of $\automrep$.
The definition of the Casimir operator depends on a choice of scalar
multiple of the Killing form, and we choose the normalisation for which,
in the case of $\GL_2(E)$, the quotient by a maximal compact subgroup has
constant curvature $-1$, and in the case of $\Hamil^\times$, the form
agrees with that for $\GL_2(\R)$ after complexification.

Isomorphism classes of irreducible representations of~$\SO_2(\R)$ have
dimension~$1$ and are parametrised
by integers, with~$k\in\Z$ corresponding to
\[
  r(k) \colon
  \begin{pmatrix}
    \cos \theta & \sin \theta \\
    -\sin \theta & \cos \theta
  \end{pmatrix}
  \mapsto e^{ik\theta}.
\]
%A representation~$V$ of~$\SO_2(\R)$ is admissible if every irreducible
%representation~$r_k$ appears with finite multiplicity~$m_k$.
%In this case, we will refer to
%the multiset of integers~$k$ with multiplicity~$m_k$ as the~$\SO_2(\R)$-type
%of~$V$.

Isomorphism classes of irreducible representations of~$\SU_2(\CC)$ are
parametrised by non-negative integers, with~$k\in \Z_{\ge 0}$ corresponding to
the representation
\[
  s(k) = \Sym^k \CC^2,
\]
which has dimension~$k+1$, where~$\CC^2$ is equipped with the standard action
of~$\SU_2(\CC)$.

Isomorphism classes of irreducible representations of~$\Hamil^\times$ are parametrised by
elements of $\Z_{\ge 2}\times \CC$, with~$(k,\mu)\in \Z_{\ge 2}\times \CC$
corresponding to the representation
\[
  t(k,\mu) = \Sym^{k-2}\CC^2 \otimes_{\CC} \nrd^{\mu/2},
\]
which has dimension~$k-1$, where~$\CC^2$ is equipped with the action
of~$\Hamil^\times$ induced by the map~$\Hamil^\times \to \SL_2(\CC)$ defined
by~$h\mapsto \nrd(h)^{-1/2}j(h)$ for some isomorphism~$j\colon \CC\otimes_\R \Hamil \cong \Mat_2(\CC)$.
The eigenvalue of the Casimir operator on~$t(k,\mu)$
is~$\frac{k}{2}(1-\frac{k}{2})$ and the central character of~$t(k,\mu)$
is~$\automchar_\R(k,\mu)$.

Every irreducible representation of~$\GL_2(\R)$ is isomorphic to one
from exactly one of the following families.
\begin{itemize}[leftmargin=*]
  \item Irreducible principal series~$\PS_\R(\automchar_1,\automchar_2)$, where~$\automchar_1,\automchar_2$
    %are quasi-characters~$\automchar_i = \automchar_\R(k_i,s_i) \colon \R^\times \to \CC^\times$, $x \mapsto
    %\sign(x)^{k_i}|x|^{s_i}$ such that~$s = \frac{1}{2}(s_1-s_2+1)$ and~$k=k_1-k_2$
    are quasi-characters~$\automchar_i = \automchar_\R(k_i,s_i)$ such that~$s = \frac{1}{2}(s_1-s_2+1)$ and~$k=k_1-k_2$
    satisfy~$2s\notin k+2\Z$.
    %, where~$s_1-s_2\notin k_1+k_2+1+2\Z$.
    %see Bump Thm 2.5.5
%
%\marginpar{Do we ever need the specific conditions for these to be unitarisable?}
%    The representation~$\PS_\R(\automchar_1,\automchar_2)$ is unitarisable if and only if
%    one of the following holds:
%    \begin{itemize}%TODO change itemize symbols
%      \item $\Re(s_1)=\Re(s_2)=0$, or
%      \item $k=0$, $\Re(s_1+s_2)=0$, $s\in \R$ and $0<s<1$.
%    \end{itemize}
%
%\marginpar{Do we ever need this?}
%    The representations~$\PS_\R(\automchar_1,\automchar_2)$ and~$\PS_\R(\automchar_1',\automchar_2')$ are
%    isomorphic if and only if one has $\{\automchar_1,\automchar_2\}=\{\automchar_1',\automchar_2'\}$.
  \item Discrete series and limits of discrete series~$\DS(k,\mu)$, where~$k\ge
    1$ is an integer and~$\mu\in \CC$.
%
%    The representation~$\DS(k,\mu)$ is unitarisable if and only if~$\Re(\mu)=0$.
  \item Finite-dimensional
    representations~$\FD_\R(k,\automchar)=\Sym^{k-2}\CC^2\otimes_{\CC} (\automchar\circ \det)$, where $k\ge 2$ is an
    integer and~$\automchar$ is a quasi-character of~$\R^\times$.
%
%    The representation~$\FD_\R(k,\automchar)$ is unitarisable if and only if~$k=2$
%    and~$\automchar$ is unitary.
    %dimension k-1
\end{itemize}

%TODO check central characters

\begin{center}
\begin{tabular}[h]{l|c|c|c}
  Representation & $\SO_2(\R)$-type & Casimir & central \\
   &  & eigenvalue & character \\
  \hline
  $\PS_\R(\automchar_1,\automchar_2)$ & $\{r(m) : m \in k+2\Z\}$ & $s(1-s)$ & $\automchar_1\automchar_2$ \\
  $\DS(k,\mu)$ & $\{r(m) : m \in k+2\Z, |m|\ge k\}$ &
  $\frac{k}{2}(1-\frac{k}{2})$ & $\automchar_\R(k,\mu)$ \\
  %$x\mapsto \sign(x)^k|x|^{\mu}$\\
  $\FD_\R(k,\automchar)$ & $\{r(m) : m \in k+2\Z, |m|< k\}$& $\frac{k}{2}(1-\frac{k}{2})$
  %& $\automchar_\R(k,k-2)\automchar^2 \colon x\mapsto x^{k-2}\automchar(x)^2$\\
  %& $x\mapsto x^{k-2}\automchar(x)^2$\\
   & $\automchar_{\R}(k,k-2)\automchar^2$
\end{tabular}
\end{center}

%In those representations of $\GL_2(\R)$, the trivial $\SO_2(\R)$ isotypical component
%is stable under~$\O_2(\R)$, and the corresponding representation of~$\O_2(\R)$ is the following:
%\begin{itemize}
%  \item in~$\PS_\R(\automchar_1,\automchar_2)$ when~$k\equiv 0 \mod{2}$: $O \mapsto
%    \automchar_1(\det(O)) = \automchar_2(\det(O))$;
%  \item in~$\FD_\R(k,\automchar)$ when~$k\equiv 0 \mod{2}$: $O \mapsto \det(O)^{k/2-1}\automchar(\det(O))$.
%\end{itemize}

Every irreducible representation of~$\GL_2(\CC)$ is isomorphic to one
from exactly one of the following families.
\begin{itemize}[leftmargin=*]
  \item Irreducible principal series~$\PS_\CC(\automchar_1,\automchar_2)$, where~$\automchar_1,\automchar_2$
    %are quasi-characters~$\automchar_i \colon \CC^\times \to \CC^\times$, $z \mapsto
    %\bigl(\frac{z}{|z|}\bigr)^{k_i}|x|^{2s_i}$ such that~$s = s_1-s_2$ and~$k
    are quasi-characters~$\automchar_i = \automchar_\CC(k_i,s_i)$ such that~$s = s_1-s_2$ and~$k
    = k_1-k_2$ satisfy $2s\notin k+2\Z$
    or~$|s|<1+\frac{|k|}{2}$.
    %Casimir 1/2(1-k^2/4-sigma^2)

%    The representation~$\PS_\CC(\automchar_1,\automchar_2)$ is unitarisable if and only if
%    one of the following holds:
%    \begin{itemize}%TODO change item symbol as above
%      \item $\Re(s_1)=\Re(s_2) = 0$, or
%      \item $k=0$, $\Re(s_1+s_2)=0$, $\sigma\in\R$ and $|\sigma|\le 1$.
%    \end{itemize}
%
%    The representations~$\PS_\CC(\automchar_1,\automchar_2)$ and~$\PS_\CC(\automchar_1',\automchar_2')$ are
%    isomorphic if and only if one has $\{\automchar_1,\automchar_2\}=\{\automchar_1',\automchar_2'\}$.
  \item Finite-dimensional representations~$\FD_\CC(k,k',\automchar)=\Sym^{k-2}\CC^2\otimes \Sym^{k'-2}\bar{\CC}^2\otimes (\automchar\circ\det)$, where~$k,k'\ge 2$
    are integers, $\automchar$ is a quasi-character of~$\CC^\times$, and $\CC^2$ denotes the standard representation
    of $\GL_2(\CC)$, while $\bar{\CC}^2$ denotes its composition with complex conjugation.
%
%    The representation~$\FD_\CC(k,k',\automchar)$ is unitarisable if and only if~$k=k'=2$
%    and~$\automchar$ is unitary.
    %dimension (k-1)(k'-1)
\end{itemize}

\begin{center}
\begin{tabular}[h]{l|c|c|c}
  Repre- & $\SU_2(\CC)$-type & Casimir & central \\
   sentation &  & eigenvalue & character \\
  \hline
  $\PS_\CC(\automchar_1,\automchar_2)$ & $\{s(m) : m \in k+2\Z, m\ge |k|\}$ &
  $1-\frac{k^2}{4}-s^2$ & $\automchar_1\automchar_2$ \\
  $\FD_\CC(k,k',\automchar)$ & $\{s(m) : m \in k+k'+2\Z,$&
  $k(1-\frac{k}{2}) + k'(1-\frac{k'}{2})$
  %& $z\mapsto z^{k-2}\bar{z}^{k'-2}\automchar(z)^2$\\
   & $\automchar_{\CC}(k-k',$\\
  &  $|k-k'|\le m \le k+k'-4\}$ & & \quad\quad$\tfrac{k+k'-4}{2})\automchar^2$
\end{tabular}
\end{center}

\begin{lemma}\label{lem:localboundarch}
  Let~$V$ be a finite-dimensional representation of~$K_\infty$. Then
  there exists~$\kappa_\infty>0$ such that for every irreducible
  representation~$\automrep_\infty$ of~$G_\infty$ one has
  \[
    \dim \Hom_{K_\infty}(V,\automrep_\infty) \le \kappa_\infty.
  \]
\end{lemma}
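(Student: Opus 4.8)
The plan is to reduce to a uniform ``multiplicity one'' statement at each infinite place and then read it off the classification tables recalled above. First I would write $V\cong\bigoplus_{u}u^{\oplus m_u}$ as a finite direct sum of irreducible representations $u$ of the compact group $K_\infty$, with multiplicities $m_u$; then $\dim\Hom_{K_\infty}(V,\automrep_\infty)=\sum_u m_u\dim\Hom_{K_\infty}(u,\automrep_\infty)$, so it suffices to bound $\dim\Hom_{K_\infty}(u,\automrep_\infty)$ uniformly in $\automrep_\infty$ for each fixed irreducible $u$. Next, using that $G_\infty\cong\GL_2(\R)^{\cV_{\R}(D)}\times\GL_2(\CC)^{\cV_{\CC}(D)}\times(\Hamil^\times)^{\cV_{\Hamil}(D)}$ is a finite product of the local groups $G_v$ and that $K_\infty$ is the corresponding product of the groups $\O_2(\R)$, $\U_2(\CC)$, $\U_1(\Hamil)$, I would invoke the standard fact that every irreducible representation $\automrep_\infty$ of $G_\infty$ is a (Hilbert) tensor product $\bigotimes_{v}\automrep_v$ of irreducible representations of the factors, and likewise $u=\bigotimes_v u_v$; this gives $\dim\Hom_{K_\infty}(u,\automrep_\infty)=\prod_v\dim\Hom_{K_v}(u_v,\automrep_v)$. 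So everything comes down to the local claim: for each infinite place $v$, every irreducible representation of $K_v$ occurs in every irreducible representation of $G_v$ with multiplicity at most $1$.

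I would check this local claim in the three cases. For $v\in\cV_{\Hamil}(D)$, the reduced norm is trivial on $K_v=\U_1(\Hamil)$, so $t(k,\mu)$ restricts to $K_v$ as the irreducible representation $\Sym^{k-2}\CC^2$, whence $\dim\Hom_{K_v}(u_v,t(k,\mu))\le 1$ for every irreducible $u_v$. For $v\in\cV_{\R}(D)$, so $G_v=\GL_2(\R)$ and $K_v=\O_2(\R)$, the tables show that in any irreducible $\automrep_v$ each weight $r(m)$ of $\SO_2(\R)$ occurs with multiplicity at most $1$; since $\Ind_{\SO_2(\R)}^{\O_2(\R)}r(m)$ is irreducible for $m\neq 0$ and equals $\mathbf{1}\oplus\det$ for $m=0$, Frobenius reciprocity yields $\dim\Hom_{\O_2(\R)}(u_v,\automrep_v)\le 1$ for every irreducible $u_v$ of $\O_2(\R)$. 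For $v\in\cV_{\CC}(D)$, so $G_v=\GL_2(\CC)$ and $K_v=\U_2(\CC)$, every irreducible $u_v$ of $\U_2(\CC)$ restricts to $\SU_2(\CC)$ as some $s(b)=\Sym^b\CC^2$, and since $\Hom_{\U_2(\CC)}(u_v,\automrep_v)$ embeds into $\Hom_{\SU_2(\CC)}(s(b),\automrep_v)$, whose dimension is at most $1$ by the tables, we again get $\dim\Hom_{\U_2(\CC)}(u_v,\automrep_v)\le 1$.

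Combining, $\dim\Hom_{K_\infty}(u,\automrep_\infty)\le 1$ for every irreducible $u$, hence $\dim\Hom_{K_\infty}(V,\automrep_\infty)\le\sum_u m_u\le\dim V$, and one can take $\kappa_\infty=\max(1,\dim V)$. I do not expect any genuine obstacle here: the only steps requiring a little care are the passages from the $\SO_2(\R)$- and $\SU_2(\CC)$-types in the tables to multiplicities for the larger compact groups $\O_2(\R)$ and $\U_2(\CC)$ (handled, respectively, by Frobenius reciprocity for the index-two inclusion $\SO_2(\R)\subset\O_2(\R)$ and by the fact that restriction of $K$-homomorphisms is injective), together with the invocation of the tensor-product decomposition of irreducible representations of a finite product of reductive groups.
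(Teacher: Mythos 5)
Your argument is correct and follows the same route as the paper: inspect the $K$-types in the classification tables of Section~\ref{subsec:archi} to see that each irreducible $K_v$-type occurs in each irreducible $G_v$-representation with multiplicity at most $1$, and multiply over the infinite places. Where the paper's proof simply says ``by inspection'', you correctly fill in the small but necessary step of passing from the tabulated $\SO_2(\R)$- and $\SU_2(\CC)$-multiplicities to multiplicities for $\O_2(\R)$ and $\U_2(\CC)$ (via Frobenius reciprocity for the index-two inclusion, and via irreducibility of the restriction to $\SU_2(\CC)$, respectively), which is what is actually needed since $K_\infty$ is built from $\O_2$ and $\U_2$.
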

\begin{proof}
  The assertion follows by inspection of the above lists of the
  irreducible representations of~$\GL_2(E)$ for~$E=\R$ and~$E = \CC$.
\end{proof}

The automorphic inductions~$\AI_E^L$ from $\GL_1(L)$ to $\GL_2(E)$ are as follows.
\begin{itemize}[leftmargin=*]
  \item Extension~$\R\subset \CC$: $\AI_{\R}^{\CC}(\automchar_\CC(k,s)) = \DS(1+|k|,2s)$.
  \item Extension~$\R \subset \R\times\R$:
    let~$\automchar_i = \automchar_\R(k_i,s_i)$ for~$i=1,2$ be ordered such
    that~$\Re(s_1)\ge\Re(s_2)$. Then:
    \begin{itemize}[leftmargin=*]
      \item $\AI_{\R}^{\R\times\R}(\automchar_2,\automchar_1)=\AI_{\R}^{\R\times\R}(\automchar_1,\automchar_2) =
        \PS_\R(\automchar_1,\automchar_2)$ if~$s_1-s_2+1\notin k_1+k_2 + 2\Z$;
      \item $\AI_{\R}^{\R\times\R}(\automchar_2,\automchar_1)=\AI_{\R}^{\R\times\R}(\automchar_1,\automchar_2) =
        \FD_\R(k,\automchar_\R(k_2,s_2+\frac{1}{2}))$, where~$k=s_1-s_2+1$, if~$k\in
        k_1+k_2+2\Z$ and~$k\neq 1$;
      \item $\AI_{\R}^{\R\times\R}(\automchar_2,\automchar_1)=\AI_{\R}^{\R\times\R}(\automchar_1,\automchar_2) =
        \DS(1,2s_1) = \DS(1,2s_2)$ if~$s_1=s_2$ and~$k_1\not\equiv k_2\mod{2}$.
    \end{itemize}
  \item Extension~$\CC\subset\CC\times\CC$:
    let~$\automchar_i = \automchar_\CC(k_i,s_i)$ for~$i=1,2$ be ordered such that~$\Re(s_1)\ge \Re(s_2)$,
    $s = s_1-s_2$, and~$k = k_1-k_2$. Then:
    \begin{itemize}[leftmargin=*]
      \item $\AI_{\CC}^{\CC\times\CC}(\automchar_2,\automchar_1) =\AI_{\CC}^{\CC\times\CC}(\automchar_1,\automchar_2) = \PS_\CC(\automchar_1,\automchar_2)$
        if~$2s \notin k+2\Z$ or~$|s|< \frac{|k|}{2}+1$;
      \item $\AI_{\CC}^{\CC\times\CC}(\automchar_2,\automchar_1) =\AI_{\CC}^{\CC\times\CC}(\automchar_1,\automchar_2) = \FD_\CC(s+\frac{k}{2}+1,
        s-\frac{k}{2}+1, \automchar_\CC(k_2,s_2+\frac{1}{2}))$ otherwise.
    \end{itemize}
\end{itemize}

The Jacquet--Langlands transfer~$\JL_{\Hamil}$ from~$\Hamil^\times$ to~$\GL_2(\R)$ is
\[
  \JL_{\Hamil}(t(k,\mu)) = \DS(k,\mu) \text{ for }(k,\mu)\in \Z_{\ge 2}\times\CC.
\]

We now record the isomorphism classes of some representations of various compact groups.

Let $\lieso_2(\R)$ and $\liegl_2(\R)$ be the Lie algebras of $\SO_2(\R)$, respectively of $\GL_2(\R)$,
both equipped with the adjoint action of $\SO_2(\R)$.
We have the following isomorphisms of $\SO_2(\R)$-representations.\\[0.5em]
\begin{center}
\begin{tabular}{l|l}
  $i$ & \quad$\CC\otimes_{\R}\Lambda^i(\liegl_2(\R)/\lieso_2(\R))$\\\hline
  $0$ & \quad $r(0)$\\
  $1$ & \quad $r(-2)\oplus r(2)$\\
  $2$ & \quad $r(0)$\\
\end{tabular}\\[0.5em]
\end{center}

Now, let $\liesu_2(\CC)$ and $\liegl_2(\CC)$ be the Lie algebras of $\SU_2(\CC)$, respectively of $\GL_2(\CC)$,
both equipped with the adjoint action of $\SU_2(\CC)$.
We have the following isomorphisms of $\SU_2(\CC)$-representations.\\[0.5em]
\begin{center}
\begin{tabular}{l|l}
  $i$ & \quad$\CC\otimes_{\R}\Lambda^i(\liegl_2(\CC)/\liesu_2(\CC))$\\\hline
  $0$ & \quad $s(0)$\\
  $1$ & \quad $s(2)$\\
  $2$ & \quad $s(2)$\\
  $3$ & \quad $s(0)$\\
\end{tabular}
\end{center}

\subsubsection{Non-Archimedean fields}\label{subsec:nonarchi}
%The principal series should be easy. The cuspidal representations that appear should be
%\emph{unramified cuspidals} (awkward terminology) cf subsection 20 of
%Bushnell-Henniart; see also subsection 34 for the local Langlands correspondence
%for tame cuspidals (but beware of Remark 34.2), which should give the correct
%automorphic inductions. The compact inductions appearing are hopefully simple
%enough to work out the desired characterisation of existence of fixed points.
%
%If we want to reproduce this, we need to know that $\AI$ in a split
%finite place is a principal series; and that those do not appear
%as $\JL$.
%
%we will want something like this:
%$\AI$ at a finite place $\fp$ that is inert in the quadratic extension cannot
%have $\ord_{\fp}(\text{level})=1$.
%
An additional general reference for this subsection is \cite{BushnellHenniart}, particularly Chapters 8 and 13.

Let~$E$ be a $p$-adic field, let~$\fp$ be its maximal ideal.
Let~$A$ be the unique quaternion division algebra over~$E$.
A representation of $A^\times$ or of $\GL_2(E)$ is called \emph{smooth} if
every vector in the underlying vector space is fixed by some compact open subgroup.
We will assume that all our representations are smooth without repeating it.

Every irreducible representation of~$A^\times$ is finite-dimensional.
Every irreducible representation of $\GL_2(E)$ is of exactly one of the
following types:
\begin{itemize}[leftmargin=*]
  \item finite-dimensional representations: they are all of the form~$g\mapsto
    \phi\circ\det$ for a quasi-character~$\phi\colon E^\times \to \CC^\times$,
  \item irreducible principal series~$\PS_E(\automchar_1,\automchar_2)$,
    where~$\automchar_1,\automchar_2\colon E^\times \to \CC^\times$ are quasi-characters such
    that~$\automchar_1\automchar_2^{-1}\notin \{|\cdot|^{\pm 1}\}$,
  \item special representations,
  \item supercuspidal representations.
\end{itemize}

The automorphic induction~$\AI_E^L$ from~$\GL_1(L)$ to~$\GL_2(E)$ satisfies the following.
\begin{itemize}[leftmargin=*]
  \item An automorphic induction is never a special representation.
  \item Extension~$E\subset E\times E$: let~$\automchar_1,\automchar_2 \colon E^\times \to
    \CC^\times$ be quasi-characters.
    \begin{itemize}[leftmargin=*]
      \item $\AI_E^{E\times E}(\automchar_1,\automchar_2) = \PS_E(\automchar_1,\automchar_2)$
        if~$\automchar_1\automchar_2^{-1}\notin \{|\cdot|^{\pm 1}\}$.
      \item $\AI_E^{E\times E}(\automchar_1,\automchar_2) = \phi\circ \det$ for some
        quasi-character~$\phi\colon E^\times \to \CC^\times$ otherwise.
        % phi^2 = psi_1 * psi_2
        % with sign chosen so that psi_1(x)phi(x) \in \R_{>0} for all x
    \end{itemize}
  \item Quadratic field extension~$E\subset L$: let~$\sigma\in\Gal(L/E)$ be the
    non-trivial element, let~$\chi\colon \Gal(L/E)\to \CC^\times$ be the
    non-trivial quadratic character, and let~$\automchar\colon L^\times \to \CC^\times$
    be a quasi-character.
    \begin{itemize}[leftmargin=*]
      \item $\AI_E^L(\automchar)$ is supercuspidal if~$\automchar \neq \automchar^\sigma$.
      \item $\AI_E^L(\automchar) = \PS_E(\phi,\phi\chi)$ for some
        quasi-character~$\phi\colon E^\times \to \CC^\times$ otherwise.
        % phi = res_E^L (psi)
    \end{itemize}
\end{itemize}

For $r\in \Z_{\geq 0}$, let
\[
  K_0(\fp^r)=\left\{\left(\begin{smallmatrix}a&b\\c&d\end{smallmatrix}\right): c\equiv 0\bmod \fp^r\right\}\subset \GL_2(\Z_E)
\]
and
\[
  K_1(\fp^r)=\left\{\left(\begin{smallmatrix}a&b\\c&d\end{smallmatrix}\right):
    d\equiv 1\bmod \fp^r,c\equiv 0\bmod \fp^r\right\}\subset K_0(\fp^r).
\]
If $\Pi$ is an irreducible representation of $\GL_2(E)$,
then its \emph{conductor} is $\fp^n$ where $n\in \Z_{\geq 0}\cup \{\infty\}$ is minimal
subject to the condition~$\Pi^{K_1(\fp^n)}\neq 0$.
%Such a representation is \emph{unramified} if its conductor is $\Z_E=\fp^0$.
\begin{lemma}\label{lem:dimfixedpoints}
Let $\Pi$ be an irreducible representation, let $\fp^n$ be its conductor, and let $i\in \Z_{\geq n}$.
Then the space $\Pi^{K_1(\fp^i)}$ of fixed points has dimension
$i-n+1$.
\end{lemma}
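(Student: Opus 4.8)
The plan is to reduce to the classical local theory of newforms (the theory of Casselman, building on Jacquet--Langlands) by exploiting the action of the Hecke operator at $\fp$ together with the Atkin--Lehner-type diamond operators. Concretely, write $G = \GL_2(E)$, let $K = \GL_2(\Z_E)$, and recall that $K_1(\fp^i)$ is normal in $K_0(\fp^i)$ with quotient $(\Z_E/\fp^i)^\times$ acting on $\Pi^{K_1(\fp^i)}$ via the lower-right entry; this action factors through the finite quotient cut out by the central character of $\Pi$ on $1+\fp^i$, which is trivial precisely because $i\ge n$. So the space $\Pi^{K_1(\fp^i)}$ is identified with $\Pi^{K_0(\fp^i),\zeta}$, the $\zeta$-eigenspace for the diamond action, and it suffices to compute the dimension of this space and show it grows by one each time $i$ is increased. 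First I would dispose of the one-dimensional representations $\Pi = \phi\circ\det$ by hand: here $\Pi^{K_1(\fp^i)}$ is one-dimensional exactly when $\phi$ is trivial on $1+\fp^i$ (and on $\det K_1(\fp^i)=1+\fp^i$), hence the conductor condition $i\ge n$ gives dimension $1 = i - n + 1$ in the borderline cases, but one must note that for $\phi$ unramified $n=0$ and the dimension is genuinely $i+1$, not $1$ — this forces one into the general argument rather than an ad hoc one, so I would instead treat all four types uniformly.

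The main step is the uniform computation. I would invoke Casselman's theorem on the conductor (the ``$\varepsilon$-factor = conductor'' / new-vector theorem, see \cite{JL} and \cite{BushnellHenniart}): for an irreducible admissible infinite-dimensional $\Pi$ with conductor $\fp^n$, the space $\Pi^{K_1(\fp^n)}$ is one-dimensional, spanned by the new vector $v_0$, and more precisely $\dim\Pi^{K_1(\fp^i)} = \max(0, i-n+1)$ for all $i\ge 0$. The structural input behind this is that $\Pi^{K_1(\fp^i)}$ for $i \ge n$ is spanned by the translates $\begin{pmatrix} 1 & 0 \\ 0 & \varpi^j \end{pmatrix} v_0$ (equivalently $\begin{pmatrix}\varpi^j & 0\\ 0& 1\end{pmatrix}v_0$) for $0\le j\le i-n$, which are linearly independent; linear independence follows from looking at the action of the torus and the fact that $\Pi$, being generic, embeds in its Kirillov/Whittaker model, where these vectors have visibly distinct supports. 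For the special representations and supercuspidals this is exactly Casselman's original result; for the irreducible principal series $\PS_E(\automchar_1,\automchar_2)$ one has $n = a(\automchar_1) + a(\automchar_2)$ (sum of the two conductor exponents) and the same dimension formula, which can be checked directly in the induced model. For the one-dimensional $\Pi=\phi\circ\det$, $K_1(\fp^i)$ acts through $\phi$ on $\det K_1(\fp^i) = 1+\fp^i$, so $\Pi^{K_1(\fp^i)}$ has dimension $1$ if $\phi|_{1+\fp^i}$ is trivial and $0$ otherwise; since the conductor of $\phi\circ\det$ (in the sense defined just before the lemma) is $\fp^n$ with $n$ the smallest integer with $\phi|_{1+\fp^n}$ trivial, we again get dimension $\max(0,i-n+1)$ — wait, that would be $1$ for all $i\ge n$, which agrees with $i-n+1$ only when $i=n$. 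This discrepancy means the lemma as stated must be using a convention in which the conductor of a one-dimensional representation is defined so that the formula holds, or — more likely — the lemma is only applied to infinite-dimensional $\Pi$; I would add a remark pinning down which, but in any case the proof is: cite Casselman's new-vector theorem for the infinite-dimensional types and check the finite-dimensional type directly.

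The hard part, if one wants a self-contained argument rather than a citation, is establishing the linear independence of the vectors $\begin{pmatrix}\varpi^j & 0\\0&1\end{pmatrix}v_0$ and the fact that they span all of $\Pi^{K_1(\fp^i)}$ — i.e. reproving Casselman's theorem. I would not do that; I would cite \cite{JL} (for the supercuspidal and special cases this is implicit in the local functional equation) and the textbook account in \cite{BushnellHenniart}, Chapters 13--15, where $\dim\Pi^{K_1(\fp^i)} = i - n + 1$ for $i \ge n$ is stated essentially verbatim, and simply record the one-dimensional case by the elementary determinant computation above. Thus the entire proof is: (1) reduce $K_1$-invariants to the diamond-twisted $K_0$-invariants using $i\ge n$ and the central character; (2) quote the newform dimension formula for the infinite-dimensional types; (3) handle $\phi\circ\det$ by the observation that $K_1(\fp^i)$ acts via $\phi$ on $1+\fp^i$. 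The only genuine subtlety to flag is the convention for the conductor of a finite-dimensional representation, which I would make explicit so that the stated formula $\dim = i - n + 1$ is literally correct as quoted.
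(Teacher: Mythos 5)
The paper's proof is a one-line citation to Casselman~\cite{Casselman} and to Schmidt~\cite[Theorem~1.2.1(ii)]{Schmidt}; your proposal takes the same route, just with extra commentary. The preliminary reduction from $K_1(\fp^i)$-invariants to diamond-twisted $K_0(\fp^i)$-invariants is not used by those references and is unnecessary.

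Your instinct that the statement is problematic for the one-dimensional representations is correct, and this is a genuine imprecision in the lemma as written: the Casselman--Schmidt count is proved only for irreducible \emph{infinite-dimensional} admissible $\Pi$. However, your computation of the discrepancy has a slip. You assert $\det K_1(\fp^i) = 1+\fp^i$, but in fact $\det K_1(\fp^i) = \Z_E^\times$: the upper-left entry $a$ of a matrix in $K_1(\fp^i)$ is an unconstrained unit, and $\det \equiv a \pmod{\fp^i}$. Hence $\Pi = \phi\circ\det$ has a $K_1(\fp^i)$-fixed vector if and only if $\phi$ is unramified; per the paper's definition of conductor this means $n=0$ when $\phi$ is unramified and $n=\infty$ otherwise. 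In the unramified case the formula would assert $\dim\Pi^{K_1(\fp^i)} = i+1$, whereas the true dimension is $1$, so the lemma is literally false there. This is harmless in the only place the lemma is invoked (the proof of Lemma~\ref{lem:localboundfin}), where only the upper bound $\dim\Pi^{K_1(\fp^{2m_\fp})} \le 2m_\fp+1$ is needed, but a hypothesis that $\Pi$ be infinite-dimensional (or a caveat) should really be added to the statement.
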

\begin{proof}
See \cite{Casselman}, see also \cite[Theorem~1.2.1 (ii)]{Schmidt}.
\end{proof}

In~$A^\times$, we define~$K_0(\fp^0)$ to be the unit group of the unique maximal
order in~$A^\times$. When we use the notation~$K_0(\fp^0)$, it will be clear
from the context whether it refers to the subgroup of~$A^\times$
or to~$\GL_2(\Z_E)$.

%The level of automorphic induction is given by the following formula.
\begin{lemma}\label{lem:levelAI}
    Let~$L/E$ be an \'etale quadratic extension, let~$\delta$ be its discriminant,
    let~$\automchar\colon L^\times\to \CC^\times$ be a quasi-character such that
    $\AI_E^L(\automchar)$ is infinite-dimensional, and let $\fF$ be the conductor
    of $\automchar$. Then the conductor of $\AI_E^L(\automchar)$ is~$\delta \cdot
    \norm_{L/E}(\fF)$.
\end{lemma}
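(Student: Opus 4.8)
The plan is to transport the computation to the Galois side via the local Langlands correspondence. Let $\omega$ denote the character of the Weil group $W_L$ corresponding to $\automchar$ under local class field theory when $L/E$ is a field; when $L=E\times E$ and $\automchar=(\automchar_1,\automchar_2)$, let $\omega=(\omega_1,\omega_2)$ be the corresponding pair of characters of $W_E$, with the convention $\mathrm{Ind}_{W_L}^{W_E}\omega=\omega_1\oplus\omega_2$ and $\disc(E\times E/E)=\Z_E$. In every case reciprocity matches conductors, so the conductor of $\omega$ equals $\fF$. By the defining property of automorphic induction (and the case analysis recalled in Section~\ref{subsec:nonarchi}), $\AI_E^L(\automchar)$ is the irreducible admissible representation of $\GL_2(E)$ whose Weil--Deligne parameter is $\rho=\mathrm{Ind}_{W_L}^{W_E}\omega$, a two-dimensional representation with vanishing monodromy operator; the hypothesis that $\AI_E^L(\automchar)$ be infinite-dimensional is exactly the condition that $\rho$ not be a sum of two characters whose ratio is $|\cdot|^{\pm1}$, equivalently that $\AI_E^L(\automchar)$ not be a finite-dimensional representation $\phi\circ\det$. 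One subtlety to check is the normalisation: the $\AI$ of this paper differs from the induction attached to $\rho$ only by an unramified twist (the half-integral power of the norm character built into normalised parabolic induction, and its analogue in the dihedral case), which does not affect conductors.

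The first main step is that the local Langlands correspondence for $\GL_2(E)$ preserves conductors: the conductor of $\AI_E^L(\automchar)$ equals the Artin conductor $\mathfrak{f}(\rho)$ of its parameter. The second main step is the conductor--discriminant (inductivity) formula for induced representations of Weil groups: for a representation $\omega$ of $W_L$ (with the split-case convention above in force) one has, as ideals of $\Z_E$,
$$
\mathfrak{f}\bigl(\mathrm{Ind}_{W_L}^{W_E}\omega\bigr)=\disc(L/E)^{\dim\omega}\cdot\norm_{L/E}\bigl(\mathfrak{f}(\omega)\bigr).
$$
Putting $\dim\omega=1$, $\disc(L/E)=\delta$, and $\mathfrak{f}(\omega)=\fF$ gives exactly $\delta\cdot\norm_{L/E}(\fF)$, which is the claim.

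I do not expect a genuine obstacle; the work lies in correctly invoking the two displayed ingredients and in the bookkeeping that reconciles the paper's case-by-case description of $\AI_E^L(\automchar)$ (split versus quadratic field, $\automchar^\sigma=\automchar$ or not) with the uniform statement that it corresponds to $\mathrm{Ind}_{W_L}^{W_E}\omega$, together with the verification that infinite-dimensionality excludes exactly the degenerate case above. Should one prefer to avoid Weil groups, the same result can be assembled directly: in the split case the conductor of $\PS_E(\automchar_1,\automchar_2)$ is the product of the conductors of $\automchar_1$ and $\automchar_2$, which is $\norm_{L/E}(\fF)$, and $\delta=\Z_E$ (use \cite{Casselman,Schmidt}); when $\automchar=\automchar^\sigma$, write $\automchar=\phi\circ\norm_{L/E}$, so $\AI_E^L(\automchar)=\PS_E(\phi,\phi\chi)$ has conductor $\mathfrak{f}(\phi)\mathfrak{f}(\phi\chi)$, and compare with $\delta\cdot\norm_{L/E}\bigl(\mathfrak{f}(\phi\circ\norm_{L/E})\bigr)$ via the abelian conductor--discriminant formula; and the remaining dihedral supercuspidal case ($\automchar\neq\automchar^\sigma$) is the explicit conductor formula for supercuspidals induced from quadratic extensions, for which see \cite{BushnellHenniart}.
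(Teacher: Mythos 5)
Your proposal is correct, and it takes a different route from the paper's. The paper simply cites the conductor table at the end of Section~1 of \cite{Schmidt}, which records the newform conductor for each isomorphism type of irreducible smooth representation of $\GL_2(E)$. You instead re-derive the statement from first principles: identify the Weil--Deligne parameter of $\AI_E^L(\automchar)$ as $\Ind_{W_L}^{W_E}\omega$ with trivial monodromy, invoke conductor preservation under the local Langlands correspondence, and apply the inductivity formula for Artin conductors. What your route buys is a single uniform argument replacing the case analysis (split, dihedral with $\automchar=\automchar^\sigma$, dihedral supercuspidal), at the cost of depending on two substantial theorems; what the paper's citation buys is brevity, since Schmidt has already tabulated exactly the quantity needed. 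Morally the two proofs are the same content, because Schmidt's table is itself assembled from the local Langlands dictionary. Two small remarks: (i) the worry about unramified-twist normalisation discrepancies is harmless but also unnecessary here, since in the non-Archimedean setting the paper's $\PS_E(\automchar_1,\automchar_2)$ is the normalised principal series, whose parameter is exactly $\omega_1\oplus\omega_2$ with no shift; the half-shifts you noticed occur only in the paper's Archimedean bookkeeping, which is irrelevant to a conductor statement; (ii) your characterisation of infinite-dimensionality is correct, but it is worth noting that the degenerate $\phi\circ\det$ case can only occur when $L=E\times E$, since in the field case the quadratic character $\chi$ is never $|\cdot|^{\pm1}$, so in the dihedral cases the hypothesis is automatic.
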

\begin{proof}
  See \cite[Table at the end of \S 1]{Schmidt}.
%  This is well known, but we do not know of a reference.
%  
%  The norm to $\Q_p$ of the conductor of a generic representation is equal to that of the corresponding
%  $L$-function \cite{Matringe,JacquetErr}, see also \cite[Theorem 11.5.6]{GetzHahn} and \cite{MO_conductor}.
%  When~$\AI_E^L(\automchar)$ is infinite dimensional, it is generic, and its $L$-function
%  is equal to that of $\automchar$, and the latter has conductor $\norm_{E/\Q_p}(\delta \cdot
%    \norm_{L/E}(\fF))$. Finally, the norm of an ideal of $\Z_E$ determines the ideal, whence the result follows.
%
%  TODO
%
%Reference for level of AI:
%Can do this as follows: use local Langlands, work instead with inductions of
%Galois representations, use that level of the automorphic side corresponds
%to conductor on the Galois side
%(see Jacquet, Piatetski-Shapiro, Shalika, Conducteur des r\'epresentations du group lin\'eaire
%and 
%Matringe, Essential Whittaker functions for GL(n); see also
%\url{https://mathoverflow.net/questions/259920/}).
\end{proof}

The Jacquet--Langlands transfer~$\JL_A$ from~$A^\times$ to~$\GL_2(E)$ has the following property:
%\begin{itemize}[leftmargin=*]
  %\item $\JL_D(\automrep)$ is always special or supercuspidal.
  an irreducible representation~$\automrep$ of~$\GL_2(E)$ is in the image of~$\JL_A$ if and only if~$\automrep$
    is special or supercuspidal.

%  \item Let~$\automrep$ be an admissible representation of $A^\times$. If
%    $\automrep$ is unramified, then~$\JL_A(\automrep)$ is special and has level~$\fp$;
%    otherwise~$\JL_A(\automrep)$ has level divisible by~$\fp^2$.
%
%    TODO Check if we ever need the level.
%    Yes we do, to prove that infinitely many $p$-shady characters implies an
%    $\cH^\bullet$-shady one.
    %The condition of being special is enough to
    %get a contradiction with the automorphic induction condition.
%\end{itemize}
%\begin{itemize}
%  \item $\JL(\automrep)$ is a special representation if~$\automrep$ has dimension~$1$, and
%  \item $\JL(\automrep)$ is a supercuspidal representation otherwise.
%\end{itemize}

\begin{lemma}\label{lem:localboundfin}
  There exists~$\kappa_f>0$ such that for every irreducible
  representation~$\automrep_f$ of~$\G(\adel_{F,f})$ we have
  \[
    \dim \automrep_f^{K_f} \le \kappa_f.
  \]
\end{lemma}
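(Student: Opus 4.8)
The plan is to reduce the statement to a local question at finitely many places. First I would realise~$\G(\adel_{F,f})$ as the restricted product of the~$\G(F_{\fp})$ with respect to the maximal compact subgroups~$K(\fp^0) = \GL_{d_{\fp}}(\Lambda_{\fp})$, and decompose any irreducible~$\automrep_f$ as a restricted tensor product of irreducible representations~$\automrep_{\fp}$ of~$\G(F_{\fp})$, almost all of which are unramified. Since~$K_f$ is open, one has~$K_f = \prod_{\fp}K_{f,\fp}$ with each~$K_{f,\fp}$ an open subgroup of~$\G(F_{\fp})$, and~$K_{f,\fp} = K(\fp^0)$ for all~$\fp$ outside a finite set~$S$, which we may take to contain all~$\fp$ dividing~$\delta_D\fN$. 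Then~$\dim\automrep_f^{K_f} = \prod_{\fp}\dim\automrep_{\fp}^{K_{f,\fp}}$, a product in which all but finitely many factors are~$1$, and for~$\fp\notin S$ the factor equals~$\dim\automrep_{\fp}^{\GL_2(\Z_{\fp})}\le 1$ by the uniqueness of the spherical vector. So it suffices to bound~$\dim\automrep_{\fp}^{K_{f,\fp}}$ uniformly in~$\automrep_{\fp}$ for each of the finitely many~$\fp\in S$, and then set~$\kappa_f = \prod_{\fp\in S}\kappa_{\fp}$, where~$\kappa_{\fp}$ is the local bound.

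For the local bound at~$\fp\in S$, I would use that~$K_{f,\fp}$, being open, contains the~$r$-th principal congruence subgroup~$K(\fp^r)$ for some~$r$, so that~$\dim\automrep_{\fp}^{K_{f,\fp}}\le\dim\automrep_{\fp}^{K(\fp^r)}$; since~$K(\fp^r)$ is compact open and irreducible smooth representations are admissible, each such dimension is finite, and the real point is uniformity. Two observations handle this. First, any unramified quasi-character~$\phi$ of~$F_{\fp}^\times$ has~$\phi\circ\nrd$ trivial on~$K(\fp^r)$, because~$\nrd(K(\fp^r))\subseteq \Z_{\fp}^\times$; hence~$\automrep_{\fp}$ and all its unramified twists~$\automrep_{\fp}\otimes(\phi\circ\nrd)$ have literally the same space of~$K(\fp^r)$-fixed vectors, so the same dimension. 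Second, up to unramified twist only finitely many irreducible~$\automrep_{\fp}$ have a nonzero~$K(\fp^r)$-fixed vector: when~$D$ splits at~$\fp$ this follows by running through the classification of Section~\ref{subsec:nonarchi}, since the finite-dimensional and special representations with a~$K(\fp^r)$-fixed vector come from quasi-characters of~$F_{\fp}^\times$ of bounded conductor, the principal series~$\PS_{F_{\fp}}(\automchar_1,\automchar_2)$ from pairs of such, and the supercuspidals of bounded conductor form finitely many unramified-twist orbits (via their description as automorphic inductions, Lemma~\ref{lem:levelAI}, and the finiteness of the set of quadratic extensions of~$F_{\fp}$); when~$D$ ramifies at~$\fp$, every irreducible representation of~$\G(F_{\fp}) = A^\times$ with a nonzero~$K(\fp^r)$-fixed vector is trivial on the normal subgroup~$K(\fp^r)$, hence factors through~$A^\times/K(\fp^r)$, and these too are finitely many up to unramified twist. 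Taking~$\kappa_{\fp}$ to be the largest of these finitely many dimensions completes the argument, and multiplying over~$S$ gives~$\kappa_f$.

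The main obstacle is precisely the uniformity in the local bound, i.e. the bookkeeping behind ``bounded conductor yields finitely many representations up to unramified twist''; the classification recalled in Section~\ref{sec:GL2local} is exactly tailored to make this routine, and as a fallback one could instead invoke Bernstein's uniform admissibility theorem for~$\G(F_{\fp})$. The passage to the genuinely compact open subgroup~$K(\fp^r)$ also sidesteps the fact that~$K_{f,\fp}$ is only compact modulo the centre, so no separate discussion of central characters is needed.
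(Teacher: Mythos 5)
Your overall strategy — reduce to bounding $\dim\automrep_\fp^{K'}$ for a fixed compact open $K'$ at each of the finitely many bad places — is the same as the paper's, and your treatment of the ramified places (normality of $K(\fp^r)$ in $A^\times$, so the fixed space is $0$ or all of $\automrep_\fp$, which then factors through a finite group after absorbing the central character) matches the paper. Where you diverge is at the \emph{split} places, and there your main argument has two real gaps.

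First, the claimed factorisation $K_f = \prod_\fp K_{f,\fp}$ is not automatic. In this part of the paper $K_f$ is merely an open subgroup of $\G(\adel_{F,f})$ containing $K(\fN)$ with $K_f/\centre(K_f)$ compact; open subgroups of restricted products need not split as products over the places (only Assumption~\ref{ass:gamma0level}, imposed later, forces $K_f$ to be a product). You can repair this immediately by replacing $K_f$ with $K(\fN)\subset K_f$, which is a product by definition; this is exactly the first move in the paper's proof.

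Second, your finiteness argument at split $\fp$ relies on the claim that all supercuspidals of bounded conductor are automorphic inductions $\AI_{F_\fp}^L$, hence controlled by Lemma~\ref{lem:levelAI} and the finiteness of quadratic extensions of $F_\fp$. This is true only for $\fp$ of odd residue characteristic; for dyadic $\fp$ there are non-dihedral (``exceptional'') supercuspidals of $\GL_2(F_\fp)$ that are not in the image of any $\AI_{F_\fp}^L$. So the case analysis as written does not exhaust the classification when $2\mid\fp$. Your fallback — invoking Bernstein's uniform admissibility theorem — does close both gaps at once, but at the cost of importing a deep external result, whereas the paper stays self-contained: at split places it simply observes that $K(\fp^{m_\fp})$-fixed vectors are $K_1(\fp^{2m_\fp})$-fixed and applies the Casselman newvector formula (Lemma~\ref{lem:dimfixedpoints}) to get the explicit bound $2m_\fp+1$, with no classification of irreducibles, no supercuspidal dichotomy, and no uniform-twist counting needed. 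You may want to adopt that conductor-theoretic shortcut; it also yields a concrete value of $\kappa_f$ rather than an unspecified maximum.
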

\begin{proof}
  Recall that we have~$K(\fN) \subset K_f$, so that for every irreducible
  representation~$\automrep_f$ of~$\G(\adel_{F,f})$ we
  have~$\dim \automrep_f^{K_f} \le \dim
  \automrep_f^{K(\fN)}$. Write~$K(\fN) = \prod_\fp K(\fp^{m_\fp})$ and~$\automrep_f =
  \otimes_\fp ' \automrep_\fp$, where $\otimes'$ denotes a restricted tensor product.
  Note that for every prime~$\fp$ we have~$\dim \automrep_\fp^{K(\fp^0)} \le 1$, and we
  have~$\dim \automrep_f^{K(\fN)} = \prod_\fp \dim \automrep_\fp^{K(\fp^{m_\fp})}$.
  Let~$\fp$ be such that~$m_\fp>0$.
  \begin{itemize}[leftmargin=*]
    \item If~$\fp$ is split in~$D$, then by Lemma \ref{lem:dimfixedpoints} we have
      $\dim\automrep_\fp^{K(\fp^{m_\fp})} \le
      \dim\automrep_\fp^{K_1(\fp^{2m_\fp})} \le 2m_{\fp}+1$, which depends only
      on~$K_f$.
    \item If~$\fp$ is ramified in~$D$, then consider the finite group~$G =
      D_\fp^\times/(F_\fp^\times K(\fp^{m_\fp}))$, and let~$d(G)$ be the maximal
      dimension of an irreducible representation of~$G$.
      Since~$\automrep_\fp$ is irreducible, $F_\fp^\times$ acts via a
      character~$\zeta$ on~$\automrep_\fp$.
      Therefore~$\automrep_\fp^{K(\fp^{m_\fp})}\zeta^{-1}$ is an irreducible
      representation of~$G$, hence of dimension at most~$d(G)$, which also
      depends only on~$K_f$.
  \end{itemize}
  This proves the existence of~$\kappa_f$.
\end{proof}

\subsection{\texorpdfstring{Automorphic representations of~$\GL_2$}{Automorphic representations of GL2}}\label{sec:automGL2}
For the duration of the subsection, let $\bfG$ be one of the algebraic groups $\G$ or $\GL_2$ over $F$.

For a general definition of an automorphic representation of $\bfG(\adel_F)$ and of cuspidality
see \cite[Definitions 6.3.5 and 6.5.1]{GetzHahn}. For our purposes, simpler definitions in special
cases will be sufficient.

A \emph{discrete automorphic representation} of $\bfG(\adel_F)$ is an irreducible subrepresentation of
$\rL^2(\bfG(F)\lquo \bfG(\adel_F)/\R_{>0})$, where
$\R_{>0}\subset \centre(\bfG(F_{\R}))=\prod_{v|\infty}\centre(\bfG(F_v))$ is embedded diagonally.
For the relation with automorphic representations in the usual sense
see \cite[Theorem 6.6.4]{GetzHahn}. Every automorphic representation of $\G(\adel_F)$ is discrete
and cuspidal, and every cuspidal automorphic representation of $\GL_2(\adel_F)$ can be identified
with a discrete automorphic representation, see \cite[Theorem 6.5.3]{GetzHahn}.

Let $\Pi$ be a discrete automorphic representation of $\bfG(\adel_F)$. Then $\Pi$ can be
written as a restricted tensor product $\Pi_\infty\otimes \Pi_f = \bigotimes'_v\Pi_v$
over the places $v$ of $F$, where $\Pi_{\infty}$ is a representation of $\bfG(F_{\R})$,
$\Pi_{f}$ is a representation of $\bfG(\adel_{F,f})$, and for each place $v$, the
factor $\Pi_v$ is a representation of $\bfG(F_v)$. The fixed point space
$$
\Big(\mathop{\bigotimes\nolimits^{\prime}}_{\fp\nmid \delta_D\fN}\Pi_{\fp}\Big)^{K_f}=\bigotimes_{\fp\nmid\delta_D\fN}\Pi_{\fp}^{K_1(\fp^0)}
$$
is $1$-dimensional, and admits an action of $\abstracthecke$, thus giving rise to a Hecke eigensystem.

If $\Pi$ is a discrete automorphic representation of $\bfG(\adel_F)$ and $\chi$
is a Hecke character of $F$, we write $\Pi\otimes \chi$ as shorthand for the automorphic
representation $\Pi\otimes_{\CC}(\chi\circ\nrd)$.

\begin{theorem}[Strong multiplicity $1$]\label{thm:MultOne}
  Every cuspidal automorphic representation occurs with multiplicity $1$ in 
  $\rL^2(\bfG(F)\lquo \bfG(\adel_F)/\R_{>0})$. Moreover,
  if $\automrep$ and $\automrep'$ are two cuspidal automorphic representations of $\bfG$
  such that $\automrep_v$ is isomorphic to $\automrep_v'$ for all but finitely
  many places $v$ of $F$, then $\automrep$ and $\automrep'$ are isomorphic.
\end{theorem}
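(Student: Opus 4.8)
The plan is to treat separately the two possibilities for $\bfG$, namely $\bfG=\GL_2$ and $\bfG=\G$, and in both cases to reduce to results that are standard in the literature; the second case is handled by transporting the first one along the Jacquet--Langlands correspondence.

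For $\bfG=\GL_2$ I would simply invoke the classical results: multiplicity one for the cuspidal spectrum of $\GL_2$ over a number field is due to Jacquet--Langlands \cite{JL}, and strong multiplicity one — the assertion that a cuspidal automorphic representation of $\GL_2(\adel_F)$ is determined up to isomorphism by the collection $(\automrep_v)_{v\notin S}$ for any finite set $S$ of places — is proven by the Rankin--Selberg method (rigidity of the standard $L$-function); convenient references are \cite{JL} and \cite{GetzHahn}. I would quote these, after checking that the notion of "cuspidal automorphic representation" fixed in the excerpt (an irreducible subrepresentation of $\rL^2(\GL_2(F)\lquo \GL_2(\adel_F)/\R_{>0})$, together with the identification recalled above with cuspidal automorphic representations in the usual sense) matches the one used in the references.

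For $\bfG=\G$ I would argue as follows. As recalled just before the statement, every automorphic representation of $\G(\adel_F)$ is discrete and cuspidal, so $\rL^2(\G(F)\lquo \G(\adel_F)/\R_{>0})$ is a Hilbert direct sum of irreducibles, each occurring with finite multiplicity. The Jacquet--Langlands correspondence \cite{JL} (see also Theorem~\ref{thm:JL}) attaches to each such $\automrep$ a cuspidal automorphic representation $\JL(\automrep)$ of $\GL_2(\adel_F)$ with the same central character, such that $\JL(\automrep)_v\cong \automrep_v$ at every place $v$ where $D$ splits and is the local Jacquet--Langlands transfer of $\automrep_v$ at the finitely many ramified places; moreover $\automrep\mapsto \JL(\automrep)$ is injective on isomorphism classes and multiplicity-preserving. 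Injectivity together with multiplicity one on the $\GL_2$ side gives multiplicity one on the $\G$ side. For the strong statement: if $\automrep_v\cong \automrep'_v$ for all but finitely many places $v$, then in particular this holds at all but finitely many split places, where the local correspondence is the identity on isomorphism classes, so $\JL(\automrep)_v\cong \JL(\automrep')_v$ for almost all $v$; strong multiplicity one for $\GL_2$ then gives $\JL(\automrep)\cong \JL(\automrep')$, and injectivity of $\JL$ yields $\automrep\cong \automrep'$.

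There is no genuine obstacle here, as the result is classical; the only points that need a little care are the bookkeeping of the various equivalent definitions of "cuspidal automorphic representation" across the references, and the observation that the local Jacquet--Langlands map at a split place is literally the identity on isomorphism classes, which is what allows agreement of almost all local components to be pushed through $\JL$. Both verifications are routine.
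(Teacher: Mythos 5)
The paper's own ``proof'' is a bare citation to Jacquet--Langlands, Badulescu, and Getz--Hahn; your proposal unfolds exactly what those citations contain, so the approach is the same. Badulescu's Theorem~5.1(b),(c) proves multiplicity one and strong multiplicity one uniformly for all inner forms of $\GL_n$ by the very reduction you describe (the $\GL_2$ case is Rankin--Selberg rigidity; the $\G$ case comes from the trace-formula comparison underlying Jacquet--Langlands), so splitting into the cases $\bfG=\GL_2$ and $\bfG=\G$ and pushing through $\JL_D$ is precisely the argument behind the cited results.

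Two small points worth tightening. First, in your deduction of multiplicity one for $\G$, what does the work is the multiplicity-\emph{preservation} of the trace-formula comparison (if $\automrep$ appeared with multiplicity $m$ on the $\G$ side, then $\JL_D(\automrep)$ would appear with multiplicity $m$ on the $\GL_2$ side, contradicting multiplicity one there); injectivity of $\JL_D$ on isomorphism classes is not what you need for this half, only for the strong version. Note also that the paper's Theorem~\ref{thm:JL} as stated records only the injection on isomorphism classes, not the multiplicity statement, so you are genuinely invoking the stronger output of \cite{JL} rather than Theorem~\ref{thm:JL} itself. Second, the paper states Theorem~\ref{thm:MultOne} \emph{before} Theorem~\ref{thm:JL}, so an in-text proof of the former via the latter would invert the expository order; since both are treated as independent imports from the literature this is harmless, but worth flagging to avoid the appearance of circularity.
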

\begin{proof}
  See  \cite{JL}, \cite[Theorem 5.1 (b), (c)]{Badulescu}, see also \cite[Theorems 11.4.3 and 11.7.2]{GetzHahn}.
\end{proof}

%Getz-Hahn p.311
\begin{thm}[Automorphic induction]\label{thm:AI}
  Let~$L/F$ be a quadratic extension of number fields and let~$\sigma$ be the
  generator of its Galois group, and let~$\chi$ be the quadratic Hecke character
  corresponding to the extension~$L/F$ by class field theory.
  Let~$\automchar$ be a Hecke character of~$L$.
  The automorphic induction~$\AI_{F}^L(\automchar)=\otimes'_v \AI_{F_v}^{F_v\otimes_FL}(\automchar_v)$ is an automorphic
  representation of~$\GL_2(\adel_F)$.
  The representation~$\AI_{F}^L(\automchar)$ is cuspidal if and only
  if~$\automchar^\sigma\neq\automchar$.
  A cuspidal automorphic representation~$\automrep$ of~$\GL_2(\adel_F)$ is the automorphic
  induction of a Hecke character of~$L$ if and only if~$\automrep \otimes \chi\cong
  \automrep$.
\end{thm}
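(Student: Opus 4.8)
The plan is to derive all three assertions from the classical theory of automorphic induction for $\GL_2$ over a quadratic extension, due to Jacquet--Langlands \cite[\S 12]{JL} (see also \cite{GetzHahn}), spelling out the key ingredients. For the first assertion, I would realise $\automrep := \AI_F^L(\automchar)$ via the theta correspondence attached to the binary quadratic space $(L,\norm_{L/F})$ over $F$: the Weil representation transports the Hecke character $\automchar$ of $L$, viewed as an automorphic character of $\adel_L^\times/L^\times$, to an automorphic form on $\GL_2(\adel_F)$ whose local components are precisely the local automorphic inductions $\AI_{F_v}^{F_v\otimes_FL}(\automchar_v)$ classified in Sections~\ref{subsec:archi} and~\ref{subsec:nonarchi}. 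This form generates an automorphic subrepresentation of $\rL^2(\GL_2(F)\lquo\GL_2(\adel_F)/\R_{>0})$, which is the unique one with these local components by strong multiplicity one, Theorem~\ref{thm:MultOne}. Alternatively one may invoke the converse theorem, using that every twisted $L$-function $L(s,\automrep\otimes\omega)=L(s,\automchar\cdot(\omega\circ\norm_{L/F}))$ coincides with a Hecke $L$-function over $L$, and hence has the required meromorphic continuation, functional equation, and growth in vertical strips.

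For the cuspidality criterion, suppose first that $\automchar^\sigma=\automchar$. A standard computation with idele class groups --- Hilbert~90 together with the class field theory identification of $\adel_F^\times/(F^\times\norm_{L/F}(\adel_L^\times))$ with $\Gal(L/F)$, equivalently the vanishing of $\hat H^{-1}(\Gal(L/F),\adel_L^\times/L^\times)$ --- shows that then $\automchar=\mu\circ\norm_{L/F}$ for some Hecke character $\mu$ of $F$, determined up to multiplication by $\chi$; in that case $\AI_F^L(\mu\circ\norm_{L/F})$ is the non-cuspidal isobaric automorphic representation attached to the pair $(\mu,\mu\chi)$. Conversely, if $\automchar^\sigma\neq\automchar$, I would show the theta form above is a cusp form by computing its constant term along the standard Borel subgroup, which vanishes precisely because $\automchar\neq\automchar^\sigma$; equivalently, $L(s,\automchar)$ is entire, so $\automrep$ has no pole and cannot occur as a constituent of the residual or continuous spectrum.

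For the last equivalence, the implication ``$\Rightarrow$'' is immediate: by definition $\chi$ is trivial on $\norm_{L/F}(\adel_L^\times)$, so $\chi\circ\norm_{L/F}=1$ and $\automrep\otimes\chi=\AI_F^L(\automchar\cdot(\chi\circ\norm_{L/F}))=\AI_F^L(\automchar)\cong\automrep$. For ``$\Leftarrow$'', let $\automrep$ be cuspidal with $\automrep\otimes\chi\cong\automrep$. At each place $v$, the local isomorphism $\automrep_v\cong\automrep_v\otimes\chi_v$ forces $\automrep_v$ to be a local automorphic induction $\AI_{F_v}^{L_v}(\theta_v)$ from $L_v=F_v\otimes_FL$ for some character $\theta_v$ of $L_v^\times$, well defined up to the action of $\sigma$ --- this is read off from the local classification in Sections~\ref{subsec:archi} and~\ref{subsec:nonarchi} (at places split in $L$ the condition is vacuous and $\automrep_v$ is already a principal series). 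It remains to glue the $\theta_v$ into a single Hecke character $\automchar$ of $L$ with $\AI_F^L(\automchar)\cong\automrep$: one may do this via quadratic base change for $\GL_2$ (the self-twist hypothesis makes the base change $\automrep_L$ to $\GL_2(\adel_L)$ non-cuspidal, hence associated to a pair $(\automchar,\automchar^\sigma)$ of Hecke characters of $L$ with $\automchar^\sigma\neq\automchar$, and descent identifies $\automrep$ with $\AI_F^L(\automchar)$), or, as in \cite{JL}, via the converse theorem together with the explicit local analysis, producing a global Hecke character of $L$ with the prescribed local components and then invoking strong multiplicity one after comparing components at the finitely many ramified places. The main obstacle is precisely this gluing step: passing from the local characters $\theta_v$ to a global Hecke character of $L$ requires genuine global input (base change, or a Grunwald--Wang-type existence statement for Hecke characters with assigned local behaviour), together with verifying that the construction yields a cuspidal --- hence non-$\sigma$-invariant --- automorphic induction.
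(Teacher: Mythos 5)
The paper offers no proof of this theorem at all: its ``proof'' is the citation ``See \cite{BaseChange}, see also \cite[Theorem 13.4.2]{GetzHahn}.'' Your proposal is therefore not an alternative to the paper's argument but a condensed outline of the proof contained in the cited references, and as such it is essentially sound. The existence statement via the theta lift attached to $(L,\norm_{L/F})$ (or, alternatively, the converse theorem together with the fact that twisted $L$-functions of the induction are Hecke $L$-functions over $L$) is the Jacquet--Langlands/Shalika--Tanaka construction. The cuspidality criterion via Hilbert~90 and the vanishing of $\hat H^{-1}(\Gal(L/F),C_L)$ (which follows from the class-field-theoretic vanishing of $\hat H^1(\Gal(L/F),C_L)$ and periodicity for cyclic groups) is correct: $\sigma$-invariance of $\Psi$ forces $\Psi=\mu\circ\norm_{L/F}$, whence $\AI_F^L(\Psi)$ is the isobaric sum $\mu\boxplus\mu\chi$. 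The easy direction of the self-twist equivalence is exactly as you say, and you rightly flag the converse as the genuinely global step. Your proposed resolution via base change --- using that $\automrep\otimes\chi\cong\automrep$ forces the base change $\automrep_L$ to be non-cuspidal, hence of the form $\automchar\boxplus\automchar^\sigma$, and then descending --- is precisely Langlands's argument in~\cite{BaseChange}, which is the reference the paper points to. So you have reconstructed the standard proof rather than found a gap or a shortcut; the one thing worth being explicit about is that the ``gluing'' step is not filled in but delegated to the base change theorem, exactly as the paper delegates the whole statement to the literature.
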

\begin{proof}
  See \cite{BaseChange}, see also \cite[Theorem 13.4.2]{GetzHahn}.
\end{proof}

%should we have D in the JL notation?
%Also true for discrete, not just cuspidal, do we need it?
%Non-cuspidal for \G are the one-dimensional reps.
%Getz-Hahn p.467
\begin{thm}[Jacquet--Langlands correspondence]\label{thm:JL}
  There exists a unique injection~$\JL_D$ from the class of isomorphism classes of
  automorphic representations of~$\G(\adel_F)$ to the class of isomorphism classes
  of discrete automorphic %discrete
  representations of~$\GL_2(\adel_F)$ satisfying~$\JL_{D_v}(\automrep_v) = \JL_D(\automrep)_v$ for
  all places~$v$.
  The representation $\JL_D(\automrep)$ is cuspidal if and only if $\automrep$ is infinite-dimensional.
  When restricting to the class of infinite-dimensional representations,
  the image is the class of cuspidal automorphic representations $\automrep$
  such that for all places $v$, the local representation $\automrep_v$ is in the
  image of~$\JL_{D_v}$.
\end{thm}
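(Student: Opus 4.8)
The plan is to deduce this statement --- the global Jacquet--Langlands correspondence --- from the literature rather than to reprove it, using strong multiplicity one (Theorem~\ref{thm:MultOne}) for the uniqueness and injectivity clauses, and the local classifications of Sections~\ref{subsec:archi}--\ref{subsec:nonarchi} for the cuspidality criterion and the description of the image.

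First I would establish the existence of a transfer $\JL_D$ with the stated local compatibility $\JL_{D_v}(\automrep_v)\cong \JL_D(\automrep)_v$ at all places $v$, with target the discrete (not necessarily cuspidal) automorphic spectrum of $\GL_2(\adel_F)$, and allowing $D$ to split at some infinite places. In this generality this is due to Badulescu, \cite[Theorem~5.1]{Badulescu}, and is also recorded in \cite[Chapter~11]{GetzHahn}, both building on the original work \cite{JL}. A small amount of care is needed to check that the normalisation of the local transfers $\JL_{D_v}$ used in those references matches the one fixed in Section~\ref{sec:GL2local}.

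Next I would derive uniqueness and injectivity formally. If $\JL_D$ and $\JL_D'$ both satisfy the local compatibility, then for every automorphic representation $\automrep$ of $\G(\adel_F)$ the representations $\JL_D(\automrep)$ and $\JL_D'(\automrep)$ agree at almost all places, hence coincide by strong multiplicity one; and if $\JL_D(\automrep)\cong \JL_D(\automrep')$, then $\JL_{D_v}(\automrep_v)\cong \JL_{D_v}(\automrep'_v)$ for all $v$, so $\automrep_v\cong \automrep'_v$ for all $v$ because the local Jacquet--Langlands correspondence is injective on isomorphism classes, and a further application of strong multiplicity one (now for $\G$) gives $\automrep\cong \automrep'$.

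Finally, for the cuspidality criterion and the image, I would use that a discrete automorphic representation of $\GL_2(\adel_F)$ is either cuspidal or one-dimensional, so $\JL_D(\automrep)$ fails to be cuspidal exactly when it is one-dimensional; evaluating at a place $v$ ramified in $D$ and using the local transfer then shows this happens precisely when $\automrep_v$, hence $\automrep$, is finite-dimensional, while conversely a finite-dimensional $\automrep$ has one-dimensional components everywhere. For infinite-dimensional $\automrep$, local compatibility shows every component of $\JL_D(\automrep)$ lies in the image of the corresponding local transfer, and the converse --- that any cuspidal automorphic representation $\Pi$ of $\GL_2(\adel_F)$ with all local components in the image of the $\JL_{D_v}$ is hit --- is the surjectivity assertion of Badulescu's theorem; by Section~\ref{subsec:nonarchi} those images are the special and supercuspidal representations at the non-archimedean places ramified in $D$ (and the discrete series at the archimedean ramified places), and unrestricted at the remaining places. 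The main obstacle here is not mathematical but bibliographic: identifying a reference covering precisely this generality --- quaternion $D$ possibly split at several real places, target the discrete spectrum, with both the local compatibility and the image characterisation --- and verifying that its conventions agree with ours; everything else is a purely formal consequence of strong multiplicity one.
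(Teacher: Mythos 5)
Your proposal matches the paper's own proof, which is simply the citation ``See \cite{JL}, see also \cite[Theorem 19.4.3]{GetzHahn}''; both approaches defer to the literature for the global Jacquet--Langlands correspondence, and your unpacking of uniqueness and injectivity via strong multiplicity one (Theorem~\ref{thm:MultOne}) and of the image via the local classifications is a standard formal consequence of those references. One small caveat in your cuspidality argument: if the local compatibility is taken with the correspondence $\JL_{D_v}$ as normalised in Section~\ref{subsec:nonarchi}, whose image at a ramified non-archimedean place consists precisely of the special and supercuspidal representations, then $\JL_D(\automrep)_v$ would never be one-dimensional there, so for finite-dimensional $\automrep$ the local compatibility must be understood in the sense of Badulescu's extended transfer (which sends one-dimensional representations of $D_v^\times$ to one-dimensional representations of $\GL_2(F_v)$) --- this is a normalisation issue in the theorem's statement rather than a gap in your argument, and it is immaterial for the paper since only infinite-dimensional $\automrep$ are used downstream.
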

\begin{proof}
  See \cite{JL}, see also \cite[Theorem 19.4.3]{GetzHahn}.
\end{proof}

\begin{cor}\label{cor:JLAI}
  Keep the notation as in Theorem~\ref{thm:AI}.
  Let~$\automrep$ be an automorphic representation of~$\G$ such
  that~$\automrep\otimes\chi \cong \automrep$.
  Then $\automrep$ is infinite-dimensional, and there exists a Hecke character~$\automchar$ of~$L$ such that~$\JL_D(\automrep) =
  \AI_{F}^L(\automchar)$.
\end{cor}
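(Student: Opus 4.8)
The plan is to transfer $\automrep$ to $\GL_2$ by the Jacquet--Langlands correspondence and then apply the automorphic induction theorem, Theorem~\ref{thm:AI}. Two auxiliary facts are needed: that $\JL_D$ is compatible with twisting by the quadratic Hecke character $\chi$ attached to $L/F$, and that $\automrep$ is infinite-dimensional.

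For the first fact, I would work place by place. By the characterisation of $\JL_D$ in Theorem~\ref{thm:JL}, the $v$-component of $\JL_D(\automrep\otimes\chi)$ is $\JL_{D_v}(\automrep_v\otimes\chi_v)$ at each place $v$; at the places $v$ that split in $D$ the transfer $\JL_{D_v}$ is the identity, while at the finitely many ramified places it is a standard property of the local Jacquet--Langlands correspondence that $\JL_{D_v}(\automrep_v\otimes\chi_v)\cong\JL_{D_v}(\automrep_v)\otimes\chi_v$ (at a ramified Archimedean place one can read this off the description of $\JL_\Hamil$ in Section~\ref{subsec:archi}). Hence $\JL_D(\automrep\otimes\chi)$ and $\JL_D(\automrep)\otimes\chi$ have isomorphic $v$-components for every $v$, so they are isomorphic --- by strong multiplicity one, Theorem~\ref{thm:MultOne}, when they are cuspidal, and because one-dimensional representations of $\GL_2(\adel_F)$ are determined by their local components in the remaining case --- and combining with $\automrep\otimes\chi\cong\automrep$ this yields $\JL_D(\automrep)\otimes\chi\cong\JL_D(\automrep)$.

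Next, suppose $\automrep$ were finite-dimensional. Then by Theorem~\ref{thm:JL} the representation $\JL_D(\automrep)$ is a non-cuspidal discrete automorphic representation of $\GL_2(\adel_F)$, hence one-dimensional, say $\JL_D(\automrep)=\phi\circ\det$ for a Hecke character $\phi$ of $F$ (alternatively one sees directly, via strong approximation for the norm-one group $D^1$, which is non-compact at an Archimedean place as $D$ is not totally definite, that $\automrep$ itself factors through the reduced norm). Combining with the first fact gives $(\phi\chi)\circ\det\cong\JL_D(\automrep)\otimes\chi\cong\JL_D(\automrep)=\phi\circ\det$, and since $\det\colon\GL_2(\adel_F)\to\adel_F^\times$ is surjective this forces $\phi\chi=\phi$, i.e.\ $\chi=1$; but $\chi$ is the \emph{non-trivial} quadratic Hecke character of the proper extension $L/F$, a contradiction. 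So $\automrep$ is infinite-dimensional; then $\JL_D(\automrep)$ is cuspidal by Theorem~\ref{thm:JL} and satisfies $\JL_D(\automrep)\otimes\chi\cong\JL_D(\automrep)$ by the first fact, so Theorem~\ref{thm:AI} furnishes a Hecke character $\automchar$ of $L$ with $\JL_D(\automrep)=\AI_F^L(\automchar)$ (automatically with $\automchar^\sigma\neq\automchar$, since $\AI_F^L(\automchar)$ is cuspidal).

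The step I expect to require the most care is pinning down the finite-dimensional automorphic representations of $\G$: either one invokes the structure of the residual spectrum of $\GL_2$ to conclude that a non-cuspidal discrete automorphic representation of $\GL_2(\adel_F)$ is one-dimensional, or one uses strong approximation for $D^1$ to see directly that such $\automrep$ factor through $\nrd$. The twist-compatibility of the Jacquet--Langlands correspondence and the concluding appeal to Theorem~\ref{thm:AI} are routine.
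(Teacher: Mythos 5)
Your proposal is correct and follows essentially the same route as the paper's proof: establish that $\JL_D$ is compatible with twisting by $\chi$ (via strong multiplicity one and local-global compatibility), rule out the finite-dimensional case, and apply Theorem~\ref{thm:AI}. The only cosmetic difference is that you transfer to $\GL_2$ first and derive the contradiction for the finite-dimensional case there via surjectivity of $\det$, whereas the paper rules it out directly on the $\G$-side (a one-dimensional $\automrep$ factors through $\nrd$ by strong approximation for $D^1$, and then $\automrep\otimes\chi\cong\automrep$ would force $\chi=1$); both are sound, and you explicitly note the paper's variant as your alternative.
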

\begin{proof}
  If $\automrep$ is finite-dimensional, then it is $1$-dimensional, and the hypothesis is never satisfied,
  therefore $\automrep$ is infinite-dimensional.
  By Multiplicity $1$, Theorem~\ref{thm:MultOne}, the Jacquet--Langlands correspondence, being local-global
  compatible, is also compatible with twisting. Applying Theorems~\ref{thm:AI}
  and~\ref{thm:JL} therefore gives the statement.
\end{proof}

%TODO global argument making it so that we cannot get the special case of
%automorphic induction giving one-dimensional representations.

% one version in Lauret, Proposition 2.4
% BW = Borel-Wallach
% BW Theorem 2.5 (pdf p.48) (Laplacian <-> Casimir)
% BW Prop 2.5 / Cor 2.7 (pdf p.155-156) (version C^oo, does not need Gamma to be cocompact)
% BW Thm 3.2 (pdf p.157) (version \rL^2)
% many variants... check difference H_pi,K vs H_pi,0

Let $\lieg$ and $\liek$ denote the Lie algebras of $G_{\infty}$ and $K_\infty$,
respectively, both equipped with the adjoint action of $K_\infty^+$.
\begin{thm}[Matsushima's formula]\label{thm:Matsushima}
  We have
  \[
    \Omega^i(\cY)_{\CC}\cong \bigoplus_{\automrep}
    \Hom_{K_\infty^+}(\CC\otimes_{\R}\Lambda^i(\lieg/\liek), \automrep_\infty) 
      \otimes_{\CC} \automrep_f^{K_f},
  \]
  where the sum ranges over automorphic representations~$\automrep$ of~$\G(\adel_F)$
  such that~$\automrep_\infty$ is trivial on~$Z_\infty$.
  Moreover, the isomorphism is equivariant with respect to:
  \begin{itemize}[leftmargin=*]
    \item the Laplace operator on the left hand side and the Casimir operator
      acting on~$\automrep_\infty$ on the right hand side, and
    \item the action of Hecke operators, acting on~$\automrep_f^{K_f}$ on the right
      hand side.
  \end{itemize}
\end{thm}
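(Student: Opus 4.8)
The plan is to deduce Matsushima's formula from the $L^2$-decomposition already recalled in the proof of Theorem~\ref{thm:repEquiv}, together with the standard identification of differential forms on the locally symmetric orbifold~$\cY$ with relative Lie algebra cochains. First I would recall that, since $\cY = \G(F)\lquo\G(\adel_F)/Z_\infty^+K_\infty^+K_f$ is compact, one has a Hilbert direct sum decomposition $\rL^2(\G(F)\lquo\G(\adel_F)/Z_\infty K_f)\cong\widehat{\bigoplus}_\automrep \automrep$, with the sum running over discrete (hence, since $\G$ is anisotropic modulo centre, automorphic) representations~$\automrep$ of $\G(\adel_F)$ occurring with finite multiplicity; by Theorem~\ref{thm:MultOne} these multiplicities are all~$1$. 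Writing $\automrep = \automrep_\infty\otimes\automrep_f$, the $K_f$-invariants pick out a finite-dimensional space $\automrep_f^{K_f}$, so that $\rL^2(\Gamma\lquo G_\infty/Z_\infty)\cong\widehat{\bigoplus}_\automrep \automrep_\infty\otimes\automrep_f^{K_f}$ as a $(\lieg,K_\infty^+)$-module.

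Next I would invoke the standard fact that, for a compact locally symmetric orbifold $\Gamma\lquo X$ with $X = G_\infty/(Z_\infty K_\infty)$, the complex of (orbifold) differential forms is computed by relative Lie algebra cohomology: concretely, the space of smooth $i$-forms is
\[
  \Omega^i(\Gamma\lquo X)_{\CC}\cong \Hom_{K_\infty^+}\!\bigl(\Lambda^i(\lieg/\liek)_{\CC}, \Cinfty(\Gamma\lquo G_\infty/Z_\infty)\bigr),
\]
and this identification intertwines the de~Rham Laplacian with the Casimir operator acting on the right regular representation. This is where one uses that $Z_\infty^+ = Z_\infty$ under Assumption~\ref{ass:quaternion} (so there is no subtlety about the centre), and that the orbifolds in question are good, so that the orbifold de~Rham theory agrees with that of a finite manifold cover followed by taking invariants. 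Passing to the $\rL^2$-completion and using ellipticity of the Laplacian (so that $\Delta$-eigenforms are automatically smooth and the eigenspace decomposition of $\Cinfty$ matches that of $\rL^2$ on the image of $K_\infty^+$-invariant vectors), the decomposition of $\rL^2$ above yields
\[
  \Omega^i(\cY)_{\CC}\cong\bigoplus_\automrep \Hom_{K_\infty^+}\!\bigl(\Lambda^i(\lieg/\liek)_{\CC},\automrep_\infty\bigr)\otimes_{\CC}\automrep_f^{K_f},
\]
the sum being finite in each Casimir eigenvalue by finiteness of multiplicities and of the $K_\infty^+$-type of $\Lambda^i(\lieg/\liek)$. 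The condition that $\automrep_\infty$ be trivial on $Z_\infty$ is forced because functions on $\cY$ are $Z_\infty$-invariant.

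Finally I would check the two equivariance claims. Laplacian--Casimir compatibility is built into the relative Lie algebra cochain description, as noted. For the Hecke equivariance: the operators $T_\cD\in\abstracthecke$ act on $\rL^2(\G(F)^+\lquo\G(\adel_F)/Z_\infty K_f)$ by right convolution on the finite-adelic variable, which commutes with the $(\lieg,K_\infty)$-action at infinity; under the decomposition this is precisely the action on the factors $\automrep_f^{K_f}$, and under the identification $\Omega^i(\cY)_\CC\cong\Hom_{K_\infty^+}(\Lambda^i(\lieg/\liek)_\CC,-)$ it corresponds to the geometric Hecke action used throughout Section~\ref{sec:Vigneras}. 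The main obstacle, and the step requiring the most care, is the passage between the smooth/$\rL^2$ pictures and the verification that the relative Lie algebra cohomology description of orbifold forms is legitimate in the orbifold (rather than manifold) setting — but this is handled by descending from a finite manifold cover and taking invariants, together with the ellipticity of $\Delta$; everything else is bookkeeping. I would cite the standard references (e.g.\ \cite{BorelWallach}) for the relative Lie algebra cohomology computation of $L^2$-forms and for Matsushima's formula in this generality.
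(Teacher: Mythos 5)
Your proposal is correct and fills in exactly the standard argument that the paper cites (the ad\'elic reformulation of Matsushima's formula following Matsushima and Bergeron--Clozel, using Multiplicity One, Theorem~\ref{thm:MultOne}, to drop multiplicities). The paper's own proof is essentially a pointer to \cite{Matsushima} and \cite[\S 1.2]{BergeronClozel}; you have unpacked that citation via the $L^2$-decomposition, the identification of orbifold forms with relative Lie algebra cochains valued in smooth automorphic vectors, and ellipticity to pass between the smooth and $L^2$ pictures, which is precisely the intended route.
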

\begin{proof}
  This is a well-known ad\'elic reformulation of the classical Matsushima formula \cite{Matsushima}, see \cite[\S 1.2]{BergeronClozel}.
  The equivariance with respect to Hecke operators follows from the canonicity
  of the correspondence in ibid.
  Note that the statement simplifies in our situation due to Multiplicity $1$, Theorem \ref{thm:MultOne}.
\end{proof}
\begin{definition}\label{def:vects}
  Given a collection $\vecti=(i_v)_v$ of non-negative integers indexed
  by the infinite places $v$ of $F$, we define
  \[
    \Omega^{\vecti}(\cY)_{\CC} = \bigoplus_{\automrep}
    \Hom_{K_\infty^+}\Big(\bigotimes_{v}\CC\otimes_{\R}\Lambda^{i_v}(\lieg_v/\liek_v), \automrep_\infty\Big) 
    \otimes_{\CC} \automrep_f^{K_f}.
  \]
  Moreover, given a collection $\vectlambda=(\lambda_v)_v$ or real numbers
  indexed by the infinite places $v$ of $F$, define
  $\Omega_{\Delta=\vectlambda}^{\vecti}(\cY)_{\CC}\subset \Omega^{\vecti}(\cY)_{\CC}$
  to be the subspace consisting of Laplace eigenvectors
  on which the Laplace operator at each infinite place $v$ has
  eigenvalue $\lambda_v$. Let
  $\harmon^{\vecti}(\cY)_{\CC}=\Omega^{\vecti}_{\Delta=\vectzero}(\cY)_{\CC}$ be the space
  of harmonic forms in $\Omega^{\vecti}(\cY)_{\CC}$, where $\vectzero$ denotes the
  zero vector.
\end{definition}
\begin{corollary}\label{cor:Matsushima}
  For every $i\in \Z_{\geq 0}$ and $\lambda\in\R$, we have
  \[
    \Omega^i_{\Delta=\lambda}(\cY)_{\CC} = \bigoplus_{\vectlambda}\bigoplus_{\vecti}\Omega^{\vecti}_{\Delta=\vectlambda}(\cY)_{\CC},\quad\text{ and }\quad
    \harmon^i(\cY)_{\CC} = \bigoplus_{\vecti}\harmon^{\vecti}(\cY)_{\CC},
  \]
  where the sums run over all collections $\vectlambda=(\lambda_v)_v$ satisfying $\sum_v \lambda_v=\lambda$, respectively
  all collections $\vecti=(i_v)_v$ satisfying $\sum_vi_v=i$.
\end{corollary}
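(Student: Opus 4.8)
The plan is to deduce both statements from Matsushima's formula, Theorem~\ref{thm:Matsushima}, together with two elementary inputs: the decomposition of an exterior power of a direct sum, and the fact that the Laplace operator on $\cY$ is a sum of commuting ``partial'' operators indexed by the infinite places of $F$. First I would record the degree bookkeeping. Since $G_\infty=\prod_v G_v$ and $K_\infty=\prod_v K_v$, one has $K_\infty^+$-equivariantly $\lieg/\liek=\bigoplus_v\lieg_v/\liek_v$, whence
\[
  \Lambda^i(\lieg/\liek)\cong\bigoplus_{\vecti}\bigotimes_v\Lambda^{i_v}(\lieg_v/\liek_v)
\]
as $K_\infty^+$-representations, the sum running over $\vecti=(i_v)_v$ with $\sum_v i_v=i$. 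Tensoring with $\CC$, applying $\Hom_{K_\infty^+}(-,\automrep_\infty)\otimes_\CC\automrep_f^{K_f}$ and summing over automorphic representations $\automrep$ as in Theorem~\ref{thm:Matsushima} and Definition~\ref{def:vects}, I get an isomorphism $\Omega^i(\cY)_\CC\cong\bigoplus_{\vecti}\Omega^{\vecti}(\cY)_\CC$ compatible with the actions of the Laplace and Hecke operators.

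Next I would refine this to a fixed Laplace eigenvalue. By Theorem~\ref{thm:Matsushima} the Laplace operator acts as the Casimir of $G_\infty$, which --- as $G_\infty=\prod_v G_v$ --- is a sum $\Delta=\sum_v\Delta_v$ of pairwise commuting Casimir operators $\Delta_v$ of the factors. On the summand attached to an automorphic representation $\automrep$, each $\Delta_v$ acts through the irreducible local factor $\automrep_v$ of $\automrep_\infty=\bigotimes_v\automrep_v$, hence by a scalar; therefore $\Omega^{\vecti}(\cY)_\CC$ decomposes into the joint $\Delta_v$-eigenspaces $\Omega^{\vecti}_{\Delta=\vectlambda}(\cY)_\CC$ of Definition~\ref{def:vects}, and the $\lambda$-eigenspace of $\Delta$ is the direct sum of those with $\sum_v\lambda_v=\lambda$. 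This yields the first displayed equation.

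For the harmonic statement, I would specialise the first equation to $\lambda=0$; it then suffices to show that the only $\vectlambda$ with $\sum_v\lambda_v=0$ that contributes is $\vectlambda=\vectzero$, i.e. that each $\Delta_v$ acts non-negatively on $\Omega^i(\cY)_\CC$. I would obtain this by identifying $\Delta_v$, up to the normalisation fixed in Theorem~\ref{thm:Matsushima}, with the de Rham Laplacian along the $v$-th factor of the Riemannian product $X=\prod_v X_v$: the product decomposition of differential forms is $\Gamma_c$-equivariant, descends to each component $Y_c$ of $\cY$, and under it the de Rham Laplacian of $\cY$ is the sum of the non-negative partial Laplacians along the factors. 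Non-negativity then forces all $\lambda_v=0$, giving $\harmon^i(\cY)_\CC=\bigoplus_{\vecti}\harmon^{\vecti}(\cY)_\CC$. The main obstacle is precisely this last identification --- matching the ``partial Casimir'' $\Delta_v$ entering Definition~\ref{def:vects} with the non-negative de Rham partial Laplacian, so that the positivity argument applies; everything else is routine bookkeeping with Matsushima's formula.
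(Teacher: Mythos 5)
The paper states this corollary without giving a proof, treating it as immediate from Theorem~\ref{thm:Matsushima} and Definition~\ref{def:vects}. Your argument is correct and, as far as one can tell, is the intended one. The degree bookkeeping via $\Lambda^i(\lieg/\liek)\cong\bigoplus_\vecti\bigotimes_v\Lambda^{i_v}(\lieg_v/\liek_v)$ is the right observation, and the passage from the coarse decomposition $\Omega^i(\cY)_\CC\cong\bigoplus_\vecti\Omega^\vecti(\cY)_\CC$ to the refined $\Delta$-eigenspace decomposition only uses that the commuting family $(\Delta_v)_v$ acts by scalars on each $\automrep_v$ and that $\Delta=\sum_v\Delta_v$, which is pure linear algebra. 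You correctly identified the only genuinely non-formal point: in the harmonic case, one must show that $\sum_v\lambda_v=0$ forces $\vectlambda=\vectzero$, i.e. that each $\Delta_v$ is non-negative on $\Omega^i(\cY)$. Your geometric argument — that each $\Delta_v$ is the partial de Rham Laplacian $d_vd_v^*+d_v^*d_v$ along the $v$-th factor of $X=\prod_vX_v$, which is visibly non-negative and descends to $\Gamma_c\lquo X$ because $\Gamma_c\subset G_\infty$ preserves the product structure — is the clean way to do this, and is preferable to checking the Casimir tables of Section~5.2.1: a naive table check would stumble on finite-dimensional representations such as $\FD_\R(4,\cdot)$, which have the right $K$-types and negative Casimir eigenvalue but happen not to be unitarizable and hence never appear in the automorphic spectrum. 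Your global positivity argument sidesteps that subtlety entirely.

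One small point worth flagging: Theorem~\ref{thm:Matsushima} as stated only records that the isomorphism is equivariant for the global Laplacian and the global Casimir. Your proof implicitly uses the finer fact that it is equivariant for each local Casimir separately; this follows from the same canonicity of the Matsushima correspondence (the isomorphism is equivariant under the full algebra of $G_\infty$-invariant differential operators), and indeed Definition~\ref{def:vects} already presupposes this when speaking of ``the Laplace operator at each infinite place.'' So this is not a gap, just a point you might make explicit.
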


\subsection{Self-twist conditions}\label{sec:selfTwistConds}
In this subsection we prove Theorem \ref{thm:IntroProtoMain} for
representation equivalence, $\Omega^\bullet$-, $\Omega^0$-, and
$\cH^\bullet$-isospectrality, and
Theorem~\ref{thm:IntroRegLL} parts~\ref{item:IntroRegLLRegConst}
and~\ref{item:IntroRegLLProd}.
\subsubsection{Representation equivalence}
\begin{prop}\label{prop:L2psi}
  %Let~$\chi\in \widehat{C}$ be a non-trivial character corresponding to
  Let~$\chi$ be an order~$2$ Hecke character, let~$L/F$ be the corresponding
  quadratic extension, and let~$\sigma$ denote the non-trivial
  automorphism of~$L/F$. Let $V$ be a $K_\infty^+$-representation.
  %If there exists a Hecke eigensystems in~$\rL^2(\G(F)\lquo\G(\adel_{F})/Z_{\infty} K_f)$ that
  %admit a self-twist by~$\chi$, then~$L/F$ is inert or ramified at every place
  %of~$F$ that ramifies in~$D$.
  %Assume that this conditions holds; then 
  Then every Hecke eigensystem over~$\CC$ in the
  module~$M=\Hom_{K_{\infty}^+}(V,\rL^2(\G(F)\lquo\G(\adel_{F})/Z_{\infty}
  K_f))$ that admits a self-twist by~$\chi$ is attached to some automorphic
  representation~$\automrep = \automrep_\infty\otimes\automrep_f$ of
  $\G(\adel_F)$ such that $\JL_D(\automrep)=\AI_{F}^{L}(\automchar)$ for a
  unitary Hecke character~$\automchar$ of~$L$ satisfying all of the following:
  \begin{enumerate}[leftmargin=*,label=\upshape{(\arabic*)}]
    %\item $\automchar^\sigma \neq \automchar$;
    \item\label{item:JLpsi} for every place~$v$ of~$F$ that ramifies in~$D$, there is a single
      place~$w$ of~$L$ above~$v$, and we have~$\automchar_w^\sigma \neq \automchar_w$;
    \item\label{item:centralchar} the parameters~$(k_w,s_w)$ of the
      character~$\automchar$ satisfy the following conditions:
      \begin{itemize}[leftmargin=*] % number and label ?
        \item for every real place~$v$ of~$F$ that extends to a complex place~$w$
          of~$L$, we have~$s_w = 0$ and $k_w$ is odd;
        \item for every real place~$v$ of~$F$ that extends to two real places~$w,w'$
          of~$L$, we have~$s_w+s_{w'}=0$ and $k_w+k_{w'}=0\mod{2}$;
        \item for every complex place~$v$ of~$F$ that extends to two complex
          places~$w,w'$ of~$L$, we have~$s_w+s_{w'}=0$ and~$k_w+k_{w'}=0$.
  \end{itemize}
  \end{enumerate}
  %where in the last two conditions the product runs over places of~$L$ extending~$v$,
  %\begin{itemize}
  %  \item for every complex place~$v$ of~$L$ that restricts to a real place
  %    of~$F$,
  %    we have~$s_v = 0$;
  %  \item for every pair~$\{v,v'\}$ of real places of~$L$ that restrict to the
  %    same real place of~$F$ we have~$s_v+s_{v'}=0$;
  %  \item for every pair~$\{v,v'\}$ of complex places of~$L$ that restrict to the
  %    same complex place of~$F$ we have~$k_v+k_{v'} = 0$ and~$s_v+s_{v'}=0$,
  %\end{itemize}
  Each such Hecke eigensystem has multiplicity~$\dim \automrep_f^{K_f}\cdot\dim\Hom_{K_{\infty}^+}(V,\automrep_{\infty})$ (which may be $0$).
\end{prop}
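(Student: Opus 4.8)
The plan is to unwind the self-twist condition through the automorphic dictionary set up in Sections \ref{sec:GL1}--\ref{sec:automGL2}. First I would decompose $\rL^2(\G(F)\lquo\G(\adel_{F})/Z_{\infty} K_f)$ into automorphic representations $\automrep=\automrep_\infty\otimes\automrep_f$ of $\G(\adel_F)$ with $\automrep_\infty$ trivial on $Z_\infty$, so that, after applying $\Hom_{K_\infty^+}(V,-)$, the module $M$ decomposes as $\bigoplus_\automrep \Hom_{K_\infty^+}(V,\automrep_\infty)\otimes\automrep_f^{K_f}$ with the Hecke action concentrated on $\automrep_f^{K_f}$. A Hecke eigensystem in $M$ is then one occurring in $\automrep_f^{K_f}$ for some $\automrep$, and its multiplicity is $\dim\Hom_{K_\infty^+}(V,\automrep_\infty)\cdot\dim\automrep_f^{K_f}$; this already gives the multiplicity formula and the fact that the eigensystem is attached to an automorphic representation $\automrep$ of $\G(\adel_F)$, which by Theorem \ref{thm:MultOne} (strong multiplicity $1$) and the remarks in Section \ref{sec:automGL2} determines $\automrep$ and hence $\automrep_f$ essentially uniquely.

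Next I would translate the self-twist by $\chi$ into the statement $\automrep\otimes\chi\cong\automrep$ as representations of $\G(\adel_F)$. The point is that the Hecke eigensystem attached to $\automrep$ records the eigenvalues $a_{\cD}$ for $\cD$ coprime to $\delta_D\fN$, and $\automrep_f^{K_f}$ being one-dimensional away from the bad places, the eigenvalue $a_{\fq}$ for a split prime $\fq$ is the relevant trace/Satake eigenvalue. The self-twist condition $a_{\cD}=0$ whenever $\chi([\cD])\neq 1$, combined with strong multiplicity $1$, forces $\automrep\otimes\chi\cong\automrep$: indeed $\automrep\otimes\chi$ has Hecke eigenvalue $\chi(\fq)a_{\fq}$ at almost all $\fq$, which equals $a_{\fq}$ at all $\fq$ split in $L$ (where $\chi(\fq)=1$) and $-a_{\fq}=a_{\fq}=0$ at almost all inert $\fq$; so $\automrep$ and $\automrep\otimes\chi$ agree locally almost everywhere and are therefore isomorphic. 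Then Corollary \ref{cor:JLAI} gives that $\automrep$ is infinite-dimensional and $\JL_D(\automrep)=\AI_F^L(\automchar)$ for a unitary Hecke character $\automchar$ of $L$; note $\automrep$ cannot be finite-dimensional, as $\G/F$ is anisotropic (or since a $1$-dimensional $\automrep$ never satisfies $\automrep\otimes\chi\cong\automrep$). The replacement $\automchar\mapsto\automchar^\sigma$ does not change $\AI_F^L(\automchar)$.

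It then remains to extract conditions \ref{item:JLpsi} and \ref{item:centralchar} from purely local considerations. For \ref{item:JLpsi}: at a place $v$ ramified in $D$, the local component $\automrep_v$ is a finite-dimensional representation of $D_v^\times$, so $\JL_{D_v}(\automrep_v)=\AI_{F_v}^{L_v}(\automchar_w)$ must be special or supercuspidal by the image description of $\JL_{D_v}$ recalled in Section \ref{subsec:nonarchi}; since an automorphic induction is never special, it is supercuspidal, and by the local automorphic induction dictionary this forces $L_v$ to be a field (i.e. a single place $w$ above $v$) and $\automchar_w^\sigma\neq\automchar_w$. For \ref{item:centralchar}: at the infinite places, $\automrep_\infty$ is trivial on $Z_\infty$, so the central character of $\automrep_\infty$ is trivial; matching this against the central character of $\JL_{D_v}(\automrep_v)=\AI_{F_v}^{L_v\otimes_{F_v}F_v}(\automchar_w)$ using the explicit local automorphic-induction formulas and central-character tables of Section \ref{subsec:archi} (for $\GL_2(\R)$ and $\GL_2(\CC)$) and the $\Hamil^\times$ table yields, in each of the three cases (real $v$ with $L_v$ complex, real $v$ with $L_v$ split, complex $v$ with $L_v$ split), exactly the stated constraints on $(k_w,s_w)$. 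Unitarity of $\automrep$ forces $\automchar$ to be unitary, hence $s_w\in i\R$, which is consistent with the $s_w=0$ and $s_w+s_{w'}=0$ conditions coming from triviality of the central character.

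The main obstacle I expect is the careful bookkeeping in the last step: one must treat separately the archimedean places according to whether $v$ is real or complex and how it splits in $L$, unwind the normalisations of the Casimir operator and central characters (including the half-integral shifts appearing in the $\AI$ formulas such as $\automchar_\CC(k_2,s_2+\tfrac12)$), and verify that triviality on $Z_\infty$ is equivalent to the listed parity and vanishing conditions on $(k_w,s_w)$ rather than something weaker. The ramified-place analysis and the reduction to $\automrep\otimes\chi\cong\automrep$ are comparatively routine given Corollary \ref{cor:JLAI} and the image description of the local Jacquet--Langlands transfer.
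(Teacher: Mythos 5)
Your structure matches the paper's proof almost exactly: decompose the $\rL^2$-space into cuspidal automorphic representations via Gelfand--Piatetski-Shapiro, read off the multiplicity formula, translate the self-twist into $\automrep\otimes\chi\cong\automrep$ via strong multiplicity one, feed this into Corollary~\ref{cor:JLAI}, and then extract conditions~\ref{item:JLpsi} and~\ref{item:centralchar} by local computation.

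The one genuine gap is in your derivation of condition~\ref{item:JLpsi}: you argue that at every $v$ ramified in $D$, the transfer $\JL_{D_v}(\automrep_v)$ ``must be special or supercuspidal'', but that classification only applies at non-Archimedean places. At a real place $v$ ramified in~$D$ the local algebra is~$\Hamil$, and the image of $\JL_{\Hamil}$ consists of the genuine discrete series $\DS(k,\mu)$ with $k\ge 2$. This requires a separate Archimedean check, exactly as in the paper's proof: by the local automorphic-induction formulas in Section~\ref{subsec:archi}, $\AI$ from the quadratic \'etale extension of $\R$ at $v$ is a discrete series with parameter $\ge 2$ precisely when $v$ extends to a single complex place $w$ of $L$ with $k_w\neq 0$; the split case $L_v\cong\R\times\R$ only produces principal series, finite-dimensional representations, or the limit $\DS(1,\cdot)$, none of which lie in the image of $\JL_{\Hamil}$. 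You cannot recover this from the central-character analysis of condition~\ref{item:centralchar} alone: ruling out the possibility that a real $v$ ramified in~$D$ splits into two real places of~$L$ is exactly the point that needs the image description of $\JL_{\Hamil}$, not the parity or vanishing constraints on $(k_w,s_w)$. Once this case is filled in, the rest of your argument is fine; in particular your alternative route to unitarity of $\automchar$ (from unitarity of $\automrep$, rather than the paper's observation that condition~\ref{item:centralchar} combined with all $s_w$ sharing a common real part forces $\Re(s_w)=0$) also works.
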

\begin{proof}
  Hecke eigensystems over~$\CC$ in~$M$ are attached to automorphic representations~$\automrep =
  \automrep_\infty\otimes\automrep_f$ of~$\G(\adel_F)$ such that the central character
  of~$\automrep_\infty$ vanishes on~$Z_\infty$, and each such eigensystem has
  multiplicity~$\dim \automrep_f^{K_f}\cdot\dim\Hom_{K_{\infty}^+}(V,\automrep_{\infty})$.
  %If such a representation is non-cuspidal, then it is of the
  %form~$\phi\circ\nrd$ for some Hecke character~$\phi$ of~$F$; such
  %representations cannot have a self-twist by~$\chi$ since~$\phi\chi\neq\phi$.
  %Therefore, 
  Hecke eigensystems over~$\CC$
  in~$\rL^2(\G(F)\lquo\G(\adel_{F})/Z_{\infty} K_f)$ that admit a self-twist
  by~$\chi$ are exactly those attached to automorphic representations~$\automrep =
  \automrep_\infty\otimes\automrep_f$ of~$\G(\adel_F)$ such that $\automrep\otimes\chi \cong \automrep$
  and the central character of~$\automrep_\infty$ vanishes on~$Z_\infty$.
  By Corollary~\ref{cor:JLAI}, such~$\automrep$ are exactly the ones
  satisfying
  \begin{itemize}
    \item $\automrep$ is infinite-dimensional;
    \item $\JL_D(\automrep) = \AI_{L/F}(\automchar)$ for some Hecke character~$\automchar$ of~$L$;
    \item the central character of~$\automrep_\infty$ vanishes on~$Z_\infty$.
  \end{itemize}
  By Theorem~\ref{thm:JL} and Theorem~\ref{thm:AI}, the first two conditions are
  equivalent to the existence of a Hecke character $\automchar$ of $L$ such that we have
  $\JL_D(\automrep) = \AI_{F}^L(\automchar)$ and~$\automchar^\sigma \neq \automchar$.
  Let~$\automchar$ be a Hecke character of~$L$ such that~$\automchar^\sigma\neq\automchar$.
  By Theorem~\ref{thm:JL} there exists~$\automrep$ such
  that~$\AI_F^L(\automchar)=\JL_D(\automrep)$ if and only if~$\AI_F^L(\automchar)$ is discrete
  series at the infinite places that are ramified in $D$,
  and special or supercuspidal at the finite places that are ramified in $D$.
  Let~$v$ be a place of~$F$ that ramifies in~$D$.
  If~$v$ is an infinite place: by Subsection~\ref{subsec:archi},
  the local component~$\AI_F^L(\automchar)_v$ is a discrete series representation if
  and only if~$v$ extends to a complex place~$w$ in~$L$ and~$k_w \neq 0$,
  equivalently there is a single place~$w$ of~$L$ above~$v$
  and~$\automchar_w^\sigma\neq\automchar_w$.
  If~$v$ is a finite place: by Subsection~\ref{subsec:nonarchi}, the
  local component~$\AI_F^L(\automchar)_v$ is special or supercuspidal if and only
  there is a single place~$w$ of~$L$ above~$v$
  and~$\automchar_w^\sigma \neq \automchar_w$. This proves that condition~\ref{item:JLpsi}
    of the conclusion of the proposition holds.
  By Subsection~\ref{subsec:archi}, the central character of~$\AI_F^L(\automchar)$
  vanishing on~$Z_\infty$ is equivalent to
  condition~\ref{item:centralchar} in the conclusion.
  Since $\automchar$ is trivial on $L^\times$, all the $s_w$ have the same real part.
  Therefore condition \ref{item:centralchar} implies that this real part is $0$,
  so that $\automchar$ is unitary.
%
  %conditions~(\ref{item:centralcharRC}), (\ref{item:centralcharRR})
  %and~(\ref{item:centralcharC}).
\end{proof}

\begin{definition}\label{def:repequivObstruction}
  If $L/F$ is a quadratic extension, then an \emph{$\rL^2$-shady character} of~$L$
  is a unitary Hecke character $\automchar$ of $L$ such that $L$ and $\automchar$ have all of the following properties: 
  \begin{itemize}[leftmargin=*]
    \item the field $L$ and the character~$\automchar$ satisfy the conditions~\ref{item:JLpsi}
      and~\ref{item:centralchar} in Proposition~\ref{prop:L2psi}, so that
      in particular, by Theorems~\ref{thm:AI} and~\ref{thm:JL},
      there exists a unique automorphic representation~$\automrep=\automrep_{\infty}\otimes\automrep_f$
      of~$\G(\adel_F)$ satisfying~$\JL_D(\automrep)=\AI_F^L(\automchar)$;
    \item we have~$\automrep_f^{K_f}\neq 0$.
  \end{itemize}
\end{definition}
%It follows from Theorems \ref{thm:AI} and \ref{thm:JL} that if $\automchar$ is a possible
%obstruction to representation equivalence, then there
%exists a unique automorphic representation $\automrep$ of $\G(\adel_F)$ such that we have
%$\JL_D(\automrep)=\AI_F^L(\automchar)$.
% ---------
% This will be useful for constructing interesting examples, but no need to include the general statement here:
% ---------
%
%\begin{cor}
%  Assume~$D$ is unramified at all finite places and~$K_\fp = \GL_2(\Z_\fp)$ for
%  all primes~$\fp$, and~$L/F$ is inert or ramified at every place
%  of~$F$ that ramifies in~$D$.
%  Then there exists a Hecke eigensystems
%  in~$\rL^2(\G(F)\lquo\G(\adel_{F})/Z_{\infty} K_f)$ that admit a self-twist
%  by~$\chi$.
%\end{cor}
%\begin{proof}
%  Consider conductor~$1$ Hecke characters that, up to torsion, are in the $-1$
%  eigenspace for the automorphism of~$L/F$.
%\end{proof}
%
The following is Theorem~\ref{thm:IntroProtoMain} for representation equivalence.

\begin{thm}\label{thm:repEquivPsi}
  Let~$c\in C$. Then at least one of the following statements is true:
  \begin{enumerate}[leftmargin=*,label={\upshape(\roman*)}]
    \item there exist a character~$\chi\in \Cisodual\setminus c^\perp$,
      with corresponding quadratic extension~$L/F$, % with non-trivial automorphism~$\sigma$,
      and an $\rL^2$-shady character of~$L$;
    \item
      for all $b\in C$ the groups~$\Gamma_{b}$
    and~$\Gamma_{cb}$ are representation equivalent.
  \end{enumerate}
\end{thm}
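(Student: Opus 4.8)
The plan is to obtain this statement by combining Theorem~\ref{thm:repEquiv}, Proposition~\ref{prop:L2psi} and Definition~\ref{def:repequivObstruction}: beyond those inputs, the only new ingredient needed is that, under Assumption~\ref{ass:quaternion}, the character furnished by Theorem~\ref{thm:repEquiv} is automatically quadratic.

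First I would assume that statement~(ii) of the theorem fails, so that for some $b\in C$ the groups $\Gamma_{b}$ and $\Gamma_{cb}$ are not representation equivalent. Theorem~\ref{thm:repEquiv} then forces its own statement~(i): there exist a character $\chi\in\Cisodual\setminus c^\perp$ and a Hecke eigensystem over~$\CC$ in $\rL^2(\G(F)^+\lquo\G(\adel_F)/Z_\infty K_f)$ admitting a self-twist by~$\chi$. Next I would record that, since $d=2$ (Assumption~\ref{ass:quaternion}), the group $C_{\iso}$ is a quotient of $\Cl_F(\fN\cV_{\Hamil}(D))/\Cl_F(\fN\cV_{\Hamil}(D))^2$ by the discussion following~\eqref{eq:Ciso}, hence has exponent dividing~$2$; thus $\chi^2=1$, and as $\chi(c)\neq 1$ the character $\chi$ has order exactly~$2$. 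Let $L/F$ be the associated quadratic extension, with nontrivial automorphism $\sigma$.

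Now I would feed the eigensystem into Proposition~\ref{prop:L2psi}. Using the decomposition of $\rL^2(\G(F)^+\lquo\G(\adel_F)/Z_\infty K_f)$ from the proof of Theorem~\ref{thm:repEquiv} together with strong multiplicity one (Theorem~\ref{thm:MultOne}), the eigensystem is attached to an automorphic representation $\automrep=\automrep_\infty\otimes\automrep_f$ of $\G(\adel_F)$ with $\automrep_f^{K_f}\neq 0$, with central character of $\automrep_\infty$ trivial on $Z_\infty$, and with $\automrep\otimes\chi\cong\automrep$ (this last condition encoding the self-twist). Picking any irreducible $K_\infty^+$-subrepresentation $V$ of $\automrep_\infty$, the same eigensystem then occurs, still admitting a self-twist by~$\chi$, in $M=\Hom_{K_\infty^+}(V,\rL^2(\G(F)\lquo\G(\adel_F)/Z_\infty K_f))$, since $\automrep$ occurs in that $\rL^2$-space with multiplicity $\dim\automrep_f^{K_f}\geq 1$ and $\Hom_{K_\infty^+}(V,\automrep_\infty)\neq 0$. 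Proposition~\ref{prop:L2psi}, applied with this $\chi$, $L$ and $V$, now produces a unitary Hecke character $\automchar$ of $L$ with $\JL_D(\automrep)=\AI_F^L(\automchar)$ satisfying conditions~\ref{item:JLpsi} and~\ref{item:centralchar} (the attached representation being the same $\automrep$, by Theorem~\ref{thm:MultOne}). Together with $\automrep_f^{K_f}\neq 0$, this says precisely that $\automchar$ is an $\rL^2$-shady character of $L$ in the sense of Definition~\ref{def:repequivObstruction}, so statement~(i) of the theorem holds.

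The only point that requires genuine care is the bookkeeping involved in passing from the module $\rL^2(\G(F)^+\lquo\G(\adel_F)/Z_\infty K_f)$ of Theorem~\ref{thm:repEquiv} to the module $\Hom_{K_\infty^+}(V,\rL^2(\G(F)\lquo\G(\adel_F)/Z_\infty K_f))$ required by Proposition~\ref{prop:L2psi}: one must check, using Proposition~\ref{prop:plustransfer} and the fact that both $\rL^2$-spaces decompose over the automorphic representations of $\G(\adel_F)$, that the relevant $\automrep$, the nonvanishing of $\automrep_f^{K_f}$ and the self-twist property all survive this passage, and that restricting to a single $K_\infty^+$-type does not destroy the eigensystem (it does not, since the Hecke action commutes with $K_\infty^+$). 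This is routine; the substantive new observation specific to the quaternion case is simply that $\Cisodual$ is $2$-torsion, which is what makes Proposition~\ref{prop:L2psi} applicable.
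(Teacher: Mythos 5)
Your proposal is correct and takes essentially the same route as the paper, whose proof is simply the statement that the theorem is an immediate consequence of Theorem~\ref{thm:repEquiv} and Proposition~\ref{prop:L2psi}. You have correctly identified the one point that makes this ``immediate consequence'' legitimate under Assumption~\ref{ass:quaternion}, namely that with~$d=2$ the group~$C_{\iso}$ has exponent dividing~$2$, so any~$\chi\in\Cisodual\setminus c^\perp$ automatically has order exactly~$2$ and thus corresponds to a quadratic extension~$L/F$, exactly as required by the hypotheses of Proposition~\ref{prop:L2psi}. The bookkeeping you flag in your last paragraph concerning the~$\rL^2$-space over~$\G(F)^+$ versus~$\G(F)$ and the restriction to a single~$K_\infty^+$-type is indeed the content of the discussion preceding Theorem~\ref{thm:repEquiv} and the first line of the proof of Proposition~\ref{prop:L2psi}; the paper elides it as well.
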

\begin{proof}
  This is an immediate consequence of Theorem~\ref{thm:repEquiv} and
  Proposition~\ref{prop:L2psi}.
\end{proof}

\begin{rmk}
  In Proposition~\ref{prop:L2psi}, condition~\ref{item:JLpsi} at a real
  place~$v$ of~$F$ is equivalent to the following: if~$v$ ramifies in~$D$,
  then~$v$ extends to a complex place~$w$ of~$L$ and we have~$k_w\neq 0$.
\end{rmk}

It will sometimes be useful to reformulate the conditions of
Proposition~\ref{prop:L2psi} as follows.

\begin{lemma}\label{lem:shadyGaloisaction}
  Let~$L/F$ be a quadratic extension, let~$\sigma$ denote the non-trivial
  automorphism of~$L/F$, and let~$\automchar$ be a Hecke character of~$L$.
  Then $\Psi^\sigma\Psi$ has finite order if and only if all of the following
  conditions hold:
  \begin{itemize}[leftmargin=*]
    \item for every real place~$v$ of~$F$ that extends to a complex place~$w$
      of~$L$, we have~$s_w = 0$;
    \item for every real place~$v$ of~$F$ that extends to two real places~$w,w'$
      of~$L$, we have~$s_w+s_{w'}=0$;
    \item for every complex place~$v$ of~$F$ that extends to two complex
      places~$w,w'$ of~$L$, we have~$s_w+s_{w'}=0$ and~$k_w+k_{w'}=0$.
  \end{itemize}
\end{lemma}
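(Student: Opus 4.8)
The plan is to reduce the statement to the standard fact that a Hecke character of a number field has finite order if and only if its infinity-type is trivial, and then to read off the infinity-type of $\Psi^\sigma\Psi$ one archimedean place at a time.

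First I would record the following well-known fact: a Hecke character $\eta=\prod_w\eta_w$ of a number field $K$ has finite order precisely when $s_w=0$ at every infinite place $w$ of $K$ and, in addition, $k_w=0$ at every complex place $w$ of $K$ (the sign exponent at a real place being irrelevant, as $\{\pm1\}$ is finite). One direction is immediate, since a finite-order Hecke character has finite image in $\CC^\times$, so each of its local components has finite order; for the converse, a Hecke character with vanishing infinity-type is trivial on the connected component of the id\`ele class group, hence factors through the profinite group $C_K/C_K^0$, and a continuous homomorphism from a profinite group to $\CC^\times$ has finite image because $\CC^\times$ has no small subgroups. I would then apply this with $K=L$ and $\eta=\Psi^\sigma\Psi=\prod_w(\Psi^\sigma)_w\,\Psi_w$.

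Next I would compute $(\Psi^\sigma)_w$ at each infinite place $w$ of $L$ above an infinite place $v$ of $F$, distinguishing the three cases that occur: $v$ real with a unique place $w$ above it (necessarily $L_w\cong\CC$), $v$ real splitting into two real places $w,w'$, and $v$ complex (necessarily splitting into two complex places $w,w'$). In the first case $\sigma$ acts on $L_w\cong\CC$ as complex conjugation, which negates $k_w$ and fixes $s_w$, so $(\Psi^\sigma)_w=\automchar_\CC(-k_w,s_w)$ and $(\Psi^\sigma\Psi)_w=\automchar_\CC(0,2s_w)$; triviality of its infinity-type amounts to $s_w=0$. In the split cases, the conventions fixed in Section~\ref{sec:GL1} on the identifications of the two completions of $L$ above $v$ with $\R$ or $\CC$ make $\sigma$ interchange these completions while preserving the parameters, so $(\Psi^\sigma)_w=\Psi_{w'}$ and $(\Psi^\sigma\Psi)_w=\automchar(k_w+k_{w'},\,s_w+s_{w'})$ (in the family $\automchar_\R$ or $\automchar_\CC$ according to the case), and likewise at $w'$; the resulting conditions are $s_w+s_{w'}=0$ in the real case, and $s_w+s_{w'}=0$ together with $k_w+k_{w'}=0$ in the complex case. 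Collecting these conditions over all infinite places $v$ of $F$ gives exactly the three bulleted conditions in the statement, which completes the proof.

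I expect the only mildly delicate point to be the bookkeeping of the action of $\sigma$ on the archimedean completions of $L$ and its effect on the parameters $(k_w,s_w)$ — specifically, checking with the conventions of Section~\ref{sec:GL1} that $(\Psi^\sigma)_w=\Psi_{w'}$ with unchanged parameters at a split place, and $(\Psi^\sigma)_w=\automchar_\CC(-k_w,s_w)$ at a complex place of $L$ lying over a real place of $F$. Everything else is a direct calculation.
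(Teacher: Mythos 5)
Your proof is correct and takes essentially the same approach as the paper's: compute the local components of $\Psi^\sigma\Psi$ at the archimedean places using the convention of Section~4.1 (conjugation negates $k_w$ and fixes $s_w$ at a ramified real place, and swaps parameters between $w$ and $w'$ at a split place), then invoke the standard fact that a Hecke character has finite order if and only if all its $s$-parameters vanish and all $k$-parameters at complex places vanish. Your write-up is slightly more explicit (the no-small-subgroups argument, the verification that $\sigma$ preserves parameters under the paper's conventions), but the content is the same.
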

\begin{proof}
  At a place of~$F$ that extends to two places of~$L$, the action of~$\sigma$
  on~$\Psi$ swaps the parameters~$(k,s)$. At a real place~$v$ of~$F$ that
  extends to a complex place~$w$ of~$L$, the action of~$\sigma$ on~$\Psi$
  negates~$k_w$ and leaves~$s_w$ unchanged. Finally, a Hecke character of~$L$
  has finite order if and only if all its~$s$ parameters are~$0$ and all
  its~$k$ parameters at complex places are~$0$. Putting these together gives the
  lemma.
\end{proof}

\begin{remark}
  When at least one real place of~$F$ ramifies in~$L$, the Hecke characters
  appearing in Theorem~\ref{thm:repEquivPsi} are \emph{partially algebraic Hecke
  characters} in the sense of~\cite[Section 5.5]{MolinPage}. In fact, they must
  come from the construction given in the proof of \emph{ibid.}, Proposition 41.
\end{remark}

By omitting conditions from Theorem~\ref{thm:repEquivPsi}, we recover previously known results.

\begin{corollary}\label{cor:repEquivNoPsi}
  Let~$c\in C$. Then at least one of the following statements is true:
  \begin{enumerate}[leftmargin=*,label={\upshape(\roman*)}]
    \item there exists a character~$\chi\in \Cisodual$, with
      corresponding quadratic extension~$L/F$,
  such that
  \begin{enumerate}[leftmargin=*,label=\upshape{(\alph*)}]
    \item\label{item:odd} $\chi(c) = -1$, i.e. $\chi\not\in c^\perp$, and
    \item\label{item:singlePlace} for every place~$v$ of~$F$ that ramifies in~$D$, there is a single
      place of~$L$ above~$v$;
  \end{enumerate}
    \item for all $b\in C$ the groups~$\Gamma_{b}$ and~$\Gamma_{cb}$ are
  representation equivalent.
  \end{enumerate}
\end{corollary}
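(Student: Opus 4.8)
The plan is to obtain this corollary as a direct weakening of Theorem~\ref{thm:repEquivPsi}, discarding the more delicate of the conditions that enter the definition of an $\rL^2$-shady character. First I would invoke Theorem~\ref{thm:repEquivPsi}: it already provides the dichotomy that either statement~(ii) of the corollary holds, or there is a character $\chi\in\Cisodual\setminus c^\perp$, with corresponding quadratic extension $L/F$, for which $L$ admits an $\rL^2$-shady character $\automchar$. So it remains only to check that, in the second case, $\chi$ and $L$ satisfy conditions~\ref{item:odd} and~\ref{item:singlePlace}, whence statement~(i) of the corollary holds and we are done.

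Both checks are immediate from the definitions. Since $\chi$ is the quadratic Hecke character attached to $L/F$ by class field theory it has order~$2$, so $\chi(c)\in\{\pm 1\}$; and $\chi\notin c^\perp$ means $\chi(c)\neq 1$, hence $\chi(c)=-1$, which is condition~\ref{item:odd}. For condition~\ref{item:singlePlace}, recall that by Definition~\ref{def:repequivObstruction} an $\rL^2$-shady character of $L$ satisfies condition~\ref{item:JLpsi} of Proposition~\ref{prop:L2psi}, and the first assertion in~\ref{item:JLpsi} is exactly that every place $v$ of $F$ ramifying in $D$ has a single place of $L$ above it. That is precisely condition~\ref{item:singlePlace}, and the deduction is complete.

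I do not expect a genuine obstacle here: all the substance --- the passage through the Jacquet--Langlands correspondence and Langlands's automorphic induction theorem --- is already packaged into Theorem~\ref{thm:repEquivPsi} and the notion of an $\rL^2$-shady character. This corollary simply forgets the harder-to-verify parts of that notion, namely the infinity-type and central-character constraints~\ref{item:centralchar} and the non-vanishing of $\automrep_f^{K_f}$, in order to record a cleaner, if weaker, sufficient criterion for representation equivalence that recovers results previously obtained from the selectivity method and from the Labesse--Langlands multiplicity formula. The only point requiring any attention is to confirm that conditions~\ref{item:odd} and~\ref{item:singlePlace} genuinely survive as consequences of $\rL^2$-shadiness, which is the content of the second paragraph above.
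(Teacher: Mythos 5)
Your proposal is correct and matches the paper's intended argument: the paper itself introduces Corollary~\ref{cor:repEquivNoPsi} with the remark ``By omitting conditions from Theorem~\ref{thm:repEquivPsi}, we recover previously known results'' and gives no further proof. You have simply made this precise: alternative~(ii) of Theorem~\ref{thm:repEquivPsi} is alternative~(ii) of the corollary, and in the other alternative, $\chi\notin c^\perp$ together with $\chi$ having order~$2$ (automatic here since $\Ciso$ has exponent dividing $d=2$ under Assumption~\ref{ass:quaternion}, by Equation~(\ref{eq:Ciso})) gives~\ref{item:odd}, while the first clause of condition~\ref{item:JLpsi} in Proposition~\ref{prop:L2psi}, which an $\rL^2$-shady character satisfies by Definition~\ref{def:repequivObstruction}, gives~\ref{item:singlePlace}.
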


\begin{remark}
  Corollary~\ref{cor:repEquivNoPsi} seems much weaker than
  Theorem~\ref{thm:repEquivPsi}, but is sufficient to recover previously known
  results.
  It would be interesting to know how much weaker it actually is, specifically
  whether, given $L/F$ as in Corollary~\ref{cor:repEquivNoPsi},
  there always exists a $\automchar$ as in Theorem \ref{thm:repEquivPsi}.
  %We can prove it when $K_f$ is of ``$\Gamma_0$-type'', but not in general.
\end{remark}

\begin{cor}\label{cor:LinowitzVoight}
  Let~$\fN$ be a non-zero ideal of~$\Z_F$ coprime to~$\delta_D$, and let~$K_f =
  \prod_{\fp^i\parallel\fN}K_0(\fp^i)$.
  Let~$c\in C$. Then at least one of the following statements is true:
  \begin{enumerate}[leftmargin=*,label={\upshape(\roman*)}]
    \item we have that
      \begin{enumerate}[leftmargin=*,label=\upshape{(\alph*)}]
        \item $D$ is unramified at all finite places of~$F$, and
        \item there exists a quadratic extension~$L/F$ that is
          ramified at exactly the same set of real places of~$F$
          as~$D$, such that all primes of~$F$
          dividing~$\fN$ with odd exponent are split in~$L$,
          and such that all primes of~$F$ 
          whose class in~$C_{\iso}$ is~$\bar{c}$ are inert in~$L$;
      \end{enumerate}
    \item for all $b\in C$ the groups~$\Gamma_{b}$ and~$\Gamma_{cb}$
      are representation equivalent.
  \end{enumerate}
\end{cor}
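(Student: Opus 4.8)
The plan is to deduce this corollary from Corollary~\ref{cor:repEquivNoPsi} by making its alternative~(i) completely explicit for the particular level subgroups $K_f=\prod_{\fp^i\parallel\fN}K_0(\fp^i)$. First I would apply Corollary~\ref{cor:repEquivNoPsi}: if its alternative~(ii) holds we are done, so assume instead that there is a quadratic character $\chi\in\Cisodual$, with corresponding quadratic extension $L/F$, such that $\chi(c)=-1$ and such that every place of $F$ ramified in $D$ has a single place of $L$ above it. It then remains to translate these two facts about $\chi$ and $L$ into the assertions~(a) and~(b) of the present statement.

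For the translation I would use the description $C_{\iso}\cong F^\times\backslash\bigl(\{\pm1\}^{\cV_{\Hamil}(D)}\times\adel_{F,f}^\times\bigr)/\nrd(\normaliser(K_f))$ from~\eqref{eq:Ciso}. Viewing $\chi$ as a finite-order Hecke character of $F$, membership in $\Cisodual$ is equivalent to saying that $\chi$ is trivial at all complex places, trivial at all real places outside $\cV_{\Hamil}(D)$, and trivial on $\nrd(\normaliser(K_f))_\fp$ at every finite prime $\fp$; in terms of $L$, this says that $L/F$ splits at every real place outside $\cV_{\Hamil}(D)$, is unramified at every finite prime $\fp$ for which $\nrd(\normaliser(K_f))_\fp\supseteq\Z_\fp^\times$, and that $\fp$ splits in $L$ whenever $\nrd(\normaliser(K_f))_\fp=F_\fp^\times$. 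The key computation is therefore the local one: using that $\adel_{F,f}^\times K_f\subseteq\normaliser(K_f)$, that for $\fp^i\parallel\fN$ with $i\geq 1$ the group $K_0(\fp^i)$ is normalised by the Atkin--Lehner element, whose reduced norm has valuation $i$, and that the unit group of the maximal order of a local quaternion division algebra is normalised by an element of reduced norm of odd valuation, one finds that $\nrd(\normaliser(K_f))_\fp$ contains $\Z_\fp^\times$ for every finite $\fp$ and equals all of $F_\fp^\times$ precisely when $\fp\mid\delta_D$ or $\fp$ divides $\fN$ to an odd power (recall that $\delta_D$ and $\fN$ are coprime by hypothesis).

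Combining this local computation with the two properties of $\chi$ and $L$ then finishes the proof. At a finite prime $\fp\mid\delta_D$ one obtains on the one hand that $\fp$ splits in $L$, and on the other hand, since such a prime ramifies in $D$, that $\fp$ does \emph{not} split in $L$; so there is no finite prime dividing $\delta_D$, i.e.\ $D$ is unramified at all finite places, which is assertion~(a). The remaining cases of the computation show that $L/F$ is unramified at every finite prime; hence every finite prime of $F$ is unramified in $L$, every prime dividing $\fN$ to an odd power splits in $L$, and every prime whose class in $C_{\iso}$ is $\bar c$ is inert in $L$ (because $\chi$ takes the value $-1$ on that class). At the real places, $L/F$ splits outside $\cV_{\Hamil}(D)$ and, by the single-place hypothesis, ramifies at every real place in $\cV_{\Hamil}(D)$, which is exactly the set of real places ramified in $D$; thus $L$ is ramified at precisely the real places at which $D$ is ramified. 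Together these establish assertion~(b). The step I expect to be the main obstacle is the local normaliser computation, and in particular pinning down the reduced-norm image of the Atkin--Lehner normaliser of $K_0(\fp^i)$ and tracking how the parity of $i$ interacts with the squares contributed by the centre.
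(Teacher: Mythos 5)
Your proof is correct and follows essentially the same route as the paper's. The paper also deduces the corollary from Corollary~\ref{cor:repEquivNoPsi} together with the description of $C_{\iso}$ as a quotient of the ray class group $\Cl_F(\cV_{\Hamil}(D))$; the only difference is that where you spell out the local normaliser computation (Atkin--Lehner element for $K_0(\fp^i)$, uniformiser of the local quaternion division algebra, and the squares coming from the centre), the paper simply cites Voight's book for the resulting identity $C_{\iso} = \Cl_F(\cV_{\Hamil}(D))/\langle\fa^2,\ \fp\text{ ramified in }D,\ \fp^e\|\fN\rangle$.
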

\begin{proof}
  By Equation~(\ref{eq:Ciso}) and a local computation \cite[eqns. (23.2.4) and~(23.2.8), Proposition 23.4.14]{Voight},
  we have~$C_{\iso} = \Cl_F(\cV_{\Hamil}(D))/\langle\fa^2, \fp \text{ ramified in }D,
  \fp^e\|\fN\rangle$.
  Assume that for some $b\in C$ the groups~$\Gamma_{b}$ and~$\Gamma_{cb}$ are not
  representation equivalent.
  Let~$\fp$ be a finite place of~$F$ that ramifies in~$D$. Then the class of~$\fp$ is
  trivial in~$C_{\iso}$, so~$\fp$ splits in~$L$; but this
  contradicts Corollary~\ref{cor:repEquivNoPsi}~\ref{item:singlePlace}. Therefore, no
  finite place ramifies in~$D$.
  Let~$\chi$ and~$L/F$ be as in Corollary~\ref{cor:repEquivNoPsi}. By the
  expression of~$C_{\iso}$ as a class group, $L/F$ is unramified at all finite
  places of~$F$, split at all prime ideals dividing~$\fN$ with odd exponent, and the
  set of real places of~$F$ that ramify in~$L$ is a subset of those that ramify
  in~$D$.
  Corollary~\ref{cor:repEquivNoPsi}~\ref{item:singlePlace} gives the other inclusion
  between those sets of ramified real places.
  Let~$\fp$ be a finite place of~$F$ whose class in~$C_{\iso}$ equals~$\bar{c}$.
  Then~$\chi(\fp) \neq 1$ by Corollary \ref{cor:repEquivNoPsi}~\ref{item:odd},
  so that~$\fp$ is inert in~$L$.
\end{proof}

\begin{rmk}
  The groups of the form~$\Gamma_c$ for~$c\in C$ are exactly the ones of the form~$\cO^\times$,
  where~$\cO$ is an Eichler order of level~$\fN$.
  Corollary~\ref{cor:LinowitzVoight} is essentially the criterion used
  in~\cite{VoightLinowitz} to investigate isospectrality of a large number of
  pairs of hyperbolic orbifolds of dimension~$2$ and~$3$,
  using the selectivity method~\cite[Theorem~2.17 and Theorem~2.19]{VoightLinowitz}.
\end{rmk}

\begin{cor}\label{cor:Rajan}
  Assume that there is a maximal ideal~$\fp$ of $\Z_F$ that is ramified in~$D$, and that
  there exists an element~$g\in \normaliser(K_f)$ such that the valuation~$\ord_\fq(\nrd(g))$ is
  even for every finite place~$\fq\neq \fp$ and such that~$\ord_{\fp}(\nrd(g))$ is odd.
  Then for all~$c,c'\in C$ the groups~$\Gamma_{c}$
  and~$\Gamma_{c'}$ are representation equivalent.
\end{cor}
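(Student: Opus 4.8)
The plan is to derive this from Corollary~\ref{cor:repEquivNoPsi}. Recall that, for a fixed $c\in C$, that corollary says that \emph{either} there is a $\chi\in\Cisodual\setminus c^\perp$ whose associated quadratic extension $L/F$ has the property that every place of $F$ ramifying in $D$ has a single place of $L$ above it, \emph{or} $\Gamma_b$ and $\Gamma_{cb}$ are representation equivalent for every $b\in C$. The strategy is to show that the first alternative fails for every $c\in C$; then the second holds for every $c\in C$, and applying it with $c$ replaced by $c'c^{-1}$ and $b=c$ gives that $\Gamma_{c}$ and $\Gamma_{c'}$ are representation equivalent for all $c,c'\in C$, as desired. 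A character $\chi$ occurring in the first alternative is non-trivial (it lies in $\Cisodual\setminus c^\perp$, so $\chi(c)\neq 1$), and conversely every non-trivial $\chi\in\Cisodual$ is non-trivial on some $c\in C$; thus it suffices to prove that for every non-trivial $\chi\in\Cisodual$ the prime $\fp$ splits in the corresponding quadratic extension $L$, since this already makes condition~(i)(b) fail at $\fp$.

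To prove this I would use the identification, coming from Proposition~\ref{prop:plustransfer} and Equation~\eqref{eq:Ciso}, of $\Cisodual$ with the group of quadratic Hecke characters of $F$ that are trivial on $\nrd(\normaliser(K_f))$. Fix such a $\chi$. Since $g\in\normaliser(K_f)$ normalises $K_f$ place by place, for each finite place $\fq$ the idele equal to $\nrd(g)_\fq$ at $\fq$ and to $1$ elsewhere again lies in $\nrd(\normaliser(K_f))$, so $\chi_\fq(\nrd(g)_\fq)=1$; in particular $\chi_\fp(\nrd(g)_\fp)=1$, while $\ord_\fp(\nrd(g))$ is odd. If $\fp$ were inert in $L$ then $\chi_\fp$ would be the unramified quadratic character of $F_\fp^\times$, giving $\chi_\fp(\nrd(g)_\fp)=(-1)^{\ord_\fp(\nrd(g))}=-1$, a contradiction. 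It remains to rule out that $\fp$ ramifies in $L$, and this is where the hypothesis that $\fp$ ramifies in $D$ is used: then $\G(F_\fp)=\Delta_\fp^\times$ is the unit group of a division algebra, whose reduced norm is surjective onto $\Z_\fp^\times$ on the units of the maximal order $\Lambda_\fp$, and $(K_f)_\fp$ lies in $\Lambda_\fp^\times$; combining this with the odd reduced-norm valuation of $g_\fp\in\normaliser_{\G(F_\fp)}((K_f)_\fp)$ shows that $\nrd\bigl(\normaliser_{\G(F_\fp)}((K_f)_\fp)\bigr)=F_\fp^\times$, so that $\chi_\fp$, being trivial on this group, is trivial — in particular unramified, a contradiction. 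Hence $\fp$ splits in $L$.

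The class field theory bookkeeping — identifying $\Cisodual$ with quadratic characters of $F$ trivial on $\nrd(\normaliser(K_f))$, the split/inert/ramified trichotomy for $\chi_\fp$, and the group-theoretic reduction via Corollary~\ref{cor:repEquivNoPsi} — is routine. The genuine obstacle is the last local claim, namely that the reduced norm of the normaliser of $(K_f)_\fp$ in $\Delta_\fp^\times$ is all of $F_\fp^\times$. Making this precise requires the fine structure of $\Delta_\fp^\times$ (uniqueness of the maximal order, so that $\Lambda_\fp^\times$ and its congruence subgroups are normal in $\Delta_\fp^\times$ and $\nrd(\Lambda_\fp^\times)=\Z_\fp^\times$) together with the odd reduced-norm valuation supplied by $g_\fp$; the evenness of $\ord_\fq(\nrd(g))$ at the remaining finite places is exactly what keeps the global relation $\chi(\nrd(g))=1$ consistent with $\chi_\fp(\nrd(g)_\fp)=1$. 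Once this local point is in hand the rest of the argument is immediate.
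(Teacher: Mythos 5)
Your reduction of the corollary to the single assertion that $\fp$ splits in the quadratic extension $L$ attached to every non-trivial $\chi\in\Cisodual$ is the same opening move as the paper's. From there, however, the two arguments are genuinely different. The paper stays entirely global: $\nrd(g)$ has trivial class in $\Ciso$ simply because $g\in\normaliser(K_f)$, and since $\Ciso$ has exponent dividing $2$ (being a quotient of $\Cl_F(\fN\cV_{\Hamil}(D))$ modulo squares, as $d=2$ here) the parity hypotheses on $\ord_\fq(\nrd(g))$ identify that class with $[\fp]$; triviality of $[\fp]$ then gives that $\fp$ splits in $L$ in one stroke, with no local analysis and no separate treatment of the inert and ramified cases. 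You instead go local at $\fp$, extracting $g_\fp$ and studying $\normaliser_{\G(F_\fp)}((K_f)_\fp)$ inside $\Delta_\fp^\times$. That route makes the mechanism at $\fp$ more visible, but as written it carries two gaps that the paper's computation in $\Ciso$ does not encounter.

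The first gap is the assertion that $g$ ``normalises $K_f$ place by place'', so that the idele which is $\nrd(g)_\fp$ at $\fp$ and $1$ elsewhere lies in $\nrd(\normaliser(K_f))$. This presupposes that $K_f$ is a product $\prod_\fq (K_f)_\fq$, which is not among the standing hypotheses of Section~\ref{sec:Vigneras}: there $K_f$ is only required to be an open subgroup of $\G(\adel_{F,f})$ containing $K(\fN)$ with $K_f/\centre(K_f)$ compact, and open compact subgroups of a restricted product need not split as products (one can impose a matching congruence condition across two places). It is telling that, once the product assumption is granted, your argument makes no use at all of the evenness of $\ord_\fq(\nrd(g))$ for $\fq\neq\fp$ --- you could simply replace $g$ by the idele supported at $\fp$ --- whereas the paper's proof uses that evenness essentially to identify $[\nrd(g)]$ with $[\fp]$ in $\Ciso$. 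The second gap is the claim $\nrd\bigl(\normaliser_{\G(F_\fp)}((K_f)_\fp)\bigr)=F_\fp^\times$ in the case $\fp$ ramified in $L$: to combine $\nrd(\Lambda_\fp^\times)=\Z_{\fp}^\times$ with the odd-valuation element $g_\fp$ you need $\Lambda_\fp^\times$ to lie in $\normaliser_{\G(F_\fp)}((K_f)_\fp)$, but $(K_f)_\fp$ is only assumed to be an open compact subgroup of $\Lambda_\fp^\times$ containing $K(\fp^{m_\fp})$, and such a group need not be normal in $\Lambda_\fp^\times$. Your parenthetical about characteristic subgroups of the unique maximal order would close this if $(K_f)_\fp$ were itself one of the congruence subgroups $1+\Jac(\Lambda_\fp)^m$, but that is a further unstated hypothesis. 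The paper never needs to rule out ramification by a separate local argument, and that is exactly the payoff of phrasing the whole thing as a class computation in the exponent-$2$ group $\Ciso$.
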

\begin{proof}
  Let $c$, $c'\in C$ and assume that~$\Gamma_{c}$ and~$\Gamma_{c'}$ are not
  representation equivalent, and let~$L/F$ be as in
  Corollary~\ref{cor:repEquivNoPsi}.
  By definition of~$C_{\iso}$, the class of~$\nrd(g)$ in~$C_{\iso}$ is
  trivial. On the other hand, since~$C_{\iso}$ has exponent dividing~$2$, the
  valuation assumptions imply that the class of~$\nrd(g)$ equals the class
  of~$\fp$. This implies that~$\fp$ splits in~$L$, contradicting
  Corollary~\ref{cor:repEquivNoPsi}~\ref{item:singlePlace}.
  Therefore, the groups~$\Gamma_{c}$
  and~$\Gamma_{c'}$ are representation equivalent.
\end{proof}

\begin{rmk}\label{rk:Rajan}
  Corollary~\ref{cor:Rajan} is essentially the same criterion as
  in~\cite[Section~3]{Rajan}: the condition~(H3) in that paper (with~$v_0=\fp$) implies
  that we can take~$g$ to be a uniformiser of~$D_\fp$ at~$v_0$ and~$g_v=1$
  for~$v\neq v_0$. One superficial difference is that the representation equivalent groups
  in~\cite{Rajan} are arithmetic subgroups of~$\SL_1(D)$, whereas ours are
  subgroups of~$\GL_1(D)$; by choosing appropriate levels $K_f$, our method can be adapted to
  the~$\SL_1(D)$ setting, but we chose to stick to~$\GL_1(D)$ for simplicity of
  the exposition. The similarity is not surprising: our Hecke eigenvalue systems
  with a self-twist exactly correspond to the endoscopic representations studied
  in~\cite{LabesseLanglands} and playing an important role in~\cite{Rajan}. However,
  our method proves representation equivalence
  without an analysis of the Labesse--Langlands multiplicity formula.
\end{rmk}

\subsubsection{Differential forms}
\begin{proposition}\label{prop:diffFormsPsi}
  Let~$\chi$ be an order~$2$ Hecke character, with corresponding
  quadratic extension~$L/F$, let~$\sigma$ denote the non-trivial
  automorphism of~$L/F$, and let $\vecti=(i_v)_v$ and $\vectlambda=(\lambda_v)_v$
  be as in Definition \ref{def:vects}. Then the Hecke eigensystems over~$\CC$
  in~$\Omega^{\vecti}_{\Delta=\vectlambda}(\cY)_{\CC}$ that admit a self-twist
  by~$\chi$ are the Hecke eigenvalue systems attached to an automorphic representation~$\automrep= \automrep_\infty\otimes\automrep_f$
  of $\G(\adel_F)$ such that $\JL_D(\automrep)=\AI_{F}^{L}(\automchar)$ for some unitary Hecke character~$\automchar$ of~$L$ satisfying
  all of the following conditions:
  \begin{enumerate}[leftmargin=*,label=\upshape{(\arabic*)}]
    %\item $\automchar^\sigma \neq \automchar$;
    \item\label{item:diffFormsPsi1} for every place~$v$ of~$F$ that ramifies in~$D$, there
      is a single place~$w$ of~$L$ above~$v$, and we have~$\automchar_w^\sigma \neq \automchar_w$;
    \item for every real place $v$ of $F$ that extends to a complex place $w$ of $L$
      we have
      \begin{itemize}
        \item $i_v=0$ if~$v$ is ramified in~$D$;
        \item $i_v=1$ otherwise;
      \end{itemize}
      $\lambda_v=0$, and $\automchar_w = \automchar_{\CC}(k,0)$ with $k\in \{\pm 1\}$;
    \item for every real place of $F$ that extends to two real places $w$,
      $w'$ of $L$ we have $i_v\in \{0,1,2\}$, $\automchar_{w} = \automchar_{\R}(k,s)$ and
      $\automchar_{w'} = \automchar_{\R}(k',s')$ where $k\equiv k'\mod 2$, and $s=it=-s'$ for $t\in \R$
      satisfying $\tfrac14+t^2=\lambda_v$;
    \item\label{item:diffFormsPsi4} for every complex place $v$ of $F$, extending to places $w$ and $w'$ of $L$,
      we have $\automchar_w=\automchar_{\CC}(k,s)$ and $\automchar_{w'} = \automchar_{\CC}(k',s')$ where either
      \begin{itemize}
        \item $i_v\in \{0,1,2,3\}$, $k=k'=0$, and $s=it=-s'$ with $t\in \R$ satisfying $1+4t^2=\lambda_v$; or
        \item $i_v\in \{1,2\}$, $k=-k'\in \{\pm1\}$, and $s=it=-s'$ with $t\in \R$ satisfying $4t^2=\lambda_v$;
      \end{itemize}
  \end{enumerate}
  each such Hecke eigenvalue system occurring with multiplicity $2^r\dim \automrep_f^{K_f}$,
  where $r$ is the number of real places $v$ of $F$ for which one has $i_v=1$.
\end{proposition}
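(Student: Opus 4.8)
The plan is to reduce the statement to Proposition~\ref{prop:L2psi} together with a place-by-place analysis of archimedean local components, mirroring the structure of the proof of that proposition but now keeping track of the extra data $\vecti$ and $\vectlambda$.

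First I would rewrite the space via Matsushima's formula. By Definition~\ref{def:vects} (which is extracted from Theorem~\ref{thm:Matsushima}), writing $V_v=\CC\otimes_\R\Lambda^{i_v}(\lieg_v/\liek_v)$ for each infinite place $v$, there is a Hecke-equivariant identification
\[
\Omega^{\vecti}_{\Delta=\vectlambda}(\cY)_\CC=\bigoplus_\automrep\Hom_{K_\infty^+}\Bigl(\textstyle\bigotimes_v V_v,\automrep_\infty\Bigr)\otimes_\CC\automrep_f^{K_f},
\]
the sum running over automorphic representations $\automrep$ of $\G(\adel_F)$ with $\automrep_\infty$ trivial on $Z_\infty$ and with Casimir eigenvalue $\lambda_v$ at each infinite place $v$. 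Applying Proposition~\ref{prop:L2psi} with $V=\bigotimes_v V_v$, I get that a Hecke eigensystem over $\CC$ in this space admits a self-twist by $\chi$ precisely when it is attached to such an $\automrep$ with $\automrep_f^{K_f}\neq0$ and with $\JL_D(\automrep)=\AI_F^L(\automchar)$ for a Hecke character $\automchar$ of $L$ satisfying $\automchar^\sigma\neq\automchar$; the multiplicity is then $\dim\automrep_f^{K_f}\cdot\prod_v\dim\Hom_{K_v^+}(V_v,\automrep_v)$. Thus the whole problem becomes local: for each infinite place $v$, using $\JL_{D_v}(\automrep_v)=\AI_{F_v}^{F_v\otimes_FL}(\automchar_v)$, I must decide when $\automrep_v$ is trivial on the local centre, has Casimir eigenvalue $\lambda_v$, and satisfies $\Hom_{K_v^+}(V_v,\automrep_v)\neq0$, and compute the dimension of that Hom-space.

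The bulk of the argument is then this archimedean bookkeeping, for which I would feed in the explicit data from Subsection~\ref{subsec:archi}: the formulas for $\AI_\R^\CC$, $\AI_\R^{\R\times\R}$, $\AI_\CC^{\CC\times\CC}$ and $\JL_\Hamil$, and the tables of $\SO_2(\R)$- and $\SU_2(\CC)$-types, Casimir eigenvalues, and central characters, together with the tables for $\CC\otimes_\R\Lambda^i(\lieg_v/\liek_v)$; I would also note that at a real place ramified in $D$ the symmetric-space factor of $X$ is a point, so there $\lieg_v/\liek_v=0$ and $V_v$ is the trivial $K_v^+$-representation when $i_v=0$ and zero otherwise. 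Running through the four cases --- $v$ real and ramified in $D$; $v$ real, split in $D$, with $L$ non-split at $v$; $v$ real, split in $D$, with $L$ split at $v$; $v$ complex --- the representation $\automrep_v$ is forced into one of the families $t(k,\mu)$, $\DS(k,\mu)$, $\PS_\R$, $\PS_\CC$ (unitarity of $\automchar$ excluding the finite-dimensional possibilities), and the constraints on $(k_w,s_w)$, $i_v$ and $\lambda_v$ fall out of matching $K$-types and reading off Casimir eigenvalues: for instance at a ramified real place one gets $i_v=0$, $k_w=\pm1$, $s_w=0$, $\lambda_v=0$; at a split real place with $L$ split one gets $\automrep_v=\PS_\R(\automchar_w,\automchar_{w'})$, $i_v\in\{0,1,2\}$, $k_w\equiv k_{w'}\bmod2$, $s_w=it=-s_{w'}$ and Casimir eigenvalue $\tfrac14+t^2$; and at a complex place $\automrep_v=\PS_\CC(\automchar_w,\automchar_{w'})$ with $k_{w'}=-k_w$, splitting into $k_w=0$ (any $i_v\in\{0,1,2,3\}$, Casimir $1+4t^2$) and $k_w=\pm1$ ($i_v\in\{1,2\}$, Casimir $4t^2$). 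Unitarity of $\automchar$ is recovered as in Proposition~\ref{prop:L2psi}: $\automchar$ is trivial on $L^\times$, so all $s_w$ share one real part, and the constraints just obtained force it to vanish. Assembling these local biconditionals yields exactly conditions \ref{item:diffFormsPsi1}--\ref{item:diffFormsPsi4}.

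For the multiplicity I would read off, from the same case analysis, that $\dim\Hom_{K_v^+}(V_v,\automrep_v)$ equals $1$ at every complex place and at every real place with $i_v\in\{0,2\}$, and equals $2$ at a (necessarily split-in-$D$) real place with $i_v=1$, since there $V_v\cong r(2)\oplus r(-2)$ meets the $\SO_2$-type of $\automrep_v$ in two distinct characters; multiplying over all infinite places gives the factor $2^r$ with $r=\#\{v\text{ real}:i_v=1\}$, and multiplying by $\dim\automrep_f^{K_f}$ gives the asserted multiplicity. The one genuine obstacle is the volume and delicacy of the local step: one has to get every $K$-type intersection exactly right, handle correctly the degenerate behaviour at real places ramified in $D$, and keep careful track of the factor-two contributions so that the global multiplicity comes out as $2^r\dim\automrep_f^{K_f}$ rather than a power of two with a different exponent.
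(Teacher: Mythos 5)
Your proposal is correct and follows essentially the same route as the paper's own proof: reduce to Proposition~\ref{prop:L2psi} (applied to the $K_\infty^+$-representation $\bigotimes_v\CC\otimes_\R\Lambda^{i_v}(\lieg_v/\liek_v)$, restricted to the $\vectlambda$-eigenspace), then carry out the same place-by-place archimedean case analysis via the tables of Subsection~\ref{subsec:archi}, multiplying the local dimensions $\dim H_v$ to get the factor $2^r\dim\automrep_f^{K_f}$. The case split and the resulting constraints and multiplicities you describe match the paper's argument exactly.
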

\begin{proof}
  We apply Proposition \ref{prop:L2psi} with $V=\Omega^{\vecti}_{\Delta=\vectlambda}(\cY)_{\CC}$.
  Let $\automchar$ and $\automrep$ be as in Proposition \ref{prop:L2psi}.
  For every infinite place $w$ of $L$, write $s_w=it_w$ with~$t_w\in\R$.

  Write $K_{\infty}^+=\prod_v K_v$, with the product running over the infinite places of $F$,
  and for each such place $v$, let $H_v=\Hom_{K_v}(\Lambda^{i_v}(\lieg_v/\liek_v),\automrep_v)$.

  Let $v$ be a real place of $F$ that extends to a complex place $w$ of $L$.
  Then we have $\AI_F^L(\automchar)_v=\DS(1+|k_w|,0)$. By the local computation in
  Section \ref{subsec:archi} the space $H_v$ is non-trivial if and only if
  either
  \begin{itemize}
    \item the place $v$ is unramified in $D$, $i_v=1$, $k_w\in \{\pm 1\}$, and $\lambda_v=0$,
      in which case $\dim H_v=2$; or
    \item the place $v$ is ramified in $D$, $i_v=0$, $k_w\in \{\pm 1\}$, and $\lambda_v=0$,
      in which case $\dim H_v=1$.
  \end{itemize}

  Let $v$ be a real place of $F$ that extends to two real places $w$, $w'$ of $L$.
  Then we necessarily have $\AI_F^L(\automchar)_v=\PS(\automchar_w,\automchar_{w'})$. Moreover, 
  the space $H_v$ is non-trivial if and only if $i_v\in\{0,1,2\}$ and $\lambda_v=\tfrac14+t_w^2$.
  In this case, $\dim H_v=2$ if $i_v=1$, and $\dim H_v=1$ otherwise.

  Let $v$ be a complex place of $F$, extending to two places $w$ and $w'$ of $L$. 
  Then we necessarily have $\AI_F^L(\automchar)_v=\PS(\automchar_w,\automchar_{w'})$. Moreover, 
  the space $H_v$ is non-trivial if and only if one of the following holds:
  \begin{itemize}
    \item $k_w=0$, $i_v\in \{0,1,2,3\}$, $\lambda_v=1+4t_w^2$;
    \item $k_w\in \{\pm1\}$, $i_v\in \{1,2\}$, $\lambda_v=4t_w^2$.
  \end{itemize}
  When $H_v$ is non-trivial, we have $\dim H_v=1$.
  Taking the tensor product of the spaces $H_v$ over all places $v\mid\infty$ gives the result.
\end{proof}

\begin{definition}
  If $L/F$ is a quadratic extension, and $\vecti$ and $\vectlambda$ are as in
  Proposition~\ref{prop:diffFormsPsi}, then an \emph{$(\Omega^{\vecti}_{\Delta=\vectlambda})$-shady character}
  of~$L$ is a unitary Hecke character $\automchar$ of~$L$ such that $L$ and $\automchar$ have
  all of the following properties:
  \begin{itemize}[leftmargin=*]
    \item the field $L$ and the character~$\automchar$ satisfy the conditions~\ref{item:diffFormsPsi1}--\ref{item:diffFormsPsi4}
      in Proposition~\ref{prop:diffFormsPsi}, so that
      in particular, by Theorems~\ref{thm:AI} and~\ref{thm:JL},
      there exists a unique automorphic representation~$\automrep=\automrep_{\infty}\otimes\automrep_f$
      of~$\G(\adel_F)$ satisfying~$\JL_D(\automrep)=\AI_F^L(\automchar)$;
    \item we have $\automrep_f^{K_f}\neq 0$.
  \end{itemize}
\end{definition}

We now prove a general version of Theorem~\ref{thm:IntroProtoMain}, which we
will then specialise.

\begin{theorem}\label{thm:isospectralPsi}
  Let~$c\in C$, let $i\in \Z_{\geq 0}$, and let
  $\lambda\in \R_{\geq 0}$. Then exactly one of the following statements is true:
  \begin{enumerate}[leftmargin=*,label={\upshape(\roman*)}] 
    \item\label{item:isospectralPsiObstruction} there exist a character~$\chi\in \Cisodual\setminus c^\perp$, with
      corresponding quadratic extension~$L/F$,
      %with non-trivial automorphism~$\sigma$, a unitary Hecke character~$\automchar$ of~$L$,
      collections $\vectlambda=(\lambda_v)_v$ and $\vecti=(i_v)_v$ of real numbers, 
      respectively non-negative integers, indexed by the infinite places $v$ of $F$,
      such that~$\sum_v \lambda_v = \lambda$ and~$\sum_v i_v = i$,
      and an $(\Omega^{\vecti}_{\Delta=\vectlambda})$-shady character of $L$;
    \item\label{item:isospectralPsiConclusion} there exists $T\in \abstracthecke[W]_c$ inducing, for all $b\in C$, an
      isomorphism of $\abstracthecke_1$-modules
      $$
        T\colon \Omega^i_{\Delta=\lambda}(Y_{b})\to \Omega^i_{\Delta=\lambda}(Y_{cb}).
      $$
  \end{enumerate}
\end{theorem}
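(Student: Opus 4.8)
The plan is to deduce the theorem from Theorem~\ref{thm:HeckeIsosp}\ref{item:lambdaiso}. That result asserts that exactly one of the following two statements is true: either there exist $\chi\in\Cisodual\setminus c^\perp$ and a Hecke eigensystem over~$\CC$ in $\CC\otimes_{\R}\Omega^i_{\Delta=\lambda}(\cY)=\Omega^i_{\Delta=\lambda}(\cY)_\CC$ admitting a self-twist by~$\chi$ --- call this statement $(\dagger)$ --- or statement~\ref{item:isospectralPsiConclusion} of the present theorem holds. Since statement~\ref{item:isospectralPsiConclusion} is literally the conclusion~(ii) of Theorem~\ref{thm:HeckeIsosp}\ref{item:lambdaiso}, it suffices to prove that $(\dagger)$ is equivalent to statement~\ref{item:isospectralPsiObstruction}; the ``exactly one'' clause then follows directly.

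Two preliminary observations set up the equivalence. First, because $d=2$ by Assumption~\ref{ass:quaternion}, the group $C_{\iso}$ is a quotient of a ray class group by its squares (Equation~\eqref{eq:Ciso}), so it has exponent dividing~$2$; hence every $\chi\in\Cisodual\setminus c^\perp$ is a nontrivial, and therefore order~$2$, Hecke character of~$F$, with a well-defined associated quadratic extension~$L/F$. Secondly, by Corollary~\ref{cor:Matsushima} there is a decomposition $\Omega^i_{\Delta=\lambda}(\cY)_\CC=\bigoplus_{\vectlambda}\bigoplus_{\vecti}\Omega^{\vecti}_{\Delta=\vectlambda}(\cY)_\CC$, the sums running over collections $\vectlambda=(\lambda_v)_v$ and $\vecti=(i_v)_v$ with $\sum_v\lambda_v=\lambda$ and $\sum_v i_v=i$, and by Theorem~\ref{thm:Matsushima} this decomposition is $\abstracthecke$-equivariant. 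Consequently a Hecke eigensystem over~$\CC$ occurs in $\Omega^i_{\Delta=\lambda}(\cY)_\CC$ if and only if it occurs in $\Omega^{\vecti}_{\Delta=\vectlambda}(\cY)_\CC$ for at least one such pair $(\vecti,\vectlambda)$, and in that case it admits a self-twist by~$\chi$ in the ambient space if and only if it does so in the summand, the Hecke eigenvalues being the same in both.

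Now I would prove the two implications. For $\ref{item:isospectralPsiObstruction}\Rightarrow(\dagger)$, suppose we are given $\chi$ with associated extension~$L/F$, collections $\vectlambda$, $\vecti$ with $\sum_v\lambda_v=\lambda$ and $\sum_v i_v=i$, and an $(\Omega^{\vecti}_{\Delta=\vectlambda})$-shady character~$\automchar$ of~$L$. By definition of shady, $L$ and~$\automchar$ satisfy conditions~\ref{item:diffFormsPsi1}--\ref{item:diffFormsPsi4} of Proposition~\ref{prop:diffFormsPsi} and the associated automorphic representation~$\automrep$ has $\automrep_f^{K_f}\neq 0$, so by that proposition the Hecke eigensystem attached to~$\automrep$ occurs in $\Omega^{\vecti}_{\Delta=\vectlambda}(\cY)_\CC$ with multiplicity $2^r\dim\automrep_f^{K_f}\ge 1$ and admits a self-twist by~$\chi$; by the second observation it then also occurs in $\Omega^i_{\Delta=\lambda}(\cY)_\CC$, giving $(\dagger)$. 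Conversely, for $(\dagger)\Rightarrow\ref{item:isospectralPsiObstruction}$, a Hecke eigensystem in $\Omega^i_{\Delta=\lambda}(\cY)_\CC$ with self-twist by some $\chi\in\Cisodual\setminus c^\perp$ occurs, by the second observation, in some summand $\Omega^{\vecti}_{\Delta=\vectlambda}(\cY)_\CC$ with $\sum_v\lambda_v=\lambda$ and $\sum_v i_v=i$, still with a self-twist by~$\chi$. Proposition~\ref{prop:diffFormsPsi} then attaches to it a unitary Hecke character~$\automchar$ of~$L$ satisfying \ref{item:diffFormsPsi1}--\ref{item:diffFormsPsi4}, together with an automorphic representation~$\automrep$; since the eigensystem genuinely occurs, its multiplicity $2^r\dim\automrep_f^{K_f}$ is positive, which forces $\automrep_f^{K_f}\neq 0$. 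Hence $\automchar$ is $(\Omega^{\vecti}_{\Delta=\vectlambda})$-shady, and statement~\ref{item:isospectralPsiObstruction} holds with this $\chi$, $L/F$, $\vectlambda$, $\vecti$, $\automchar$.

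The only point requiring genuine care, I expect, is the bookkeeping around $\chi$: checking that viewing an element of $\Cisodual\setminus c^\perp$ as an order~$2$ Hecke character of~$F$ attached to a quadratic extension is legitimate, and that the self-twist condition of Definition~\ref{def:eigensystem} (which refers to the class of~$\cD$ in~$C$) matches the notion used inside Proposition~\ref{prop:diffFormsPsi} (which is phrased automorphically in terms of $\automrep\otimes\chi\cong\automrep$). Everything else is a formal combination of Theorem~\ref{thm:HeckeIsosp}, Corollary~\ref{cor:Matsushima}, and Proposition~\ref{prop:diffFormsPsi}, with no further computation needed.
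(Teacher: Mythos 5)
Your proposal is correct and takes the same route as the paper: the paper's own proof is precisely the one-line observation that the result follows from Theorem~\ref{thm:HeckeIsosp}~\ref{item:lambdaiso}, Corollary~\ref{cor:Matsushima}, and Proposition~\ref{prop:diffFormsPsi}, and your argument simply unfolds the details of how those three results combine — including the correct handling of the positive-multiplicity requirement (which is what forces $\automrep_f^{K_f}\neq 0$) and the observation, needed for the statement to parse, that $C_{\iso}$ has exponent dividing~$2$ when $d=2$ so that every $\chi\in\Cisodual\setminus c^\perp$ determines a quadratic extension.
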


\begin{proof}
  This is an immediate consequence of Theorem~\ref{thm:HeckeIsosp}~\ref{item:lambdaiso},
  Corollary \ref{cor:Matsushima}, and Proposition \ref{prop:diffFormsPsi}.
\end{proof}

\begin{definition}
  If $L/F$ is a quadratic extension, with non-trivial automorphism~$\sigma$,
  then an \emph{$\Omega^{\bullet}$-shady character} of $L$ is a unitary Hecke
  character $\automchar$ of $L$ such that~$L$ and $\automchar$ have all of the following properties:
  \begin{enumerate}[leftmargin=*]
    %\item $\automchar^\sigma \neq \automchar$;
    \item for every place~$v$ of~$F$ that ramifies in~$D$, there
      is a single place~$w$ of~$L$ above~$v$, and we have~$\automchar_w^\sigma \neq \automchar_w$;
    \item for every real place $v$ of $F$ that
      extends to a complex place $w$ of $L$ we have~$\automchar_w = \automchar_{\CC}(k,0)$
      with $k\in \{\pm 1\}$;
    \item for every real place of $F$ that extends to two real places $w$,
      $w'$ of $L$ we have $\automchar_{w} = \automchar_{\R}(k,s)$ and
      $\automchar_{w'} = \automchar_{\R}(k',s')$ where $k\equiv k'\mod 2$ and $s=-s'\in i\R$;
    \item for every complex place $v$ of $F$, extending to places $w$ and $w'$ of $L$,
      we have $\automchar_w=\automchar_{\CC}(k,s)$ and $\automchar_{w'} = \automchar_{\CC}(k',s')$ where
      $k=-k'\in \{-1,0,1\}$ and $s=-s'\in i\R$;
    \item the unique automorphic representation~$\automrep$ of~$\G(\adel_F)$ such
      that $\JL_D(\automrep) = \AI_F^L(\automchar)$ satisfies~$\automrep_f^{K_f}\neq 0$.
  \end{enumerate}
\end{definition}

The following is Theorem~\ref{thm:IntroProtoMain}
for~$\Omega^\bullet$-isospectrality.

\begin{corollary}\label{cor:allisospectralPsi}
  Let~$c\in C$. Then at least one of the following statements is true:
  \begin{enumerate}[leftmargin=*,label={\upshape(\roman*)}] 
    \item\label{item:allisospectralPsiObstruction} there exist a character~$\chi\in \Cisodual\setminus c^\perp$,
      with corresponding quadratic extension~$L/F$ with non-trivial
      automorphism~$\sigma$, and an $\Omega^\bullet$-shady character of~$L$;
    \item\label{item:allisospectralPsiConclusion} for all $b\in C$ the manifolds
      $Y_{b}$ and $Y_{cb}$ are $i$-isospectral for all~$i\ge 0$.
  \end{enumerate}
\end{corollary}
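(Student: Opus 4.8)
The plan is to deduce this directly from Theorem~\ref{thm:HeckeIsosp}~\ref{item:allLambdasIso}, together with Corollary~\ref{cor:Matsushima} and Proposition~\ref{prop:diffFormsPsi}. The guiding principle is that the notion of an $\Omega^\bullet$-shady character is engineered to be the ``union over all $\vecti$ and $\vectlambda$'' of the notion of an $(\Omega^{\vecti}_{\Delta=\vectlambda})$-shady character, so that any self-twisted Hecke eigensystem in any degree produces an $\Omega^\bullet$-shady character. I would argue by contrapositive: assume that for no $\chi\in\Cisodual\setminus c^\perp$ does the corresponding quadratic extension $L/F$ carry an $\Omega^\bullet$-shady character, fix an arbitrary $i\ge 0$, and show that alternative (i) of Theorem~\ref{thm:HeckeIsosp}~\ref{item:allLambdasIso} fails for this $i$; then alternative (ii) there gives that $Y_b$ and $Y_{cb}$ are $i$-isospectral for all $b\in C$, and since $i$ was arbitrary this is conclusion~\ref{item:allisospectralPsiConclusion}.

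So suppose for contradiction there were $\chi\in\Cisodual\setminus c^\perp$ and a Hecke eigensystem over $\CC$ in $\CC\otimes_\R\bigoplus_{\lambda\in\R}\Omega^i_{\Delta=\lambda}(\cY)$ admitting a self-twist by $\chi$. First I would note that $\chi$ is nontrivial (the trivial character lies in $c^\perp$) and, since $C_{\iso}$ has exponent dividing $d=2$, of order exactly $2$; hence it corresponds to a quadratic extension $L/F$ with nontrivial automorphism $\sigma$. Since every $T_\cD\in\abstracthecke$ commutes with the Laplace operator at each infinite place, each summand $\Omega^i_{\Delta=\lambda}(\cY)_\CC$, hence by Corollary~\ref{cor:Matsushima} each summand $\Omega^{\vecti}_{\Delta=\vectlambda}(\cY)_\CC$, is $\abstracthecke$-stable; a simultaneous eigenvector witnessing the eigensystem lies in a finite sum of such spaces, and projecting onto a nonzero homogeneous component shows that the same eigensystem (still with a self-twist by $\chi$, this being a property of the eigensystem itself) occurs with positive multiplicity in some $\Omega^{\vecti}_{\Delta=\vectlambda}(\cY)_\CC$. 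Proposition~\ref{prop:diffFormsPsi} then provides a unitary Hecke character $\automchar$ of $L$ satisfying its conditions~\ref{item:diffFormsPsi1}--\ref{item:diffFormsPsi4} for this $\vecti,\vectlambda$, attached to an automorphic representation $\automrep$ with $\JL_D(\automrep)=\AI_F^L(\automchar)$ and multiplicity $2^r\dim\automrep_f^{K_f}$ for some $r\ge 0$; positivity of the multiplicity forces $\automrep_f^{K_f}\ne 0$, so $\automchar$ is $(\Omega^{\vecti}_{\Delta=\vectlambda})$-shady.

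The remaining, purely bookkeeping, step is to verify that conditions~\ref{item:diffFormsPsi1}--\ref{item:diffFormsPsi4} of Proposition~\ref{prop:diffFormsPsi}, once the constraints tying the local $k$- and $s$-parameters to $\vecti$ and $\vectlambda$ are discarded, are exactly conditions~(1)--(4) in the definition of an $\Omega^\bullet$-shady character: the ramified-place condition is literally the same; at a real place of $F$ extending to a complex place $w$ of $L$ one has $\automchar_w=\automchar_\CC(k,0)$ with $k\in\{\pm1\}$; at a real place extending to two real places $w,w'$ one has $\automchar_w=\automchar_\R(k,s)$, $\automchar_{w'}=\automchar_\R(k',s')$ with $k\equiv k'\bmod 2$ and $s=-s'\in i\R$; and at a complex place the two alternatives in condition~\ref{item:diffFormsPsi4} combine to $k=-k'\in\{-1,0,1\}$ with $s=-s'\in i\R$ (using $\{0\}\cup\{\pm1\}=\{-1,0,1\}$). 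Together with $\automrep_f^{K_f}\ne0$ this exhibits $\automchar$ as an $\Omega^\bullet$-shady character of $L$ for $\chi\in\Cisodual\setminus c^\perp$, contradicting our assumption. I do not expect a genuine obstacle: all the representation-theoretic and spectral content is already contained in Theorem~\ref{thm:HeckeIsosp}, Corollary~\ref{cor:Matsushima}, and Proposition~\ref{prop:diffFormsPsi}, and the only points requiring care are the elementary observation that a Hecke eigenvector in an algebraic direct sum of $\abstracthecke$-stable eigenspaces has a nonzero eigenvector component in one of them, and the routine matching of the two lists of local conditions, i.e.\ checking that ``$(\Omega^{\vecti}_{\Delta=\vectlambda})$-shady for some compatible $\vecti,\vectlambda$'' is precisely ``$\Omega^\bullet$-shady''.
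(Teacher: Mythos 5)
Your proof is correct and is essentially the paper's argument. The paper simply factors the translation from Hecke eigensystems with self-twist to $(\Omega^{\vecti}_{\Delta=\vectlambda})$-shady characters into the intermediate Theorem~\ref{thm:isospectralPsi} (which it proves from Theorem~\ref{thm:HeckeIsosp}~\ref{item:lambdaiso}, Corollary~\ref{cor:Matsushima} and Proposition~\ref{prop:diffFormsPsi}), and then derives the corollary by observing that an $(\Omega^{\vecti}_{\Delta=\vectlambda})$-shady character is in particular $\Omega^\bullet$-shady; you inline that same translation and route through Theorem~\ref{thm:HeckeIsosp}~\ref{item:allLambdasIso} instead of~\ref{item:lambdaiso}, a cosmetic difference since part~\ref{item:allLambdasIso} is an immediate consequence of part~\ref{item:lambdaiso}.
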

\begin{proof}
  Suppose that part \ref{item:allisospectralPsiObstruction} does not hold. Then nor
  does part \ref{item:isospectralPsiObstruction} of Theorem \ref{thm:isospectralPsi} for any $i$ and $\lambda$.
    Thus part \ref{item:isospectralPsiConclusion} of Theorem \ref{thm:isospectralPsi} holds 
    for every $i$ and $\lambda$, whence we deduce that part \ref{item:allisospectralPsiConclusion}
    of the corollary holds.
\end{proof}

\begin{definition}
  If $L/F$ is a quadratic extension, with non-trivial automorphism~$\sigma$,
  then an \emph{$\Omega^0$-shady character} of $L$ is a unitary Hecke
  character $\automchar$ of $L$ such that~$L$ and $\automchar$ have all of the following properties:
  \begin{enumerate}[leftmargin=*]
    %\item $\automchar^\sigma \neq \automchar$;
    \item for every place~$v$ of~$F$ that ramifies in~$D$, there
      is a single place~$w$ of~$L$ above~$v$, and we have~$\automchar_w^\sigma \neq \automchar_w$;
    \item for every real place $v$ of $F$ that extends to a complex place $w$ of
      $L$, the algebra~$D$ is ramified at~$v$ and
      %\marginpar{Is the condition on the ramification of the algebra
      %superfluous, already determined by~$C_{\iso}$?}
      $\automchar_w = \automchar_{\CC}(k,0)$ with $k\in \{\pm 1\}$;
    \item for every real place of $F$ that extends to two real places $w$,
      $w'$ of $L$ we have $\automchar_{w} = \automchar_{\R}(k,s)$ and
      $\automchar_{w'} = \automchar_{\R}(k',s')$ where $k\equiv k'\mod 2$ and $s=-s'\in i\R$;
      %(TODO check same as "$\automchar^\sigma\automchar$ is unramified at $v$")
    \item for every complex place $v$ of $F$, extending to places $w$ and $w'$ of $L$,
      we have $\automchar_w=\automchar_{\CC}(0,s)$ and $\automchar_{w'} = \automchar_{\CC}(0,s')$
      where~$s=-s'\in i\R$;
    \item the unique automorphic representation~$\automrep$ of~$\G(\adel_F)$ such
      that $\JL_D(\automrep) = \AI_F^L(\automchar)$ satisfies~$\automrep_f^{K_f}\neq 0$.
  \end{enumerate}
\end{definition}

The following is Theorem~\ref{thm:IntroProtoMain} for~$\Omega^0$-isospectrality.

\begin{corollary}\label{cor:zeroisospectralPsi}
  Let~$c\in C$. Then at least one of the following statements is true:
  \begin{enumerate}[leftmargin=*,label={\upshape(\roman*)}] 
    \item there exist a character~$\chi\in \Cisodual\setminus c^\perp$, with
      corresponding quadratic extension~$L/F$,
      and an $\Omega^0$-shady character~$\automchar$ of~$L$;
    \item for all $b\in C$ the manifolds $Y_{b}$ and $Y_{cb}$ are $0$-isospectral.
  \end{enumerate}
\end{corollary}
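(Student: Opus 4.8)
The plan is to follow the template of the proof of Corollary~\ref{cor:allisospectralPsi}, but to use only the degree $i=0$ instance of Theorem~\ref{thm:isospectralPsi}. First I would note that the only collection $\vecti=(i_v)_v$ of non-negative integers with $\sum_v i_v=0$ is the zero collection $\vectzero$, so that in obstruction~\ref{item:isospectralPsiObstruction} of Theorem~\ref{thm:isospectralPsi} applied with $i=0$, the only spaces that can occur are the $\Omega^{\vectzero}_{\Delta=\vectlambda}(\cY)_{\CC}$, and hence the only shady characters that can appear are the $(\Omega^{\vectzero}_{\Delta=\vectlambda})$-shady ones.

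The key step is then to verify that a character is $(\Omega^{\vectzero}_{\Delta=\vectlambda})$-shady for some $\vectlambda$ if and only if it is $\Omega^0$-shady. This is a direct comparison of the conditions of Proposition~\ref{prop:diffFormsPsi}, specialised to $i_v=0$ for all infinite places $v$, with the definition of an $\Omega^0$-shady character. The points worth extracting are: condition~(2) of Proposition~\ref{prop:diffFormsPsi} with $i_v=0$ forces $v$ to ramify in $D$ whenever $v$ is a real place of $F$ extending to a complex place of $L$, which is exactly condition~(2) of the $\Omega^0$-shady definition; condition~(4) with $i_v=0$ forces the first of its two alternatives, hence $k_w=k_{w'}=0$ at every complex place $v$ of $F$, which is condition~(4) of the $\Omega^0$-shady definition; and at the real and complex places of $F$ that split in $L$, condition~(3) (resp.\ the first alternative of~(4)) is compatible with $i_v=0$ and simply records $s_w=-s_{w'}\in i\R$, with $\lambda_v=\tfrac14+|s_w|^2$ (resp.\ $\lambda_v=1+4|s_w|^2$). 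Thus, given a $(\Omega^{\vectzero}_{\Delta=\vectlambda})$-shady character, discarding the data of $\vectlambda$ yields an $\Omega^0$-shady character; conversely, given an $\Omega^0$-shady character, setting $\lambda_v$ by these formulas (and $\lambda_v=0$ at the real places ramified in $D$) produces a $\vectlambda$ for which it is $(\Omega^{\vectzero}_{\Delta=\vectlambda})$-shady. The remaining conditions~(1) and~(5), namely $\automchar_w^\sigma\neq\automchar_w$ above places ramified in $D$ and $\automrep_f^{K_f}\neq0$, are verbatim the same in both definitions.

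With this equivalence the corollary follows formally. Suppose statement~(i) fails: for every $\chi\in\Cisodual\setminus c^\perp$, with corresponding quadratic extension $L/F$, there is no $\Omega^0$-shady character of $L$. By the equivalence just established there is then no $(\Omega^{\vectzero}_{\Delta=\vectlambda})$-shady character of $L$ for any $\vectlambda$, so obstruction~\ref{item:isospectralPsiObstruction} of Theorem~\ref{thm:isospectralPsi} with $i=0$ fails for every $\lambda\in\R_{\ge0}$. Hence conclusion~\ref{item:isospectralPsiConclusion} holds for every $\lambda$: there exists $T\in\abstracthecke[W]_c$ inducing, for all $b\in C$, an isomorphism $\Omega^0_{\Delta=\lambda}(Y_b)\xrightarrow{\ \sim\ }\Omega^0_{\Delta=\lambda}(Y_{cb})$ of $\abstracthecke_1$-modules. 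In particular $\dim\Omega^0_{\Delta=\lambda}(Y_b)=\dim\Omega^0_{\Delta=\lambda}(Y_{cb})$ for all $\lambda\ge0$ and all $b\in C$, which is exactly the statement that $Y_b$ and $Y_{cb}$ are $0$-isospectral. I do not expect a real obstacle here: all the substance has been packaged into Theorem~\ref{thm:isospectralPsi} and Proposition~\ref{prop:diffFormsPsi}, and the only mild subtlety is checking that the $i_v=0$ specialisation of Proposition~\ref{prop:diffFormsPsi} really does eliminate the ``extra'' possibilities (a real-to-complex place unramified in $D$, and the $k_w=\pm1$ branch at a complex place of $F$), which is precisely what makes the definition of $\Omega^0$-shady the right one.
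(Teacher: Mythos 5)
Your proof is correct and follows exactly the route the paper intends: the paper's own proof simply says "completely analogous to that of Corollary~\ref{cor:allisospectralPsi}", and you have spelled out what that analogy amounts to, including the verification that specialising Proposition~\ref{prop:diffFormsPsi} to $\vecti=\vectzero$ recovers precisely the $\Omega^0$-shady conditions (notably that $i_v=0$ forces the ramified-in-$D$ branch at real places extending to complex places, and forces $k_w=k_{w'}=0$ at complex places of $F$).
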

\begin{proof}
  The proof is completely analogous to that of Corollary \ref{cor:allisospectralPsi}.
\end{proof}
%
%\begin{corollary}
%  Let~$c\in C$. Then at least one of the following statements is true:
%  \begin{enumerate}[leftmargin=*,label={\upshape(\roman*)}] 
%    \item there exist a character~$\chi\in \Cisodual\setminus c^\perp$
%      such that the corresponding quadratic extension~$L/F$ is ramified
%      at exactly the same set of real places of~$F$ as~$D$;
%    \item for all $b\in C$ the manifolds $Y_{b}$ and $Y_{cb}$ are $0$-isospectral.
%  \end{enumerate}
%\end{corollary}
%\begin{proof}
%  The result immediately follows from Corollary \ref{cor:zeroisospectralPsi}.  
%\end{proof}
%
\subsubsection{Cohomology}
\begin{proposition}\label{prop:harmonPsi}
  Let~$\chi$ be an order~$2$ Hecke character, with corresponding quadratic
  extension~$L/F$, let~$\sigma$ denote the non-trivial automorphism
  of~$L/F$, and let $\vecti=(i_v)_v$ be as in Definition \ref{def:vects}.
  Then the Hecke eigensystems over~$\CC$ in~$\harmon^{\vecti}(\cY)_{\CC}$
  that admit a self-twist by~$\chi$ are the Hecke eigenvalue systems attached
  to an automorphic representation~$\automrep= \automrep_\infty\otimes\automrep_f$ of $\G(\adel_F)$
  such that $\JL_D(\automrep)=\AI_{F}^{L}(\automchar)$ for some unitary Hecke
  character~$\automchar$ of~$L$ satisfying all of the following:
  \begin{enumerate}[leftmargin=*,label=\upshape{(\arabic*)}]
    %\item $\automchar^\sigma \neq \automchar$;
    \item for every place~$v$ of~$F$ that ramifies in~$D$, there is a single
      place~$w$ of~$L$ above~$v$, and we have~$\automchar_w^\sigma \neq \automchar_w$;
    \item every real place $v$ of $F$ extends to a complex place $w$ of $L$, and for all
      such places we have
      \begin{itemize}
        \item $i_v=0$ if~$v$ is ramified in~$D$;
        \item $i_v=1$ otherwise;
      \end{itemize}
      and $\automchar_w = \automchar_{\CC}(k,0)$ with $k\in \{\pm 1\}$;
    \item for every complex place $v$ of $F$, extending to places $w$ and $w'$ of $L$,
      we have $\automchar_w=\automchar_{\CC}(k,0)$ and $\automchar_{w'} = \automchar_{\CC}(k',0)$, where 
      $i_v\in \{1,2\}$, $k=-k'\in \{\pm1\}$;
  \end{enumerate}
  each such Hecke eigenvalue system occurring with multiplicity $2^r\dim \automrep_f^{K_f}$,
  where $r$ is the number of real places $v$ of $F$ for which one has $i_v=1$.
\end{proposition}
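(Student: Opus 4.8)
The plan is to deduce this proposition from Proposition~\ref{prop:diffFormsPsi}
  by specialising to the zero Laplace eigenvalue. Indeed, by
  Definition~\ref{def:vects} we have
  $\harmon^{\vecti}(\cY)_{\CC}=\Omega^{\vecti}_{\Delta=\vectzero}(\cY)_{\CC}$, so
  it suffices to apply Proposition~\ref{prop:diffFormsPsi} with the same~$\vecti$
  and with~$\vectlambda=\vectzero$, i.e.\ $\lambda_v=0$ for every infinite
  place~$v$ of~$F$, and then to simplify the resulting
  conditions~\ref{item:diffFormsPsi1}--\ref{item:diffFormsPsi4}.

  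First I would eliminate the archimedean conditions that become vacuous
  when~$\lambda_v=0$. At a real place~$v$ of~$F$ that extends to two real
  places~$w,w'$ of~$L$, condition~(3) of Proposition~\ref{prop:diffFormsPsi}
  demands $\lambda_v=\tfrac14+t^2$ for some~$t\in\R$, which is incompatible
  with~$\lambda_v=0$; hence no real place of~$F$ splits in~$L$, which is precisely
  the clause ``every real place~$v$ of~$F$ extends to a complex place~$w$ of~$L$''
  in condition~(2) of the present statement. At a complex place~$v$ of~$F$,
  extending to complex places~$w,w'$ of~$L$, the first alternative of
  condition~\ref{item:diffFormsPsi4} demands $\lambda_v=1+4t^2>0$ and so cannot
  occur, while the second alternative with~$\lambda_v=0$ forces~$t=0$, hence
  $s_w=s_{w'}=0$, $i_v\in\{1,2\}$ and $k=-k'\in\{\pm1\}$; this is condition~(3) of
  the present statement. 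At a real place~$v$ of~$F$ extending to a complex
  place~$w$ of~$L$, condition~(2) of Proposition~\ref{prop:diffFormsPsi} already
  has~$\lambda_v=0$ built in and reads verbatim as in the present statement, and
  condition~\ref{item:diffFormsPsi1} is unchanged. The multiplicity
  $2^r\dim\automrep_f^{K_f}$, with~$r$ the number of real places of~$F$ for
  which~$i_v=1$, is inherited directly from Proposition~\ref{prop:diffFormsPsi}.

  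I do not expect any genuine obstacle here: the argument is a direct
  specialisation, and the only point requiring care is to verify that, once
  $\vectlambda$ is set to zero, the case analysis in
  Proposition~\ref{prop:diffFormsPsi} remains exhaustive and collapses to exactly
  the list of conditions stated here, including the (now forced) assertion that
  $F$ has no real place split in~$L$. This amounts to a routine re-inspection of
  the archimedean lists recalled in Section~\ref{subsec:archi}, together with the
  observation that the equations $\tfrac14+t^2=0$ and $1+4t^2=0$ have no real
  solutions.
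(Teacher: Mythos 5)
Your proposal is correct and takes exactly the approach of the paper, which simply says the result is obtained by specialising Proposition~\ref{prop:diffFormsPsi} to $\vectlambda=\vectzero$. Your additional verifications (that $\tfrac14+t^2=0$ and $1+4t^2=0$ have no real solutions, forcing the real places of $F$ to ramify in $L$ and only the second alternative at complex places to survive) are precisely the routine simplifications the paper's one-line proof elides.
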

\begin{proof}
  The result is obtained by specialising Proposition \ref{prop:diffFormsPsi} to
  $\vectlambda=\vectzero$.
\end{proof}

\begin{definition}\label{def:Hshady}
  If $L/F$ is a quadratic extension, with non-trivial automorphism~$\sigma$,
  then an \emph{$\cH^\bullet$-shady character} of $L$ is a unitary Hecke
  character $\automchar$ of $L$ such that~$L$ and $\automchar$ have all of the following properties:
  \begin{enumerate}[leftmargin=*,label={\upshape(\arabic*)}]
    \item\label{item:Hshadyramified} for every place~$v$ of~$F$ that ramifies in~$D$, there is a single
      place~$w$ of~$L$ above~$v$, and we have~$\automchar_w^\sigma \neq \automchar_w$;
    \item\label{item:HshadyR} every real place $v$ of $F$ extends to a complex place $w$ of $L$, and for all
      such places we have
      $\automchar_w = \automchar_{\CC}(k,0)$ with $k\in \{\pm 1\}$;
    \item\label{item:HshadyC} for every complex place $v$ of $F$, extending to places $w$ and $w'$ of $L$,
      we have $\automchar_w=\automchar_{\CC}(k,0)$ and $\automchar_{w'} = \automchar_{\CC}(k',0)$, where 
      $k=-k'\in \{\pm1\}$;
    \item\label{item:Hshadylevel} the unique automorphic representation~$\automrep$ of~$\G(\adel_F)$ such
      that $\JL_D(\automrep) = \AI_F^L(\automchar)$ satisfies~$\automrep_f^{K_f}\neq 0$.
  \end{enumerate}
\end{definition}

The following is Theorem~\ref{thm:IntroProtoMain}
for~$\cH^\bullet$-isospectrality and, together with
Lemma~\ref{lem:RegConstFormalism}, implies Theorem~\ref{thm:IntroRegLL}
parts~\ref{item:IntroRegLLRegConst} and~\ref{item:IntroRegLLProd}.

\begin{theorem}\label{thm:ratlRegPsi}
  Let $c\in C$, and let $i\in \Z_{\geq 0}$.
  Then exactly one of the following statements is true:
  \begin{enumerate}[leftmargin=*,label={\upshape(\roman*)}] 
    \item we have $\#\cV_{\R}+\#\cV_{\CC}\leq i\leq \#\cV_{\R}+ 2\#\cV_{\CC}$ and
      there exist a character~$\chi\in \Cisodual\setminus c^\perp$, with
      corresponding quadratic extension~$L/F$, and an $\cH^\bullet$-shady character of $L$;
    \item for all $b\in C$ the homogeneous components~$H_i(Y_b,\Z)_{\free}$ and
      $H_i(Y_{cb},\Z)_{\free}$ are linked in the sense of Definition
      \ref{def:linked}, and we have
      \[
        \frac{\Reg_i(Y_{b})^2}{\Reg_i(Y_{cb})^2}\in \Q^{\times}.
      \]
  \end{enumerate}
\end{theorem}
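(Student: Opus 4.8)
The plan is to combine the dichotomy of Theorem~\ref{thm:regQuosRatl} with the explicit description of self-twisted eigensystems in harmonic forms obtained above. First I would apply Theorem~\ref{thm:regQuosRatl} to $c$ and $i$: exactly one of the following holds, namely (a) there exist $\chi\in\Cisodual\setminus c^\perp$ and a Hecke eigensystem over $\CC$ in $\CC\otimes_\R\cH^i(\cY)$ admitting a self-twist by $\chi$, or (b) for all $b\in C$ the homogeneous components $H_i(Y_b,\Z)_{\free}$ and $H_i(Y_{cb},\Z)_{\free}$ are linked. If (b) holds, then part~\ref{item:rationalregs} of Lemma~\ref{lem:RegConstFormalism} gives $\Reg_i(Y_b)^2/\Reg_i(Y_{cb})^2\in\Q^\times$ for every $b$, which is statement~(ii). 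Conversely, statement~(ii) literally contains the linking assertion in (b), and we just saw (b) implies (ii); so (b) and (ii) are equivalent. It therefore remains to show that (a) is equivalent to statement~(i), after which the ``exactly one'' claim follows from the exclusivity of (a) and (b).

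To go from (a) to (i) I would decompose, using Corollary~\ref{cor:Matsushima}, the space $\CC\otimes_\R\cH^i(\cY)=\bigoplus_{\vecti}\harmon^{\vecti}(\cY)_\CC$ as a direct sum over tuples $\vecti=(i_v)_v$ with $\sum_v i_v=i$, a decomposition into $(\CC\otimes\abstracthecke)$-submodules by the Hecke-equivariance in the Matsushima formula. A vector realizing the self-twisted eigensystem projects non-trivially onto some summand, so the eigensystem (still with its self-twist by $\chi$) occurs in some $\harmon^{\vecti}(\cY)_\CC$. Writing $L/F$ for the quadratic extension attached to $\chi$ — legitimate since under Assumption~\ref{ass:quaternion} the group $C_{\iso}$ has exponent dividing $2$, so $\chi$ is an order-$2$ Hecke character — Proposition~\ref{prop:harmonPsi} applied to this $\vecti$ then exhibits a unitary Hecke character $\automchar$ of $L$ with $\JL_D(\automrep)=\AI_F^L(\automchar)$, satisfying conditions~(1)--(3) of that proposition and with $\automrep_f^{K_f}\neq 0$ (the eigensystem has positive multiplicity $2^r\dim\automrep_f^{K_f}$). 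Comparing the two lists, the restrictions on $\automchar$ in conditions~(1)--(3) of Proposition~\ref{prop:harmonPsi} are precisely items~\ref{item:Hshadyramified}--\ref{item:HshadyC} of Definition~\ref{def:Hshady}, and the level condition is item~\ref{item:Hshadylevel}, so $\automchar$ is $\cH^\bullet$-shady. Finally those same conditions force $i_v=1$ at the real places of $F$ split in $D$, $i_v=0$ at the real places ramified in $D$, and $i_v\in\{1,2\}$ at the complex places, hence $i=\#\cV_{\R}+\sum_{v\in\cV_{\CC}}i_v$ with $\#\cV_{\CC}\le\sum_{v\in\cV_{\CC}}i_v\le 2\#\cV_{\CC}$, which is exactly the inequality in statement~(i).

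For the reverse implication (i)$\Rightarrow$(a) I would run Proposition~\ref{prop:harmonPsi} backwards. Given an $\cH^\bullet$-shady character $\automchar$ of the extension $L/F$ attached to some $\chi\in\Cisodual\setminus c^\perp$, and with $i$ in the stated range, choose $\vecti$ having $i_v=1$ for $v\in\cV_{\R}$, $i_v=0$ for $v\in\cV_{\Hamil}$, and values in $\{1,2\}$ at the complex places summing to $i-\#\cV_{\R}$ (possible because $\#\cV_{\CC}\le i-\#\cV_{\R}\le 2\#\cV_{\CC}$); note that $\cH^\bullet$-shadiness already forces every real place of $F$ to extend to a complex place of $L$, matching condition~(2) of Proposition~\ref{prop:harmonPsi}. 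Then $\automchar$ satisfies every hypothesis of Proposition~\ref{prop:harmonPsi} for this $\vecti$, so there is a self-twisted Hecke eigensystem over $\CC$ in $\harmon^{\vecti}(\cY)_\CC\subseteq\CC\otimes_\R\cH^i(\cY)$; that is, (a) holds. Combining both directions, (a)$\Leftrightarrow$(i) and (b)$\Leftrightarrow$(ii), and since exactly one of (a), (b) holds by Theorem~\ref{thm:regQuosRatl}, exactly one of (i), (ii) holds.

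I expect the only part requiring genuine care — as opposed to assembling cited results — to be the side-by-side comparison of the local archimedean conditions in Proposition~\ref{prop:harmonPsi} with those defining an $\cH^\bullet$-shady character, together with the associated arithmetic that converts the admissible profiles $\vecti$ into the bound $\#\cV_{\R}+\#\cV_{\CC}\le i\le\#\cV_{\R}+2\#\cV_{\CC}$; the Hecke-equivariance of the Matsushima decomposition and the order-$2$ claim for $\chi$ under Assumption~\ref{ass:quaternion} also need to be recorded but follow immediately from what has been established.
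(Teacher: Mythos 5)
Your proposal is correct and follows exactly the route the paper intends: the paper's proof is a one-line citation to Theorem~\ref{thm:regQuosRatl}, Corollary~\ref{cor:Matsushima}, Proposition~\ref{prop:harmonPsi}, and Lemma~\ref{lem:RegConstFormalism}, and your write-up is precisely the expected expansion of that citation, including the side-by-side comparison of conditions (1)--(3) of Proposition~\ref{prop:harmonPsi} with Definition~\ref{def:Hshady}, and the combinatorial observation that the admissible profiles $\vecti$ force the bound $\#\cV_\R+\#\cV_\CC\le i\le\#\cV_\R+2\#\cV_\CC$.
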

\begin{proof}
  This is an immediate consequence of Theorem \ref{thm:regQuosRatl}, Corollary \ref{cor:Matsushima},
  Proposition \ref{prop:harmonPsi}, and Lemma \ref{lem:RegConstFormalism}.
\end{proof}

\begin{remark}
  The range of~$i$ appearing in Theorem~\ref{thm:ratlRegPsi} is sometimes called the
  cuspidal range or the tempered range. Outside that range, the only automorphic
  representations contributing to harmonic forms are non-cuspidal, and moreover
  by Theorem~\ref{thm:ratlRegPsi} we get automatic rationality for the regulator quotient.
  %\marginpar{By looking more closely at the Hecke action on top degree homology, we could
  %give a new(?) proof that the volumes of the various components are the same}
\end{remark}

\begin{definition}\label{def:balanced}
  Let~$\mYu$ and~$\mYa$ be fields and~$\mE/\mYu$ a quadratic \'etale algebra.
  A collection~$(q_\tau)_\tau \in \{0,1\}^{\Hom(\mE,\mYa)}$ is \emph{balanced} if
  for every~$\tau\neq \tau' \in \Hom(\mE,\mYa)$ such that~$\tau|_{\mYu} =
  \tau'|_{\mYu}$, we have~$\{q_\tau,q_{\tau'}\} = \{0,1\}$.
\end{definition}

\begin{proposition}\label{prop:algchar}
  Let~$L/F$ be a quadratic extension with~$L$ totally complex, let~$\automchar$ be a Hecke character of~$L$,
  and let~$\automchar^{\alg} = \automchar \|\cdot\|^{-1/2}$.
  Then~$\automchar$ satisfies \ref{item:HshadyR} and \ref{item:HshadyC} of
  Definition~\ref{def:Hshady} if and only if~$\automchar^{\alg}$ is an algebraic Hecke
  character whose type is balanced.
\end{proposition}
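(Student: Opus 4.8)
The three properties in the statement --- satisfying conditions \ref{item:HshadyR} and \ref{item:HshadyC} of Definition~\ref{def:Hshady}, being an algebraic Hecke character, and having balanced type --- all depend only on the archimedean components $\automchar_w$ of $\automchar$, so the plan is to reduce the whole equivalence to a place-by-place computation over the infinite places $v$ of $F$, using crucially that $L$ is totally complex (so every such $v$ has only complex places of $L$ above it, and $L$ has no real place) and that $\automchar$, being a Hecke character, is unitary (so $s_w\in i\R$ at every infinite place $w$ of $L$). First I would fix, once and for all, the identification conventions of Section~\ref{sec:GL1}, write $\automchar_w=\automchar_\CC(k_w,s_w)$, observe that twisting by a suitable half-integral power of the norm character shifts each $s_w$ by a fixed rational constant without changing $k_w$, so that $\automchar^{\alg}_w=\automchar_\CC(k_w,s_w')$, and record the standard relation with the infinity type: if $\tau_w\colon L\to\CC$ denotes the embedding attached to the chosen identification $L_w\cong\CC$ and $\overline{\tau_w}$ its complex conjugate, then expanding $\automchar^{\alg}_w$ gives $q_{\tau_w}=s_w'+k_w/2$ and $q_{\overline{\tau_w}}=s_w'-k_w/2$, these being integers exactly when $\automchar^{\alg}$ is algebraic.

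With this in hand, the equivalence becomes a short computation carried out at each infinite place of $F$. In one direction, if $\automchar^{\alg}$ is algebraic then $s_w'+k_w/2\in\Z$ for every infinite place $w$ of $L$; since $s_w'$ differs from $s_w\in i\R$ by a fixed rational number, this forces $s_w=0$ and $k_w$ of the fixed parity for every $w$. The pairs of embeddings of $L$ with a common restriction to $F$ are then of exactly the two shapes appearing in Definition~\ref{def:Hshady}: for a real place $v$ of $F$ --- which ramifies in $L$ because $L$ is totally complex --- with unique place $w$ above it, the relevant pair is the conjugate pair $\{\tau_w,\overline{\tau_w}\}$, and balancedness of $\{q_{\tau_w},q_{\overline{\tau_w}}\}$ forces $|k_w|=1$, which is condition \ref{item:HshadyR} (and this is insensitive to the free choice of $L_w\cong\CC$, which merely interchanges $\tau_w$ and $\overline{\tau_w}$); for a complex place $v$ of $F$ with places $w,w'$ above it, the convention of Section~\ref{sec:GL1} that $L_w\cong\CC$ and $L_{w'}\cong\CC$ extend a single $F_v\cong\CC$ makes $\tau_w$ and $\tau_{w'}$ precisely the two extensions of one embedding of $F$, so balancedness of $\{q_{\tau_w},q_{\tau_{w'}}\}$ forces $k_w=-k_{w'}\in\{\pm1\}$, which is condition \ref{item:HshadyC}, and balancedness of the conjugate pair $\{q_{\overline{\tau_w}},q_{\overline{\tau_{w'}}}\}$ is then automatic. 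Conversely, conditions \ref{item:HshadyR} and \ref{item:HshadyC} say exactly that $s_w=0$ at every infinite place of $L$ and that $|k_w|=1$, with $k_w=-k_{w'}$ at the places above a complex place of $F$; substituting into the formulas above shows that $\automchar^{\alg}$ is algebraic with type taking values in $\{0,1\}$ and satisfying the balancedness condition, which finishes the proof.

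I expect the real difficulty to be bookkeeping rather than ideas: keeping straight the two identification conventions (the unconstrained choice of $L_w\cong\CC$ above a real place of $F$, which negates $k_w$ but leaves the unordered conditions invariant, versus the rigid compatible choice above a complex place of $F$, which is what pairs up the embeddings correctly), and, most of all, pinning down the exact constant shift introduced by the $\|\cdot\|^{\pm1/2}$ twist against the normalisations of Section~\ref{sec:GL1} so that the resulting infinity type genuinely lands in $\{0,1\}$ as demanded by Definition~\ref{def:balanced}. The one conceptual point that makes everything rigid is the unitarity of $\automchar$: it is precisely what forces the analytic ("$s$") part of $\automchar$ to vanish the moment $\automchar^{\alg}$ is algebraic, so that the whole statement collapses to a finite archimedean check at the places of $F$.
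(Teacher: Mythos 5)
Your architecture is the right one and matches the paper's approach: the paper's proof is a single sentence citing \cite[Lemma 4]{MolinPage} and recording the relations between the type $(q_\tau)$ and the parameters $(k_w)$, which you unpack into the archimedean place-by-place check. Your treatment of the two kinds of pairs of embeddings (the conjugate pair above a real place of $F$, and the non-conjugate pair above a complex place of $F$ governed by the rigid compatibility convention of Section~\ref{sec:GL1}), and your use of unitarity to force $s_w=0$, are all correct.

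But the sign you deferred is not harmless. Your formulas $q_{\tau_w}=s_w'+k_w/2$ and $q_{\overline{\tau_w}}=s_w'-k_w/2$, with $s_w'=s_w-\tfrac12$ and $s_w=0$, give $q_{\tau_w}+q_{\overline{\tau_w}}=-1$. For $k_w\in\{\pm 1\}$ this puts the type in $\{0,-1\}$, not in $\{0,1\}$ as required by Definition~\ref{def:balanced}, so the argument as you have written it would \emph{disprove} balancedness rather than prove it. The paper's own proof records instead $q_\tau+q_{\bar\tau}=1$ and $q_{\bar\tau}-q_\tau=k_\tau$, which is the negation of your relations and is what makes the proposition true. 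In other words, the "bookkeeping" you punted on is precisely a global sign in the normalisation of the type (i.e.\ whether the $q_\tau$ are read off from $\automchar^{\alg}_\infty$ or from $(\automchar^{\alg}_\infty)^{-1}$), and it determines whether the twist sends the type into $\{0,1\}$ or into $\{0,-1\}$. Since you explicitly flagged this as the delicate point, you diagnosed the issue correctly, but a complete proof must actually resolve it — either by fixing the opposite convention for the type, or by observing that the paper's stated definition of type and the stated proof relations already disagree by this sign and deciding which to follow.
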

\begin{proof}
  This is well-known (see e.g.~\cite[Lemma 4]{MolinPage}).
  The type~$(q_\tau)_\tau$ of~$\automchar^{\alg}$ is related to the parameters~$(k_w)_w$
  of~$\automchar$ by the relations~$q_{\tau}+q_{\bar\tau} = 1$
  and~$q_{\bar\tau}-q_{\tau} = k_\tau$.
\end{proof}

\begin{remark}
  The conditions on~$\automchar^{\alg}$ also imply that for every complex
  embedding~$\tau$ of~$L$ we have~$\{q_{\tau},q_{\bar\tau}\} = \{0,1\}$.
\end{remark}

The following corollary shows that~$\cH^\bullet$-isospectrality is in fact very
frequent.

\begin{cor}
  Let $c\in C$, and let $i\in \Z_{\geq 0}$.
  Then at least one of the following statements is true:
  \begin{enumerate}[leftmargin=*,label={\upshape(\roman*)}]
  \item
    there exist a character~$\chi\in \Cisodual\setminus c^\perp$
    corresponding to a quadratic extension~$L/F$ such that
    $L$ contains a CM subfield
      %and the maximal CM subfield of~$L$
      %is not contained in~$F$;
    not contained in~$F$;
  \item for all $b\in C$ we have
    \[
      \frac{\Reg_i(Y_{c})^2}{\Reg_i(Y_{c'})^2}\in \Q^{\times}.
    \]
  \end{enumerate}
\end{cor}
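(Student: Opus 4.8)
The plan is to deduce this from Theorem~\ref{thm:ratlRegPsi}, whose alternative~(ii) already contains the rationality asserted in~(ii) of the present corollary (for every~$b\in C$). Thus it suffices to prove that when alternative~(i) of the corollary fails, so does alternative~(i) of Theorem~\ref{thm:ratlRegPsi}: that is, assuming that no $\chi\in\Cisodual\setminus c^\perp$ has its associated quadratic extension $L/F$ containing a CM subfield not contained in~$F$, I must show that no such $L$ admits an $\cH^\bullet$-shady character.

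So fix $\chi\in\Cisodual\setminus c^\perp$ with associated extension $L/F$, and suppose for contradiction that $\automchar$ is an $\cH^\bullet$-shady character of $L$. Condition~\ref{item:HshadyR} of Definition~\ref{def:Hshady} forces every real place of $F$ to become complex in~$L$, so $L$ is totally complex and Proposition~\ref{prop:algchar} applies: $\automchar^{\alg}=\automchar\,\|\cdot\|^{-1/2}$ is an algebraic Hecke character of $L$ whose type $(q_\tau)_{\tau\in\Hom(L,\CC)}$ is balanced in the sense of Definition~\ref{def:balanced}. In particular, for each embedding of~$F$ its two (distinct) extensions $\tau,\tau'$ to $L$ satisfy $\{q_\tau,q_{\tau'}\}=\{0,1\}$, so $q_\tau\neq q_{\tau'}$.

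The key input is Weil's structure theorem for algebraic Hecke characters, as packaged in the theory of the Serre group (see, e.g., Serre, \emph{Abelian $\ell$-adic representations and elliptic curves}, Ch.~II, or Schappacher, \emph{Periods of Hecke characters}, Ch.~0): the group of infinity types of algebraic Hecke characters of $L$ is generated by the type of the norm character~$\|\cdot\|_L$ together with the types induced, via the restriction maps $\Hom(L,\CC)\to\Hom(L_0,\CC)$, from the CM subfields $L_0\subseteq L$. Under our hypothesis every CM subfield $L_0$ of $L$ lies inside $F$; for such $L_0$ a type induced from $L_0$ has $q_\tau$ depending only on $\tau|_{L_0}$, hence only on $\tau|_F$, and the norm type is constant, so it has this property too. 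Hence the infinity type of $\automchar^{\alg}$ also has $q_\tau$ depending only on $\tau|_F$, contradicting the previous paragraph. So $L$ admits no $\cH^\bullet$-shady character; therefore alternative~(i) of Theorem~\ref{thm:ratlRegPsi} fails, alternative~(ii) holds, and in particular the rationality in~(ii) of the corollary follows.

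I expect the only real difficulty to be locating and correctly applying the precise form of Weil's theorem — that the infinity type of \emph{every} algebraic Hecke character of $L$ is built from the norm type and inductions from CM subfields — and checking that this is exactly what the balanced condition rules out once all CM subfields of $L$ are confined to~$F$. The rest is routine manipulation of restrictions of embeddings, together with the already-established equivalence between $\cH^\bullet$-shady characters and balanced algebraic Hecke characters from Proposition~\ref{prop:algchar}.
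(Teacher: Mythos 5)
Your proof is correct and takes essentially the same approach as the paper: reduce to Theorem~\ref{thm:ratlRegPsi}, convert an $\cH^\bullet$-shady character into a balanced algebraic Hecke character via Proposition~\ref{prop:algchar}, and invoke the Artin--Weil theorem to produce a CM subfield of $L$ not contained in $F$. The only cosmetic difference is that you quote Weil's theorem as a generation statement (norm type plus inductions from CM subfields), whereas the paper phrases it as the type descending to the maximal CM subfield; the two formulations carry the same content and both yield the contradiction with the balanced condition in the same way.
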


\begin{proof}
  Assume that the second statement does not hold, and
  apply Theorem~\ref{thm:ratlRegPsi} and Proposition~\ref{prop:algchar} to get
  corresponding characters~$\automchar$ and~$\automchar^{\alg}$. By the
  Artin--Weil theorem~(see \cite{Weil} for the statement and, for example,
  \cite[Lemma 2.3.4]{Patrikis} for several proofs), the existence of an algebraic Hecke character
  of non-parallel type forces $L$ to contain a CM subfield, and therefore a
  maximal one,~$M$, say. The type
  of~$\automchar^{\alg}$ descends to~$M$ but not to~$F$, so that~$F$ cannot
  contain~$M$.
\end{proof}

\begin{remark}
  As we consider weaker and weaker notions of isospectrality, there are fewer
  and fewer Hecke characters that can prevent us from proving the respective isospectrality.
  We find it instructive to list the conditions on the shady characters
  incrementally, as follows.

  The shady characters~$\automchar$ appearing in Corollary~\ref{cor:allisospectralPsi}
  ($i$-isospectrality for all~$i$)
  are exactly the characters appearing in Theorem~\ref{thm:repEquivPsi}
  (representation equivalence) that
  satisfy the additional conditions:
  \begin{itemize}[leftmargin=*]
    \item for every real place $v$ of $F$ that
      extends to a complex place $w$ of $L$ we have~$k_w\in \{\pm 1\}$, and
    \item for every complex place $v$ of $F$, extending to places $w$ and $w'$ of $L$,
      we have $k_w\in \{-1,0,1\}$.
  \end{itemize}

  Assuming that~$L$ and~$D$ are ramified at the same set of real places of~$F$,
  the shady characters~$\automchar$ appearing in
  Corollary~\ref{cor:zeroisospectralPsi} ($0$-isospectrality)
  are exactly the characters appearing in Corollary~\ref{cor:allisospectralPsi}
  ($i$-isospectrality for all~$i$) that
  satisfy the additional condition:
  \begin{itemize}[leftmargin=*]
    \item for every complex place $v$ of $F$, extending to places $w$ and $w'$ of $L$,
      we have~$k_w = 0$.
  \end{itemize}

  Assuming that~$L$ is totally complex,
  the shady characters~$\automchar$ appearing in Theorem~\ref{thm:ratlRegPsi}
  (rationality of regulator quotients) are exactly the characters appearing in
  Corollary~\ref{cor:allisospectralPsi} ($i$-isospectrality for all~$i$) that
  satisfy the additional condition:
  \begin{itemize}[leftmargin=*]
    \item for every complex place $v$ of $F$, extending to places $w$ and $w'$ of $L$,
      we have~$k_w \in \{\pm 1\}$ and~$s_w=0$.
  \end{itemize}
\end{remark}

\subsection{Sparsity of non-matching Laplace eigenvalues}\label{sec:KelmerConverse}
In this subsection we show that Vign\'eras pairs of manifolds are always
``almost isospectral''. The main result is Theorem \ref{thm:KelmerConverse},
which in particular implies Theorem \ref{thm:IntroKelmerConverse} from the Introduction.
\begin{notation}
  Let $V$ be a finite-dimensional representation of $K_{\infty}$ and $\Gamma$
  a discrete cocompact subgroup of $G_{\infty}$. Then we set
\[
  \Omega(V,\Gamma) = \Hom_{K_\infty}(V, \rL^2(\Gamma\lquo G_\infty / Z_\infty)).
\]
By $\Delta$ we denote the operator on $\Omega(V,\Gamma)$ induced by
the Casimir operator on the $\rL^2$-space.
If $A$ is a commutative ring, $a\colon A\to \CC$ is a ring homomorphism, and $\Omega$
is an $A_{\CC}$-module, we write $\Omega_{A=a}$ for the subspace of $\Omega$
consisting of all elements $\omega\in\Omega$ such that one has $t\omega=a(t)\omega$ for all $t\in A$.
\end{notation}

\begin{lemma}\label{lem:globalbound}
  Let~$V$ be a finite-dimensional representation of~$K_\infty$.
  Then there exists~$\kappa>0$ such that for every ring homomorphism~$a\colon
  \abstracthecke_1 \to \CC$, we have
  %\marginpar{I would like to say eigensystem of~$\abstracthecke_1$ but we have not
  %defined it.}
  \[
    \sum_{c\in C}\dim \Omega(V,\Gamma_{c})_{\abstracthecke_1=a} \le \kappa.
  \]
\end{lemma}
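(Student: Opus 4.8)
The plan is to pass to the adelic picture and diagonalise the whole $\rL^2$-space simultaneously. The decomposition of $\G(F)^+\lquo\G(\adel_F)/Z_\infty K_f$ into the components $\Gamma_c\lquo G_\infty/Z_\infty$ gives a canonical identification $\bigoplus_{c\in C}\Omega(V,\Gamma_c)=\Hom_{K_\infty}\bigl(V,\rL^2(\G(F)^+\lquo\G(\adel_F)/Z_\infty K_f)\bigr)$ of $\abstracthecke$-modules, so that the sum in the statement is the dimension of the $a$-eigenspace of $\abstracthecke_1$ on the right-hand side. Feeding in the spectral decomposition $\widehat\bigoplus_\automrep V(\automrep)$ used in the proof of Theorem~\ref{thm:repEquiv}, where $V(\automrep)\cong\automrep_\infty\otimes_\CC\automrep_f^{K_f}$ by Multiplicity~$1$ (Theorem~\ref{thm:MultOne}) runs over automorphic representations of $\G(\adel_F)$ with $\automrep_\infty$ trivial on $Z_\infty$, and applying $\Hom_{K_\infty}(V,-)$, we obtain $\bigoplus_{c}\Omega(V,\Gamma_c)=\bigoplus_\automrep\Hom_{K_\infty}(V,\automrep_\infty)\otimes_\CC\automrep_f^{K_f}$. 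Since $\abstracthecke$ is generated by the spherical Hecke operators at the primes $\fp\nmid\delta_D\fN$, it acts through the one-dimensional space $\bigotimes_{\fp\nmid\delta_D\fN}\automrep_\fp^{\GL_2(\Z_\fp)}$ and hence acts on the whole summand $V(\automrep)$ via a single ring homomorphism $a_\automrep\colon\abstracthecke\to\CC$. By Lemma~\ref{lem:localboundarch} and Lemma~\ref{lem:localboundfin} one has $\dim\Hom_{K_\infty}(V,\automrep_\infty)\cdot\dim\automrep_f^{K_f}\le\kappa_\infty\kappa_f$ for all $\automrep$, so
\[
  \sum_{c\in C}\dim\Omega(V,\Gamma_c)_{\abstracthecke_1=a}\le \kappa_\infty\kappa_f\cdot\#\bigl\{\automrep : \automrep_f^{K_f}\neq 0,\ a_\automrep|_{\abstracthecke_1}=a\bigr\},
\]
and it remains to bound this cardinality uniformly in $a$.

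First, $\automrep\mapsto a_\automrep$ is injective on automorphic representations of $\G(\adel_F)$: the scalars $a_\automrep(T_\fp)$ and $a_\automrep(T_{(\fp,\fp)})$ for $\fp\nmid\delta_D\fN$ recover the Satake parameters of $\JL_D(\automrep)_\fp$, so injectivity follows from strong multiplicity one (Theorem~\ref{thm:MultOne}) and injectivity of the Jacquet–Langlands transfer (Theorem~\ref{thm:JL}). It therefore suffices to bound the number of ring homomorphisms $b\colon\abstracthecke\to\CC$ of the form $a_\automrep$ with $b|_{\abstracthecke_1}=a$, and for this I would exploit the $C$-grading of $\abstracthecke$. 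For such a $b$ set $I_b=\{\gamma\in C : b(\abstracthecke_\gamma)\neq 0\}$; since $b$ is a ring homomorphism and $C$ is finite, $I_b$ is a subgroup (it is closed under multiplication, and $\gamma^{-1}=\gamma^{n-1}$ where $n$ is the order of $\gamma$). If $b,b'$ are two such homomorphisms with $b|_{\abstracthecke_1}=b'|_{\abstracthecke_1}=a$, then for $\gamma\in I_b$ one picks $t\in\abstracthecke_\gamma$ and $s\in\abstracthecke_{\gamma^{-1}}$ with $b(t),b(s)\neq 0$ (possible as $I_b$ is a group), and $b(ts)=b'(ts)$ with $ts\in\abstracthecke_1$ forces $b'(t),b'(s)\neq 0$; hence $I_b\subseteq I_{b'}$, and by symmetry $I_b=I_{b'}$. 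The same computation shows that $b(u)/b'(u)$ for $u\in\abstracthecke_{\gamma^{-1}}$ with $b(u)\neq 0$ depends only on $\gamma$ and is multiplicative in $\gamma$, defining a character $\chi_{b'}\in\widehat{I_b}$ with $b'(t)=\chi_{b'}(\deg t)\,b(t)$ for every homogeneous $t$ with $\deg t\in I_b$, while both sides vanish when $\deg t\notin I_b=I_{b'}$. Thus $b'=b\cdot\widetilde{\chi_{b'}}$ for any extension $\widetilde{\chi_{b'}}\in\widehat C$, the assignment $b'\mapsto\chi_{b'}$ is injective, and there are at most $|I_b|\le|C|$ choices of $b'$. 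Taking $\kappa=\kappa_\infty\kappa_f\,|C|$ then completes the proof.

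The bookkeeping in the first two paragraphs is routine; the step that genuinely uses the structure of the situation is the uniform bound $\#\{b' : b'\sim b\}\le|C|$ on the fibre of $\mathrm{Spec}\,\abstracthecke\to\mathrm{Spec}\,\abstracthecke_1$. This is where one has to preclude the a priori danger that infinitely many automorphic representations share the same restriction to $\abstracthecke_1$; in practice the would-be culprits are exactly the eigensystems admitting a self-twist, for which many of the $a_\automrep(T_\fp)$ vanish, cf. the self-twist analysis of Section~\ref{sec:isoconds}. The graded-algebra argument handles these together with all other eigensystems, since it only uses that the support $I_b$ of an eigensystem is a subgroup of $C$ and that two automorphic eigensystems agreeing on the degree-zero part $\abstracthecke_1$ differ by a character of $C$.
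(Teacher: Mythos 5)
Your proof is correct and follows the same skeleton as the paper's: pass to the adelic $\rL^2$-space, decompose it spectrally, use multiplicity one and the local bounds of Lemmas~\ref{lem:localboundarch} and~\ref{lem:localboundfin} to bound each contributing automorphic representation's contribution by~$\kappa_\infty\kappa_f$, and show that at most~$\#C$ automorphic representations contribute. The one place where you take a genuinely different route is the last step. The paper passes to the image~$\heckebig$ of~$\abstracthecke_\CC$ in~$\End_\CC(M)$, where~$M$ is the~$a$-eigenspace, and observes that~$\heckebig_1\cong\CC$ while~$\heckebig$ is a $C$-graded finite \'etale~$\heckebig_1$-algebra of dimension at most~$\#C$, so that~$M$ carries at most~$\#C$ eigensystems. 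You instead prove a self-contained combinatorial statement about the abstract $C$-graded algebra~$\abstracthecke_\CC$: the fibre over~$a$ of the restriction map on $\CC$-algebra homomorphisms from~$\abstracthecke_\CC$ to~$(\abstracthecke_1)_\CC$ has at most~$\#C$ elements, because the support~$I_b$ of any extension~$b$ is a subgroup of~$C$ independent of~$b$, and any two extensions of~$a$ differ by twisting by a character of~$I_b$. These are two formulations of the same fact about $C$-graded commutative algebras. An advantage of your version is that it does not presuppose that the eigenspace~$M$ is finite-dimensional over~$\CC$, which the paper asserts at the outset but which is in effect part of what must be established. One small streamlining is available: the detour through the Jacquet--Langlands transfer to obtain injectivity of~$\automrep\mapsto a_\automrep$ is unnecessary, since strong multiplicity one (Theorem~\ref{thm:MultOne}) is stated for~$\G$ itself and the eigensystem determines~$\automrep_\fp$ for~$\fp\nmid\delta_D\fN$ directly.
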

\begin{proof}
  Let
  $M = \Hom_{K_\infty}(V,\rL^2(\G(F)^+\lquo\G(\adel_{F})/Z_{\infty} K_f))_{\abstracthecke_1=a}$,
  which is a $\abstracthecke$-module that is finite-dimensional over~$\CC$, and satisfies
  \[
    \dim M =
    \sum_{c\in C}\dim \Omega(V, \Gamma_{c})_{\abstracthecke_1=a}.
  \]
  The conclusion of the lemma is trivial if~$M = 0$, so assume otherwise.
  Let~$\heckebig$ be the image of~$\abstracthecke_{\CC}$ in~$\End_{\CC}(M)$, which inherits the
  grading from~$\abstracthecke$. Then~$\heckebig_1$ is a field, and $\heckebig$ is a
  finite \'etale algebra over~$\heckebig_1$ of dimension at most~$\# C$, so
  there are at most~$\# C$ Hecke eigensystems in~$M$.
  We have
  \[
    \dim M
    = \sum_b \dim M_{\abstracthecke=b},
  \]
  where~$b$ ranges over Hecke eigensystems in~$M$.
  Let~$b = (b_\cD)_{\cD}$ be a Hecke eigensystem in~$M$.
  By Strong Multiplicity $1$, Theorem \ref{thm:MultOne}, there exists a unique automorphic
  representation~$\automrep = \automrep_\infty \otimes \automrep_f$
  of~$\G(\adel_F)$ such that $\automrep_f$ corresponds to~$b$, and~$\automrep$
  has multiplicity~$1$ in~$\rL^2(\G(F)^+\lquo\G(\adel_{F})/Z_{\infty})$.
  %In addition, we have~$\Delta=\lambda$ on~$\automrep_\infty$.
  We have
  \[
    \dim M_{\abstracthecke=b} = \dim \Hom_{K_\infty}(V,\automrep_\infty) \cdot \dim
    \automrep_f^{K_f} \le \kappa_\infty \cdot \kappa_f,
  \]
  where~$\kappa_\infty$ and~$\kappa_f$ are as in Lemmas~\ref{lem:localboundarch}
  and~\ref{lem:localboundfin}.
  Putting everything together, we see that~$\kappa = (\# C) \cdot \kappa_\infty
  \cdot \kappa_f$ satisfies the conclusion of the lemma.
\end{proof}

%Hum, from the proof we see that we can remove the $\Delta=\lambda$ subscript,
%this is true even when we sum over all~$\lambda$!

We will need the following version of Theorem~\ref{thm:IntroProtoMain}.

\begin{lemma}\label{lem:shadyKelmer}
  Let~$V$ be a finite-dimensional representation of~$K_\infty$.
  Let~$\lambda\geq 0$ and let~$a\colon \abstracthecke_1 \to \CC$ be a ring
  homomorphism.
  Then at least one of the following statements holds:
  \begin{enumerate}[leftmargin=*,label={\upshape(\roman*)}]
    \item there exist a character~$\chi\in \Cisodual$,
      with corresponding quadratic extension~$L/F$,
      and an $\rL^2$-shady character of~$L$, such that the automorphic
      representation~$\automrep = \automrep_\infty \otimes \automrep_f$ of~$\G(\adel_F)$ with~$\JL_D(\automrep) =
      \AI_{L/F}(\automchar)$ satisfies
      \begin{itemize}
        \item $\Hom_{K_\infty}(V,\automrep_\infty) \neq 0$,
        \item the Casimir eigenvalue of~$\automrep_\infty$ is~$\lambda$, and
        \item $\abstracthecke_1$ acts on~$\automrep_f$ as~$a$;
      \end{itemize}
    \item all spaces
      $
        \Omega(V, \Gamma_{c})_{\Delta=\lambda, \abstracthecke_1=a}
      $
      for~$c\in C$ have the same dimension.
  \end{enumerate}
\end{lemma}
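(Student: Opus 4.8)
The plan is to combine the graded-algebra machinery of Section~\ref{sec:gradedAlg} with the automorphic dictionary of Proposition~\ref{prop:L2psi}, in the spirit of the proofs of Theorems~\ref{thm:HeckeIsosp} and~\ref{thm:repEquiv}. Set
\[
  M = \Hom_{K_\infty}\bigl(V,\ \rL^2(\G(F)^+\lquo\G(\adel_F)/Z_\infty K_f)\bigr)_{\Delta=\lambda,\ \abstracthecke_1=a}.
\]
Since the action of $\abstracthecke[W]$ on the $\rL^2$-space commutes with that of $G_\infty$, hence of $K_\infty$, and with the Casimir operator, $M$ is a $C$-graded module over $\CC\otimes\abstracthecke[W]$ with homogeneous components $M_c = \Omega(V,\Gamma_c)_{\Delta=\lambda,\ \abstracthecke_1=a}$, and it is finite-dimensional over $\CC$ by Lemma~\ref{lem:globalbound}. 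Let $\heckebig$ be the image of $\CC\otimes\abstracthecke[W]$ in $\End_\CC(M)$, so that with $R=\CC$ and $\hecke = \CC\otimes\abstracthecke$ we are exactly in the setting of Section~\ref{sec:gradedAlg} and $M$ is a graded $\heckebig$-module.

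Assume that statement~(i) does not hold. I claim that then no Hecke eigensystem over $\CC$ in $M$ admits a self-twist by a non-trivial $\chi\in\Cisodual$. Under Assumption~\ref{ass:quaternion} the group $\Cisodual$ has exponent dividing $2$, so such a $\chi$ is an order~$2$ Hecke character with an associated quadratic extension $L/F$. A Hecke eigensystem over $\CC$ in $M$ admitting a self-twist by $\chi$ is in particular such an eigensystem in $\Hom_{K_\infty}(V,\rL^2(\G(F)^+\lquo\G(\adel_F)/Z_\infty K_f))$; running the argument of Proposition~\ref{prop:L2psi} (via strong multiplicity one, Theorem~\ref{thm:MultOne}, Corollary~\ref{cor:JLAI}, the local computations of Section~\ref{sec:GL2local}, and the bookkeeping relating $K_\infty^+$ with $K_\infty$ and $\G(F)^+$ with $\G(F)$ from Proposition~\ref{prop:plustransfer}), it is attached to an automorphic representation $\automrep=\automrep_\infty\otimes\automrep_f$ of $\G(\adel_F)$ with $\automrep_f^{K_f}\ne0$ and $\JL_D(\automrep) = \AI_{L/F}(\automchar)$ for a unitary Hecke character $\automchar$ of $L$ satisfying conditions~\ref{item:JLpsi} and~\ref{item:centralchar} of Proposition~\ref{prop:L2psi}; in other words, $\automchar$ is an $\rL^2$-shady character of $L$. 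Moreover, since the eigensystem lies inside $M$, the representation $\automrep_\infty$ has Casimir eigenvalue $\lambda$ and satisfies $\Hom_{K_\infty}(V,\automrep_\infty)\ne0$, and $\abstracthecke_1$ acts on $\automrep_f$ as $a$. Thus $\chi$, $L$ and $\automchar$ witness statement~(i), a contradiction, proving the claim.

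Now fix $c\in C$ and apply Corollary~\ref{cor:linked} to $M$ with $R=\CC$. Given a simple $\hecke$-submodule $E$ of $M$, Lemma~\ref{lem:SubmodEigensystem} (with $\fp=0$) produces a Hecke eigensystem $(a_\cD)_\cD$ over $\CC$ in $M$ with $a_\cD=0$ whenever $T_\cD$ annihilates $E$; by the previous paragraph it does not admit a self-twist by any $\chi\in\Cisodual$ with $\chi(c)\ne1$, so for such a $\chi$ there is some $\cD$ with $\chi([\cD])\ne1$ and $a_\cD\ne0$, and then $c_{\chi,E}$ defined as the image of $[\cD]^{-1}$ in $C$ and $T_{\chi,E}=T_\cD\in\hecke_{c_{\chi,E}}$ satisfy $\chi(c_{\chi,E})\ne1$ and $T_{\chi,E}$ does not annihilate $E$. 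Hence, by Corollary~\ref{cor:linked}, for every $b\in C$ the homogeneous component $M_b$ is linked to $M_{cb}$, so that $\dim_\CC M_b = \dim_\CC M_{cb}$. As $c$ and $b$ were arbitrary, all the spaces $\Omega(V,\Gamma_c)_{\Delta=\lambda,\ \abstracthecke_1=a}=M_c$ have the same dimension, which is statement~(ii).

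The formal part (Corollary~\ref{cor:linked}, Lemma~\ref{lem:SubmodEigensystem}) is immediate once $M$ is set up; the substance of the argument, and the step most prone to subtleties, is the middle paragraph. There one must verify that the failure of~(i) genuinely precludes any self-twisting Hecke eigensystem in the refined module $M$, which forces one to re-run the dictionary of Proposition~\ref{prop:L2psi} while carefully carrying along the extra constraints $\Delta=\lambda$ and $\abstracthecke_1=a$ and handling the $K_\infty$-versus-$K_\infty^+$ (and $\G(F)$-versus-$\G(F)^+$) normalisation, invoking strong multiplicity one, the Jacquet--Langlands correspondence, and automorphic induction.
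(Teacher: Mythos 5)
Your proof is correct and takes essentially the same route as the paper, whose own proof is the one-line instruction ``Apply Proposition~\ref{prop:littleExercise}, analogously to e.g. the proof of Theorem~\ref{thm:HeckeIsosp}, and then apply Proposition~\ref{prop:L2psi}.'' You implement this by applying Corollary~\ref{cor:linked} (which is Proposition~\ref{prop:littleExercise} with $R=Q$, and here $R=\CC$ is already a field) and Lemma~\ref{lem:SubmodEigensystem} to produce the self-twisting eigensystem, then feeding it into Proposition~\ref{prop:L2psi}; your acknowledgement that one must reconcile the $K_\infty$-versus-$K_\infty^+$ and $\G(F)$-versus-$\G(F)^+$ conventions via Proposition~\ref{prop:plustransfer} correctly identifies the small bookkeeping step that the paper leaves implicit.
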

\begin{proof}
  Apply Proposition \ref{prop:littleExercise}, analogously to e.g. the proof of Theorem
  \ref{thm:HeckeIsosp}, and then apply Proposition \ref{prop:L2psi}.
\end{proof}

We now count the relevant shady characters by reducing to lattice point
counting.

\begin{lemma}\label{lem:counting}
  Let~$\chi\in\Cisodual$, let~$L/F$ be the corresponding quadratic extension, and
  assume that there exists an $\rL^2$-shady character of~$L$.
  Let~$\Lambda$ be the group of unitary Hecke characters~$\automchar$ of~$L$ such that
  \begin{itemize}[leftmargin=*]
    \item for every real place~$v$ of~$F$ that extends to a complex place~$w$
      of~$L$, we have~$s_w = 0$ and~$k_w = 0$;
    \item for every real place~$v$ of~$F$ that extends to two real places~$w,w'$
      of~$L$, we have~$s_w+s_{w'}=0$;
    \item for every complex place~$v$ of~$F$, extending to two complex
      places~$w,w'$ of~$L$, we have~$s_w+s_{w'}=0$ and~$k_w=k_{w'}=0$;
  \end{itemize}
  For a Hecke character~$\automchar$ of~$L$, if
  there exists an automorphic representation~$\automrep =
  \automrep_\infty \otimes \automrep_f$ of~$\G(\adel_F)$ such
  that~$\JL_D(\automrep) = \AI_{L/F}(\automchar)$,
  let~$\lambda(\automchar)$ be the Casimir
  eigenvalue of~$\automrep_\infty$; otherwise let~$\lambda(\automchar)=\infty$.

  Let~$\automchar_0$ be a unitary Hecke character of~$L$.
  Then as~$T\to\infty$, we have
  \[
    \#\{\automchar\in \Lambda \colon \lambda(\automchar_0\automchar)\le T\} = O(T^{r/2}),
  \]
  where~$r=\#\cV_{\R}(D)+\#\cV_{\CC}(D)$.
\end{lemma}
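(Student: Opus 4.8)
The plan is to reduce the counting problem to counting lattice points in a Euclidean space of dimension $r=\#\cV_\R(D)+\#\cV_\CC(D)$, and to show that the condition $\lambda(\automchar_0\automchar)\le T$ confines the relevant points to a ball of radius $O(T^{1/2})$.

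\textbf{Step 1: parametrise $\Lambda$ by a lattice.} First I would analyse the structure of the group $\Lambda$ of unitary Hecke characters of $L$ satisfying the listed constraints at the infinite places. The constraints kill all the archimedean parameters except the $s$-parameters at the infinite places of $F$ that split into two places of $L$, and even there they are constrained by $s_w+s_{w'}=0$. Thus the ``infinity type'' of a character in $\Lambda$ lives in a real vector space of dimension exactly $r$ (one real coordinate $t_w\in\R$ with $s_w=it_w$, $s_{w'}=-it_w$, for each of the $\#\cV_\R(D)$ real places of $F$ inert in $D$ that become two real places of $L$ — wait, more carefully: the places contributing are those where the $s$-parameters are not forced to vanish). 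By the structure theory of Hecke characters (the idele class group $L^\times\backslash\adel_L^\times$ modulo its maximal compact is $\cong \R^{r_1(L)+r_2(L)}/(\text{image of units})$, i.e. finitely many copies of $\R$ modulo a lattice), the subgroup $\Lambda$ is, up to a finite group, a finitely generated abelian group whose free part, via the map $\automchar\mapsto (t_w)_w$, maps to a lattice $\Lambda_\infty$ of full rank $r$ in $\R^r$ — or possibly a lattice of rank $\le r$ together with a finite-index full subtorus; in any case the image of $\Lambda$ under the infinity-type map is a discrete subset of $\R^r$ that meets any ball of radius $R$ in $O(R^r)$ points, together with at most finitely many characters per infinity type (the kernel is a finite group, since a unitary Hecke character with trivial infinity type and bounded conductor — here the conductor is controlled by $\automchar_0$ — lies in a finite set). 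I need to be slightly careful that multiplying by the fixed $\automchar_0$ only translates the relevant infinity-type coordinates by a fixed vector, which does not affect the counting asymptotics.

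\textbf{Step 2: relate $\lambda(\automchar_0\automchar)$ to the size of the infinity type.} The key computation is to express the Casimir eigenvalue $\lambda(\automchar_0\automchar)$ in terms of the archimedean parameters. Using the global Casimir = sum of local Casimirs (as packaged in Corollary~\ref{cor:Matsushima} / the tables of Section~\ref{subsec:archi}) together with the explicit description of $\AI_{L/F}(\automchar_0\automchar)_v$ from Section~\ref{subsec:archi}: at a real place $v$ of $F$ inert in $D$ becoming complex in $L$ the local representation is a discrete series and contributes a bounded quantity; at a real place splitting into two real places of $L$ the local factor is $\PS_\R$ and contributes $s(1-s)=\frac14+t_w^2$ with $s=\frac12+it_w$ (shifted by the $\automchar_0$-contribution, a constant); at a complex place of $F$ the local factor is $\PS_\CC$ and contributes $1-\frac{k^2}{4}+t_w^2$ with $k$ bounded (the conditions force $k_w$ to be bounded, coming from $\automchar_0$). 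Hence $\lambda(\automchar_0\automchar) = \sum_w t_w^2 + (\text{bounded terms linear in the }t_w\text{ and constants})$, i.e. $\lambda(\automchar_0\automchar) \ge \|(t_w)_w\|^2 - C\|(t_w)_w\| - C'$ for constants $C,C'$ depending only on $\automchar_0$, $D$, $L$. I also need $\lambda(\automchar_0\automchar)=\infty$ (no automorphic representation exists) only makes the count smaller, so I may restrict to characters for which it is finite.

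\textbf{Step 3: conclude.} Combining, $\lambda(\automchar_0\automchar)\le T$ forces $\|(t_w)_w\| \le C'' \sqrt{T}$ for large $T$, so the infinity types of qualifying characters lie in a ball of radius $O(\sqrt T)$ in $\R^r$. Since the image of $\Lambda$ in $\R^r$ is discrete (a lattice of rank $\le r$) and the fibres of the infinity-type map restricted to the qualifying characters are finite of bounded size, the total count is $O((\sqrt T)^{r}) = O(T^{r/2})$, as claimed.

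\textbf{Main obstacle.} The technical heart is Step 2: assembling the exact shape of $\lambda(\automchar_0\automchar)$ as a quadratic-plus-lower-order function of the free parameters, keeping careful track of which infinite places of $F$ contribute a genuine $t_w^2$ term (the split ones, real-into-real-real and complex) versus a bounded term (the ones that become complex in $L$, where the $s$-parameter is forced to be $0$ anyway by membership in $\Lambda$), and checking that the ``bounded'' parameters $k_w$ at complex places really are bounded — this boundedness comes precisely from the fact that $\automchar$ ranges over $\Lambda$, where those $k$-parameters are forced to vanish, so only $\automchar_0$ contributes, giving a fixed finite set. A secondary but routine point is the standard fact that unitary Hecke characters of $L$ of bounded conductor and bounded infinity type form a finite set, which handles the finiteness of the fibres.
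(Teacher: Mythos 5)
Your proposal matches the paper's proof in all essentials: both reduce to parametrising $\Lambda$ (modulo torsion) as a lattice of rank $r$, express $\lambda(\automchar_0\automchar)$ as a positive-definite quadratic form in the infinity-type coordinates plus lower-order terms using the explicit Archimedean formulas of Section~\ref{subsec:archi}, and conclude by counting lattice points in a ball of radius $O(T^{1/2})$. The one point you use implicitly but should flag is that the existence of an $\rL^2$-shady character forces $L$ and $D$ to be ramified at the same real places of $F$ (via Proposition~\ref{prop:L2psi}\ref{item:JLpsi} and equation~(\ref{eq:Ciso})), which is what guarantees the free parameters come exactly from $\cV_{\R}(D)\cup\cV_{\CC}(D)$ and hence that the rank is exactly $r=\rk\Z_L^\times-\rk\Z_F^\times$.
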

\begin{proof}
  Note that the definition of $\rL^2$-shady characters, specifically Proposition \ref{prop:L2psi},
  property \ref{item:JLpsi}, and equation \eqref{eq:Ciso}, imply 
  that $L$ and $D$ are ramified at the same real places of~$F$.
  By Lemma~\ref{lem:shadyGaloisaction}, a Hecke character~$\automchar$ of~$L$ is
  in~$\Lambda$ if and only if~$\automchar^\sigma\automchar$ has finite order and
  one has~$k_w=0$ for all complex places~$w$ of~$L$.
  Therefore the rank of~$\Lambda$ is $\rk\Z_L^\times-\rk\Z_F^\times=r$.

  Let~$\automchar_0$ be a unitary Hecke character of~$L$.
  By the formulas in Section \ref{subsec:archi}, there exists
  a quadratic form~$Q\colon \Lambda \to \R$ that
  gives~$\Lambda/\Lambda_{\tors}$ the structure of a Euclidean lattice and such
  that for all~$\automchar\in \Lambda$, either~$\lambda(\automchar_0 \automchar) = \infty$ or
  \[
    \lambda(\automchar_0\automchar) = Q(\automchar) + O(Q(\automchar)^{1/2}).
  \]
  In particular, since $\Lambda_{\tors}$ is finite, we have
  \[
    \#\{\automchar\in \Lambda \colon Q(\automchar)\le T\} = O(T^{r/2})
  \]
  in the limit as~$T\to\infty$. This proves the lemma.
\end{proof}

We can finally prove the following result, of which
Theorem~\ref{thm:IntroKelmerConverse} is a special case.

\begin{theorem}\label{thm:KelmerConverse}
  Let~$V$ be a finite-dimensional representation of~$K_\infty$, and let~$c,c'\in C$. Then
  as $T\to \infty$, we have
  \[
    \sum_{\lambda \le T}
    \bigl|
    \dim \Omega(V,\Gamma_{c})_{\Delta=\lambda}
    -
    \dim \Omega(V,\Gamma_{c'})_{\Delta=\lambda}
    \bigr|
    = O(T^{r/2}),
  \]
  where~$r=\#\cV_{\R}(D)+\#\cV_{\CC}(D)$.
\end{theorem}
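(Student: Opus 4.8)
The plan is to cut the left-hand side into pieces indexed by Hecke eigensystems of $\abstracthecke_1$, use Lemma~\ref{lem:shadyKelmer} to discard the pieces on which the two spectra agree, bound each surviving piece by the uniform constant of Lemma~\ref{lem:globalbound}, and count the surviving pieces with Lemma~\ref{lem:counting}. To set up the adelic bookkeeping, I would use the discrete decomposition of $\rL^2(\G(F)^+\lquo\G(\adel_F)/Z_\infty K_f)$ appearing in the proof of Theorem~\ref{thm:repEquiv} together with Strong Multiplicity~$1$ (Theorem~\ref{thm:MultOne}): for each $c\in C$ and each $\lambda$ one has $\dim\Omega(V,\Gamma_{c})_{\Delta=\lambda}=\sum_{\automrep}\dim\Hom_{K_\infty}(V,\automrep_\infty)\cdot\dim(\automrep_f^{K_f})_{c}$, the sum running over automorphic representations $\automrep=\automrep_\infty\otimes\automrep_f$ of $\G(\adel_F)$ trivial on $Z_\infty$ with $\automrep_\infty$ of Casimir eigenvalue $\lambda$, and $(\cdot)_{c}$ denoting the $c$-graded component of $\automrep_f^{K_f}$ for the $\normaliser(K_f)/K_f$-action. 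On each such $\automrep$ the commutative algebra $\abstracthecke_1$ acts by scalars, through an eigensystem $a_\automrep$, and by Strong Multiplicity~$1$ the assignment $\automrep\mapsto a_\automrep$ is injective; hence the $\abstracthecke_1$-action is semisimple, and the pairs $(\lambda,a)$ with $\Omega(V,\Gamma_{c''})_{\Delta=\lambda,\abstracthecke_1=a}\neq 0$ for some $c''\in C$ are exactly the pairs $(\lambda(\automrep),a_\automrep)$.

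Next, for each such $\automrep$, Lemma~\ref{lem:shadyKelmer} applied with $\lambda=\lambda(\automrep)$ and $a=a_\automrep$ yields one of two alternatives. In the first, all dimensions $\dim\Omega(V,\Gamma_{c''})_{\Delta=\lambda,\abstracthecke_1=a}$ for $c''\in C$ are equal, so $\automrep$ contributes $0$ to $\dim\Omega(V,\Gamma_{c})_{\Delta=\lambda}-\dim\Omega(V,\Gamma_{c'})_{\Delta=\lambda}$; in the second, $\automrep$ is the unique automorphic representation with $\JL_D(\automrep)=\AI_F^L(\automchar)$ for an $\rL^2$-shady character $\automchar$ of the quadratic extension $L/F$ attached to some $\chi\in\Cisodual$, and then by Lemma~\ref{lem:globalbound} the absolute value of the contribution of $\automrep$ is at most $\sum_{c''}\dim\Omega(V,\Gamma_{c''})_{\abstracthecke_1=a_\automrep}\le\kappa$. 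Combining these with the triangle inequality over $\lambda\le T$, the left-hand side of the theorem is at most $\kappa$ times the number $N(T)$ of automorphic representations $\automrep$ with $\Hom_{K_\infty}(V,\automrep_\infty)\neq 0$, with Casimir eigenvalue at most $T$, and with $\JL_D(\automrep)=\AI_F^L(\automchar)$ for an $\rL^2$-shady character $\automchar$ of some $L$ corresponding to a character of $\Cisodual$. So it remains to prove $N(T)=O(T^{r/2})$.

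For this I would first note that $\Cisodual$ is finite, so there are finitely many relevant fields $L$, and those admitting no $\rL^2$-shady character contribute nothing; fix $L$ with an $\rL^2$-shady character. By Theorems~\ref{thm:AI} and~\ref{thm:JL}, two $\rL^2$-shady characters of $L$ give the same $\automrep$ exactly when they are exchanged by the non-trivial automorphism $\sigma$ of $L/F$, so up to a factor of $2$ it suffices to count $\rL^2$-shady characters $\automchar$ of $L$ with $\Hom_{K_\infty}(V,\automrep_\infty)\neq 0$ and Casimir eigenvalue of $\automrep_\infty$ at most $T$. The condition $\automrep_f^{K_f}\neq 0$ built into the definition of an $\rL^2$-shady character bounds the conductor of $\automrep_f$, hence, by Lemmas~\ref{lem:levelAI} and~\ref{lem:dimfixedpoints}, the conductor of $\automchar$; and the condition $\Hom_{K_\infty}(V,\automrep_\infty)\neq 0$, via the explicit $K_\infty$-types of automorphic inductions recorded in Section~\ref{subsec:archi}, bounds the parameters $k_w$ of $\automchar$ at the infinite places. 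Since the group $\Lambda$ of Lemma~\ref{lem:counting} is precisely the group of unitary Hecke characters of $L$ whose $s$-parameters satisfy condition~\ref{item:centralchar} of Proposition~\ref{prop:L2psi} and whose archimedean $k$-parameters all vanish, these two bounds force all the $\rL^2$-shady characters in question to lie in finitely many cosets $\automchar_0\Lambda$ of $\Lambda$. Fixing a representative $\automchar_0$ in each coset and invoking Lemma~\ref{lem:counting} bounds the count inside each coset by $O(T^{r/2})$; summing over the finitely many cosets and the finitely many fields $L$ gives $N(T)=O(T^{r/2})$, and hence the theorem.

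I expect the main difficulty to be the penultimate step — showing that the relevant $\rL^2$-shady characters lie in finitely many cosets of $\Lambda$ — since this requires carefully matching the archimedean constraints coming from $V$ (and the representation-theoretic tables of Section~\ref{subsec:archi}) and the level constraint coming from $K_f$ (and Lemmas~\ref{lem:levelAI} and~\ref{lem:dimfixedpoints}) against the precise definition of $\Lambda$. Everything else is a routine assembly of Lemmas~\ref{lem:globalbound}, \ref{lem:shadyKelmer}, and~\ref{lem:counting}.
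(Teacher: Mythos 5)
Your argument is correct and follows essentially the same route as the paper's proof: after reducing via Lemma~\ref{lem:shadyKelmer} to counting $\rL^2$-shady characters, it bounds each surviving contribution uniformly by the constant $\kappa$ of Lemma~\ref{lem:globalbound}, and finishes with the lattice-point estimate of Lemma~\ref{lem:counting}; your explicit Matsushima-style unpacking is a presentational expansion of the paper's shorter passage from the single sum over $\lambda$ to the double sum over pairs $(\lambda,a)$.

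Two small remarks. First, a minor inaccuracy that does not affect the proof: Strong Multiplicity~$1$ does not make the map $\automrep\mapsto a_{\automrep}|_{\abstracthecke_1}$ injective, since $\automrep$ and $\automrep\otimes\chi$ for $\chi\in\Cdual$ have identical $\abstracthecke_1$-eigensystems and Casimir eigenvalue; but your argument only requires that every pair $(\lambda,a)$ with a nonzero simultaneous eigenspace arises from \emph{some} $\automrep$, so the bound by $N(T)$ survives intact (if anything $N(T)$ overcounts, which is harmless for an upper bound). Second, your instinct that the ``finitely many $\Lambda$-cosets'' step is the delicate one is well founded: the paper's proof records only the bound on the archimedean $k_w$ parameters at complex places of $L$ coming from the $V$-constraint, whereas your additional use of $\automrep_f^{K_f}\neq 0$ together with Lemmas~\ref{lem:levelAI} and~\ref{lem:dimfixedpoints} to bound the conductor of $\automchar$ is genuinely needed to confine the relevant shady characters to a finitely generated piece of the group $\Lambda$ on which the lattice-point count of Lemma~\ref{lem:counting} applies.
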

\begin{proof}
  Let $\kappa$ be as in Lemma~\ref{lem:globalbound}. Then by Lemma~\ref{lem:shadyKelmer} we have
  \begin{eqnarray}
     \lefteqn{\sum_{\lambda \le T}
    \bigl|
    \dim \Omega(V,\Gamma_{c})_{\Delta=\lambda}
    -
    \dim \Omega(V,\Gamma_{c'})_{\Delta=\lambda}
  \bigr| \nonumber}\\
    &=& \sum_{\lambda \le T} \sum_{a\colon \abstracthecke_1 \to \CC}
    \bigl|
    \dim \Omega(V,\Gamma_{c})_{\Delta=\lambda, \abstracthecke_1=a}
    -
    \dim \Omega(V,\Gamma_{c'})_{\Delta=\lambda, \abstracthecke_1=a}
    \bigr| \nonumber\\
    &\le& \sum_{\chi\in \Cisodual}\sum_{\automchar}\kappa, \label{eq:Kelmer}
  \end{eqnarray}
  where for each $\chi \in \Cisodual$, with corresponding quadratic extension $L/F$, the last sum
  runs over $\rL^2$-shady characters~$\automchar$ of~$L$ such that the automorphic
  representation~$\automrep = \automrep_\infty \otimes \automrep_f$ of~$\G(\adel_F)$ with~$\JL_D(\automrep) =
  \AI_{L/F}(\automchar)$ satisfies
  \begin{itemize}[leftmargin=*]
    \item $\Hom_{K_\infty}(V,\automrep_\infty) \neq 0$ and
    \item the Casimir eigenvalue of~$\automrep_\infty$ is at most~$T$.
  \end{itemize}
  Let~$\chi$ and~$L/F$ be as above and let~$\sigma$ be the non-trivial automorphism
  of~$L/F$, and assume that there exists an $\rL^2$-shady character of~$L$.
  By inspection of the tables in Section \ref{subsec:archi}, we see that the 
  condition $\Hom_{K_\infty}(V,\automrep_\infty)
  \neq 0$ implies that the~$k_w$ parameters at complex places~$w$
  of~$L$ of all~$\automchar$ in the sum must belong to a finite set depending
  only on~$V$.
  Let~$\Lambda$ be as in Lemma~\ref{lem:counting}.
  The set of Hecke characters~$\automchar$ of~$L$ that appear in~\eqref{eq:Kelmer} is contained in
  a finite union of cosets of~$\Lambda$, so Lemma~\ref{lem:counting} gives the desired bound.
\end{proof}

\begin{remark}
  Weyl's law implies that for all~$c\in C$ there exists~$a>0$ such that
      as~$T\to\infty$ we have
      \[
        \sum_{\lambda\le T}\dim \Omega(V,\Gamma_c)_{\Delta=\lambda} \sim a
        T^{d/2},
      \]
      where~$d=2\#\cV_{\R}(D)+3\#\cV_{\CC}(D)$. In particular, it follows from
      Theorem \ref{thm:KelmerConverse} that the eigenvalues in the Laplace
      spectra of any two $\Omega(V,\Gamma_{c})$ and $\Omega(V,\Gamma_{c'})$ that 
      do not match up are sparse -- have density $0$ -- among all eigenvalues.
\end{remark}

\begin{remark}\label{rmrk:compareKelmer}
  It is instructive to compare Theorem \ref{thm:KelmerConverse} in the special case
  $r=1$ to the results of \cite{Kelmer}. In this case, \cite[Theorem 1(2)]{Kelmer} shows
  that if $\Gamma_{c}$ and $\Gamma_{c'}$ are torsion-free, and for
  all finite-dimensional representations $V$ of $K_{\infty}$ one has
  \[
    \sum_{\lambda \le T}
    \bigl|
    \dim \Omega(V,\Gamma_{c})_{\Delta=\lambda}
    -
    \dim \Omega(V,\Gamma_{c'})_{\Delta=\lambda}
    \bigr|
    = o(T^{r/2}),
  \]
  then $\Gamma_{c}$ and $\Gamma_{c'}$ are representation equivalent.
  On the other hand, there do exist torsion-free $\Gamma_{c}$ and $\Gamma_{c'}$ 
  arising from the Vign\'eras construction
  that are not representation equivalent -- see \cite{VoightLinowitz} for
  examples and further references.
  This shows that the bound in \cite[Theorem 1(2)]{Kelmer} is sharp.
\end{remark}

%TODO: think about the next bit (in the \LaTeX comments) and decide what to write
%
%\begin{remark}
%  Let $\chi\in \Cisodual$ and let $L/F$ be the corresponding quadratic extension.
%  Suppose that there exists an $\rL^2$-shady character of $L$.
%  Then $L$ embeds in $D$, and such an embedding induces
%  a map
%  \[
%    L^\times\lquo \adel_L^\times/(Z_{\infty}K_{\infty}K_f)\cap \adel_L^\times \to \cY,
%  \]
%  whose image $\cT$ has dimension $r=\#\cV_{\R}(D)+\#\cV_{\CC}(D)$. Note that
%  the bound in Theorem \ref{thm:KelmerConverse} is of the same order as
%  the asymptotic from Weyl's law for $\cT$. We wonder whether this numerical coincidence
%  has a structural reason. Is there a precise relationship between the Laplace eigenvectors
%  corresponding to shady characters of $L$ and Laplace eigenvectors on $\cT$?
%\end{remark}

%
%--------------
%--- Section 6
%--------------
%
%\newpage
\section{\texorpdfstring{Self-twists in characteristic~$p$ and Galois representations}{Self-twists in characteristic p and Galois representations}}\label{sec:galreps}

% Give (conjectural) reformulations of the automorphic conditions in terms of
% Galois representations
\begin{assumption}\label{ass:gamma0level}
  We keep Assumption \ref{ass:quaternion}.
In addition, we assume that we have $K_f = K_0(\fN)$ for some non-zero ideal~$\fN$ coprime to~$\delta_D$,
where $K_0(\fN) = \prod_{\fp^i\parallel\fN}K_0(\fp^i)$.
\end{assumption}

In this section we prove Theorem~\ref{thm:IntroProtoMain} for $\Z_p$-isospectrality,
Theorem~\ref{thm:IntroTorsionLL}, Theorem~\ref{thm:IntroRegLL} part~\ref{item:IntroRegLLTriv},
and Theorem~\ref{thm:IntroFiniteSetS} with a specified set $S$,
all conditional on a conjecture on the existence of Galois representations with prescribed
local behaviour attached to mod~$p$ Hecke eigenvalue systems, Conjecture \ref{conj:attachedgalreps}.
The basic idea for proving Theorem \ref{thm:IntroProtoMain} is the same as in Section
\ref{sec:isoconds}, but with the difference that we cannot use automorphic
induction in the mod $p$ setting. Instead, we go via Galois representations,
where the self-twist condition implies that the representation is induced
from a $1$-dimensional Galois character. We then analyse the local properties
of these inductions. Theorem \ref{thm:IntroTorsionLL} and Theorem \ref{thm:IntroRegLL}
part \ref{item:IntroRegLLTriv} are refinements of Theorem \ref{thm:IntroProtoMain}
for $\Z_p$-isospectrality. The proof of Theorem \ref{thm:finiteSetS} is an effective version 
of an argument of Serre, \cite[Proposition 20]{SerreEllCurves}, showing that if
one has mod $p$ Hecke characters with bounded weights for lots of primes $p$, then one has
a suitable characteristic $0$ Hecke character.

In Section \ref{sec:galrepprelim} we recall some basic definitions for Galois representations
and recall a result on conductors of induced representations. In Section \ref{sec:Zpisospectrality}
we state Conjecture \ref{conj:attachedgalreps} and prove Theorem \ref{thm:IntroProtoMain} for $\Z_p$-isospectrality,
Theorem \ref{thm:IntroTorsionLL}, and Theorem \ref{thm:IntroRegLL} part \ref{item:IntroRegLLTriv}.
In Section \ref{sec:localrep} we collect local properties of Galois representations,
which are then used in Section \ref{sec:globalrep} to prove Theorem \ref{thm:IntroFiniteSetS}.

%
%TODO
%\begin{itemize}
%  \item check what is known about conjecture 6.17 (10 authors paper), track references
%    to this conjecture or similar ones
%  \item give more references for the material?
%\end{itemize}

\subsection{Basics on Galois representations}\label{sec:galrepprelim}
All representations
of Galois groups will be finite-dimensional and continuous with respect to the discrete
topology on the target.

If $L$ is a perfect field, then $\overline{L}$ denotes a fixed algebraic
closure, $G_L$ denotes the Galois group of $\overline{L}/L$, and, if $p$ is a prime number,
$\eps_{L,p}$ denotes the mod~$p$ cyclotomic character $\eps_{L,p}\colon G_L\to \F_p^\times$.

\begin{defn}
  If $E$ is a local field, we will write $\bE$ for its residue field.

  Let $E$ be a local field.
  We denote by $\Frob_E$ an arbitrary choice of an element of~$G_E$
  that acts as $x\mapsto x^{\#\bE}$ on the residue field of~$\overline{E}$,
  and call such an element a \emph{Frobenius element}.

  Let $p$ be a prime number. We say that a representation~$(V,\galrep)$ of $G_E$ over $\F_p$
  is \emph{finite flat} if there
  exists a finite flat group scheme~$\cG$ over~$\Z_{E}$ such that one has~$V \cong
  \cG(\overline{E})$ as representations of~$G_E$.

  Let $q$ be a power of $p$. We say that a representation~$(V,\galrep)$ of $G_E$
  over~$\F_q$ is \emph{finite flat} if the
  restriction of scalars~$\WRes_{\F_p}^{\F_q}V$ from $\F_q$ to $\F_p$, which is
  a representation of~$G_E$ over~$\F_p$ of dimension~$[\F_q:\F_p]\cdot\dim V$, is finite flat.

  We say that a representation $\galrep\colon G_E\to \GL_2(\F_q)$ is \emph{ordinary}
  %\marginpar{TODO: check with an expert whether this is outrageous}
  if it has a non-zero unramified quotient.
\end{defn}

\begin{defn}
  Let~$L$ be a number field, let $p$ be a prime number, and~$\fp$ a maximal ideal
  of~$\Z_L$.
  We abbreviate $G_{L_{\fp}}$ to $G_{\fp}$.
  We denote by $\Frob_{\fp}$ a choice of Frobenius element in $G_{\fp}$, which, in turn, determines a 
  well-defined conjugacy class in $G_L$.

  Let $q$ be a power of $p$. We say that a representation $(V,\galrep)$ of $G_L$
  over $\F_q$ is \emph{finite flat at~$\fp$} if the restriction~$\galrep|_{G_{\fp}}$ is finite
  flat.

  We say that a representation $\galrep\colon G_L\to \GL_2(\F_q)$ is \emph{ordinary at $\fp$}
  if its restriction to $G_{\fp}$ is ordinary.

  The \emph{prime-to-$p$-conductor} of a representation $\galrep$ of $G_L$ is the
  largest divisor of the Artin conductor that has no prime divisors lying above $p$.
\end{defn}

\begin{lem}\label{lem:Taguchi}
  Let~$p$ be a prime number.
  Let~$\galchar$ be a character of $G_L$, and let~$\fM$ be its prime-to-$p$-conductor.
  Then the representation $\galrep = \Ind_{G_F/G_L}\galchar$ has
  prime-to-$p$-conductor $\delta_{L/F}\norm_{L/F}(\fM)$,
  where $\delta_{L/F}$ denotes the relative discriminant of~$L/F$.
\end{lem}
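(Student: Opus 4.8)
The plan is to reduce the computation of the prime-to-$p$ conductor of $\galrep = \Ind_{G_F/G_L}\galchar$ to the classical conductor–discriminant formula, being careful to track only the prime-to-$p$ part. First I would recall the conductor–discriminant formula for induced representations (see e.g. Serre, \emph{Local Fields}, Ch.~VI, or the standard reference on Artin conductors): for a character $\galchar$ of $G_L$ with Artin conductor $\fM$, the Artin conductor of $\Ind_{G_F/G_L}\galchar$ is $\delta_{L/F}\cdot \norm_{L/F}(\fM)$, where $\delta_{L/F}$ is the relative discriminant. This is an identity of ideals of $\Z_F$. The whole content of the lemma is then to pass from the full conductor to the prime-to-$p$ conductor, i.e. to discard the Euler factors at primes of $F$ above $p$ on both sides.

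The key step is to verify that the formula is compatible with localisation at a prime $\fp$ of $\Z_F$ not lying above $p$, i.e. that the $\fp$-part of the conductor of $\Ind\galchar$ depends only on the $\fp$-part of $\delta_{L/F}$ and on the parts of $\fM$ at primes of $L$ above $\fp$. This is immediate from the local nature of both the Artin conductor and the discriminant: the conductor exponent at $\fp$ of an induced representation is computed from the restriction of $\Ind_{G_F/G_L}\galchar$ to a decomposition group $G_\fp$, which by Mackey decomposition is $\bigoplus_{\fP\mid\fp}\Ind_{G_\fp/G_\fP}(\galchar|_{G_\fP})$; applying the local conductor–discriminant formula at each $\fP\mid\fp$ and taking the norm back down to $\fp$ gives the $\fp$-exponent of $\delta_{L/F}\norm_{L/F}(\fM)$. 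Since "prime-to-$p$-conductor" means, by definition in the excerpt, the largest divisor of the Artin conductor with no prime divisors above $p$, and the same truncation can be applied to $\delta_{L/F}\norm_{L/F}(\fM)$ prime by prime, the identity descends: the prime-to-$p$ conductor of $\galrep$ equals the prime-to-$p$ part of $\delta_{L/F}\norm_{L/F}(\fM)$, which is $\delta_{L/F}'\cdot\norm_{L/F}(\fM')$ with primes denoting prime-to-$p$ parts, and since $\norm_{L/F}$ of a prime of $L$ above $\fp$ is supported at $\fp$, this is exactly $\delta_{L/F}\cdot\norm_{L/F}(\fM)$ with $\fM$ the prime-to-$p$-conductor of $\galchar$ and $\delta_{L/F}$ implicitly meaning its prime-to-$p$ part. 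One should state clearly whether $\delta_{L/F}$ in the conclusion denotes the full discriminant or its prime-to-$p$ part — from context (the downstream application counts primes away from $p$) it is the prime-to-$p$ part, or equivalently one works entirely with ideals coprime to $p$.

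I expect the main obstacle to be purely expository rather than mathematical: making the bookkeeping of "prime-to-$p$" clean, in particular being precise about whether $\delta_{L/F}$ on the right-hand side is the full relative discriminant or already truncated away from $p$, and ensuring the Mackey/localisation argument is stated at the right level of generality (étale algebras are not needed here since $L/F$ is a genuine quadratic field extension in the application, but the argument works verbatim for $L/F$ étale quadratic, with $\delta_{L/F}$ the discriminant of the étale algebra). The mathematical input — the conductor–discriminant formula for inductions and its compatibility with restriction to decomposition groups — is entirely standard and can be cited.
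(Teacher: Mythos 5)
Your proposal takes essentially the same route as the paper: the paper's proof is the single line ``See \cite{Taguchi}'', and Taguchi's theorem is precisely the conductor--discriminant (induction) formula for Artin conductors that you invoke, with the Mackey/localisation bookkeeping and prime-to-$p$ truncation being exactly why the formula can be applied prime by prime and then restricted away from $p$. So the plan is sound and matches the paper's intent.

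The one point I would push back on is the claim that the required input ``is entirely standard and can be cited'' from Serre's \emph{Local Fields}. The classical conductor--discriminant formula there is proved for complex (characteristic-zero) representations of finite Galois groups; the proof goes through character theory and Brauer induction, which do not transfer unchanged to characteristic $p$. In this paper, however, $\galchar$ is a character with values in $\Fpbar^\times$ (see Definition~\ref{def:pshady} and the way Lemma~\ref{lem:Taguchi} is used in the proofs of Theorems~\ref{thm:pshady} and~\ref{thm:finiteSetS}), so the lemma is really needed for mod~$p$ representations. That is exactly the setting of Taguchi's paper, whose entire purpose is to establish the induction formula for Artin conductors of mod~$\ell$ Galois representations over local fields of residue characteristic $\neq\ell$. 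For the $1$-dimensional $\galchar$ one can sidestep this by Teichm\"uller-lifting (the image of $\galchar$ in $\Fpbar^\times$ automatically has order prime to $p$), but one then still has to check that for the induced $2$-dimensional representation, the Artin conductor at a prime $\fq\nmid p$ is unchanged under reduction mod~$p$; this uses that the higher ramification groups $G_i$ for $i\ge 1$ at such $\fq$ are $q$-groups with $q$ coprime to $p$, hence taking $G_i$-invariants commutes with reduction, together with a separate argument for the tame part. Either make that reduction explicit, or replace the Serre citation with the mod~$\ell$ version of the formula (Taguchi), which is what the paper does.
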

\begin{proof}
  See \cite{Taguchi}.
\end{proof}

\subsection{Self-twist conditions}\label{sec:Zpisospectrality}

The following is a variant on conjectures formulated by Ash, Calegari--Geraghty, Calegari--Venkatesh,
and others on Galois representations attached to cohomology classes \cite{Ash}, \cite[Conjecture B]{CalegariGeraghty},
\cite[Conjecture 2.2.5]{torsionJL}. We claim no originality, except for
possible mistakes.

\begin{conj}\label{conj:attachedgalreps}
  Under Assumption \ref{ass:gamma0level},
  let~$i\ge 0$ be an integer, let~$\fm$ be a maximal ideal of~$\abstracthecke$, and
  let~$p$ be the residue characteristic of~$\fm$.
  If~$H_i(\cY,\Z)_\fm$ is non-zero, then there exists a semisimple Galois representation
  \[
    \galrep\colon G_F \to \GL_2(\abstracthecke/\fm)
  \]
  with the following properties:
  \begin{enumerate}[leftmargin=*,label=\upshape{(\arabic*)}]
    \item\label{item:condunram} $\galrep$ is unramified outside~$p\delta_D\fN$, and for all~$\fq\nmid
      p\delta_D\fN$ we have
      \[
        \Tr\galrep(\Frob_\fq) \equiv T_\fq \bmod \fm;
      \]
  \item\label{item:conddet} $\det\galrep = \zeta\cdot\eps_{F,p}$, where $\zeta$ is an unramified character;
  \item\label{item:condramif} either $\rho$ is decomposable or for all~$\fq \mid \delta_D$ with $\fq\nmid p$ we have 
      \[
        \galrep|_{G_\fq} \cong \begin{pmatrix}\theta\eps_{F,p} & * \\ 0  & \theta\end{pmatrix}
      \]
      with~$\theta$ unramified;
      %\marginpar{maybe change all $\fp$ that don't divide~$p$ to $\fq$}
    \item\label{item:condatp} for all~$\fp\nmid \delta_D\fN$ with~$\fp\mid p$, either
      \begin{itemize}
        \item $\galrep$ is finite flat at~$\fp$, or
        \item $\galrep$ is ordinary at~$\fp$ but not finite flat;
      \end{itemize}
    \item\label{item:condlevel} the prime-to-$p$-conductor of~$\galrep$
      divides~$\delta_D\fN$.
  \end{enumerate}
\end{conj}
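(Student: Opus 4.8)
This is posed as a conjecture and is not known in the stated generality --- the authors invoke it only as a hypothesis --- so what follows is the strategy one would pursue, with the main gap flagged. The plan splits according to whether the mod-$p$ Hecke eigenvalue system cut out by $\fm$ lifts to characteristic zero.

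Suppose first that it does, i.e.\ it comes from an eigenvalue system in $H_i(\cY,\overline{\Q}_p)$ and hence, after a fixed embedding $\overline{\Q}_p\hookrightarrow\CC$, in $H_i(\cY,\CC)$. By Matsushima's formula, Theorem~\ref{thm:Matsushima}, this eigenvalue system arises from an automorphic representation $\automrep$ of $\G(\adel_F)$ with $\automrep_\infty$ cohomological of the weight attached to $\Lambda^i$ of the symmetric space, and the Jacquet--Langlands correspondence, Theorem~\ref{thm:JL}, transfers it to a cuspidal automorphic representation $\JL_D(\automrep)$ of $\GL_2(\adel_F)$. When $F$ is totally real or CM one attaches to $\JL_D(\automrep)$ a $p$-adic Galois representation $G_F\to\GL_2(\overline{\Q}_p)$ satisfying local-global compatibility at every finite place, by the work of Eichler--Shimura, Deligne, Carayol, Blasius--Rogawski, Harris--Lan--Taylor--Thorne and Scholze together with the more recent results on compatibility at $p$; reducing modulo the maximal ideal and semisimplifying produces the candidate $\galrep$. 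From local-global compatibility away from $p$ one reads off the Frobenius-trace relation, the shape $\zeta\cdot\eps_{F,p}$ of the determinant, and the divisibility of the prime-to-$p$-conductor by $\delta_D\fN$; the last point uses that $\automrep$ has nonzero $K_0(\fN)$-fixed vectors and that, since the local component of $K_f$ at each prime $\fq\mid\delta_D$ is the maximal compact of $D_\fq^\times$, the local factor $\automrep_\fq$ is one-dimensional. That same fact gives the claimed form of $\galrep|_{G_\fq}$ at $\fq\mid\delta_D$: the transfer of a one-dimensional representation of $D_\fq^\times$ is a twist of Steinberg, whose Weil--Deligne representation is an extension of an unramified character $\theta$ by $\theta\eps_{F,p}$, so $\galrep|_{G_\fq}$ has this form when indecomposable, the decomposable case being the remaining alternative. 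Finally, the dichotomy at primes $\fp\mid p$ is the integral $p$-adic Hodge theory input --- a weight-$2$ form of level prime to $p$ has local representation at $p$ that is crystalline, equivalently finite flat, or ordinary --- provided by Fontaine--Laffaille in the crystalline range (applicable because the coefficient system is trivial and $p\nmid\delta_D\fN$ may be taken large) and by the ordinary theory otherwise.

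The substance of the conjecture is the case in which $\fm$ is genuine torsion not lifting to characteristic zero. Here there is no automorphic representation available, and one would instead construct $\galrep$ by Scholze's theorem on Galois representations attached to torsion in the cohomology of locally symmetric spaces (for $F$ CM), which yields a semisimple $\galrep\colon G_F\to\GL_2(\abstracthecke/\fm)$ with the correct characteristic polynomials of Frobenius --- up to a nilpotent ambiguity that can be removed --- and the determinant relation; alternatively, in favourable situations $\fm$ can be realised through congruences with characteristic-zero classes and a patching argument in the style of Calegari--Geraghty. The local conditions at $\delta_D$, at $\fN$, and above $p$ must then be deduced from mod-$p$ local-global compatibility and from integral $p$-adic Hodge theory for torsion classes.

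The hard part is precisely the local behaviour above $p$ in the torsion case: Scholze's construction pins down Frobenius at unramified primes but says little about restrictions to decomposition groups at $p$, and establishing the finite-flat-or-ordinary dichotomy there is tantamount to a case of the expected integral refinement of $p$-adic local-global compatibility, which is open in general. A secondary obstacle is the analogous compatibility at the primes dividing $\delta_D\fN$, needed for the condition at $\fq\mid\delta_D$ and for the prime-to-$p$-conductor bound. For these reasons the statement is posed as a conjecture --- a variant of conjectures of Ash, Calegari--Geraghty, and Calegari--Venkatesh --- rather than proved.
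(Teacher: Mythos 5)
You correctly recognise that this statement is posed as a conjecture, not a theorem, and that the paper gives no proof of it: the authors cite it only as a hypothesis, remarking that it is a variant of conjectures of Ash, Calegari--Geraghty, and Calegari--Venkatesh, that they ``claim no originality, except for possible mistakes,'' and that partial results are known (citing Scholze and Caraiani--Newton). Your sketch of the evidence --- the characteristic-zero case via Matsushima's formula, Jacquet--Langlands, and the attached $p$-adic Galois representations with local--global compatibility; the torsion case via Scholze's construction; and the key gap being local compatibility above $p$ for torsion classes --- matches the paper's own discussion in the remark following the conjecture, which additionally records a sanity check (suggested by Thorne, via Harder's work on the Eisenstein cohomology of $\GL_2$) that condition (4) is satisfied by the reducible representations arising from the boundary. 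In short, there is nothing to compare against: the paper does not prove this statement, and your response correctly identifies both that fact and the substance of what remains open.
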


Note that we do not make any conjectures about the properties of $\rho$ at primes
that divide both $\delta_D\fN$ and $p$, nor about the precise shape of
$\rho$ at $\fp\mid \delta_D$ when $\rho$ is decomposable.
The conjecture as stated will be sufficient for our purposes.

Versions of the conjecture are known in some special cases, see \cite{Scholze} for
the first results of this nature and \cite{CaraianiNewton} for the most
up-to-date results and overview of the state of knowledge.

\begin{remark}
  In the literature, the most precise local properties of the conjectural Galois
  representations are typically formulated only for irreducible representations,
  and the most subtle properties are those for primes above $p$.
  Following a suggestion of Jack Thorne, we have performed the following sanity check:
  according to \cite{Harder} the boundary contribution to the cohomology of arithmetic
  subgroups of $\GL_2(F)$ yields reducible Galois representations whose reduction
  modulo $\fp$ satisfies condition \ref{item:condatp} of the conjecture.
\end{remark}
%From now on, we fix a prime $p>2$.
%
%According to the following standard lemma, there will be two cases to consider:
%reducible, or induced from a character of the quadratic extension corresponding
%to the twist.

\begin{lem}\label{lem:induced}
  Let~$p>2$ be a prime number.
  Let~$G$ be a group and~$H$ an index~$2$ subgroup. Let~$\chi\colon G
  \to \{\pm 1\}$ be the corresponding quadratic character.
  Let~$\galrep\colon G \to \GL_2(\Fpbar)$ be a semisimple representation such
  that
  \[
    \galrep\otimes\chi \cong \galrep.
  \]
  Then there exists~$\galchar\colon H \to \Fpbar^\times$ such that
  \[
    \galrep \cong \Ind_{G/H}\galchar.
  \]
\end{lem}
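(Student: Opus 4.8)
The plan is to separate the case where the semisimple representation $\galrep$ is irreducible from the case where it is a direct sum $\galrep\cong\galchar_1\oplus\galchar_2$ of two characters $\galchar_1,\galchar_2\colon G\to\Fpbar^\times$; the hypothesis $p>2$ enters essentially in each.

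Consider first the reducible case. From $\galrep\otimes\chi\cong\galchar_1\chi\oplus\galchar_2\chi\cong\galchar_1\oplus\galchar_2$ I would observe that $\galchar_i\chi=\galchar_i$ is impossible, since $\chi$ is non-trivial ($H$ being a proper subgroup of $G$), so the only way to match the two unordered pairs is $\galchar_2=\galchar_1\chi$. Because $\chi$ is trivial on $H$, the projection formula gives $\Ind_{G/H}(\galchar_1|_H)\cong\Ind_{G/H}(\mathbf{1}_H)\otimes\galchar_1\cong\Fpbar[G/H]\otimes\galchar_1$. Since $p\neq 2$, the permutation module $\Fpbar[G/H]$ is semisimple and splits as $\mathbf{1}\oplus\chi$, whence $\Ind_{G/H}(\galchar_1|_H)\cong\galchar_1\oplus\galchar_1\chi\cong\galrep$, and one may take $\galchar=\galchar_1|_H$.

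Now suppose $\galrep$ is irreducible. An isomorphism $\galrep\otimes\chi\cong\galrep$ is the same as a matrix $A\in\GL_2(\Fpbar)$ with $\galrep(g)A=\chi(g)A\galrep(g)$ for all $g\in G$; restricting to $H$ shows $A$ commutes with $\galrep(H)$. The key point is that $A$ cannot be scalar: if $A=\lambda I$, then for any $g\in G\setminus H$ the relation reads $\lambda\galrep(g)=-\lambda\galrep(g)$, forcing $2\lambda\galrep(g)=0$, which is impossible as $p\neq 2$, $\lambda\neq 0$ and $\galrep(g)\in\GL_2(\Fpbar)$. Hence, choosing an eigenvalue $\mu$ of $A$, the subspace $\ker(A-\mu I)$ is a one-dimensional $\galrep(H)$-stable subspace; let $\galchar\colon H\to\Fpbar^\times$ be the character by which $H$ acts on it. Then by Frobenius reciprocity $\Hom_G(\Ind_{G/H}\galchar,\galrep)\cong\Hom_H(\galchar,\galrep|_H)\neq 0$, so there is a nonzero $G$-equivariant map $f\colon\Ind_{G/H}\galchar\to\galrep$. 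Since $\galrep$ is irreducible, $f$ is surjective, and since both $\Ind_{G/H}\galchar$ and $\galrep$ are two-dimensional, $f$ is an isomorphism; thus $\galrep\cong\Ind_{G/H}\galchar$.

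I do not expect any real obstacle: this is elementary Clifford theory for the index-$2$ subgroup $H$. The points that need a little care are the two appeals to $p>2$ (splitting $\Fpbar[G/H]$ in the reducible case, and ruling out a scalar intertwiner $A$, equivalently showing $\galrep|_H$ is reducible, in the irreducible case) and the dimension count at the end, which works precisely because a one-dimensional $H$-subrepresentation of $\galrep|_H$ already forces a nonzero, hence bijective, map out of the two-dimensional $\Ind_{G/H}\galchar$ onto the irreducible $\galrep$.
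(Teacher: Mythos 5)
Your proof is correct and follows essentially the same approach as the paper: the decomposable case is handled identically via the splitting $\Fpbar[G/H]\cong \mathbf{1}\oplus\chi$ (using $p\neq 2$), and the irreducible case concludes with the same Frobenius-reciprocity-plus-dimension-count argument. The only variation is that the paper establishes reducibility of $\galrep|_H$ by computing $\dim\Hom_H(\galrep|_H,\galrep|_H)=2$ via Frobenius reciprocity, while you make the same fact concrete by exhibiting a non-scalar intertwiner $A$ for $\galrep(H)$ and taking one of its eigenlines --- two faces of the same Clifford-theoretic argument, with yours having the small advantage of producing the $H$-stable line explicitly.
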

\begin{proof}
  First suppose that $\rho$ is decomposable, so that $\galrep \cong \theta\oplus \theta\chi$
  for some character $\theta$ of $G$. Since~$p\neq 2$, we
  have~$\Fpbar[G/H] \cong \triv\oplus \chi$, and therefore~$\Ind_{G/H}(\theta|_H) \cong
  \theta\otimes_{\Fpbar} \Fpbar[G/H] \cong \galrep$, as claimed.

  Now suppose that $\rho$ is irreducible. Then
  we have
  \[
    \Hom_H(\galrep|_H,\galrep|_H)
    \cong \Hom_G(\galrep, \Ind_{G/H}(\galrep|_H))
    \cong \Hom_G(\galrep, \galrep\otimes_{\Fpbar}\Fpbar[G/H]).
  \]
  Since $\galrep$ is irreducible, we have $\dim\Hom_G(\galrep,\galrep)=1$, and therefore
  \[
    \dim\Hom_H(\galrep|_H,\galrep|_H) = 2.
  \]
  The restriction~$\galrep|_H$ is therefore reducible, so there exists
  a $1$-dimensional subrepresentation~$\galchar$ of~$\galrep|_H$.
  We then have 
  \[
    \Hom_G(\Ind_{G/H}\galchar,\galrep)
    \cong \Hom_H(\galchar,\galrep|_H) \neq 0,
  \]
  so there exists a non-zero homomorphism~$f\colon \Ind_{G/H}\galchar \to \galrep$,
  which must be an isomorphism by irreducibility of~$\galrep$ and dimension
  comparison.
\end{proof}

\begin{lemma}\label{lem:pisenough}
  Let $L$ be a number field, let $p$ be a prime number, let $\fF$ be an ideal
  of~$\Z_L$ supported at the primes above $p$, let $\fM$ be an ideal of $\Z_L$
  coprime to $p$, and let~$\cV$ be a set of real places of $L$. Then every
  Hecke character $\automchar\colon \Cl_{L}(\fF\fM\cV)\to \Fpbar^\times$
  has modulus dividing $p\fM\cV$.
\end{lemma}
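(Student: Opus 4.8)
The plan is to use that $\automchar$ takes values in $\Fpbar^{\times}$, a torsion group in which the only $p$-torsion element is the identity (if $x^p=1$ then $(x-1)^p=x^p-1=0$ in characteristic $p$). Since $\Cl_L(\fF\fM\cV)$ is finite, $\automchar$ is therefore a character of order prime to $p$, so it is trivial on every subgroup of $\Cl_L(\fF\fM\cV)$ that is a $p$-group. It then suffices to produce a modulus $\fn'$ dividing $p\fM\cV$ together with a surjection $\Cl_L(\fF\fM\cV)\to\Cl_L(\fn')$ whose kernel is a finite $p$-group: then $\automchar$ factors through $\Cl_L(\fn')$, so its conductor divides $\fn'$, hence divides $p\fM\cV$, which is what ``$\automchar$ has modulus dividing $p\fM\cV$'' means.

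I would take $\fn'=\fF_0\fM\cV$, where $\fF_0=\prod_{\fq\mid\fF}\fq$ is the radical of $\fF$. Because $\fF$ is supported at the primes of $\Z_L$ above $p$, every prime dividing $\fF$ divides $p\Z_L$, so $\fF_0$ divides $p\Z_L$ and hence $\fF_0\fM\cV$ divides $p\fM\cV$. Since $\fF_0$ divides $\fF$ and the archimedean parts agree, there is a natural surjection $\Cl_L(\fF\fM\cV)\to\Cl_L(\fF_0\fM\cV)$, and comparing the standard exact sequence $\Z_L^{\times}\to(\Z_L/\fn_0)^{\times}\times\{\pm1\}^{\fn_\infty}\to\Cl_L(\fn_0\fn_\infty)\to\Cl_L\to 1$ for the two moduli shows that its kernel is a quotient of $\ker\!\bigl((\Z_L/\fF\fM)^{\times}\to(\Z_L/\fF_0\fM)^{\times}\bigr)$. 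As $\fF$ and $\fM$ are coprime with $\fF_0\mid\fF$, the Chinese Remainder Theorem identifies this last group with $\ker\!\bigl((\Z_L/\fF)^{\times}\to(\Z_L/\fF_0)^{\times}\bigr)$, which is a finite $p$-group: it is a product, over the primes $\fq\mid\fF$, of kernels of reduction maps $(\Z_L/\fq^{k})^{\times}\to(\Z_L/\fq)^{\times}$, each of order a power of $\#(\Z_L/\fq)$, hence a power of $p$ since $\fq\mid p$. Thus the kernel of $\Cl_L(\fF\fM\cV)\to\Cl_L(\fF_0\fM\cV)$ is a finite $p$-group, and the reduction of the first paragraph applies.

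There is no genuine difficulty here; the only points requiring care are invoking the ray class group exact sequence correctly so that the relevant kernel is manifestly a subquotient of the finite $p$-group $\ker\!\bigl((\Z_L/\fF)^{\times}\to(\Z_L/\fF_0)^{\times}\bigr)$, and the trivial observation that the radical of an ideal supported over $p$ divides $p\Z_L$. An alternative and perhaps more transparent route is local: for $\fq\mid p$ the group $1+\fq\Z_{L_\fq}$ of principal units is a pro-$p$ group, so the local component $\automchar_\fq$, having order prime to $p$, is trivial on it and hence has conductor exponent at most $1$ at $\fq$; combined with the fact that $\automchar$ is unramified outside the primes dividing $\fF\fM$ and the places in $\cV$, this again shows that the conductor of $\automchar$ divides $\fF_0\fM\cV$, and so divides $p\fM\cV$.
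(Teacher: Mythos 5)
Your proof is correct, and your closing ``alternative route'' is essentially the paper's one-line proof: the paper simply observes that $\automchar$ is at most tamely ramified at all primes above $p$, which is the same as your observation that $\automchar_{\fq}$ must kill the pro-$p$ group $1+\fq\Z_{L_\fq}$. Your main argument takes a more elaborate but equally valid global route, producing the surjection $\Cl_L(\fF\fM\cV)\to\Cl_L(\fF_0\fM\cV)$ with $p$-group kernel via the ray class group exact sequence; this buys you nothing here beyond the local argument but does spell out why the order-prime-to-$p$ observation propagates cleanly to the conductor. One small point of economy: you do not even need the full exact-sequence bookkeeping, since the kernel of the natural surjection $\Cl_L(\fF\fM\cV)\to\Cl_L(\fF_0\fM\cV)$ is by general nonsense a quotient of $\ker\bigl((\Z_L/\fF\fM)^\times\to(\Z_L/\fF_0\fM)^\times\bigr)$, and the CRT reduction to $\ker\bigl((\Z_L/\fF)^\times\to(\Z_L/\fF_0)^\times\bigr)$ being a $p$-group is immediate.
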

\begin{proof}
  The character $\automchar$ is at most tamely ramified at all primes of $\Z_L$
  above $p$, hence the result follows.
\end{proof}

\begin{definition}\label{def:pshady}
  Let~$L/F$ be a quadratic extension.
  A \emph{$\Z_p$-shady character of~$L$} is a Hecke character
  \[
    \automchar \colon \Cl_L(p\delta_D\fN\Z_L\infty) \to \Fpbar^\times
  \]
  such that $\galrep = \Ind_{G_F/G_L}\galchar$ satisfies conditions
  \ref{item:conddet}--\ref{item:condlevel} from Conjecture~\ref{conj:attachedgalreps},
  where $\galchar\colon G_L \to \Fpbar^\times$ is the corresponding Galois
  character.
\end{definition}
We now prove Theorem \ref{thm:IntroProtoMain} for $\Z_p$-isospectrality.

%incorporated in the proofs of 6.9 and 6.10:
%
%  The \emph{maximal ideal~$\fm_a\subset\abstracthecke$ corresponding to~$a = (a_\cD)_{\cD}$} is
%  \[
%    \fm_a = \abstracthecke \cap I = \ker(a|_{\abstracthecke}),
%  \]
%  where $I$ is the ideal of $L\otimes \abstracthecke$ generated by $T_\cD - a_{\cD}$ for all $\cD$,
%  and~$a|_{\abstracthecke}$ is the restriction of~$a$ to~$\abstracthecke$.

\begin{theorem}\label{thm:pshady}
  Let $p>2$ be a prime number. Assume that Conjecture~\ref{conj:attachedgalreps}
  holds for all $i\geq 0$ and all maximal ideals $\fm$ of $\abstracthecke$ with residue
  characteristic $p$.
  Let~$c\in C$.
  Then at least one of the following two statements is true:
  \begin{enumerate}[leftmargin=*,label={\upshape(\roman*)}]
    \item\label{item:pshady} there exist~$\chi\in \Cisodual\setminus c^\perp$ and a $\Z_p$-shady
      character of the quadratic extension determined by~$\chi$;
    \item\label{item:pAllthegoodstuff} there exists~$T\in\abstracthecke[W]_c$ such that for every integer~$i\ge 0$
      and every~$b\in C$, all of the following hold:
    \begin{enumerate}[leftmargin=*,label={\upshape(\alph*)}]
      \item\label{item:6Tisom} $T$ induces an isomorphism
        of~$\abstracthecke_1$-modules
        \[
          T \colon H_i(Y_{b},\Z_{(p)})
          \to H_i(Y_{cb},\Z_{(p)});
        \]
%      \item\label{item:6regconstform} the triple $(\cY,b,cb)$ satisfies the formalism of regulator constants in degree~$i$;
      \item\label{item:6nopregquo} one has
        \[
          \frac{\Reg_i(Y_{cb})^2}{\Reg_i(Y_{b})^2}\in \Z_{(p)}^\times.
        \]
    \end{enumerate}
  \end{enumerate}
\end{theorem}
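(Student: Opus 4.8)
The plan is to combine the characteristic-$p$ analogue of Theorem \ref{thm:HeckeIsosp} with a Galois-representation analysis replacing the automorphic induction input used in Section \ref{sec:isoconds}. Concretely, suppose statement \ref{item:pshady} fails, i.e.\ for every $\chi \in \Cisodual \setminus c^\perp$, with corresponding quadratic extension $L/F$, there is no $\Z_p$-shady character of $L$. I want to deduce \ref{item:pAllthegoodstuff}. First I would apply Theorem \ref{thm:HeckeIsosp}\ref{item:allpIso} (and for part \ref{item:6nopregquo}, Theorem \ref{thm:regQuosValues}\ref{item:TrivialGlobalRegconst}) to the module $M = H_i(\cY,\Z)$, which reduces everything to showing: there is \emph{no} $\chi \in \Cisodual \setminus c^\perp$ and no Hecke eigensystem over $\Fpbar$ in $\Fpbar \otimes H_i(\cY,\Z)$ admitting a self-twist by $\chi$. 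So assume for contradiction that such a $\chi$ and such an eigensystem exist; let $L/F$ be the quadratic extension attached to $\chi$, and let $\fm$ be the maximal ideal of $\abstracthecke$ cutting out this eigensystem, with residue characteristic $p$. Then $H_i(\cY,\Z)_\fm \neq 0$, so Conjecture \ref{conj:attachedgalreps} furnishes a semisimple $\galrep \colon G_F \to \GL_2(\abstracthecke/\fm)$ satisfying \ref{item:condunram}--\ref{item:condlevel}. Enlarging the coefficient field, view $\galrep$ as valued in $\GL_2(\Fpbar)$.

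The key step is to translate the self-twist into an induction statement. The self-twist condition says that for every $\cD$ coprime to $\delta_D\fN$ with $\chi([\cD]) \neq 1$ one has $a_\cD = 0$; unpacking this (via the grading and using that the eigensystem factors through $\abstracthecke_1$ on the unramified primes that split in $L$, while $T_\fq$ acts by a nilpotent-hence-zero scalar on the relevant line for $\fq$ inert, as in Lemma \ref{lem:nonInvertibleHecke}) shows that $T_\fq \equiv 0 \bmod \fm$ for all $\fq$ inert in $L/F$ (away from the bad set), hence $\Tr\galrep(\Frob_\fq) = 0$ for a density-$\tfrac12$ set of Frobenii — precisely the Frobenii lying outside $G_L$. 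By Chebotarev this forces $\Tr(\galrep(g)) = 0$ for all $g \notin G_L$, i.e.\ $\galrep \otimes \chi_{L/F} \cong \galrep$ where $\chi_{L/F}$ is the quadratic character of $\Gal(L/F)$ reduced mod $p$. Since $p > 2$, Lemma \ref{lem:induced} applies and gives a character $\galchar \colon G_L \to \Fpbar^\times$ with $\galrep \cong \Ind_{G_F/G_L}\galchar$. Now I would read off the conductor of $\galchar$: by Lemma \ref{lem:Taguchi}, the prime-to-$p$ conductor of $\galrep$ is $\delta_{L/F}\norm_{L/F}(\fM)$ where $\fM$ is the prime-to-$p$ conductor of $\galchar$; combined with condition \ref{item:condlevel} of the conjecture (the prime-to-$p$ conductor of $\galrep$ divides $\delta_D\fN$) and the fact that the central character / determinant condition \ref{item:conddet} pins down $\galchar|_{G_F}$ up to the cyclotomic twist, one sees that $\galchar$ has finite order and its modulus divides $p\,\delta_D\fN\Z_L\infty$ by Lemma \ref{lem:pisenough}. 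Thus $\galchar$ descends to a Hecke character $\automchar \colon \Cl_L(p\delta_D\fN\Z_L\infty) \to \Fpbar^\times$. By construction $\Ind_{G_F/G_L}\galchar = \galrep$ inherits conditions \ref{item:conddet}--\ref{item:condlevel} directly from Conjecture \ref{conj:attachedgalreps}, so $\automchar$ is exactly a $\Z_p$-shady character of $L$ in the sense of Definition \ref{def:pshady}. Finally, one must check $\chi \in \Cisodual \setminus c^\perp$ is the character attached to this very $L/F$, which holds by our choice. This contradicts the failure of \ref{item:pshady}, completing the argument.

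The main obstacle I anticipate is the bookkeeping around the bad primes and the determinant/conductor conditions. Conjecture \ref{conj:attachedgalreps} deliberately says nothing about $\galrep$ at primes dividing both $\delta_D\fN$ and $p$, and condition \ref{item:condramif} only constrains $\galrep|_{G_\fq}$ for $\fq \mid \delta_D$, $\fq \nmid p$, with two alternatives (decomposable, or a specific non-split shape). I will need to verify that in every case the induced structure $\Ind_{G_F/G_L}\galchar$ is consistent with these constraints and that the translation is reversible — i.e.\ that the conditions imposed on $\automchar$ in Definition \ref{def:pshady} are \emph{implied by}, not merely compatible with, conditions \ref{item:conddet}--\ref{item:condlevel} holding for $\galrep$. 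Since Definition \ref{def:pshady} literally asks for $\Ind_{G_F/G_L}\galchar$ to satisfy \ref{item:conddet}--\ref{item:condlevel}, this last point is essentially tautological once we know $\galrep \cong \Ind_{G_F/G_L}\galchar$; the real work is just confirming $\galchar$ is a genuine (finite-order, controlled-modulus) Hecke character, for which Lemmas \ref{lem:Taguchi} and \ref{lem:pisenough} together with \ref{item:condlevel} should suffice. A secondary subtlety is the passage from "self-twist of the Hecke eigensystem" to "$\Tr\galrep(\Frob_\fq)=0$ for $\fq$ inert"; here I would argue via the inert Hecke operators lying in a homogeneous component $\abstracthecke_c$ with $\chi(c)\neq1$ and re-run the vanishing argument of Lemma \ref{lem:nonInvertibleHecke} at the level of the reduced eigensystem, then invoke \ref{item:condunram}.
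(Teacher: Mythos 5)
Your proposal is correct and follows essentially the same route as the paper's proof: contrapositive structure via Theorem~\ref{thm:HeckeIsosp}\ref{item:allpIso} and Theorem~\ref{thm:regQuosValues}\ref{item:TrivialGlobalRegconst}, then Conjecture~\ref{conj:attachedgalreps} to produce $\galrep$, Chebotarev plus Brauer--Nesbitt to get $\galrep\otimes\chi\cong\galrep$, Lemma~\ref{lem:induced} to obtain $\galchar$ with $\galrep\cong\Ind_{G_F/G_L}\galchar$, and Lemmas~\ref{lem:Taguchi} and~\ref{lem:pisenough} with conditions~\ref{item:condunram} and~\ref{item:condlevel} of the conjecture to bound the conductor of $\galchar$ and land on a $\Z_p$-shady character of $L$. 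Your extra discussion of how the self-twist forces $T_\fq\equiv 0\bmod\fm$ for $\fq$ inert (and why Definition~\ref{def:pshady} then applies tautologically) simply fills in details the paper leaves implicit; there is no substantive difference in method.
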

\begin{proof}
  Suppose that \ref{item:pAllthegoodstuff} does not hold, so that at least one of \ref{item:6Tisom},
  \ref{item:6nopregquo} does not hold. Then by combining Theorem \ref{thm:HeckeIsosp}\ref{item:allpIso}
  and Theorem \ref{thm:regQuosValues}\ref{item:TrivialGlobalRegconst} we deduce that there exists
  $\chi\in \Cisodual\setminus c^{\perp}$ and a Hecke eigensystem $a=(T_{\cD}\mapsto a_{\cD})_{\cD}$
  over $\Fpbar$ in $\Fpbar\otimes H_i(\cY,\Z)$ admitting a self-twist by $\chi$.

  Let $\fm\subset \abstracthecke$ be the kernel of~$a\colon \abstracthecke \to
  \Fpbar$.
  Since the image of $\abstracthecke$ under $a$ is a subring of $\Fpbar$, and hence a field,
  the ideal $\fm$ is maximal, and by assumption we have~$H_i(\cY,\Z)_{\fm}\neq 0$.
  Let $\galrep$ be a Galois representation as in the conclusion of Conjecture
  \ref{conj:attachedgalreps}.
  By property \ref{item:condunram} and Chebotarev's density theorem we have
  $\galrep\otimes \chi \cong \galrep$. By Lemma \ref{lem:induced} we have
  $\galrep \cong \Ind_{G_F/G_L}\galchar$, where $L/F$ is the quadratic extension
  cut out by $\chi$. By combining properties \ref{item:condunram} and \ref{item:condlevel}
  of $\galrep$ with Lemmas \ref{lem:Taguchi} and \ref{lem:pisenough} we deduce that $\galchar$
  has conductor dividing $p\delta_D\fN\Z_L\infty$, so that by class field theory
  it corresponds to a $\Z_p$-shady character of $L$.
\end{proof}

\begin{lemma}\label{lem:iotatriv}
  For all $T\in \abstracthecke_1$, the endomorphisms $T$ and $\iota(T)$
  of $H_i(\cY,\Z)$ are the same.
\end{lemma}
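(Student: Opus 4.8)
The plan is to reduce the statement to the triviality, on $H_i(\cY,\Z)$, of certain ``diamond operators'', exploiting crucially that we are now in $\Gamma_0$-level (Assumption~\ref{ass:gamma0level}). First I would recall the structure of the Hecke algebra when $d=2$: it decomposes as $\abstracthecke = \bigotimes_{\fp\nmid\delta_D\fN}\Z[T_\fp,S_\fp,S_\fp^{-1}]$, where $T_\fp = K_f\diag(1,\varpi_\fp)K_f$ and $S_\fp = K_f\varpi_\fp I_2 K_f$ is the invertible central double coset $T_{(\varpi_\fp,\varpi_\fp)}$, acting on $\cY$ as right multiplication by the central element $\varpi_\fp I_2$ (a diamond operator $\langle\varpi_\fp\rangle$). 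Since $\fp$ is coprime to $\fN$, the Weyl element $\left(\begin{smallmatrix}0&1\\1&0\end{smallmatrix}\right)$ lies in the local component of $K_f$ at $\fp$, so $K_f\diag(\varpi_\fp,1)K_f = T_\fp$; combined with $\iota(T_\cD) = T_{\cD^{-1}}$ this gives $\iota(T_\fp) = S_\fp^{-1}T_\fp$ and $\iota(S_\fp) = S_\fp^{-1}$. Since $\abstracthecke$ is commutative, for any monomial $m = \prod_\fp T_\fp^{a_\fp}S_\fp^{b_\fp}$ one then has $\iota(m) = \langle z_m\rangle\cdot m$, where $\langle z_m\rangle = \prod_\fp S_\fp^{-a_\fp-2b_\fp}$ is the diamond operator attached to the central idèle $z_m = \prod_\fp\varpi_\fp^{-(a_\fp+2b_\fp)}$; moreover the degree of $m$ in the $C$-grading is precisely the class of $z_m$ in $C$.

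Now $\abstracthecke_1$ is spanned over $\Z$ by the monomials $m$ of degree $1$, and for each such $m$ the idèle $z_m$ is trivial in $C$. So it suffices to show: if $z\in\adel_{F,f}^\times$ is central with trivial class in $C$, then right multiplication by $z$ is the identity on $\cY$. Here I would use the descriptions $\cY = \G(F)\backslash\G(\adel_F)/Z_\infty^+K_\infty^+K_f$ and $C = F^\times_+\backslash\adel_{F,f}^\times/\nrd(K_f)$ from Proposition~\ref{prop:plustransfer}. Under Assumption~\ref{ass:gamma0level} a direct local computation gives $\nrd(K_f) = \adel_{F,f}^\times\cap K_f$ (both equal $\prod_\fp\Z_\fp^\times$), while $\nrd(\G(F)^+) = F^\times_+$ by Hasse--Schilling--Maass. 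Hence triviality of the class of $z$ in $C$ means $z = \alpha u$ with $\alpha\in F^\times_+$ and $u\in\adel_{F,f}^\times\cap K_f$. Right multiplication by the global central element $\alpha I_2$ coincides with left multiplication by it, hence acts trivially modulo $\G(F)$ (its archimedean part lies in $Z_\infty = Z_\infty^+$ by Assumption~\ref{ass:quaternion} and is absorbed into the right quotient); and right multiplication by $u I_2\in K_f$ acts trivially modulo $K_f$. Thus $\langle z\rangle$ is the identity on $\cY$, so $\iota(m) = m$ on $H_i(\cY,\Z)$ for every degree-$1$ monomial $m$, and therefore $\iota(T) = T$ on $H_i(\cY,\Z)$ for all $T\in\abstracthecke_1$.

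The substantive point — and the only place where Assumption~\ref{ass:gamma0level} is genuinely used — is the identity $\nrd(K_f) = \adel_{F,f}^\times\cap K_f$: for a general level $K_f$ the diamond operators need not be trivial and the lemma can fail. The remaining ingredients are routine: the local description of $\abstracthecke$, the Weyl-element identity (available precisely because each $T_\cD$ involves only ideals coprime to $\fN$), and the cancellation on the adelic double quotient. Note that this argument yields an equality of endomorphisms of $H_i(\cY,\Z)$ on the nose, so no separate treatment of torsion is required.
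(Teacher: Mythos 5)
Your proof is correct and takes essentially the same approach as the paper: both reduce $\iota(T)=T$ on $H_i(\cY,\Z)$ to the triviality of a central diamond operator whose class in $C$ is trivial, the paper via $\iota(T)=T_{(\fd,\fd)}^{-1}T$ with $\fd=\prod_j\fa_j\fb_j$, and you via $\iota(m)=\langle z_m\rangle m$. Your version works through the explicit local generators $T_\fp,S_\fp$ and spells out the adelic absorption argument and the identity $\nrd(K_f)=\adel_{F,f}^\times\cap K_f$, filling in details that the paper leaves implicit (the Weyl-element step and the triviality of $T_{(\alpha,\alpha)}$).
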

\begin{proof}
  Let~$T = \prod_{j=1}^k T_{\cD_j}\in\abstracthecke_1$ where the~$\cD_j = (\fa_j,\fb_j)$ are such that
  the class of~$\fd = \prod_{j=1}^{k}\fa_j\fb_j$ in~$C$ is trivial.
  We have
  \[
    \iota(T) = \prod_{j=1}^k T_{\cD_j^{-1}} = T_{(\fd,\fd)}^{-1} \cdot T.
  \]
  Since the class of~$\fd$ in~$C$ is trivial, by
  Proposition~\ref{prop:plustransfer} there exists~$\alpha\in F_+^\times$ such
  that we have~$\fd = \alpha\Z_F$. But the corresponding Hecke
  operator~$T_{(\alpha,\alpha)}$ acts trivially on~$H_i(\cY,\Z)$.
  This proves the conclusion of the lemma for~$T$.
  Since Hecke operators of this form generate~$\abstracthecke_1$, this proves
  the lemma.
\end{proof}

If $p$ is a prime number, $\fn$ is a maximal ideal of $\Z_p\otimes \abstracthecke_1$ and
$L/F$ is a quadratic extension, then we say that~$\fn$ \emph{corresponds to} a $\Z_p$-shady
character~$\automchar$ of~$L$ if for every prime ideal~$\fq$ of~$\Z_F$ that splits in~$L$,
we have~$T_{\fq} \bmod\fn =  \automchar(\fQ)+\automchar(\fQ')$, where~$\fq\Z_L =
\fQ\fQ'$.
We have the following local version of Theorem \ref{thm:pshady}, proving Theorem \ref{thm:IntroTorsionLL}
and Theorem \ref{thm:IntroRegLL} part \ref{item:IntroRegLLTriv}.

\begin{theorem}\label{thm:pshadylocal}
  Let $p>2$ be a prime number, and let $\fn$ be a maximal ideal of $\Z_p\otimes \abstracthecke_1$.
  Assume that Conjecture \ref{conj:attachedgalreps} holds for all $i\geq 0$ and all maximal ideals
  $\fm$ of $\abstracthecke$ above $\fn$. Let~$c\in C$.
  Then at least one of the following two statements is true:
  \begin{enumerate}[leftmargin=*,label={\upshape(\roman*)}]
    \item there exists~$\chi\in \Cisodual\setminus c^\perp$, with corresponding quadratic extension $L/F$,
      such that $\fn$ corresponds to a $\Z_p$-shady character $\automchar$
      of $L$;
    \item\label{item:localAllTheGoodStuff} there exists~$T\in\abstracthecke[W]_c$ such that for every integer~$i\ge 0$
      and every~$b\in C$, all of the following hold:
    \begin{enumerate}[leftmargin=*,label={\upshape(\alph*)}]
      \item $T$ induces an isomorphism of~$\abstracthecke_1$-modules
        \[
          T \colon H_i(Y_{b},\Z)_{\fn}
          \to H_i(Y_{cb},\Z)_{\fn};
        \]
      \item we have $\cC_{b,cb}(H_i(\cY,\Z)_{\fn}) \in \Z_p^\times$.
    \end{enumerate}
  \end{enumerate}
\end{theorem}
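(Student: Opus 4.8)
The plan is to follow the proof of Theorem~\ref{thm:pshady} essentially verbatim, but feeding in the $\fn$-local statements Theorem~\ref{thm:HeckeIsosp}\ref{item:localpIso} and Theorem~\ref{thm:regQuosValues}\ref{item:TrivialLocalRegconst} in place of their $\Z_{(p)}$-counterparts. I would prove the contrapositive: assuming that~(i) fails --- so that for \emph{every} $\chi\in\Cisodual\setminus c^\perp$, with associated quadratic extension $L/F$, the ideal $\fn$ does not correspond to a $\Z_p$-shady character of $L$ --- I will produce a single $T\in\abstracthecke[W]_c$ witnessing~(ii).

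The first step is to rule out self-twists locally at $\fn$. Concretely, I would show that under the standing assumption there is no $i\geq 0$, no $\chi\in\Cisodual\setminus c^\perp$, and no Hecke eigensystem over $\Fpbar$ admitting a self-twist by $\chi$ in any of the $\Fpbar$-modules $\Fpbar\otimes H_i(\cY,\Z)_\fn$ or $\Fpbar\otimes\bigl(M_\fn/\fn M_\fn\bigr)$ with $M=H_i(\cY,\Z)_{\free}$. Indeed, such an eigensystem determines a maximal ideal $\fm$ of $\abstracthecke$ lying over $\fn$ with $H_i(\cY,\Z)_\fm\neq 0$, so Conjecture~\ref{conj:attachedgalreps} --- which is assumed precisely for such $\fm$ --- supplies a semisimple $\galrep\colon G_F\to\GL_2(\abstracthecke/\fm)$. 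The self-twist condition together with property~\ref{item:condunram} gives $\galrep\otimes\chi\cong\galrep$ by Chebotarev; comparing determinants forces $\chi$ to have order $2$, so Lemma~\ref{lem:induced} gives $\galrep\cong\Ind_{G_F/G_L}\galchar$ for the quadratic extension $L/F$ cut out by $\chi$ and a character $\galchar$ of $G_L$. Properties~\ref{item:condunram} and~\ref{item:condlevel}, together with Lemmas~\ref{lem:Taguchi} and~\ref{lem:pisenough}, bound the conductor of $\galchar$ by $p\delta_D\fN\Z_L\infty$, so $\galchar$ corresponds via class field theory to a Hecke character $\automchar\colon\Cl_L(p\delta_D\fN\Z_L\infty)\to\Fpbar^\times$; conditions~\ref{item:conddet}--\ref{item:condlevel} on $\galrep$ say precisely that $\automchar$ is $\Z_p$-shady, and unwinding property~\ref{item:condunram} on primes $\fq$ of $\Z_F$ that split in $L$ and have trivial class in $C$ gives $T_\fq\bmod\fn=\automchar(\fQ)+\automchar(\fQ')$ with $\fq\Z_L=\fQ\fQ'$, i.e.\ that $\fn$ corresponds to $\automchar$, contradicting the assumption.

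Granting this, Theorem~\ref{thm:HeckeIsosp}\ref{item:localpIso} lands in its alternative~(ii) for every $i$, producing for each $i$ some $T_i\in\abstracthecke[W]_c$ inducing isomorphisms $T_i\colon H_i(Y_b,\Z)_\fn\to H_i(Y_{cb},\Z)_\fn$ for all $b\in C$. Likewise Theorem~\ref{thm:regQuosValues}\ref{item:TrivialLocalRegconst}, applied with $\fn'=\fn$ --- a legitimate choice because, by Lemma~\ref{lem:iotatriv}, $\iota$ acts trivially on the image of $\abstracthecke_1$ in $\End H_i(\cY,\Z)$, so $\fn$ and $\iota(\fn)$ define the same localisation of $M$ --- lands in its alternative~(ii), giving that $(M_\fn)_b$ and $(M_\fn)_{cb}$ are linked and that $\cC_{b,cb}(H_i(\cY,\Z)_\fn)\in\Z_p^\times$ for all $b$. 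Since $H_i(\cY,\Z)$ vanishes for all but finitely many $i$, and since the condition that an element of $\abstracthecke[W]_c$ act invertibly on $M_\fn$ in a given degree is a non-empty Zariski-open condition on $\Q_p\otimes\abstracthecke[W]_c$ on which the image of $\abstracthecke[W]_c$ is Zariski-dense (as in the proofs of Theorem~\ref{thm:HeckeIsosp}\ref{item:lambdaiso} and~\ref{item:allpIso}), I would intersect these finitely many open conditions and extract a single integral $T\in\abstracthecke[W]_c$ realising~(ii)(a) in all degrees at once; assertion~(ii)(b) is independent of $T$ and has already been established.

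The genuinely deep ingredient is Conjecture~\ref{conj:attachedgalreps}, which is assumed; given it, the argument is a mechanical transcription of the proof of Theorem~\ref{thm:pshady} into the $\fn$-local setting. The only real work is bookkeeping around localisations: checking that the self-twist eigensystem arising from a failure of~(ii) genuinely lives \emph{above the fixed} $\fn$ (so that the assumed cases of the conjecture apply), verifying via Lemma~\ref{lem:iotatriv} that $\fn'=\fn$ is a permissible member of $\cN$ for the homology module, and matching the Galois-theoretic output to the precise definition of ``$\fn$ corresponds to $\automchar$'', which reduces to the Frobenius-trace identity on split primes of trivial class. Producing a single Hecke operator uniform in $i$ via the Zariski-density argument is the last mild subtlety, but no new idea beyond Sections~\ref{sec:Vigneras} and~\ref{sec:galreps} is needed.
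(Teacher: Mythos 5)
Your proposal is logically correct and takes essentially the paper's own approach, just run in the contrapositive direction: you assume~(i) fails and construct the uniform~$T$ witnessing~(ii), whereas the paper assumes~(ii) fails and extracts a self-twist eigensystem, then produces the $\Z_p$-shady character exactly as you do (Conjecture~\ref{conj:attachedgalreps}, Chebotarev, Lemma~\ref{lem:induced}, Lemmas~\ref{lem:Taguchi} and~\ref{lem:pisenough}). Your handling of Lemma~\ref{lem:iotatriv} to verify $\iota(\fn)=\fn$, and your unwinding of the Frobenius-trace identity via Mackey's formula to check the ``$\fn$ corresponds to~$\automchar$'' condition, both match the paper's (compressed) proof.

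The one genuine imprecision is the ``Zariski-density'' step used to produce a single~$T$ uniform across all degrees~$i$. The set of $T\in\abstracthecke[W]_c$ whose image acts $\Z_p$-invertibly on $H_i(\cY,\Z)_\fn$ is \emph{not} cut out by a Zariski-open condition on $\Q_p\otimes\abstracthecke[W]_c$: the Zariski-open condition only sees $\Q_p$-invertibility, and a $T$ in the intersection of the open sets need neither be integral nor induce a $\Z_p$-isomorphism (its determinant could have positive $p$-adic valuation). The Zariski argument is only valid in the proof of Theorem~\ref{thm:HeckeIsosp}\ref{item:lambdaiso}, where the ground ring is $\R$; part~\ref{item:allpIso} instead clears denominators. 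The clean fix is to apply Proposition~\ref{prop:littleExercise} once, with $R=\Z_p$, to the finitely generated $\Z_p$-module $M=\bigoplus_i H_i(\cY,\Z)_\fn$ and with $\heckebig$ the image of $\Z_p\otimes\abstracthecke[W]$ in $\End_{\Z_p}(M)$: the absence of self-twists for every~$i$ is exactly the hypothesis (since any simple submodule of $\Fpbar\otimes_{\Z_p} M$ lives in one homogeneous piece of one degree), and the output is a single invertible $T'\in\heckebig_c$. One then lifts $T'$ to an integral $T\in\abstracthecke[W]_c$ by approximating the $\Z_p$-coefficients of a preimage by rational integers modulo~$p$, using that $p\heckebig\subset\Jac(\heckebig)$, so invertibility in $\heckebig$ is detected modulo~$p$. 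With this substitution your argument is complete and equivalent to the paper's.
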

\begin{proof}
  The proof follows the same pattern as that of Theorem \ref{thm:pshady}.

  Suppose that \ref{item:localAllTheGoodStuff} does not hold. Then in particular $H_i(\cY,\Z)_{\fn}$
  is non-zero, so that $\fn$ is lifted from a maximal ideal of the image of $\abstracthecke_1$ in
  $\End H_i(\cY,\Z_p)$. By Lemma \ref{lem:iotatriv} we have $\iota(\fn)=\fn$.
  By Theorem \ref{thm:HeckeIsosp} \ref{item:localpIso} and Theorem \ref{thm:regQuosValues} \ref{item:TrivialLocalRegconst}
  there is a character $\chi\in \Cisodual\setminus c^\perp$, and a Hecke eigensystem $a$
  with a self-twist by $\chi$.
  Let $\fm\subset \abstracthecke$ be the kernel of~$a\colon \abstracthecke \to
  \Fpbar$, let $L/F$ be the field cut out by $\chi$,
  and $\galrep=\Ind_{G_F/G_L}\galchar$ be the Galois representation attached by
  Conjecture \ref{conj:attachedgalreps}. Then an easy calculation
  (see also the proof of Theorem~\ref{thm:finiteSetS}, specifically
  equation (\ref{eq:induceddecomp})) shows that $\fn$
  corresponds to the Hecke character~$\automchar$ attached to $\galchar$ by class
  field theory.
\end{proof}

\subsection{Local representations}\label{sec:localrep}
For the duration of this subsection,
let~$p>2$ be a prime,
let~$E$ be a $p$-adic field, we write $\Z_E$ for its
ring of integers, $\pi$ for its uniformiser, $e$ for its ramification
index over $\Q_p$, so that $p\in \pi^e\Z_E^\times$, $f$ for the residue field
degree $[\resfield:\F_p]$,
and we take~$\overline\resfield$ to be the residue field of the maximal unramified extension~$E_{\nr}$
of~$E$.
For each~$n\ge 1$, let~$\resfield_n$ be the subfield of~$\overline \resfield$ of degree~$n$
over~$\F_p$ (so that~$\resfield = \resfield_f$).

Let~$I \supset P$ be the inertia group, respectively the wild inertia inside $G_E$,
and let~$I_{\tame} = I/P$.

In this subsection we abbreviate~$\eps_{E,p}$ to $\eps$ and $\Frob_E$ to $\Frob$.

Let~$n\in \Z_{\ge 1}$, let~$q = p^n$ and~$d \mid q-1$. Define a character~$\theta_d\colon
I_{\tame} \to \resfield_n^\times$ by
\[
  \frac{s(\pi^{1/d})}{\pi^{1/d}} = \theta_d(s)\in\mu_d(E_{\nr}) \cong \mu_d(\resfield_n)
  \text{ for all }s\in I_{\tame}.
\]
%
%Every character~$I_{\tame} \to \overline \resfield^\times$ is of the
%form~$\theta_d^{a}$ for a unique~$a/d\in (\Q/\Z)'$, where~$(\Q/\Z)'\subset
%\Q/\Z$ is the subgroup of elements of order coprime to~$p$ (i.e. denominator
%coprime to~$p$).
%
A \emph{fundamental character of level~$n$} is a
character~$I_{\tame}\to\overline{\F}_p^\times$ of the form
\[
  \omega_\tau = \tau \circ \theta_{q-1}
\]
for some~$\tau\in\Hom(\resfield_n,\overline{\F}_p)$.
Let $\varphi$ be the absolute Frobenius automorphism $x\mapsto x^p$ of $\Fpbar$.
If one chooses a~$\tau_0\in \Hom(\resfield_n,\F_q)$, then every
character~$\chi\colon I_{\tame} \to \F_q^\times$
is of the form
%there is an
%$a\in \Z$ and, for each $\tau\in\Hom(\resfield_n,\F_q)$, an
%$\h_{\tau}\in \Z$ satisfying $a = \sum_{i=0}^{n-1}h_{\varphi^i\circ\tau_0}p^i$,
%such that
$$
\chi = \omega_{\tau_0}^a
%= \prod_{\tau\in \Hom(\resfield_n,\F_q)}\omega_\tau^{h_\tau}.
$$
for some $a\in \Z$.

%TODO: the next remark should be a lemma
\begin{remark}\label{rmrk:htau}
  If we have an equality
$$
  \omega_{\tau_0}^a= \prod_{\tau\in \Hom(\resfield_n,\F_q)}\omega_\tau^{h_\tau}
$$
for $h_\tau\in \Z$, then the exponents satisfy
$a\equiv \sum_{i=0}^{n-1}h_{\varphi^i\circ\tau_0}p^i\mod{(q-1)}$.
Since $\omega_{\tau_0}$ has order $q-1$,
we may always choose $a\in \{0,\ldots,q-2\}$
and the $h_\tau\in \{0,\ldots,p-1\}$,
in which case the $h_\tau$ are uniquely determined by~$\chi$, except
that~$(0,\dots,0)$ and~$(p-1,\dots,p-1)$ both represent the trivial character.
%By convention we will choose all~$h_\tau=0$ when~$\chi$ is trivial.
\end{remark}

If~$n$, $m\in \Z_{\geq 1}$ are such that $n\mid m$, and~$\tau\in \Hom(\resfield_n,\overline{\F}_p)$, then
\[
  \omega_\tau = \prod_{\tau'\in\Hom(\resfield_m,\overline{\F}_p), \tau'|_{\resfield_n}= \tau}
  \omega_{\tau'}.
\]

By abuse of notation, whenever~$\chi$ is a character of~$I_{\tame} = I/P$, we will also
denote by~$\chi$ its inflation to~$I$ that is trivial on~$P$.

Let~$n\in \Z_{\geq 1}$, set $q=p^n$, and let $\galchar\colon G_E \to \F_q^\times$ be a character.
Then~$\galchar$ is tamely ramified since the order of~$\F_q^\times$ is coprime
to~$p$, and for every choice~$\Frob\in G_E$ of Frobenius element and for
every~$s\in I$ we have
\[
  \galchar(s) = \galchar(\Frob \cdot s\cdot \Frob^{-1}) = \galchar(s)^{\#\resfield}.
\]
Thus, we may always assume that~$n\mid f$. In particular, we can write
\[
  \galchar|_{I} = \prod_{\tau\in\Hom(\resfield,\F_{p^f})} \omega_\tau^{h_\tau}.
\]
In other words, we can decompose~$\galchar|_I$ using only fundamental characters of
level~$f$. This also implies that 
\[
  (\omega_\tau)^{\sigma} = \omega_{\tau\circ\sigma}
\]
for every~$\tau\in\Hom(\resfield,\Fpbar)$ and all $\sigma\in G_E$.

%needed?
% For arbitrary~$\tau$ we have
% \[
%   \omega_\tau(\varphi s \varphi^{-1}) = \omega_{\Frob\circ tau}$.
% \]

\begin{prop}\label{prop:cyclofondachar}
  The mod~$p$ cyclotomic character satisfies~$\eps|_{I} = \theta_{p-1}^e$.
\end{prop}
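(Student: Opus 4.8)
The plan is to evaluate both characters on a primitive $p$-th root of unity, working over the maximal unramified extension $E_{\nr}$, and to exploit that $\zeta_p-1$ and a $(p-1)$-th root of $p$ generate the same (totally ramified) extension there.

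First I would record that $\eps|_I$ factors through $I_{\tame}$: its image lies in $\F_p^\times$, whose order $p-1$ is prime to $p$, while $P$ is pro-$p$, so $\eps(P)=1$. Thus both sides of the asserted identity are characters of $I_{\tame}$ with values in $\F_p^\times$, the target $\mu_{p-1}(E_{\nr})$ of $\theta_{p-1}^e$ being identified with $\mu_{p-1}(\resfield_1)=\F_p^\times$ by reduction. Next I would re-express $\theta_{p-1}^e$ through $(p-1)$-th roots of $p$: writing $p=\pi^e u$ with $u\in\Z_E^\times$ and choosing, by Hensel's lemma (valid since $p-1$ is prime to $p$ and $E_{\nr}$ has algebraically closed residue field), a unit $v\in\Z_{E_{\nr}}^\times$ with $v^{p-1}=u$, the element $\varpi:=(\pi^{1/(p-1)})^e v$ satisfies $\varpi^{p-1}=p$, and since $v\in E_{\nr}$ is fixed by $I$ one obtains $\theta_{p-1}(s)^e=s(\varpi)/\varpi$ for all $s\in I$; moreover this is independent of the chosen $(p-1)$-th root of $p$, as any two differ by an element of $\mu_{p-1}(\overline E)\subset E_{\nr}$.

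Set $L:=E_{\nr}$. Since $\mu_{p-1}\subset L$ and $\Z_L^\times$ is $(p-1)$-divisible, $L^\times/(L^\times)^{p-1}$ is detected by the valuation, so for each divisor $d$ of $p-1$ the field $L$ has a \emph{unique} totally (tamely) ramified extension of degree $d$, namely $L(x^{1/d})$ for any valuation-$1$ element $x$. In particular $L(\varpi)=L(p^{1/(p-1)})$ and $L(\zeta_p)$, being totally tame of the same degree over $L$, coincide; choose $\varpi$ inside this common field $M:=L(\zeta_p)$. From the Eisenstein property of $\Phi_p(1+X)$, $\zeta_p-1$ has the same valuation as $\varpi$, so $\tau:=(\zeta_p-1)/\varpi$ is a unit of $M$. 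Now the key point: because $M/L$ is totally ramified, $\Gal(M/L)$ acts trivially on the residue field, hence $s(\tau)/\tau$ is a $1$-unit for every $s\in\Gal(M/L)$. Writing $\mathfrak m$ for the maximal ideal of the valuation ring of $\overline E$, this gives
\[
  \frac{s(\zeta_p-1)}{\zeta_p-1}=\frac{s(\tau)}{\tau}\cdot\frac{s(\varpi)}{\varpi}\equiv\frac{s(\varpi)}{\varpi}=\theta_{p-1}(s)^e\pmod{\mathfrak m},
\]
whereas $s(\zeta_p)=\zeta_p^{\eps(s)}$ gives
\[
  \frac{s(\zeta_p-1)}{\zeta_p-1}=\frac{\zeta_p^{\eps(s)}-1}{\zeta_p-1}=1+\zeta_p+\cdots+\zeta_p^{\eps(s)-1}\equiv\eps(s)\pmod{\mathfrak m}
\]
(with $\eps(s)$ lifted to $\{1,\dots,p-1\}$). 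Comparing the two congruences and using the identification $\mu_{p-1}(E_{\nr})\cong\F_p^\times$ yields $\theta_{p-1}(s)^e=\eps(s)$ for $s\in\Gal(M/L)$; since both characters factor through $\Gal(M/L)$, this proves $\eps|_I=\theta_{p-1}^e$.

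The only genuinely substantive input is the observation that a totally ramified extension has trivial Galois action on its residue field: this is what upgrades the a priori statement "$\eps|_I$ and $\theta_{p-1}^e$ agree up to an automorphism of $\F_p^\times$" (both being faithful on the cyclic group $\Gal(M/L)$) to an honest equality, and it makes the precise arithmetic relation between $\zeta_p-1$ and $p^{1/(p-1)}$ unnecessary beyond the fact that they generate the same extension of $L$. Everything else — the $(p-1)$-divisibility of $\Z_L^\times$, uniqueness of tame extensions over $L$, the valuation comparison via Eisenstein, and the $\mu_{p-1}(E_{\nr})\cong\F_p^\times$ bookkeeping built into the statement — is routine.
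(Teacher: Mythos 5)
Your proof is correct. The paper gives no argument of its own for this proposition---it simply cites Serre, \emph{Propri\'et\'es galoisiennes des points d'ordre fini des courbes elliptiques}, Proposition~8---and your proof is a faithful, self-contained rendering of Serre's argument: compare $s(\zeta_p-1)/(\zeta_p-1)$ with $s(\varpi)/\varpi$ for a $(p-1)$-th root $\varpi$ of~$p$, using that inertia acts trivially on residues so that the unit $\tau=(\zeta_p-1)/\varpi$ contributes only a $1$-unit, and read off the equality in $\F_p^\times$ mod $\mathfrak m$. One small presentational remark: when you say $L(\varpi)$ and $L(\zeta_p)$ are ``totally tame of the same degree'' over $L$, the common degree is $(p-1)/\gcd(e,p-1)$ rather than $p-1$ in general (and can be~$1$), which your valuation argument in fact yields; it may be worth making that explicit so the reader does not assume degree $p-1$.
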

\begin{proof}~
  See \cite[Proposition 8]{SerreEllCurves}.
\end{proof}
We will typically use this proposition in the following form:
for every $n\in \Z_{\geq 1}$
%this is also~$\omega_\tau^{(p^n-1)e/(p-1)}$ for every choice of~$\tau$ of level~$n$, and
we have~$\eps|_I=\prod_\tau \omega_\tau^e$ where the product is over~$\tau\in \Hom(\resfield_n,\F_{p^n})$.
%of level~$n$, i.e. $h_\tau = e$ for all~$\tau$ in the notations above.

\begin{thm}[Raynaud]\label{thm:raynaud}
  %check that this is correct! (ok)
  Let~$\galrep \colon G_E \to \GL_n(\F_p)$ be a representation.
  If~$\galrep$ is finite flat, then so is its semisimplification.
  If~$\galrep$ is semisimple, then it is finite flat if and only if each of its
  irreducible factors is finite flat.
  If~$\galrep$ is irreducible, then it is finite flat if and only if there exists
  a character~$\galchar\colon G_E \to \F_q^\times$,
  where~$q = p^n$,
  satisfying
  \[
    \galchar|_{I} = \prod_{\tau\in \Hom(\resfield_n,\F_q)}\omega_\tau^{h_\tau}
  \]
  with~$0\le h_\tau\le e$, such that
  \[
    \galrep \cong \WRes_{\F_p}^{\F_q}\galchar.
  \]
\end{thm}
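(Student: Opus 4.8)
The plan is to reduce the first two assertions to the irreducible case by a scheme-theoretic closure argument, and then to invoke Raynaud's classification \cite{Raynaud} for the irreducible case. For the first two assertions, suppose $0\to V'\to\galrep\to V''\to 0$ is a short exact sequence of $\F_p[G_E]$-modules with $\galrep\cong\mathcal{G}(\overline{E})$ for a finite flat group scheme $\mathcal{G}$ over $\Z_E$. Since $E$ has characteristic $0$, the generic fibre $\mathcal{G}_E$ is finite étale, so $V'$ is the group of $\overline{E}$-points of a closed subgroup scheme $\mathcal{G}'_E\subseteq\mathcal{G}_E$. Let $\mathcal{G}'$ be the scheme-theoretic closure of $\mathcal{G}'_E$ in $\mathcal{G}$; because $\Z_E$ is a discrete valuation ring, $\mathcal{O}(\mathcal{G}')$ is torsion-free, hence flat, over $\Z_E$, it is finite over $\Z_E$ as a quotient of $\mathcal{O}(\mathcal{G})$, and it is a closed subgroup scheme of $\mathcal{G}$ with generic fibre $\mathcal{G}'_E$. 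Thus $V'$ is finite flat, and since the fppf quotient $\mathcal{G}/\mathcal{G}'$ is again a finite flat group scheme with generic fibre $\mathcal{G}_E/\mathcal{G}'_E$, so is $V''$. Applying this along a Jordan--H\"older filtration of $\galrep$ shows that the semisimplification of a finite flat representation is finite flat; conversely a finite direct sum of finite flat group schemes is finite flat, which together with the subquotient statement just proved yields the second assertion. Note that this part is valid for arbitrary ramification index $e$.

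For the third assertion, I would first observe that an irreducible $\F_p$-representation of $G_E$ is tamely ramified: the wild inertia subgroup $P$ is pro-$p$, so it acts through a finite $p$-group, whose subspace of fixed vectors is non-zero and $G_E$-stable, hence all of $\galrep$ by irreducibility. Thus $\galrep$ factors through the tame quotient $G_E/P$, and the standard description of irreducible $\F_p$-representations of $G_E/P$ (see also Remark~\ref{rmrk:htau}) gives $\galrep\cong\WRes_{\F_p}^{\F_q}\galchar$ for some character $\galchar\colon G_E\to\F_q^\times$ with $q=p^n$ and $\galchar|_I$ of level exactly $n$; writing $\galchar|_I=\prod_{\tau\in\Hom(\resfield_n,\F_q)}\omega_\tau^{h_\tau}$ with $0\le h_\tau\le p-1$, what remains is to single out the finite flat ones by the sharper bound $h_\tau\le e$. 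This is precisely Raynaud's theorem on simple finite flat group schemes of type $(p,\dots,p)$ over $\Z_E$: such a group scheme is an $\F_q$-vector space scheme of $\F_q$-rank one with affine algebra of Kummer type $\Z_E[(x_\tau)_\tau]/(x_\tau^p-\delta_\tau x_{\tau'})$, where $\tau\mapsto\tau'$ is the cyclic permutation of $\Hom(\resfield_n,\F_q)$ induced by the Frobenius of $\resfield_n$ and the structure constants satisfy $0\le v(\delta_\tau)\le e$ for the normalised valuation $v$ of $E$; a computation with Kummer theory and Proposition~\ref{prop:cyclofondachar} then identifies the resulting $I$-action with $\prod_\tau\omega_\tau^{v(\delta_\tau)}$. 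Conversely, given exponents in $[0,e]$, one writes down this Hopf algebra, checks the comultiplication and antipode axioms by a direct calculation, and obtains a finite flat group scheme realising $\WRes_{\F_p}^{\F_q}\galchar$.

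The main obstacle is the ``only if'' direction of the third assertion: showing that a finite flat group scheme with irreducible generic fibre necessarily has an affine algebra of the shape above, and in particular that the valuations of its structure constants are bounded by $e$. This is the technical core of \cite{Raynaud} --- the theory of $\mathcal{F}$-vector space schemes and the control of their invariants over a base of ramification index $e$ --- and I would cite it rather than reproduce it. The remaining ingredients (the reduction to the irreducible case, tameness of irreducible $\F_p$-representations of $G_E$, and the explicit ``if'' construction) are elementary.
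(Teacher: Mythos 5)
Your proposal is correct and takes essentially the same route as the paper, whose entire ``proof'' is simply the citation of Raynaud (Corollaire~3.3.7 and Th\'eor\`eme~3.4.3). You usefully unpack the elementary reductions (scheme-theoretic closure for subobjects, fppf quotients, tameness of irreducible mod~$p$ representations) before deferring, as the paper does, to Raynaud's theory of $\mathcal{F}$-vector space schemes for the structure of simple prolongations and the bound $0\le h_\tau\le e$.
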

\begin{proof}~
  [Raynaud, Corollaire 3.3.7 and Th\'eor\`eme 3.4.3]
  %check that there is no problem with $e>p-1$ (ok).
\end{proof}
%\begin{thm}[Raynaud]\label{thm:raynaud}
%  Let~$\cG$ be a finite flat group scheme over~$\Z_K$ that is killed by~$p$.
%  Let~$V = \cG(\bar{K})$, which is a finite dimensional~$\F_p$-vector space with
%  an action of~$G_K$.
%  Let~$W$ be a Jordan--H\"older factor of~$V$, of dimension~$n$, and let~$q =
%  p^n$. Then there exists a structure of~$\F_q$-vector space of dimension~$1$
%  on~$W$ that commutes with the Galois action. Moreover, for every such
%  $\F_q$-structure, the action of~$I$ is given by a character of the form
%  \[
%    \prod_{\tau\in \Hom(k_n,\F_q)}\omega_\tau^{h_\tau},
%  \]
%  with~$0\le h_\tau\le e$.
%\end{thm}

\begin{lemma}\label{lem:ord_finite_flat}
  Let $\rho\colon G_E\to \GL_2(\Fpbar)$ be an ordinary representation such that
  $\det \rho = \zeta\cdot\eps$, where $\zeta$ is an unramified character,
  and such that $\rho$ is either induced from a $1$-dimensional
  character or decomposable. Then $\rho$ is finite flat.
\end{lemma}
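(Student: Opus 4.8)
The plan is to reduce to the case where $\rho$ is a direct sum of two characters, one unramified, and then apply Raynaud's criterion (Theorem~\ref{thm:raynaud}) to each summand. First I would observe that $\rho$ is semisimple: a decomposable $2$-dimensional representation is by definition a direct sum of characters, while an induction $\Ind_{G_E/G_L}\psi$ of a character from an index-$2$ (hence normal) subgroup $G_L$ is semisimple, being $\cong\psi'\oplus\psi'\chi$ when $\psi^\sigma=\psi$ (here $p\neq 2$, so $\Fpbar[G_E/G_L]$ is semisimple) and irreducible when $\psi^\sigma\neq\psi$.

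Next I would exploit ordinarity. By hypothesis $\rho$ has a nonzero unramified quotient. If this quotient is $2$-dimensional, then $\rho$ itself is unramified, i.e. $\rho|_I$ is trivial; in that case $\rho$ is a direct sum of unramified characters, each of which is finite flat by Theorem~\ref{thm:raynaud} with all exponents $h_\tau=0$, and we are done. Otherwise the unramified quotient is $1$-dimensional, which forces $\rho$ to be reducible; combined with semisimplicity this gives $\rho\cong\chi_1\oplus\chi_2$ for characters $\chi_1,\chi_2\colon G_E\to\Fpbar^\times$ with, say, $\chi_2$ unramified. Note in particular that the irreducible induced case $\rho=\Ind_{G_E/G_L}\psi$ with $\psi^\sigma\neq\psi$ cannot occur under the hypotheses, since an irreducible $2$-dimensional representation has no $1$-dimensional quotient and hence is not ordinary.

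It then remains to show that $\chi_1$ and $\chi_2$ are finite flat, after which Theorem~\ref{thm:raynaud} (the semisimple case) yields that $\rho$ is finite flat. For $\chi_2$ this is immediate since $\chi_2|_I=1$. For $\chi_1$, I would use $\det\rho=\zeta\cdot\eps$ with $\zeta$ unramified and $\chi_2$ unramified to get $\chi_1=\zeta\,\eps\,\chi_2^{-1}$, so that $\chi_1|_I=\eps|_I$; by Proposition~\ref{prop:cyclofondachar} (in the reformulated shape $\eps|_I=\prod_\tau\omega_\tau^{e}$ recorded after it) we get $\chi_1|_I=\prod_\tau\omega_\tau^{h_\tau}$ with $h_\tau=e$ for all $\tau$, so $0\le h_\tau\le e$, and Theorem~\ref{thm:raynaud} shows $\chi_1$ is finite flat.

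The only genuinely delicate point is the bookkeeping in passing between $\Fpbar$-coefficients and coefficients in a finite field: to invoke Theorem~\ref{thm:raynaud} one realises each $\chi_i$ over its field of definition $\F_{q_i}$, where $\WRes_{\F_p}^{\F_{q_i}}\chi_i$ is irreducible, and observes that $\WRes_{\F_p}^{\F_{q}}\rho$ (for any $\F_q$ containing both) is a direct sum of such restrictions of scalars, the condition $0\le h_\tau\le e$ being insensitive to the choice. I expect this reduction, together with cleanly excluding the irreducible induced case via ordinarity, to be the only real obstacle; everything else is a direct application of Raynaud's theorem and the description of $\eps|_I$ on inertia.
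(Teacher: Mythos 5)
Your proof is correct and follows essentially the same route as the paper's: first reduce to the decomposable case (using that an induced representation is semisimple and that ordinarity forces a stable line, so the irreducible induced case cannot occur), then use ordinarity to make one summand unramified, deduce from the determinant condition that the other is $\theta\eps$ with $\theta$ unramified, and conclude via Raynaud's criterion and the description of $\eps|_I$ in terms of fundamental characters. The paper's proof is terser but makes the identical deductions; your extra explicitness about the two cases of the unramified quotient and about the restriction-of-scalars bookkeeping is harmless.
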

\begin{proof}
  If $\rho$ is induced from a $1$-dimensional character, then it is semisimple.
  Since it is also ordinary, it has a stable line, and therefore is decomposable
  into a direct sum of $1$-dimensional characters. The proof thus reduces
  to the case that $\rho$ is decomposable.

  Assume that $\rho$ is a direct sum of two $1$-dimensional characters.
  Since $\rho$ is ordinary, one of these characters is unramified. Due to the
  determinant assumption, the other one is of the form $\theta\eps$, where
  $\theta$ is an unramified character. By Theorem~\ref{thm:raynaud} and Proposition
  \ref{prop:cyclofondachar}, $\rho$ is finite flat.
\end{proof}

%\begin{lem}\label{lem:finiteflatdecompo}
%  Assume that~$p\ge 2e+1$.
%  Let~$\galrep\colon G_E\to \GL_2(\Fpbar)$ be a Galois representation.
%  Assume that~$\galrep$ is finite flat, that~$\det\galrep = \eps$ and that~$\galrep\cong
%  \galchar\oplus\galchar'$ is decomposable.
%  Then
%  \[
%    \galchar|_{I} \cong \prod_{\tau\in\Hom(\resfield,\Fpbar)}\omega_\tau^{h_\tau}
%    \text{ and }
%    \galchar'|_{I} \cong \prod_{\tau\in\Hom(\resfield,\Fpbar)}\omega_\tau^{h_\tau'}
%  \]
%  where~$0 \le h_\tau\le e$ and~$h_\tau + h_{\tau'} = e$.
%\end{lem}
%\begin{proof}
%  By Raynaud's theorem~\ref{thm:raynaud}, we have~$h_\tau,h_\tau'\in\{0,\dots,e\}$. By the determinant
%  assumption and Proposition~\ref{prop:cyclofondachar}, we must have
%  \[
%    \prod_\tau \omega_\tau^{h_\tau+h_\tau'} = \prod_\tau \omega_\tau^e.
%  \]
%  Since~$p-1\ge 2e$, this forces~$h_\tau+h_\tau' = e$ for all~$\tau$.
%\end{proof}
%
%\begin{rmk}
%  If~$e=1$ then the conclusion of Lemma~\ref{lem:finiteflatdecompo} takes a simpler form:
%  \[
%    \{h_\tau,h_\tau'\} = \{0,1\} \text{ for all }\tau\in\Hom(\resfield,\Fpbar).
%  \]
%\end{rmk}
%
%The following lemma will allow us to treat the reducible case.
%

\begin{lem}\label{lem:redcase}
  Suppose there exists a finite flat character $\theta\colon G_E\to \Fpbar^\times$
  and an unramified character $\phi\colon G_E\to \Fpbar^\times$ such
  that $\theta^2\phi=\eps$. Then we have~$e>1$.
\end{lem}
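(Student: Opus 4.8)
The plan is to restrict the relation $\theta^2\phi=\eps$ to the inertia subgroup $I\subset G_E$ and to compare the resulting identity with the known structure of $\eps|_I$. Since $\phi$ is unramified, restriction gives $\theta^2|_I=\eps|_I$, and by Proposition~\ref{prop:cyclofondachar} together with the remark following it, for every $n\ge 1$ we have
\[
  \eps|_I=\prod_{\tau\in\Hom(\resfield_n,\Fpbar)}\omega_\tau^{e}.
\]
As $e\ge 1$ always holds, it suffices to derive a contradiction from the assumption $e=1$; under that assumption $\eps|_I=\prod_\tau\omega_\tau$, with all exponents equal to $1$.

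Next I would pin down the shape of $\theta|_I$. Unwinding the definition of finite flatness for a character valued in a finite field $\F_{q_0}\subseteq\Fpbar$, the restriction of scalars $\WRes_{\F_p}^{\F_{q_0}}\theta$ is finite flat and semisimple, so by Theorem~\ref{thm:raynaud} its irreducible $\F_p$-constituent $\overline\theta$ (whose scalar extension to $\Fpbar$ is the sum of the distinct $\Gal(\F_{q_0}/\F_p)$-conjugates of $\theta$) is finite flat, hence of the form $\WRes_{\F_p}^{\F_{q'}}\galchar$ for some character $\galchar\colon G_E\to\F_{q'}^\times$ with $q'=p^{n}$ and $\galchar|_I=\prod_{\tau\in\Hom(\resfield_n,\Fpbar)}\omega_\tau^{h_\tau}$, $0\le h_\tau\le e$. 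Comparing scalar extensions to $\Fpbar$ identifies $\theta|_I$ with a Frobenius twist of $\galchar|_I$, and since the $p$-power Frobenius only permutes the level-$n$ fundamental characters among themselves (compare the identity $(\omega_\tau)^\sigma=\omega_{\tau\circ\sigma}$), this merely permutes the exponents; thus $\theta|_I=\prod_\tau\omega_\tau^{h'_\tau}$ with $0\le h'_\tau\le e=1$. Squaring gives $\theta^2|_I=\prod_\tau\omega_\tau^{2h'_\tau}$ with each $2h'_\tau\in\{0,2\}$, hence in $\{0,1,\dots,p-1\}$ because $p>2$. Comparing with $\eps|_I=\prod_\tau\omega_\tau$ at the same level, and noting that $\eps|_I=\theta_{p-1}^{e}=\theta_{p-1}$ is nontrivial since it has order $p-1\ge 2$, the uniqueness of the fundamental-character expansion (Remark~\ref{rmrk:htau}) forces $2h'_\tau=1$ for every $\tau$. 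This is impossible, and the contradiction shows $e\ne 1$, hence $e>1$.

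The only genuinely delicate step is the middle one: translating the formal definition of finite flatness (via restriction of scalars to $\F_p$ and finite flat group schemes) into the clean statement that $\theta|_I$ is a product of level-$n$ fundamental characters with exponents bounded by $e$. This needs the identification of the irreducible $\F_p$-constituent $\overline\theta$ of $\WRes_{\F_p}^{\F_{q_0}}\theta$ (of dimension equal to the "level" of $\theta$), the application of the irreducible case of Theorem~\ref{thm:raynaud} to it, and the comparison of $\overline\theta\otimes\Fpbar$ with $\theta$ and its conjugates to transfer the exponent bound back to $\theta|_I$. Once this is in place, the conclusion is simply the parity observation that $2h'_\tau$ is even while the corresponding exponent of $\eps|_I$ equals $1$; the rest is routine bookkeeping with fundamental characters.
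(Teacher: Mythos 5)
Your proof is correct and takes essentially the same route as the paper: restrict $\theta^2\phi=\eps$ to inertia, write $\theta|_I$ as a product of level-$n$ fundamental characters with exponents in $\{0,\dots,e\}$ via Theorem~\ref{thm:raynaud}, then under $e=1$ compare with $\eps|_I=\prod_\tau\omega_\tau$ and use the uniqueness from Remark~\ref{rmrk:htau} (valid because $p>2$) to force the impossible equations $2h_\tau=1$. The only difference is that you carry out in more detail the restriction-of-scalars and Galois-conjugation bookkeeping needed to transfer Raynaud's exponent bound from the irreducible $\F_p$-constituent back to $\theta|_I$; the paper's proof asserts this step tersely.
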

\begin{proof}
  Suppose that $e=1$. Let $n\in \Z_{\geq 1}$ be such that $\theta$
  takes values in $\F_{p^n}^\times$.
  By Theorem \ref{thm:raynaud} we may write
  $$
    \theta|_{I} = \prod_{\tau\in \Hom(\resfield_n,\Fpbar)}\omega_{\tau}^{h_{\tau}}
  $$
  with $h_{\tau}\in \{0,1\}$. By assumption and by Proposition \ref{prop:cyclofondachar} we have
  $$
    \prod_{\tau\in \Hom(\resfield_n,\Fpbar)}\omega_{\tau}^{2h_{\tau}} = (\theta^2\phi)|_I
      = \eps|_I =  \prod_{\tau\in \Hom(\resfield_n,\Fpbar)}\omega_{\tau}.
  $$
  All exponents of $\omega_{\tau}$ appearing in the equality are in
  $\{0,1,\ldots,p-1\}$, and $1\not\in \{0,p-1\}$, so Remark \ref{rmrk:htau}
  implies that these exponents are uniquely determined. Thus, we have $2h_{\tau}=1$ for all $\tau$,
  which is impossible.
\end{proof}
\subsection{Global representations}\label{sec:globalrep}
If $L$ is a number field and
$\fP$ is a maximal ideal of~$\Z_L$, then we denote by $\F_{\fP}$ the residue field
$\Z_L/\fP$, and by $I_\fP\subset G_{\fP}$ the inertia subgroup at $\fP$.

In this subsection we prove the main result of the section, Theorem \ref{thm:finiteSetS},
which is a precise version of Theorem \ref{thm:IntroFiniteSetS} from the introduction.

\begin{lem}\label{lem:HTinduced}
  Let~$L/F$ be a quadratic extension of number fields, let~$p>2$ be a prime number
  that is unramified in~$L$,
  let~$\galrep\colon G_F\to \GL_2(\Fpbar)$ be a Galois representation of the
  form~$\galrep = \Ind_{G_F/G_L}\galchar$, where~$\galchar\colon G_L \to \Fpbar^\times$
  is a character, and let $\fp\mid p$. Assume that~$\galrep$ is finite flat
  at~$\fp$ and that~$\det\galrep = \zeta\cdot\eps_{F,p}$ for an unramified character $\zeta$.
  Then for every prime~$\fP\mid \fp$ of~$L$ we have
  \[
    \galchar|_{I_\fP} = \prod_{\tau\in\Hom(\F_\fP,\Fpbar)}\omega_\tau^{h_\tau}
  \]
  for some collection~$(h_\tau)_{\tau}\in\{0,1\}^{\Hom(\Z_L/\fp\Z_L,\Fpbar)}=\prod_{\fP|\fp}\{0,1\}^{\Hom(\F_{\fP},\Fpbar)}$
  that is balanced, as in Definition \ref{def:balanced}.
  %with the following property:
  %for every prime~$\fp\mid p$ of~$F$, and
%  If~$\fp = \fP\fP^\sigma$ splits in~$L$ then for every~$\tau\in\Hom(\F_\fP,\Fpbar)$
%  and~$\tau'\in\Hom(\F_{\fP^\sigma},\Fpbar)$ corresponding to the same element
%  of~$\Hom(\F_\fp,\Fpbar)$, we have
%  \[
%    \{h_\tau,h_{\tau'}\} = \{0,1\}.
%  \]
%  If~$\fp$ is inert in~$L$ then for every~$\tau\in \Hom(\F_\fP,\Fpbar)$ we have
%  \[
%    \{h_\tau,h_{\tau\circ \sigma}\} = \{0,1\}.
%  \]
%  In other words, in the decomposition
%  \[
%    \Hom(\Z_L/p,\Fpbar) =
%    \bigsqcup_{p\in\fp\subset\Z_F}
%    \bigsqcup_{\tau_0\in\Hom(\F_\fp,\Fpbar)}
%    \Hom_{\Z_F,\tau_0}(\Z_L/\fp, \Fpbar),
%  \]
%  for every~$\fp$ and~$\tau_0$, when we write~$\Hom_{\Z_F,\tau_0}(\Z_L/\fp,
%  \Fpbar) = \{\tau,\tau'\}$,
  %for all distinct ring homomorphisms~$\tau$,~$\tau'\in \Hom(\Z_L/\fp\Z_L,\Fpbar)=\bigsqcup_{\fP|\fp}\Hom(\F_{\fP},\Fpbar)$
  %satisfying $\tau|_{\F_{\fp}} = \tau'|_{\F_{\fp}}$, we have
  %\[
  %  \{h_\tau, h_{\tau'}\} = \{0,1\}.
  %\]
\end{lem}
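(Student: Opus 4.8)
The plan is to restrict $\galrep$ to a decomposition group at $\fp$, decompose that restriction by Mackey's formula, and then read off the shape of $\galchar$ on inertia from Raynaud's theorem (Theorem~\ref{thm:raynaud}) combined with the hypothesis $\det\galrep=\zeta\cdot\eps_{F,p}$. First I would record the local data: since $p$ is unramified in $L$ and $F\subseteq L$, it is unramified in $F$, so $e(F_\fp/\Q_p)=e(L_\fP/\Q_p)=1$ for every prime $\fP\mid\fp$ of $L$, and each $L_\fP/F_\fp$ is unramified, whence $I_\fP=I_\fp$. Applying Mackey's formula to $\galrep|_{G_{F_\fp}}=(\Ind_{G_F/G_L}\galchar)|_{G_{F_\fp}}$, whose double cosets $G_{F_\fp}\backslash G_F/G_L$ biject with the primes $\fP\mid\fp$ of $L$, gives $\galrep|_{G_{F_\fp}}\cong\bigoplus_{\fP\mid\fp}\Ind_{G_{F_\fp}/G_{L_\fP}}(\galchar|_{G_{L_\fP}})$. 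As $[L:F]=2$ with $p$ unramified in $L$, exactly one of two cases occurs: $\fp$ splits as $\fP\fP'$ with $G_{L_\fP}=G_{L_{\fP'}}=G_{F_\fp}$ and $\F_\fP=\F_{\fP'}=\F_\fp$; or $\fp$ is inert, with a unique $\fP\mid\fp$, $[G_{F_\fp}:G_{L_\fP}]=2$, and $\F_\fP/\F_\fp$ quadratic. In either case, restricting the determinant hypothesis to $I_\fp$ (using that $\zeta$ is unramified at $\fp$, that $e=1$, and Proposition~\ref{prop:cyclofondachar}) gives
\[
  \det\galrep|_{I_\fp}=\eps_{F,p}|_{I_\fp}=\prod_{\tau\in\Hom(\Z_L/\fp\Z_L,\,\Fpbar)}\omega_\tau,
\]
where the right-hand side is written in fundamental characters of level $[\F_\fP:\F_p]$ for each $\fP\mid\fp$.

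Next I would determine $\galchar|_{I_\fP}$. In the split case $\galrep|_{G_{F_\fp}}\cong\galchar|_{G_{L_\fP}}\oplus\galchar|_{G_{L_{\fP'}}}$ is semisimple and finite flat, so by Theorem~\ref{thm:raynaud} each summand is a finite flat character of $G_{F_\fp}$; applying the irreducible case of that theorem over the exact field of definition, refining to level $[\F_\fp:\F_p]$, and using $e=1$, I get $\galchar|_{I_\fP}=\prod_\tau\omega_\tau^{h_\tau}$ and likewise for $\fP'$, with all exponents in $\{0,1\}$. In the inert case I would first exclude reducibility of $\galrep|_{G_{F_\fp}}$: a reducible $\Ind_{G_{F_\fp}/G_{L_\fP}}(\galchar|_{G_{L_\fP}})$ equals $\theta\oplus\theta\cdot\chi_{L_\fP/F_\fp}$ for a finite flat character $\theta$ of $G_{F_\fp}$ with $\theta|_{I_\fp}=\prod_\tau\omega_\tau^{c_\tau}$, $c_\tau\in\{0,1\}$, so $\det\galrep|_{I_\fp}=\theta^2|_{I_\fp}$ would have only even exponents, contradicting the displayed formula since $p>2$. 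Hence $\galrep|_{G_{F_\fp}}$ is irreducible; by Clifford's theorem $\galrep|_{I_\fp}\cong\galchar|_{I_\fP}\oplus(\galchar|_{I_\fP})^\sigma$ with $\sigma$ generating $\Gal(L_\fP/F_\fp)$, and the same finite-flatness argument, now with fundamental characters of level $[\F_\fP:\F_p]$, gives $\galchar|_{I_\fP}=\prod_{\tau\in\Hom(\F_\fP,\Fpbar)}\omega_\tau^{h_\tau}$ with $h_\tau\in\{0,1\}$.

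It remains to check that $(h_\tau)_\tau$ is balanced. Conjugation by a Frobenius element induces the $q$-power map on tame inertia with $q=\#\F_\fp$, which carries $\omega_\tau$ to $\omega_{\tau\circ\sigma}$, where $\sigma$ is the nontrivial automorphism of the $\Z_F/\fp$-algebra $\Z_L/\fp\Z_L$ (the swap of the two copies of $\F_\fp$ in the split case, the generator of $\Gal(\F_\fP/\F_\fp)$ in the inert case). Hence $\det\galrep|_{I_\fp}=\prod_\tau\omega_\tau^{\,h_\tau+h_{\tau\circ\sigma}}$, and comparing with the displayed expression, using uniqueness of exponents once chosen in $\{0,\dots,p-1\}$ (Remark~\ref{rmrk:htau}; note $h_\tau+h_{\tau\circ\sigma}\in\{0,1,2\}\subseteq\{0,\dots,p-1\}$ as $p>2$, and $(1,\dots,1)$ is not the exceptional tuple), forces $h_\tau+h_{\tau\circ\sigma}=1$ for all $\tau$. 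Since the pairs $\{\tau,\tau\circ\sigma\}$ are exactly the distinct pairs of embeddings of $\Z_L/\fp\Z_L$ into $\Fpbar$ restricting to a common embedding of $\Z_F/\fp$, this says precisely that $(h_\tau)_\tau$ is balanced (Definition~\ref{def:balanced}).

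The step I expect to be the main obstacle is the bookkeeping with fundamental characters across the unramified extension $L_\fP/F_\fp$: making fully precise the passage from Raynaud's theorem --- phrased for $\F_p$-coefficient representations and in terms of the exact field of definition --- to an expression of $\galchar|_{I_\fP}$ in fundamental characters of level $[\F_\fP:\F_p]$ with exponents in $\{0,1\}$, together with correctly tracking the Frobenius/$\sigma$-action on those characters and the small-prime corner cases in the uniqueness of exponents. The remaining ingredients (the Mackey decomposition, the split/inert case split, the exclusion of the reducible inert case, and the determinant comparison) are routine.
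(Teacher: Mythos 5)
Your proposal is correct and follows essentially the same route as the paper: Mackey decomposition, Raynaud's theorem to constrain exponents, and the determinant identity $\eps_{F,p}|_{I_\fp}=\prod_\tau\omega_\tau$ together with uniqueness of exponents to force $h_\tau+h_{\tau\circ\sigma}=1$. The one substantive difference is organizational, and it costs you a detour.

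The paper applies Mackey's formula just once, directly to the restriction to inertia: since $L/F$ is unramified at $\fp$, the inertia subgroup $I_\fp$ is contained in $G_L$, so $\{1,\sigma\}$ is a set of double coset representatives for $I_\fp\backslash G_F/G_L$ and Mackey gives $\galrep|_{I_\fp}\cong\galchar|_{I_\fP}\oplus\galchar^\sigma|_{I_\fP}$ with no split/inert case distinction. You instead restrict first to the full decomposition group $G_{F_\fp}$ and then case-split. This forces you, in the inert case, (i) to rule out reducibility of $\galrep|_{G_{F_\fp}}$ via a parity-of-exponents argument, and (ii) to invoke Clifford's theorem to pass to inertia. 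Step (i) is entirely avoidable, and step (ii) is a little imprecise as stated: Clifford applied to $I_\fp\lhd G_{F_\fp}$ only tells you that $\galrep|_{I_\fp}$ is an isotypical multiple of $G_{F_\fp}$-conjugates of an irreducible, without by itself excluding that this irreducible is $2$-dimensional, and without identifying the summands as $\galchar|_{I_\fP}$ and its conjugate. What actually gives you the decomposition is Mackey applied to the chain $I_\fp\subset G_{L_\fP}\subset G_{F_\fp}$ (again using $I_\fp\subset G_{L_\fP}$), which hands you $\galchar|_{I_\fp}\oplus\galchar^\sigma|_{I_\fp}$ regardless of irreducibility. So the appeal to Clifford should be replaced by Mackey, and once you do that the reducibility exclusion and the case split both become superfluous; you land on the paper's one-line Mackey computation.

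The point you flag as the likely main obstacle --- the bookkeeping with fundamental characters at the right level and the passage from Raynaud's $\F_p$-coefficient statement to $\galchar|_{I_\fP}$ with $\Fpbar$-coefficients --- is a real subtlety, and it is present in exactly the same form in the paper's proof, which likewise compresses it to a single sentence. Your treatment of the final balancedness step via Remark~\ref{rmrk:htau} (checking $h_\tau+h_{\tau\circ\sigma}\in\{0,1,2\}$ avoids the exceptional all-$(p-1)$ tuple because $p>2$) is correct and matches the paper's use of ``$p-1\ge 2$''.
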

\begin{proof}
  Let $\sigma$ be the non-trivial element of the Galois group.
  Fixing an extension of the discrete $\fp$-adic valuation on $F$ to $\overline{F}$
  identifies $G_{\fp}$ with a subgroup of $G_F$, and determines a prime $\fP$ of $L$ above $\fp$.
  We have the inclusions
  $$
  \begin{tikzcd}
    %\xymatrix{
    %  I_{\fp} & \subset  & G_{\fp} \subset & G_F\\
    %  I_{\fP} \ar@{^{(}->}[u] & \subset & G_{\fP} \ar@{^{(}->}[u]\subset & G_L\ar@{^{(}->}[u]
    %}
    I_{\fp} \arrow[r,symbol=\subset] & G_{\fp} \arrow[r,symbol=\subset] & G_F\\
      I_{\fP} \arrow[u,symbol=\subset]\arrow[r,symbol=\subset] & G_{\fP}\arrow[u,symbol=\subset]\arrow[r,symbol=\subset]& G_L\arrow[u,symbol=\subset].
   \end{tikzcd}
   $$

  Of the vertical inclusions, the right hand one has index $2$, so that $G_L$ is a normal
  subgroup of $G_F$, while the left one is an equality, since $L/F$ is unramified at $\fp$.
  These inclusions also induce inclusions $\F_{\fp}\subset \F_{\fP}\cong\F_{\fP^\sigma}\subset \Fpbar$,
  of which the first one is either an equality or an inclusion of index $2$,
  and where the middle isomorphism is induced by $\sigma$.

  Since $\fp$ is unramified in $L/F$, a full set of double coset representatives
  for $I_\fp \lquo G_F / G_L$ is given by $\{1,\sigma\}$, and we have $I_{\fp}=I_{\fP}=I_{\fp}\cap G_L=I_{\fp}\cap G_L^{\sigma}$.
  Mackey's formula therefore implies that we have
  $$
    \galrep|_{I_\fp} \cong \galchar|_{I_\fP} \oplus \galchar^\sigma|_{I_\fP}.
  $$
  Theorem \ref{thm:raynaud} implies that $\galchar|_{I_\fP} $ is of the form
  \[
    \galchar|_{I_\fP} = \prod_{\tau\in\Hom(\F_\fP,\Fpbar)}\omega_\tau^{h_\tau}
  \]
  with $h_{\tau}\in \{0,1\}$ for all $\tau$.
%  Since~$\galrep$ is finite flat, its restriction to~$G_L$ is finite flat,
%  so~$\galchar$ is finite flat, proving the first claim.
%  We compute $\galrep|_{G_\fp}\galrep = (\Ind_{G_F/G_L}\galchar)|_{G_\fp}$ using Mackey's
%  formula.
%  A full set of double coset representatives for~$G_\fp \lquo G_F / G_L$ is~$\{1, \sigma\}$
%  if~$\fp$ is split in~$L$, and~$\{1\}$ if~$\fp$ is inert.
  
  Since we have~$\det\galrep = \zeta\cdot\eps_{F,p}$ for an unramified character $\zeta$, we obtain,
  using Proposition \ref{prop:cyclofondachar},
      \[
        \eps_{F,p}|_{I_\fp}
        = \prod_\tau\omega_\tau
        = \prod_\tau
        \omega_\tau^{h_\tau}\prod_\tau(\omega_{\tau}^{\sigma}){}^{h_\tau}
        = \prod_\tau \omega_\tau^{h_\tau}\omega_{\tau\circ\sigma}^{h_\tau}
        = \prod_\tau \omega_\tau^{h_\tau+h_{\tau\circ\sigma}},
      \]
      where the products run over~$\tau\in \Hom(\F_\fP,\Fpbar)$.
      Since we have~$p-1\ge 2$, Remark~\ref{rmrk:htau} implies that~$h_\tau+h_{\tau\circ\sigma}=1$ for all~$\tau$,
      i.e. the collection $(h_{\tau})_{\tau}$ is balanced.
\end{proof}

%(As for Lemma~\ref{lem:finiteflatdecompo}, can weaken the assumption on~$e$
%to~$p-1\ge 2e$ with conclusion~$h_\tau+h_{\tau'}=e$. This is not necessary to
%reduce to finitely many bad $p$, but it could help exclude candidate $\galchar$ that
%actually cannot occur.)

\begin{definition}
Given a finite extension $L/F$ of number fields, a non-zero ideal $\fM$ of $\Z_L$, and
a set $\cV$ of real places of $L$, we denote by $U_L^{(F)}(\fM\cV)$ the group
of units of $\Z_L$ that are congruent to an element of $\Z_F$ modulo $\fM$ and
that are positive at all places in $\cV$.
\end{definition}

\begin{lemma}\label{lem:HTunits}
  Let~$L/F$ be a quadratic extension of number fields that is unramified
  at all finite places, $p$ a prime number unramified in~$L$,
  and $\fM$ a non-zero ideal of~$\Z_L$.
  Let~$(h_\tau)_\tau$ be a collection of integers indexed by~$\tau\in\Hom(\Z_L/p,\Fpbar)
  =\bigsqcup_{\fP|p}\Hom(\F_{\fP},\Fpbar)$.
  Let~$\galchar \colon G_L \to \Fpbar^\times$ be a character with prime-to-$p$-conductor
  dividing~$\fM$ such that the character $\det\Ind_{G_F/G_L}\galchar$ of $G_F$ is unramified at
  all primes $\fq$ of $\Z_F$ satisfying $\fq\mid \fM\cap\Z_F$ and $\fq\nmid p$,
  and such that for all prime ideals~$\fP\mid p$ of~$\Z_L$ we have
  \[
    \galchar|_{I_\fP} = \prod_{\tau\in\Hom(\F_\fP,\Fpbar)}\omega_\tau^{h_\tau}.
  \]
  %Let~$L$ be a number field such that~$|\Hom(K,L)|=[K\colon\Q]$.
  Let~$\tilde{L}$ be the Galois closure of~$L$ over~$\Q$ and let~$\tilde{\fP}\mid p$ be a prime
  of~$\tilde{L}$, inducing a bijection~$\beta\colon \Hom(L,\tilde{L})\cong \Hom(\Z_L/p,\Fpbar)$.
  Then for all~$u\in U_L^{(F)}(\fM\infty)$, we have
  \[
    \prod_{\tau\in\Hom(L,\tilde{L})}\tau(u)^{h_{\beta(\tau)}} \equiv 1 \bmod{\tilde{\fP}}.
  \]
\end{lemma}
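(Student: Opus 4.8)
The plan is to pass from the Galois character $\galchar$ to the associated finite-order Hecke character $\automchar\colon \adel_L^\times/L^\times \to \Fpbar^\times$ via class field theory, and to read off the desired congruence from the product formula $\prod_v \automchar_v(u) = 1$, which holds for every $u \in L^\times$ because $u$ embeds trivially in the idele class group. The bulk of the work consists in showing that $\automchar_v(u) = 1$ for every place $v$ of $L$ not lying over $p$, so that $\prod_{\fP \mid p} \automchar_\fP(u) = 1$, after which the local description of $\galchar$ above $p$ will identify this product with the expression in the statement.

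At an archimedean place $v$ of $L$ the character $\automchar_v$ is trivial on the identity component of $L_v^\times$; since $u$ is positive at the real places of $L$ (part of the definition of $U_L^{(F)}(\fM\infty)$) and $\CC^\times$ is connected, $\automchar_v(u) = 1$. At a finite prime $\fP \nmid p$ with $\fP \nmid \fM$, the hypothesis that the prime-to-$p$-conductor of $\galchar$ divides $\fM$ forces $\automchar_\fP$ to be unramified, hence trivial on $u \in \Z_{L,\fP}^\times$. The interesting case is $\fP \mid \fM$ with $\fP \nmid p$, treated prime-by-prime over $F$: fix such $\fP$ and put $\fq = \fP \cap \Z_F$, so $\fq \mid \fM \cap \Z_F$ and $\fq \nmid p$. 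Since $L/F$ is unramified at finite places, $\fq$ is split or inert in $L$ and the quadratic character $\chi_{L/F}$ of $L/F$ is unramified at $\fq$; combining the identity $\det \Ind_{G_F/G_L}\galchar = (\galchar \circ \mathrm{Ver})\cdot\chi_{L/F}$ with the compatibility of the transfer $\mathrm{Ver}$ under $\adel_F^\times \hookrightarrow \adel_L^\times$, the hypothesis on $\det \Ind_{G_F/G_L}\galchar$ gives $\prod_{\fP' \mid \fq}\automchar_{\fP'}(a) = 1$ for every $\fq$-adic unit $a$. If $\fq$ is split and only one of the two primes above it divides $\fM$, a Mackey computation of $(\Ind_{G_F/G_L}\galchar)|_{I_\fq}$ together with the determinant hypothesis forces $\galchar|_{I_\fP}$ to be trivial, so $\automchar_\fP$ is in fact unramified and $\automchar_\fP(u) = 1$. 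In every other case (namely $\fq$ inert, or $\fq$ split with both primes above dividing $\fM$) I choose $a \in \Z_F$ with $a \equiv u \bmod \fM$, which exists because $u \in U_L^{(F)}(\fM\infty)$; then $a$ is a $\fq$-adic unit, and since the conductor exponent of $\automchar$ at each $\fP' \mid \fq$ dividing $\fM$ is bounded by the corresponding exponent of $\fM$, we get $\automchar_{\fP'}(u) = \automchar_{\fP'}(a)$ there (while $\automchar_{\fP'}(u)=1$ at the unramified $\fP' \mid \fq$), whence $\prod_{\fP'\mid\fq}\automchar_{\fP'}(u) = \prod_{\fP'\mid\fq}\automchar_{\fP'}(a) = 1$.

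Granting $\prod_{\fP \mid p}\automchar_\fP(u) = 1$, it remains to unwind the local factors at $p$. At $\fP \mid p$ the identity $\galchar|_{I_\fP} = \prod_{\tau \in \Hom(\F_\fP,\Fpbar)}\omega_\tau^{h_\tau}$ shows $\galchar|_{I_\fP}$ is tame, so $\automchar_\fP$ is trivial on $1+\fP$ and its restriction to $\Z_{L,\fP}^\times$ factors through $\Z_{L,\fP}^\times/(1+\fP) = \F_\fP^\times$; by local class field theory and the standard identification of fundamental characters (as in \cite[\S 1]{SerreEllCurves}) there is a sign $\eps \in \{\pm 1\}$, independent of $\fP$, with $\automchar_\fP(x) = \prod_{\tau}\tau(\bar x)^{\eps h_\tau}$ for $x \in \Z_{L,\fP}^\times$ with image $\bar x \in \F_\fP^\times$. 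Reindexing $\bigsqcup_{\fP\mid p}\Hom(\F_\fP,\Fpbar) = \Hom(\Z_L/p,\Fpbar)$ through the bijection $\beta$ and using $\tau(u) \bmod \tilde\fP = \beta(\tau)(u \bmod p)$, the relation $\prod_{\fP\mid p}\automchar_\fP(u) = 1$ becomes $\bigl(\prod_{\tau\in\Hom(L,\tilde L)}\tau(u)^{h_{\beta(\tau)}}\bigr)^{\eps} \equiv 1 \bmod \tilde\fP$; raising to the power $\eps$ yields the claim. The main obstacle will be the bookkeeping at the primes dividing $\fM$: one must notice that, for $\fq$ split with exactly one prime above it dividing $\fM$, the determinant hypothesis already makes $\automchar$ unramified there, and that $a \equiv u \bmod \fM$ pins $a$ to a $\fq$-adic unit precisely in the cases where $\automchar_\fP$ can genuinely ramify, so that the determinant hypothesis applies; checking that the normalisation-dependent sign from local class field theory is harmless, since we only need a product to equal $1$, is a minor point.
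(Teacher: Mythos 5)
Your proof is correct and at bottom runs along the same lines as the paper's: both pass from $\galchar$ to a finite-order Hecke character $\automchar$ of $L$ via class field theory, and both invoke Serre's description of the restriction of $\automchar$ at primes above $p$ in terms of fundamental characters; the desired congruence is the statement that the product over $\fP\mid p$ of the local contributions applied to $u$ is trivial. The two arguments differ only in how this triviality is established. The paper does it in one stroke: it uses Gallagher's determinant formula to recast the hypothesis on $\det\Ind_{G_F/G_L}\galchar$ as saying that $\automchar$ factors through $\Cl_L(p\fM\infty)/(\Z_F/(\fM\cap\Z_F))^\times$, and then reads off from the exact sequence
\[
  1 \to \frac{\prod_{\fP\mid p}\F_\fP^\times}{U_L^{(F)}(\fM\infty)} \to
  \frac{\Cl_L(p\fM\infty)}{(\Z_F/(\fM\cap\Z_F))^\times} \to \frac{\Cl_L(\fM\infty)}{(\Z_F/(\fM\cap\Z_F))^\times}\to 1
\]
that the restriction of $\automchar$ to $\prod_{\fP\mid p}\F_\fP^\times$ kills the image of $U_L^{(F)}(\fM\infty)$. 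You instead unwind the same information through the product formula $\prod_v\automchar_v(u)=1$ and an explicit place-by-place verification that $\automchar_v(u)=1$ for $v\nmid p$. That buys elementarity but costs length. Incidentally, the case distinction you make at a split $\fq$ with exactly one prime above dividing $\fM$ is unnecessary: the uniform argument with an auxiliary $a\in\Z_F$, $a\equiv u\bmod\fM$, already gives $\automchar_{\fP}(u)=\automchar_{\fP}(a)$ at the ramified prime and $\automchar_{\fP'}(u)=\automchar_{\fP'}(a)=1$ at the unramified one, so the determinant hypothesis still forces the product over $\fP'\mid\fq$ to be $1$ without the Mackey argument.
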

\begin{proof}
  Without loss of generality, we may replace $\fM$ by its coprime-to-$p$ part.
  %Let~$U(\fM)\subset \Z_L^\times$ denote the subgroup of totally positive units
  %congruent to~$1\bmod{\fM}$.
  By class field theory, $\galchar$ corresponds to a character
  \[
    \Cl_{L}(\fF\fM\infty) \to \Fpbar^\times,
  \]
  where~$\fF$ is an ideal supported at the primes above~$p$,
  and in fact by Lemma \ref{lem:pisenough} it corresponds to a character
  \[
    \automchar\colon \Cl_{L}(p\fM\infty) \to \Fpbar^\times.
  \]
  Moreover, by \cite{Gallagher} the assumption on $\det\Ind_{G_F/G_L}\galchar$ is equivalent to $\automchar$ factoring
  through $\Cl_{L}(p\fM\infty)/(\Z_F/(\fM\cap \Z_F))^\times$.
  We have an exact sequence
  \begin{equation}\label{eq:clgpsequence}
    1 \to \frac{\prod_{\fP\mid p}\F_\fP^\times}{U_L^{(F)}(\fM\infty)} \to
    \frac{\Cl_{L}(p\fM\infty)}{(\Z_F/(\fM\cap \Z_F))^\times} \to \frac{\Cl_{L}(\fM\infty)}{(\Z_F/(\fM\cap \Z_F))^\times} \to 1.
  \end{equation}
  By \cite[Section 1.5, Proposition 3]{SerreEllCurves}, the restriction of~$\automchar$ to~$\prod_{\fP\mid p}\F_\fP^\times$
  satisfies
  $$
  \automchar|_{\prod_{\fP\mid p}\F_\fP^\times} = \prod_{\fP\mid p}\prod_{\tau\in\Hom(\F_\fP,\Fpbar)}\tau^{-h_\tau}.
  $$
  This forces~$\prod_{\tau\in\Hom(\Z_F/p,\Fpbar)}\tau^{-h_\tau}$ to be trivial
  on the image of~$U_L^{(F)}(\fM\infty)$, which is equivalent to the claimed statement.
\end{proof}

We now prove Theorem \ref{thm:IntroFiniteSetS} from the introduction.
\begin{theorem}\label{thm:finiteSetS}
  Assume Conjecture \ref{conj:attachedgalreps}.
  Let $x$ be the exponent of $\Cl_F$, let~$\chi\in\Cisodual$, let~$L/F$ be the
  corresponding quadratic
  extension, %$\sigma$ the non-trivial automorphism of~$L/F$,
  let~$\tilde{L}$ denote the Galois closure of~$L$ over~$\Q$.
  %and fix a generating set~$u_1,\dots,u_r$ of the group of totally positive
  %units of~$L$ that are congruent to~$1\bmod{\delta_D\fN\Z_L}$.
  Let~$S$ be the union of the following sets of prime numbers: the primes that
  are ramified in~$L$; the prime numbers that divide~$2\norm_{F/\Q}(\delta_D\fN)$;
  and the prime numbers $p$ for which there exists an ideal $\fM$ of $\Z_L$
  such that $\norm_{L/F}\fM$ divides $\delta_D\fN$, and
  a balanced collection~$(h_\tau)_\tau\in\{0,1\}^{\Hom(L,\tilde{L})}$
  %such that~$\{h_\tau, h_{\tau'}\} = \{0,1\}$ whenever~$\tau\neq\tau'$ and~$\tau|_F =\tau'|_F$
  such that all of the following hold:
  \begin{enumerate}[leftmargin=*,label=\upshape{(\alph*)}]
    \item\label{item:divunits} for all units $u\in U_L^{(F)}(\fM\infty)$, the prime~$p$ divides
      \begin{equation}\label{eq:tau_units}
        \norm_{\tilde L/\Q}
        \left(\prod_{\tau\in\Hom(L,\tilde{L})}\tau(u)^{h_\tau} - 1\right);
      \end{equation}
    \item\label{item:dividealgens} for every prime ideal~$\fq$ of~$\Z_F$ dividing~$\delta_D$, let~$\fQ$ be a
      prime of~$\Z_L$ above~$\fq$, let~$k$ be the order of~$\fQ$
      in~$\Cl_L(\fN\Z_L\infty)/(\Z_F/\fN)^\times$, and let~$\alpha\in \Z_F+\fN\Z_L$ be a totally
      positive generator of~$\fQ^k$; then~$p$
      divides
      \begin{equation}\label{eq:tau_alpha}
        \norm_{\tilde L/\Q}\left(
        \Bigl(\prod_{\tau} \tau(\alpha)^{2h_\tau x} - \norm_{L/\Q}(\fQ)^{2kx}\Bigr)
        \Bigl(\prod_{\tau} \tau(\alpha)^{2h_\tau x} - 1\Bigr)
        \right).
      \end{equation}
  \end{enumerate}

  Then:
  \begin{enumerate}[leftmargin=*,label=\upshape{(\arabic*)}]
    \item\label{item:allpshady} every prime number~$p$ such that there exists a $\Z_p$-shady character
      of~$L$ is contained in~$S$;
    \item\label{item:manypshady} the set~$S$ is infinite if and only if there exists an $\cH^\bullet$-shady
      character of~$L$.
  \end{enumerate}
\end{theorem}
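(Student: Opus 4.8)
The plan is to prove the two equivalences in Theorem~\ref{thm:finiteSetS} by carefully tracking what the various conjectural and local conditions say about the Hecke character $\automchar$ (equivalently its Galois avatar $\galchar$) at the primes dividing $p$, and converting those into divisibility statements about norms of units and of generators of ideal powers. For part~\ref{item:allpshady}, suppose $p\notin S$ and that there is a $\Z_p$-shady character $\automchar$ of $L$, with $\galrep=\Ind_{G_F/G_L}\galchar$ satisfying conditions \ref{item:conddet}--\ref{item:condlevel} of Conjecture~\ref{conj:attachedgalreps}. Since $p\notin S$, the prime $p$ is unramified in $L$, does not divide $2\norm_{F/\Q}(\delta_D\fN)$, so $\galrep$ is unramified at $p$ outside a very controlled set and is in particular ordinary or finite flat at each $\fp\mid p$ by \ref{item:condatp}. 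First I would invoke Lemma~\ref{lem:ord_finite_flat}: because $\galrep$ is induced from a character, the ordinary case forces $\galrep$ finite flat, so $\galrep$ is finite flat at every $\fp\mid p$. Then Lemma~\ref{lem:HTinduced} applies and produces the balanced collection $(h_\tau)_\tau\in\{0,1\}^{\Hom(L,\tilde L)}$ describing $\galchar|_{I_\fP}$ for $\fP\mid p$. The prime-to-$p$-conductor condition \ref{item:condlevel} together with Lemma~\ref{lem:Taguchi} shows $\delta_{L/F}\norm_{L/F}(\fM)$ divides $\delta_D\fN$, where $\fM$ is the prime-to-$p$-conductor of $\galchar$; since $p$ is unramified in $L$ (so $\delta_{L/F}$ is a unit at $p$) this gives $\norm_{L/F}\fM\mid\delta_D\fN$.

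Next I would extract the two divisibilities \ref{item:divunits} and \ref{item:dividealgens} from the hypothesis that $\galchar$, viewed via class field theory as a character of $\Cl_L(p\fM\infty)$, actually exists. For \ref{item:divunits}, Lemma~\ref{lem:HTunits} directly gives $\prod_\tau\tau(u)^{h_\tau}\equiv 1\pmod{\tilde\fP}$ for all $u\in U_L^{(F)}(\fM\infty)$ and every prime $\tilde\fP\mid p$ of $\tilde L$; running over all such $\tilde\fP$ and taking the product of conjugates shows $p$ divides the norm in \eqref{eq:tau_units}, provided that norm is nonzero. (When the product $\prod_\tau\tau(u)^{h_\tau}-1$ vanishes the divisibility is vacuous, so this causes no problem; one should note this case explicitly.) For \ref{item:dividealgens} I would combine: the value $\automchar(\fQ)$ satisfies a relation coming from property \ref{item:condramif} of $\galrep$ at $\fq\mid\delta_D$ (the two diagonal entries of $\galrep|_{G_\fq}$ are $\theta\eps$ and $\theta$ with $\theta$ unramified), which up to the ambiguity controlled by the exponent $x$ of $\Cl_F$ pins $\automchar(\fQ)^{2x}$ to be either $\norm(\fQ)^{2kx}$-ish or $1$-ish modulo $p$; writing $\fQ^k=\alpha\Z_L$ with $\alpha$ totally positive in $\Z_F+\fN\Z_L$ and using that $\automchar$ kills $(\Z_F/\fN)^\times$ converts $\automchar(\fQ)^k=\automchar(\alpha)$, and $\automchar(\alpha)$ is computed at the primes above $p$ by $\prod_\tau\tau(\alpha)^{h_\tau}$ via the same local formula; raising to the $2x$ and clearing the unramified-character ambiguity yields that $p$ divides \eqref{eq:tau_alpha}. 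The decomposable case of $\galrep$ (when \ref{item:condramif} only says ``$\rho$ decomposable'') has to be handled in parallel, but then $\galchar$ is one of the two diagonal characters and the same local analysis goes through. Hence $p\in S$, a contradiction, proving \ref{item:allpshady}.

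For part~\ref{item:manypshady} I would argue in both directions. If there is an $\cH^\bullet$-shady character $\automchar_0$ of $L$ (so $L$ is ramified at the same real places of $F$ as $D$, all archimedean parameters are as in Definition~\ref{def:Hshady}, and $\automrep_f^{K_f}\neq 0$), then twisting $\automchar_0$ by finite-order Hecke characters of $L$ of $p$-power order (for infinitely many $p$) and reducing modulo a prime above $p$ produces, for all sufficiently large $p$ outside a fixed finite bad set, a $\Z_p$-shady character of $L$; by part~\ref{item:allpshady} each such $p$ lies in $S$, so $S$ is infinite. Conversely, if $S$ is infinite, then in particular infinitely many $p\in S$ arise from some balanced collection $(h_\tau)_\tau$ and ideal $\fM$ with $\norm_{L/F}\fM\mid\delta_D\fN$ satisfying \ref{item:divunits} and \ref{item:dividealgens}; since there are only finitely many choices of $\fM$ and of $(h_\tau)_\tau$, one choice works for infinitely many $p$. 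The divisibilities \ref{item:divunits} then say that for infinitely many $p$ the algebraic number $\prod_\tau\tau(u)^{h_\tau}-1$ is divisible by a prime above $p$ in $\tilde L$ for every unit $u\in U_L^{(F)}(\fM\infty)$; since a nonzero algebraic number has only finitely many prime divisors, this forces $\prod_\tau\tau(u)^{h_\tau}=1$ for all such $u$. This is exactly the condition allowing one to construct (by the Artin--Weil / Hecke recipe, as in the proof of \cite[Proposition 41]{MolinPage} or \cite[Lemma 2.3.4]{Patrikis}) an algebraic Hecke character $\automchar^{\alg}$ of $L$ whose infinity-type is the balanced collection $(h_\tau)_\tau$; condition \ref{item:dividealgens} similarly guarantees the character can be chosen with the right local behaviour (the right level and central character) at the primes dividing $\delta_D\fN$, and the balancedness of $(h_\tau)_\tau$ plus Proposition~\ref{prop:algchar} translates the infinity-type conditions into conditions \ref{item:HshadyR} and \ref{item:HshadyC} of Definition~\ref{def:Hshady}. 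Finally one checks $\automrep_f^{K_f}\neq 0$: this holds because the level of $\automchar^{\alg}$ divides $\delta_D\fN$ (from \ref{item:dividealgens} and $\norm_{L/F}\fM\mid\delta_D\fN$ via Lemma~\ref{lem:levelAI}) and $K_f=K_0(\fN)$, and the space of fixed vectors of $\GL_2$-principal series or supercuspidals of conductor dividing $\fN$ under $K_0(\fN)$ is nonzero by Lemma~\ref{lem:dimfixedpoints} at split-in-$D$ primes and by an explicit check at ramified primes. Thus $\automchar^{\alg}$ (suitably normalised, $\automchar=\automchar^{\alg}\|\cdot\|^{1/2}$) is an $\cH^\bullet$-shady character of $L$.

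The main obstacle I anticipate is the bookkeeping in \ref{item:dividealgens}: translating the very loose local conditions \ref{item:conddet} and \ref{item:condramif} of Conjecture~\ref{conj:attachedgalreps} at primes $\fq\mid\delta_D$ into a clean divisibility statement about $\prod_\tau\tau(\alpha)^{2h_\tau x}$. The factor $x$ (exponent of $\Cl_F$) is there precisely to absorb the ambiguity in the unramified characters $\zeta$ and $\theta$, and getting both the ``$\norm(\fQ)^{2kx}$'' and the ``$1$'' alternatives to appear as a single polynomial whose norm $p$ must divide requires care about which of the two diagonal characters in \ref{item:condramif} is the one $\galchar$ restricts to; one also has to make sure the totally positive generator $\alpha\in\Z_F+\fN\Z_L$ interacts correctly with the fact that $\automchar$ is trivial on $(\Z_F/\fN)^\times$ and with the archimedean sign conditions. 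The converse direction's construction of $\automchar^{\alg}$ from the vanishing $\prod_\tau\tau(u)^{h_\tau}=1$ is standard (it is the proof that a balanced infinity-type that is trivial on the relevant unit group is realised by an algebraic Hecke character), but verifying that the resulting character simultaneously meets the level and ramification constraints at $\delta_D\fN$ — rather than just the archimedean ones — is the second delicate point.
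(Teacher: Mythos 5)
Your overall architecture for part~(1) — invoke Lemma~\ref{lem:ord_finite_flat} to upgrade ordinary to finite flat, then Lemma~\ref{lem:HTinduced} and Lemma~\ref{lem:Taguchi} to extract the balanced collection $(h_\tau)$ and the divisibility $\norm_{L/F}\fM\mid\delta_D\fN$, then Lemma~\ref{lem:HTunits} for \ref{item:divunits} — does match the paper. But there are three genuine gaps.

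First, in part~(1), your treatment of the decomposable case is confused: you conflate the \emph{global} dichotomy (is $\galrep$ irreducible or decomposable, i.e.\ is $\galchar^\sigma=\galchar$?) with the \emph{local} disjunction appearing in property~\ref{item:condramif} of Conjecture~\ref{conj:attachedgalreps}. You suggest that if $\galrep$ is decomposable one can run ``the same local analysis'' on the diagonal characters; but property~\ref{item:condramif} tells you nothing at $\fq\mid\delta_D$ in that branch, so you would be stuck. The paper instead dispatches the globally decomposable case immediately at the primes above $p$: writing $\galrep\cong\theta\oplus\theta\chi$ with $\theta$ finite flat at $\fp$ and applying Lemma~\ref{lem:redcase} with $\phi=\chi\zeta^{-1}$ forces $\fp$ ramified over $p$, so $p$ ramifies in $F$ and hence in $L$, giving $p\in S$ without ever touching \ref{item:divunits} or \ref{item:dividealgens}. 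Lemmas~\ref{lem:HTinduced} and~\ref{lem:HTunits} are then only invoked in the irreducible case. You also omit a small but needed step there: irreducibility plus property~\ref{item:condramif} forces $\galchar$ unramified at every $\fQ\mid\fq\mid\delta_D$, so the prime-to-$p$-conductor $\fM$ actually divides $\fN\Z_L$, which is what licences applying the exact sequence \eqref{eq:clgpsequence} with $\fM=\fN\Z_L$ in the derivation of \ref{item:dividealgens}.

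Second, your forward direction of part~(2) — reducing an $\cH^\bullet$-shady character mod primes above $p$, twisting by $p$-power-order characters, and then invoking part~(1) — is both indirect and incomplete. You would have to verify that the resulting mod-$p$ Galois representations satisfy all of properties \ref{item:conddet}--\ref{item:condlevel} of Conjecture~\ref{conj:attachedgalreps} (property~\ref{item:condatp} at $\fp\mid p$ in particular is not automatic from a naive reduction), and you offer no account of why. The paper's argument is far more economical and you miss its key step: by Corollary~\ref{cor:LinowitzVoight}, the existence of an $\cH^\bullet$-shady character forces $\delta_D=\Z_F$, so condition \ref{item:dividealgens} is vacuous; and the central-character and conductor computations show that $\prod_\tau\tau(u)^{h_\tau}-1$ is identically zero, so \ref{item:divunits} holds trivially for \emph{all} primes $p$, and $S$ is infinite with no detour through $\Z_p$-shadiness.

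Third, in the converse of part~(2) you misidentify the role of \ref{item:dividealgens}. You say it ``guarantees the character can be chosen with the right local behaviour at the primes dividing $\delta_D\fN$''. That is not what happens. In the paper, condition \ref{item:dividealgens} is used \emph{negatively}: balancedness of $(h_\tau)$ makes the algebraic number in \eqref{eq:tau_alpha} provably nonzero, and a nonzero algebraic number cannot be divisible by infinitely many rational primes, so if there were a $\fq\mid\delta_D$ one gets a contradiction. Hence $\delta_D=\Z_F$, and the awkward question of whether the putative $\cH^\bullet$-shady character has supercuspidal local components at primes dividing $\delta_D$ simply does not arise. Your version would leave that question open.

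The remaining pieces of your sketch — using Artin--Weil to produce $\automchar^{\alg}$ once $\prod_\tau\tau(u)^{h_\tau}=1$, then Proposition~\ref{prop:algchar} and Lemma~\ref{lem:levelAI} to check the archimedean type and the level, and Lemma~\ref{lem:dimfixedpoints} for $\automrep_f^{K_f}\neq 0$ — do match the paper.
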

\begin{proof}
  Let $\sigma$ be the non-trivial automorphism of $L/F$.
  We first prove \ref{item:allpshady}. Let $p$ be a prime number such that there exists
  a $\Z_p$-shady character $\automchar$. If $p|2\norm_{F/\Q}(\delta_D\fN)$, then it is in $S$.
  Assume otherwise.

  Let $\galchar\colon G_L\to \Fpbar^{\times}$
  be the corresponding Galois character, and let $\galrep=\Ind_{G_F/G_L}\galchar$,
  which, in particular, satisfies the properties of Conjecture \ref{conj:attachedgalreps}.
  By property \ref{item:conddet} we have $\det\galrep = \zeta\cdot\eps_{F,p}$, where $\zeta$
  is an unramified character. Let $\fp$ be any prime ideal of~$\Z_F$
  above $p$. By Mackey's formula the restriction $\galrep|_{G_{\fp}}$ is either also induced
  from a $1$-dimensional character or decomposable.
  By property \ref{item:condatp} and Lemma \ref{lem:ord_finite_flat} the representation
  $\galrep$ is finite flat at $\fp$. Now consider two cases.

  First suppose that $\galrep$ is decomposable. Since~$p\neq 2$, we have
  $\galrep\cong \theta\oplus \theta\chi$ for some character
  $\theta\colon G_F\to \Fpbar^{\times}$ that is finite flat at $\fp$.
  By Lemma \ref{lem:redcase}, applied with $\phi=\chi\zeta^{-1}$,
  the prime $\fp$ is ramified in $F$, so that we have $p\in S$.

  Next suppose that $\galrep$ is irreducible. If $p$ ramifies in $L$, then we have $p\in S$. Assume otherwise.
  By Lemma \ref{lem:HTinduced} we may write, for every prime ideal $\fP$ of $\Z_L$ above $\fp$,
  \[
    \galchar|_{I_\fP} = \prod_{\tau\in\Hom(\F_\fP,\Fpbar)}\omega_{\tau}^{h_{\tau}'},
  \]
  where the collection $(h_{\tau}')_{\tau}\in\{0,1\}^{\Hom(\Z_L/\fp\Z_L,\Fpbar)}=\prod_{\fP|\fp}\{0,1\}^{\Hom(\F_{\fP},\Fpbar)}$
  is balanced. Let $\fM$ be the prime-to-$p$-conductor of $\automchar$. Since $L/F$ is
  unramified at all finite primes, Lemma \ref{lem:Taguchi} and property \ref{item:condlevel}
  of $\rho$ imply that we have $(\norm_{L/F}\fM)|\delta_D\fN$.
  Since $\fp|p$ was arbitrary, we may apply Lemma \ref{lem:HTunits}
  to obtain~$\tilde{\fP}$ and~$\beta$ as
  in Lemma~\ref{lem:HTunits} such that for every $u\in U_L^{(F)}(\fM\infty)$ we have
  \[
    \prod_{\tau\in\Hom(L,\tilde{L})}\tau(u)^{h_{\beta(\tau)}'} \equiv 1
    \bmod{\tilde{\fP}}.
  \]
  Reduction mod~$\tilde{\fP}$ also induces a bijection~$\beta_F\colon \Hom(F,\tilde{L})\cong
  \Hom(\Z_F/p,\Fpbar)$ such that the following square is commutative:
  $$
    \xymatrix{
      \Hom(L,\tilde{L}) \ar[r]^{\text{restriction}}\ar[d]^{\beta}& \Hom(F,\tilde{L})\ar[d]^{\beta_F}\\
      \Hom(\Z_L/p,\Fpbar) \ar[r]^{\text{res.}} & \Hom(\Z_F/p,\Fpbar).
    }
  $$
  Therefore the collection
  $(h_{\tau}=h_{\beta(\tau)}')_{\tau}$ indexed by $\tau\in\Hom(L,\tilde{L})$ is
  balanced, and for all~$u\in U_L^{(F)}(\fM\infty)$ the prime~$p$ divides
  \[ 
    \norm_{\tilde L/\Q}
    \left(\prod_{\tau\in\Hom(L,\tilde{L})}\tau(u)^{h_\tau} - 1\right).
  \]
  This proves divisibility \ref{item:divunits}, and it remains to prove divisibility \ref{item:dividealgens}.

  Let~$\fq$ be a prime ideal of~$\Z_F$ dividing~$\delta_D$.
  Since we have
  \[
    C_{\iso} = \Cl_F(\cV_{\Hamil}(D))/\langle\fa^2, \fp \text{ ramified in }D, \fp^e\|\fN\rangle,
  \]
  the prime~$\fq$ splits in~$L$.
  %\marginpar{factor out with Corollary \ref{cor:LinowitzVoight}?}
  Let~$\fQ$ be a prime ideal of~$\Z_L$ above~$\fq$.
  By Mackey's formula, we have
  \begin{equation}\label{eq:induceddecomp}
    \galrep|_{G_\fq} \cong \galchar|_{G_\fQ} \oplus \galchar^\sigma|_{G_\fQ}.
  \end{equation}
  Since~$\fq$ does not
  divide~$p$ and~$\galrep$ is irreducible, property~\ref{item:condramif}
  of~$\galrep$ says
  \[
    \galrep|_{G_\fq} \cong \begin{pmatrix}\theta\eps_{F,p} & * \\ 0  & \theta\end{pmatrix}
  \]
  with~$\theta$ unramified. 
  But~$\galrep|_{G_\fq}$ is semisimple, so we have
  \begin{equation}\label{eq:localdecomp}
    \galrep|_{G_\fq} \cong \theta\eps_{F,p} \oplus \theta.
  \end{equation}
  In particular~$\galrep$ is unramified at~$\fq$, and therefore~$\galchar$ is
  unramified at~$\fQ$.
  Since~$\fq | \delta_D$ and~$\fQ | \fq$ were arbitrary, the conductor of~$\automchar$
  divides~$p\fN\Z_L\infty$.
  By property~\ref{item:conddet} of~$\galrep$, we have~$\theta^2=\zeta|_{G_{\fq}}$,
  so that, in particular, we have $\theta^{2x}=1$.
  By equations \eqref{eq:induceddecomp} and \eqref{eq:localdecomp} we get
  \[
    \{\galchar(\Frob_\fQ)^{2x},\galchar^\sigma(\Frob_\fQ)^{2x}\}
    = \{\eps_{L,p}(\Frob_\fQ)^{2x},1\},
  \]
  in other words,
  \begin{equation}\label{eq:pairvalue}
    \{\automchar(\fQ)^{2x},\automchar^\sigma(\fQ)^{2x}\}
    = \{\norm_{L/\Q}(\fQ)^{2x},1\}.
  \end{equation}
  Let~$k$ and~$\alpha$ be as in~\ref{item:dividealgens}.
  By the exact sequence \eqref{eq:clgpsequence}, applied with $\fM=\fN\Z_L$, we have
  \[
    \prod_{\tau\in\Hom(\Z_L/p,\Fpbar)} \tau(\alpha)^{h_\tau'}
    = \automchar(\fQ^k).
  \]
  Combining this with equation \eqref{eq:pairvalue} we deduce that one of the following
  congruences holds:
  \begin{eqnarray*}
    \prod_{\tau\in\Hom(L,\tilde L)} \tau(\alpha)^{2h_\tau x}
    &\equiv&
    \norm_{L/\Q}(\fQ)^{2kx}
    \mod \tilde{\fP} \text{, or}\\
    \prod_{\tau\in\Hom(L,\tilde L)} \tau(\alpha)^{2h_\tau x}
    &\equiv&
    1 \mod \tilde{\fP},
  \end{eqnarray*}
  which proves~\ref{item:dividealgens}.

  Now we prove~\ref{item:manypshady}. 
  For a number field $M$ and a non-zero ideal $\fA$ of $\Z_M$, define
  $$
  \hat{U}_M(\fA) = \{(u_v)_v \in \prod_{v\nmid \infty}\Z_{M,v}^\times : u_v\equiv 1\mod \fA\Z_{M,v}\}\subset \adel_M^\times.
  $$
  If $\automchar$ is a Hecke character of $L$, then the central character of
  $\AI^L_F(\automchar)$ is $\chi\cdot\automchar|_{\adel_F^\times}$ (see e.g.
  \cite{Schmidt} or \cite[Appendix B]{GerardinLabesse}).

  First suppose that there exists an $\cH^\bullet$-shady
  character $\automchar$ of~$L$. By Corollary~\ref{cor:LinowitzVoight} this implies
  that we have $\delta_D=\Z_F$, so that condition \ref{item:dividealgens} on $p$ is empty.
  By Proposition \ref{prop:algchar} the character
  $\automchar^{\alg}=\automchar\|\cdot\|^{-1/2}$ is an algebraic Hecke character
  whose type $(h_{\tau})_{\tau\in \Hom(L,\tilde{L})}$ is balanced. This implies that we
  have $(\automchar^{\alg})^\sigma\neq \automchar^{\alg}$, so by Theorem \ref{thm:AI}
  the automorphic induction $\automrep = \AI^L_F(\automchar^{\alg})$ is infinite-dimensional.
  By Lemma \ref{lem:levelAI} the conductor $\fM$ of $\automchar^{\alg}$ satisfies
  $\norm_{L/F}(\fM)|\fN$. Moreover, the central character $\chi\cdot \automchar^{\alg}|_{\adel_F^\times}$
  of $\automrep$ vanishes on $K_0(\fN)\cap \adel_F^\times=\hat{U}_F(1)$, so that
  $\automchar^{\alg}$ vanishes on $\hat{U}_F(1)\hat{U}_L(\fM)$.
  This implies that for all $u\in U_L^{(F)}(\fM\infty)$
  the infinity-component of $\automchar^{\alg}$ satisfies
  $$
    \prod_{v|\infty}\automchar^{\alg}_v(u) = 1,
  $$
  so that all prime numbers $p$ satisfy divisibility \ref{item:divunits}.
  This shows that the set $S$ is infinite.

  Conversely, suppose that $S$ if infinite. There are only finitely many
  distinct balanced collections $(h_{\tau})_{\tau}\in \{0,1\}^{\Hom(L,\tilde{L})}$,
  and only finitely many ideals $\fM$ of $\Z_L$ such that $\norm_{L/F}\fM$ divides
  $\delta_D\fN$.
  Pick a collection $(h_{\tau})_{\tau}$ and an ideal $\fM$ for which infinitely
  many primes satisfy the divisibilities \ref{item:divunits} and \ref{item:dividealgens}.
  Suppose that there exists a prime ideal $\fq|\delta_D$, and let $\fQ$, $k$
  and~$\alpha$ be as in \ref{item:dividealgens}. Since the collection~$(h_{\tau})_{\tau}$ is balanced,
  we have
  $$
  \Big(\prod_{\tau}(\tau(\alpha)\tau(\alpha)^\sigma)^{h_{\tau}}\Big)^{2x}=\left(\norm_{L/\Q}\alpha \right)^{2x} = \norm_{L/\Q}(\fQ^{2kx}).
  $$
  Since $\alpha$ generates a non-trivial ideal, we have $\prod_{\tau}(\tau(\alpha)^{2h_{\tau}x})\neq 1$,
  and similarly for~$\alpha^\sigma$ in place of $\alpha$, so that the expression in \eqref{eq:tau_alpha}
  is non-zero. This contradicts that it is divisible by infinitely many primes. Hence we have $\delta_D=\Z_F$.

  Next, for every $u\in U_L^{(F)}(\fM\infty)$, infinitely many primes dividing
  the expression~\eqref{eq:tau_units} forces it to be $0$. This implies that there exists an algebraic
  Hecke character~$\automchar^{\alg}$ of $L$ with conductor dividing $\fM$, with type
  $(h_{\tau})_{\tau}$, and such that~$\automchar^{\alg}$ vanishes on $\hat{U}_F(1)$.
  Note that we have $K_0(\fN) = K_1(\fN)\cdot (K_0(\fN)\cap \adel_F^\times)$.
  By Proposition \ref{prop:algchar} and Lemma \ref{lem:levelAI}
  the Hecke character $\automchar = \automchar^{\alg}\|\cdot\|^{1/2}$
  is an $\cH^\bullet$-shady character of~$L$, as required.
\end{proof}

\section{Curiosity cabinet}\label{sec:examples}

We continue to make Assumption \ref{ass:gamma0level}.
%Also recall the notation $K_0(\fN)$ from Section \ref{sec:galreps},
%where $\fN$ is a nonzero ideal of $\Z_F$ coprime to $\delta_D$.
%In this section we always take $K_f=K_0(\fN)$ for ideals $\fN$.
Given a number field~$L$ and a non-zero ideal~$\fF$ of~$\Z_L$, we
write~$\heckegroup_{L,\fF}$ for the group of Hecke characters of~$L$ of
conductor dividing~$\fF$.
If $\Gamma$ is a subgroup of $D^\times$, let $P\Gamma$ denote the image
of $\Gamma$ in $D^\times/F^\times$.

In Subsection~\ref{subsec:toolkit}, we first explain that the existence of the
various kinds of shady characters is computable, and then give a criterion to
prove non-isospectrality.
In Subsection~\ref{subsec:collection}, we give a series of examples illustrating
our methods, and we prove Theorems~\ref{thm:IntroSmallIso},
\ref{thm:IntroZeroNotOne} and~\ref{thm:IntroRatlRegQuo}.

\subsection{Toolkit}\label{subsec:toolkit}

By~\cite{MolinPage}, there exist algorithms performing the following tasks:
\begin{itemize}
  \item given~$(L,\fF)$, compute the structure of~$\heckegroup_{L,\fF}$;
  \item given~$\automchar\in \heckegroup_{L,\fF}$, compute the multiplicative
    order of~$\automchar$;
  \item given~$\automchar\in \heckegroup_{L,\fF}$, a non-zero ideal~$\fa$, and a target
    precision, compute~$\automchar(\fa)\in\CC^\times$ to the required precision;
  \item given~$\automchar\in \heckegroup_{L,\fF}$, an infinite place~$v$ of~$L$, and a target
    precision, compute the parameters~$(k_v,s_v)\in \Z\times\CC$ to the required
    precision.
\end{itemize}
This is not quite enough to decide existence of shady characters, because their
definitions involve vanishing conditions on the parameters~$s_v$, so having
access to approximations is not sufficient.
We obtain decision algorithms as follows:
\begin{itemize}
  \item For $\rL^2$-, $\Omega^\bullet$ and~$\Omega^0$-shady characters, by
    Lemma~\ref{lem:shadyGaloisaction} it is sufficient to be able to compute the
    action of a given automorphism of~$L$ on~$\heckegroup_{L,\fF}$.
    This is possible using the description of~$\heckegroup_{L,\fF}$
    in~\cite{MolinPage} as the dual of a group on which the action can clearly
    be computed.
  \item For $\cH^\bullet$-shady characters, by Proposition~\ref{prop:algchar} it
    is sufficient to be able to compute groups of algebraic characters, for
    which an algorithm is described in~\cite{MolinPage}.
  \item For $\Z_p$-shady characters, it is sufficient to be able to compute ray
    class groups, for which there are standard algorithms
    \cite[Chapter~4]{CohenAdvanced}.
\end{itemize}

\begin{definition}
  Let $L/F$ be a quadratic extension, let $i\in \Z_{\geq 0}$, and $\lambda \in \R$.
  Then an \emph{$\Omega^i_{\Delta=\lambda}$-shady character} of $L$ is a
  $(\Omega^{\vecti}_{\Delta=\vectlambda})$-shady character of $L$ for some
  $\vecti=(i_v)_v$, $\vectlambda=(\lambda_v)_v$ satisfying
  $\sum_v i_v=i$ and $\sum_v \lambda_v=\lambda$, with both sums running over
  the infinite places $v$ of $F$.
\end{definition}

The following result will be used to prove non-$i$-isospectrality.
\begin{lemma}\label{lem:negative}
  Assume that $\cV_{\R}(D)=\emptyset$, that $\delta_D=1$, and that $C$ has order $2$,
  let $c\in C$ be the non-trivial element, and $L/F$ the corresponding quadratic extension.
  Let $i\in \Z_{\geq 0}$ and $\lambda\in \R$. Suppose that there exists a unique
  pair $\{\automchar,\automchar^{-1}\}$ of~$\Omega^i_{\Delta=\lambda}$-shady characters of $L$.
  Suppose in addition that the conductor~$\fF$ of~$\automchar$ satisfies that $\fN/\norm_{L/F}(\fF)$
  is the square of an ideal of~$\Z_F$. Then the orbifolds~$Y_{1}$ and~$Y_{c}$ are not $i$-isospectral.
\end{lemma}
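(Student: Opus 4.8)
The plan is to argue by contradiction. Suppose $Y_1$ and $Y_c$ are $i$-isospectral; then the $\lambda$-eigenspaces of $\Delta$ on complex-valued differential $i$-forms have equal dimension on the two pieces, i.e. $\dim_\CC \Omega^i_{\Delta=\lambda}(Y_1)_\CC = \dim_\CC \Omega^i_{\Delta=\lambda}(Y_c)_\CC$. Since $\cV_\R(D)=\emptyset$, the group $C$ is the narrow ray class group $\Cl_F(\infty)$, of order $2$ by hypothesis, and $C_{\iso}=C$ (in the contrary case $W$ contains an element of degree $c$, which induces an isometry $Y_1\simeq Y_c$, so the situation does not arise); write $\chi$ for the non-trivial character of $C_{\iso}$. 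Consider the graded $\heckebig$-module $M=\Omega^i_{\Delta=\lambda}(\cY)_\CC$, whose two homogeneous components are $\Omega^i_{\Delta=\lambda}(Y_1)_\CC$ and $\Omega^i_{\Delta=\lambda}(Y_c)_\CC$. Every self-twist eigenvector is annihilated by $\abstracthecke_c$, hence is homogeneous, and $W$ (being of trivial degree) permutes self-twist eigensystems, so $M=M_0\oplus M_1$ as graded $\heckebig$-submodules, where $M_1$ is the span of the eigenspaces of Hecke eigensystems admitting a self-twist by $\chi$ and $M_0$ is the complement. As $M_0$ contains no self-twist eigensystem, Corollary~\ref{cor:linked} (with $R=Q=\CC$) shows that $(M_0)_1$ is linked to $(M_0)_c$, so $\dim_\CC (M_0)_1=\dim_\CC (M_0)_c$. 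Hence the assumption forces $\dim_\CC (M_1)_1=\dim_\CC (M_1)_c$, and it suffices to contradict this.

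By Proposition~\ref{prop:diffFormsPsi}, the self-twist eigensystems occurring in $M$ correspond to $\Omega^i_{\Delta=\lambda}$-shady characters of $L$; by the uniqueness hypothesis there is a single pair $\{\automchar,\automchar^{-1}\}$ of these. Moreover $\automchar^\sigma$ is again such a character and differs from $\automchar$ (compare the two at any real place of $F$: all real places ramify in $D$, hence extend to a single complex place $w$ of $L$, at which $\automchar_w=\automchar_\CC(k,0)$ with $k\neq 0$), so uniqueness gives $\automchar^\sigma=\automchar^{-1}$, whence $\AI_F^L(\automchar^{-1})=\AI_F^L(\automchar)$; thus all these eigensystems come from one automorphic representation $\automrep=\automrep_\infty\otimes\automrep_f$ of $\G(\adel_F)$ with $\JL_D(\automrep)=\AI_F^L(\automchar)$. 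Consequently $M_1$ is the $\automrep$-isotypic part of $M$, and Matsushima's formula (Corollary~\ref{cor:Matsushima}), applied component by component, gives $\dim_\CC (M_1)_b = N\cdot\dim_\CC(\automrep_f^{K_f})_b$ for $b\in\{1,c\}$, with $N$ a positive integer independent of $b$: it is the total archimedean multiplicity $\sum_{\vecti}\dim\Hom_{K_\infty^+}\!\bigl(\bigotimes_v \CC\otimes_\R\Lambda^{i_v}(\lieg_v/\liek_v),\automrep_\infty\bigr)$ over $\vecti$ with $\sum_v i_v=i$, which is $\geq 1$ because $\automchar$ is $\Omega^i_{\Delta=\lambda}$-shady and carries no factor $2^r$ since $r=0$ (all real places have $i_v=0$). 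So it is enough to show $\dim_\CC(\automrep_f^{K_f})_1\neq\dim_\CC(\automrep_f^{K_f})_c$.

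To this end I compute the $C$-grading on $\automrep_f^{K_f}=\bigotimes_\fp\automrep_\fp^{K_0(\fp^{m_\fp})}$, where $\fN=\prod_\fp\fp^{m_\fp}$. Since $\delta_D=1$, each $\automrep_\fp$ ($\fp$ finite) is a representation of $\GL_2(F_\fp)$, infinite-dimensional because $\JL_D(\automrep)=\AI_F^L(\automchar)$ is cuspidal; and since $C=\Cl_F(\infty)$, the extension $L/F$ is unramified at all finite places, so $\delta_{L/F}$ is trivial and Lemma~\ref{lem:levelAI} gives $\mathrm{cond}(\automrep_\fp)=\fp^{n_\fp}$ with $n_\fp=\ord_\fp(\norm_{L/F}(\fF))$. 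For $\fp\nmid\fN$ the local factor is the one-dimensional spherical line; for $\fp\mid\fN$ the central character of $\automrep_\fp$ is unramified (as $\automrep_f^{K_f}\neq 0$), so by Lemma~\ref{lem:dimfixedpoints} — whose proof applies verbatim with $K_1(\fp^\bullet)$ replaced by $K_0(\fp^\bullet)$ once the central character is unramified, since the newvector and all its oldvectors are then $K_0$-fixed — the vectors $v_\fp^{(j)}=\automrep_\fp\bigl(\diag(\varpi_\fp^j,1)\bigr)v_\fp^{\mathrm{new}}$, $0\leq j\leq e_\fp$, form a basis of $\automrep_\fp^{K_0(\fp^{m_\fp})}$, where $e_\fp=m_\fp-n_\fp=\ord_\fp\bigl(\fN/\norm_{L/F}(\fF)\bigr)$ is \emph{even} by hypothesis. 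Fix an automorphic realization of $\automrep$, let $c_0\in C$ be the component of the line spanned by $\bigotimes_\fp v_\fp^{\mathrm{new}}$, and note that right translation by $\diag(\varpi_\fp^j,1)$ moves the $b$-component to the $b[\fp]^{j}$-component, where $[\fp]\in C$ is the class of $\fp$; as $L/F$ is unramified at $\fp$, one has $[\fp]=1$ if $\fp$ splits in $L$ and $[\fp]=c$ if $\fp$ is inert. Hence the basis vector $\bigotimes_{\fp\mid\fN}v_\fp^{(j_\fp)}$ (spherical elsewhere) lies in the component $c_0\cdot c^{\sum_{\fp\ \mathrm{inert}}j_\fp}$, so that
\[
  \dim_\CC(\automrep_f^{K_f})_{c_0}-\dim_\CC(\automrep_f^{K_f})_{c_0 c}
  = \Bigl(\prod_{\substack{\fp\mid\fN\\ \fp\ \mathrm{split}}}(e_\fp+1)\Bigr)\prod_{\substack{\fp\mid\fN\\ \fp\ \mathrm{inert}}}\ \sum_{j=0}^{e_\fp}(-1)^j
  = \prod_{\substack{\fp\mid\fN\\ \fp\ \mathrm{split}}}(e_\fp+1)\ \geq\ 1,
\]
using that $\sum_{j=0}^{e_\fp}(-1)^j=1$ when $e_\fp$ is even. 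Since $\{c_0,c_0c\}=\{1,c\}$, this contradicts $\dim_\CC(M_1)_1=\dim_\CC(M_1)_c$, and the lemma follows.

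The step I expect to demand the most care is the third paragraph: pinning down $\mathrm{cond}(\automrep_\fp)$ via Lemma~\ref{lem:levelAI} together with the fact that $C=\Cl_F(\infty)$ forces $L/F$ unramified at finite places (so $\delta_{L/F}$ contributes nothing); checking that the dimension count of Lemma~\ref{lem:dimfixedpoints} carries over from $K_1(\fp^i)$- to $K_0(\fp^i)$-invariants when the central character is unramified; and, above all, the adelic bookkeeping that determines in which connected component of $\cY$ each oldvector $v_\fp^{(j)}$ lives.
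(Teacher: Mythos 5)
Your proposal is correct but takes a genuinely different, and in fact more refined, route than the paper's. The paper argues by parity: it shows that the \emph{total} multiplicity of $\lambda$ on $\Omega^i(\cY)=\Omega^i(Y_1)\sqcup\Omega^i(Y_c)$ is odd, because non-self-twist eigensystems pair off under twisting by $\chi$ and contribute evenly, while the unique self-twist representation $\automrep$ contributes $\dim\automrep_f^{K_f}=\prod(e_\fp+1)$ (odd, since all $e_\fp$ are even) to the total. An odd total cannot split into two equal halves. You instead split off the self-twist part $M_1$ and compute the \emph{difference} $\dim (M_1)_1 - \dim (M_1)_c$ directly, by tracking which component of $\cY$ each oldvector $\bigotimes_\fp v_\fp^{(j_\fp)}$ lives in: split primes each contribute a factor $e_\fp+1$, inert primes contribute $\sum_{j=0}^{e_\fp}(-1)^j=1$ (using $e_\fp$ even), giving a non-zero difference. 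Your version is strictly more informative: it gives the exact imbalance, and it does not rely on the number of valid archimedean weight tuples $\vecti$ being odd (which is automatic when $F$ has exactly one complex place, as in all the paper's examples, but is not a priori true in general). The price is the extra adelic bookkeeping of oldforms, which you correctly flag as the delicate step; the extension of Lemma~\ref{lem:dimfixedpoints} from $K_1$- to $K_0$-invariants with unramified central character, and the basis $v_\fp^{(j)}=\automrep_\fp(\diag(\varpi_\fp^j,1))v_\fp^{\mathrm{new}}$, are standard Casselman/Atkin--Lehner--Li facts and are used correctly.

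Two small issues in your write-up. First, your justification that $C_{\iso}=C$ is circular: you say that if $W$ had an element of degree $c$ then $Y_1\simeq Y_c$ ``so the situation does not arise'', but the hypotheses do not obviously preclude that situation. The correct argument: by the explicit formula $C_{\iso}=\Cl_F(\infty)/\langle\fa^2,\ \fp^{m_\fp}\text{ for }\fp\mid\fN\rangle$ (cf.\ the proof of Corollary~\ref{cor:LinowitzVoight}), $C_{\iso}$ differs from $C$ only if some inert $\fp\mid\fN$ has $m_\fp$ odd; but for $\fp$ inert in $L$, $\ord_\fp(\norm_{L/F}\fF)$ is even, and the hypothesis that $\fN/\norm_{L/F}(\fF)$ is a square forces $m_\fp$ even. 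Hence $C_{\iso}=C$. Second, the claim that $M=M_0\oplus M_1$ is a decomposition of \emph{graded $\heckebig$-modules} requires that $W$ preserve the span of the self-twist eigenvectors; this is true but deserves a word (for instance, $W$ normalises $\abstracthecke$ and preserves each generalised eigenspace up to permutation, and the self-twist property of an eigensystem is invariant under this permutation since $W$ is of trivial degree). Alternatively, you could avoid this split entirely and argue, as the paper does, at the level of the total multiplicity, combined with your more precise component count on the self-twist block.
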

\begin{proof}
  It suffices to show that the multiplicity of~$\lambda$ in the~$\Delta$-spectrum on~$\Omega^i(\cY)$ is odd.
  Let $\chi\in \Cdual$ be the unique non-trivial character.

  First let $\automrep=\automrep_{\infty}\otimes \automrep_f$ be 
  an automorphic representation of $\G(\adel_F)$ such that $\automrep\otimes \chi\not\cong \automrep$.
  Then $\automrep$ and $\automrep\otimes \chi$ have the same Casimir eigenvalue. Moreover,
  since the Hecke character $\chi$ is trivial on $\nrd(K_f)$, the dimensions of $\automrep_f^{K_f}$
  and of $(\automrep_f\otimes\chi)^{K_f}$ are equal. Thus, the contribution from $\automrep$
  and from its twist by $\chi$ to the $\lambda$-eigenspace under the isomorphism of Corollary \ref{cor:Matsushima}
  is even-dimensional.

  By assumption, there exists a unique automorphic representation $\automrep$ of $\G(\adel_F)$
  such that $\automrep_f^{K_f}$ is non-trivial,
  $\automrep\otimes\chi\cong\automrep$, and~$\automrep$ contributes
  to~$\Omega^i_{\Delta=\lambda}$. We now apply Proposition \ref{prop:diffFormsPsi}
  in conjunction with Corollary \ref{cor:Matsushima}. The hypothesis on~$\cV_{\R}(D)$
  ensures that, in the notation of Proposition \ref{prop:diffFormsPsi}, we have $r=0$. Moreover,
  by Lemmas~\ref{lem:dimfixedpoints} and~\ref{lem:levelAI}, the assumption on the conductor of $\automchar$
  implies that the contribution from $\automrep$ to multiplicity of the $\lambda$-eigenspace
  is odd-dimensional.
\end{proof}

\subsection{Collection}\label{subsec:collection}

%TODO pictures yes, presentation and other details abount the orbifolds: low
%priority.

  The following example reproduces~\cite[Example~6.1]{VoightLinowitz} with our method.

% i = 68
% polabs = y^6 - 2*y^5 - y^4 + 4*y^3 - 4*y^2 + 1
\begin{ex}\label{ex:LinowitzVoight}
  Let~$F = \Q(\alpha)$ where~$\alpha^6 - 2\alpha^5 - \alpha^4 + 4\alpha^3 -
  4\alpha^2 + 1 = 0$.
  The field~$F$ is the unique number field of discriminant~$-974528$ and
  signature~$(4,1)$ (\href{http://www.lmfdb.org/NumberField/6.4.974528.1}{LMFDB \texttt{6.4.974528.1}}).
  Let~$D$ be the unique quaternion division algebra ramified at every real place
  and no finite place of~$F$. Let~$\fN = (1)$.
  We have~$C \cong \Ciso \cong C_2$, which therefore has a single non-trivial
  character~$\chi$, corresponding to the quadratic extension~$L =
  F(\sqrt{2\alpha-1})$.

  Let~$C = \{1,c\}$. We have
  \[
    \vol(Y_{1}) = \vol(Y_{c}) = \frac{974528^{3/2}\zeta_F(2)}{2^{12}\pi^{10}} = 2.834032\dots.
  \]

  The extension~$L/F$ is ramified at exactly~$2$ of the~$4$ real places of~$F$.
  By Corollary~\ref{cor:LinowitzVoight}, the groups~$\Gamma_{1}$
  and~$\Gamma_{c}$ are representation equivalent.
\end{ex}

The following example proves Theorem~\ref{thm:IntroSmallIso}.
It is the smallest example we could find of a pair of hyperbolic
$3$-orbifolds that are $i$-isospectral for all~$i$ but not representation
equivalent.

% i = 3 in my list
% y^4 - y^3 + y^2 + 4*y - 4
\begin{ex}\label{ex:1notL2}
  % = \Q(\sqrt{-10-14\sqrt{5}})
  Let~$F = \Q(\alpha)$ where~$\alpha^4 - \alpha^3 + \alpha^2 + 4\alpha - 4 = 0$,
  which is also the field~$\Q(\sqrt{-10-14\sqrt{5}})$.
  %Let~$F = \Q(\sqrt{-10-14\sqrt{5}})$, which
  The field~$F$ is the unique number field of discriminant~$-1375$ and
  signature~$(2,1)$ (\href{http://www.lmfdb.org/NumberField/4.2.1375.1}{LMFDB \texttt{4.2.1375.1}}).
  Let~$D$ be the unique quaternion division algebra ramified at every real place
  and no finite place of~$F$. Let~$\fN = (1)$.
  We have~$C \cong \Ciso \cong C_2$, which therefore has a single non-trivial
  character~$\chi$, corresponding to the quadratic extension~$L = F(\zeta_{10})$.
  Let~$\sigma$ be the non-trivial automorphism of~$L/F$.
  % polabs = x^8 + 7*x^6 + 4*x^4 + 3*x^2 + 1
  % emb = 1/4*x^6 + 2*x^4 + 3*x^2 + 7/4
  % zeta10 = 5/8*x^7 - 3/8*x^6 + 4*x^5 - 5/2*x^4 - 1/2*x^2 + 7/8*x - 1/8
  Let~$C = \{1,c\}$. We have
  \[
    \vol(Y_{1}) = \vol(Y_{c}) = \frac{1375^{3/2}\zeta_F(2)}{2^8\pi^6} = 0.2510654\dots.
  \]
  By \cite[Theorem C]{VoightLinowitz}, the groups~$\Gamma_{1}$
  and~$\Gamma_{c}$ are not representation equivalent.
  Take representatives~$(\tau_k)_k$ of~$\Hom(L,\CC)$ modulo complex
  conjugation such that
  % emb: [0.8090169943749474241 - 1.6067232121035396258*I, 0.8090169943749474241 + 1.6067232121035396258*I, 0.8449151874764454285 + 0.E-1252*I, -1.4629491762263402767 + 0.E-1251*I]
  % arg(z)*10/(2*Pi): [-1.0000000000000000000, -1.0000000000000000000, 3.000000000000000001, -3.000000000000000001]
  % WARNING: taking the conjugate of tau_2 to get the same restriction to F: same phase, opposite weight
  \begin{eqnarray*}
    (\tau_k(\zeta_{10}))_k &=& (e^{-\frac{2\pi}{10}i}, e^{\frac{2\pi}{10}i},
    e^{3\frac{2\pi}{10}i}, e^{-3\frac{2\pi}{10}i}), \\
    (\tau_k(\alpha))_k &\approx& (0.809 - 1.607i, 0.809 - 1.607i, 0.845, -1.463).
  \end{eqnarray*}

  We compute the group~$\heckegroup_{L,(1)}$ of unitary Hecke characters. It is
  isomorphic to~$\Z^7 \times \R$. Let~$\automchar_1,\dots,\automchar_7$ denote the
  computed basis of the canonical complement of~$\|\cdot\|^{i\R}$ (see e.g. 
  \cite[Section 3.5]{MolinPage}).
  For each character~$\automchar$ and complex embedding~$\tau$, we display an
  approximation of the pair~$(k,t)\in \Z\times \R$ such that~$\automchar_{\tau}
  = \automchar_\CC(k,it)$.

  \begin{center}
  \begin{tabular}{c|ccccccc}
 & $\automchar_1$ & $\automchar_2$ & $\automchar_3$ & $\automchar_4$ & $\automchar_5$ & $\automchar_6$ & $\automchar_7$
 \\ \hline
 % [ [2, 0] [-2, 0] [0, -2.2319604434027811917 E-1251]       [-3, -0.8984865990634729366]   [1, -0.3668195261273877584]    [1, -0.3668195261273877584]  [-2, 1.1489612923342942994] [0, -I]]
    $\tau_1$ & \small $(2,0)$ & \small $(-2,0)$ & \small $(0,0)$ & \small $(-3,-0.898)$ & \small
    $(1,-0.367)$ & \small $(1,-0.367)$ & \small $(-2,1.149)$
 \\
 % [ [2, 0] [-2, 0] [-2, -4.782372596715610414 E-1251]        [-3, 0.8984865990634729366]   [2, -1.2653061251908606950]    [0, -1.2653061251908606950]  [0, -1.1489612923342942994] [0, -I]]
 $\tau_2$ & \small $(-2,0)$ & \small $(2,0)$ & \small $(2,0)$ & \small $(3,0.898)$ & \small $(-2,-1.265)$ & \small $(0,-1.265)$ & \small $(0,-1.149)$
 \\
 % [ [4, 0] [-9, 0]       [-2, -0.6108567827548085216] [-10, 4.023174812169127207 E-1250] [5, -0.014650055735326590153]     [3, 1.6467757070535750435] [-1, -0.5252844900170465560] [0, -I]]
 $\tau_3$ & \small $(4,0)$ & \small $(-9,0)$ & \small $(-2,-0.611)$ & \small $(-10,0)$ & \small $(5,-0.015)$ & \small $(3,1.647)$ & \small $(-1,-0.525)$
 \\
 % [[-4, 0]  [9, 0]         [2, 0.6108567827548085216] [12, -5.269223276921633345 E-1251]   [-6, 1.6467757070535750435] [-4, -0.014650055735326590153]   [3, 0.5252844900170465560] [0, -I]]
 $\tau_4$ & \small $(-4,0)$ & \small $(9,0)$ & \small $(2,0.611)$ & \small $(12,0)$ & \small $(-6,1.647)$ & \small $(-4,-0.015)$ & \small $(3,0.525)$
  \end{tabular}
  \end{center}

  We then compute the subgroup~$\heckegroup_{L,(1)}^-$ of unitary Hecke
  characters~$\automchar$ such that~$\automchar^\sigma \automchar$ has finite order.
  It admits a basis~$\automchar_1',\dots,\automchar_4'$, where
  % basis: [[1, 0, 0, 0, 0, 0, 0]~, [0, 1, 0, 0, 0, 0, 0]~, [0, 0, 0, 1, 0, 0, 0]~, [0, 0, -1, 0, 1, -1, -2]~]
  \[
    \automchar_1' = \automchar_1, \quad
    \automchar_2' = \automchar_2, \quad
    \automchar_3' = \automchar_4, \quad
    \automchar_4' = \automchar_3^{-1} \automchar_5 \automchar_6^{-1}
    \automchar_7^{-2}.
  \]

  We display the~$\automchar_j'$ as above.

  \begin{center}
  \begin{tabular}{c|cccc}
 & $\automchar_1'$ & $\automchar_2'$ & $\automchar_3'$ & $\automchar_4'$
 \\ \hline
 % [ [2, 0] [-2, 0]       [-3, -0.8984865990634729366]         [4, -2.297922584668588599]]
 $\tau_1$ & $(2,0)$ & $(-2,0)$ & $(-3,-0.898)$ & $(4,-2.298)$
 \\
 % [ [2, 0] [-2, 0]        [-3, 0.8984865990634729366]          [4, 2.297922584668588599]]
 $\tau_2$ & $(-2,0)$ & $(2,0)$ & $(3,0.898)$ & $(-4,2.298)$
 \\
 % [ [4, 0] [-9, 0] [-10, 4.023174812169127207 E-1250]    [6, 3.354896706994125008 E-636]]
 $\tau_3$ & $(4,0)$ & $(-9,0)$ & $(-10,0)$ & $(6,0)$
 \\
 % [[-4, 0]  [9, 0] [12, -5.269223276921633345 E-1251] [-10, -3.354896706994125008 E-636]]
 $\tau_4$ & $(-4,0)$ & $(9,0)$ & $(12,0)$ & $(-10,0)$
  \end{tabular}
  \end{center}

  Projecting on the values~$(k_{\tau_1},k_{\tau_3},k_{\tau_4})$, we obtain a
  lattice generated by the columns of a matrix, say $B$, and with LLL-reduced basis with matrix $B'$, where
  \[
    B=
    \begin{pmatrix}
      2 & -2 & -3 & 4 \\
      4 & -9 & -10 & 6 \\
      -4 & 9 & 12 & -10
    \end{pmatrix},
    \quad\quad
    B'=
    \begin{pmatrix}
      2  & 0 & -1 \\
      -1 & 2 & -1 \\
      1  & 2 & 3
    \end{pmatrix}.
  \]
  From this it is clear that there is no~$\Omega^\bullet$-shady character of~$L$, as
  such a character would have to satisfy~$k_{\tau_1} \in \{-1,0,1\}$
  and~$k_{\tau_3},k_{\tau_4} \in \{\pm 1\}$.
  By Corollary~\ref{cor:allisospectralPsi}, the orbifolds~$Y_{1}$
  and~$Y_{c}$ are therefore $i$-isospectral for all~$i\ge 0$.

  % 4 generators
  %  Pres.3^2 = Id(Pres)
  %  Pres.4^2 = Id(Pres)
  %  Pres.2^3 = Id(Pres)
  %  (Pres.2 * Pres.3)^2 = Id(Pres)
  %  (Pres.2^-1 * Pres.1^-1)^2 = Id(Pres)
  %  (Pres.4 * Pres.1)^2 = Id(Pres)
  %  Pres.1^5 = Id(Pres)
  %  (Pres.2^-1 * Pres.3 * Pres.4)^2 = Id(Pres)
  %  Pres.4 * Pres.1 * Pres.3 * Pres.1^-1 * Pres.4 * Pres.3 = Id(Pres)
  %  Pres.2^-1 * Pres.1^-1 * Pres.3 * Pres.2 * Pres.4 * Pres.1 * Pres.3 * Pres.4  = Id(Pres)

  % 3 generators
  %  Pres.2^2 = Id(Pres)
  %  Pres.1^3 = Id(Pres)
  %  (Pres.2 * Pres.3^-1)^2 = Id(Pres)
  %  (Pres.1 * Pres.3^-1 * Pres.1^-1 * Pres.2)^2 = Id(Pres)
  %  (Pres.3 * Pres.1 * Pres.3^-1 * Pres.1^-1 * Pres.3)^2 = Id(Pres)
  %  Pres.3^10 = Id(Pres)
  %  Pres.3 * Pres.1^-1 * Pres.3^-1 * Pres.1 * Pres.3 * Pres.1^-1 * Pres.3 *
  %  Pres.1 * Pres.3^-1 * Pres.1^-1 * Pres.3 * Pres.1 = Id(Pres)
  %  (Pres.1^-1 * Pres.3 * Pres.1 * Pres.3^-1 * Pres.1^-1 * Pres.3)^2 = Id(Pres)

  The maximal cyclic subgroups of~$P\Gamma_{1}$ have order~$2$, $3$ or~$5$,
  the maximal cyclic subgroups of~$P\Gamma_{c}$ have order~$2$, $3$ or~$10$,
  and we have

  \[
    H_1(P\Gamma_{1},\Z) \cong (\Z/2\Z)^2\quad
    \text{ and }\quad
    H_1(P\Gamma_{c},\Z) \cong (\Z/2\Z)^2.
  \]

  \begin{figure}
    \centering
    \includegraphics[scale=0.15]{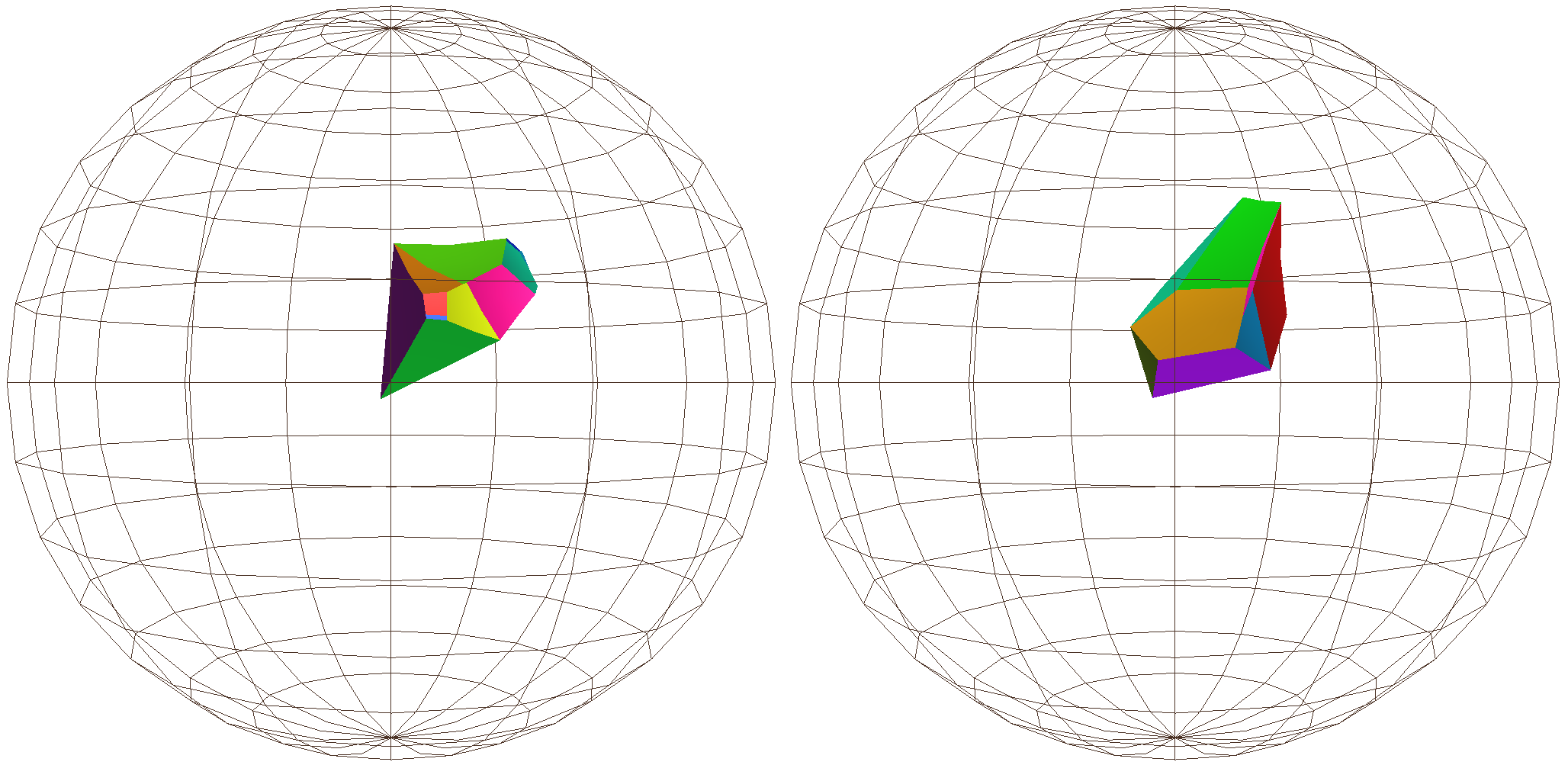}
    \caption{Isospectral and $1$-isospectral but not representation equivalent
    $3$-orbifolds (volume $\approx 0.251$)}
  \end{figure}

  %Presentations ? Pictures ?

  %In both cases, I think that the underlying topological space is the sphere
  %$S^3$ (I modded out a presentation by the torsion elements and it simplified
  %to the trivial group). [checked: this really computes the $\pi_1$ of the
  %underlying topological space]
  %Maybe this could be confirmed by computing the singular
  %locus and using the program \texttt{Orb} by Damian Heard.

  %Smallest $i$-isospectral for all $i$, not representation equivalent
  %$3$-orbifolds.
\end{ex}

\begin{remark}
The above example is, in particular, an example of a pair
of orbifolds that are $i$-isospectral for all $i$, but not strongly
isospectral. Wolf had asked in~\cite{Wolf} whether such examples exist, 
and their existence was first shown in \cite{LauretEtAlOnWolf}
in all dimensions greater than $4$.
\end{remark}

The following example proves Theorem~\ref{thm:IntroZeroNotOne}.
It is the smallest example we could find of a pair of hyperbolic
$3$-orbifolds that are $0$-isospectral but not not $1$-isospectral.

% i = 2 in my list
% y^4 - 3*y^2 - 2*y + 1
\begin{ex}\label{ex:0not1}
  Let~$F = \Q(\alpha)$ where~$\alpha^4 - 3\alpha^2 - 2\alpha + 1 = 0$.
  The field~$F$ is the unique number field of discriminant~$-1328$ and
  signature~$(2,1)$ (\href{http://www.lmfdb.org/NumberField/4.2.1328.1}{LMFDB \texttt{4.2.1328.1}}).
  Let~$D$ be the unique quaternion division algebra ramified at every real place
  and no finite place of~$F$. Let~$\fN = (1)$.
  We have~$C \cong \Ciso \cong C_2$, which therefore has a single non-trivial
  character~$\chi$, corresponding to the quadratic extension~$L = F(\zeta_{4})$.
  Let~$\sigma$ be the non-trivial automorphism of~$L/F$.
  % polabs = x^8 - 2*x^6 + 4*x^5 + 11*x^4 - 20*x^3 + 14*x^2 - 20*x + 29
  % emb = 567/17294*x^7 + 229/17294*x^6 - 645/17294*x^5 + 2099/17294*x^4 + 7283/17294*x^3 - 6965/17294*x^2 - 1334/8647*x - 5309/8647
  % zeta4 = 567/17294*x^7 + 229/17294*x^6 - 645/17294*x^5 + 2099/17294*x^4 + 7283/17294*x^3 - 6965/17294*x^2 + 7313/8647*x - 5309/8647
  Let~$C = \{1,c\}$. We have
  \[
    \vol(Y_{1}) = \vol(Y_{c}) = \frac{1328^{3/2}\zeta_F(2)}{2^8\pi^6} = 0.2461808\dots.
  \]

  Take representatives~$(\tau_k)_k$ of~$\Hom(L,\CC)$ modulo complex
  conjugation such that
  % emb: [-1.1384624687937302984 + 0.4850624940594346588*I, 1.9403926636606703329 + 9.343090228197688709 E-1252*I, 0.3365322739267902639 - 2.076242272932819713 E-1252*I, -1.1384624687937302984 - 0.4850624940594346588*I]
  % zeta4: [9.834958193515473049 E-1253 + 1.0000000000000000000*I, 4.917479096757736524 E-1252 + 1.0000000000000000000*I, -4.917479096757736524 E-1253 + 1.0000000000000000000*I, -3.933983277406189220 E-1252 + 1.0000000000000000000*I]
  % WARNING: take the conjugate of tau_4 to get the same restriction to F: same phase, opposite weight; then insert tau_4 in second place (cyclic perm 2->3 3->4 4->2)
  \begin{eqnarray*}
    (\tau_k(\zeta_{4}))_k &=& (i, -i, i, i), \\
    (\tau_k(\alpha))_k &\approx& (-1.138 + 0.485i, -1.138 + 0.485i, 1.940, 0.337).
  \end{eqnarray*}

  We compute the group~$\heckegroup_{L,(1)}$ of unitary Hecke characters. It is
  isomorphic to~$\Z^7 \times \R$. Let~$\automchar_1,\dots,\automchar_7$ denote the
  computed basis of the canonical complement of~$\|\cdot\|^{i\R}$. We display
  the $\automchar_j$ as in the previous example.

  \begin{center}
  \begin{tabular}{c|ccccccc}
 & $\automchar_1$ & $\automchar_2$ & $\automchar_3$ & $\automchar_4$ & $\automchar_5$ & $\automchar_6$ & $\automchar_7$
 \\ \hline
 % [[4, 0] [-23, -0.6505054942622726709]  [-31, 0.5497339788262156403] [2, -0.4530422299363691667]   [-31, 0.5305008751947639866] [-6, -1.2661028410428484176]   [19, 1.1144385582547997688]]
  $\tau_1$ & \small $(4,0)$ & \small $(-23,-0.651)$ & \small $(-31,0.550)$ & \small $(2,-0.453)$ & \small $(-31,0.531)$ & \small $(-6,-1.266)$ & \small $(19,1.114)$
 \\
 % [[4, 0] [-21, -0.6505054942622726709]  [-30, 1.2661028410428484176] [2, -0.4530422299363691667]  [-30, -1.6340485993934058241] [-7, -0.5497339788262156403]   [20, -2.217986282453441606]]
 $\tau_2$ & \small $(-4, 0)$ & \small $(21, -0.651)$  & \small $(30, 1.266)$ & \small $(-2, -0.453)$  & \small $(30, -1.634)$ & \small $(7, -0.550)$   & \small $(-20, -2.218)$
  \\
 % [[4, 0]  [-22, 0.7370821029869885530] [-31, -0.5679820192466143124] [2, -1.1197443038169645164] [-32, -0.19133110041498798168]  [-7, 0.5679820192466143124] [21, -0.19133110041498798168]]
  $\tau_3$ & \small $(4, 0)$ & \small $(-22, 0.737)$ & \small $(-31, -0.568)$ & \small $(2, -1.120)$ & \small $(-32, -0.191)$ & \small $(-7, 0.568)$ & \small $(21, -0.191)$
  \\
 % [[4, 0]  [-22, 0.5639288855375567888] [-32, -1.2478548006224497456]   [2, 2.025828763689702850]   [-31, 1.2948788246136298193]  [-8, 1.2478548006224497456]   [20, 1.2948788246136298193]]
 $\tau_4$ & \small $(4, 0)$  & \small $(-22, 0.564)$ & \small $(-32, -1.248)$   & \small $(2, 2.026)$   & \small $(-31, 1.295)$  & \small $(-8, 1.248)$   & \small $(20, 1.295)$
  \end{tabular}
  \end{center}

  We then compute the subgroup~$\heckegroup_{L,(1)}^-$ of unitary Hecke
  characters~$\automchar$ such that~$\automchar^\sigma \automchar$ has finite order.
  It admits a basis~$\automchar_1',\dots,\automchar_4'$, where
  % basis of minus part: [[1, 0, 0, 0, 0, 0, 0]~, [0, 0, 1, 0, 0, 1, 0]~, [0, 0, 0, 0, -1, 0, 1]~, [0, -1, 0, -1, 1, 0, 1]~]
  \[
    \automchar_1' = \automchar_1,\quad
    \automchar_2' = \automchar_3\automchar_6,\quad
    \automchar_3' = \automchar_5^{-1}\automchar_7,\quad
    \automchar_4' = \automchar_2^{-1}\automchar_4^{-1}\automchar_5\automchar_7.
  \]

  We display the~$\automchar_j'$ as above.

  \begin{center}
  \begin{tabular}{c|cccc}
 & $\automchar_1'$ & $\automchar_2'$ & $\automchar_3'$ & $\automchar_4'$
 \\ \hline
 % [[4, 0] [-37, -0.7163688622166327774]  [50, 0.5839376830600357822]       [9, 2.748487157648205593]]
 $\tau_1$ & $(4, 0)$ & $(-37, -0.716)$ & $(50, 0.584)$ & $(9, 2.748)$
 \\
 % [[4, 0]  [-37, 0.7163688622166327774] [50, -0.5839376830600357822]      [9, -2.748487157648205593]]
 $\tau_2$ & $(-4, 0)$ & $(37, 0.716)$ & $(-50, -0.584)$ & $(-9, -2.748)$
 \\
 % [[4, 0]                 [-38, 0.E-57]                 [53, 0.E-58] [9, 1.1948189333493392078 E-58]]
 $\tau_3$  & $(4, 0)$ & $(-38, 0)$ & $(53, 0)$ & $(9, 0)$
 \\
 % [[4, 0]                 [-40, 0.E-57]                 [51, 0.E-57] [9, -3.186183822264904554 E-58]]
 $\tau_4$ & $(4, 0)$ & $(-40, 0)$ & $(51, 0)$ & $(9, 0)$
  \end{tabular}
  \end{center}

  Projecting on the values~$(k_{\tau_1},k_{\tau_3},k_{\tau_4})$, we obtain a
  lattice generated by the columns of a matrix, say $B$, and with
  LLL-reduced basis with matrix $B'$, where

  \[
    B=\begin{pmatrix}
      4 & -37 & 50 & 9 \\
      4 & -38 & 53 & 9 \\
      4 & -40 & 51 & 9
    \end{pmatrix},
    \quad\quad
   B'= \begin{pmatrix}
      1 & -1 & 1 \\
      1 & 2 & 0 \\
      1 & 0 & -2
    \end{pmatrix}.
  \]
  From this it is clear that there is no~$\Omega^0$-shady character of~$L$, as
  such a character would have to satisfy~$k_{\tau_1}=0$
  and~$k_{\tau_3},k_{\tau_4} \in \{\pm 1\}$.
  By Corollary~\ref{cor:zeroisospectralPsi}, the orbifolds~$Y_{1}$
  and~$Y_{c}$ are therefore $0$-isospectral.
% psishady: [-2, 0, 0, 1]~
% psi0: [-9, 0, 0, 4]~
  However, there exists an $\Omega^\bullet$-shady character,
  namely~$\automchar_\shady = (\automchar_1')^{-2}\automchar_4'$, and the set of
  $\Omega^\bullet$-shady characters is~$\automchar_\shady^{\pm
  1}\automchar_0^\Z$, where~$\automchar_0 =
  (\automchar_1')^{-9}(\automchar_4')^4$:

  \begin{center}
  \begin{tabular}{c|cc}
 & $\automchar_\shady$ & $\automchar_0$
 \\ \hline
 % [      [1, 2.748487157648205593]       [0, 10.993948630592822372]]
 $\tau_1$ & $(1, 2.748)$ & $(0, 10.994)$
 \\
 % [     [1, -2.748487157648205593]      [0, -10.993948630592822372]]
 $\tau_2$ & $(-1, -2.748)$ & $(0, -10.994)$
 \\
 % [[1, 1.1948189333493392078 E-58]   [0, 4.779275733397356831 E-58]]
 $\tau_3$ & $(1, 0)$   & $(0, 0)$
 \\
 % [[1, -3.186183822264904554 E-58] [0, -1.2744735289059618216 E-57]]
 $\tau_4$ & $(1, 0)$ & $(0, 0)$
  \end{tabular}
  \end{center}

  For all~$n\in \Z$, the
  character~$\automchar_\shady\automchar_0^n$ contributes an
  eigenvalue~$\lambda(n) \approx 4(2.748+10.994 n)^2$ to the Laplace spectrum
  of~$\Omega^1(\cY)$ by Proposition~\ref{prop:diffFormsPsi}. The first few
  corresponding eigenvalues are
  %[30.216726623028448576, 271.9505396072560372, 755.4181655757112144, 1480.6196045283939802, 2447.554856465304335, 3656.223921386442278, 5106.626799291807809, 6798.763490181400930, 8732.633994055221639, 10908.238310913269935, 13325.576440755545822, 15984.648383582049297, 18885.454139392780358, 22027.993708187739012, 25412.267089966925254, 29038.274284730339082, 32906.01529247798050, 37015.49011320984951, 41366.69874692594610, 45959.64119362627028, 50794.31745331082206]
  \[
    30.2167\dots,\quad 271.9505\dots,\quad 755.4182\dots,\quad 1480.6196\dots,\quad 2447.5549\dots.
  \]

  The quadratic growth of these eigenvalues is an example
  of the phenomenon described in Theorem~\ref{thm:IntroKelmerConverse}. 
  Only~$\automchar_\shady^{\pm 1}$ contribute to the eigenvalue~$\lambda(0)$, so
  by Lemma~\ref{lem:negative}, the orbifolds~$Y_{1}$ and~$Y_{c}$ are
  not~$1$-isospectral.

  %Weyl's law: ~vol(Y)/(6*pi^2)*T^3 for t<T s=it lambda = 1-s^2 = 1+t^2 (for 0-forms)
  %expected first eigenvalue ~ 24., second ~ 39., third ~ 51., so the shady one might be the smallest or the second smallest one.

  The maximal cyclic subgroups of both~$P\Gamma_{1}$ and~$P\Gamma_{c}$ have order~$2$, $3$ or~$4$,
  and we~have
  \[
     H_1(P\Gamma_{1},\Z) \cong H_1(P\Gamma_{c},\Z) \cong (\Z/2\Z)^2.
  \]

  \begin{figure}
    \centering
    \includegraphics[scale=0.15]{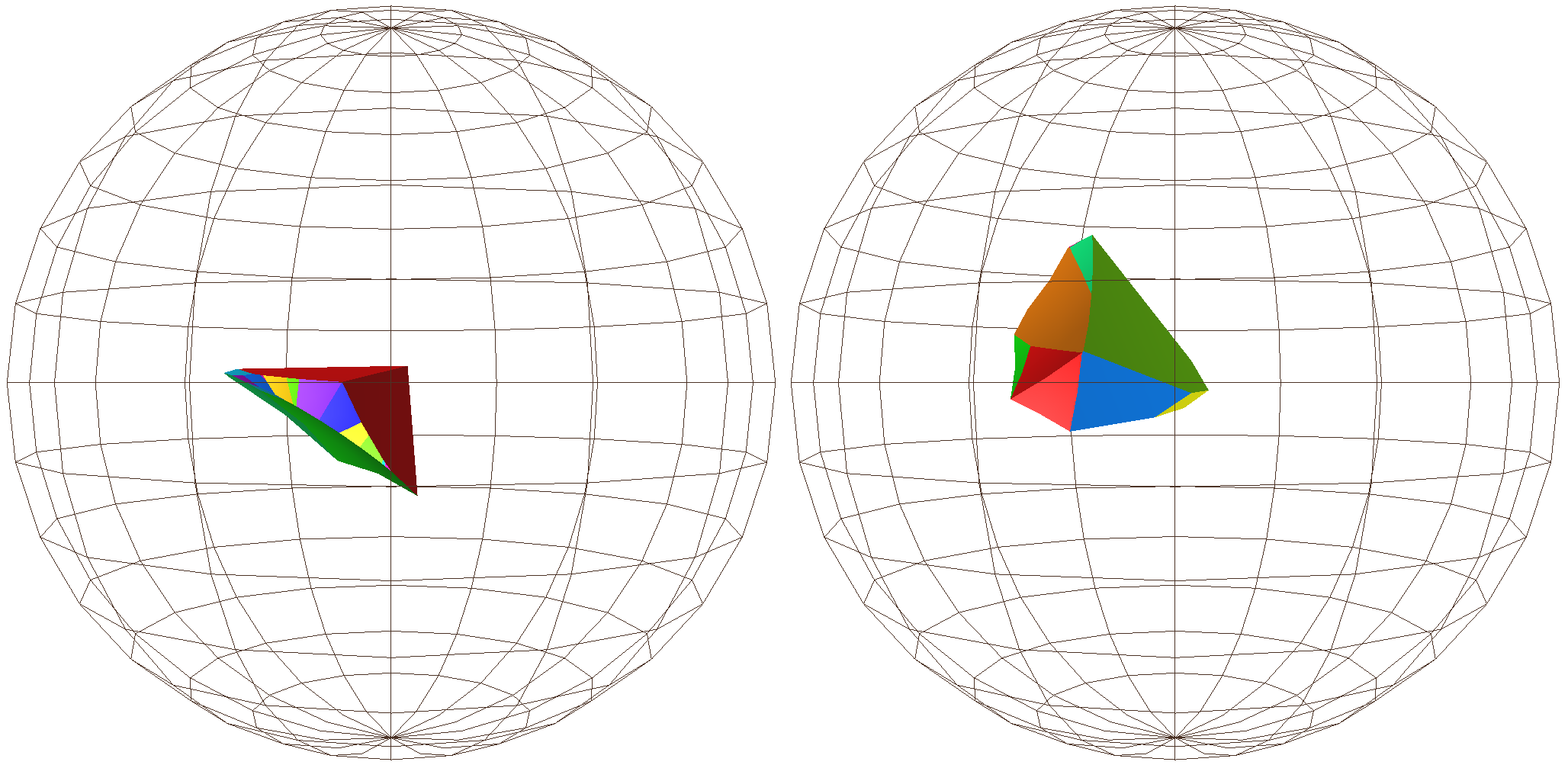}
    \caption{Isospectral but not $1$-isospectral $3$-orbifolds (volume $\approx
    0.246$)}
  \end{figure}
\end{ex}

The following is an example of $0$-isospectral $3$-orbifolds with distinct
Betti numbers (in particular, they are not $1$-isospectral).
Note that Tenie \cite{Tenie} has constructed a pair of isospectral $3$-manifolds
that have non-isomorphic rational cohomology rings.

% i = 90
% y^6 - y^5 - 3*y^4 + 2*y^2 + 4*y + 1
\begin{ex}\label{ex:0betti}
  Let~$F = \Q(\alpha)$ where~$\alpha^6 - \alpha^5 - 3\alpha^4 + 2\alpha^2 + 4\alpha + 1 = 0$.
  The field~$F$ is the unique number field of discriminant~$-958527$ and
  signature~$(4,1)$ (\href{http://www.lmfdb.org/NumberField/6.4.958527.1}{LMFDB \texttt{6.4.958527.1}}).
  Let~$D$ be the unique quaternion division algebra ramified at every real place
  and no finite place of~$F$. Let~$\fN = (1)$.
  We have~$C \cong \Ciso \cong C_2$, which therefore has a single non-trivial
  character~$\chi$, corresponding to the quadratic extension~$L = F(\zeta_{6})$.
  Let~$\sigma$ be the non-trivial automorphism of~$L/F$.

  Let~$C = \{1,c\}$. We have
  \[
    \vol(Y_{1}) = \vol(Y_{c}) = \frac{958527^{3/2}\zeta_F(2)}{2^{12}\pi^{10}} = 3.397413\dots.
  \]

  Take representatives~$(\tau_k)_k$ of~$\Hom(L,\CC)$ modulo complex
  conjugation such that
  %in F: [-1.3614536605399279066, -0.2866408479479670245, 1.5107153089542776643, 1.9591513926543315480, -0.4108860965603571406 + 0.8348379298556512255*I]
  %emb: [1.9591513926543315480 - 5.852095443365247909 E-97*I, -0.4108860965603571406 + 0.8348379298556512255*I, -0.4108860965603571406 - 0.8348379298556512255*I, -0.2866408479479670245 + 1.4513196699545814814 E-94*I, 1.5107153089542776643 - 9.363352709384396654 E-97*I, -1.3614536605399279066 + 2.615652579366531205 E-93*I]
  %zeta6: [0.5000000000000000000 - 0.8660254037844386468*I, 0.5000000000000000000 + 0.8660254037844386468*I, 0.5000000000000000000 + 0.8660254037844386468*I, 0.5000000000000000000 - 0.8660254037844386468*I, 0.5000000000000000000 - 0.8660254037844386468*I, 0.5000000000000000000 + 0.8660254037844386468*I]
  % arg(zeta6): [-1, 1, 1, -1, -1, 1]
  % TODO conjugate tau3 (same phase, oppo weight), then put tau1 in 3rd place by cyclic perm (1,3,2)

  \begin{eqnarray*}
    (\tau_k(\zeta_{6}))_k &=& (e^{-\frac{2\pi}{6}i},e^{\frac{2\pi}{6}i},e^{\frac{2\pi}{6}i},e^{-\frac{2\pi}{6}i},e^{-\frac{2\pi}{6}i},e^{\frac{2\pi}{6}i}), \\
    (\tau_k(\alpha))_k &\approx& (1.959, -0.411 + 0.835i, -0.411 - 0.835i, -0.287, 1.511, -1.361).
  \end{eqnarray*}

  We compute the group~$\heckegroup_{L,(1)}$ of unitary Hecke characters. It is
  isomorphic to~$\Z^{11} \times \R$. Let~$\automchar_1,\dots,\automchar_{11}$ denote the
  computed basis of the canonical complement of~$\|\cdot\|^{i\R}$. We display
  the $\automchar_j$ as above.

  \begin{center}
  \begin{tabular}{c|cccccc}
 & $\automchar_1$ & $\automchar_2$ & $\automchar_3$ & $\automchar_4$ &
    $\automchar_5$ & $\automchar_6$
 \\ \hline
 %[[1, 0] [-1, -0.6096447545055631316] [-1, 0.6096447545055631316] [1, 0.24196263500692387749] [-1, 0.07247503111127213890] [2, -0.03623751555563606945] [-1, 0.15721883305909800820] [-1, 0.2605662711521888128] [1, -0.3559281863656689338] [-2, -0.4284032174769410727] [1, 0.2605662711521888128]]
 $\tau_1$  & $(1, 0)$ & $(-1, -0.610)$ & $(-1, 0.610)$ & $(1, 0.242)$ & $(-1, 0.072)$ & $(2, -0.036)$
 \\
 %[[-1, 0] [0, 1.0315655507056925482] [0, 0.07806279027156950662] [0, 0.2987798849764180456] [0, 0.006702617915511482875] [-1, -0.4210821812611457088] [0, -0.7621087944104729351] [1, 0.2818516156769222400] [-1, -0.15367860283712714730] [2, 0.3738018573629981752] [-1, 1.0138239886229325568]]
 $\tau_2$ & $(-1, 0)$ & $(0, 1.032)$ & $(0, 0.078)$ & $(0, 0.299)$ & $(0, 0.007)$ & $(-1, -0.421)$
 \\
 %[[-1, 0] [0, -0.07806279027156950662] [0, -1.0315655507056925482] [-2, 0.2987798849764180456] [2, 0.006702617915511482875] [-2, 0.4143795633456342260] [0, 1.0675912973024024636] [1, 1.0138239886229325568] [0, 0.3805044752785096581] [1, -0.16038122075263863017] [-1, 0.2818516156769222400]]
 $\tau_3$ & $(-1, 0)$ & $(0, -0.078)$ & $(0, -1.032)$ & $(-2, 0.299)$ & $(2, 0.007)$ & $(-2, 0.414)$
 \\
 %[[1, 0] [0, -0.3948651370411193142] [0, 0.3948651370411193142] [1, -0.8469928617279204665] [-1, -1.2707437487901989456] [0, 0.6353718743950994728] [1, -1.0588683052590597060] [-2, -0.3037833130160844759] [2, -0.7217806279971673376] [-3, 0.5489631207930316080] [2, -0.3037833130160844759]]
 $\tau_4$ & $(1, 0)$ & $(0, -0.395)$ & $(0, 0.395)$ & $(1, -0.847)$ & $(-1, -1.271)$ & $(0, 0.635)$
 \\
 %[[1, 0] [1, -0.2924989599824493699] [1, 0.2924989599824493699] [1, -0.6428630622799365181] [-1, 0.2592538708419919191] [1, -0.12962693542099595957] [0, -0.19180459571897229947] [-1, -0.7209529241443006733] [2, 1.0128550568537067447] [-3, 0.7536011860117148256] [1, -0.7209529241443006733]]
 $\tau_5$ & $(1, 0)$ & $(1, -0.292)$ & $(1, 0.292)$ & $(1, -0.643)$ & $(-1, 0.259)$ & $(1, -0.130)$
 \\
 %[[-1, 0] [0, 0.3435060910950087741] [0, -0.3435060910950087741] [-1, 0.6503335190480970159] [1, 0.9256096110059119218] [0, -0.4628048055029559609] [0, 0.7879715650270044688] [0, -0.5315056382916584605] [0, -0.16197211493225298420] [1, -1.0875817259381649059] [0, -0.5315056382916584605]]
 $\tau_6$ & $(-1, 0)$ & $(0, 0.344)$ & $(0, -0.344)$ & $(-1, 0.650)$ & $(1, 0.926)$ & $(0, -0.463)$
  \\ \hline

 & $\automchar_7$ & $\automchar_8$ &
    $\automchar_9$ & $\automchar_{10}$ & $\automchar_{11}$
 \\ \hline
 %[[1, 0] [-1, -0.6096447545055631316] [-1, 0.6096447545055631316] [1, 0.24196263500692387749] [-1, 0.07247503111127213890] [2, -0.03623751555563606945] [-1, 0.15721883305909800820] [-1, 0.2605662711521888128] [1, -0.3559281863656689338] [-2, -0.4284032174769410727] [1, 0.2605662711521888128]]
    $\tau_1$ & $(-1, 0.157)$ & $(-1, 0.261)$ & $(1, -0.356)$ & $(-2, -0.428)$ & $(1, 0.261)$ &
 \\
 %[[-1, 0] [0, 1.0315655507056925482] [0, 0.07806279027156950662] [0, 0.2987798849764180456] [0, 0.006702617915511482875] [-1, -0.4210821812611457088] [0, -0.7621087944104729351] [1, 0.2818516156769222400] [-1, -0.15367860283712714730] [2, 0.3738018573629981752] [-1, 1.0138239886229325568]]
 $\tau_2$ & $(0, -0.762)$ & $(1, 0.282)$ & $(-1, -0.154)$ & $(2, 0.374)$ & $(-1, 1.0138)$ &
 \\
 %[[-1, 0] [0, -0.07806279027156950662] [0, -1.0315655507056925482] [-2, 0.2987798849764180456] [2, 0.006702617915511482875] [-2, 0.4143795633456342260] [0, 1.0675912973024024636] [1, 1.0138239886229325568] [0, 0.3805044752785096581] [1, -0.16038122075263863017] [-1, 0.2818516156769222400]]
 $\tau_3$ & $(0, 1.068)$ & $(1, 1.014)$ & $(0, 0.381)$ & $(1, -0.160)$ & $(-1, 0.282)$ &
 \\
 %[[1, 0] [0, -0.3948651370411193142] [0, 0.3948651370411193142] [1, -0.8469928617279204665] [-1, -1.2707437487901989456] [0, 0.6353718743950994728] [1, -1.0588683052590597060] [-2, -0.3037833130160844759] [2, -0.7217806279971673376] [-3, 0.5489631207930316080] [2, -0.3037833130160844759]]
 $\tau_4$ & $(1, -1.059)$ & $(-2, -0.304)$ & $(2, -0.722)$ & $(-3, 0.549)$ & $(2, -0.304)$ &
 \\
 %[[1, 0] [1, -0.2924989599824493699] [1, 0.2924989599824493699] [1, -0.6428630622799365181] [-1, 0.2592538708419919191] [1, -0.12962693542099595957] [0, -0.19180459571897229947] [-1, -0.7209529241443006733] [2, 1.0128550568537067447] [-3, 0.7536011860117148256] [1, -0.7209529241443006733]]
 $\tau_5$ & $(0, -0.192)$ & $(-1, -0.721)$ & $(2, 1.013)$ & $(-3, 0.754)$ & $(1, -0.721)$ &
 \\
 %[[-1, 0] [0, 0.3435060910950087741] [0, -0.3435060910950087741] [-1, 0.6503335190480970159] [1, 0.9256096110059119218] [0, -0.4628048055029559609] [0, 0.7879715650270044688] [0, -0.5315056382916584605] [0, -0.16197211493225298420] [1, -1.0875817259381649059] [0, -0.5315056382916584605]]
 $\tau_6$ & $(0, 0.788)$ & $(0, -0.532)$ & $(0, -0.162)$ & $(1, -1.088)$ & $(0, -0.532)$ &
  \end{tabular}
  \end{center}

  We then compute the subgroup~$\heckegroup_{L,(1)}^-$ of unitary Hecke
  characters~$\automchar$ such that~$\automchar^\sigma \automchar$ has finite order.
  It admits a basis~$\automchar_1',\dots,\automchar_6'$, where
  %basis of minus part: [[1, 0, 0, 0, 0, 0, 0, 0, 0, 0, 0]~, [0, 1, 1, 0, 0, 0, 0, 0, 0, 0, 0]~, [0, 0, 0, 0, 0, 0, 0, -1, 0, 0, 1]~, [0, 0, 0, 0, -1, 0, 0, 0, 1, -1, 0]~, [0, 0, 0, 0, 1, 2, 0, 0, 0, 0, 0]~, [0, 0, 0, 1, 1, 0, -2, 0, 0, 0, 0]~]

  \[
    \automchar_1' = \automchar_1,
    \automchar_2' = \automchar_2\automchar_3,
    \automchar_3' = \automchar_8^{-1}\automchar_{11},
    \automchar_4' = \automchar_5^{-1}\automchar_9\automchar_{10}^{-1},
    \automchar_5' = \automchar_5\automchar_6^2,
    \automchar_6' = \automchar_4\automchar_5\automchar_7^{-2}.
  \]

  We display the~$\automchar_j'$ as above.

  \begin{center}
  \begin{tabular}{c|cccccc}
 & $\automchar_1'$ & $\automchar_2'$ & $\automchar_3'$ & $\automchar_4'$ & $\automchar_5'$ & $\automchar_6'$
 \\ \hline
 %[[1,0][-2,0.E-57][2,0.E-58][4,0.E-58][3,0.E-58][2,0.E-58]]
 $\tau_1$ & $(1,0)$ & $(-2,0)$ & $(2,0)$ & $(4,0)$ & $(3,0)$ & $(2,0)$
 \\
 %[[-1,0][0,1.1096283409772620548][-2,0.7319723729460103169][-3,-0.5341830781156368053][-2,-0.8354617446067799348][0,1.8297000917128753987]]
 $\tau_2$ & $(-1,0)$ & $(0,1.110)$ & $(-2,0.732)$ & $(-3,-0.534)$ & $(-2,-0.835)$ & $(0,1.830)$
 \\
 %[[-1,0][0,-1.1096283409772620548][-2,-0.7319723729460103169][-3,0.5341830781156368053][-2,0.8354617446067799348][0,-1.8297000917128753987]]
 $\tau_3$ & $(-1,0)$ & $(0,-1.110)$ & $(-2,-0.732)$ & $(-3,0.534)$ &
    $(-2,0.835)$ & $(0,-1.830)$
 \\
 %[[1,0][0,0.E-58][4,0.E-58][6,1.5930919111324522770E-58][-1,0.E-57][-2,6.372367644529809108E-58]]
 $\tau_4$ & $(1,0)$ & $(0,0)$ & $(4,0)$ & $(6,0)$ & $(-1,0)$ & $(-2,0)$
 \\
 %[[1,0][2,0.E-58][2,0.E-57][6,1.5930919111324522770E-58][1,0.E-58][0,0.E-58]]
 $\tau_5$ & $(1,0)$ & $(2,0)$ & $(2,0)$ & $(6,0)$ & $(1,0)$ & $(0,0)$
 \\
 %[[-1,0][0,0.E-58][0,0.E-57][-2,0.E-57][1,0.E-57][0,0.E-57]]
 $\tau_6$ & $(-1,0)$ & $(0,0)$ & $(0,0)$ & $(-2,0)$ & $(1,0)$ & $(0,0)$
  \end{tabular}
  \end{center}

  Projecting on the values~$(k_{\tau_1},k_{\tau_2},k_{\tau_4},k_{\tau_5},k_{\tau_6})$, we obtain a
  lattice generated by the columns of a matrix, say $B$, and with
  LLL-reduced basis with matrix $B'$, where
  \[
   B= \begin{pmatrix}
       1 & -2 &  2 &  4 &  3 & 2\\
      -1 &  0 & -2 & -3 & -2 & 0\\
       1 &  0 &  4 &  6 & -1 & -2\\
       1 &  2 &  2 &  6 &  1 & 0\\
      -1 &  0 &  0 & -2 &  1 & 0
    \end{pmatrix},
    \quad\quad
    B'=
    \begin{pmatrix}
       1 & 0 & -2 &  0 & 0\\
      -1 & 1 & -1 &  1 & 1\\
       1 & 0 &  0 &  0 & 2\\
       1 & 2 &  0 &  0 & 0\\
      -1 & 0 &  0 & -2 & 0
    \end{pmatrix}.
  \]
  From this it is clear that there is no~$\Omega^0$-shady character of~$L$, as
  such a character would have to satisfy~$k_{\tau_2}=0$
  and~$k_{\tau_1},k_{\tau_4},k_{\tau_5},k_{\tau_6} \in \{\pm 1\}$.
  However, there exists an~$\cH^\bullet$-shady character,
  namely~$\automchar_\shady = \automchar_1$. Moreover, $\automchar_\shady^{\pm
  1}$ are the only~$\cH^\bullet$-shady characters of~$L$, so that the $1$st
  Betti numbers of~$Y_{1}$ and~$Y_{c}$ differ by~$1$.
  % geomdata = [[2,2,2,2],[2,2,2]],
  %            [[2,2,2,2,2,2,2,2,2,2,2,2,2,2,2,2,2,2,2,6],[2,2,2,2,2,2,2,2,2,2,2,2,2,2,2,3,3]],
  %            [[0],[1]]
  In fact, we can independently compute the homology and see that we have
  \[
    H_1(P\Gamma_{1},\Z) \cong (\Z/2\Z)^4\quad
    \text{ and }\quad
    H_1(P\Gamma_{c},\Z) \cong (\Z/2\Z)^3 \oplus \Z.
  \]
  The maximal cyclic subgroups of~$P\Gamma_{1}$ have order~$2$ or~$6$,
  and the maximal cyclic subgroups of~$P\Gamma_{c}$ have order~$2$ or~$3$.

  The eigenvalues of the Hecke operators acting on $H_1(\cY,\Q) = \Q$ are
  rational, so one expects them to correspond to an elliptic curve~$E$ over~$F$.
  In this case, since this cohomology class comes from a Hecke character, CM
  theory implies that this elliptic curve exists, and one can find it
  explicitly:
  \[
    E : y^2 + a_3 y
    = x^3  + a_2 x^2
    + a_4x
    + a_6,
  \]
  where
  \begin{eqnarray*}
    a_2 &=& \alpha^5 - \alpha^4 - 3\alpha^3 + \alpha^2 + 3\alpha + 2, \\
    a_3 &=& \alpha^5 - \alpha^4 - 3\alpha^3 + \alpha^2 + \alpha + 2, \\
    a_4 &=& \alpha^3 - 2\alpha, \\
    a_6 &=& 12159\alpha^5 - 15642\alpha^4 - 31998\alpha^3 + 9172\alpha^2 + 21688\alpha + 42423.
  \end{eqnarray*}
%a6 12159*\alpha^5 - 15642*\alpha^4 - 31998*\alpha^3 + 9172*\alpha^2 + 21688*\alpha + 42423
%0,
%y^5 - y^4 - 3*y^3 + y^2 + 3*y + 2,
%y^5 - y^4 - 3*y^3 + y^2 + y + 2,
%y^3 - 2*y,
%12159*y^5 - 15642*y^4 - 31998*y^3 + 9172*y^2 + 21688*y + 42423
  In other words, the fact that the elliptic curve~$y^2+y=x^3$ with $j$-invariant~$0$, after
  base-change to~$F$ and a sextic twist, acquires everywhere good reduction, is
  an algebraic explanation for the fact that~$Y_1$ and~$Y_c$ are not
  $1$-isospectral!

  \begin{figure}
    \centering
    \includegraphics[scale=0.15]{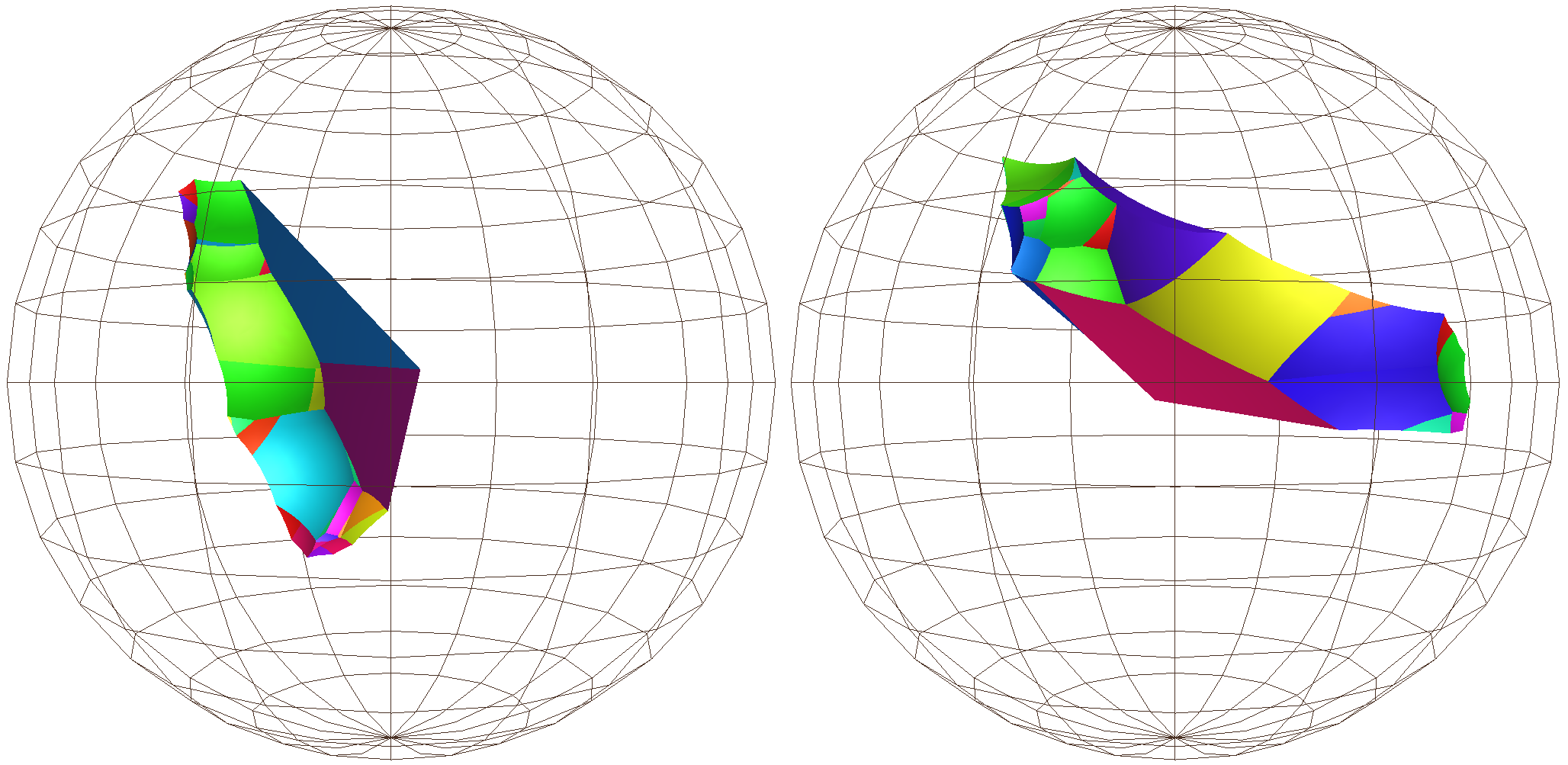}
    \caption{Isospectral $3$-orbifolds with distinct Betti numbers (volume
    $\approx 3.397$)}
  \end{figure}
\end{ex}

The following example proves Theorem~\ref{thm:IntroRatlRegQuo}.
It is an example of non-isospectral $3$-orbifolds forming a Vign\'eras
pair with no $\cH^*$-shady character and non-zero $1$st
Betti numbers; in particular their regulator ratio is rational for a
non-trivial reason.

% i = 180
% y^4 - 2*y^3 + 7*y^2 - 6*y - 3
\begin{ex}\label{ex:Hnot0betti}
  Let~$F = \Q(\alpha)$ where~$\alpha^4 - 2\alpha^3 + 7\alpha^2 - 6\alpha - 3 = 0$.
  The field~$F$ is a number field of discriminant~$-10224$ and
  signature~$(2,1)$ (\href{http://www.lmfdb.org/NumberField/4.2.10224.2}{LMFDB \texttt{4.2.10224.2}}).
  Let~$D$ be the unique quaternion division algebra ramified at every real place
  and no finite place of~$F$. Let~$\fN = (1)$.
  We have~$C \cong \Ciso \cong C_2$, which therefore has a single non-trivial
  character~$\chi$, corresponding to the quadratic extension~$L = F(\zeta_{12})$.
  Let~$\sigma$ be the non-trivial automorphism of~$L/F$.

  Let~$C = \{1,c\}$. We have
  \[
    \vol(Y_{1}) = \vol(Y_{c}) = \frac{10224^{3/2}\zeta_F(2)}{2^{8}\pi^{6}} = 5.902455\dots.
  \]

  Take representatives~$(\tau_k)_k$ of~$\Hom(L,\CC)$ modulo complex
  conjugation such that
  %in F: [-0.3450453331850040973, 1.3450453331850040973, 0.5000000000000000000 + 2.492809983760847061*I]
  %emb: [1.3450453331850040973 - 2.809005812815318996 E-96*I, -0.3450453331850040973 + 9.363352709384396654 E-97*I, 0.5000000000000000000 - 2.492809983760847061*I, 0.5000000000000000000 - 2.492809983760847061*I]
  %zeta12: [-0.8660254037844386468 - 0.5000000000000000000*I, -0.8660254037844386468 - 0.5000000000000000000*I, 0.8660254037844386468 + 0.5000000000000000000*I, 0.8660254037844386468 - 0.5000000000000000000*I]
  %arg(zeta12): [-5, -5, 1, -1]
  %TODO swap tau 1,2 <---> 3,4
  \begin{eqnarray*}
    (\tau_k(\zeta_{12}))_k &=& (e^{-5\frac{2\pi}{12}i},e^{-5\frac{2\pi}{12}i},e^{\frac{2\pi}{12}i},e^{-\frac{2\pi}{12}i}), \\
    %(\tau_k(\alpha))_k &\approx& (1.345, -0.345, \tfrac 12 - 2.493i, \tfrac 12 - 2.493i).
    (\tau_k(\alpha))_k &\approx& (1.345, -0.345, 0.500 - 2.493i, 0.500 - 2.493i).
  \end{eqnarray*}

  We compute the group~$\heckegroup_{L,(1)}$ of unitary Hecke characters. It is
  isomorphic to~$\Z^7 \times \R$. Let~$\automchar_1,\dots,\automchar_7$ denote the
  computed basis of the canonical complement of~$\|\cdot\|^{i\R}$. We display
  the $\automchar_j$ as above.

  \begin{center}
  \begin{tabular}{c|ccccccc}
 & $\automchar_1$ & $\automchar_2$ & $\automchar_3$ & $\automchar_4$ & $\automchar_5$ & $\automchar_6$ & $\automchar_7$
 \\ \hline
 % [  [1, 0]   [3, 0] [-2, 2.5281052315337870967 E-95]      [-2, 0.7137871714381806844] [-3, -0.14046200759650359505]  [0, -0.14046200759650359505]   [0, 0.9532666096642025347]]
 $\tau_1$ & \small $(1, 0)$ & \small $(3, 0)$  & \small $(-2, 0)$ & \small $(-2, 0.714)$ & \small $(-3, -0.140)$ & \small $(0, -0.140)$ & \small $(0, 0.953)$
 \\
 % [  [1, 0]   [3, 0] [-2, 2.4060387764194250175 E-95]     [-2, -0.7137871714381806844]  [-4, 0.14046200759650359505]   [1, 0.14046200759650359505]  [0, 0.23947943822602185031]]
 $\tau_2$ & \small $(1, 0)$ & \small $(3, 0)$ & \small $(-2, 0)$ & \small $(-2, -0.714)$ & \small $(-4, 0.140)$ & \small $(1, 0.140)$ & \small $(0, 0.239)$
 \\
 % [[-19, 0] [-33, 0]     [26, -0.4801411429017179338] [26, -7.372409864094612219 E-96] [43, -0.21516166827863737333]  [-9, 0.21516166827863737333] [-6, -0.3563024524942532256]]
 $\tau_3$ & \small $(-19, 0)$ & \small $(-33, 0)$ & \small $(26, -0.480)$ & \small $(26, 0)$ & \small $(43, -0.215)$ & \small $(-9, 0.215)$ & \small $(-6, -0.356)$
 \\
 % [ [19, 0]  [33, 0]     [-26, 0.4801411429017179338]                    [-26, 0.E-94] [-42, 0.21516166827863737333] [10, -0.21516166827863737333]  [6, -0.8364435953959711594]]
 $\tau_4$ & \small $(19, 0)$ & \small $(33, 0)$ & \small $(-26, 0.480)$ & \small $(-26, 0)$ & \small $(-42, 0.215)$ & \small $(10, -0.215)$ & \small $(6, -0.836)$
  \end{tabular}
  \end{center}

  We then compute the subgroup~$\heckegroup_{L,(1)}^-$ of unitary Hecke
  characters~$\automchar$ such that~$\automchar^\sigma \automchar$ has finite order.
  It admits a basis~$\automchar_1',\dots,\automchar_4'$, where
  %basis of minus part: [[1, 0, 0, 0, 0, 0, 0]~, [0, 1, 0, 0, 0, 0, 0]~, [0, 0, 1, 0, 0, 0, 0]~, [0, 0, 0, 0, -1, 1, 0]~]
  \[
    \automchar_1' = \automchar_1,\quad
    \automchar_2' = \automchar_2,\quad
    \automchar_3' = \automchar_3,\quad
    \automchar_4' = \automchar_5^{-1}\automchar_6.
  \]

  We display the~$\automchar_j'$ as above.

  \begin{center}
  \begin{tabular}{c|cccc}
 & $\automchar_1'$ & $\automchar_2'$ & $\automchar_3'$ & $\automchar_4'$
 \\ \hline
 % [  [1, 0]   [3, 0] [-2, 2.5281052315337870967 E-95]                  [3, 0.E-58]]
 $\tau_1$ & $(1, 0)$ & $(3, 0)$  & $(-2, 0)$ & $(3, 0)$
 \\
 % [  [1, 0]   [3, 0] [-2, 2.4060387764194250175 E-95]                  [5, 0.E-58]]
 $\tau_2$ & $(1, 0)$ & $(3, 0)$ & $(-2, 0)$ & $(5, 0)$
 \\
 % [[-19, 0] [-33, 0]     [26, -0.4801411429017179338] [-52, 0.4303233365572747467]]
 $\tau_3$ & $(-19, 0)$ & $(-33, 0)$ & $(26, -0.480)$ & $(-52, 0.430)$
 \\
 % [ [19, 0]  [33, 0]     [-26, 0.4801411429017179338] [52, -0.4303233365572747467]]
 $\tau_4$ & $(19, 0)$ & $(33, 0)$ & $(-26, 0.480)$ & $(52, -0.430)$
  \end{tabular}
  \end{center}

  Projecting on the values~$(k_{\tau_1},k_{\tau_2},k_{\tau_3})$, we obtain a
  lattice generated by the columns of a matrix, say $B$, and with
  LLL-reduced basis with matrix $B'$, where
  \[
   B= \begin{pmatrix}
      1 & 3 & -2 & 3\\
      1 & 3 & -2 & 5\\
      -19 & -33 & 26 & -52
    \end{pmatrix},
    \quad\quad
   B'= \begin{pmatrix}
      -1 & -2 & 1\\
      1 & -2 & 1\\
      0 & 2 & 5
    \end{pmatrix}.
  \]
  % psishady: [-1, -1, 0, 1]~ ~> [-1, -1, 0, 0, -1, 1, 0]~
  % psi0: [-1, -1, -2, 0]~ ~> [-1, -1, -2, 0, 0, 0, 0]~
  From this it is clear that there is no~$\cH^\bullet$-shady character of~$L$,
  as such a character would have to satisfy~$k_\tau\in\{\pm 1\}$ for
  all~$\tau\in\Hom(L,\CC)$.
  However, there exists an~$\Omega^\bullet$-shady character
  of~$L$, namely~$\automchar_\shady =
  (\automchar_1')^{-1}(\automchar_2')^{-1}\automchar_4'$, and the set of
  $\Omega^\bullet$-shady characters  is~$\automchar_\shady^{\pm 1}
  \automchar_0^\Z$, where~$\automchar_0 =
  (\automchar_1')^{-1}(\automchar_2')^{-1}(\automchar_3')^{-2}$:

  \begin{center}
  \begin{tabular}{c|cc}
 & $\automchar_\shady$ & $\automchar_0$
 \\ \hline
 % [               [-1, 0.E-58] [0, -5.056210463067574193 E-95]]
 $\tau_1$ & $(-1, 0)$ & $(0, 0)$
 \\
 % [                [1, 0.E-58] [0, -4.812077552838850035 E-95]]
 $\tau_2$ & $(1, 0)$ & $(0, 0)$
  \\
 % [ [0, 0.4303233365572747467]      [0, 0.9602822858034358676]]
 $\tau_3$ & $(0, 0.430)$ & $(0, 0.960)$
  \\
 % [[0, -0.4303233365572747467]     [0, -0.9602822858034358676]]
 $\tau_4$ & $(0, -0.430)$ & $(0, -0.960)$
  \end{tabular}
  \end{center}

  For all~$n\in \Z$, the
  character~$\automchar_\shady\automchar_0^n$ contributes an
  eigenvalue~$\lambda(n) \approx 1+4(0.430+0.960 n)^2$ to the Laplace spectrum
  of~$\Omega^1(\cY)$ by Proposition~\ref{prop:diffFormsPsi}. The first few
  corresponding eigenvalues are
  \[
    %[1.7407126959431422097, 2.123425951544380716, 8.735135987764877202, 9.883275754568592720, 23.106695827009585692, 25.020262105015778222, 44.85539221367726768, 47.53438500288593722, 73.98122514776792316, 77.42564444817906972, 110.48419462928155214, 114.69404044089517571, 154.36430065821815463, 159.33957298103425521, 205.62154323457773060, 211.36224206859630820, 264.2559223583602801, 270.7620477035813347, 330.2674380295658031, 337.5389898859893347, 403.6560902481942995]
    1.741\dots,\quad 2.123\dots,\quad 8.735\dots,\quad 9.883\dots,\quad 23.107\dots,\quad 25.020\dots.
  \]

  Only~$\automchar_\shady^{\pm 1}$ contribute to the eigenvalue~$\lambda(0)$, so
  by Lemma~\ref{lem:negative}, the orbifolds~$Y_{1}$ and~$Y_{c}$ are
  not~$1$-isospectral. Since these characters are also~$\Omega^0$-shady, by the same
  argument these orbifolds are also not $0$-isospectral.

  % geomdata = [[2,2],[2,2]],
  %            [[2,2,2,2,2,2,2,2,2,12,3,3,3,3,3,4],[2,2,2,2,2,2,2,2,2,2,2,2,2,2,3,3,3]],
  %            [[1],[1]]

  Since there is no $\cH^\bullet$-shady character of~$L$, by
  Theorem~\ref{thm:ratlRegPsi}, we have
  \[
    \frac{\Reg_1(Y_{1})^2}{\Reg_1(Y_{c})^2}
    \in \Q^\times,
  \]
  and the Betti numbers of~$Y_{1}$ and~$Y_{c}$ are
  equal. In fact, we have
  \[
    H_1(P\Gamma_{1},\Z) \cong (\Z/2\Z)^2\oplus\Z\quad
    \text{ and }\quad
    H_1(P\Gamma_{c},\Z) \cong (\Z/2\Z)^2\oplus\Z,
  \]
  and in particular both $1$st Betti numbers are~$1$, so the rationality of the
  ratio of regulators is a non-trivial statement.
  Moreover, since the orbifolds are not isospectral, the Cheeger--M\"uller theorem does
  not say anything about this rationality.

  The maximal cyclic subgroups of~$P\Gamma_{1}$ have order~$2$, $3$, $4$
  or~$12$, and
  the maximal cyclic subgroups of~$P\Gamma_{c}$ have order~$2$ or~$3$.

  \begin{figure}
    \centering
    \includegraphics[scale=0.15]{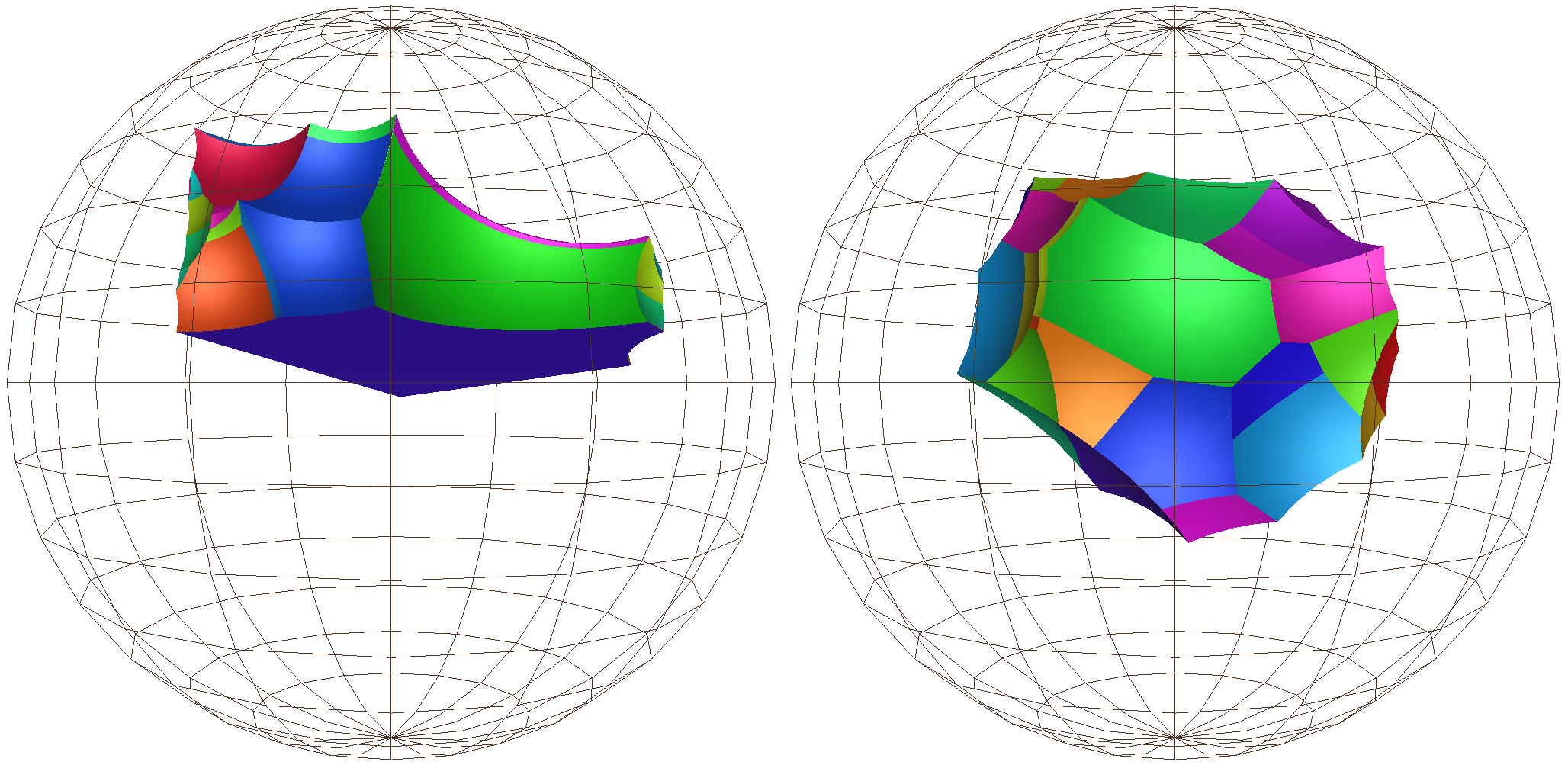}
    \caption{Non-isospectral $3$-orbifolds with Betti number~$1$ and rational
    regulator square quotient (volume $\approx 5.902$)}
  \end{figure}
\end{ex}

%TODO Can we construct similar examples for manifolds?

%TODO $p$-shady character, torsion homology, regulators constants? (at least the $11$ example)

\end{document}